\newcommand{\ed}{{\rm d}}
\newcommand{\w}{{\mathchoice{\,{\scriptstyle\wedge}\,}{{\scriptstyle\wedge}}
      {{\scriptscriptstyle\wedge}}{{\scriptscriptstyle\wedge}}}}
\newcommand{\lhk}{\mathbin{\hbox{\vrule height1.4pt width4pt depth-1pt
             \vrule height4pt width0.4pt depth-1pt}}}
\newcommand{\del}{{\partial}}
\newcommand{\delb}{\bar{\partial}}
\newcommand{\delx}{\partial_{\xi}}
\newcommand{\delxb}{\partial_{\xib}}
\newcommand{\liealgebra}[1]{{\mathfrak {#1}}}
\newcommand{\g}{\liealgebra{g}}
\newcommand{\sla}{\liealgebra{sl}}
\newcommand{\so}{\liealgebra{so}}
\newcommand{\su}{\liealgebra{su}}
\newcommand{\liegroup}[1]{{\operatorname{#1}}}
\newcommand{\G}{\liegroup{G}}
\newcommand{\K}{\liegroup{K}}
\newcommand{\SL}{\liegroup{SL}}
\newcommand{\SO}{\liegroup{SO}}
\newcommand{\SU}{\liegroup{SU}}
\newcommand{\R}{\mathbb R}
\newcommand{\C}{\mathbb C}
\newcommand{\Z}{\mathbb Z}
\newcommand{\E}{\mathbb E}
\newcommand{\PP}{\mathbb P}
\renewcommand{\Re}{\operatorname{Re}}
\renewcommand{\Im}{\operatorname{Im}}
\DeclareMathOperator{\Ad}{Ad}
\newcommand{\tr}{\rm tr}
\newcommand{\dd}[2]{\frac{\partial {#1}}{\partial {#2}}}
\newcommand{\mcb}{\mathcal B}
\newcommand{\mcc}{\mathcal C}
\newcommand{\mce}{\mathcal E}
\newcommand{\mcf}{\mathcal F}
\newcommand{\mch}{\mathcal H}
\newcommand{\mci}{\mathcal I}
\newcommand{\mcj}{\mathcal J}
\newcommand{\mcl}{\mathcal L}
\newcommand{\mcm}{\mathcal M}
\newcommand{\mcn}{\mathcal N}
\newcommand{\mco}{\mathcal O}
\newcommand{\mcp}{\mathcal P}
\newcommand{\mcq}{\mathcal Q}
\newcommand{\mcr}{\mathcal R}
\newcommand{\mcs}{\mathcal S}
\newcommand{\mcu}{\mathcal U}
\newcommand{\mcv}{\mathcal V}
\newcommand{\mcw}{\mathcal W}
\newcommand{\mfj}{\mathfrak J}
\newcommand{\nA}{\textnormal{A}}
\newcommand{\nB}{\textnormal{B}}
\newcommand{\nC}{\textnormal{C}}
\newcommand{\nAb}{\ol{\textnormal{A}}}
\newcommand{\nBb}{\ol{\textnormal{B}}}
\newcommand{\Zh}{\hat{Z}}
\newcommand{\ES}{\textnormal{S}}
\newcommand{\x}{\textnormal{x}}
\newcommand{\xh}{\hat{\textnormal{x}}}
\newcommand{\im}{\textnormal{i}}
\newcommand{\JAI}{\textnormal{J}}
\newcommand{\btheta}{\boldsymbol{\theta}}
\newcommand{\ol}{\overline}
\newcommand{\zb}{\ol{z}}
\newcommand{\hb}{\bar h}
\newcommand{\bEb}{\ol{\bf E}}
\newcommand{\cb}{\ol{c}}
\newcommand{\ab}{\ol{a}}
\newcommand{\Ub}{\ol{U}}
\newcommand{\betab}{\ol{\beta}}
\newcommand{\gammab}{\ol{\gamma}}
\newcommand{\etab}{\ol{\eta}}
\newcommand{\thetab}{\ol{\theta}}
\newcommand{\zetab}{\ol{\zeta}}
\newcommand{\omb}{\ol{\omega}}
\newcommand{\bb}{\ol{b}}
\newcommand{\xib}{\ol{\xi}}
\newcommand{\bE}{{\bf E}}
\newcommand{\bn}{{\bf n}}
\newcommand{\bs}{{\bf s}}
\newcommand{\bt}{{\bf t}}
\newcommand{\s}[1]{{\mathbb S}^{#1}}
\newcommand{\F}[1]{\mcf^{(#1)}}
\newcommand{\Fh}[1]{\hat{\mcf}^{(#1)}}
\newcommand{\X}[1]{X^{(#1)}}
\newcommand{\Xh}[1]{\hat{X}^{(#1)}}
\newcommand{\I}[1]{{\rm I}^{(#1)}}
\newcommand{\Ih}[1]{{\hat{\rm I}}^{(#1)}}
\newcommand{\cp}[1]{{\C \mathbb{P}^{#1}}}
\newcommand{\pr}{{\mathbb{P}}}
\newcommand{\xinf}{X^{(\infty)}}
\newcommand{\xinfh}{\hat{X}^{(\infty)}}
\newcommand{\iinf}{{\rm I}^{(\infty)}}
\newcommand{\iinfh}{\hat{\rm I}^{(\infty)}}
\newcommand{\finf}{{\mcf}^{(\infty)}}
\newcommand{\Cv}[1]{\mathcal{H}^{#1}}
\newcommand{\Hb}{{\bar H}^1}
\newcommand{\Vp}[1]{\mcv^{(#1)}}
\newcommand{\mcuh}{\hat{\mcu}}
\newcommand{\mcjh}{\hat{\mcj}}
\DeclareMathOperator{\Gr}{\rm Gr}
\newcommand{\be}{\begin{equation}}
\newcommand{\ee}{\end{equation}}
\newcommand{\bp}{\begin{pmatrix}}
\newcommand{\ep}{\end{pmatrix}}
\newcommand{\lra}{\longrightarrow}
\newcommand{\hra}{\hookrightarrow}
\newcommand{\p}{\varphi}
\newcommand{\pt}{\tilde{\varphi}}
\newcommand{\ff}{{\rm I \negthinspace I}}
\newcommand{\Sigmah}{\hat{\Sigma}}
\newcommand{\n}{\notag}
\newcommand{\noi}{\noindent}
\newcommand{\np}{\newpage}
\newcommand{\tn}{\textnormal}
\newcommand{\tb}{\textbf}
\newcommand{\hook}{\hookrightarrow}
\newcommand{\marg}{\marginpar}
\newcommand{\one}{\vspace{1mm}}
\newcommand{\two}{\vspace{2mm}}
\newcommand{\qinf}{Q_{\infty}}
\newcommand{\da}{\dot{a}}
\newcommand{\db}{\dot{b}}
\newcommand{\dc}{\dot{c}}
\theoremstyle{plain}
\newtheorem{thm}{Theorem}[section]
\newtheorem{lem}[thm]{Lemma}
\newtheorem{conj}[thm]{Conjecture}
\newtheorem{cor}[thm]{Corollary}
\newtheorem{prop}[thm]{Proposition}
\newtheorem{rem}[thm]{Remark}
\newtheorem{quest}[thm]{Question}
\theoremstyle{definition}
\newtheorem{defn}{Definition}[section]
\newtheorem{exam}[thm]{Example}
\begin{document}
\title[Conservation laws for CMC surfaces]{Conservation laws for surfaces of constant mean curvature in $3$-dimensional space forms}
\author{Daniel Fox and Joe S. Wang}
\address{Philadelphia, PA and Seoul, South Korea}
\email{foxdanie@gmail.com, jswang12@gmail.com}
\subjclass[2000]{53C43, 35A27}
\date{\today}
\dedicatory{Dedicated to our teacher Robert Bryant on the occasion of his $60 $th birthday.}
\keywords{Differential geometry, Exterior differential system, Integrable system, CMC surface, Characteristic cohomology, Conservation law, Jacobi field, Recursion, Integrable extension, Non-local symmetry, Secondary characteristic cohomology, Affine algebra, Noether's theorem, Finite type surface, Abel-Jacobi map, Picard-Fuchs equation, Griffiths transversality}
\thanks{We would like to thank Dominic Joyce, Fran Burstall, Joseph Krasil'shchik, George Bluman, Young-Heon Kim, Chanyoung Sung, Ben Mckay, Tom Ivey, and Abraham Smith  for useful discussions on this topic. 
Wang would like to thank KIAS, UCC, KonKuk University,  Korea University, 
and Texas  A\&M  for the support.}
\begin{abstract}
The exterior differential system for constant mean curvature (CMC) surfaces in a 3-dimensional space form is an elliptic Monge-Ampere system defined on the unit tangent bundle.
We determine the infinite sequence of higher-order symmetries and conservation laws via an enhanced prolongation modelled on a loop algebra valued formal Killing field.
To prove their existence we derive an explicit differential algebraic recursion,
which shows that the sequence of conservation laws are represented by 
1-forms generally singular at the umbilics.
As a consequence we establish Noether's theorem for the CMC system
and there is a canonical isomorphism between the symmetries and conservation laws.

A geometric interpretation of the $\s{1}$-family of associate surfaces leads to 
an integrable extension for a non-local symmetry called spectral symmetry. 
The sequence of higher-order conservation laws generated by recursion 
are homogeneous under this generalized symmetry.
We show that the corresponding spectral conservation law exists 
as a secondary characteristic cohomology class.

For a compact linear finite type CMC surface of arbitrary genus,
we observe that the monodromies of the associated flat $\sla(2,\C)$-connection commute with each other.
It follows that
a single spectral curve is defined as the completion of the  set of eigenvalues of 
the entire monodromies.
This agrees with the recent result  that
a compact  high genus linear finite type CMC surface 
necessarily factors through a branched covering of a torus.

We introduce a sequence of Abel-Jacobi maps defined by the periods of conservation laws.
For the class of deformations of CMC surfaces which scale the Hopf differential by a real parameter, we compute the first order truncated Picard-Fuchs equation.  
The resulting formulae for the first few terms exhibit a similarity with the Griffiths transversality theorem for variation of Hodge structures.
\end{abstract}
\maketitle

\np
\tableofcontents

\np
\section{Introduction}\label{sec:intro}  
Constant mean curvature (CMC) surfaces in a 3-dimensional Riemannian space form 
are by definition  the extremals for the area functional  with/without volume constraint. 
The theory of such CMC surfaces lies at the crossroads of 
classical differential geometry, integrable systems, and elliptic partial differential equation.
There exist a vast amount of literature on the subject. 
For the integrable systems aspects of the theory, especially for the genus 1 case,
let us cite the articles \cite{Pinkall1989}\cite{Hitchin1990}\cite{Bobenko1991}.
For the analytic aspects of the theory, 
let us cite the surveys  \cite{Osserman1986}\cite{Lawson1980}\cite{White2013}\cite{Brendle2013} 
and refer the reader to the included bibliographies for  further references. 
We mention in particular the recent proof of Lawson conjecture 
on the embedded minimal tori in $\s{3}$ by \cite{Brendle2012}, see also  \cite{Brendle2013a}.
A similar argument was applied to the proof of
Pinkall \& Sterling conjecture on the embedded CMC tori in $\s{3}$ in \cite{Andrews2012}.
 
\two
The second order elliptic partial differential equation\footnotemark\footnotetext{It is one of the elliptic sinh-Gordon, Liouville, or cosh-Gordon equation depending on the sign of a structural constant.} which locally describes CMC surfaces in a 3-dimensional space form is one of the prototypes of integrable equation. 
As is often true with integrable PDE's, it possesses an infinite sequence of higher-order symmetries and conservation laws.
The purpose of this work is twofold:

\begin{enumerate}[\qquad a)]
\item
to compute and determine the symmetries and conservation laws
for the exterior differential system (EDS) for CMC surfaces in a 3-dimensional space form
as the characteristic cohomology of the defining elliptic Monge-Ampere system,

\item
to propose the methods of application of conservation laws and their periods 
to understanding the global geometry CMC surfaces. 
\end{enumerate}

\two 
We shall closely follow the general theory of characteristic cohomology for differential systems
developed by Bryant \& Griffiths \cite{Bryant1995}.
The original idea of applying commutative algebraic analysis to differential equation 
is due to Vinogradov, Tsujishita, etc, \cite{Vinogradov19841}\cite{Vinogradov19842}\cite{Tsujishita1982}.
The coordinate-free framework of \cite{Bryant1995} conforms 
with the purpose of application to the global theory of CMC surfaces.  
   
\two
For an evolution equation a conservation law has a transparent practical meaning: 
an integral of local density which remains constant under time evolution. 
On the other hand for the EDS for CMC surfaces, or for elliptic equations in general, 
a conservation law implies a moment condition for the associated over-determined boundary value problem. 

To this end consider for example a  closed curve $\Gamma\subset \E^3$ in the 3-dimensional Euclidean  space with the prescribed tangent 2-plane fields $\Pi$ along $\Gamma$.
The boundary value problem here is to find a CMC-1 surface spanning $\Gamma$ which is also tangent to $\Pi$ along $\Gamma;$\footnotemark\footnotetext{The initial data $(\Gamma, \Pi)$ admits a unique prolongation to the infinite jet space of solutions to $(\star)$.}   
\be(\star)\;\left\{
\begin{array}{rl}
\Sigma&: \; \tn{CMC-1 surface in $\E^3$}, \n\\
\partial\Sigma&=\Gamma,  \n\\
T\Sigma&=\Pi \quad\tn{along}\; \Gamma. \n
\end{array}\right.\one
\ee

From the known existence results  for Dirichlet problem it is intuitively clear that the problem $(\star)$ is generally not solvable unless the pair $(\Gamma, \Pi)$ satisfies certain compatibility conditions.
More concretely suppose the given data $(\Gamma, \Pi)$ is real analytic. Then under a certain mild non-degeneracy assumption one can uniquely thicken $(\Gamma, \Pi)$ to a CMC-1 strip by Cauchy-Kowalewski theorem. Conservation laws represent the obstruction to close up this CMC-1 strip to a compact CMC-1 surface  (with boundary) with reasonable regularity.\footnotemark\footnotetext{A linearized version of this problem is to impose both Dirichlet and Neumann data for Laplace equation on a domain, \cite{Fox2009}.}

Such moment conditions are likely  to exist at least conceptually for general second order elliptic equations for one function of two variables. But it is conceivable that they may exist in a form which cannot be computed. The relevant question here is whether these moment conditions can be expressed as integrals of local density that can be detected within the framework of characteristic cohomology of differential equations. 

We shall find that the CMC system admits an infinite sequence of 
higher-order symmetries and conservation laws, 
and give the explicit differential algebraic recursion formulae for them, 
Part \ref{part:cvlaw}, and \ref{part:prolongation}.
We also lay a foundation for the application of conservation laws and their periods 
to the global geometry of CMC surfaces,
and give a basic analysis of the variation of periods
for the particular deformation by scaling Hopf differential, Part \ref{part:FT}, and \ref{part:period}.

\two
Kusner introduced the momentum class for classical conservation laws induced by the Killing fields of the ambient space form \cite{Kusner1991}.  This idea was applied to the moduli theory for complete CMC surfaces
\cite{Korevaar1989}\cite{Korevaar1992}\cite{Kusner2003}.
A refined interpretation of momentum class in a generalized relative homological setting was proposed in \cite{Edelen2013}.
Balancing condition from the momentum class also appeared in Kapouleas' gluing construction of complete  CMC-1 surfaces \cite{Kapouleas1990}. Given the results of our analysis it is interesting that in this construction the possible moment conditions from the higher-order conservation laws could be perturbed away by implicit function theorem.
 
Pinkall \& Sterling gave a classification of CMC-1 tori in $\E^3$ via linearization on the Jacobian of the associated spectral curve \cite{Pinkall1989}. Their work is strictly for tori for which there exist no umbilic points.
The main difference in the high genus CMC surface case is that the sequence of higher-order symmetries and conservation laws are singular at the umbilics. One of our initial motivation was to generalize  \cite{Pinkall1989} and obtain an analogous algebraic construction of high genus CMC surfaces.

Fox \& Goertsches determined the space of conservation laws for the elliptic sinh-Gordon equation which locally describes CMC surfaces \cite{Fox2011}. 
A distinctive aspect of the current paper compared to  \cite{Fox2011} is that our analysis is global; 
we work in an equivariant geometric setting independent of a particular local coordinate representation of the underlying differential equation,
with a view to application for the theory of compact high genus CMC surfaces, or
for  the moduli theory of complete CMC surfaces.

\two
There are many existing literature on the conservation laws for elliptic, or hyperbolic sin(h)-Gordon equations.
Let us cite \cite{Dodd1977}\cite{Ziber1979}\cite{Dorfman1993}.

We remark that the recent work \cite{Berstein2013} employs the conservation laws for the curvature entropy functional to give a new characterization of the classical minimal surfaces such as Enneper's surface, catenoid, and helicoid.

\two
Our basic method of analysis is the differential analysis of EDS in terms of differential forms, \cite{Bryant1991}\cite{Ivey2003}.
Due to the size of some of the computations involved, we relied on \texttt{Maple} for the longer computations, 
particularly to get an insight on the inductive formulae by trying out the first few terms.

In addition to \cite{Bryant1995},
the geometric perspective on differential equation suggested in \cite{Bryant1995d} 
also served as a general guidance for developing the subsequent applications of conservation laws.

\two
In recent years there have been extensive studies of surfaces of constant mean curvature 
in 3-dimensional homogeneous Riemannian manifolds, \cite{Fernandez2010} and the references therein.
We have not analyzed the conservation laws for these general cases,
but the results and perspectives from the present work 
may provide a general indication to the relevant structures.

\two
With a subject of this scope omissions are inevitable and the results and references un-cited here 
are in no way less important nor they deserve less credit.

\two
We give in the sub-sections below the summary of paper.
\subsection{Exterior differential system}
Let $M$ be the 3-dimensional Riemannian space form of constant curvature $\epsilon$.
Let $\underline{\x}:\Sigma\hook M$ be an immersed surface. 
The mean curvature $H$ of $\underline{\x}$ is a second order scalar invariant 
defined by the trace of second fundamental form
\[ H=\frac{1}{2}\tn{tr}_{\underline{\tn{I}}}\underline{\ff}. 
\]
Here $\underline{\tn{I}},\underline{\ff}$ are 
the induced metric, and second fundamental form of $\underline{\x}$ respectively. 
A \tb{CMC}-$\delta$ surface is an immersed surface 
with constant mean curvature 
$$H\equiv\delta.$$
CMC surfaces arise as the extremals for the area functional 
with/without the constant enclosed volume constraint  \cite{White2013}. 
We shall be mainly concerned with the case 
$$\epsilon+\delta^2\ne 0.$$

The differential equation for CMC surfaces is a homogeneous elliptic Monge-Ampere system $\mci$ defined on the $\s{2}$-bundle of unit tangent vectors $X\to M$.
In order to take into consideration the relations from the higher-order derivatives all at the same time,
the background for our analysis will be the infinite prolongation $(\xinf,\I{\infty})$ of $(X,\mci)$. 
In addition,  to support certain extension of the base function field to accommodate the solutions to the linearized Jacobi equation (see Sect.~\ref{sec:Jacobifields}), the actual analysis will be carried out on a double cover  $(\xinfh,\Ih{\infty})$ of $(\xinf,\I{\infty})$. 
The double cover $\xinfh$ amounts to introducing the double cover of a CMC surface 
defined by the square root of Hopf differential,
a holomorphic quadratic differential canonically associated with a CMC surface.
We shall see that the infinite sequence of conservation laws are defined on $\xinfh$ rather than on $\xinf$.

\subsection{Symmetries and conservation laws}
In this general set-up, there are two fundamental structural invariants: 
symmetry, and conservation law.

\two\quad a)
Let $\hat{T}^h=(\Ih{\infty})^{\perp}\subset T\xinfh$ be the formally integrable rank two Cartan distribution. By definition CMC surfaces correspond to $\hat{T}^h$-horizontal integral surfaces of $\iinfh$. A section of the quotient bundle $T\xinfh/\hat{T}^h$, or simply a vector field $V\in H^0(T\xinfh)$,  is a \tb{symmetry} when its formal infinitesimal action (Lie derivative) preserves the ideal $\iinfh.$

We find that a symmetry is uniquely generated by a scalar function on $\xinfh$ called \tb{Jacobi field.}
Intuitively a Jacobi field can be considered as an element in the kernel of the linearization of $\iinfh$;
it restricts to a CMC surface to be a solution to the usual Jacobi equation.

\quad b)
By construction the infinitely prolonged ideal $\Ih{\infty}$ is Frobenius. It follows that the quotient space $(\Omega^*(\xinfh)/\iinfh, \underline{\ed}),$ where $\underline{\ed}=\ed\mod\Ih{\infty},$ becomes a complex. A \tb{conservation law} is an element in the $1$-st characteristic cohomology
\be\label{eq:introcvlaw}
\mcc^{(\infty)}=H^1( \Omega^*(\xinfh)/\iinfh, \underline{\ed}).
\ee
The differential system $(\xinfh,\iinfh)$ is formally self-adjoint, 
and the symbol of a conservation law (co-symmetry) is also a Jacobi field.

The conservation law pulls back to any integral surface $\xh:\Sigmah\to\xinfh$ 
to define de Rham cohomology classes,\footnotemark\footnotetext{In the present case the conservation laws are represented by 1-forms which are singular at the umbilics. 
We shall find that the residues at the umbilics are generally non-trivial 
and this cohomology map should be adapted/understood accordingly.}
$$\xh^*: \mcc^{(\infty)} \to H^1_{\tn{dR}}(\Sigmah,\C).
$$
These classes should measure how complicated the surface is, 
considered as an integral manifold of $\iinfh$.  
This is a differential geometric analogue of the characteristic classes for vector bundles in topology, 
which measure the complexity of universal maps into the classifying spaces up to homotopy.

\subsection{Dimension bounds}
From the long exact sequence associated with
$$0\to\iinfh\to \Omega^*(\xinfh) \to \Omega^*(\xinfh)/\iinfh\to 0,$$ 
we obtain the (local) isomorphism
$$\mcc^{(\infty)} \simeq H^2(\iinfh, \ed)=:\Cv{(\infty)}$$
with the space of differentiated conservation laws (2-forms) $\Cv{(\infty)}$.
The general recipe provided by \cite{Bryant1995} yields a subspace of reduced 2-forms 
$H^{(\infty)}\subset\Omega^2(\iinfh)$ in which every conservation law has a unique representative.

\two\noi
\tb{Corollary~\ref{cor:harmonic}.} [Hodge theorem]
\emph{
There exists an isomorphism
$$\Cv{(\infty)}\simeq \{ \, \tn{closed 2-forms in $H^{(\infty)}$} \, \}.
\two$$}
\indent
The differential equation for closed 2-forms in $H^{(\infty)}$ 
is a linear differential system.
By examining the symbol of this system for the principal coefficients,
we obtain the uniform bounds 
on the dimension of the space of conservation laws.
 
\two\noi
\tb{Theorem~\ref{thm:dimbound}.} 
\emph{
For $k\geq 1$,
\begin{align*}
 \tn{dim}\; \mch^{2k-1} &=0, \\
\tn{dim}\; \mch^{2k}    &\leq 2. \n
\end{align*}
It follows that for $k\geq 1$,
\begin{align*}
 \tn{dim}\; \mcc^{2k-1} &\leq 2, \\
\tn{dim}\; \mcc^{2k}    &=0. \n
\end{align*} }
\indent Here $\mch^n,  \mcc^n$ denote the space of $n+1$-th order 
differentiated, un-differentiated conservation laws respectively.
A similar gap phenomenon has been observed for
the generic nonlinear elliptic Poisson equations in \cite{Fox2011}.
 
\two
For a proof of the above theorem we rely on the following technical lemma.

\two\noi
\tb{Lemma~\ref{lem:lemma5.4}.} 
\emph{\,Let $f: U\subset \xinf\to\C$ be a scalar function on an open subset $U\subset\xinf$ such that
\be\label{eq:Lemma5.4intro}
\delxb f =0.
\ee
Then $ f$ is a constant function.\one }

\indent Here for a scalar function $f$ we denote $\ed f\equiv (\delx f) \xi+(\delxb f)\xib\mod\iinfh,$
where $\xi$ is roughly a 1-form that restricts to be the unitary $(1,0)$-form on a CMC surface.
This lemma and its generalization Lem.~\ref{lem:lemma5.4'} 
are the key structural properties of the CMC system  
which will be used in proving various rigidity results.

\subsection{Enhanced prolongation}
For an integrable PDE it is generally the case that 
there exists a set of preferred good coordinates/functions on the infinite prolongation space. 
We introduce an enhanced prolongation to capture this symmetry property of the EDS for CMC surfaces, Eqs.~\eqref{eq:newrecursion},  Sec.~\ref{sec:recursion}.  
The enhanced prolongation is  modelled on a twisted loop algebra 
$\Lambda^{\sigma,\tau}\so(4,\C)$-valued formal Killing field. 
It is tailored to incorporate the characteristics of the structure equation for CMC surfaces. 

For our purpose it allows one to simply read off the sequence of 
higher-order Jacobi fields and conservation laws from the enhanced structure equations, Lem.~\ref{lem:bncn},  Prop.~\ref{prop:nontrivialvarphin}. 
The original idea to employ formal Killing fields to produce Jacobi fields can be traced to 
Burstall's works on harmonic maps into symmetric spaces \cite[Section 5]{Burstall1992}. 
We adapted this to our setting to construct the enhanced prolongation.
Compare Terng \& Wang's work on curved flats and conservation laws \cite{Terng2005}.

\two
In order to generate the desired sequence of higher-order conservation laws, 
we derive an explicit differential algebraic recursion  
for the formal Killing field coefficients, Sec.~\ref{sec:inductiveformula}.
The original formula is due to Pinkall \& Sterling in a slightly different form \cite{Pinkall1989}.

By construction the higher-order conservation laws
become singular at the umbilics when restricted to a CMC surface.
It would be natural to consider them as a twisted conservation law,
a smooth section of $\Omega^1(\Sigmah)\otimes(\hat{K})^q$ for some $q\in\Z_+.$
Here $\Sigmah$ is the double cover of a CMC surface defined by the square root of Hopf differential, 
and $\hat{K}\to\Sigmah$ is the canonical line bundle.

\subsection{Noether's theorem}
We have determined the uniform dimension bounds,
and derived the differential algebraic recursion 
for the desired sequence of higher-order Jacobi fields and conservation laws.
This is summarized in the higher-order extension of Noether's theorem.
It states that there exists a canonical isomorphism
between the spaces of symmetries and conservation laws for the CMC system.
  
\two\noi
\tb{Theorem~\ref{thm:higherNoether}.} [Noether's theorem]
\emph{
The Noether's theorem for classical symmetries and conservation laws Cor.~\ref{cor:Noether} admits the following higher-order extension.
\begin{enumerate}[\qquad a)]
\item
The space of conservation laws  is the direct sum of the classical conservation laws and the higher-order conservation laws:
\begin{align}
\Cv{(\infty)}&=\Cv{0}\oplus\cup_{k=1}^{\infty}\Cv{2k},\n\\
\mcc^{(\infty)}&=\mcc^0\oplus\cup_{k=1}^{\infty}\mcc^{2k-1}.\n
\end{align}
Here $\tn{dim}\,\Cv{0}=\tn{dim}\, \mcc^0=6$, and for $n\geq 1,$ 
\begin{align}
\Cv{n}&=\left\{
\begin{array}{rll}
&0  &\tn{when $n$ is odd,} \n \\
&\langle \Phi_{a^{2k+1}}, \Phi_{\ol{a}^{2k+1}} \rangle &\tn{when $n=2k$.}  \n 
\end{array}\right. \n \\
\mcc^{n}&=\left\{
\begin{array}{rll}
&\langle [\varphi_{k-1}], [\ol{\varphi_{k-1}}] \rangle &\tn{when $n=2k-1$}, \n\\
&0&\tn{when $n$ is even}.\n
\end{array}\right. \n
\end{align}
\item
Recall the exact sequence Eq.~\eqref{eq:exactsequence} from Sec.~\ref{sec:highercvlaws1},
\be 
0\to E^{0,1}_1\hook E^{1,1}_1\to E^{2,1}_1.\n
\ee
The injective map $E^{0,1}_1\hook E^{1,1}_1$ is also surjective and we have the isomorphisms
\[ \mathfrak{J}^{(\infty)} \simeq \mathfrak{S}_v \simeq \mcc^{(\infty)} \simeq \Cv{(\infty)}.\] 
\end{enumerate}}

\two
Here a few remarks are in order. 

In $\,a)$, the classical conservation laws are those induced by the Killing fields of the ambient space form,
$\Phi_{a^{2k+1}}\in\Cv{(\infty)}$ is the differentiated conservation law generated by Jacobi field $a^{2k+1}$,
and $[\varphi_{k-1}]\in \mcc^{(\infty)}$ is the corresponding un-differentiated conservation law. 
We have from recursion 
an explicit inductive formula  for the sequence of 1-forms $\varphi_{n}, n=0, 1, 2, \,... \, $.
 
In $\,b)$, the pieces $E^{p,q}_r$'s are the elements of a spectral sequence canonically associated with $(\Omega^*(\xinfh),\iinfh)$.  $\mathfrak{J}^{(\infty)}$  is the space of Jacobi fields, and $\mathfrak{S}_v$ is the space of (vertical) symmetries.
 
\subsection{Integrable extension}
The associate surfaces of a CMC surface are an $\s{1}$-family of CMC surfaces obtained 
by scaling the Hopf differential by unit complex numbers. 
The corresponding deformation vector field called  \tb{spectral symmetry } 
turns out to be a non-local object;
it is defined on an integrable extension\footnotemark\footnotetext{This is roughly an extension of a PDE by a compatible system of ODE's. In our case $Z\to \xinf$ is an affine bundle over $\g$ 
(the algebra of Killing fields on $M$).},   Sec.~\ref{sec:spectralsym}.
\be\label{eq:introextension}
 \Pi: (Z,\mcj ) \to (\xinf, \iinf), \quad \tn{such that}\;\,\Pi^*\iinf \subset\mcj \subset\Omega^*(Z).
\ee
Spectral symmetry is a  $\Pi$-horizontal vector field $\mcs$ on $Z$ such that
$$\mcl_{\mcs}(\Pi^*\iinf)\subset\mcj.\quad$$
The notion of weighted homogeneity under spectral symmetry 
will be important for our analysis throughout the paper.

\two
We show that there exists the corresponding \tb{spectral conservation law} 
which is defined as a  secondary characteristic cohomology class.  

\two\noi
\tb{Theorem~\ref{thm:spectralcvlaw}.}  [Secondary conservation law] 
\emph{ Let $\Sigma\hook M$ be a CMC surface. The spectral conservation law
$[\varphi_{v_0}] \in    H^1(\Omega^*(Z)/\mcj, \underline{\ed})$,   Eq.~\eqref{eq:spectralcvlaw},
restricts to represent an element in the quotient space
$$ [ \varphi_{v_0} ]\in  H^1(\Sigma,\R)/\langle \mcc^0 \rangle.\two$$}
\indent Here $\langle \mcc^0 \rangle \subset H^1(\Sigma,\R)$ is the subspace spanned by the restriction of classical conservation laws.  

The definition of spectral symmetry as a non-local object and some of the related materials 
are drawn from the lectures by Krasil'shchik \cite{Krasilshchik2012} on the non-local theory of differential equation. Compare also \cite{Vinogradov1989}\cite{Bluman2010}.

\subsection{Finite type surfaces}
The class of linear finite type CMC surfaces are defined by the condition
that there exists a constant coefficient linear relation among the canonical Jacobi fields.
For the case of torus without umbilics,
the Jacobi fields are smooth solutions to the elliptic Jacobi equation.
From the elliptic theory the kernel is finite dimensional,
and a CMC torus is necessarily linear finite type.

We consider the case of compact linear finite type surfaces of arbitrary genus with possible umbilics.
A polynomial Killing field still exists, Thm.~\ref{thm:polyKilling},
which forces the monodromies of the associated flat $\sla(2,\C)$-connection to commute with each other,
Cor.~\ref{cor:monocommute}.
This implies that a single spectral curve can be defined as the completion of the set of eigenvalues
of the entire monodromies of the flat $\sla(2,\C)$-connection, Cor.~\ref{cor:commonspectral}.
This agrees with the recent result of Gerding that
a compact high genus linear finite type CMC surface factors through a branched covering of a torus, Thm.~\ref{thm:Gerding}.
We do not pursue the rest of details, and refer the reader to the original work \cite{Gerding2011}.
 
One of the initial ideas was to employ the index theory to show that
a compact CMC surface is necessarily linear finite type, 
and thereby reducing the construction of examples 
to an analysis of spectral data.
Although this is not true, there is a possibility that
a compatible nonlinear finite type criterion 
may exist to produce high genus CMC surfaces.
We give an example of a class of nonlinear finite type surfaces
defined by a fourth order equation, Sec.~\ref{sec:nft}.

\subsection{Abel-Jacobi maps}
A direct computation shows that
at an umbilic point of degree $p\in\Z_+$ on a CMC surface $\Sigma$,
the 1-form  $\varphi_n$ representing the higher-order conservation law 
has the singularity of the following form.

\two\noi
\tb{Corollary~\ref{cor:varphininw}.} 
\emph{ With respect to the local formulation of CMC surfaces in Eqs.~\eqref{eq:localum},\eqref{eq:Hopfum}
adapted to an umbilic point, 
the higher-order conservation laws have the following normal form.
Here $z=w^2$.
\be\label{eq:varphininw}
\varphi_n=\frac{1}{w^{(2n-1)p+(4n-1)}}\mco(w^{4n}) \ed w 
+\frac{1}{w^{(2n-1)p+(4n-4)}}\mco(w^{4n-4})  \ol{w}\ed \ol{w},\quad n\geq 0.
\ee\indent}

Here the notation $\mco(w^{\ell})$ means a polynomial in $w$ of degree $\leq \ell$
with coefficients in the ring of functions generated by a scalar function (conformal factor)  $u$ 
and its successive $z$-derivatives.
Examining the first few conservation laws in the sequence $\varphi_n, n\geq 1$, 
we suspect that the residues are generally non-trivial
and detect certain higher-order quantitative invariants of a CMC surface at the umbilics.

Let $\Sigmah\to\Sigma$ be the double cover defined by the square root of Hopf differential $\ff$.
The normal form above shows that $\varphi_n \otimes (\sqrt{\ff})^{4n}$ is a smooth section of 
$\Omega^1(\Sigmah)\otimes (\hat{K})^{4n}.$
It also shows that a CMC surface with umbilics cannot be of linear finite type.
  
\two 
We introduce  an associated sequence of \tb{Abel-Jacobi maps}
defined as the dual of period map for the conservation laws
on the punctured CMC surface, Defn.~\ref{defn:AJmap}.
Set the truncated subspace of conservation laws
$$ \ol{H}^{(1,0)}_{n}=\langle\varphi_1, \varphi_2, \, ... \, \varphi_n \rangle.$$
Then the relevant filtration is
\be\label{eq:introfiltration}
 \ldots \, \subset \ol{H}^{(1,0)}_{n-1}\subset  \ol{H}^{(1,0)}_{n}\subset  \ol{H}^{(1,0)}_{n+1}\subset 
\ldots \; \subset\ol{H}^{(1,0)}_{\infty}\subset\mcc^{(\infty)},
\ee
and certain Hodge-like structure naturally emerges, Sec.~\ref{sec:abel}.
Although the analogy with variation of Hodge structures (VHS) in complex projective geometry is intriguing,
at the present the analysis is in the preparatory stage.

\subsection{Variation of periods}
There is a natural action of symmetries on conservation laws,
which turns out to be trivial except for 
the classical symmetries acting on the classical conservation laws, Cor.~\ref{cor:exactaction}.
This suggests to consider a different kind of deformation 
to obtain nontrivial variation of periods for conservation laws.

We examine the particular variation of CMC surfaces which scales the Hopf differential by a real parameter. 
The resulting first order truncated Picard-Fuchs equation, for the first few conservation laws in the sequence, 
exhibits a similarity with the Griffiths transversality theorem.

\two\noi
\tb{Conjecture.~\ref{conj:Gtransversality}.} [Griffiths transversality] 
\emph{ For the first order deformation given by the inhomogeneous Jacobi field $u=\im v_{\rho}+\im\delta v_0$, let $\psi_j, j=0,1,\, ... \,,$ be the sequence of 1-forms obtained from the variation of the higher-order conservation law  $\varphi_j$  by Eq.~\eqref{eq:dotvarphij}. Then as a characteristic cohomology class on $\Zh$ we have
\be\label{eq:introGtransversality}
[\psi_j] \in \langle [\varphi_0], [\varphi_1],\, ... \, [\varphi_{j-2}] \rangle \subset H^1(\Omega^*(\Zh)/\mcjh,\underline{\ed}).\two
\ee} 
\indent Here we denote the deformation of $\varphi_j$ by (upper-dot $( \,\dot{}\,)$ is the first order variation)
$$\dot{\varphi}_j=(2j-1)\varphi_j+\psi_j.$$
The proposed equation \eqref{eq:introGtransversality} is reminiscent of the Griffiths transversality  for VHS in the sense that in terms of the filtration \eqref{eq:introfiltration} this implies
$$\dot{ \left(  \ol{H}^{(1,0)}_{n} \right) }\subset  \ol{H}^{(1,0)}_{n-2} \mod \;\tn{scaling terms.}
$$

\subsection{Further research}
The application of these results to solving the differential equation for CMC surfaces, 
i.e., either to find a class of examples with as much control, 
or to understand the structure of the moduli space of compact high genus CMC surfaces, 
is yet to be explored.

\two
The methods of analysis developed for CMC surfaces can be applied to the following related problems.

\begin{itemize} 
\item  special Legendrian surfaces in $\ES^5$ 
\item  complex curves in $\ES^6$ 
\item  primitive harmonic maps from a surface to a general $k$-symmetric space 
\item  special Legendrian 3-folds in $\ES^7$. 
\end{itemize}

\np
\part{Classical, and higher-order conservation laws}\label{part:cvlaw}
\section{CMC surfaces in $3$-dimensional space forms}\label{sec:setup}
Let $M$ be the simply connected 3-dimensional Riemannian space form of constant curvature $\epsilon$.
Let \\
\centerline{\xymatrix{\SO(3) \ar[r] & \mathcal{F} \ar[d]^{\pi} \\ & M  }}\\

\noi be the bundle of oriented orthonormal coframes.  An element $\mathfrak{u}\in\mcf$ is an isometry 
\[\mathfrak{u}:T_{\pi(\mathfrak{u})}M\to\R^3,\]
where $\R^3$ is the fixed standard 3-dimensional Euclidean space. The structure group $\SO(3)$ acts on $\mcf$ on the right by
\[\mathfrak{u} \to g^{-1}\circ\mathfrak{u},\quad \mbox{for}\;g\in\SO(3).
\]

Let $(\omega^1,\,\omega^2,\,\omega^3)^t$ be the $\R^3$-valued tautological 1-form on $\,\mcf$ defined as follows. For a tangent vector $\tn{v}\in T_{\mathfrak{u}}\mcf$,  set
\[\mathfrak{u}(\pi_*(\tn{v}))=(\omega^1(\tn{v}),\omega^2(\tn{v}),\omega^3(\tn{v}))^t.
\]
The structure group $\SO(3)$ acts on $(\omega^1,\,\omega^2,\,\omega^3)^t$ on the right accordingly by
\[  \bp \omega^1 \\ \omega^2 \\ \omega^3\ep  \to g^{-1}  \bp \omega^1 \\ \omega^2 \\ \omega^3\ep,
\quad \mbox{for}\;g\in\SO(3).
\]
There exists a Lie algebra $\so(3)$-valued unique connection 1-form $(\omega^A_B)$ on $\mcf$ such that they satisfy the structure equation
\begin{align}\label{1strt1}
\ed \omega^A   &= -\omega^A_B \w \omega^B,\quad \omega^A_B=-\omega^B_A,\\
\ed \omega^A_B&= -\sum_{C=1}^3\omega^A_C\w \omega^C_B+ \epsilon\,\omega^A\w\omega^B, \quad A,B=1,2,3.\n
\end{align}

\one
Let $X=\mcf/\SO(2)$ be the unit tangent bundle of $M$.

\one
\centerline{\xymatrix{\s{2} \ar[r] & X \ar[d]^{\pi_0}\\ & M}}

\one
\noindent
For a constant $\delta$, define the differential ideal on $X$ generated by
\begin{equation}\label{1ideal1}
\mci=\langle\, \omega^3, \, - \ed \omega^3,\, \Psi \,\rangle
\end{equation}
where $\,- \ed \omega^3=\omega^3_1\w\omega^1+\omega^3_2\w\omega^2$ from Eq.~\eqref{1strt1}, and
$\,\Psi$ is the 2-form
\be\label{Psi}
\Psi=\omega^3_1\w\omega^2+\omega^1\w\omega^3_2-2\delta\omega^1\w\omega^2.
\ee
The structure equation \eqref{1strt1} shows that this set of differential forms, originally defined on $\mcf$, are invariant under the induced action by the structure group $\SO{(2)}$ of $X$. 
The ideal $\mci$ is well defined on $X$.

\two
Let $\underline{\x}:\Sigma\hook M$ be an immersed oriented surface. Let $\x:\Sigma\hook X$ be the lift of $\,\underline{\x}$ by the unit normal field compatible with the given orientation of $M$.  The standard moving frame computation shows that the induced metric $\underline{\rm{I}}$ and the second fundamental form $\underline{\ff}$ (corresponding to the unit normal chosen for the lift $\x$) of the surface $\underline{\x}$ are
\begin{align}
\underline{\rm{I}}&=(\omega^1)^2+(\omega^2)^2,\n\\
\underline{\ff}&=\omega^3_1\,\omega^1+\omega^3_2\,\omega^2.\label{eq:sff}
\end{align}
It follows that $\x$ is an integral manifold of $\,\mci$, i.e., $\x^*\mci=0$, when the surface $\underline{\x}$ has \tb{constant mean curvature}
$$\frac{1}{2}\tr_{\underline{\rm{I}}}(\underline{\ff})=\delta.$$
Conversely, the immersed integral surfaces of $\,\mci$ project to the possibly branched CMC surfaces in $M$. The EDS defined by the differential ideal ~\eqref{1ideal1} on the unit tangent bundle $X$ is therefore the fundamental object in the study of CMC  surfaces in the space form $M$.

\begin{rem}\label{1rem1}
When $\epsilon>0$, the space form $M\simeq \s{3}$ is parallelizable.
$X$ is diffeomorphic to $\s{3}\times\s{2}$, and the de Rham cohomology  is given by
\begin{equation}\label{1deRham1}
H^q_{dR}(X,\R)=\left\{
\begin{array}{rl}
\R  &\quad\mbox{$q=0, 2, 3, 5$} \\
0 \,&\quad\mbox{$q=1, 4.$} \\
\end{array} \right.
\end{equation}
The dual homology $2$-cycle is generated by any  fiber of $\pi_0: X\to M$. The dual homology  $3$-cycle is  generated by any  section of $\pi_0$ which is a unit vector field on $M$.
It can for example be given by an integral manifold of the Frobenius system
$\,\langle \,\omega^3_1+\omega^2,\, \omega^2_3+\omega^1 \, \rangle$  on $X$.

When $\epsilon \leq 0$, $\,M$ is contractible. $X$ is  diffeomorphic to $\,\R^3\times \s{2}$.
The de Rham cohomology is given by
\begin{equation}\label{1deRham0}
H^q_{dR}(X,\R)=\left\{
\begin{array}{rl}
\R    &\quad\mbox{$q=0, 2$} \\
0  \, &\quad\mbox{$q=1, 3, 4, 5.$} \\
\end{array} \right.
\end{equation}
\end{rem}

\two
The EDS for CMC surfaces is an elliptic Monge-Ampere system.  The ellipticity implies that the characteristic directions are complex and they induce a complex structure on the integral surfaces.  To this end, we introduce the complexified differential forms on $\mcf$ as follows. Here $\,\im=\sqrt{-1}$.
\begin{align}\label{1complex1}
\xi&=\omega^1+\im\,\omega^2,\\
\theta_0&=\omega^3, \n \\
\eta_1&=(\omega^3_1 -\im\,\omega^3_2) +\delta\,\xib, \n \\
\rho&=\omega^1_2.\n
\end{align}
Define the dual frame on $\mcf$ by
\be\label{dualframing}
\bp \bE\\ \bn \\ E_{\eta} \\E_{\rho} \ep
\iff
\bp \xi \\ \theta_0 \\ \eta_1 \\ \rho \ep.
\ee

In terms of the complex 1-forms,  the structure equation \eqref{1strt1} translates to
\begin{align}\label{eq:strt2}
\ed \xi &= \im \rho \w \xi -\theta_0\w (\etab_1+\delta \xi),\\
\ed \theta_0&=-\mbox{Re} (\eta_1\w\xi), \n\\
\ed \eta_1 &=-\im\rho\w\eta_1+\theta_0\w(\delta\,\eta_1+\gamma^2\,\xib),\n \\
\ed \rho&=-\frac{\im}{2}\eta_1\w\etab_1+\gamma^2\frac{\im}{2}\xi\w\xib+\delta\,\Psi. \n
\end{align}
Here 
$$\gamma^2=\epsilon+\delta^2$$ is the structure constant.\footnotemark\footnotetext{We adopt the following convention for $\gamma$:
\begin{equation}\label{1deRham1}
\gamma=\left\{
\begin{array}{cl}
+\sqrt{\epsilon+\delta^2} & \mbox{\qquad$\,\epsilon+\delta^2>0$} \\
0 & \mbox{if \quad$\,\epsilon+\delta^2=0$} \\
+\im \sqrt{-(\epsilon+\delta^2)} & \mbox{\qquad $\,\epsilon+\delta^2<0$.}
\end{array} \right.\n
\end{equation}} The 2-form $\Psi$ in the ideal $\mci$ is expressed in terms of the complex 1-forms by
\[\Psi=\Im (\eta_1\w\xi).
\]
The ideal \eqref{1ideal1} is now written as
\be\label{1ideal2}
\mci=\langle \,\theta_0,  \, \eta_1\w\xi, \, \etab_1\w\xib  \,\rangle.
\ee
We record the useful formula
\be\label{1dPsi}
\ed \Psi= \im\,\theta_0 \w (\eta_1\w\etab_1 + \gamma^2\,\xi\w\xib).
\ee

\two
Let us introduce a few notations and conventions which will be frequently used. 
Define the $\C$-linear operator $\JAI$ acting on 1-forms by
\be\label{1JAI}
\JAI\xi=-\im\xi,\; \JAI\theta_0=\theta_0,\; \JAI\eta_1=-\im \eta_1.
\ee
For a scalar function $f\in C^{\infty}(X)$, the covariant derivative (when pulled back to $\mcf$)
is written in the upper-index notation\footnotemark\footnotetext{The lower index notations $f_{\xi},\,f_{\xib}$ are reserved for the derivatives modulo the \emph{infinitely prolonged ideal} $\I{\infty}$. See Sec.~\ref{sec:prolongation1} for the details.}
$$\ed f=f^{\xi}\xi+f^{\xib}\xib+f^0\theta_0+f^1\eta_1+f^{\bar{1}}\etab_1.$$
Define the first order differential operators $\del,\,\delb$  by
$$\del f=f^{\xi}\xi+f^1\eta_1, \quad  \delb f=f^{\xib}\xib+f^{\bar{1}}\etab_1.$$

The second order covariant derivatives of a scalar function $f$ satisfy the following commutation
relations, which are drawn from the identity $\,\ed^2 f\equiv0\mod \theta_0$.
\be\label{1commut}\begin{array}{rlrrrlrl}
f^{\xi,\xib}&=f^{\xib,\xi}, &\;\;& f^{\xi, \bar{1}}&=f^{\bar{1}, \xi},&\;\;&
f^{\xi ,1}        &=f^{1, \xi}+\frac{1}{2}f^0,\\
f^{1 ,\bar{1}}&=f^{\bar{1}, 1}, &\;\;&  f^{\xib ,1}&=f^{1, \xib}, &\;\;
&  f^{\xib,\bar{1}}&=f^{\bar{1},\xib}+\frac{1}{2}f^0.  \end{array}
\ee
When $f^0=0$, we have the extra relations
\be\label{1extracommut}\begin{array}{rlrrl}
f^{1,0}& =f^{\xib}-\delta f^1,     &\;\;& f^{\xi,0}     &=\delta f^{\xi}-\gamma^2 f^{\bar{1} },\\
f^{\bar{1},0}&=f^{\xi} -\delta f^{\bar{1}}, &\;\;&  f^{\xib,0}&=\delta f^{\xib}-\gamma^2 f^{1 }.  \end{array}
\ee

\begin{exam}[Polar surfaces]
We give a brief analysis of the integral surfaces of $\mci$ in the case  $\epsilon=1$ and the ambient space form $M$ is the 3-sphere $\s{3}$.

When $M=\s{3}$,  the oriented orthonormal coframe bundle $\mcf$ is naturally identified with the special orthogonal group $\SO(4)$. For definiteness, we consider an element $\tn{e}\in\SO(4)$ as an ordered set of orthonormal column vectors
$$\tn{e}=(e_0,\,e_1,\,e_2,\,e_3).$$
The associated projection maps are\\
\centerline{\xymatrix@R=5mm{\quad\quad\quad \SO(4) \quad\, \ni \tn{e} \ar[d] \\ 
\qquad \qquad  \;\; X  \quad \ni (e_0, e_3) \ar[d]^{\pi_0}  \\
\quad\quad \; \;\quad\;\; M \qquad   \ni e_0.}}\\

\noindent
The components of $\tn{e}$ satisfy the structure equation
$$\ed e_A =e_B \omega^A_B,$$
where we set $\omega^A_0=\omega^A$.

Let $E_1=\frac{1}{2}(e_1-\im e_2),\, E_{\bar{1}}=\ol{E}_1$, and denote $e_0=\pi_0,\, e_3=\bn$.
In terms of the complexified 1-forms \eqref{1complex1}, the structure equation for the frame $\tn{e}$ translates to
\begin{align}\label{eq:3spherecase}
\ed  \pi_0 &=  \left( E_1 \xi + E_{\bar{1}} \xib \right)+ \bn \theta_0,\\
\ed  E_1 &= -\im E_1  \rho + \frac{1}{2} \bn \eta_1+\frac{1}{2}(\delta  \bn- \pi_0) \xib_1,\n\\
\ed \bn &= - \pi_0 \theta_0 - \left( E_1 (\delta \xi+\etab_1) + E_{\bar{1}} (\delta \xib +\eta_1) \right).\n
\end{align}

Let $\x: \Sigma \hook X$ be an immersed integral surface of $\mci$, and consider the pulled back
$\SO(2)$-bundle $\x^*\SO(4)\to\Sigma$. Assume first the independence condition $\xi\w\xib\ne0$.
The equation $\eta_1\w\xi=0$ implies that 
$$\eta_1=h_2\xi$$ 
for a coefficient $h_2$.
In terms of the induced complex structure on the surface determined by the $(1,0)$-form $\xi$,
see Sec.~\ref{sec:prolongation1}, a computation using Eq.~\eqref{eq:strt2} with $\theta_0=0$ leads to
\begin{align}\label{eq:3spherepair}
\ed *\ed \pi_0 &= 2(\delta \bn - \pi_0) \frac{\im}{2} \xi \w \xib, \\
\ed *\ed \bn &= 2 (\delta \pi_0-(\delta^2+|h_2|^2) \bn) \frac{\im}{2} \xi \w \xib.  \n
\end{align}

On the other hand, assuming the independence condition $\eta_1\w\etab_1\ne 0$,
the substitution $\xi = p_2 \eta_1$ for a coefficient $p_2=\frac{1}{h_2}$ gives
\be\label{3spheredual}
\ed *\ed \bn = 2 \left( \delta |p_2|^2 \pi_0-(\delta^2|p_2|^2+1) \bn \right) \frac{\im}{2} \eta_1 \w \etab_1.
\ee
It follows that when the mean curvature $\delta=0$, 
one obtains a dual pair of (possibly branched) minimal surfaces in $\s{3}$ from an integral surface of $\mci$ in $X$. The minimal surface described by $\bn$ is called the \tb{polar surface} of $\pi_0$ \cite{Lawson1970}. Note that the adapted oriented frame for the minimal surface $\bn$ is $(\bn, \, E_{\bar{1}},\,E_1, \pi_0)$.
\end{exam}
We now turn our attention to Hopf differential, a holomorphic quadratic differential canonically attached to  a CMC surface. Let $\x: \Sigma \hook X$ be an immersed integral surface of $\mci$. The structure equation \eqref{eq:strt2} restricted to the pulled back $\SO(2)$-bundle $\x^*\mcf\to\Sigma$ shows that $\,\Sigma$ inherits a unique complex structure defined by a section of either $\xi$, or $\eta_1$. Let $K=T^{*(1,0)}\to\Sigma$ be the canonical line bundle of $(1,0)$-forms of the underlying Riemann surface.
\begin{defn}\label{Hopf}
Let $\x: \Sigma \hook X$ be an immersed integral surface of the EDS for CMC surfaces.  Consider Eq.~\eqref{eq:strt2} mod $\mci$ as the induced structure equation on $\x^*\mcf$. The \tb{Hopf differential} of $\x$  is the quadratic differential
\begin{equation}\label{eq:Hopf}
\ff=\eta_1 \circ \xi  \in H^0(\Sigma, K^2).
\end{equation}
The structure equation ~\eqref{eq:strt2} shows that $\ff$ is a well defined holomorphic section.  

The \tb{umbilic divisor} $$\mcu_{\x} =(\ff)_0$$ is the zero divisor of $\ff$. By Riemann-Roch, we have $\tn{deg}(\mcu_{\x})=4\, \tn{genus}(\Sigma)-4.$
\end{defn}

We will find in the course of analysis that the (possibly double-valued) holomorphic 1-form
$\omega=\sqrt{\ff}$ is a very useful object.
Let us introduce the double cover of $\Sigma$ on which $\omega$ is well-defined.
\begin{defn}\label{defn:doublecover}
Let $\x:\Sigma\hook X$ be an immersed integral surface of the EDS for CMC surfaces.
Let $K\to\Sigma$ be the canonical line bundle under the induced complex structure. Let $\ff\in H^0(\Sigma, K^2)$ be the Hopf differential \eqref{eq:Hopf}.
The \tb{double cover} $\Sigmah$ of $\Sigma$ associated with $\ff$ is the Riemann surface of the complex curve
\[
\Sigma'= \left\{  \kappa  \in K\; \vert \; \kappa^2=\ff \right\} \subset K.
\]
Here we identify the section $\ff$ with its image in the line bundle $K^2$.
The \tb{square root $\omega$} of the Hopf differential $\ff$ is the holomorphic 1-form on $\Sigmah$ obtained by the pull-back of the restriction of the tautological 1-form on $K$ to $\Sigma'$.
\end{defn}
By construction, the natural projection $\nu:\Sigmah \to \Sigma$ is a double cover branched over the odd degree umbilics  in $\,\mcu_{\x}$.\footnotemark
\footnotetext{Let $\mcu_{\x}=\sum_{p\in\Sigma} m(p) p \in Div(\Sigma)$ be the umbilic divisor. The integer $m(p)$ is the umbilic degree of the point $p$. We have $\tn{deg}(\mcu_{\x})=\sum_{p\in\Sigma} m(p)=4\, \tn{genus}(\Sigma)-4.$
}
It is clear from the defining property of the tautological 1-form on $K$ that
$$\omega^2=\nu^*\ff.$$
\begin{defn}\label{exact}
An immersed integral surface  $\x:\Sigma\hook X$ of the EDS for CMC surfaces is \tb{split type} when there exists a holomorphic 1-form $\underline{\omega}$ on $\Sigma$ which is a square root of the Hopf differential $\ff$;
$$\underline{\omega}^2=\ff.$$
In this case, the double cover $\Sigmah$ consists of the two copies of $\Sigma$
corresponding to the sections $\pm\underline{\omega}$ of $K$.
\end{defn}
Note by definition of the square root $\omega$ that for a split type integral surface we have $\omega=\nu^*\underline{\omega}$.

\one
The square root $\omega$ on the double cover $\Sigmah$ is a holomorphic 1-form, and is necessarily closed.  We will see that the conservation laws for CMC surfaces lead to the $1$-forms which are closed on the integral surfaces of $\mci$, and $\omega$ provides the first example of a conservation law that we encounter which is not induced by the ambient symmetry of $X$.  It is semi-classical:  on the one hand we have not needed to prolong the natural EDS for CMC surfaces; on the other hand it only appears in normal form at the second prolongation.

A generic CMC surface is not split type, but the square root $\omega$ is defined globally on the double cover.  We will interpret the double cover associated with the Hopf differential as a base for a field extension. In a sense to be made precise below, this defines a splitting field for the linearized Jacobi equation for CMC surfaces.

\section{Classical conservation laws and Jacobi fields}\label{sec:classicallaws}
Given the differential ideal $\mci$ on the manifold $X$, it is natural to consider the cohomology of the quotient complex
\[(\underline{\Omega}^*,\, \underline{\ed}),\]
where $\underline{\Omega}^*=\Omega^*(X)/\mci$ and  $\underline{\ed}=\ed\mod \mci$. A \tb{classical conservation law} for CMC surfaces, as opposed to the higher-order generalization on the infinite prolongation to be discussed in Sec.~\ref{sec:highercvlaws1}, is by definition an element of the 1-st cohomology of the quotient complex
\[ \mcc^{(0)}= \mcc^{0}:=H^1(\underline{\Omega}^*,\, \underline{\ed}).\]
In this section we give an analytic description of the classical conservation laws  and show that they are equivalent to Kusner's momentum class defined in terms of the Killing fields of the ambient space form $M$  \cite{Kusner1991}. This serves as a motivation and a model for the study of higher-order conservation laws. Moreover, in hindsight  a recursion relation to be explored for the higher-order symmetries and conservation laws is already implicit in the structure equation for the classical conservation laws, Sec.~\ref{sec:resolution}.

\two
For computational purposes, among other practical reasons, we shall work directly with the differentiated classical conservation laws defined as follows. Consider the $\ed$-invariant filtration by the subspaces
\be\label{eq:filtration}
 F^p\Omega^q(X)=\tn{Image} \{ \, \underbrace{\mci\w\mci\w\,...\,\mci}_p\w\Omega^{*}(X) \to \Omega^{q}(X) \}.
\ee
For each fixed $p\geq0$, we have a well defined complex
\be\label{eq:Fpcomplex}
(F^p\Omega^*(X), \,\ed).
\ee
Let $H^{p,q}(F^p\Omega^*(X),\,\ed)$ denote the associated cohomology at $F^p\Omega^q(X)$.

Let $\Gr^{p,q}(X)=F^p\Omega^q(X)/F^{p+1}\Omega^q(X)$ be the associated graded, and consider the complex
\[ (\Gr^{p,*}(X), \,\ed^p),\]
where $\ed^p=\ed\mod F^{p+1}\Omega^*(X)$.
Let $H^{p,q}(X, \ed^p)$ denote the associated cohomology at $\Gr^{p,q}(X)$.

By definition, we have the isomorphism
\be\label{eq:Grcohomology}
 H^1(\underline{\Omega}^*,\, \underline{\ed})=H^{0,1}(X,  \ed^0).
\ee
The exterior derivative defines a map
\be\label{eq:edmap}
\ed: H^{0,1}(X, \ed^0) \to  H^{1,2}(F^1\Omega^*(X),\,\ed).
\ee
According to the fundamental results established in \cite{Bryant1995}, this is an isomorphism when one restricts to a small contractible open subset of $X$. We shall postpone the global aspects for the moment and work directly with the cohomology $ H^{1,2}(F^1\Omega^*(X),\,\ed)$, which is known as the space of differentiated conservation laws.
\begin{defn}
Let $(X,\mci)$ be the EDS for CMC surfaces. Let \eqref{eq:Fpcomplex} be the associated complex under the filtration defined by \eqref{eq:filtration}.  A \tb{classical differentiated conservation law} is an element of the $2$-nd cohomology
\[   \mch^{(0)}=\mch^0:= H^{1,2}(F^1\Omega^*(X),\,\ed)=H^{2}(\mci,\,\ed).\]
\end{defn}
The filtered object $F^1\Omega^2(X)\subset\Omega^2(X)$ is a subspace of 2-forms, rather than a quotient space in the definition of un-differentiated conservation law. As we will see below, there exists a proper subspace of $F^1\Omega^2(X)$ in which every classical conservation law has a unique representative.

\two
We proceed to the computation of classical differentiated conservation laws.\footnotemark
\footnotetext{The differential analysis is carried out on the $\SO(2)$-bundle $\mcf\to X$ via pull-back.}
Let $\Phi\in F^1\Omega^2(X)$ be a 2-form in the ideal.  Up to addition by $\ed(F^1\Omega^1(X))$, an exact 2-form in the ideal of the form $\ed(f\theta_0), \,f\in C^{\infty}(X)$, one may write
\be\label{eq:H0}
\Phi=A \Psi + \theta_0\w\sigma, \quad A\in C^{\infty}(X),\, \sigma\in\Omega^1(X).
\ee
\begin{defn}
Let $(X,\mci)$ be the EDS for CMC surfaces. Let \eqref{eq:filtration} define the associated filtration. The subspace of \tb{reduced 2-forms} $H^{(0)}\subset F^1\Omega^2(X)$ is the subspace of  2-forms specified in Eq.~\eqref{eq:H0}.
\end{defn}
Note by construction that $H^{(0)}$ is transversal to the subspace $\ed(F^1\Omega^1(X))$ in $F^1\Omega^2(X)$. It follows that every classical conservation law has a unique representative in $H^{(0)}$, and we have the isomorphism
$$ \mch^{(0)} \simeq \{ \,\tn{closed 2-forms in }\, H^{(0)} \, \}.
$$

\two
The condition for a reduced 2-form $\Phi$ to be a conservation law is that it is closed, $\ed \Phi=0$.
From Eqs.~\eqref{eq:strt2}, \eqref{1dPsi}, the equation $\ed\Phi \equiv0\mod\theta_0$ gives
$$-\im \ed A\w (\eta_1\w\xi-\etab_1\w\xib)\equiv\sigma\w (\eta_1\w\xi+\etab_1\w\xib) \mod
\;\;\theta_0.$$
One may thus put
\be\label{1sigma}
\sigma=-\JAI \ed A.
\ee
Here $\JAI$ is the operator defined in \eqref{1JAI}.

We continue to examine the equation $\ed \Phi=0$ with the given $\sigma.$
The equation $\,\ed\Phi+\im\,\theta_0\w \ed(\ed(A))\equiv 0\mod\etab_1,\xib$ gives $A^0=0$.
Utilizing the relations \eqref{1commut}, \eqref{1extracommut}, one gets  ('$\cdot$' denote  0)
\be\label{eq:classicalJacobi}
\ed \begin{pmatrix} A\\A^1\\A^{\bar{1}}\\A^{\xi}\\A^{\xib} \end{pmatrix} =
\im \begin{pmatrix} \cdot \\A^1\\-A^{\bar{1}}\\-A^{\xi}\\A^{\xib}\end{pmatrix}\rho
+\begin{pmatrix}
A^{\xi}& A^{\xib}&\cdot&  A^1& A^{\bar{1}}\\
A^{1,\xi}&\cdot&  ( A^{\xib}-\delta A^{{1}}  )&
A^{{1,1}}&-\frac{1}{2}A\\
\cdot& A^{{\bar{1},\xib}}&  ( A^{{\xi}}-\delta A^{{\bar{1}}}  ) &
-\frac{1}{2}A & A^{\bar{1},\bar{1}}\\
A^{\xi,\xi}&-\frac{\gamma^2}{2}A&
 ( \delta A^{\xi}-\gamma^2 A^{{\bar{1}}}) &
 A^{1,\xi}&\cdot\\
-\frac{\gamma^2}{2}A &
 A^{\xib,\xib}&
  ( \delta A^{\xib}-\gamma^2 A^{1})& \cdot&
 A^{\bar{1},\xib}  \end{pmatrix}
\begin{pmatrix} \xi \\ \xib \\ \theta_0 \\ \eta_1 \\ \etab_1 \end{pmatrix}.
\ee

Denote $\,A^{1,\xi}-A^{\bar{1},\xib}=B\,$ for a new variable $B$. From the equations $\ed(\ed(A^1))\equiv 0,\,\ed(\ed(A^{\xi}))\equiv 0 \mod \eta_1, \xi$, and  similarly $\ed(\ed(A^{\bar{1}}))\equiv 0,\,\ed(\ed(A^{\xib}))\equiv 0 \mod \etab_1, \xib$, one gets
$$A^{\xib,\xib}=\gamma^2 A^{1,1},\; A^{\xi,\xi}=\gamma^2 A^{\bar{1},\bar{1}},\;
A^{1,\xi}+A^{\bar{1},\xib}=-\delta A.$$
The remaining un-determined second derivatives of $A$ at this stage are $\,B, \,A^{1,1},A^{\bar{1},\bar{1}}.$

Differentiating again, the equations $\ed(\ed(A^1))\equiv 0\mod \eta_1,\, \ed(\ed(A^{\bar{1}}))\equiv 0 \mod \etab_1$ imply
$$\ed B=(-\delta A^{\xi}+\gamma^2 A^{\bar{1}})\xi+(\delta A^{\xib}-\gamma^2 A^{1})\xib
+(A^{\xib}-\delta A^1)\eta_1+(-A^{\xi}+\delta A^{\bar{1}})\etab_1.$$
The analysis is divided here into two cases.

\two
$\bullet$ Case $\gamma^2=\epsilon+\delta^2\ne0.$
The identities from $\ed(\ed B)=0$ imply $A^{1,1}=A^{\bar{1},\bar{1}}=0.$ The linear differential system for the six variables $\{A,A^1,A^{\bar{1}},A^{\xi},A^{\xib}, B\}$ becomes closed and compatible, i.e., their derivatives are expressed as functions of themselves alone and $\ed^2=0$ is a formal identity. By the existence and uniqueness theorem of ODE,  the space of  (local and differentiated) classical conservation laws is six dimensional. We shall give an argument below that these conservation laws, both differentiated and un-differentiated, are globally defined on $X$.

\one
$\bullet$ Case $\gamma^2=\epsilon+\delta^2=0.$
The equations $\ed(\ed B)=0$, and $\ed(\ed A^{\xi})=0,\,\ed(\ed A^{\xib})=0$ are identities. The remaining compatibility equations for $\ed A^1,\,\ed A^{\bar{1}}$ show that the linear differential system for  the six variables $\{A,A^1,A^{\bar{1}},A^{\xi},A^{\xib}, B\}$ is involutive.\footnotemark
\footnotetext{A PDE is involutive when the associated generic nested sequence of infinitesimal Cauchy problems
is well posed as a whole. It is one of the central concepts in Cartan's theory of exterior differential systems.
The Cartan-K\"ahler theorem, a geometric generalization of the Cauchy-Kowalewski theorem,
states that for an involutive PDE the generic nested sequence of actual Cauchy problems is solvable
and it admits many local solutions in the real analytic category. See \cite{Bryant1991}\cite{Ivey2003}.}
In the sense of Cartan-K\"ahler theory, locally it admits solutions depending on two arbitrary functions of 1 variable. The space of (local and differentiated) classical  conservation laws is infinite dimensional.

The CMC surfaces in this case are, away from the umbilics, locally described by 
the elliptic Liouville equation $\Delta u+ e^{u}=0.$
It is well known that this equation admits a Weierstra\ss\,-type representation formula for solutions in terms of one arbitrary holomorphic function of 1 complex variable. Up to the contribution from the ambient symmetry, the classical conservation laws are equivalent to the Cauchy integral formula. This case includes the minimal surfaces in the Euclidean space,  and the CMC-1 surfaces in hyperbolic space (Bryant's surfaces, \cite{Bryant1987}).

\two
For the rest of the paper, we will restrict ourselves to the following non-degenerate case:
\begin{center}
\fbox{\quad$\gamma^2\ne 0$\quad}
\end{center}
\one
The analysis for the case $\gamma^2=0$ is, although related, different in many aspects and
deserves a separate treatment.

\two
Motivated by the structure equation \eqref{eq:classicalJacobi} satisfied by the coefficients of the classical differentiated conservation laws, we make the following definition of second order differential operators for scalar functions.
\begin{defn}\label{e0operators}
For a scalar function $f:X\to\C$, define the pair of second order linear differential operators by
\begin{align}
\mce^0_H(f)&:=f^{\xi, \xib} +\frac{\gamma^2}{2} f, \label{E0H}  \\
\mce^0_V(f)&:=f^{1,\bar{1}} +\frac{1}{2} f.  \label{E0V}
\end{align}
\end{defn}
The subscripts $H$ and $V$ indicate horizontal and vertical. Note that
\be
\ed \JAI (\ed f) \equiv   \left(4\mce^0_H(f)- 2\gamma^2 f \right) \frac{\im}{2}\xi \w \xib
+ \left(4\mce^0_V(f) -2 f \right) \frac{\im}{2} \eta_1 \w \etab_1\mod\theta_0,\,\ed \theta_0.
\ee
\begin{defn}\label{defn:classicalJacobifield}
A scalar function $A:X\to\C$ is a \tb{classical Jacobi field} if it satisfies
\begin{align}\label{eq:classicalJacobifield}
A^0=A^{1,\xib}=A^{\bar{1},\xi}&=0,\\
\mce^0_H(A)&=0, \n\\
\mce^0_V(A)&=0. \n
\end{align}
The vector space of classical Jacobi fields is denoted by $\mathfrak{J}^{(0)}=\mathfrak{J}^0$.
\end{defn}

The analysis for classical conservation laws in this section is summarized as follows.
\begin{prop}\label{prop:classicalJacobifield}
A classical Jacobi field $A$ satisfies the closed structure equation \eqref{eq:classicalJacobi}. There exists a canonical (local) isomorphism
\[ \mathfrak{J}^{(0)}\simeq \mch^{(0)}.\]
\end{prop}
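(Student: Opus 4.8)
The plan is to establish the isomorphism $\mathfrak{J}^{(0)}\simeq\mch^{(0)}$ by exhibiting an explicit correspondence that assigns to each classical Jacobi field $A$ the reduced 2-form $\Phi_A = A\Psi + \theta_0\wedge\sigma$ with $\sigma=-\JAI\,\ed A$, and then showing this map is well defined, injective, and surjective. The starting point is the analysis already carried out in this section: a reduced 2-form $\Phi$ is a conservation law exactly when $\ed\Phi=0$, and running through the consequences of that closure condition produces precisely the system \eqref{eq:classicalJacobi} together with the constraints $A^0=0$ and $A^{1,\xib}=A^{\bar 1,\xi}=0$. So the first step is to verify that these closure conditions are equivalent to the defining equations \eqref{eq:classicalJacobifield} of a classical Jacobi field. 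The conditions $A^0=0$ and the off-diagonal second-derivative vanishing appear directly; the content is to check that the two remaining compatibility relations extracted from $\ed\Phi=0$ amount to $\mce^0_H(A)=0$ and $\mce^0_V(A)=0$, using the operators defined in \eqref{E0H} and \eqref{E0V} together with the commutation relations \eqref{1commut} and \eqref{1extracommut}.

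Concretely I would proceed as follows. First, given a classical Jacobi field $A$, form $\Phi_A$ and verify it is closed: this is where Prop.~\ref{prop:classicalJacobifield}'s first assertion comes in, namely that $A$ satisfies the closed structure equation \eqref{eq:classicalJacobi}. The derivation in the text shows that once $A^{1,1}=A^{\bar 1,\bar 1}=0$ (forced in the case $\gamma^2\ne 0$ from $\ed^2 B=0$) the six-variable system $\{A,A^1,A^{\bar 1},A^\xi,A^{\xib},B\}$ closes up and $\ed^2=0$ becomes a formal identity. Thus I would show that the defining equations of $\mathfrak{J}^{(0)}$ are exactly the integrability conditions that make \eqref{eq:classicalJacobi} a consistent closed system, whence $\ed\Phi_A=0$ and $\Phi_A$ represents a class in $\mch^{(0)}$. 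Conversely, given any closed reduced 2-form, the text already shows it must have the normal form \eqref{eq:H0} with $\sigma=-\JAI\,\ed A$ and that its coefficient $A$ satisfies \eqref{eq:classicalJacobi}, hence is a classical Jacobi field.

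For injectivity, I would note that the correspondence $A\mapsto\Phi_A$ reconstructs $A$ as the coefficient of $\Psi$ in the reduced representative, so the map is clearly one-to-one on the level of reduced 2-forms; since every class in $\mch^{(0)}$ has a unique representative in $H^{(0)}$ (the transversality of $H^{(0)}$ to $\ed(F^1\Omega^1(X))$ established just before the definition of reduced 2-forms), injectivity into cohomology follows. Surjectivity is immediate from the converse direction: every closed reduced 2-form arises as $\Phi_A$ for a classical Jacobi field $A$. Assembling these gives the bijective linear map $\mathfrak{J}^{(0)}\simeq\{\text{closed 2-forms in }H^{(0)}\}\simeq\mch^{(0)}$.

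The main obstacle I anticipate is purely computational rather than conceptual: carefully translating the closure condition $\ed\Phi=0$ into the operator equations $\mce^0_H(A)=\mce^0_V(A)=0$. One must differentiate $\Phi_A=A\Psi-\theta_0\wedge\JAI\,\ed A$ using \eqref{eq:strt2} and \eqref{1dPsi}, reduce modulo $\theta_0$ and then track the $\eta_1\wedge\etab_1$ and $\xi\wedge\xib$ components, invoking the commutation relations \eqref{1commut} and \eqref{1extracommut} (valid once $A^0=0$) to eliminate the ambiguity in mixed second derivatives. The emphasis on ``$\mathrm{local}$'' isomorphism signals that I should phrase the result on a contractible neighborhood, leaving the global question of whether these conservation laws extend over all of $X$ to the separate argument the text promises; so I would not attempt global statements here beyond recording that the six-dimensional solution space is obtained from the existence-uniqueness theorem for the closed ODE system.
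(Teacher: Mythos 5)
Your proposal is correct and takes essentially the same route as the paper: the paper's own proof is just ``direct computation,'' deferring to the section's preceding derivation in which closure of a reduced 2-form $\Phi=A\Psi+\theta_0\w\sigma$ forces $\sigma=-\JAI\,\ed A$ and the closed six-variable system \eqref{eq:classicalJacobi}, which for $\gamma^2\ne 0$ is exactly the system characterizing classical Jacobi fields. Your explicit treatment of the map $A\mapsto\Phi_A$, with injectivity via uniqueness of reduced representatives in $H^{(0)}$ and surjectivity via the normal-form analysis, is precisely the detail the paper omits, and your deferral of the global statement matches the paper's handling as well.
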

\begin{proof}
Direct computation. We omit the details. The analysis in the below shows that both classical Jacobi fields and classical conservation laws are globally defined, and this isomorphism holds globally on $X$.
\end{proof}
Eq.~\eqref{1sigma} shows that a classical Jacobi field uniquely determines a classical conservation law;
the coefficients of the 2-form $\,\Phi$ are determined by the Jacobi field $\,A$  and its derivatives. A version of this phenomenon will continue to hold for the higher-order conservation laws,  Sec.~\ref{sec:highercvlaws1}.
\begin{rem}\label{rem:h2introduce}
The set of equations \eqref{eq:classicalJacobifield} can be more compactly written as follows.
Let $\eta_1=h_2\xi,\,\etab_1=\hb_2\xib$ for coefficients $h_2,\,\hb_2$ on an integral surface ($h_2,\,\hb_2$ are the prolongation variables. See Sec.~\ref{sec:prolongation1}). Consider the differential equation for a scalar function $A\in C^{\infty}(X)$,
\be\label{eq:classicalJacobiequation}
 \mce(A):=A_{\xi,\xib}+\frac{1}{2}(\gamma^2+h_2\hb_2)A=0.
\ee
Here the lower index notation $A_{\xi,\xib}$ represents the total derivative with respect to $\xi,\xib$ on the integral surfaces.

One computes (since $\delxb h_2=0$, Sec.~\ref{sec:prolongation1})
\begin{align}
A_{\xi}&=A^{\xi}+h_2 A^1, \n \\
\mce(A)&
=A^{\xi,\xib}+\hb_2A^{\xi,\bar{1}}+h_2A^{1,\xib}+h_2\hb_2A^{1,\bar{1}}+\frac{1}{2}(\gamma^2+h_2\hb_2)A. \label{eq:expandedJacobi}
\end{align}
If a function $A$ on $X$ satisfies Eq.~\eqref{eq:classicalJacobiequation} for any integral surface of $\mci$, then it must satisfy Eq.~\eqref{eq:expandedJacobi}. Since the EDS for CMC surfaces is involutive, this implies that each coefficient of $h_2,\,\hb_2,\,h_2\hb_2$ in \eqref{eq:expandedJacobi} must vanish separately.  This  shows that all the conditions in Eq.~\eqref{eq:classicalJacobifield} except $A^0=0$ are imposed. Requiring that $\ed^2=0$ then forces $A^0=0$. Hence a classical Jacobi field can be equivalently defined by the single equation  \eqref{eq:classicalJacobiequation} involving the total derivative $A_{\xi,\xib}$ and the prolongation variables $h_2, \hb_2$.
\end{rem}

\one
A geometric concept closely related to, and somewhat dual to, conservation law is symmetry.
Let us give a definition of classical symmetry, which turns out to be equivalent to classical Jacobi field. 
\begin{defn}
Let $(X,\,\mci)$ be the EDS for CMC surfaces. A vector field $V \in  H^0(TX)$ is a \tb{classical symmetry} if it preserves the differential ideal \eqref{1ideal2} under the Lie derivative,   
$$\mcl_V \mci \subset \mci.$$ 
The algebra of classical symmetries is denoted by $\mathfrak{S}^0.$
\end{defn}
\begin{prop}\label{prop:classicalsymmetrytocv}
There exists a canonical isomorphism
$$ \mathfrak{S}^{0}\simeq \Cv{(0)}.$$
\end{prop}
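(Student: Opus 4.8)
The plan is to establish the isomorphism $\mathfrak{S}^{0}\simeq \Cv{(0)}$ by passing through the already-established identification $\mathfrak{J}^{(0)}\simeq \mch^{(0)}=\Cv{(0)}$ from Prop.~\ref{prop:classicalJacobifield}. Since a classical symmetry is a vector field $V\in H^0(TX)$ with $\mcl_V\mci\subset\mci$, while a classical Jacobi field is a scalar function solving Eq.~\eqref{eq:classicalJacobifield}, the real content is to produce a canonical bijection between these two data. First I would recall that the ideal is $\mci=\langle\theta_0,\eta_1\w\xi,\etab_1\w\xib\rangle$, and expand $V$ in the coframe dual to \eqref{dualframing}, writing $V=v^{\bE}\bE+v^{\bn}E_{\eta}+v^{\rho}E_{\rho}+\dots$ in terms of the complex frame. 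The symmetry condition $\mcl_V\mci\subset\mci$ must then be imposed component-by-component using Cartan's formula $\mcl_V=\ed\circ\iota_V+\iota_V\circ\ed$ together with the structure equations \eqref{eq:strt2} and the expression \eqref{1dPsi} for $\ed\Psi$.

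The key computational step is that $\mcl_V\theta_0\in\mci$ forces $\theta_0(V)=:A$ to be the distinguished scalar, and the higher conditions $\mcl_V(\eta_1\w\xi)\in\mci$ recover precisely the constraints in Def.~\ref{defn:classicalJacobifield}. Concretely, I would set $A=\theta_0(V)$ and show that $\mcl_V\theta_0\equiv 0\mod\mci$ expresses the remaining components $v^{\bE},v^{\bn},v^{\rho}$ in terms of $A$ and its covariant derivatives $A^{\xi},A^1$, etc., exactly mirroring how Eq.~\eqref{1sigma} determines the 2-form $\Phi$ from $A$. The point is that a symmetry, being transverse to the Cartan distribution only through its $\theta_0$-component, is rigidly reconstructed from $A$; the conditions that this reconstruction be consistent (i.e. that $\mcl_V$ genuinely preserves the ideal and not merely $\theta_0$) are the operator equations $\mce^0_H(A)=\mce^0_V(A)=0$ together with $A^0=A^{1,\xib}=A^{\bar 1,\xi}=0$. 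This makes the correspondence $V\mapsto \theta_0(V)$ a well-defined map $\mathfrak{S}^0\to\mathfrak{J}^{(0)}$, and the reconstruction shows it is injective with image all of $\mathfrak{J}^{(0)}$; composing with the isomorphism $\mathfrak{J}^{(0)}\simeq\Cv{(0)}$ yields the claim.

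There is a subtlety I would address: two vector fields differing by a section of the Cartan distribution $\hat T^h=(\mci)^\perp$ (equivalently by a field tangent to the integral surfaces with $\theta_0$-component zero) induce the \emph{same} infinitesimal action on $\mci$ modulo $\mci$, so $\mathfrak{S}^0$ should be understood as an algebra of vector fields modulo this ambiguity, matching the remark in the summary that a symmetry is ``a section of the quotient bundle $T\xinfh/\hat T^h$, or simply a vector field.'' I would verify that the map $V\mapsto\theta_0(V)$ descends to this quotient and that the reconstruction of $V$ from $A$ is canonical only up to such distribution-valued terms, so that both sides genuinely match.

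The main obstacle I expect is the bookkeeping in verifying that imposing $\mcl_V\theta_0\in\mci$ and $\mcl_V(\eta_1\w\xi)\in\mci$ reproduces \emph{exactly} the system \eqref{eq:classicalJacobifield}, with no extra or missing constraints. This requires careful use of the commutation relations \eqref{1commut}, the extra relations \eqref{1extracommut} valid when $A^0=0$, and the structure equation $\ed\rho$ in \eqref{eq:strt2} (which is where $\Psi$ and $\gamma^2$ enter, producing the $\mce^0_H,\mce^0_V$ operators). As with Prop.~\ref{prop:classicalJacobifield}, the cleanest presentation is likely to declare the verification a direct computation and record that the symmetry equations dualize the conservation-law equations precisely because the CMC system is formally self-adjoint—a fact the authors highlight in the introduction. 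The genuinely nontrivial input, that the system is closed and involutive so that $A$ together with finitely many derivatives determines $V$ globally, is already available from the analysis preceding Prop.~\ref{prop:classicalJacobifield}, so I would invoke it rather than reprove it.
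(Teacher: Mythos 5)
The core of your proposal is the paper's own proof: impose $\mcl_V\theta_0\equiv 0\mod\theta_0$ to express $V$ through the generating function $V_0=\theta_0(V)$ and its covariant derivatives (the paper's Eq.~\eqref{eq:Jacobisymmetry}), then impose $\mcl_V\Psi\equiv a\Psi+b\,\ed\theta_0\mod\theta_0$ to force the classical Jacobi equations on $V_0$, and finally invoke the correspondence between classical Jacobi fields and classical conservation laws from Prop.~\ref{prop:classicalJacobifield} and Eq.~\eqref{1sigma}.

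Your ``subtlety'' paragraph, however, rests on a false premise. On the unprolonged Monge--Amp\`ere system $(X,\mci)$ there is no ambiguity by fields annihilated by $\theta_0$: if $W$ is a vector field on $X$ with $\theta_0(W)=0$, Cartan's formula gives $\mcl_W\theta_0=W\lrcorner\,\ed\theta_0=-\Re\bigl(\eta_1(W)\,\xi-\xi(W)\,\eta_1\bigr)$, and since $\{\theta_0,\xi,\xib,\eta_1,\etab_1\}$ is a coframe on $X$, the requirement $\mcl_W\theta_0\equiv 0\mod\theta_0$ already forces $\xi(W)=\eta_1(W)=0$, hence $W=0$ (on $\mcf$ only the $E_\rho$-direction survives, and it projects to zero on $X=\mcf/\SO(2)$). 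In other words, $\mci$ has no Cauchy characteristics, no nonzero symmetry has vanishing generating function, and $V\mapsto\theta_0(V)$ is injective on honest vector fields; no descent to a quotient is needed, which is exactly why the proposition can assert an isomorphism with $\mathfrak{S}^0$ itself, as the paper defines it. The trivial symmetries you are importing ($V_{\xi}\delx+V_{\xib}\delxb$, sections of $(\iinf)^{\perp}$) exist only on the infinite prolongation, where the ideal is Frobenius; the quotient-bundle phrasing you quote from the introduction refers to that prolonged setting, not to $(X,\mci)$. One further point you gloss over and the paper flags explicitly: the condition $V_0^0=0$ does not follow from the second-order conditions imposed by $\mcl_V\Psi$; one must differentiate to third order to uncover it.
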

\begin{proof}
We outline the necessary calculations.  The equation $\mcl_V \theta_0\equiv 0\mod \theta_0$ implies that, in terms of the dual  frame \eqref{dualframing},
\be\label{eq:Jacobisymmetry}
V=V_0 \bn -2V^1 \bE -2V^{\ol{1}} \bEb + 2V^{\xi}E_{\eta} + 2V^{\xib}E_{\etab}
\ee
for a scalar function $V_0$ on $X$. Next we look for the conditions that
$\mcl_V \Psi \equiv a \Psi+b \ed \theta_0 \mod \theta_0$ for some functions $a, b$.  This imposes a set of equations on the second derivatives of $V_0$. Combined with the compatibility equations \eqref{1commut}, it follows that the function $V_0$ satisfies the same set of equations \eqref{eq:classicalJacobi} as the generating function for a classical conservation law.  We mention that $V_0^0=0$, although one has to work with the third derivatives of $V_0$ in order to uncover this equation.  
\end{proof}
\begin{rem}
We remark that  a classical symmetry $V$ satisfies the stronger relations
\be\label{eq:symmetryinvariance}
 \mcl_V\theta_0=0,\;\mcl_V\Psi=0.
\ee
\end{rem}

\one
Propositions  \ref{prop:classicalJacobifield} and \ref{prop:classicalsymmetrytocv}  give the canonical isomorphisms between classical Jacobi fields, classical symmetries, and classical conservation laws which is generally known as Noether's theorem. There is a general formalism (for Euler-Lagrange equations) by which this isomorphism between symmetries and conservation laws are more directly established, \cite{Bryant2003}.

Given a classical Jacobi field $A$, let $V_A$ denote the corresponding classical symmetry.
\begin{defn}\label{defn:0PCform}
The  $0$-th order \tb{Poincar\'e-Cartan form} is the differential 3-form $\Upsilon_0$ on $X$
defined by
\be\label{eq:classicalCartanform}
\Upsilon_0=\theta_0\w\Psi\in F^2\Omega^3(X).
\ee
The 3-form $\Upsilon_0$ is closed, $\ed\Upsilon_0=0$, and it defines a class
\[ [ \Upsilon_0 ] \in H^{2,3}(F^2\Omega^*(X), \ed ).\]
\end{defn}
Since $\theta_0$ is up to scale by functions the unique 1-form in the ideal $\mci$, the cohomology class $[\Upsilon_0]$ is nontrivial (it is easily checked that $\dim_{\C}H^{2,3}(F^2\Omega^*(X), \ed)=1$).

Both $\theta_0$ and $\Psi$ are invariant forms under the classical symmetry as in \eqref{eq:symmetryinvariance}, and  we have for any $V\in\mathfrak{S}^0$
\[ \mcl_V\Upsilon_0=0.\]
From this and Cartan's formula for Lie derivative $\mcl_V=\ed \circ V\lrcorner+V\lrcorner \circ\ed,$  it follows that the following map directly establishes the desired isomorphism between the space of classical Jacobi fields and the space of classical conservation laws.
\be\label{eq:Jacobicvlaw}
A\mapsto V_A\lrcorner \Upsilon_0.
\ee

It is clear that the classical symmetries are induced by the prolonged action of the group of isometry of the ambient space form $M$ to the unit tangent bundle $X$. In particular, a classical symmetry is globally well defined on $X$.
\begin{cor}[\tb{Noether's theorem} for classical conservation law]\label{cor:Noether}
The space of classical Jacobi fields, the space of classical symmetries, and the space of classical differentiated/un-differentiated conservation laws are all canonically isomorphic to each other. In particular, they are globally defined on $X$.
\end{cor}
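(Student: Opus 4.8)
The plan is to collect the local isomorphisms already established and then upgrade them to global statements by means of the ambient isometry group. First I would record that Proposition~\ref{prop:classicalJacobifield} furnishes the local isomorphism $\mathfrak{J}^{(0)}\simeq\mch^{(0)}$, that Proposition~\ref{prop:classicalsymmetrytocv} furnishes $\mathfrak{S}^{0}\simeq\Cv{(0)}=\mch^{(0)}$, and that the Poincar\'e--Cartan contraction $A\mapsto V_A\lrcorner\Upsilon_0$ of Eq.~\eqref{eq:Jacobicvlaw} realizes the composite $\mathfrak{J}^{(0)}\to\mathfrak{S}^{0}\to\mch^{(0)}$ directly and canonically. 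On the undifferentiated side, the exterior derivative of Eq.~\eqref{eq:edmap} gives the local isomorphism $\mcc^{0}\simeq\mch^{(0)}$ from \cite{Bryant1995}. Over any contractible open set this already identifies all four spaces; what remains is to show that the identifications and the spaces themselves are global.

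The central step is to pin down the global symmetry space. Every element of the Lie algebra $\g$ of Killing fields of $M$ prolongs to a vector field on the coframe bundle $\mcf$ that preserves the tautological and connection forms of Eq.~\eqref{1strt1}, hence descends to a vector field on $X$ preserving the ideal $\mci$ of Eq.~\eqref{1ideal2}; this gives a linear map $\g\to\mathfrak{S}^{0}$ landing in globally defined symmetries. Since $M$ is a $3$-dimensional space form, $\dim\g=6$, and the prolonged action is faithful, so $\dim\mathfrak{S}^{0}\geq 6$. Conversely, the analysis of the case $\gamma^2\neq0$ preceding Definition~\ref{e0operators} shows that the structure equations \eqref{eq:classicalJacobi} form a closed, compatible linear system whose local solution space is exactly $6$-dimensional, so the germ of $\mathfrak{S}^{0}$ at each point has dimension at most $6$. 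The six ambient symmetries therefore exhaust the local symmetry space at every point: each local symmetry is the restriction of one of them, $\dim\mathfrak{S}^{0}=6$, and $\mathfrak{S}^{0}$ is globally defined.

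Globality now transfers through the canonical maps. The reconstruction $\sigma=-\JAI\,\ed A$ of Eq.~\eqref{1sigma} and the contraction $A\mapsto V_A\lrcorner\Upsilon_0$ are pointwise-natural in $A$, its derivatives, and the globally defined form $\Upsilon_0$, so they carry the global $6$-dimensional space $\mathfrak{S}^{0}$ isomorphically onto global $6$-dimensional spaces $\mathfrak{J}^{(0)}$ and $\mch^{(0)}$, in agreement with the local count. For the undifferentiated laws, I would observe that a global differentiated law $\Phi\in\mch^{(0)}$ automatically has vanishing de Rham class: $H^2_{dR}(X,\R)\simeq\R$ is generated by a fiber $\s{2}$ of $\pi_0$ (Remark~\ref{1rem1}), and since $\Phi$ lies in the ideal $\mci$ whose generators $\theta_0$ and $\eta_1\w\xi$ restrict to zero on this fiber (there $\xi=\theta_0=0$), the period $\int_{\s{2}}\Phi$ vanishes. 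Hence $\Phi=\ed\varphi$ for a global $1$-form $\varphi$, and $\underline{\ed}\varphi=\Phi\bmod\mci=0$, so $\varphi$ represents a class in $\mcc^{0}=H^1(\underline{\Omega}^*,\underline{\ed})$ mapping to $\Phi$. Because $H^1_{dR}(X,\R)=0$ in both cases of Remark~\ref{1rem1}, this $\varphi$ is unique modulo $\underline{\ed}(\underline{\Omega}^0)$; equivalently one may take $\varphi$ to be the explicit Killing-field flux, recovering Kusner's momentum class. Thus $\mcc^{0}$ is globally $6$-dimensional and the chain $\mathfrak{J}^{(0)}\simeq\mathfrak{S}^{0}\simeq\mch^{(0)}\simeq\mcc^{0}$ holds over all of $X$.

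The step I expect to carry the real weight is the dimension saturation for symmetries, namely establishing that the six prolonged ambient Killing fields are linearly independent as symmetries and that the local solution space of \eqref{eq:classicalJacobi} is no larger than six in the regime $\gamma^2\neq0$, since this is precisely what forces the global symmetry space to coincide with the image of $\g$ and thereby globalizes the whole package. By contrast, the differentiated-to-undifferentiated passage, which might a priori obstruct in $H^2_{dR}(X,\R)$, is disarmed by the automatic vanishing of fiber periods noted above, and $H^1_{dR}(X,\R)=0$ removes any residual ambiguity.
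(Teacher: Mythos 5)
Your proposal is correct, and its first half (globalizing symmetries via the prolonged isometry action and transferring globality through the explicit maps \eqref{eq:Jacobisymmetry}, \eqref{eq:Jacobicvlaw}) is the same as the paper's; the paper merely asserts the ambient-symmetry globality as clear, while you justify it by the dimension-saturation count against the closed rank-six system \eqref{eq:classicalJacobi}, which is a worthwhile addition. Where you genuinely diverge is the final step, producing a global un-differentiated representative. The paper does this constructively: it exhibits the identity \eqref{eq:classicalcvlawidentity}, observes that the 2-form $\Xi_A=\im\big(A\xi\w\xib-2\theta_0\w(A^{\bar{1}}\xi-A^1\xib)\big)$ is closed and descends to the base $M$, and uses $H^2(M,\R)=0$ to find a primitive $\chi_A$ on $M$, so that $\im(A^{\bar{1}}\xi-A^1\xib)-\delta\chi_A$ is an explicit global primitive of $\Phi_A$. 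You instead argue topologically on $X$ itself: every reduced 2-form in the ideal restricts to zero on a fiber $\s{2}$ of $\pi_0$ (since $\theta_0,\xi,\Psi,\ed\theta_0$ all vanish there), the fiber generates $H_2(X)$ in both curvature regimes of Remark~\ref{1rem1}, hence all de Rham periods of $\Phi$ vanish and $\Phi=\ed\varphi$ globally, with $H^1_{dR}(X,\R)=0$ giving injectivity of $\ed:\mcc^0\to\mch^{(0)}$. Both arguments are valid. Yours is shorter, non-constructive, and treats $\epsilon>0$ and $\epsilon\leq 0$ uniformly; the paper's is longer but produces the representative as an explicit flux, which is exactly what identifies the classical conservation laws with Kusner's momentum class --- a conclusion the paper relies on immediately afterwards (``Kusner's momentum class accounts for all the classical conservation laws'') and which your period argument does not by itself deliver.
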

\begin{proof}
From the explicit local isomorphisms \eqref{eq:Jacobisymmetry}, \eqref{eq:Jacobicvlaw} established earlier, it follows that both classical Jacobi field and classical differentiated conservation laws are also globally well defined on $X$. It thus suffices to show that every classical un-differentiated conservation law has a globally defined representative. In the below, in relation to Kusner's momentum class, we show explicitly that a classical differentiated conservation law admits a corresponding globally defined un-differentiated representative.
\end{proof}

The classical conservation laws were used by Kusner \cite{Kusner1991} to define certain global invariants for CMC surfaces that became important for the structure theorems of properly embedded CMC surfaces in 3-dimensional space forms.  Let us give a discussion of some of the main results in \cite{Korevaar1989}, \cite{Korevaar1992} adapted to our framework.

Let $\G$ be the six dimensional group of isometries of $M$. Let  $\g$ be the Lie algebra of $\G$. We identify an element $Y \in \g$ with the corresponding Killing vector field on $M$. It is a standard fact that the divergence of a Killing field of any Riemannian manifold vanishes,
\be\label{eq:Ydivergence}
 \nabla \cdot Y=0.
\ee
Since $H^2(M, \R)=0$, for a given Killing field $Y$ there exists a vector field  $Z^Y\in H^0(TM)$ such that
\be\label{eq:Ycurl} 
Y=\nabla\times Z^Y \quad\tn{(curl)}.
\ee
Such a vector field $Z^Y$ is not unique, but by choosing a basis of $\g$ let us assume we have an injective homomorphism from $\g$ to $H^0(TM)$ that maps $Y\to Z^Y$.

Let $\x:\Sigma\hook M$ be a CMC-$\delta$ surface. Let $e_3\in H^0(\x^*TM)$ be the unit normal vector field to $\Sigma$. For a Killing field $Y$ of the ambient space $M$, let $Y=Y^T+Y^N$ be the orthogonal decomposition into the tangential and normal components along $\Sigma$. A computation shows that
\[ \nabla^{\Sigma}\cdot Y^T=2\delta \,\langle e_3,Y \rangle.  
\]
Here $\nabla^{\Sigma}\cdot Y^T$ is the divergence of $Y^T$ as a vector field on $\Sigma$.

Let $\Gamma\hook\Sigma$ be an oriented closed curve representing a homology class $[\Gamma] \in H_1(\Sigma,\Z)$.  
Let $e_1$ be the tangent vector field of $\Gamma$, and $e_2$ be the unit conormal field along $\Gamma$ such that the frame $\{e_1, e_2, e_3\}$ is positively oriented.
Let $D \hook M$ be a surface (2-simplex) such that $\del D = \Gamma$. 
Let $\mathfrak{n}$ the oriented unit normal field to $D$ which is in the direction of $e_2$ along $\Gamma$.  

Kusner defined the \tb{momentum class}  
\[
\mu \in H^1(\Sigma,\R) \otimes \g^*
\]
by the integral formula
\be\label{eq:momentumclass}
\langle \mu([\Gamma]), Y \rangle= \int_{\Gamma} \langle e_2,Y \rangle + 2\delta \int_{D} \langle  \mathfrak{n},Y \rangle.
\ee
Under the action of $g\in\G$ on $\x$, the momentum class $\mu$ transforms as
\[
g^*(\mu)=\Ad^*(g) \cdot \mu.
\]

In order to see momentum class is well defined, let  $(\Gamma', D')$ be another set of data representing the given homology class $[\Gamma]$. There exists a 3-simplex $U\subset M$ and a subset $S\subset\Sigma$ such that
\[\qquad\quad \del U =  D-D'+S,\quad\; \del S=\Gamma-\Gamma'\quad\tn{(outward normal)}.
\]
Applying the divergence theorem to $Y$ and $Y^T$ in turn, we find that
\begin{align}\label{eq:divergence}
2\delta \int_{D} \langle  \mathfrak{n},Y \rangle -2\delta \int_{D'} \langle  \mathfrak{n'},Y \rangle
&=-\int_S 2\delta \langle e_3,Y \rangle,\\
&=- \int_{\Gamma} \langle e_2,Y \rangle  +  \int_{\Gamma'} \langle e_2',Y \rangle, \n
\end{align}
and the claim follows.

\two
We wish to show that the momentum class is equivalent to the classical conservation laws. Note from the identity \eqref{eq:Ycurl} and Stokes' theorem that the momentum class can be expressed entirely in terms of line integrals
\[
\langle \mu([\Gamma]), Y \rangle= \int_{\Gamma} \langle e_2,Y \rangle 
+ 2\delta \int_{\Gamma} \langle  e_1, Z^Y \rangle.
\]
Analytically this can be interpreted as follows. From the structure equation \eqref{eq:classicalJacobi} we have the identity
\be\label{eq:classicalcvlawidentity}
 \ed \Big(\im(A^{\bar{1}}\xi-A^1\xib)\Big)=\Phi_A+\delta\im \Big(A\xi\w\xib-2\theta_0\w(A^{\bar{1}}\xi-A^1\xib)\Big).
\ee
Here $\Phi_A$ denotes the differentiated conservation law \eqref{eq:H0} generated by a Jacobi field $A$.
One may check that $\Xi_A=\im \Big(A\xi\w\xib-2\theta_0\w(A^{\bar{1}}\xi-A^1\xib)\Big)$ is a closed 2-form defined on the ambient space form $M$ (this is equivalent to \eqref{eq:Ydivergence}).  Since $H^2(M,\R)=0$, there exists a 1-form $\chi_A$ on $M$ such that
$\ed\chi_A=\Xi_A$. Consequently we have
\[ \ed \Big(\im(A^{\bar{1}}\xi-A^1\xib)-\delta\chi_A\Big)=\Phi_A.
\]
By the defining properties of the contact 1-form $\theta_0$, Eq.~\eqref{eq:divergence} is recovered by first integrating $\Xi_A$ on the 2-cycle $\del U$, and then integrating \eqref{eq:classicalcvlawidentity} on the 2-simplex $S$.

As a result of the momentum class analysis, we conclude that for a classical differentiated conservation law  there exists an un-differentiated representative which is globally defined on $X$.
In particular, Kusner's momentum class accounts for all the classical conservation laws.
\begin{rem}\label{rem:Edelen}
Edelen and Solomon gave a generalization of momentum class to a relative homological setting
for hypersurfaces with constant weighted mean curvature \cite{Edelen2013}.
This extension of momentum class is suited for the analysis of the CMC (hyper)surfaces invariant under Killing fields.
For an application they gave a geometric derivation of the first integral for CMC twizzlers\footnotemark\footnotetext{A CMC twizzler is a helicoidal surface of 
nonzero constant mean curvature in the Euclidean 3-space generated by 
the trace of a planar curve under an orthogonal screw motion, \cite{Perdomo2012}.}
in terms of the period of a classical conservation law.
\end{rem}
It is known that there exists an infinite sequence of  non-classical higher-order symmetries and  conservation laws for the elliptic $\sinh$-Gordon equation which locally describes the CMC surfaces for the case  $\gamma^2>0$, \cite{Fox2011}.  We shall show that Noether's theorem admits an extension 
to the higher-order symmetries and conservation laws.  
See Sec.~\ref{sec:highercvlaws1}, \ref{sec:recursion}.
 
In Sec.~~\ref{sec:families} we will return to the adjoint action of the isometry group G on $\mu$ and its higher-order counterparts, and formulate these in terms of the action of symmetry vector fields on the characteristic cohomology.  We put it off so that we may deal with the classical and higher-order conservation laws at the same time.
  
\section{Prolongation}\label{sec:prolongation1}
Let $\x:\Sigma\hook X$ be an immersed integral surface of the ideal $\mci.$  For the analysis in this section, assume the independence condition $\xi\w\xib\ne0$ so that $\x$ projects to an immersed CMC surface in $M$.

Let $\x^*\mcf\to \Sigma$ be the induced $\SO(2)$-bundle.  The structure equation \eqref{eq:strt2} restricted to $\x^*\mcf$ becomes
\begin{align}\label{eq:strtx}
\ed\xi&=\im\,\rho\w\xi,\\
0&=\eta_1\w\xi, \n \\
\ed\eta_1&=-\im\,\rho\w\eta_1, \n \\
\ed\rho&=-\frac{\im}{2}\eta_1\w\etab_1+\gamma^2\frac{\im}{2}\xi\w\xib. \n
\end{align}
The first equation shows that $\Sigma$ has an induced complex structure defined by
a section of $\xi$ as a $(1,0)$-form.  Let $K=T^{*(1,0)}\to \Sigma$ be the canonical line bundle of $(1,0)$-forms.

From the second equation, it follows that there exists a coefficient $h_2$ defined on $\x^*\mcf$
such that
\be\label{eq:h2}
\eta_1= h_2 \xi.
\ee
Differentiating this using the third equation of \eqref{eq:strtx}, one gets
$$\ed h_2\equiv-2 \im\,h_2\rho,\mod \;\xi.$$
The Hopf differential of $\x$, Definition ~\ref{Hopf}, is now written as
$$\ff=h_2\xi^2.$$
Put $h_2=a-\im\,b$ for real coefficients $a,\,b$. Then
\be\label{eq:SecondFundamentalForm}
\bp \omega^3_1\\ \omega^3_2 \ep =
\bp a+\delta &b \\ b & -a+\delta \ep \bp \omega^1 \\ \omega^2 \ep,\n
\ee
and Hopf differential is a complexification of the traceless part of the second fundamental form of the surface.

From the Gau\ss\, equation, the fourth equation of \eqref{eq:strtx}, the curvature $R$ of the metric  $\xi\circ\xib$ satisfies the equation
\be\label{eq:Gausscurvature}
R=\gamma^2-h_2\hb_2.
\ee

\one
Summarizing the analysis above we state the classical Bonnet-type theorem for CMC surfaces. It shows that Eq.~\eqref{eq:Gausscurvature} is essentially the only compatibility equation to be satisfied to prescribe a CMC surface. The proof shall be omitted.
\begin{thm}
Let $\Sigma$ be a Riemann surface. Let $(\underline{\rm{I}}, \ff)$ be a pair of conformal metric and a holomorphic quadratic differential which satisfies the compatibility equation
\[ R_{\underline{\rm{I}}}=\gamma^2-\vert \ff \vert^2_{\underline{\rm{I}}}.
\]
Here $R_{\underline{\rm{I}}}$ is the curvature of the conformal metric $\underline{\rm{I}}$, and $\vert \ff \vert^2_{\underline{\rm{I}}}$ is the norm square of the holomorphic differential with respect to $\underline{\rm{I}}$.

Let $\pi: \widetilde{\Sigma}\to\Sigma$ be the universal cover.  Then there exists a CMC-$\delta$ immersion $\tilde{\x}: \widetilde{\Sigma}\hook M$ which realizes $(\pi^*\underline{\rm{I}}, \pi^*\ff)$ as the induced Riemannian metric and the Hopf differential. Such $\tilde{\x}$ is unique up to motion by isometry of the 3-dimensional space form $M$.
\end{thm}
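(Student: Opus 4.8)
The plan is to treat this as an instance of the fundamental theorem of surface theory, integrating a Lie-algebra valued Maurer--Cartan form by Cartan's theorem. Recall that the oriented orthonormal coframe bundle $\mcf$ is itself (a copy of) the isometry group $\G$ of $M$, and that the tautological and connection forms $(\omega^A,\omega^A_B)$ constitute the Maurer--Cartan form of $\G$, satisfying the structure equations \eqref{1strt1}. The strategy is to manufacture a $\g$-valued $1$-form $\Theta$ on $\widetilde{\Sigma}$ out of the data $(\pi^*\underline{\rm{I}},\pi^*\ff)$, verify that it satisfies the Maurer--Cartan equation $\ed\Theta+\tfrac12[\Theta,\Theta]=0$ precisely because of the compatibility hypothesis, and then integrate.

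First I would reduce to a global unitary coframe. Since $\widetilde{\Sigma}$ is simply connected it is diffeomorphic either to $\R^2$, in which case its oriented orthonormal coframe bundle is trivial and admits a global section, or to $\s{2}$; in the latter case $H^0(\s{2},K^2)=0$ forces $\ff=0$, the surface is totally umbilic, and the immersion is the round one, so we may set this case aside. Fixing a global unitary coframe, the metric $\pi^*\underline{\rm{I}}=\xi\circ\xib$ determines the $(1,0)$-form $\xi$ and, via $\ed\xi=\im\rho\w\xi$, the Levi--Civita connection form $\rho$; writing $\pi^*\ff=h_2\,\xi^2$ defines a global function $h_2$, and the statement that $\ff\in H^0(\Sigma,K^2)$ is holomorphic is equivalent to $\ed h_2\equiv -2\im\,h_2\rho \bmod \xi$.

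Next I would set $\theta_0:=0$ and $\eta_1:=h_2\,\xi$ and assemble $\Theta=(\omega^A,\omega^A_B)$ by inverting the dictionary \eqref{1complex1}. The heart of the matter is to check the four components of \eqref{eq:strt2}, which is the Maurer--Cartan equation for $\Theta$: the $\ed\xi$ equation holds by the definition of $\rho$; the $\ed\theta_0$ equation is automatic because $\Re(\eta_1\w\xi)=\Re(h_2\,\xi\w\xi)=0$; the $\ed\eta_1$ equation unwinds to $\ed h_2\equiv-2\im\,h_2\rho\bmod\xi$, i.e. exactly the holomorphicity of $\ff$ (this is the Codazzi identity); and the $\ed\rho$ equation reduces, using $\eta_1\w\etab_1=h_2\hb_2\,\xi\w\xib$, to $\ed\rho=\tfrac{\im}{2}(\gamma^2-h_2\hb_2)\,\xi\w\xib$, which by \eqref{eq:Gausscurvature} is the Gauss equation $R_{\underline{\rm{I}}}=\gamma^2-\vert \ff\vert^2_{\underline{\rm{I}}}$ --- the sole hypothesis. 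Thus the compatibility equation together with holomorphicity is precisely equivalent to the flatness of $\Theta$.

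Finally, since $\widetilde{\Sigma}$ is simply connected and $\Theta$ satisfies the Maurer--Cartan equation, Cartan's integration theorem yields a smooth map $F\colon\widetilde{\Sigma}\to\mcf\cong\G$ with $F^*(\omega^A,\omega^A_B)=\Theta$, unique up to left translation $F\mapsto g\cdot F$ by $g\in\G$. Composing with $\mcf\to X\to M$ gives $\tilde{\x}\colon\widetilde{\Sigma}\hook M$; by construction $\tilde{\x}^*\big((\omega^1)^2+(\omega^2)^2\big)=\xi\circ\xib=\pi^*\underline{\rm{I}}$ and the Hopf differential \eqref{eq:Hopf} is $\eta_1\circ\xi=h_2\xi^2=\pi^*\ff$. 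Uniqueness up to an isometry of $M$ is exactly the ``unique up to left translation'' clause, since left translations of $\mcf\cong\G$ are the isometries of $M$. The only genuinely non-formal point is the passage between the Gauss--Codazzi reduction above and the global integration; simple connectivity of $\widetilde{\Sigma}$ is what makes both the existence of the integral $F$ and its uniqueness up to $\G$ automatic, so that no period obstruction appears and the apparent gauge dependence on the chosen unitary coframe drops out after projecting to $M$.
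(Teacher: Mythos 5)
Your proof is correct and is essentially the argument the paper intends: the paper omits the proof of this Bonnet-type theorem, but its preceding analysis (the restricted structure equations \eqref{eq:strtx}, the substitution $\eta_1=h_2\xi$, holomorphicity of $\ff$, and the Gauss equation \eqref{eq:Gausscurvature}) is precisely your identification of the Maurer--Cartan equation with the Gauss--Codazzi system, and your Cartan-integration step on the simply connected cover supplies the converse existence and the uniqueness-up-to-left-translation clause in the standard way. The two points you gloss (the $\widetilde{\Sigma}\cong\s{2}$ case with $\ff=0$, and the adapted-frame lift of an arbitrary competing immersion in the uniqueness argument) are routine and do not affect correctness.
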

We give a definition for the compatible Bonnet data.
\begin{defn}\label{defn:Bonnettriple}
Let $M$ be a 3-dimensional Riemannian space form of constant curvature $\epsilon.$  An \tb{admissible triple} for  CMC-$\delta$ surface in $M$ consists of  a Riemann surface, a conformal metric, and a holomorphic quadratic  differential which satisfy the compatibility equation \eqref{eq:Gausscurvature} with the structure constant $\gamma^2=\epsilon+\delta^2.$
\end{defn}
For the purpose of our analysis it would be sometimes convenient to identify a CMC surface with an admissible triple, and set aside the global issue of monodromy of the associated CMC immersion for separate treatment.
 
\two
We now turn our attention to the infinite prolongation of the EDS for CMC surfaces. 
Prolongation is a general framework that allows one to access 
the higher-order structure of a differential equation, \cite[Chapter \rm{VI}]{Bryant1991}.
Through the process of successive differentiation, it provides a proper setting to 
take into consideration the derivatives of all orders at the same time. 
We will find that the EDS for CMC surfaces possesses rich higher-order structures.

To begin we consider the infinite prolongation of the induced structure equation on a CMC surface.
Note the higher-order derivatives of the structure coefficient $\, h_2$ on a CMC surface can be introduced inductively by the sequence of equations
\be\label{eq:dhj}
\ed h_j +  \im   \,j    h_j  \, \rho = h_{j+1} \, \xi + T_{j} \, \xib,
\; \; j = 2, \, 3, \, ... \, ,
\ee
where
\begin{align}\label{eq:Tj}
T_{2}&=0, \,     \\
T_{j+1}&=   \sum_{s=0}^{j-2}
a_{js} \,  h_{j-s} \, \partial^{s}_{\xi} R,    \; \; \; \mbox{for} \; \; j\geq 2,  \n   \\
& \quad  a_{js} =\frac{(j+s+2) }{2} \frac{ (j-1) !}{(j-s-2)!(s+2)!}
             =\frac{(j+s+2) }{2\, j} {j \choose s+2}, \n\\
& \quad  \partial^{s}_{\xi}  R=\delta_{0s}\gamma^2 -h_{2+s} \hb_2.\n
\end{align}
Each coefficient $h_j$ represents a section of $K^j$, which is generally not holomorphic for $j\geq 3$.
\begin{rem}
We will use $f_{\xi}$ or $\,\del_{\xi}f$ to denote the $\xi$-coefficient of $\,\ed f$ (and similarly for $f_{\xib}$ or  $\del_{\xib}f$) on an integral surface.  We can iterate this to obtain the quantities $\del^s_{\xi}f$.  One then computes  that $\, \partial^{s}_{\xi}  R=\delta_{0s}\gamma^2 -h_{2+s} \hb_2\,$  for $\, s\geq 0$.
\end{rem}
The formula for $T_{j+1}$ is uniquely determined by the condition that
$\ed(\ed h_j)=0$ is an identity for $j=2,\,3,\,...\,$,  \cite{Wang2012}. This implies the recursive relation
\be \label{eq:Trecurs}
T_{j+1} = \partial_{\xi} T_j + \frac{j}{2}R\, h_j.
\ee
One may check that the sequence \eqref{eq:Tj} is the unique solution.
\begin{rem}
The recursion relation \eqref{eq:Trecurs} can be interpreted as a commutator relation for
the natural pair of first order differential operators $(\delx,\,\delxb)$ on the sections of the line bundles $\Gamma(K^j)$ as follows.
\be\label{3operators}
\xymatrix{
 \ar@<1ex>[r]   &\ar@<1ex>[l]  \Gamma(K^{j-1})   \ar@<1ex>[r]^{\del_{\xi}} &  \Gamma(K^j)\ar@<1ex>[l]^{\del_{\xib}}
\ar@<1ex>[r]^{\del_{\xi}}   &  \ar@<1ex>[l]^{\del_{\xib}}  \Gamma(K^{j+1})   \ar@<1ex>[r]
&\ar@<1ex>[l]
 }
\ee
A direct computation shows that when acting on the sections of $K^j$,
\be\label{3anticommut}
\del_{\xib}\del_{\xi}-\del_{\xi}\del_{\xib}=\frac{j}{2}R.
\ee
This identity, and its generalizations to be recorded at the end of this section, will be frequently used.

When applied to $\,T_j$ for  example (note that $T_j$ has weight $(j-1)$ under the $\SO{(2)}$ action),
this yields
\begin{align}
\delxb(\delx T_j)-\delx(\delxb T_j)&=R\frac{(j-1)}{2}T_j,\n\\
\delxb(T_{j+1}-R\frac{j}{2}h_j)-\delx(\delxb T_j)&=R\frac{(j-1)}{2}T_j,\n\\
\delxb T_{j+1}&=\delx(\delxb T_j)+\delxb (R\frac{j}{2}h_j)+R\frac{(j-1)}{2}T_j.\n
\end{align}
\end{rem}

\one
With this preparation we proceed to define the infinite prolongation of the EDS for CMC surfaces. 
The infinitely prolonged structure equation established in this section 
will serve as a basis for our analysis throughout the paper.  

Set $(X^{(0)},\,\I{0})=(X,\,\mci)$ be the original differential system \eqref{1ideal1}. Inductively define  $(\X{k+1},\,\I{k+1})$ as the prolongation of $(\X{k},\,\I{k})$ so that 
$$\pi_{k+1, k}: \X{k+1}\to \X{k}$$
is the bundle of oriented integral 2-planes of $(\X{k},\,\I{k})$, and  that $\I{k+1}$ is generated by $\pi_{k+1,k}^*\I{k}$ and the canonical contact ideal on $\X{k+1}\subset\Gr^+(2,T\X{k})$.

For each $k\geq 0$, the space $\X{k+1}$ is a smooth manifold. The projection $\pi_{k+1,k}$ is a smooth submersion with two dimensional fibers isomorphic to $\C\mathbb{P}^1$, see Sec.~\ref{sec:doublecover} for details. Note this implies that the de Rham cohomology $H^1(\X{k}, \C)=0$ for all $k\geq 0$.
\begin{defn}
The \tb{infinite prolongation} $(\xinf,\I{\infty})$  is defined as the limit  
\begin{align}\label{eq:infiniteprolongation}
 \xinf &=\lim_{\longleftarrow}  \X{k}, \\
\I{\infty}&=\cup_{k\geq 0} \I{k}. \n
\end{align}
Here we identity $\I{k}$ with its image $\pi_{\infty,k}^*\I{k}\subset\Omega^*(\xinf).$
\end{defn}
Note by construction that the sequence of Pfaffian systems satisfy the inductive closure condition
$$\ed\I{k}\equiv 0\mod \I{k+1}, \; k\geq 1.$$
 
\two
Recall the $\SO(2)$-bundle  $\Pi:\mcf\to X=\mcf/\SO(2)$. Set $\F{k}=\Pi^*\X{k}$ for $k\geq 1$, and define
\[\F{\infty}= \lim_{\longleftarrow} \,\F{k}.\]
The bundle $\Pi: \F{k}\to\X{k}, \,0\leq k\leq \infty,$ is a principal right $\SO(2)$-bundle. Our analysis will be practically carried out on $(\F{k},\,\Pi^*\I{k})$ in a way that is invariant under the $\SO(2)$ action so that it has a well defined meaning on $\X{k}$. For simplicity, let us continue to use $\,\I{k}$ to denote the differential ideal $\Pi^*\I{k}$  on  $\F{k}$.

\two
The preceding analysis suggests to define the following sequence of generators for the ideal
$\I{\infty}$ on a particular open subset of $\F{\infty}$.  Let $\X{1}_0\subset\X{1}$ be the open subset defined by the independence condition
\[
\X{1}_0=\{ \;\, \tn{E}\in \X{1} \;\vert\; \;  \xi\w\xib_{\vert_{\tn{E}}}\ne 0\,\}.
\]
Inductively define the corresponding sequence of open subsets $\X{k+1}_0=\pi_{k+1,k}^{-1}(\X{k}_0)$ and $\F{k+1}_0=\Pi^{-1}(\X{k+1}_0)$ for $k\geq 1$. Let $\,\xinf_0=\lim \X{k}_0$, and $\,\F{\infty}_0=\lim \F{k}_0$.
 
Let us define the following set of differential forms, vector fields, and functions on $\F{\infty}_0$ following Eqs.~\eqref{eq:dhj}, \eqref{eq:Tj}. The covariant objects, functions and differential forms, are essentially defined on some finite $\F{k}_0$,
whereas the contravariant objects, points and vector fields, are defined on $\F{\infty}_0$.

\one
\begin{itemize}
\item  Differential forms:
\be\label{eq:thetaj}
\begin{array}{rlllll}
\eta_j&=\ed h_j&+\im j h_j \rho&&-T_j \xib,\,\quad&\tn{for}\;j\geq 2,\\
\theta_j&=\eta_j &&-h_{j+1}\xi, &&\tn{for}\; j\geq 1. \\
\end{array}
\ee
On the open subset $\,\F{k}_0$, the  set of 1-forms
$$\{ \rho; \, \xi, \xib, \theta_0, \theta_1 , \thetab_{1}, \ldots, \,\theta_k, \thetab_{k};\,\eta_{k+1}, \etab_{k+1}\}$$
form a coframe. On the open subset $\,\F{\infty}_0$, the  set of 1-forms
\be\label{Finfframe}
\{ \rho; \, \xi, \xib, \theta_0, \theta_1 , \thetab_{1},  \,\theta_2, \thetab_{2}, \ldots \}
\ee
form a coframe.  
 
\one
\item Vector fields:
\begin{align}
\delx&:=   \tn{total derivative with respect to}\;\xi\mod\iinf,\n\\
\delxb&:= \tn{total derivative with respect to}\;\xib\mod\iinf.\n
\end{align}
On the open subset $\,\F{\infty}_0$, define the  set of vector fields
$$\{ E_{\rho}; \, \delx, \delxb, E_0, E_1 , E_{\bar{1}}, E_2,E_{\bar{2}}, \ldots \}$$
as the dual frame to \eqref{Finfframe}.
\end{itemize}
 
\one
In terms of the 1-forms introduced above, we denote the covariant derivative of  $T_{j},\,j\geq 3$, by
\be\label{3dTi}
\ed T_{j} +\im \, (j-1) T_{j} \rho=(\del_{\xi} T_{j}) \xi + (\del_{\xib} T_{j}) \xib +T_{j}^{\bar{2}}\thetab_{2}
+\sum_{s=2}^{j-1}T_{j}^s\theta_s,
\ee
where $T_j^s=E_s(T_j)$.

For a later use, extend the $\C$-linear operator $\JAI$ acting on 1-forms by
$$\JAI\xi=-\im\xi,\; \JAI\theta_0=\theta_0, \;\JAI\theta_j=-\im\theta_j \;\tn{for}\; j\geq 1.$$
 
\two
Recall from Eqs.~\eqref{eq:strt2},
\begin{align}\label{3strt2}
\ed \xi - \im \rho \w \xi &=\hspace{12.5mm}-\theta_0\w\thetab_1
\hspace{15mm}-\theta_0\w (\delta \xi +\hb_2\xib),\\
\ed \theta_0&=\hspace{39.6mm}-\frac{1}{2} (\theta_1\w\xi+\thetab_1\w\xib),\n\\
\ed \theta_1 +\im\rho\w\theta_1&=-\theta_2\w\xi
+\theta_0\w(\delta\theta_1+h_2\thetab_1)
+\theta_0\w(2\delta h_2\xi+(\gamma^2+h_2\hb_2)\xib),\n\\
\ed \eta_1 +\im\rho\w\eta_1&=\hspace{13.5mm}\delta\theta_0\w\theta_1
\hspace{17.5mm}+\theta_0\w(\delta h_2\xi+ \gamma^2 \xib),\n\\
\ed \rho&=R\frac{\im}{2}\xi\w\xib-\frac{\im}{2}\theta_1\w\thetab_1
+\frac{\im}{2}(-\delta\theta_1+h_2\thetab_1)\w\xi
+\frac{\im}{2}(-\hb_2\theta_1+\delta\thetab_1)\w\xib.\n
\end{align}

\two
With this preparation, we record the structure equation satisfied by the 1-forms $\theta_j, \eta_j$  on $\,\mcf^{(\infty)}_0$.  
\begin{lem}\label{lem:prolongedstrt}
For  $\,j\geq 2$,
\begin{align}
\ed \theta_j + \im\, j\,\rho\w\theta_j&=-\theta_{j+1}\w\xi+ \theta_0\w(T_j\theta_1+h_{j+1}\thetab_1)
+\frac{jh_j}{2}\theta_1\w\thetab_1\n \\
&\quad+\tau_j'\w\xi+\tau_j''\w\xib,  \quad\tn{where}\n\\
\tau_j'&= (\delta h_{j+1}+h_2T_j)\theta_0+\frac{j h_j}{2}(\delta \theta_1-h_2\thetab_1),\n\\
\tau_j''&=(\hb_2 h_{j+1}+\delta T_j)\theta_0+\frac{j h_j}{2}(\hb_2 \theta_1-\delta\thetab_1)\n \\
&\quad-(\, T_{j}^{\bar{2}}\thetab_{2}+\sum_{s=2}^{j-1}\,T_{j}^s\theta_s), \n\\
\ed \eta_j + \im\, j\,\rho\w\eta_j&=-T_{j+1}\xi\w\xib+ \theta_0\w(T_j\theta_1 )
+\frac{jh_j}{2}\theta_1\w\thetab_1\n \\
&\quad+\varepsilon_j'\w\xi+\varepsilon_j''\w\xib,  \quad\tn{where}\n\\
\varepsilon_j'&= (  h_2T_j)\theta_0+\frac{j h_j}{2}(\delta \theta_1-h_2\thetab_1),\n\\
\varepsilon_j''&=(  \delta T_j)\theta_0+\frac{j h_j}{2}(\hb_2 \theta_1-\delta\thetab_1)\n \\
&\quad-(\, T_{j}^{\bar{2}}\thetab_{2}+\sum_{s=2}^{j-1}\,T_{j}^s\theta_s).\n
\end{align}
\end{lem}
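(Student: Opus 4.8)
The plan is to prove both identities by direct differentiation of the definitions \eqref{eq:thetaj}, using nothing beyond $\ed^2=0$ together with the already-established structure equations \eqref{3strt2}, the covariant-derivative expansion \eqref{3dTi} of $T_j$, and the recursion \eqref{eq:Trecurs}. I would first establish the formula for $\eta_j$ and then deduce the one for $\theta_j$ from the relation $\theta_j=\eta_j-h_{j+1}\xi$.

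First I would differentiate $\eta_j=\ed h_j+\im j h_j\rho-T_j\xib$. Since $\ed(\ed h_j)=0$, this leaves $\ed\eta_j=\im j\,\ed h_j\w\rho+\im j h_j\,\ed\rho-\ed T_j\w\xib-T_j\,\ed\xib$. Rewriting $\ed h_j=\eta_j-\im j h_j\rho+T_j\xib$ in the first term isolates $\im j\,\eta_j\w\rho=-\im j\,\rho\w\eta_j$, which I would transpose to the left-hand side so as to produce the $\im j\,\rho\w\eta_j$ appearing in the statement; the residual piece is $\im j T_j\,\xib\w\rho$. The task then reduces to expanding $\im j T_j\,\xib\w\rho+\im j h_j\,\ed\rho-\ed T_j\w\xib-T_j\,\ed\xib$ by substituting \eqref{3strt2} for $\ed\rho$, its conjugate for $\ed\xib$, and \eqref{3dTi} for $\ed T_j$.

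The key step is the $\xi\w\xib$ bookkeeping. From the last line of \eqref{3strt2}, the $\xi\w\xib$ component of $\im j h_j\,\ed\rho$ is $-\tfrac{j}{2}Rh_j\,\xi\w\xib$, while \eqref{3dTi} contributes $-(\delx T_j)\,\xi\w\xib$ from $-\ed T_j\w\xib$; the recursion \eqref{eq:Trecurs}, $T_{j+1}=\delx T_j+\tfrac{j}{2}Rh_j$, collapses the two into the single coefficient $-T_{j+1}\,\xi\w\xib$. The remaining terms then sort, by wedge factor, into the $\theta_0\w(T_j\theta_1)$ piece (from the $\theta_0\w\theta_1$ summand of $\ed\xib$), the $\tfrac{jh_j}{2}\theta_1\w\thetab_1$ piece (from $-\tfrac{\im}{2}\theta_1\w\thetab_1$ in $\ed\rho$), and the $\varepsilon_j'\w\xi+\varepsilon_j''\w\xib$ remainder, whose coefficients I would read off directly, the $\thetab_2$- and $\theta_s$-terms of $\varepsilon_j''$ coming precisely from the last summands of \eqref{3dTi}. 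I would also verify that no spurious $\rho\w\xib$ piece survives: the three contributions $-\im j T_j$ (from transposing $\ed h_j$), $+\im T_j$ (from $-T_j\,\ed\xib$), and $+\im(j-1)T_j$ (from the $\rho$-term of \eqref{3dTi} in $-\ed T_j\w\xib$) sum to zero, and there are no $\rho\w\xi$ terms at all at this stage.

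For the $\theta_j$ identity I would differentiate $\theta_j=\eta_j-h_{j+1}\xi$, expand $\ed h_{j+1}$ through \eqref{eq:thetaj}, and substitute $\eta_{j+1}=\theta_{j+1}+h_{j+2}\xi$. This produces the new leading term $-\theta_{j+1}\w\xi$ and a second copy of $T_{j+1}\,\xi\w\xib$ which cancels the $-T_{j+1}\,\xi\w\xib$ already present; the extra $\theta_0$-contributions arising from $h_{j+1}\,\ed\xi$ upgrade $\theta_0\w(T_j\theta_1)$ to $\theta_0\w(T_j\theta_1+h_{j+1}\thetab_1)$ and turn $\varepsilon_j',\varepsilon_j''$ into $\tau_j',\tau_j''$ by adjoining $\delta h_{j+1}\theta_0$ and $\hb_2 h_{j+1}\theta_0$. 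Here the relevant cancellation is of the $\rho\w\xi$ terms, whose coefficients $+\im(j+1)h_{j+1}$, $-\im h_{j+1}$, and $-\im j h_{j+1}$ again sum to zero. The base case $j=2$ is where $T_2=0$, the $\thetab_2$-term and the sum $\sum_{s=2}^{j-1}$ are absent, and every formula reduces consistently. I expect the only genuine obstacle to be the sheer volume of terms and the cancellation bookkeeping rather than any conceptual subtlety.
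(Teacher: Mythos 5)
Your proposal is correct and follows essentially the same route as the paper: a direct computation from the definitions \eqref{eq:thetaj}, using the structure equations \eqref{3strt2}, the expansion \eqref{3dTi} of $\ed T_j$, and the recursion \eqref{eq:Trecurs} to collapse the $\xi\w\xib$ coefficients into $-T_{j+1}$ (resp. $-\theta_{j+1}\w\xi$ after substituting $\eta_{j+1}=\theta_{j+1}+h_{j+2}\xi$). The only difference is organizational — the paper differentiates $\theta_j$ directly (with the covariant-derivative notation $\tn{D}$ absorbing the $\rho$-term bookkeeping you carry out by hand) and omits the $\eta_j$ case as similar, whereas you prove $\eta_j$ first and then adjoin the $-h_{j+1}\xi$ correction — which is the same calculation in reverse order.
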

\begin{proof}
Let us denote $\,\tn{D}\theta_j= \ed \theta_j + \im\,j\,\rho\w\theta_j$,
and similarly for $\eta_j, \, h_j, \,T_j$, i.e., $\tn{D}h_j=\ed h_j+\im\,j\,h_j\rho$, etc.
Taking the exterior derivative of $\theta_j$ in \eqref{eq:thetaj},
\begin{align}
\tn{D}\theta_j&=-\tn{D}h_{j+1}\w\xi-\tn{D}T_j\w\xib+\im\,j\,h_j (\ed\rho)-h_{j+1}(\tn{D} \xi)-T_j(\tn{D} \xib)\n \\
&=-\theta_{j+1}\w\xi-(\tn{D}T_j-T_{j+1}\xi)\w\xib+\im\,j\,h_j (\ed\rho)-h_{j+1}(\tn{D}\xi)-T_j(\tn{D}\xib).\n
\end{align}
From Eq.~\eqref{eq:Trecurs},
$$\tn{D}T_j-T_{j+1}\xi\equiv-\frac{j}{2}R h_j\xi \mod\;\;\xib,\; \I{\infty},$$
which gives
\begin{align}\label{3Dthetai}
\tn{D}\theta_j
&=-\theta_{j+1}\w\xi+\frac{j}{2}R h_j \xi \w\xib-(T_{j}^{\bar{2}}\thetab_{2}+\sum_{s=2}^{j-1}T_{j}^s\theta_s)\w\xib\n\\
&\quad+\im\,j\,h_j (\ed\rho)-h_{j+1}(\tn{D} \xi)-T_j(\tn{D} \xib).\n
\end{align}
Lemma follows from this and \eqref{3strt2}. The analysis for $\ed\eta_i$ is similar, and shall be omitted.
\end{proof}
The structure equation implies the following commutation relations for the dual frame of vector fields. For simplicity we shall exclude the relations involving $E_0,\,E_1,\,E_{\bar{1}}$. This suffices for our computations in the later sections.
\begin{cor}\label{lem:commutation}
The dual frame $E_j$'s satisfy the following commutation relations.
\begin{align}
[E_{\xi},\,E_{\xib}]&=-R\frac{\im}{2}E_{\rho},\n \\
[E_2,\,E_{\xi} ]&=E_1+\sum_{j\geq2} \ol{T_j}^{2}E_{\bar{j}},\n\\
[E_{\bar{2}},\,E_{\xib} ]&=E_{\bar{1}}+\sum_{j\geq2} T_j^{\bar{2}}E_j,\n\\
[E_s,\,E_{\xi} ]&=E_{s-1},\n\\
[E_{\bar{s}},\,E_{\xib} ]&=E_{\ol{s-1}},\quad \tn{for}\;s\geq 3,\n\\
[E_{\ell},\,E_{\xib} ]&=\sum_{j\geq {\ell}+1}T^{\ell}_j E_j,\n\\
[E_{\bar{{\ell}}},\,E_{\xi} ]&=\sum_{j\geq {\ell}+1}\ol{T_j}^{\bar{{\ell}}} E_{\bar{j}},\quad \tn{for}\;{\ell}\geq 2,\n\\
[E_j,\,E_{k} ]&=[E_j,\,E_{\bar{k}} ]=[E_{\bar{j}},\,E_{\bar{k}} ]=0,\quad \tn{for}\;j,\,k\geq 2.\n
\end{align}
\end{cor}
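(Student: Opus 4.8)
The plan is to obtain every bracket by dualizing the structure equations rather than by manipulating the vector fields directly. Since the coframe \eqref{Finfframe} is dual to the frame $\{E_{\rho};E_{\xi},E_{\xib},E_0,E_1,E_{\bar 1},E_2,E_{\bar 2},\ldots\}$, each pairing $\omega^a(E_b)=\delta^a_b$ is locally constant, so the invariant formula $\ed\omega^a(E_b,E_c)=E_b(\omega^a(E_c))-E_c(\omega^a(E_b))-\omega^a([E_b,E_c])$ collapses to
\[
\omega^a([E_b,E_c])=-\ed\omega^a(E_b,E_c).
\]
Thus I would expand each bracket as $[E_b,E_c]=-\sum_a \ed\omega^a(E_b,E_c)\,E_a$, where $\ed\omega^a(E_b,E_c)$ is simply the coefficient of the monomial $\omega^b\w\omega^c$ in the structure equation for $\omega^a$. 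Every structure equation needed is already recorded: Eqs.~\eqref{3strt2} for $\xi,\theta_0,\theta_1,\rho$ and Lemma~\ref{lem:prolongedstrt} for $\theta_j$ with $j\geq 2$, together with their complex conjugates for the barred forms.

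With this dual formula in hand, each relation reduces to locating one wedge monomial. For $[E_{\xi},E_{\xib}]$ I scan for the coefficient of $\xi\w\xib$: only $\ed\rho$ in \eqref{3strt2} carries it, with coefficient $R\tfrac{\im}{2}$, and no $\ed\theta_j$ contains a $\xi\w\xib$ term, yielding $[E_{\xi},E_{\xib}]=-R\tfrac{\im}{2}E_{\rho}$. The lowering relations come from the leading term $-\theta_{s}\w\xi$ of $\ed\theta_{s-1}$, which gives $E_{s-1}$; for $s\geq 3$ no other equation carries $\theta_s\w\xi$, whereas for $s=2$ the conjugate of the $-T_j^{\bar 2}\thetab_2$ piece of $\tau_j''$ contributes a $\theta_2\w\xi$ monomial to each $\ed\thetab_j$ and produces the extra sum $\sum_{j\geq 2}\ol{T_j}^{2}E_{\bar j}$. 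The mixed relations $[E_{\ell},E_{\xib}]=\sum_{j\geq \ell+1}T_j^{\ell}E_j$ come entirely from the $-\sum_{s=2}^{j-1}T_j^s\theta_s$ term of $\tau_j''\w\xib$ inside $\ed\theta_j$; the index range $s=2,\dots,j-1$ forces $j>\ell$, which is exactly why the sum starts at $j=\ell+1$. The barred relations $[E_{\bar 2},E_{\xib}]$, $[E_{\bar s},E_{\xib}]$ and $[E_{\bar \ell},E_{\xi}]$ follow verbatim by complex conjugation.

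Finally, the vanishing brackets $[E_j,E_k]=[E_j,E_{\bar k}]=[E_{\bar j},E_{\bar k}]=0$ for $j,k\geq 2$ require me to verify that no monomial $\theta_j\w\theta_k$, $\theta_j\w\thetab_k$, or $\thetab_j\w\thetab_k$ with both indices $\geq 2$ ever appears on the right-hand side of any structure equation. Inspecting Lemma~\ref{lem:prolongedstrt}, each $2$-form on the right either pairs a high-index $\theta$ with $\xi$ or $\xib$ (the $-\theta_{j+1}\w\xi$, $\tau_j'\w\xi$, and $\tau_j''\w\xib$ terms) or involves only $\theta_0,\theta_1,\thetab_1$ (the $\theta_0\w(\cdots)$ and $\tfrac{jh_j}{2}\theta_1\w\thetab_1$ terms); the same is true of \eqref{3strt2}. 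Hence no such monomial exists and these brackets vanish. The only delicate point---and the main place to be careful---is the bookkeeping: because the coframe is infinite I must confirm, for each bracket, that all contributions across the infinitely many equations $\ed\theta_j$ and $\ed\thetab_j$ have been collected, and that the conjugation of index labels (e.g.\ $T_j^{\bar 2}\mapsto \ol{T_j}^{2}$) together with the signs from the duality formula are tracked consistently. There is no conceptual obstacle beyond this accounting, as Lemma~\ref{lem:prolongedstrt} has already carried out the substantive derivation of the structure equations.
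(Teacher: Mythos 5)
Your proposal is correct and follows essentially the same route as the paper: the paper's own proof consists precisely of invoking Cartan's formula $\ed\theta(E_a,E_b)=E_a(\theta(E_b))-E_b(\theta(E_a))-\theta([E_a,E_b])$ on the coframe, which (since the pairings $\theta(E_b)$ are constants) reduces each bracket to reading off wedge-monomial coefficients in Eqs.~\eqref{3strt2} and Lemma~\ref{lem:prolongedstrt}, exactly as you do. Your write-up merely makes explicit the monomial bookkeeping (including the special $s=2$ case from the conjugated $-T_j^{\bar 2}\thetab_2$ term) that the paper leaves to the reader.
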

\begin{proof}
Let $\theta$ be a differential 1-form, and let $E_a,\,E_b$ be vector fields.
The corollary follows from the Cartan's formula
\[ \ed\theta(E_a,\,E_b)=E_a(\theta(E_b))-E_b(\theta(E_a))-\theta([E_a,\,E_b]).
\]
\end{proof}
We record the following identities satisfied by the sequence $\{\,T_j\,\}$, which generalize Eq.~\eqref{eq:Trecurs}.
\begin{cor}\label{cor:Tidentity}
Recall 
$$T_2=0,  \, T_3=h_2(\gamma^2-h_2\hb_2) ,\,T_4=\frac{5}{2}\gamma^2h_3 -\frac{7}{2}h_2\hb_2h_3.$$
By definition, $T_j$ is at most linear in $\hb_2$, and at most quadratic in $h_2$. It satisfies the following identities.
\be\label{eq:Tidentity1}
\;\;\;\left\{
\begin{array}{rl}
T_{j+1} &= \partial_{\xi} T_j + \frac{j}{2}R\, h_j,  \\
\delx T_j&=\sum_{s=2}^{j-1}T^s_j h_{s+1},  \\
\delxb T_j&=T^{\bar{2}}_j\hb_3+\sum_{s=2}^{j-1}T^s_j T_s, \\
(j-1)T_j&=-2T^{\bar{2}}_j\hb_2+\sum_{s=2}^{j-1}T^s_j s h_s,  \\
T_j  &=  -T_j^{\bar{2}}\hb_2 +\sum_2^{j-1} T_j^s h_s ,\\
\end{array} \right.
\ee
\be\label{eq:Tidentity2}
\hspace{7mm}\left\{
\begin{array}{rl}
\delx (T^2_j)&=T^2_{j+1}+\frac{jh_j}{2}\hb_2-\delta_{2j}R, \\
\delx (T^{\bar{2}}_j)&=T^{\bar{2}}_{j+1}+\frac{jh_j}{2}h_2, \\
\delx (T^s_j)&=T^s_{j+1}-T^{s-1}_j,  \qquad \quad \tn{for}\;3 \leq s\leq j-1, \,j\geq 4, \\
 T^j_{j+1}-T^{j-1}_j &=\frac{j}{2}R,  \quad \tn{for}\; j\geq 3, \\
\end{array} \right.
\ee
\be\label{eq:Tidentity3}
\hspace{2cm}\left\{
\begin{array}{rl}
\delxb (T^{\bar{2}}_j)&= (\delxb T_j)^{\bar{2}}-\sum^{j-1}_{s=2} T^s_j T^{\bar{2}}_s,  \\
                     &= \sum^{j-1}_{s=2} T^{\bar{2},s}_jT_s,  \\
\delxb (T^{\ell}_j)&=(\delxb T_j)^{\ell}-\sum^{j-1}_{s=\ell+1} T^{\ell}_sT^s_j,  \\
                     &=T^{\ell,\bar{2}}_j\hb_3+\sum^{j-1}_{s=2} T^{\ell,s}_jT_s, \qquad   \tn{for}\;\ell\geq 2. \\
\end{array} \right.
\ee
\end{cor}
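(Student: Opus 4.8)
The plan is to regard the frame vector fields $E_s$ ($s\ge 2$) and $E_{\bar 2}$ as the formal partial derivatives with respect to the ``coordinates'' $h_s$ and $\hb_2$, and the two total derivatives $\delx=E_\xi$, $\delxb=E_{\xib}$ as derivations whose effect on the generators is read off from the prolonged structure equations \eqref{eq:thetaj}. From $\ed h_k=h_{k+1}\xi+T_k\xib-\im\,k\,h_k\rho+\theta_k$ and its conjugate one records
\[
E_s(h_k)=\delta_{sk},\quad E_{\bar 2}(h_k)=0,\quad E_s(\hb_2)=0,\quad E_{\bar 2}(\hb_2)=1,
\]
together with $\delx h_k=h_{k+1}$, $\delxb h_k=T_k$, $\delx\hb_2=\ol{T_2}=0$, $\delxb\hb_2=\hb_3$. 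I would first note, by induction from \eqref{eq:Tj} (or the recursion \eqref{eq:Trecurs}), that $T_j$ is a weighted polynomial in $h_2,\dots,h_{j-1},\hb_2$ \emph{alone}, of the shape $T_j=P_j+\hb_2\,Q_j$, where $P_j$ is linear and $Q_j$ quadratic in the $h$'s and neither involves $\hb_2$; in particular $E_{\bar 2}E_{\bar 2}(T_j)=0$.

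For the first group \eqref{eq:Tidentity1}, the opening line is \eqref{eq:Trecurs}, already established. Applying the derivation $\delx$ (resp. $\delxb$) to $T_j=T_j(h_2,\dots,h_{j-1},\hb_2)$ by the chain rule and inserting the values above yields the second and third identities at once, using $\delx\hb_2=0$ and $\delxb\hb_2=\hb_3$. The fourth and fifth identities are the Euler relations attached to two gradings under which $T_j$ is homogeneous: the $\SO(2)$-weight grading (weight $s$ on $h_s$, weight $-2$ on $\hb_2$), for which $T_j$ has weight $j-1$, produces the fourth; the ``length'' grading (weight $1$ on each $h_s$, weight $-1$ on $\hb_2$), under which both $P_j$ and $\hb_2 Q_j$ have weight $1$ so $T_j$ has weight $1$, produces the fifth.

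For the second group \eqref{eq:Tidentity2}, I would differentiate the recursion $T_{j+1}=\delx T_j+\tfrac{j}{2}R\,h_j$ by $E_s$, $E_2$ or $E_{\bar 2}$ and commute the operator past $\delx$ using Cor.~\ref{lem:commutation}. For $3\le s\le j-1$ the bracket $[E_s,E_\xi]=E_{s-1}$ gives $E_s\delx T_j=\delx(T_j^s)+T_j^{s-1}$, while $E_s(\tfrac{j}{2}R\,h_j)=0$ (since $E_sR=0$ and $s\ne j$), which is the third identity; the boundary case $s=j$, where $T_j^j=0$ and $E_j(\tfrac{j}{2}R\,h_j)=\tfrac{j}{2}R$, gives the fourth. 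For $s=2$ and $s=\bar 2$ the brackets $[E_2,E_\xi]=E_1+\sum_{k\ge 2}\ol{T_k}^{\,2}E_{\bar k}$ and $[E_{\bar 2},E_\xi]=\sum_{k\ge 3}\ol{T_k}^{\,\bar 2}E_{\bar k}$ annihilate $T_j$ (as $T_j$ is free of $h_1$ and of $\hb_k$ for $k\ge 3$, and $\ol{T_2}^{\,2}=0$), so the only new contributions come from $E_2(\tfrac{j}{2}R\,h_j)=\tfrac{j}{2}(-\hb_2 h_j+R\delta_{2j})$ and $E_{\bar 2}(\tfrac{j}{2}R\,h_j)=-\tfrac{j}{2}h_2 h_j$, which are precisely the first and second identities.

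For the third group \eqref{eq:Tidentity3}, I would compute $\delxb(T_j^\ell)=E_{\xib}E_\ell T_j=E_\ell\delxb T_j+[E_{\xib},E_\ell]T_j$, and similarly with $\bar 2$ in place of $\ell$; the brackets $[E_\ell,E_{\xib}]=\sum_{k\ge\ell+1}T_k^\ell E_k$ and $[E_{\bar 2},E_{\xib}]=E_{\bar 1}+\sum_k T_k^{\bar 2}E_k$ (with the appropriate sign) supply the first equalities on each line. For the second equalities I would substitute the expansion $\delxb T_j=T_j^{\bar 2}\hb_3+\sum_s T_j^s T_s$ (the third identity of \eqref{eq:Tidentity1}) and apply $E_\ell$, resp. $E_{\bar 2}$; here the decisive simplifications are $E_{\bar 2}(\hb_3)=0$ and $E_{\bar 2}E_{\bar 2}(T_j)=0$, together with the cancellation of the cross terms $\sum_s T_j^s T_s^\ell$ against the bracket sum $\sum_{k\ge\ell+1}T_k^\ell T_j^k$, which holds because $T_s^\ell=0$ for $s\le\ell$. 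The chain-rule and Euler steps are routine; the one place demanding care, and the main obstacle, is exactly this bookkeeping: checking that the inhomogeneous pieces of $[E_2,E_\xi]$, $[E_{\bar 2},E_\xi]$ and $[E_{\bar 2},E_{\xib}]$ (the $E_1,E_{\bar 1}$ terms and the $E_{\bar k}$-tails) all vanish on $T_j$, and that the summation ranges align so that the cross terms cancel. Everything else reduces to the derivation rules recorded in the first paragraph.
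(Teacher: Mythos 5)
Your proof is correct and takes essentially the same route as the paper: the paper's own proof consists precisely of applying the commutation relations of Corollary~\ref{lem:commutation} to $T_j$, together with the chain rule on $T_j$ as a function of $h_2,\dots,h_{j-1},\hb_2$ and the $\SO(2)$-weight homogeneity (the $E_\rho$-Euler relation), displaying only two sample cases and leaving the rest as routine. You have simply executed all the cases explicitly, including the second (``length'') grading behind the fifth identity of \eqref{eq:Tidentity1} and the cross-term cancellation via $T_s^\ell=0$ for $s\le\ell$ in \eqref{eq:Tidentity3}, details the paper omits.
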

\begin{proof}
These formulae are obtained by applying the commutation relations of Lemma \ref{lem:commutation} to $T_j$.
Let us present the computation for two cases.

Note by definition $E_{\rho}( h_j )=-\im j h_j$, and since $T_j$ is homogeneous of weight $j-1$ under the action by the structure group $\SO(2)$,
\begin{align}
E_{\rho}(T_j)&=-\im (j-1) T_j,\n\\
                  &=T^{\bar{2}}_j E_{\rho}(\hb_2)+\sum_{s=2}^{j-1}T^s_j E_{\rho}(h_s).\n
\end{align}
This gives the last equation of \eqref{eq:Tidentity1}.

For the last equation of \eqref{eq:Tidentity2}, we apply the relation $[E_s, E_{\xi}]=E_{s-1}, \, s\geq 3,$ to $T_j$ and get
\begin{align}
 T^{s-1}_j&=(\delx T_j)^s-\delx(T^s_j)\n\\
      &=T^s_{j+1}-(\frac{j}{2}R h_j)^s-\delx(T^s_j).\n
\end{align}
Evaluating this for $s=j\geq 3$ gives the desired formula.
\end{proof}

In the course of analysis, the following adapted functions well defined on $\xinf$ away from the umbilic divisor  $\{\,h_2 =0\,\}$ will be convenient.\footnotemark\footnotetext{Due to the possible odd powers of $h_2^{-\frac{1}{2}}$ appearing in the formulae, to be precise they are defined on a double cover of $\xinf$.
This will be defined in Sec.~\ref{sec:doublecover}.}
\begin{itemize}
\item $ z_j:=h_2^{-\frac{j}{2}}h_j,  \quad j\geq 3.$
\end{itemize}

\one\noi
These functions will turn out to be weighted homogeneous under a generalized symmetry of the CMC system.
 
\one
We introduce some notations to keep track of orders.
\begin{itemize}
\item  $\mco(k)$: scalar functions on $\xinf$ that do not depend on $h_j$ for $j>k+1$ when $k\geq 2$, or on $\hb_j$ for $j>k+1$ when $k\leq -2$.
\item $\mco(k,m)$: $\mco(k)\cap\mco(m)$ for $k\geq 2, m\leq -2$.
\end{itemize}

\section{Higher-order conservation laws}\label{sec:highercvlaws1}
Fox \& Goertsches's results on the elliptic $\sinh$-Gordon equation in \cite{Fox2011} shows that there exist infinitely many  higher-order conservation laws for the differential equation which locally describes the CMC surfaces for the case $\gamma^2>0$. This suggests to consider the analogous problem for CMC surfaces. In this section, we give an analysis of the higher-order conservation laws defined on the infinite prolongation $\,(\X{\infty},\,\I{\infty})$ utilizing the differential algebraic structure of the Frobenius  ideal $\,\I{\infty}$. 

Let us give a brief summary of the analysis. Following the general recipe for computing the conservation laws of an involutive EDS established in \cite{Bryant1995}, we first obtain a rough normal form of a differentiated conservation law as a closed 2-form in the ideal by a direct computation. We shall then extract a geometric conclusion from this that 
 \emph{one may get at most  2  new higher-order un-differentiated conservation laws on every odd prolongation
$\, \X{2k+1}\to\X{2k-1}$}.
See Thm. ~\ref{thm:dimbound} in the below for a precise statement.

\two
Let $\underline{\Omega}^*=\Omega^*(\xinf)/\iinf$, and $\underline{\ed}=\ed\mod\iinf$.
Recall that the prolongation sequence of Pfaffian systems satisfy the inductive closure condition
$$\ed\I{k}\equiv 0\mod \I{k+1}, \; k\geq 1.$$
This implies that $\iinf$ is Frobenius, and the quotient space $(\underline{\Omega}^*, \underline{\ed})$ becomes a complex.
\begin{defn}
Let $(\xinf,\iinf)$ be the infinite prolongation of the EDS for CMC surfaces. 
A \tb{conservation law} is an element in the $1$-st characteristic cohomology
$$\mcc^{(\infty)}:=H^1(\underline{\Omega}^*, \underline{\ed}).$$
A conservation law of \tb{order at most $k+1, k\geq 0,$} is an element in the $1$-st  characteristic cohomology
$$\mcc^{(k)}=H^1(\Omega^*(\X{k})/\I{k}, \underline{\ed})$$
so that $\mcc^{(\infty)}=\cup_{k=0}^{\infty}\mcc^{(k)}.$
A conservation law of \tb{order $k+1, k\geq 0,$} is an element in the quotient space
$$\mcc^{k}= \mcc^{(k)}/\mcc^{(k-1)}.$$
Here we set $\mcc^{(-1)}=0$, and $\mcc^0=\mcc^{(0)}$ is the space of classical conservation laws.
\end{defn}
\begin{rem}\label{directsum}
It is conceivable that certain elements of $\mcc^{(k)}$ become  trivial in $\mcc^{(k')}$ for $ k'>k$.
This does not occur, and we will have
$$\mcc^{(\infty)}  \simeq\oplus_{k=0}^{\infty} \, \mcc^{k}.$$
\end{rem}

\one
Let us start the analysis with a formulation of conservation laws in terms of an associated spectral sequence. The original idea of applying commutative algebraic analysis to differential equation is due to Vinogradov, Tsujishita, and Bryant \& Griffiths, etc. We shall closely follow \cite{Bryant1995}.

Consider the filtration by subspaces
$$ F^p\Omega^q =\tn{Image}\{\underbrace{\iinf\w\iinf\, ... }_{p}\w:\Omega^*(\xinf)\to\Omega^{q}(\xinf)\}.$$
From the associated graded
$F^p\Omega^*/F^{p+1}\Omega^*,$
a standard construction yields the spectral sequence
$$(E^{p,q}_r, \ed_r), \quad \ed_r \;\tn{has bidegree}\;(r, 1-r), \quad r\geq 0.$$
 
From the fundamental theorem \cite[p562, Theorem 2 and Eq.(4)]{Bryant1995}, the following sub-complex is exact,
\be\label{eq:exactsequence}
0\to E^{0,1}_1\hook E^{1,1}_1\to E^{2,1}_1.
\ee
Here by definition the first piece is given by
\begin{align}\label{eq:E011}
E^{0,1}_1 &=\{ \varphi\in\Omega^1(\xinf) \vert \ed\varphi\equiv 0\mod\iinf\}/
\{ \ed\Omega^0(\xinf) +\Omega^1(\iinf)\} \\
&=H^1(\Omega^*(\xinf)/\iinf, \underline{\ed})\n\\
&=\mcc^{(\infty)},\n
\end{align}
and it is the space of conservation laws. Unwinding the definition, the second piece $E^{1,1}_1$ is the space of cosymmetries (symbols, or generating functions of conservation laws). In our case the EDS is formally self-adjoint and it turns out that this is the space of Jacobi fields (Eq.~\eqref{eq:Agenerator}, Sec.~\ref{sec:Jacobifields}),
$$E^{1,1}_1=\{\tn{\,Jacobi fields\,}\}.$$
The derivative $\ed_1: E^{0,1}_1\hook E^{1,1}_1$ maps a conservation law to its generating Jacobi field.

We mention in passing that in this context the higher-order Noether's theorem claims that the injective symbol map
$$\ed_1: E^{0,1}_1\hook E^{1,1}_1 $$ is also surjective.\footnotemark\footnotetext{It will be shown that the higher-order  Noether's theorem holds for the EDS for CMC surfaces.}

\two
Let us comment on an important technical aspect before we proceed to the analysis of conservation laws. Due to the quotient factor $\ed\Omega^0(\xinf)$ in \eqref{eq:E011}, the space $E^{0,1}_1$ is not readily accessible by means of local analysis. For the computational purpose, it is advantageous to adopt the following differentiated version of conservation laws, similarly as we did for the analysis of classical conservation laws.

From the exact sequence
$$0\to  \iinf  \to \Omega^*(\xinf)\to  \Omega^*(\xinf)/\iinf\to 0,$$
at least locally we have
$$\mcc^{(\infty)}\simeq H^2(\iinf, \ed)\simeq\tn{ker}\{\ed_1: E^{1,1}_1\to E^{2,1}_1\}.$$
\begin{defn}
Let $(\xinf,\iinf)$ be the infinite prolongation of the EDS for CMC surfaces. 
A \tb{differentiated conservation law} is an element in the $2$-nd  cohomology
$$\mch^{(\infty)}:=H^2(\iinf, \ed).$$
A differentiated conservation law of \tb{order at most $k+1, k\geq 0,$} is an element in the $2$-nd  cohomology
$$\mch^{(k)}:=H^2(\I{k}, \ed)$$
so that $\mch^{(\infty)}=\cup_{k=0}^{\infty}\mch^{(k)}.$
A differentiated conservation law of \tb{order $k+1, k\geq 0,$} is an element in the quotient space
$$\mch^{k}:= \mch^{(k)}/\mch^{(k-1)}.$$
Here we set $\mch^{(-1)}=0$, and $\mch^0=\mch^{(0)}$ is the space of classical differentiated conservation laws.
\end{defn}
We find that the space of differentiated conservation laws $\mch^{(\infty)}$ is susceptible to local analysis partly because of the intrinsic differential algebraic structure of the infinitely prolonged ideal $\iinf$.

\two
The main result of this section is the following  uniform bounds on the dimension of the space of conservation laws.
\begin{thm}\label{thm:dimbound}
For $k\geq 1$,
\begin{align}\label{dimbound}
 \tn{dim}\; \mch^{2k-1} &=0, \\
\tn{dim}\; \mch^{2k}    &\leq 2. \n
\end{align}
It follows that for $k\geq 1$,
\begin{align}\label{dimboundC}
 \tn{dim}\; \mcc^{2k-1} &\leq 2, \\
\tn{dim}\; \mcc^{2k}    &=0. \n
\end{align}
\end{thm}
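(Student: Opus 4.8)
The plan is to follow the general recipe of Bryant \& Griffiths \cite{Bryant1995} for the characteristic cohomology of an involutive system, reducing the computation of $\mch^{(\infty)} = H^2(\iinf,\ed)$ to a symbol analysis on the principal coefficients of closed reduced 2-forms. First I would fix, as in the classical case Eq.~\eqref{eq:H0} and its prolonged analogue (Cor.~\ref{cor:harmonic}), the subspace $H^{(\infty)} \subset \Omega^2(\iinf)$ of reduced 2-forms in which every differentiated conservation law has a unique representative; under this normalization the order filtration $\mch^{(k)} = H^2(\I{k},\ed)$ is read off from the largest index of a generator $\theta_k$ appearing, equivalently from the top derivative $h_{k+1}$ of the Hopf coefficient involved. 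Writing a reduced 2-form $\Phi$ of order $\leq k+1$ and isolating its top-order coefficients — the principal symbol — I would then organize the closedness equation $\ed\Phi = 0$ according to the order filtration.

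The heart of the argument is the top-order part of $\ed\Phi = 0$. Here the only term that raises the order is the clause $-\theta_{j+1}\w\xi$ in the prolonged structure equation of Lemma~\ref{lem:prolongedstrt}, modified by the corrections $-(T_j^{\bar 2}\thetab_2 + \sum_s T_j^s\theta_s)$ whose behaviour is governed by the $T$-identities of Cor.~\ref{cor:Tidentity}. Collecting the terms of $\ed\Phi$ that live exactly one order above $\Phi$ produces a purely algebraic relation among the principal coefficients. I expect this relation to force the principal coefficient of a conservation law to be annihilated by $\delxb$ in the sense of Lemma~\ref{lem:lemma5.4} — that is, after normalizing by the appropriate power of $h_2$, to be a holomorphic object — so that at each order the space of admissible principal symbols is at most the two-dimensional span of a $(1,0)$-type datum and its conjugate. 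Feeding this back through the lower-order parts of $\ed\Phi = 0$ and invoking Lemma~\ref{lem:lemma5.4} and its generalization to pin down the residual freedom as constants, I would conclude $\dim\mch^{k} \leq 2$.

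The parity gap — odd graded degrees contributing nothing while even ones contribute at most $2$ — is the subtle point, and is where I expect the main obstacle to lie. Mechanically it stems from the fact that $\delx$ raises the $K^j$-weight by one whereas $\delxb$ preserves it (cf. the commutation relation $\delxb\delx - \delx\delxb = \frac{j}{2}R$ of Eq.~\eqref{3anticommut} on sections of $K^j$), so that the symbol map linking consecutive orders should be injective on one parity and acquire a two-dimensional cokernel on the other. Concretely, I would show that in odd graded degree the symbol is injective, forcing the principal part — and hence, by the previous paragraph, the entire new contribution — to vanish, giving $\dim\mch^{2k-1} = 0$; while in even graded degree exactly the conjugate pair produced by the recursion (the classes written $\Phi_{a^{2k+1}},\,\Phi_{\ol a^{2k+1}}$ in Thm.~\ref{thm:higherNoether}) can survive, giving $\dim\mch^{2k} \leq 2$. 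Getting the weight book-keeping and the $T_j^s$-corrections of Cor.~\ref{cor:Tidentity} exactly right in this symbol computation is the delicate part, and is presumably where the \texttt{Maple} verification of the first several terms is indispensable for guessing the correct inductive normal form.

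Finally, the bounds on the un-differentiated spaces $\mcc^{k}$ follow by transporting the graded estimates across the isomorphism $\mcc^{(\infty)} \simeq H^2(\iinf,\ed) = \mch^{(\infty)}$ coming from the long exact sequence of $0 \to \iinf \to \Omega^*(\xinf) \to \Omega^*(\xinf)/\iinf \to 0$, equivalently from the connecting map $\ed_1$ of the exact sequence Eq.~\eqref{eq:exactsequence}. Since $\ed$ applied to a $1$-form of order $k+1$ raises the order by exactly one — again through the term $-\theta_{j+1}\w\xi$ — this isomorphism shifts the grading, $\mcc^{n} \simeq \mch^{n+1}$; combined with $H^1(\X{k},\C) = 0$ noted after the definition of the prolongation, which guarantees the passage between differentiated and un-differentiated representatives is unobstructed, this turns $\dim\mch^{2k} \leq 2$ into $\dim\mcc^{2k-1} \leq 2$ and $\dim\mch^{2k+1} = 0$ into $\dim\mcc^{2k} = 0$, which is the second assertion.
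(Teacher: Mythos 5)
Your proposal follows the paper's own route: reduce a closed $2$-form in the ideal to a normal form governed by a pair of principal coefficients, show that the closedness equation forces these to be $\delxb$-annihilated after normalization by a power of $h_2$, invoke Lemma~\ref{lem:lemma5.4} to pin them down as constant multiples of $h_2^{-\frac{2k+1}{2}}$, $\hb_2^{-\frac{2k+1}{2}}$ (hence at most two dimensions of new data per even order), and transfer the bounds to $\mcc^{n}$ through the degree-shifting correspondence $\mcc^{n}\simeq\mch^{n+1}$. This is exactly the content of Prop.~\ref{prop:roughnormal}, Lemma~\ref{lem:lemma5.4}, and the short argument of Sec.~\ref{sec:dimensionbound}.

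The one place you diverge is the parity gap, and your proposed mechanism there is both unnecessary and incorrectly justified. You attribute the gap to ``$\delx$ raises the $K^j$-weight by one whereas $\delxb$ preserves it'' and hope for a symbol map that is injective in odd graded degree; but $\delxb$ does not preserve the weight --- it lowers it by one (see the diagram \eqref{3operators} and the commutation relation \eqref{3anticommut}), so the asserted asymmetry is not available as stated. In the paper no separate injectivity argument is needed: the odd vanishing is a corollary of the same normal form you set up for the even bound. The free data of a conservation law in $\mch^{(2k)}$ is exactly the pair $(B^{1,2k},C^{1,2k})$ sitting at $\SO(2)$-weight $\pm(2k+1)$; all remaining weight-$(2k+1)$ coefficients equal $\pm B^{1,2k}$, $\pm C^{1,2k}$ (Step 3 of Prop.~\ref{prop:roughnormal}), and the weight-$2k$ coefficients $B^{1,2k-1}$, $S^{2k}$, $A^{2k}$ are first derivatives of them (part d) and Step 9). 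Hence an element of $\mch^{(2k-1)}$, having no $\theta_{2k}$-terms, has vanishing principal coefficients and therefore descends two prolongation levels at once, to $\X{2k-2}$ --- which is precisely $\dim\mch^{2k-1}=0$. Your paragraph establishing the at-most-two bound already contains everything needed for this cascade; the symbol-parity speculation should be replaced by this direct consequence of the normal form.
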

We will show in the later section that $\tn{dim}\,\mch^{2k}=\tn{dim}\,\mcc^{2k-1} = 2$ for every $k\geq 1$.
Combined with Remark \ref{directsum}, this shows that there exists an infinite sequence of nontrivial higher-order conservation laws for the EDS for CMC surfaces.

\subsection{Rough normal form}\label{sec:roughnormal}
With a view to establishing the dimension bounds \eqref{dimbound} simultaneously,
we shall give an analysis for the even case $\mch^{(2k)} .$
The vanishing of  $\mch^{2k-1}$ follows from this by setting certain coefficients to zero.

Let us make one technical assumption on $k$, restricted only to this sub-section:
$$ k\geq3.$$
This is in order to give a uniform treatment. The cases $k=1, \,2$ can be treated by a separate but similar computation.

We give an outline of the analysis in this sub-section.  Given a conservation law $\Phi\in \mch^{(\infty)}$, we utilize the equation $\ed\Phi=0$ in an inductive manner to reduce and successively normalize the coefficients of $\Phi$. This leads to a subspace of higher-order reduced 2-forms $H^{(\infty)}\subset F^1\Omega^2$ in which every conservation law has a unique representative. The analysis shows that there exists a pair of distinguished principal coefficients for a conservation law $\Phi\in \mch^{(\infty)}$, which determines the order of the conservation law. The proposed dimension bounds follow from the analysis of the canonical differential equation satisfied by the principal coefficients.

\two
Let $\Phi\in\mch^{(2k)}$. From the results established in \cite{Fox2011} one may write
\be\label{eq:Phiform0}
\begin{split}
\Phi&=A \Psi +\theta_0 \w \sigma   \\
&\quad+\sum_{1\leq i < j \leq 2k}    \left( B^{i,j}\theta_i \w \theta_j    + C^{i,j}\thetab_i \w \thetab_j\right)\\
&\quad+\sum_{1\leq i \leq j \leq 2k} \left( F^{i,j}\theta_i \w \thetab_j  + G^{i,j}\thetab_i \w \theta_j \right),
\end{split}
\ee
where  $F^{i,i}=-G^{i,i}$, and
$$\sigma=S^{\xi}\xi+S^{\xib}\xib+S^0\theta_0+
\sum_{j=1}^{2k}(S^j\theta_j+S^{\bar{j}}\thetab_j)+S^{2k+1}\eta_{2k+1}+S^{\ol{2k+1}}\etab_{2k+1}.$$

In order to facilitate the differential analysis computation, let us introduce
the following weights for the 1-forms on $\mcf^{(\infty)}$.
\one
\begin{center}
\begin{tabular}{ccccccc}
&\vline&\tn{weight} &\vline\vline&  &\vline&\tn{weight}\\
\hline
$\xi$&\vline& $-1$ &\vline\vline& $\theta_j$&\vline& $j$\\
$\xib$&\vline& $1$ &\vline\vline& $\thetab_j$&\vline& $-j$\\
$\theta_0$&\vline& $0$ &\vline\vline& $\eta_j$&\vline&$j$\\
& &  &\vline\vline& $\etab_j$&\vline& $-j$
\end{tabular}
\end{center}
\one
The prolonged structure equation in Lemma ~\ref{lem:prolongedstrt} shows that these correspond to the weights under the induced action by the structure group $\SO(2)$ of the principal bundle $\mcf^{(\infty)}\to\xinf$. The weight can be extended to the monomial 2-forms in a natural way,
e.g., $\tn{weight} (\theta_i\w\eta_j) =i+j$, etc.
Note for example that the components of the 2-form $\Psi$ have weight $0.$

The conservation law $\Phi$ in ~\eqref{eq:Phiform0} now admits the corresponding weight decomposition as follows.
\begin{align}\label{eq:Phiform1}
\Phi&=A \Psi +\theta_0 \w \sigma   \\
&\quad+\sum_{\substack{1\leq i < j \leq 2k \\ i+j\leq 2k+1}}
\left( B^{i,j}\theta_i \w \theta_j    + C^{i,j}\thetab_i \w \thetab_j\right) + \Phi' \n\\
&\quad+\sum_{1 \leq j \leq 2k} \left( F^{1,j}\theta_1 \w \thetab_j  + G^{1,j}\thetab_1 \w \theta_j \right)
+\Phi^{''}, \n
\end{align}
where
\begin{align}
\Phi'&=\sum_{\substack{1\leq i < j \leq 2k \\ i+j\geq 2k+2}}
\left( B^{i,j}\theta_i \w \theta_j    + C^{i,j}\thetab_i \w \thetab_j\right), \n \\
\Phi^{''}&=\sum_{2\leq i \leq j \leq 2k} \left( F^{i,j}\theta_i \w \thetab_j  + G^{i,j}\thetab_i \w \theta_j \right).\n
\end{align}

\two
By definition of conservation law, $\,\Phi$ is a closed 2-form, $\,\ed\Phi=0.$
This is a priori a system of first order linear differential equations for the coefficients of $\,\Phi$.
On the other hand, considering the weight decomposition of $\,\Phi$,
this equation also implies a set of linear equations for the coefficients of $\,\Phi$.
For example, one may check by direct computation that one of the consequences of the equation
$\,\ed\Phi\equiv 0\mod \theta_0, \theta_1,\thetab_1$ is that $\,B^{2k-1,2k}=C^{2k-1,2k}=0$,
the highest weight coefficients must vanish.\footnotemark
\footnotetext{See \textbf{Step 1} of the proof of Prop.~\ref{prop:roughnormal}.}

The strategy of our analysis is to apply the argument of this kind repeatedly to reduce
the algebraic normal form of $\,\Phi$.
In the course of analysis, the subset of coefficients of weight $\,2k+1$ will play a distinguished role.
\begin{defn}
Let $\Phi\in\mch^{(2k)}$ be a differentiated conservation law written in decomposed form ~\eqref{eq:Phiform1}.
The \tb{principal coefficients} are
$$\{\,B^{i,j},\, C^{i,j}\,\;|\;\; i+j=2k+1  \; \}\cup\{\, S^{2k+1},\,S^{\ol{2k+1}}\,\}.$$
\end{defn}
\begin{defn}
 Let $\I{2k}$ be the $2k$-th prolongation of the differential ideal for CMC surfaces.
 The subspace of \tb{reduced 2-forms} $H^{(2k)}\subset \Omega^2(\I{2k})$ is
 the subspace of 2-forms specified by the following normalization.
\begin{align}\label{eq:H2k}
\Phi&=A \Psi +\theta_0 \w \sigma   \\
&\quad+\sum_{\substack{1\leq i < j \leq 2k \\ i+j\leq 2k+1}}
\left( B^{i,j}\theta_i \w \theta_j    + C^{i,j}\thetab_i \w \thetab_j\right)  \n\\
&\quad+\sum_{1 \leq j \leq 2k-2} \left( F^{1,j}\theta_1 \w \thetab_j  + G^{1,j}\thetab_1 \w \theta_j \right), \n
\end{align}
where $F^{1,1}=-G^{1,1}$, and the principal coefficients are determined by
\be\label{eq:H2krelations}\begin{array}{rlrll}
B^{j, 2k+1-j}&=(-1)^{j+1}\,B^{1,2k}, & & & \\
C^{j, 2k+1-j}&=(-1)^{j+1}\,C^{1,2k}, &   \tn{for} \;\; j&= 1,  2, \,...   \, k, &       \\
A^{2k+1}&=\im\,B^{1,2k},& S^{2k+1}&=-B^{1,2k},&\\
A^{\ol{2k+1}}&=-\im\,C^{1,2k},&  S^{\ol{2k+1}}&=-C^{1,2k}.&
\end{array}\ee
Denote the entire space of reduced 2-forms by 
$$H^{(\infty)}=\cup_{k=0}^{\infty}H^{(2k)} \subset \Omega^2(\I{\infty}).$$
\end{defn}
\one

The main result of this sub-section is a rough normal form for a conservation law, and a rough structure equation for the principal coefficients. This would suffice for our purpose to give the proposed dimension bounds on the space of higher-order conservation laws.
\begin{prop}\label{prop:roughnormal}
Let $\Phi\in\mch^{(2k)}$ be a higher-order conservation law written in decomposed form ~\eqref{eq:Phiform1}.
Then
\begin{enumerate}[\qquad a)]
\item $\Phi\in H^{(2k)}.$

\item The principal coefficients $\,B^{1,2k}, \,C^{1,2k}$ satisfy the structure equations
\begin{align}
\ed B^{1,2k} +\im \, (2k+1) B^{1,2k}\rho &\equiv 0 \mod\;  \xi,   \theta_0,  \theta_1,  \thetab_1,
\theta_2, \theta_3, \, ... \, \theta_{k+1}, \n\\
\ed C^{1,2k} - \im \, (2k+1) C^{1,2k}\rho &\equiv 0 \mod\;  \xib, \theta_0,  \theta_1,  \thetab_1,
\thetab_2, \thetab_3, \, ... \, \thetab_{k+1}. \n
\end{align}

\item $(B^{ij})^{\bar{s}}=(C^{ij})^s=0 \;\; {\rm for} \; i, s \geq 2.$

\item $B^{1,2k-1}, \,S^{2k}$, and $C^{1,2k-1}, \,S^{\ol{2k}}$
lie in the linear span of the first order derivatives of $\, B^{1,2k}$, and  $\,C^{1,2k}$ respectively.
\end{enumerate}
\end{prop}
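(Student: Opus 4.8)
The plan is to impose the closedness condition $\ed\Phi=0$ on the weight-decomposed form~\eqref{eq:Phiform1} and to expand it with the prolonged structure equation of Lemma~\ref{lem:prolongedstrt}. The decisive feature of that equation is that, modulo $\rho$, $\theta_0$, and forms of strictly lower derivative order, $\ed\theta_j\equiv-\theta_{j+1}\w\xi$: differentiating a coefficient raises the derivative order by one in the $\xi$-direction. Hence for a monomial $B^{i,j}\theta_i\w\theta_j$ the top-order part of its exterior derivative is $B^{i,j}\bigl(\theta_{i+1}\w\theta_j+\theta_i\w\theta_{j+1}\bigr)\w\xi$, together with the correction terms $\tau_i'',\tau_j''$ carrying the $T_\ell^s\theta_s$ and $T_\ell^{\bar 2}\thetab_2$ contributions. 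First I would expand $\ed\Phi$ in this way and read off its vanishing monomial-by-monomial, in descending order of weight.

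The descending induction establishes part a) together with the relations~\eqref{eq:H2krelations}. Collecting the coefficient of the top-order monomial in $\ed\Phi\equiv0\mod\theta_0,\theta_1,\thetab_1$ forces the highest-weight coefficients $B^{2k-1,2k}=C^{2k-1,2k}=0$; this is \textbf{Step 1}. Descending one weight at a time, the coefficient of $\theta_a\w\theta_b\w\xi$ yields a linear relation of the shape $B^{a-1,b}+B^{a,b-1}\equiv(\text{first derivatives})$, which telescopes: it annihilates the high-weight part $\Phi'$ and produces the alternating-sign identities $B^{j,2k+1-j}=(-1)^{j+1}B^{1,2k}$, and likewise for the $C^{i,j}$ in the conjugate (barred) directions. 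The cross terms of $\Phi''$, namely the $F^{i,j},G^{i,j}$ with $i\geq2$, are removed in the same sweep: part of them is forced to vanish by closedness, and the remainder is absorbed by subtracting exact forms in $\ed(\Omega^1(\I{2k}))$ and of the form $\ed(f\theta_0)$, which also normalizes $A$, $\sigma$, and the high coefficients $F^{1,j},G^{1,j}$ for $j=2k-1,2k$. This lands $\Phi$ in the reduced space $H^{(2k)}$, giving a).

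With $\Phi$ in reduced form, I would extract parts b), c), d) from the components of $\ed\Phi=0$ transverse to $\xi,\theta_0,\theta_1,\thetab_1,\theta_2,\dots,\theta_{k+1}$. Reading $\ed\Phi=0$ in precisely those directions, and simplifying with the commutation relations of Corollary~\ref{lem:commutation} and the $T_j$-identities of Corollary~\ref{cor:Tidentity}, forces the covariant derivative $\ed B^{1,2k}+\im(2k+1)B^{1,2k}\rho$ to vanish modulo the listed forms; this is b). The same components collected in the barred directions give $(B^{i,j})^{\bar s}=0$ for $i,s\geq2$, and the conjugate computation gives $(C^{i,j})^{s}=0$, which is c). Finally, matching the subleading (weight $2k$) monomials expresses $B^{1,2k-1}$ and $S^{2k}$ as explicit linear combinations of the first-order derivatives of $B^{1,2k}$, with the conjugate statement for $C^{1,2k-1},S^{\ol{2k}}$; this is d). Throughout, the conjugate statements follow from the reality symmetry of the structure equation~\eqref{3strt2} under $\xi\leftrightarrow\xib$, $\theta_j\leftrightarrow\thetab_j$.

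The main obstacle is bookkeeping rather than any single conceptual step: $\ed\Phi=0$ is a large overdetermined linear system, and one must track which monomial $3$-forms each coefficient contributes to and verify that the descending recursion closes uniformly. The correction terms $\tau_\ell''$ couple adjacent weight levels through the $T_\ell^s$ and $T_\ell^{\bar2}$ coefficients, so the telescoping of the descending induction is consistent only because of the $T_j$-identities of Corollary~\ref{cor:Tidentity}; verifying this compatibility is the delicate part. The hypothesis $k\geq3$ is what makes the index ranges in Lemma~\ref{lem:prolongedstrt} and the sums $\sum_{s=2}^{j-1}$ generic, so that a single uniform recursion suffices, while the cases $k=1,2$ degenerate at the low indices and are handled by the same method but separately. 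As indicated in the introduction, the pattern is first located by computing the initial terms in \texttt{Maple} and then proved inductively in general.
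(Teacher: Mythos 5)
Your overall strategy --- expanding $\ed\Phi=0$ against the prolonged structure equation of Lemma~\ref{lem:prolongedstrt} and running a descending induction on weight --- is exactly the paper's, and your treatment of the high-weight part $\Phi'$, of the alternating-sign identities, and of parts b), c), d) corresponds to the paper's Steps 1, 3, 6, 7 and 9. But there is a genuine gap in how you dispose of the mixed terms. You claim that the $F^{i,j},G^{i,j}$ with $i\geq 2$, the coefficients $F^{1,2k-1},F^{1,2k},G^{1,2k-1},G^{1,2k}$, and the normalization of $A$ and $\sigma$ are partly ``absorbed by subtracting exact forms in $\ed(\Omega^1(\I{2k}))$ and of the form $\ed(f\theta_0)$''. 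This step is not available. The freedom to modify by exact forms has already been spent in putting $\Phi$ into the decomposed shape \eqref{eq:Phiform0}: an exact form $\ed(f\theta_0)$ contains the monomial $f\,\ed\theta_0=-f\,\Re(\theta_1\wedge\xi)$, and $\ed(f\theta_j)$ contains $-f\,\theta_{j+1}\wedge\xi$, neither of which occurs in \eqref{eq:Phiform1}; so subtracting such forms destroys the very shape on which your induction operates, and conversely no exact form can cancel a mixed monomial $F^{i,j}\theta_i\wedge\thetab_j$ while preserving \eqref{eq:Phiform1}. Moreover the proposition asserts coefficient identities --- parts b)--d) and the relations \eqref{eq:H2krelations} --- for the \emph{given} representative $\Phi$, and these would not survive a mid-proof modification of $\Phi$.

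What is actually required, and what the paper does in its Steps 2, 4, 5 and 8, is to show that all of these coefficients are forced by the closedness equation alone either to vanish or (for $A^{2k+1},S^{2k+1}$, $A^{\ol{2k+1}},S^{\ol{2k+1}}$) to be determined by $B^{1,2k},C^{1,2k}$. Killing $\Phi''$ needs a device absent from your proposal: because the exterior derivatives of mixed terms couple weight levels in both directions through the $\tau''_j$ corrections, the paper introduces the \emph{absolute weight} of the monomials $\theta_i\wedge\thetab_j\wedge\xi$ and $\theta_i\wedge\thetab_j\wedge\xib$ and runs a second descending induction on that quantity, reducing $\Phi''$ to the single candidate $2F^{2,2}\theta_2\wedge\thetab_2$ and then killing it by a final coefficient comparison. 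Similarly $F^{1,2k}=G^{1,2k}=0$ and $F^{1,2k-1}=G^{1,2k-1}=0$ come from collecting the $\thetab_1\wedge\eta_{2k+1}\wedge\xi$-terms and the $\thetab_2\wedge\theta_{2k-1}\wedge\xib$-terms respectively, and the relations $A^{2k+1}=\im B^{1,2k}$, $S^{2k+1}=-B^{1,2k}$ come from the $\theta_1\wedge\xi\wedge\eta_{2k+1}$- and $\thetab_1\wedge\xib\wedge\eta_{2k+1}$-terms. Without these arguments your proof of part a) is incomplete, and since your later steps presuppose that the mixed terms are gone, the gap propagates to b)--d).
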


We present the proof divided in $9$ steps.
\begin{proof}

\textbf{Step 1}. We prove that $\Phi'=0$.  From Eq.~\eqref{eq:Phiform1}, consider the equation $\ed \Phi\equiv 0\mod \;\theta_0,\theta_1,\thetab_1$.
Since
$$\Psi, \,\ed\Psi, \,\ed\theta_0\equiv 0\mod\;\theta_0,\theta_1,\thetab_1,$$
the terms $A\Psi+\theta_0\w\sigma$ do not have any contribution.

We shall make use of the following form of the structure equation in Lemma ~\ref{lem:prolongedstrt}.
\be\label{eq:mod01m1}
\begin{split}
\ed\theta_j+\im\,j\,\rho\w\theta_j &\equiv -\theta_{j+1}\w\xi-(T^{\ol{2}}_j\thetab_2+\sum_{s=2}^{j-1}T^s_j\theta_s)\w\xib
\mod \;\theta_0,\theta_1,\thetab_1,\quad j\geq 2,\\
\ed\theta_1+\im\,\rho\w\theta_1 &\equiv -\theta_2\w\xi,\,\mod\;\theta_0.
\end{split}
\ee

Note first that a mixed term of $\Phi$, a $\theta\w\thetab$-term for short, has the weight in the closed interval
$[-(2k-1),\,2k-1]$, whereas $\Phi'$ consists of the terms of weight bounded by $\geq 2k+2$, or by $\leq -(2k+2)$.

Let $\Phi'=\sum_{w=2k+2}^{4k-1}(\Phi'_w+\Phi'_{-w})$ be the  weight decomposition of $\Phi'$
into terms $\Phi'_w$ of weight $w$.
We apply the induction argument on decreasing $w$ and show that $\Phi'_w=0$  for all $\,w\geq 2k+2$.
The complex conjugate of this argument then implies $\Phi'_{-w}=0$, and hence $\Phi'=0$.

Let us proceed to prove this claim.
Consider first the $\theta_{2k-1}\w\eta_{2k+1}\w\xi$-term of weight $4k-1$ in
$\ed\Phi\mod \theta_0,\theta_1,\thetab_1$.  It is easily checked that the only contribution comes from
the derivative of $\Phi'_{4k-1}$, which consists of the single element $B^{2k-1,2k}\theta_{2k-1}\w\theta_{2k}.$
Taking the exterior derivative,
$$\ed (B^{2k-1,2k}\theta_{2k-1}\w\theta_{2k})=
B^{2k-1,2k} \theta_{2k-1}\w\eta_{2k+1}\w\xi+\,... \,.$$
This implies that the highest weight term  $\Phi'_{4k-1}=0$.

Suppose $\Phi'_{s+1}=0$ for $\, s \geq 2k+2$.
For $1\leq i<\frac{s}{2}$, consider the term $\theta_i\w\theta_{s-i+1}\w\xi$ of weight $s$ in
$\ed\Phi\mod \theta_0,\theta_1,\thetab_1$.
Since the exterior derivative of a $\theta\w\theta$-term does not increase the weight,
by the induction hypothesis  and \eqref{eq:mod01m1} the only contribution comes from
\be\label{Bterms}
\ed (B^{i,s-i}\theta_i\w\theta_{s-i})=
B^{i,s-i}(\theta_{i+1}\w\theta_{s-i}\w\xi+\theta_i\w\theta_{s-i+1}\w\xi+\,... \,,
\ee
and possibly from the exterior derivatives of the mixed terms.

But \eqref{eq:mod01m1} shows that the maximum weight of the $\theta\w\theta\w\xi$-terms
that one may get from the exterior derivative of the mixed terms,
specifically those from $\theta\w\ed\thetab$, is at most $2k+2-1=2k+1$, which is strictly less than $s\geq2k+2$.
By induction on decreasing $s$ from $s=4k-1$ to $\, 2k+2$,  one gets $\Phi'_w=0$
for all $w\geq 2k+2$,  and our claim follows.

\textbf{Step 2}.  We prove that $\Phi''=0$.  Consider the equation
\[
\ed \Phi\equiv 0\mod \;\theta_0,\theta_1,\thetab_1,\theta_2,\thetab_2.
\]
Collecting the  $\theta\w\thetab\w\xi,\,\tn{and}\; \theta\w\thetab\w\xib$-terms, a similar argument as the one given above
shows that the only contributions come from differentiating $\Phi^{''}$.
In order to analyze the 3-forms of these kinds, let us define the \emph{absolute weight} by
$$\tn{absolute weight}\,(\theta_i\w\thetab_j\w\xi ) =i+j-1, \;\; \tn{absolute weight}\,( \theta_i\w\thetab_j\w\xib)  =i+j+1.$$

Expanding $\ed\Phi^{''}$, we get
$$\sum_{2\leq i \leq j\leq 2k}
   F^{i,j}(\theta_{i+1}\w\thetab_j\w\xi+\theta_i\w\thetab_{j+1}\w\xib)
+G^{i,j}(\thetab_{i+1}\w\theta_j\w\xib+\thetab_i\w\theta_{j+1}\w\xi) + \, ... \,,$$
where the  remainder are the lower absolute weight terms.
By induction on the decreasing absolute weight, it follows that the only possibly nonzero term in $\Phi^{''}$ is
$2F^{2,2} \theta_2\w\thetab_2$.

Consider now the equation $\ed \Phi\equiv 0\mod \;\theta_0,\theta_1,\thetab_1$,  and we look for the terms that
contain $\theta_2,\,\thetab_2$.
Collecting the  $\theta\w\thetab\w\xi,\,\tn{and}\; \theta\w\thetab\w\xib$-terms,
the contribution from $\Phi^{''}$ is
$2F^{2,2}(\theta_3\w\thetab_2\w\xi+\theta_2\w\thetab_3\w\xib)$ modulo terms of lower absolute weight.
On the other hand,  the rest of $\Phi$ can only contribute the terms such as
$\thetab_2\w\theta_j\w\xib$, or $\theta_2\w\thetab_j\w\xi$.
As a consequence, $F^{2,2}=0$, and we have $\Phi^{''}=0$.

\textbf{Step 3}.  We prove that
\begin{align*}
B^{j, 2k+1-j}&=(-1)^{j+1}\,B^{1,2k}\\
C^{j, 2k+1-j}&=(-1)^{j+1}\,C^{1,2k} \; \;\; \text{for}  \; j= 1,  2, \,...   \, k.
\end{align*}
Consider the equation $\ed \Phi\equiv 0\mod \;\theta_0,\theta_1,\thetab_1$ again,
and we look for $\theta\w\theta\w\xi$-terms of weight $2k+1$.
From the normalization in Step 2, there is no contribution from the exterior derivative of the mixed terms in $\Phi$.

As remarked earlier, the  exterior derivative does not increase the weight of $\theta\w\theta$-terms.
Hence the highest weight $\theta\w\theta\w\xi$-terms in $\ed \Phi\mod\;\theta_0,\theta_1,\thetab_1$ are
$$\sum_{1\leq i\leq k}B^{i,2k+1-i}(\theta_{i+1}\w\theta_{2k+1-i} +\theta_{i}\w\theta_{2k+2-i})\w\xi.$$
This implies
$$ B^{i, 2k+1-i}=(-1)^{i+1}\,B^{1,2k},\quad \tn{for}\; 2\leq i \leq k.$$
Taking the complex conjugate of this analysis, we get
$$ C^{i, 2k+1-i}=(-1)^{i+1}\,C^{1,2k},\quad \tn{for}\; 2\leq i \leq k.$$

\textbf{Step 4}.  We prove that
\begin{align*}
A^{2k+1}&=\im\,B^{1,2k},\quad S^{2k+1}=-B^{1,2k}.\\
A^{\ol{2k+1}}&=-\im\,C^{1,2k},\quad S^{\ol{2k+1}}=-C^{1,2k}.
\end{align*}
Consider the equation $\ed \Phi\equiv 0\mod \;\theta_0$ only,
and look for the $\theta_1\w\xi\w\eta_{2k+1}$-terms. Since $\ed \Psi\equiv 0\mod\;\theta_0$, the only contributions for weight $2k+1$ are
$$
A^{2k+1}\Psi\w\eta_{2k+1}+\ed\theta_0\w(S^{2k+1}\eta_{2k+1})+B^{1,2k}\theta_1\w\eta_{2k+1}\w\xi,
$$
where $A^{2k+1}$ is the $\eta_{2k+1}$-derivative of $A$.  Recall $\Psi=-\frac{\im}{2}(\theta_1\w\xi-\thetab_1\w\xib),\; \, \ed\theta_0=-\frac{1}{2}(\theta_1\w\xi+\thetab_1\w\xib)$.
We consequently find that $-\frac{\im}{2}A^{2k+1}-\frac{1}{2}S^{2k+1}-B^{1,2k}=0$.  Turning to the coefficient of the $\thetab_1\w\xib\w\eta_{2k+1}$-term we uncover only that $\frac{1}{2}A^{2k+1}-\frac{1}{2}S^{2k+1}=0$.  Together these vanishing conditions imply that
\be\label{eq:A2k+1}
A^{2k+1}=\im\,B^{1,2k},\quad S^{2k+1}=-B^{1,2k}.
\ee
Taking the complex conjugate of this argument reveals that
\be\label{eq:A2k+1b}
A^{\ol{2k+1}}=-\im\,C^{1,2k},\quad S^{\ol{2k+1}}=-C^{1,2k}.
\ee

\textbf{Step 5}.  We prove that $ F^{1,2k}=G^{1,2k}=0$.
Consider first the equation $\ed \Phi\equiv 0\mod \;\theta_0,\theta_1,\xib$,
and we look for $\thetab_1\w\eta_{2k+1}\w\xi$-terms.
By  the  relations  $\Psi,\, \ed \Psi, \, \ed\theta_0 \equiv 0\mod \;\theta_0,\theta_1,\xib$ ,
there are  no contributions from differentiating $\,A\Psi+\theta_0\w\sigma$.

It is clear then the only contribution comes from differentiating the mixed terms.  We compute
$$\ed (G^{1,2k}\thetab_1\w\theta_{2k})=G^{1,2k}\thetab_1\w\eta_{2k+1}\w\xi+\, ... \,$$
so that $G^{1,2k}=0$. Taking the complex conjugate of this argument produces
$$ F^{1,2k}=0.$$

At this stage, note that when the two principal coefficients $\,B^{1,2k}, \,C^{1,2k}$ vanish,
$\Phi$ is semi-basic for the projection $\X{2k} \to \X{2k-1}$ and it lies in the algebraic span of the (pull-back of) exterior 2-forms on $\X{2k-1}$. 
 In fact since $\Phi$ is a closed 2-form, this implies that $\Phi$ is defined on $\X{2k-1}$.

\textbf{Step 6}. We prove that $ (B^{i,j})^{\ol{s}}=(C^{i,j})^{s}=0$ for $i>2$.
Consider the equation $\ed \Phi\equiv 0\mod \;\theta_0,\theta_1,\thetab_1,\xi,\xib$.
The structure equation \eqref{eq:mod01m1} shows that the only contribution comes from differentiating
the coefficients $B^{i,j}, \,C^{i,j}$.

Note first that all the $\eta_{2k+1},\,\etab_{2k+1}$-derivatives of $B^{i,j}, \,C^{i,j}$ vanish.
Collecting the $\thetab\w\theta\w\theta$-terms, and $\theta\w\thetab\w\thetab$-terms, we get
$$ (B^{i,j})^{\ol{s}}=(C^{i,j})^{s}=0,\quad \tn{for}\;\; i\geq 2, \;\;  s=2, 3,\, ...\, 2k+1.$$
Here $(B^{i,j})^{\ol{s}}$ denotes the $\thetab_s$(or $\etab_s$)-derivative of $B^{i,j}$,
and similarly for $(C^{i,j})^{s}$.

\textbf{Step 7}. We prove part b) of the Proposition.  Collecting the $\theta\w\theta\w\theta$-terms, the contributions are from
(modulo $\theta_0,\theta_1,\thetab_1,\xi,\xib$)
\begin{align}
&\ed B^{1,2k}\w(\theta_1\w\theta_{2k}-\theta_2\w\theta_{2k-1}+\,...\,+(-1)^{k+1}\theta_k\w\theta_{k+1})\\
&\quad+ \sum_{\substack{2\leq i<j\leq2k\\ i+j\leq2k}}\,\ed B^{i,j}\w\theta_i\w\theta_j.\n
\end{align}
Evaluating this $\mod \theta_2,\theta_3, \,...\,,\theta_{k-1}$, we get
$$ (B^{1,2k})^s=0,\quad \tn{for}\;\; s=k+2, k+3,\, ...\, (2k+1).$$

Consider now the equation $\ed \Phi\equiv 0\mod \;\theta_0,\theta_1,\thetab_1,\thetab_2, \xi$.
Collecting the $\theta_2\w\theta_{2k-1}\w\xib$-terms, the only contribution is from
$$\ed (B^{2,2k-1}\theta_2\w\theta_{2k-1})=(B^{2,2k-1})^{\xib}\theta_2\w\theta_{2k-1}\w\xib+\,...$$
because $B^{j,2k-1}=0$ for $j>2$.  This implies that $(B^{1,2k})^{\xib}= (-B^{2,2k-1})^{\xib}=0$.

Considering the complex conjugate of these arguments,
we have at this stage the following structure equation for the principal coefficients.
\be
\begin{split}
\ed B^{1,2k}+\im\,(2k+1) B^{1,2k}\rho&\equiv 0 \mod\;\; \xi, \theta_0, \theta_1,\thetab_1,
\theta_2,  \theta_3, \, ... \,  \theta_{k+1}, \n \\
\ed C^{1,2k}-\im\,(2k+1) C^{1,2k}\rho&\equiv 0 \mod\;\; \xib, \theta_0, \theta_1,\thetab_1,
\thetab_2,\thetab_3, \, ... \, \thetab_{k+1}. \n
\end{split}
\ee

\textbf{Step 8}.
Consider the equation $\ed \Phi\equiv 0\mod \;\theta_0,\theta_1,\thetab_1,\theta_2,\xi$,
and collect $\thetab_2\w\theta_{2k-1}\w\xib$-terms. Since $\,T_2=T_{\ol{2}}=0$, the only contribution comes from
$$\ed (G^{1,2k-1}\thetab_1\w\theta_{2k-1})=G^{1,2k-1}\thetab_2\w\theta_{2k-1}\w\xib+\, ... \, .$$
Taking the complex conjugate, we get
$$F^{1,2k-1}=G^{1,2k-1}=0.$$

\textbf{Step 9}. We finally prove part d) of the Proposition.  One finds that the coefficient of the term $\theta_i \w \theta_j \w \xi$ for $2 \leq i < j \leq 2k$ is
\[
(B^{ij})^{\xi}+B^{i-1,j}+B^{i,j-1}=0.
\]
Consider the subset of identities $(B^{j,2k+1-j})^{\xi}+B^{j-1,2k+1-j}+B^{j,2k-j}=0$ for $j=2 \ldots k$.  Using this along with the first identity in Eq.~\eqref{eq:H2krelations} we compute
\begin{align*}
(k-1)(B^{1,2k})^{\xi}&=\sum_{j=2}^{k}(-1)^j (B^{j,2k+1-j})^{\xi}\\
&=\sum_{j=2}^{k}(-1)^j (B^{j-1,2k+1-j}+B^{j,2k-j}  )=-B^{1,2k-1}
\end{align*}
where the last equality arises from the cancellation of terms in the penultimate sum.  Thus $B^{1,2k-1} \equiv 0 \; \mod \; (B^{1,2k})^{\xi}$. The complex conjugate of this argument gives the analogous result for $C^{1,2k-1}$.

Consider next the equation $\ed \Phi\equiv 0\mod \;\theta_0$,
and collect $\theta_1\w\theta_{2k}\w\xi, \; \thetab_1\w\theta_{2k}\w\xib$-terms.
Using the vanishing results already established we find the contributions
$$
A^{2k}\Psi\w\theta_{2k}+\ed\theta_0\w(S^{2k}\theta_{2k})+(B^{1,2k})^{\xi} \theta_1\w\theta_{2k}\w\xi+B^{1,2k-1} \theta_1\w\theta_{2k}\w\xi.
$$
This implies that
\[
\frac{\im}{2}A^{2k}+\frac{1}{2}S^{2k}+(B^{1,2k})^{\xi}+B^{1,2k-1}=0.
\]
Collecting $\; \thetab_1\w\theta_{2k}\w\xib$-terms only involves
$$
A^{2k}\Psi\w\theta_{2k}+\ed\theta_0\w(S^{2k}\theta_{2k})
$$
and produces the vanishing condition
\[
-\frac{\im}{2}A^{2k}+\frac{1}{2}S^{2k}=0
\]
which implies $S^{2k}=\im A^{2k}$.  These two vanishing conditions imply that
$$ A^{2k}=\im\,(B^{1,2k})^{\xi}+ \im \, B^{1,2k-1}.
$$
Thus $A^{2k}$ and $ S^{2k}$ vanish modulo $(B^{1,2k})^{\xi}$ because we have already established this for $B^{1,2k-1}$.  Taking the complex conjugate of this argument leads to the analogous conditions for $ A^{\ol{2k}}$ and $S^{\ol{2k}}.$
\end{proof}
\begin{cor}[Hodge theorem]\label{cor:harmonic}
There exists an isomorphism
$$\Cv{(\infty)}\simeq \{ \, \tn{closed 2-forms in $H^{(\infty)}$} \, \}.
$$
\end{cor}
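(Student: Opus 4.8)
The plan is to prove this as the higher-order counterpart of the classical isomorphism $\mch^{(0)}\simeq\{\,\tn{closed 2-forms in }H^{(0)}\,\}$, reusing the transversality mechanism of the classical case but now fed by Proposition~\ref{prop:roughnormal}. I would set up the tautological linear map
\[
\Theta\colon\{\,\tn{closed 2-forms in }H^{(\infty)}\,\}\to \Cv{(\infty)}=H^2(\iinf,\ed),\qquad \Phi\mapsto[\Phi],
\]
which is well defined because a closed reduced 2-form is in particular a closed 2-form in the ideal and so determines a class in $H^2(\iinf,\ed)$. Since both sides are exhausted by order, $\Cv{(\infty)}=\cup_k\mch^{(2k)}$ and $H^{(\infty)}=\cup_k H^{(2k)}$, I would check that $\Theta$ respects the order filtration and argue at each fixed order $2k$, then assemble. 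It then suffices to prove that $\Theta$ is surjective and injective.

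For surjectivity I would invoke Proposition~\ref{prop:roughnormal}(a). Given a class in $\mch^{(2k)}$, represent it by a closed 2-form in the ideal of order at most $2k+1$; the reduction to the decomposed shape \eqref{eq:Phiform1}, following \cite{Fox2011}, proceeds modulo exact 2-forms in the ideal and so alters neither the closedness nor the class, and part (a) of the proposition says the result lies in $H^{(2k)}$. Being cohomologous to a closed form it is itself closed, so every class carries a closed reduced representative. The low-order cases $k=0,1,2$ I would dispatch by the classical computation of Section~\ref{sec:classicallaws} together with the parallel computations noted after Proposition~\ref{prop:roughnormal}.

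Injectivity is the heart of the matter and amounts to the transversality statement $H^{(\infty)}\cap\ed\big(\Omega^1(\iinf)\big)=0$: a closed reduced 2-form that is exact must vanish. My plan is to obtain this the way the classical case obtains transversality of $H^{(0)}$ to $\ed(F^1\Omega^1(X))$ — the normalization \eqref{eq:H2k}--\eqref{eq:H2krelations} defining $H^{(2k)}$ is precisely the complement to the exact forms furnished by the Bryant--Griffiths recipe \cite{Bryant1995}, in which every differentiated conservation law has a \emph{unique} representative. Concretely I would verify this by re-running the inductive weight reductions of Proposition~\ref{prop:roughnormal} (Steps 1--9) on an exact form $\Phi=\ed\alpha$ and checking that the successive normalizations $\mod\;\theta_0,\theta_1,\thetab_1,\ldots$ force every reduced coefficient to zero, so that no nonzero exact form survives in the reduced shape.

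I expect the injectivity/transversality step to be the main obstacle. Surjectivity is essentially a direct corollary of Proposition~\ref{prop:roughnormal}, but confirming that the explicit subspace $H^{(2k)}$ cut out by \eqref{eq:H2k}--\eqref{eq:H2krelations} genuinely meets the exact forms only in $0$ requires care: the defining relations mix purely algebraic constraints on the coefficients with the differential relations tying the principal coefficients $B^{1,2k},C^{1,2k}$ to $A^{2k+1},S^{2k+1}$ and their conjugates, so one must rule out that an exact form satisfies all of them simultaneously except trivially. Once transversality is secured, $\Theta$ is a bijection, and the ``harmonic representative'' interpretation — each conservation law class having a unique closed reduced representative — yields the stated Hodge-type isomorphism.
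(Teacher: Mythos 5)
Your overall architecture coincides with the paper's implicit one: the paper states Cor.~\ref{cor:harmonic} as an immediate consequence of Prop.~\ref{prop:roughnormal}, where surjectivity of your map $\Theta$ is exactly part (a) (every class, after the reduction modulo exact forms of \cite{Fox2011}, acquires a closed representative in $H^{(2k)}$), and injectivity is the transversality of the reduced subspace to $\ed\bigl(\Omega^1(\iinf)\bigr)$, held to be ``by construction'' as in the classical case $H^{(0)}$. Your surjectivity step and the assembly over the order filtration are fine.

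The gap is in your concrete plan for injectivity. Steps 1--9 of Prop.~\ref{prop:roughnormal} are driven by the single hypothesis $\ed\Phi=0$. An exact form $\Phi=\ed\alpha$ satisfies this automatically, so re-running those steps on it can only reproduce the conclusions of the proposition --- membership in $H^{(2k)}$ and the structure equations for the principal coefficients --- and nothing more; in particular they cannot ``force every reduced coefficient to zero'' (if they could, they would prove that \emph{every} conservation law vanishes). The hypothesis of exactness never enters your proposed computation, yet transversality is precisely a statement about exact forms.

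What is actually needed is a separate, purely algebraic monomial count that takes $\Phi=\ed\alpha$ as input. Write $\alpha=f_0\theta_0+\sum_{j\geq1}(f_j\theta_j+g_j\thetab_j)$ (finite sum, top order $N\geq1$). By Lemma~\ref{lem:prolongedstrt}, the only source of the monomial $\theta_{N+1}\w\xi$ in $\ed\alpha$ is $f_N\,\ed\theta_N$, contributing $-f_N\,\theta_{N+1}\w\xi$; every other term of $\ed\alpha$ produces monomials of the types $\theta_i\w\theta_j$, $\thetab_i\w\thetab_j$, $\theta\w\thetab$, $\theta_0\w(\cdot)$, $\theta_1\w\xi$, $\thetab_1\w\xi$, $\theta_s\w\xi$ with $s\leq N$, or their conjugates. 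Since the reduced shape \eqref{eq:H2k} contains no $\theta_i\w\xi$ term with $i\geq2$, membership $\ed\alpha\in H^{(2k)}$ forces $f_N=0$, and downward induction kills all $f_j$, and by conjugation all $g_j$, for $j\geq1$. One is left with $\alpha=f_0\theta_0$, and comparing
\[
f_0\,\ed\theta_0=-\tfrac{f_0}{2}\bigl(\theta_1\w\xi+\thetab_1\w\xib\bigr)
\qquad\tn{with}\qquad
A\Psi=-\tfrac{\im A}{2}\bigl(\theta_1\w\xi-\thetab_1\w\xib\bigr),
\]
the only admissible source of $\theta_1\w\xi$ and $\thetab_1\w\xib$ terms in \eqref{eq:H2k} modulo $\theta_0$, yields $f_0=\im A$ and $f_0=-\im A$, hence $f_0=A=0$ and $\Phi=\ed\alpha=0$. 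Replacing your injectivity step by this argument closes the proof; note it uses only the algebraic shape of $H^{(2k)}$, not the differential relations \eqref{eq:H2krelations}, so your worry about the mixed algebraic/differential constraints does not arise.
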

Cor.~\ref{cor:harmonic}, which identifies a conservation law originally defined as a cohomology class with a closed $2$-form in the subspace $H^{(\infty)}$, resembles the Hodge theorem for the existence of harmonic representatives of de Rham classes. 
We shall freely identify a differentiated conservation law with a reduced closed 2-form from now on.
\begin{cor}
\begin{align*}
\mch^{(\infty)}  &\simeq\mch^0\oplus_{k=1}^{\infty} \, \mch^{2k}, \\
\mcc^{(\infty)}  &\simeq\mcc^0\oplus_{k=1}^{\infty} \, \mcc^{2k-1}.
\end{align*}  
\end{cor}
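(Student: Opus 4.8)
The plan is to read both isomorphisms as the splitting of an exhaustive increasing filtration whose successive quotients are, by Theorem~\ref{thm:dimbound}, concentrated in alternating degrees. By definition $\mch^{(\infty)}=\bigcup_{k\geq 0}\mch^{(k)}$ and $\mch^{k}=\mch^{(k)}/\mch^{(k-1)}$, so the only substantive points are: (i) the maps $\mch^{(k)}\to\mch^{(\infty)}$ induced by $\I{k}\subset\I{\infty}$ are injective, so that $\{\mch^{(k)}\}$ is a genuine filtration of $\mch^{(\infty)}$ --- this is exactly the concern flagged in Remark~\ref{directsum}; and (ii) the odd graded pieces vanish. Granting (i), any filtered vector space splits as the direct sum of its successive quotients, and (ii) leaves only the even quotients.

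First I would settle (i) using the Hodge theorem, Cor.~\ref{cor:harmonic}, in its finite-level form: by Prop.~\ref{prop:roughnormal}~a) every class of $\mch^{(2k)}$ has a unique representative as a closed reduced $2$-form in $H^{(2k)}$, and the normalizations \eqref{eq:H2k}--\eqref{eq:H2krelations} are nested, $H^{(2k-2)}\subset H^{(2k)}\subset H^{(\infty)}$ (a level-$(2k-2)$ reduced form has vanishing weight-$(2k+1)$ coefficients, so it trivially satisfies the level-$2k$ relations). Hence if $\Phi\in\mch^{(2k)}$ is nonzero its reduced representative $\omega\in H^{(2k)}$ is a nonzero closed form in $H^{(\infty)}$, and by uniqueness in Cor.~\ref{cor:harmonic} the same $\omega$ represents the image of $\Phi$ in $\mch^{(\infty)}$, which is therefore nonzero. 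With (i) in hand, Theorem~\ref{thm:dimbound} gives $\mch^{2k-1}=0$ for $k\geq 1$, whence $\mch^{(2k-1)}=\mch^{(2k-2)}$: the filtration grows only on the even levels, $\mch^{2k}=\mch^{(2k)}/\mch^{(2k-2)}$, and splitting yields $\mch^{(\infty)}\simeq\mch^0\oplus\bigoplus_{k\geq 1}\mch^{2k}$. A convenient complement to $\mch^{(2k-2)}$ inside $\mch^{(2k)}$ is detected by the principal symbol $\Phi\mapsto(B^{1,2k},C^{1,2k})$, whose kernel is $\mch^{(2k-2)}$ by Step~5 of Prop.~\ref{prop:roughnormal} together with $\mch^{2k-1}=0$.

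For the second identity I would transport the decomposition through the connecting homomorphisms $\ed\colon\mcc^{(k)}\to\mch^{(k)}$ arising from $0\to\I{k}\to\Omega^*(\X{k})\to\Omega^*(\X{k})/\I{k}\to 0$; these are injective because $H^1(\X{k},\C)=0$, and they are natural with respect to the level inclusions. Naturality produces a commuting ladder of filtrations, so the injectivity of $\mch^{(k)}\to\mch^{(k+1)}$ from (i) forces the composite $\mcc^{(k)}\hookrightarrow\mch^{(k)}\hookrightarrow\mch^{(k+1)}$, hence $\mcc^{(k)}\to\mcc^{(k+1)}$, to be injective as well. Thus $\{\mcc^{(k)}\}$ is a genuine filtration of $\mcc^{(\infty)}$, and the complementary vanishing $\mcc^{2k}=0$ for $k\geq 1$ from Theorem~\ref{thm:dimbound} shows it grows only on level $0$ and the odd levels, giving $\mcc^{(\infty)}\simeq\mcc^0\oplus\bigoplus_{k\geq 1}\mcc^{2k-1}$. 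The hard part is precisely step (i): a priori a conservation law nontrivial at finite order could become exact after further prolongation, and it is the uniqueness of reduced representatives supplied by the Hodge theorem that rules this out and makes the filtration --- and hence the direct sum --- well defined.
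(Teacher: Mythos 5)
Your argument is correct, and its backbone --- unique reduced (Hodge) representatives together with the nesting $H^{(2k)}\subset H^{(2k')}$ --- is exactly what the paper's own proof runs on, so the two proofs share their essential mechanism. They are organized differently in two respects, though. First, the paper's proof opens with an order-detection step that you skip: since the monomial components in \eqref{eq:H2k} are linearly independent, a closed $2$-form lying \emph{algebraically} in $H^{(2k)}$ has coefficients independent of $h_j,\hb_j$ for $j\geq 2k+2$, hence is genuinely defined on $\X{2k}$. This is what ties the algebraic grading of reduced forms back to the order filtration $\mch^{(k)}=H^2(\I{k},\ed)$; in your version that identification is outsourced entirely to the odd vanishing of Theorem~\ref{thm:dimbound} plus the abstract splitting of an exhaustive filtration of vector spaces --- legitimate, but it conceals where the grading actually comes from, and it makes your proof only as strong as the literal statement of Theorem~\ref{thm:dimbound}, whereas the paper's order-detection step is what makes that statement meaningful for reduced representatives. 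Second, you go further than the paper on the un-differentiated side: the paper's proof argues only the $\mch$ statement and leaves $\mcc$ implicit, whereas you derive it by pushing the filtration through the connecting homomorphism $\mcc^{(k)}\to\mch^{(k)}$, injective because $H^1_{dR}(\X{k},\C)=0$, and using naturality of the ladder to get injectivity of $\mcc^{(k)}\to\mcc^{(k+1)}$; that is a genuine and worthwhile addition, consistent with the paper's earlier remarks on the exact sequence. One small attribution slip: Prop.~\ref{prop:roughnormal}~a) gives \emph{existence} of a reduced representative, while the \emph{uniqueness} your injectivity argument needs (at both the finite and the infinite level) is the transversality packaged in Cor.~\ref{cor:harmonic}; since you do invoke the latter, the logic closes, but the phrase ``by Prop.~\ref{prop:roughnormal}~a) every class has a unique representative'' credits the wrong result.
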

\begin{proof}
Note that since the set of monomial components of the 2-form $\Phi$ in \eqref{eq:H2k} is linearly independent, if a conservation law $\Phi$ belongs to $H^{(2k)}$ algebraically then the equation $\ed\Phi=0$ implies that it is independent of the variables $h_j, \hb_j$ for $j\geq 2k+2$. Hence it is defined on $\X{2k}$. The corollary follows from the transversality of the subspace of 2-forms \eqref{eq:Phiform0} (see the footnote), and the linear inclusion relation $H^{(2k)}\subset H^{(2k')}$ for $k<k'$.
\end{proof}
\subsection{Dimension bounds}\label{sec:dimensionbound}
Thm. ~\ref{thm:dimbound} now follows from Prop. ~\ref{prop:roughnormal}

\emph{Proof of Thm. ~\ref{thm:dimbound}}.
Suppose the principal coefficients $B^{1,2k},C^{1,2k}$ vanish identically. Prop.~\ref{prop:roughnormal} shows that $\Phi$ is defined on $\X{2k-2}$, and hence  $\Phi\in\mch^{(2k-2)}$.
This shows that $\tn{dim}\,\mch^{(2k-1)}/\mch^{(2k-2)}=0$.

From the general theory of ODE,  it now suffices for the proposed dimension bound $\,\tn{dim}\,\mch^{(2k)}/\mch^{(2k-2)}\leq 2$
to show that the set of equations $\{\, B^{1,2k}=C^{1,2k}=0\,\}$ imposes exactly two independent \emph{algebraic} equations on $\mch^{(2k)}.$ This follows from the following lemma.
\begin{lem}\label{lem:lemma5.4}
Let $f: U\subset \xinf\to\C$ be a scalar function on an open subset $U\subset\xinf$ such that
\be\label{eq:Lemma5.4}
\delxb f =0.
\ee
Then $ f$ is a constant function.
\end{lem}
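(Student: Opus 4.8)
The plan is to show that $f$ is annihilated by every member of the coframe-dual frame $\{E_{\rho};\, \delx,\delxb, E_0, E_1, E_{\bar 1}, E_2, E_{\bar 2},\dots\}$ on $\F{\infty}_0$; since these span the tangent space, this forces $\ed f=0$, hence $f$ constant on any connected $U$. Two facts are available at the outset: because $f$ is a genuine function on $\xinf$ its pullback is $\SO(2)$-invariant, i.e.\ weight $0$, so $E_{\rho}f=0$; and the hypothesis is precisely $\delxb f=E_{\xib}f=0$.

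First I would kill all the conjugate derivatives $E_{\bar j}f$, $j\ge 2$, by a descending induction built on the commutation relations of Cor.~\ref{lem:commutation}. Since $f$ lives on some finite level $\X{k}$ we have $E_{\bar j}f=0$ for $j\ge k+2$, which starts the induction. If $E_{\bar j}f=0$ for all $j>m$ with $m\ge 2$, then $m+1\ge 3$ and the relation $[E_{\ol{m+1}},\delxb]=E_{\bar m}$ gives
\[
E_{\bar m}f=[E_{\ol{m+1}},\delxb]f=E_{\ol{m+1}}(\delxb f)-\delxb(E_{\ol{m+1}}f)=0,
\]
so $E_{\bar j}f=0$ for every $j\ge 2$; geometrically $f$ is holomorphic along the fibers of the prolongation tower. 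Next, $[\delx,\delxb]=-R\tfrac{\im}{2}E_{\rho}$ together with $E_\rho f=0$ yields $\delxb(\delx f)=0$, and applying $E_s$ ($s\ge2$) to $\delxb f=0$ produces the triangular system $\delxb(E_sf)+\sum_{j\ge s+1}T^{s}_{j}\,E_jf=0$, whose top equation reads $\delxb(E_{k+1}f)=0$.

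The remaining, and genuinely harder, task is to propagate the vanishing to the holomorphic and horizontal directions $\delx f$, $E_jf$ ($j\ge2$), $E_0f$, $E_1f$, $E_{\bar 1}f$. Here the asymmetry of the setup is the obstacle: $\delxb$ pairs cleanly with the conjugate index under bracketing, but the curvature term $-R\tfrac{\im}{2}E_{\rho}$ in $[\delx,\delxb]$ keeps the same induction from closing on the unbarred side, and for smooth $f$ the single equation $\delxb f=0$ is by itself only a first-order condition. My plan is to bring in the global structure of the fibration: each stage $\X{k}\to\X{k-1}$ has compact rational fibers $\cp{1}$, and a weight-$w$ function restricts on such a fiber to (a holomorphic section of a line bundle determined by) the complex coordinate $h_{k+1}$; for $f$, which is fiberwise holomorphic and of weight $0$, this restriction is a holomorphic function on $\cp{1}$, hence constant on the fiber, giving $E_{k+1}f=0$ and descending $f$ to $\X{k-1}$. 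Iterating down the tower reduces $f$ to a weight-$0$ function on $X$ with $\delxb f=0$; a final application of the same fiberwise-holomorphic/compact-fiber argument to the $\s{2}=\cp{1}$ fibers of $X\to M$, combined with the base structure equations \eqref{3strt2}, completes the descent and forces $f$ constant. I expect this compact-fiber (Liouville-type) step—and the bookkeeping showing that the relevant fiberwise line bundles force constancy exactly in the weight-$0$ case—to be the crux.
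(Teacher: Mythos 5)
Your opening reductions are sound: $E_{\rho}f=0$ from $\SO(2)$-invariance, and the descending induction $E_{\bar m}f=[E_{\ol{m+1}},\delxb]f=0$ via the commutation relations of Cor.~\ref{lem:commutation} correctly kills all conjugate fiber derivatives, so $f$ is fiberwise holomorphic; the triangular system $\delxb(E_sf)+\sum_{j\geq s+1}T^s_j E_jf=0$ is also correct. The gap is at the step you yourself flag as the crux: the compact-fiber Liouville argument. First, the lemma is stated for an arbitrary open subset $U\subset\xinf$; a fiber of $\X{k}\to\X{k-1}$ meets $U$ in an open piece of $\cp{1}$, which is not compact, so ``holomorphic on the fiber, hence constant'' is not available. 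Second, even if $U$ were saturated by fibers (as $\X{k}_0$ is for $k\geq 2$), fiberwise holomorphy on the affine part of the fiber does not produce a holomorphic function on all of $\cp{1}$: the natural functions on the prolongation, e.g.\ $z_3=h_2^{-\frac{3}{2}}h_3$, are polynomial in the affine fiber coordinates and blow up at the section at infinity, and nothing in the hypothesis $\delxb f=0$ prevents $f$ from doing the same. You would need an extension statement across the infinity locus, and no such statement is true in general.

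There is a structural reason no argument of this shape can close. Your proof uses only features that are insensitive to the sign or vanishing of $\gamma^2$ and to the precise coefficients $T_j$: the $\SO(2)$-weights, the brackets $[E_{\ol{s}},\delxb]=E_{\ol{s-1}}$, and the $\cp{1}$-fibration of the prolongation tower. All of these hold verbatim when $\gamma^2=0$, i.e.\ for the elliptic Liouville equation --- but there the conclusion is false: Liouville's equation is Darboux integrable (this is Lie's theorem, which the paper cites as the origin of the lemma), so nonconstant functions $f$ on the infinite prolongation with $\delxb f=0$ do exist; restricted to a fiber they are nonconstant holomorphic functions on the affine chart with a pole at infinity, which is exactly where your argument would break on them. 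Any correct proof must therefore use $\gamma^2\neq 0$ and the specific form of the $T_j$'s. This is what the paper does: it proves the stronger, induction-friendly Lem.~\ref{lem:lemma5.4'} (that $\delxb f=c\,h_2^{-\frac{1}{2}}$ forces $c=0$ and $f$ constant) by a long sequence of coframe adaptations, in which $\gamma^2\neq 0$ enters already at \textbf{Step 1} (where $\bt^1_1=\gamma^2$ must be inverted), and the decisive non-degeneracy $\bt^i_i\neq 0$ in the induction step is obtained from the explicit formula $T_i^{i-1}=a_{i-1,0}R-a_{(i-1),(i-3)}h_2\hb_2$ together with $\delxb(h_2^{-1}h_3)=R$ --- precisely the ingredients your approach never touches.
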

We postpone the proof of this lemma to the next sub-section.

\two
Lemma \ref{lem:lemma5.4} and b) of Prop.~\ref{prop:roughnormal} imply that $B^{1,2k},\,C^{1,2k}$ are constant multiple of
$h_2^{-\frac{2k+1}{2}},\,\hb_2^{-\frac{2k+1}{2}}$ respectively.
Hence the set of equations  $\{\, B^{1,2k}=C^{1,2k}=0\,\}$
imposes two algebraic equations, and only two equations on $\mch^{(2k)}$.
\hfill$\square$

\two
The preceding lemma suggests and justifies the following decomposition of $\mch^{2k}$.
\begin{defn}
Let $\mch^{2k}=\mch^{(2k)}/\mch^{(2k-1)}$ be the space of differentiated conservation laws of order $2k+1$.
The  \tb{$\,\pm$-decomposition} of $\mch^{2k}=\mch^{2k}_+\oplus\mch^{2k}_-$ is defined by
\begin{align}
\mch^{2k}_+&=\{\; [\Phi]\in \mch^{2k}\;|\;\;  \tn{the principal coefficient  $\,C^{1,2k}$ of $\,\Phi$ vanishes}\,\}, \n \\
\mch^{2k}_- &=\{\; [\Phi]\in \mch^{2k}\;|\;\;  \tn{the principal coefficient  $\,B^{1,2k}$ of $\,\Phi$ vanishes}\,\}.\n
\end{align}
\end{defn}
\begin{cor}
For $\,k\geq 1$,
$$\tn{dim}\,\mch^{2k}_{\pm}\leq 1.$$
\end{cor}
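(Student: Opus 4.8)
The plan is to package the two principal coefficients of a conservation law into an injective linear map $\mch^{2k}\to\C^2$ and then to observe that $\mch^{2k}_{+}$ and $\mch^{2k}_{-}$ land in the two coordinate axes. First I would record what Prop.~\ref{prop:roughnormal} already gives: part b) states that, modulo a list of horizontal $1$-forms that does \emph{not} contain $\xib$, the quantity $\ed B^{1,2k}+\im(2k+1)B^{1,2k}\rho$ vanishes. Since the explicit $\rho$-term contributes nothing to the $\xib$-component, this says precisely that $\delxb B^{1,2k}=0$; conjugately $\delx C^{1,2k}=0$.

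Next I would pass to weight-zero combinations so that Lem.~\ref{lem:lemma5.4} applies. The coefficient $h_2$ has weight $2$ (from $\ed h_2+2\im h_2\rho=h_3\xi$), while $B^{1,2k}$ has weight $-(2k+1)$, since $B^{1,2k}\,\theta_1\w\theta_{2k}$ is a monomial of the $\SO(2)$-invariant form $\Phi$. Hence $g:=h_2^{(2k+1)/2}\,B^{1,2k}$ is a weight-zero function, descending to (the double cover of) $\xinf$. Using $\delxb h_2=0$ together with $\delxb B^{1,2k}=0$ gives $\delxb g=0$, so by Lem.~\ref{lem:lemma5.4} (in its double-cover form Lem.~\ref{lem:lemma5.4'}) $g$ is constant. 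Therefore $B^{1,2k}=c_B\,h_2^{-(2k+1)/2}$ for some $c_B\in\C$, and symmetrically $C^{1,2k}=c_C\,\hb_2^{-(2k+1)/2}$.

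With this in hand, define $\Lambda\colon\mch^{2k}\to\C^2$ by $[\Phi]\mapsto(c_B,c_C)$. This descends to the quotient $\mch^{2k}=\mch^{(2k)}/\mch^{(2k-1)}$ because any representative of $\mch^{(2k-1)}=\mch^{(2k-2)}$ is defined on $\X{2k-2}$ and so has vanishing $\theta_{2k}$-coefficients. It is injective: if $c_B=c_C=0$ then both principal coefficients vanish, whence Prop.~\ref{prop:roughnormal} forces $\Phi$ to be defined on $\X{2k-2}$, i.e.\ $[\Phi]=0$; this is exactly the reduction used to prove Thm.~\ref{thm:dimbound}. Since by definition $\mch^{2k}_{+}$ is the locus $c_C=0$ and $\mch^{2k}_{-}$ the locus $c_B=0$, the map $\Lambda$ embeds each of them into a one-dimensional coordinate line, giving $\dim\mch^{2k}_{\pm}\leq 1$.

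The only genuinely delicate step is the constancy of $g$: because $h_2^{(2k+1)/2}$ is double-valued, one must either work on the Hopf double cover $\xinfh$ and invoke the generalized Lem.~\ref{lem:lemma5.4'}, or square the relation to argue single-valuedly on $\xinf$. Everything else is a formal consequence of Prop.~\ref{prop:roughnormal} and the injectivity already exploited in the proof of Thm.~\ref{thm:dimbound}.
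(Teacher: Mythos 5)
Your proof is correct and is essentially the paper's own argument: the paper likewise combines Prop.~\ref{prop:roughnormal}~b) with Lem.~\ref{lem:lemma5.4} to conclude that $B^{1,2k}$ and $C^{1,2k}$ are constant multiples of $h_2^{-\frac{2k+1}{2}}$ and $\hb_2^{-\frac{2k+1}{2}}$, so that a class in $\mch^{2k}=\mch^{(2k)}/\mch^{(2k-1)}$ is determined by the pair of constants $(c_B,c_C)$, with $\mch^{2k}_{\pm}$ cut out by the vanishing of one of them. Your weight-zero combination $g=h_2^{(2k+1)/2}B^{1,2k}$ and the injective map $\Lambda$ simply make explicit the steps the paper leaves implicit (including the passage to the double cover needed for the half-integer power).
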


In the Part 2 of the paper, we will show that $\tn{dim}\,\mch^{2k}_{\pm}=1$ for every $k\geq 1$
by producing a sequence of higher-order Jacobi fields and their associated conservation laws via an explicit recursion formula.

\subsection{Proof of Lem.~\ref{lem:lemma5.4}} 
This result is essentially due to Lie who classified the Darboux integrable (hyperbolic) Poisson equations, \cite[p182]{Goursat1896}:  the only Darboux integrable nonlinear Poisson equation $u_{xy}=F(u)$ is the Liouville equation $u_{xy} = Ae^{Bu}$ for constants $A, B.$\footnotemark\footnotetext{Darboux integrability is by definition the existence of a function $f$ on the (infinite) jet space for which $\ed f$ gives the characteristic. For the Poisson equation at hand one looks for such $f$ which is functionally independent from $z$ (or $\zb$).} 
The argument works for the elliptic case $u_{z\ol{z}}=F(u)$ without much change.

We propose the following generalization of  Lem.~\ref{lem:lemma5.4} which is
suitable for proof by induction.
\begin{lem}\label{lem:lemma5.4'}
Let $f: U\subset \xinf\to\C$ be a scalar function on an open subset $U\subset\xinf$ such that
$$\delxb f= c h_2^{-\frac{1}{2}}$$ 
for a constant $c$.  From the structure equation this implies that for some $k>0$
\be\label{eq:Lemma5.4'}
\ed f    \equiv c h_2^{-\frac{1}{2}}\xib \mod\;  \xi,   \theta_0,  \theta_1,  \thetab_1, \theta_2,
\theta_3,\,...\,\theta_{k}.
\ee
Then $c=0$ necessarily and $f$ is a constant function.
\end{lem}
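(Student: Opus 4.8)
The plan is to prove Lemma~\ref{lem:lemma5.4'} by induction on $k$, reading the hypothesis \eqref{eq:Lemma5.4'} as a list of pointwise conditions on the covariant derivatives of $f$ in the coframe \eqref{Finfframe}. First I would record what \eqref{eq:Lemma5.4'} says: since $\rho$ and the forms $\thetab_2,\thetab_3,\dots$ and $\theta_{k+1},\theta_{k+2},\dots$ do not occur on the right-hand side, it forces $f$ to be $\SO(2)$-invariant of weight $0$ (so $E_\rho f=0$), together with $f^{\bar\jmath}=0$ for all $j\ge 2$, $f^j=0$ for all $j\ge k+1$, and $\delxb f=f^{\xib}=c\,h_2^{-\frac12}$ (the components along $\xi,\theta_0,\theta_1,\thetab_1,\theta_2,\dots,\theta_k$ remain free). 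In particular $f$ is a genuine weight-$0$ function of finite order, and the conditions $f^{\bar\jmath}=0$ ($j\ge 2$) already eliminate all antiholomorphic jet dependence: $f^{\bar\jmath}=\partial_{\zb_j}f\cdot\hb_2^{-j/2}=0$ kills the $\zb_j$ for $j\ge3$, and then $f^{\bar 2}=h_2\,\partial_{s}f=0$ (with $s:=h_2\hb_2$) kills the modulus, so that $f$ is a function of the holomorphic variables $z_3,\dots,z_k$ and of the base $X$ alone.

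The inductive engine is the commutation relations of Corollary~\ref{lem:commutation}. Writing $g=E_k f=f^k$ for the top coefficient (of weight $-k$), I would compute, using $[E_{\xib},E_k]=-\sum_{j\ge k+1}T^k_j E_j$,
\[
\delxb(E_k f)=E_k(\delxb f)+[E_{\xib},E_k]f
=c\,E_k\!\bigl(h_2^{-\frac12}\bigr)-\sum_{j\ge k+1}T^k_j\,f^j .
\]
Because $E_k h_2=0$ for $k\ge2$ the first term vanishes, and because $f^j=0$ for $j\ge k+1$ the sum vanishes; hence $\delxb(E_k f)=0$. Since $\delxb h_2=0$, the weight-$0$ function $h_2^{k/2}E_k f$ is then annihilated by $\delxb$, so the lower-order ($c=0$) instance of the statement forces it to be constant. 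Thus $E_k f=\lambda\,h_2^{-k/2}$ for a constant $\lambda$, equivalently $\partial_{z_k}f=\lambda$.

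The crux is to show $\lambda=0$. Here I would feed $\partial_{z_k}f=\lambda$ back into $\delxb f=c\,h_2^{-\frac12}$, separating according to powers of $s$ via $\delxb z_j=h_2^{-j/2}T_j$ and the recursion \eqref{eq:Tidentity1}. Normalising by $h_2^{1/2}$, write $\tilde T_j:=h_2^{(1-j)/2}T_j=\tilde T_j^{(0)}(z)+s\,\tilde T_j^{(1)}(z)$; the identities $T_3=h_2(\gamma^2-h_2\hb_2)$ and $T_4=\tfrac52\gamma^2h_3-\tfrac72h_2\hb_2h_3$ give $\tilde T_3=\gamma^2-s$ and $\tilde T_4=z_3(\tfrac52\gamma^2-\tfrac72 s)$. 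Matching the $s^0$ and $s^1$ parts yields
\[
\sum_{j=3}^{k}\partial_{z_j}f\,\tilde T_j^{(0)}=c,\qquad
\sum_{j=3}^{k}\partial_{z_j}f\,\tilde T_j^{(1)}=0 ,
\]
while the remaining $h_2$-powers isolate the antiholomorphic base derivatives. The top index $j=k$ enters the second identity through $\tilde T_k^{(1)}$, whose leading term is a nonzero multiple of $z_{k-1}$; tracking this against the $\delxb$-constant contribution $\tilde T_3^{(1)}=-1$ forces $\lambda=0$. With $\lambda=0$ the function $f$ is independent of $h_k$, hence satisfies \eqref{eq:Lemma5.4'} with $k$ replaced by $k-1$, and the induction descends.

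Finally I would treat the base case, where $f$ depends only on $h_2,\hb_2$ and $X$. Matching powers of $h_2$ in $\delxb f=c\,h_2^{-\frac12}$ (the half-integer power $h_2^{-1/2}$ cannot be balanced by the integer powers arising from $\delxb$ of base functions and from $\hb_2=s/h_2$) forces $c=0$ together with the vanishing of the antiholomorphic base derivatives; the residual over-determined first-order system is then closed using $\ed^2 f=0$, and the nonvanishing of the Gauss curvature $R=\gamma^2-h_2\hb_2$ leaves only constants — this is the elliptic analogue of Lie's rigidity for Darboux-integrable Poisson equations quoted above. I expect this base-case rigidity, together with the isolation of $\lambda$ in the crux step, to be the main obstacle; the remaining work is bookkeeping with the weights and the commutation relations of Corollary~\ref{lem:commutation}.
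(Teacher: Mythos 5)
Your proposal has a genuine gap at its central step, and it sits exactly where the paper's hard work lies. After computing $\delxb(E_k f)=0$ you apply ``the lower-order ($c=0$) instance of the statement'' to $h_2^{k/2}E_k f$ and conclude it is constant. But $h_2^{k/2}E_k f$ is \emph{not} of lower order: unless $f$ is already known to be linear in $h_k$ (equivalently in $z_k$), the derivative $E_k f=\partial f/\partial h_k$ still depends on $h_k$, so it has exactly the same order as $f$. Applying the induction hypothesis to it is therefore circular --- the conclusion ``$E_kf=\lambda\,h_2^{-k/2}$, i.e.\ $f$ is linear in $z_k$'' is the kind of statement the induction must \emph{produce}, not consume, and iterating the commutation computation does not help ($\delxb(E_k^mf)=0$ for every $m$, but without polynomial control in $h_k$ this never terminates). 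Breaking this circularity is precisely the purpose of the paper's two long subsections ``Initial analysis'' and ``Induction'' (Steps $\bar 1$, $0$, $\xi$, $1,2,3,\dots,k$): there the full closure condition $\ed(\ed f)=0$, collected against a successively adapted sequence of coframes $\btheta^i_j$, shows that \emph{every} component of $\ed f$ equals $c$ times an explicit function of order $\le k-1$ (the nonvanishing of the pivots $\bt^i_i$ being itself secured by the induction hypothesis through Eq.~\eqref{eq:observation}). Only once that normal form is available does the paper run the commutation-relation argument you propose (its ``Completion of proof''), where applying the induction hypothesis to $f^k$ is legitimate because $f^k\in\mco(k-1)$ is then known.

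Two further points. In your crux step, the two $s$-graded identities still couple $\lambda$ to the unknown mixed derivatives $\partial_{z_{k-1}}\partial_{z_j}$ of the remainder $f-\lambda z_k$ and to the base terms, so ``tracking against $\tilde T_3^{(1)}=-1$'' does not by itself isolate $\lambda$; the clean route (the paper's) is to apply $[E_{k-1},\delxb]$, use $T^{k-1}_k=a_{k-1,0}R-a_{k-1,k-3}h_2\hb_2$ together with $\delxb(h_2^{-3/2}h_3)=h_2^{-1/2}R$ to manufacture a weight-zero function of order $\le k-1$ whose $\delxb$ is a nonzero constant multiple of $h_2^{-1/2}$, contradicting the induction hypothesis --- but this again presupposes the unjustified linearity step. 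Finally, in the base case ``matching powers of $h_2$'' is not available, since the $h_2$-dependence of $f$ is an arbitrary smooth function rather than a Laurent series; the valid separation there is affine-linearity in $\hb_2$, after which forcing $c=0$ still requires its own computation (the paper handles $k\le 4$ by direct computation). Note also that $E_kh_2=0$ holds for $k\ge 3$, not $k\ge 2$ (indeed $E_2h_2=1$), though this only matters at the bottom of the induction.
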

The proof by induction is divided into the following three sub-sections.
Recall the notation 
$$g\in\mco(j)$$ 
for $j\geq 2$ means the function $g$ depends at the highest on the variable $h_j.$

For a uniform treatment, assume $k\geq 5$. 
The case $k\leq 4$ can be checked by direct computation.

\subsubsection{Initial analysis}
Let $\mathtt{I}$ be the Pfaffian system generated by
\[ \mathtt{I}=\langle \, \thetab_1,\,\theta_0,\,\xi,\,\theta_1,\,\theta_2,\, ... \,\theta_{k}\,\rangle.
\]
Our claim is that there is no nonzero closed 1-form $\alpha$ of the form
$$\ed f=\alpha= c h_2^{-\frac{1}{2}}\xib + \btheta, \quad \btheta\in\mathtt{I}.
$$
Note from the structure equation 
$$\ed (h_2^{-\frac{1}{2}}\xib)= - h_2^{-\frac{1}{2}}\theta_0\w\left(\theta_1+h_2\xi+\delta\xib\right)
-\frac{1}{2}h_2^{-\frac{3}{2}}\left(\theta_2\w\xib+h_3\xi\w\xib\right).
$$


\textbf{Step $\bar{1}$}.
Denote  a 1-form in $\texttt{I}$ by 
$\btheta=a_{\bar{1}}\thetab_1+a_0\theta_0+a_{\xi}\xi+a_1\theta_1+\sum_{j=2}^{k}a_j\theta_j.$
From the equation $\ed\alpha\equiv 0\mod\mathtt{I}$, collect $\thetab_2\w\xib$-terms and we get
\[ a_{\bar{1}}=-\sum_{j=3}^ka_j T^{\bar{2}}_j.
\]
Let $\mathtt{I}^{(0)}$ be the system generated by
\[ \mathtt{I}^{(0)}=\langle \, \btheta_0,\,\xi,\,\btheta_1,\,\btheta_2,\, ... \,\btheta_{k}\,\rangle,
\]
where we set
\begin{align}
\btheta_{0}&=\theta_0, \;\btheta_1=\theta_1, \;\btheta_2=\theta_2,\n\\
\btheta_j&=\theta_j-T^{\bar{2}}_j\thetab_1, \quad \tn{for}\;j\geq 3.\n
\end{align}

\textbf{Step $0$}.
Denote a 1-form in $\mathtt{I}^{(0)}$ by
$\btheta= a_0\btheta_0+a_{\xi}\xi+a_1\btheta_1+\sum_{j=2}^{k}a_j\btheta_j$.
Set 
$$\alpha= c h_2^{-\frac{1}{2}}\xib + \btheta$$ 
with this new 1-form $\btheta.$

From the equation $\ed\alpha\equiv 0\mod\mathtt{I}^{(0)}$, collect the $\thetab_1\w\xib$-terms and we get
\[ a_{0}=\sum_{j=2}^k a_j \left( -\delta j h_j +\bt_{0,j} \right),
\]
where
\[\bt_{0,j}=2\Big(\delxb(T^{\bar{2}}_j) -\sum_{s=2}^{j-1}T^s_j T^{\bar{2}}_s \Big).
\]
Let $\mathtt{I}^{(\xi)}$ be the  system generated by
\[ \mathtt{I}^{(\xi)}=\langle \,  \xi,\,\btheta^0_1,\,\btheta^0_2,\, ... \,\btheta^0_{k}\,\rangle,
\]
where we set $\btheta^0_j$'s by
\begin{align}
\btheta^0_{1}&=\theta_1,\n\\
\btheta^0_j&=\theta_j-T^{\bar{2}}_j\thetab_1
+ \left( -\delta j h_j+\bt_{0,j} \right)\theta_0,
\quad \tn{for}\;j\geq 2.\n
\end{align}

\textbf{Step $\xi$}.
Denote a 1-form in $\mathtt{I}^{(\xi)}$ by
$\btheta=  a_{\xi}\xi+a_1\btheta^0_1+\sum_{j=2}^{k}a_j\btheta^0_j$.
Set $\alpha= c h_2^{-\frac{1}{2}}\xib + \btheta$ with this new 1-form $\btheta.$

From the equation $\ed\alpha\equiv 0\mod\xib,\,\mathtt{I}^{(\xi)}$, collect the $\theta_0\w\thetab_1$-terms and we get
\[ a_{\xi}=a_1h_2+\sum_{j=2}^k a_j (h_{j+1}+\bt_{\xi,j}),
\]
where
\begin{align}
\bt_{\xi,j}&=\delta (j-1)T^{\bar{2}}_j
+\sum_{s=2}^{j-1}(-\delta sh_s+\bt_{0,s})(T^{\bar{2}}_j)^s
-\sum_{s=2}^{j-1}T^{\bar{2}}_s(\bt_{0,j})^s,\n\\
&=-2\delta T^{\bar{2}}_j
+\sum_{s=2}^{j-1}\left( \bt_{0,s}(T^{\bar{2}}_j)^s-T^{\bar{2}}_s(\bt_{0,j})^s\right).\n
\end{align}
Let $\mathtt{I}^{(1)}$ be the  system generated by
\[ \mathtt{I}^{(3)}=\langle \,  \btheta^1_1,\,\btheta^1_2,\, ... \,\btheta^1_{k}\,\rangle,
\]
where we set $\btheta^1_j$'s by
\begin{align}
\btheta^1_{1}&=\eta_1,\n\\
\btheta^1_j&=\eta_j-T^{\bar{2}}_j\thetab_1+ \left( -\delta j h_j+\bt_{0,j} \right)\theta_0
+\bt_{\xi,j}\xi, \quad \tn{for}\;j\geq 2.\n
\end{align}

\textbf{Step $1$}.
Denote a 1-form in $\mathtt{I}^{(1)}$ by
$\btheta= a_1\btheta^1_1+\sum_{j=2}^{k}a_j\btheta^1_j$.
Set $\alpha= c h_2^{-\frac{1}{2}}\xib + \btheta$ with this new 1-form $\btheta.$
Note that until this step there are no contribution from the derivative of $c h_2^{-\frac{1}{2}}\xib.$

From the equation $\ed\alpha\equiv 0\mod\xi,\,\mathtt{I}^{(1)}$, collect the $\theta_0\w\xib$-terms and we get
\[  \bt^1_1 a_{1}+\sum_{j=2}^k a_j  \bt^1_j=c 2\delta h_2^{-\frac{1}{2}},
\]
where
\begin{align}
\bt^1_1&= \gamma^2,\n \\
\bt^1_j &=2\delta(T_j-2T^{\bar{2}}_j\hb_2)-\bt_{\xi,j}\hb_2-\delxb(\bt_{0,j})
                 +\sum_{s=2}^{j-1}T^s_j\bt_{0,s},\quad \tn{for}\;j\geq 2.\n
\end{align}
Let $\mathtt{I}^{(2)}$ be the   system generated by
\[ \mathtt{I}^{(2)}=\langle \, \btheta^2_2,\, \btheta^2_3,\, ... \,\btheta^2_{k}\,\rangle,
\]
where we set $\btheta^2_j$'s by
\be
\btheta^2_j  = \btheta^1_{j}- \frac{\bt^1_j}{\gamma^2} \btheta^1_1, \quad \tn{for}\;j\geq 2.\n
\ee

\textbf{Step $2$}.
Denote a 1-form in $\mathtt{I}^{(2)}$ by
$\btheta= a_2\btheta^2_2+\sum_{j=3}^{k}a_j\btheta^2_j$.
Set 
$$\alpha= c\left( h_2^{-\frac{1}{2}}\xib +\frac{2\delta}{\gamma^2}h_2^{-\frac{1}{2}}\theta^1_1\right)+ \btheta$$ 
with this new 1-form $\btheta.$

From the equation $\ed\alpha\equiv 0\mod \thetab_2,\,\thetab_1,\,\theta_0,\, \xi,\,\mathtt{I}^{(2)}$, collect the $\theta_1\w\xib$-terms 
and we get
\[  \bt^2_2 a_{2}+\sum_{j=3}^k a_j  \bt^2_j=0,
\]
where
\begin{align}
\bt^2_2&= h_2\hb_2,\n \\
\bt^2_j &= \frac{jh_j}{2}\hb_2
+\frac{1}{\gamma^2}\Big( \delxb \bt^1_j - \sum_{s=2}^{j-1}T^s_j\bt^1_s \Big),\quad\tn{for}\;j\geq 3.\n
\end{align}
Let $\mathtt{I}^{(3)}$ be the system generated by
\[ \mathtt{I}^{(3)}=\langle \, \btheta^3_3,\, \btheta^3_4,\, ... \,\btheta^3_{k}\,\rangle,
\]
where we set $\btheta^3_j$'s by
\be
\btheta^3_j  = \btheta^2_{j}-\frac{\bt^2_j}{\bt^2_2} \btheta^2_2, \quad \tn{for}\;j\geq 3.\n
\ee

\textbf{Step $3$}.
Denote a 1-form in $\mathtt{I}^{(3)}$ by
$\btheta= a_3\btheta^3_3+\sum_{j=4}^{k}a_j\btheta^3_j$.
Set 
$$\alpha= c\left( h_2^{-\frac{1}{2}}\xib +\frac{2\delta}{\gamma^2}h_2^{-\frac{1}{2}}\theta^1_1\right)+ \btheta$$ 
with this new 1-form $\btheta.$

From the equation $\ed\alpha\equiv 0\mod \thetab_2,\,\thetab_1,\,\theta_0,\, \xi, \theta_1, \mathtt{I}^{(3)}$,  
collect the $\theta_2\w\xib$-terms and we get
\[  \bt^3_3 a_{3}+\sum_{j=4}^k a_j  \bt^3_j= c\frac{1}{2}h_2^{-\frac{3}{2}},
\]
where
\begin{align}
\bt^3_3&= -\gamma^2+2h_2\hb_2,\n \\
\bt^3_j &=x_{j-1},\quad\tn{for}\;j\geq 4.\n
\end{align}
Here 
$$x_{i}\in\mco(i)$$ 
is a generic notation for an element in $\mco(i).$

\subsubsection{Induction}
Suppose the claim of Lem.~\ref{lem:lemma5.4'} is true up to $k-1$.
Suppose also that the above analysis continues to work up to \tb{Step $i-1$} for $i\leq k$.

We then have the following schematic diagram for the 1-forms $\theta^i_j$.
\be
\begin{array}{rlllllllll}
\theta^3_3=&\theta^2_3 &+x_3\theta^2_2 &&&&&&& \\
\theta^4_4=&\theta^2_4 &+x_3\theta^3_3 &+x_4\theta^2_2&&&&&& \\
\theta^5_5=&\theta^2_5 &+x_4\theta^4_4 &+x_4\theta^3_3 &+x_5\theta^2_2&&&&& \\
\theta^6_6=&\theta^2_6 &+x_5\theta^5_5 &+x_5\theta^4_4 &+x_5\theta^3_3 &+x_6\theta^2_2&&&& \\
 &&&... &&&&&&\\
\theta^i_i=&\theta^2_i &+x_{i-1}\theta^{i-1}_{i-1}  &+x_{i-1}\theta^{i-2}_{i-2}
&+x_{i-1}\theta^{i-3}_{i-3}&...&+x_{i-1}\theta^3_3 &+x_i\theta^2_2& \\
\theta^i_{i+1}=&\theta^2_{i+1}&\qquad\cdot &+x_{i}\theta^{i-1}_{i-1}&+x_{i}\theta^{i-2}_{i-2}
&...& +x_{i}\theta^4_4 &+x_{i}\theta^3_3 &+x_{i+1}\theta^2_2& \\
\theta^i_{i+2}=&\theta^2_{i+2}&\qquad\cdot &\qquad\cdot&+x_{i+1}\theta^{i-1}_{i-1}
& ... &+x_{i+1}\theta^5_5  & +x_{i+1}\theta^4_4 &+x_{i+1}\theta^3_3 &+x_{i+2}\theta^2_2  \\
 &&& &&... &&&&
\end{array}
\ee
Here '$\cdot$' denotes $0$.

Denote a 1-form in $\mathtt{I}^{(i)}$ by
$\btheta= a_i\btheta^i_i+\sum_{j=i+1}^{k}a_j\btheta^i_j$. Set 
$$\alpha= c \left(h_2^{-\frac{1}{2}}\xib 
+\frac{2\delta}{\gamma^2}h_2^{-\frac{1}{2}}\theta_1+\sum_{j=3}^{i-1}g_j\theta^j_j\right)+ \btheta,$$ 
with this new 1-form $\btheta.$
 
From the equation $\ed\alpha\equiv 0\mod  \xi,\,\mathtt{I}^{(i)}$,  
collect the $\theta_{i-1}\w\xib$-terms.
Then we get
\be\label{eq:ithterm}
\bt^i_i a_{i}+\sum_{j=i+1}^k a_j  \bt^i_j=- c \delxb(g_{i-1}),
\ee
where we claim that
\begin{align}
\bt^i_i&=T^{i-1}_i+\delxb(x_{i-1})\in\mco(i-1),\label{eq:firstxi-1} \\
\bt^i_j&=x_{j-1},\quad\tn{for}\;j\geq i+1.\label{eq:secondj-1}
\end{align}
 
We verify this claim. The first equation \eqref{eq:firstxi-1} follows from the formula for $\theta^i_i$ in the diagram. For the second claim, again from the diagram when one differentiates $\theta^i_j, j\geq i+1,$ the possible contributions to $\theta_{i-1}\w\xib$-term are
\be\left\{\begin{array}{rl}
T^{i-1}_j\equiv x_{j-1}& \tn{from}\;\ed \theta^i_j, \n \\
\delxb(x_{j-1})& \tn{from}\;\ed (x_{j-1}\theta^{i-1}_{i-1}). 
\end{array}\right.\ee

In order to continue this induction to \tb{Step $i$}, we must show that $\bt^i_i\ne 0$.
By definition of $T_i$ in Eq.~\eqref{eq:Tj}, one finds
$$T_i^{i-1}=a_{i-1,0}R-a_{(i-1),(i-3)}h_2\hb_2.
$$
Suppose $\bt^i_i\equiv 0$ and there exists $x_{i-1}$ such that
\be\label{eq:observation}
\delxb(x_{i-1})=-T^{i-1}_i.
\ee
Since $\delxb (h_2^{-1}h_3)=R$ and $a_{(i-1),(i-3)}\ne 0$, there exists a constant $c'$ such that
$$\delxb(h_2^{-\frac{1}{2}}x_{i-1}-c'h_2^{-\frac{3}{2}}h_3)=\tn{(nonzero constant)}\cdot h_2^{-\frac{1}{2}}.
$$
This contradicts the induction hypothesis. Hence there does not exist such function $x_{i-1},$
and  $\bt^i_i$ does not vanish identically. We restrict to the open subset on which $\bt^i_i\ne 0$, the complement of which is dense in $\X{k}$.

\subsubsection{Completion of proof}
Continuing this analysis up to \tb{Step $k$}, one ends up with
$$\ed f=\alpha= c \left(h_2^{-\frac{1}{2}}\xib 
+\frac{2\delta}{\gamma^2}h_2^{-\frac{1}{2}}\theta_1+\sum_{j=3}^{k}g_j\theta^j_j\right),
$$ 
where the sequence $g_j$ is inductively defined by
\begin{align*}
g_3&=\frac{h_2^{-\frac{3}{2}}}{2(-\gamma^2+h_2\hb_2)},  \\
g_j&=-\frac{1}{\bt^j_j}\delxb (g_{j-1})\in\mco(j-1), \quad j\geq 4.
\end{align*}
In any case the structure equation for $f$ takes the form
$$\ed f=  c  \left(h_2^{-\frac{1}{2}}\xib +f^{\xi}\xi+\sum_{j=-1}^{k}f^j\theta_j\right),
$$ 
where $f^j\in\mco(k-1)$ for $j\geq 3.$
 
\begin{itemize}
\item  Suppose $c=0$. Then $\ed f =0$ and $f$ is a constant.

\item Suppose $c\ne 0$. We show that this leads to a contradiction. 
Applying  the commutation relation $ [E_k, \delxb]=\sum_{j\geq k+1}T_j^k E_j$ to the given function $f$,
$$ \delxb(f^k)=E_k(h_2^{-\frac{1}{2}})=0.
$$
By the induction hypothesis $f^k$ is a constant multiple of $h_2^{-\frac{k}{2}}$.
\begin{itemize}
\item  Suppose $f^k=0$. Then $f\in\mco(k-1)$ and again by the induction hypothesis $f\equiv \tn{constant},$
and hence $c=0$, a contradiction.

\item Suppose $f^k\ne 0$.
Applying  the commutation relation $ [E_{k-1}, \delxb]=\sum_{j\geq k}T_j^{k-1} E_j$ to the given function $f$,
$$ -\delxb(f^{k-1})=T^{k-1}_k f^k. 
$$
Since $k\geq 5$,  by the similar argument as before, following Eq.~\eqref{eq:observation}, this leads to a contradiction.
\end{itemize}
\end{itemize}
This completes the proof.

\section{Jacobi fields}\label{sec:Jacobifields}
Let $\Phi\in\Cv{(2k)}$ be a conservation law in reduced form \eqref{eq:H2k} under the normalization \eqref{eq:H2krelations}. Prop.~ \ref{prop:roughnormal} shows that the pair of principal coefficients $\{ B^{1,2k},\,C^{1,2k}\}$ indicate the order of $\Phi$; the order of $\Phi$ is less than $2k+1$ and $\Phi\in\Cv{(2k-2)}$ when the principal coefficients vanish. By a repeated application  of Prop.~\ref{prop:roughnormal}, $\Phi$ is a classical conservation law when all the potential principal coefficients $\{B^{1,2j},\,C^{1,2j}\}$ vanish for $j=k,\,k-1,\,...\, 1$. In this case, from the structure equation \eqref{eq:classicalJacobi} a classical conservation law $\Phi$ becomes trivial when the coefficient $A$ also vanishes.

The relations \eqref{eq:A2k+1}, \eqref{eq:A2k+1b} on the other hand, which state that  the principal coefficients $\{ B^{1,2k},\,C^{1,2k}\}$ are the derivatives of the coefficient $A$, suggests a different, somewhat opposite perspective. Suppose for a conservation law $\Phi\in\Cv{(2k)}$ the coefficient $A$ vanishes. Then from the argument above all the potential principal coefficients necessarily vanish and $\Phi$ is classical, and hence it is trivial. Since the equation $\ed\Phi=0$ is a system of linear differential equations for the coefficients of $\Phi$, this implies that \emph{every coefficient of a conservation law $\Phi$ lies in the linear span of the coefficient $A$ and its successive derivatives.} It follows that for a reduced 2-form $\Phi\in H^{(2k)}$ the conservation law equation $\ed\Phi=0$ is a system of linear partial differential equations for the single variable $A$.
 
The principal part of this differential equation for the generating function $A$ is a second order linear differential equation called Jacobi equation.\footnotemark\footnotetext{It turns out that the remaining equations for $A$ are differential consequences of Jacobi equation.} It can be regarded as the linearization of the EDS for CMC surfaces, although more precisely Jacobi equation governs the infinitesimal conformal CMC-$\delta$ deformation with the prescribed Hopf differential. In terms of the exact sequence \eqref{eq:exactsequence} Jacobi equation is the defining equation for the second piece $E^{1,1}_1.$

\two
We proceed to derive Jacobi equation. Given a differentiated conservation law $\Phi\in \Cv{(\infty)}$ as a reduced 2-form, consider first the equation $\ed\Phi\equiv 0\mod \,\theta_0$. One finds that
\be\label{eq:Agenerator}
\ed A \w \Psi +\ed \theta_0\w\sigma\equiv 0\mod \,\theta_0,\,F^2\Omega^3. 
\ee
Since $\Psi=\Im(\theta_1\w\xi),\,\ed\theta_0=-\Re(\theta_1\w\xi)$, this implies that
\be\label{eq:sigmaform}
\sigma \equiv \im (\del A-\delb A) = -\JAI \ed A \;\; \mod \I{\infty}
\ee
(in fact $\sigma=-\JAI \ed A$).

Recall that for a function $f$ on $\mcf^{(\infty)}$, we use the notation
\[ \ed f \equiv f_{\xi}\xi + f_{\xib}\xib \mod \rho, \, \I{\infty}  \]
for total derivatives with respect to $\xi,\,\xib$. With the given $\sigma$, the  $\theta_0 \w \xi \w \xib$-term of $\ed \Phi$ then shows that
\be
A_{\xi,\xib}+A_{\xib,\xi}+(\gamma^2+h_2\hb_2) A=0.\n
\ee
Since the total derivatives commute, $A_{\xi,\xib}=A_{\xib,\xi}$, this becomes
\be\label{eq:surfaceJacobi}
A_{\xi,\xib}+\frac{1}{2}(\gamma^2+h_2\hb_2) A=0.
\ee

Consider an immersed CMC surface $\Sigma$ with the induced metric $\underline{\rm{I}}=\xi\circ\xib.$ The operator $\JAI$ restricts to the usual Hodge star operator on $\Sigma$, and one has
\[ -\ed \JAI \ed A=-4 A_{\xi,\xib} \frac{\im}{2}\xi\w\xib=\Delta A \frac{\im}{2}\xi\w\xib,\]
where we are using the sign convention for the Laplacian $\Delta$ as
\[
-\ed \JAI \ed A = \Delta(A) \ed\tn{Area}.
\]
 Thus Eq.~\eqref{eq:surfaceJacobi} is recognized as the familiar elliptic Jacobi equation on a CMC surface;
 \be\label{eq:LaplaceJacobi}
\Delta A-2(\gamma^2+h_2\hb_2) A=0.
\ee

For a scalar function $A:\X{\infty}\to\C$ we set the \tb{Jacobi operator}
\[
\mce(A):=A_{\xi,\xib}+\frac{1}{2}(\gamma^2+h_2\hb_2) A.
\]
We will also denote by $\mce$ the induced operator on CMC surfaces. Motivated in part by Definition \ref{defn:classicalJacobifield}, we give the following definition.
\begin{defn}\label{defn:infJacobifield}
A scalar function $A$ on $\xinf$ is a \tb{Jacobi field} if it lies in the kernel of Jacobi operator,
\be\label{eq:infJacobifield}
\mce{(A)}:= A_{\xi,\xib}+\frac{1}{2}(\gamma^2+h_2\hb_2) A=0.
\ee
The vector space of Jacobi fields is denoted by  $\mfj^{(\infty)}$.

Let $\mfj^{(k)}\subset\mfj^{(\infty)}$ be the subspace of \tb{Jacobi fields of order $\leq k+1$}  defined on $\X{k}$. The space of \tb{Jacobi fields of order $k+1$} is defined as the quotient space
\begin{align}\label{eq:highJacobik}
\mfj^k&=\mfj^{(k)}/\mfj^{(k-1)},\quad k\geq 1,   \\
\mfj^0&=\mfj^{(0)}=\{\tn{classical Jacobi fields}\}.\n
\end{align}
\end{defn}
 
In Sec.~\ref{sec:classicallaws} we recorded the structure equation for classical Jacobi fields. As remarked there, note that the pair of equations $\mce^0_H=\mce^0_V=0$ in \eqref{eq:classicalJacobifield} imply Eq.~\eqref{eq:infJacobifield}. Thus a classical Jacobi field is indeed a Jacobi field.

\begin{exam}\label{ex:HopfJacobi}
We give a description of a (unique) differentiated conservation law of order 3 defined on $\X{2}$ and its associated Jacobi field,

Consider the square root of the Hopf differential $\omega=h_2^{\frac{1}{2}}\xi$. It is a (possibly double-valued) holomorphic 1-form on an integral surface. Hence $\ed\omega\equiv 0\mod \I{\infty}$, and $\omega$ represents a conservation law. Explicitly, one computes
\[ \ed\omega=\frac{1}{2}h_2^{-\frac{1}{2}} \theta_2\w\xi
-h_2^{\frac{1}{2}}\theta_0\w\thetab_{1}
-h_2^{\frac{1}{2}} \hb_2\theta_0\w\xib
-\delta h_2^{\frac{1}{2}}\theta_0\w\xi.
\]

In order to put this into the normal form of $H^{(2)}$, we augment $\omega$ by adding an appropriate 1-form in the ideal $\I{2}$.
\[ {\omega}' = \omega +\left( \frac{1}{2}h_2^{-\frac{1}{2}}\theta_1 +\frac{1}{4}\frac{h_3}{h_2^{\frac{3}{2}}}\theta_0\right).\]
Then we get
\begin{align}
\ed {\omega}' &=
\frac{1}{8}\frac{h_{3}}{h_{2}^{\frac{3}{2}}}(\theta_1\w\xi-\thetab_1\w\xib)
+\theta_0\w\left(
\frac{1}{8}{\frac { \left( 3\,h_3^{2}-2\,h_{{4}}h_{{2}}
 \right) }{h_{2}^{\frac{5}{2}}}}\xi \
+\frac{1}{4}\frac{(\gamma^2-h_2\hb_2)}{h_{2}^{\frac{1}{2}}}\xib
\right)\\
&\quad+\theta_0\w\left(
-\frac{1}{2}h_{2}^{\frac{1}{2}}\thetab_1
+\frac{1}{2}\delta h_{2}^{-\frac{1}{2}}\theta_1
+\frac{3}{8}\frac{h_3}{h_{2}^{\frac{5}{2}}}\theta_2
-\frac{1}{4} h_{2}^{-\frac{3}{2}}\theta_3
\right)\n\\
&\quad+\frac{1}{4}h_2^{-\frac{3}{2}}\theta_1\w\theta_2.\n
\end{align}
The Jacobi field of order 3 is the coefficient of the 2-form $(\theta_1\w\xi-\thetab_1\w\xib)$, which is  up to scaling by constant  
\[  z_3=h_{2}^{-\frac{3}{2}}h_{3}.\]
\end{exam}
 
\one
The recursion formula for Jacobi field to be developed in Sec.~\ref{sec:inductiveformula} 
will produce an infinite sequence of higher-order Jacobi fields 
with $z_3$ as the initial data.

\two 
Let us record a lemma on the normal form of a Jacobi field.
\begin{lem}\label{lem:Jacobinormal}
Let $A\in \mfj^{(k)}\subset\mco(k+1)$ be a Jacobi field. Suppose $A^{k+1}=E_{k+1}(A)\ne 0$.
Then $A$ is at most linear in the variable $z_{k+1}=h_{2}^{-\frac{k+1}{2}}h_{k+1}$, and up to constant scale
$$ A=z_{k+1}+\mco(k).
$$
\end{lem}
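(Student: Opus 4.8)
The plan is to read off from the Jacobi equation $\mce(A)=A_{\xi,\xib}+\frac{1}{2}(\gamma^2+h_2\hb_2)A=0$ the precise constraint it imposes on the top coefficient $A^{k+1}=E_{k+1}(A)=\partial_{h_{k+1}}A$, and then to invoke Lemma~\ref{lem:lemma5.4}. Since $A\in\mfj^{(k)}$ is a weight-zero function on $\X{k}$, it depends only on $h_2,\dots,h_{k+1}$ and their conjugates, and $E_{k+1}(A)$ is literally $\partial A/\partial h_{k+1}$. First I would record the action of the total derivatives on generators, $\delx h_j=h_{j+1}$, $\delxb h_j=T_j$ (with $T_2=0$), $\delx\hb_j=\ol{T_j}$, $\delxb\hb_j=\hb_{j+1}$, so that $\delx=\sum_j h_{j+1}\partial_{h_j}+\sum_j\ol{T_j}\,\partial_{\hb_j}$ and $\delxb=\sum_j T_j\,\partial_{h_j}+\sum_j\hb_{j+1}\,\partial_{\hb_j}$. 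Because $[\delx,\delxb]=-R\frac{\im}{2}E_\rho$ annihilates the weight-zero function $A$, the two total derivatives commute on $A$ and I may write $A_{\xi,\xib}=\delx\delxb A$; the extraction below is in any case independent of the ordering.

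The crux is to isolate the coefficient of the highest new variable $h_{k+2}$ in $\mce(A)=0$. The one-step derivative $\delxb A$ is free of $h_{k+2}$: only $T_j$ and $\hb_{j+1}$ with $j\le k+1$ act on $A$, and by the recursion of Cor.~\ref{cor:Tidentity} each $T_j$ has level $j-1$, so $T_{k+1}$ is free of $h_{k+1}$ and no coefficient in this range involves $h_{k+2}$. Applying $\delx$, the only way $h_{k+2}$ can appear in $\delx\delxb A$ is through the single monomial $h_{k+2}\,\partial_{h_{k+1}}$ of $\delx$, while the zeroth-order term $\frac{1}{2}(\gamma^2+h_2\hb_2)A$ carries no $h_{k+2}$. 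Hence the vanishing of the $h_{k+2}$-coefficient of $\mce(A)$ yields $\partial_{h_{k+1}}(\delxb A)=0$. Since the coefficients $T_j,\hb_{j+1}$ of $\delxb$ that act on $A$ are independent of $h_{k+1}$ (again $T_{k+1}$ has level $k$), the operator $\partial_{h_{k+1}}$ commutes past $\delxb$, and this identity reads $\delxb\big(A^{k+1}\big)=0$.

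To finish I would feed this into Lemma~\ref{lem:lemma5.4}. The function $A^{k+1}$ has $\SO(2)$-weight $-(k+1)$, so $g:=h_2^{(k+1)/2}A^{k+1}$ is a genuine weight-zero function on the double cover $\xinfh$, and $\delxb g=0$ because $\delxb h_2=T_2=0$ and $\delxb(A^{k+1})=0$. Lemma~\ref{lem:lemma5.4} then forces $g$ to be constant, i.e. $A^{k+1}=c\,h_2^{-(k+1)/2}$ for a constant $c$, and the hypothesis $A^{k+1}\ne0$ gives $c\ne0$. Writing $z_{k+1}=h_2^{-(k+1)/2}h_{k+1}$, this says $\partial_{h_{k+1}}\!\big(A-c\,z_{k+1}\big)=0$, so $A-c\,z_{k+1}$ is independent of $h_{k+1}$ and therefore lies in $\mco(k)$; dividing by $c$ gives $A=z_{k+1}+\mco(k)$, which is in particular at most linear in $z_{k+1}$.

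I expect the main obstacle to be the bookkeeping in the crux step: justifying that $h_{k+2}$ enters $A_{\xi,\xib}$ only linearly and only through the displayed monomial, and that $\partial_{h_{k+1}}$ commutes with $\delxb$. Both rest entirely on the level count $\mathrm{level}(T_j)=j-1$ from Cor.~\ref{cor:Tidentity}, which guarantees that no coefficient of $\delx,\delxb$ acting on a level-$(k+1)$ function depends on $h_{k+1}$ or $h_{k+2}$. A secondary point requiring care is the weight accounting used to apply Lemma~\ref{lem:lemma5.4}, which legitimately lives on $\xinfh$ precisely because $h_2^{(k+1)/2}$ is double-valued when $k$ is even; one could equally apply the lemma to the single-valued $h_2^{k+1}(A^{k+1})^2$ on $\xinf$ at the cost of an irrelevant sign.
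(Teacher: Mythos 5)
Your proposal is correct and follows essentially the same route as the paper's proof: extract the $h_{k+2}$-coefficient of the Jacobi equation to force $\delxb(A^{k+1})=0$, invoke Lemma~\ref{lem:lemma5.4} to get $A^{k+1}=c\,h_2^{-\frac{k+1}{2}}$, and conclude linearity in $z_{k+1}$. The only differences are cosmetic — you differentiate in the order $\delx\delxb$ where the paper uses $A_{\xi,\xib}=\delxb\delx A$ (immaterial since the total derivatives commute on $A$), and you spell out the $\SO(2)$-weight bookkeeping needed to apply Lemma~\ref{lem:lemma5.4} to the weight-zero function $h_2^{\frac{k+1}{2}}A^{k+1}$, a step the paper leaves implicit.
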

\begin{proof}
Since $A\in\mco(k+1)$, we have
\begin{align}
A_{\xi}&\equiv h_{k+2} A^{k+1} \mod \mco(k+1), \n\\
A_{\xi,\xib}&\equiv h_{k+2} \delxb(A^{k+1})\mod \mco(k+1),\n \\
&\equiv 0\mod\mco(k+1), \quad \tn{for $\mce(A)=0$.}\n
\end{align}
This forces $\delxb(A^{k+1})=0$. By Lemma \ref{lem:lemma5.4},  $A^{k+1}$ is a constant multiple of $h_2^{-\frac{k+1}{2}}$. It follows that $A$ is linear in the highest order variable $z_{k+1}.$
\end{proof}
Although it appears as a simple observation, 
this normal form will be refined in Sec.~\ref{sec:Noethertheorem} 
to the splitting of a Jacobi field  into classical, and higher order parts.

\subsection{Jacobi fields and symmetries}\label{sec:JacobiSymmetry}
It was shown in Sec.~\ref{sec:classicallaws} that there exists a canonical isomorphism between the classical Jacobi fields and the classical symmetries. The higher-order analogue of this isomorphism is true, and a Jacobi field uniquely determines a (vertical) symmetry vector field on $\xinf$.  In this section, we determine the  inductive differential algebraic formula for the coefficients of a symmetry generated by a Jacobi field.

\two
Recall the coframe of $\mcf^{(\infty)}$,
\[\{\,\rho,\,\xi,\,\xib,\,\theta_0,\,\theta_1,\,\thetab_1,\,\theta_2,\, \thetab_2,\,... \,\}.\]
The dual frame is denoted by
\[\{\,E_\rho,\,E_{\xi},\,E_{\xib},\,E_0,\,E_1,\,E_{\bar{1}},\,E_2,\, E_{\bar{2}},\,... \,\}.\]
By a vector field on $\xinf$ we mean a vector field (derivation) on $\mcf^{(\infty)}$ of the form
\be\label{eq:symmetryvector}
V=V_{\xi}E_{\xi} +V_{\xib}E_{\xib}+V_0 E_0+\sum_{j=1}^{\infty} ( V_j E_j + V_{\bar{j}}E_{\bar{j}})
\ee
which is invariant under the induced action of the structure group $\SO(2)$ of the bundle $\mcf^{(\infty)}\to\xinf$.

\begin{defn}
Let $V\in H^0(T\xinf)$ be a vector field. It is a \tb{symmetry} of the EDS $(\xinf,\I{\infty})$ when the Lie derivative $ \mcl_V$ preserves the ideal $\I{\infty}$,
\[  \mcl_V  \I{\infty} \subset \I{\infty}.\]
The algebra of symmetry is denoted by $\mathfrak{S}$.

A symmetry $V\in\mathfrak{S}$ is \tb{vertical} when $V_{\xi}=V_{\xib}=0$ and it has no $E_{\xi}, \,E_{\xib}$ components. The subspace of vertical symmetry is denoted by $\mathfrak{S}_v$.\footnotemark
\footnotetext{The subspace $\mathfrak{S}_v$ does not form a subalgebra.}

The \tb{degree} of a symmetry $V\in\mathfrak{S}$ is the least integer $m\geq 0$ such that
\[  \mcl_V  \I{k} \subset \I{k+m+1},\quad \forall \;k\geq 0.\]
\end{defn}
\noi
Note that the classical symmetries $\mathfrak{S}^{0}$ have degree $-1$ by definition.
\begin{exam}
Consider the vector fields of the form $\{ \, V_{\xi}E_{\xi}+V_{\xib}E_{\xib} \}$. They are horizontal with respect to the formally integrable distribution defined by $\iinf$, and hence they are symmetries by definition. These vector fields are called  trivial symmetries. The structure equation for prolongation in Lemma \ref{lem:prolongedstrt} shows that a nonzero trivial symmetry has degree 1. This may explain the appearance of the extra '$1$' in '$k+m+1$' in the definition of degree of a symmetry.

In the literature a symmetry is sometimes defined as a section of the quotient space $T\xinf/(\I{\infty})^{\perp}.$
\end{exam}

We wish to give an analytic characterization of a symmetry making use of the structure equation \eqref{3strt2}, and Lemma \ref{lem:prolongedstrt}. Without loss of generality, consider a vertical symmetry
\be\label{eq:symvf}
V=V_0 E_0+\sum_{j=1}^{\infty} ( V_j E_j + V_{\bar{j}}E_{\bar{j}}).
\ee
We first claim that $V_j, V_{\bar{j}}$'s are determined by $V_0$ and its derivatives.

\two
\textbf{Step 0}.
The condition that the Lie derivative $\mcl_V\theta_0\equiv 0\mod\I{\infty}$ gives
\begin{align}
\ed V_0+V\lrcorner \left(-\frac{1}{2}(\theta_1\w\xi+\thetab_1\w\xib)\right)&\equiv
\ed V_0  -\frac{1}{2}(V_1 \xi+V_{\bar{1}} \xib) \n\\
&\equiv 0 \mod \iinf.\n
\end{align}
Thus
\be\label{eq:symstep0}
\ed V_0\equiv \frac{1}{2}(V_1 \xi+V_{\bar{1}} \xib)\mod \iinf.
\ee

\textbf{Step 1}.
The condition that the Lie derivatives $\mcl_V\theta_1, \, \mcl_V\thetab_1\equiv 0\mod\I{\infty}$ gives
\be\label{eq:symstep1}\begin{array}{rcrll}
 \ed V_1 + \im V_1\rho&\equiv&
  (V_2-2\delta h_2 V_0 )\xi&
-(\gamma^2+h_2\hb_2)V_0\xib.& \\
\ed V_{\bar{1}} - \im V_{\bar{1}}\rho&\equiv&
(V_{\bar{2}}-2\delta \hb_2 V_0 )\xib&
-(\gamma^2+h_2\hb_2)V_0\xi\,&\mod\;\iinf.
\end{array}
\ee

\textbf{Step j}.
The condition that the Lie derivatives $\mcl_V\theta_j, \, \mcl_V\thetab_j\equiv 0\mod\I{\infty}$ for $j\geq 2$ gives after a short computation
\be\label{eq:symstepj}\begin{array}{rcll}
 \ed V_j +j \im V_j\rho&\equiv&
 \left(V_{j+1} - (\delta h_{j+1}+h_2T_j) V_0 -\frac{jh_j}{2}(\delta V_1-h_2V_{\bar{1}}) \right)\xi&\\
&&\Big(  - (\hb_2 h_{j+1}+\delta T_j) V_0 -\frac{jh_j}{2}(\hb_2  V_1- \delta V_{\bar{1}})
+(T^{\bar{2}}_jV_{\bar{2}}+ \sum_{s=2}^{j-1} T^s_jV_s)\Big)\xib& \\
&&\mod\;\iinf,&\\
 \ed V_{\bar{j}} -j \im V_{\bar{j}}\rho&\equiv&
 \left(V_{\ol{j+1}} - (\delta \hb_{j+1}+\hb_2\ol{T}_j) V_0 -\frac{j\hb_j}{2}(\delta V_{\bar{1}}-\hb_2V_{1}) \right)\xib&\\
&&\Big(  - (h_2 \hb_{j+1}+\delta \ol{T}_{j}) V_0 -\frac{j\hb_j}{2}(h_2  V_{\bar{1}}- \delta V_{1})
+(\ol{T}^{2}_{j}V_{2}+ \sum_{s=2}^{j-1} \ol{T}^{\bar{s}}_{j}V_{\bar{s}})\Big)\xi& \\
&&\mod\;\iinf.&\\
\end{array}
\ee

Note that all the coefficients $\{\,V_j, \,V_{\bar{j}}\,\}$ for $j\geq 1$ are determined by $V_0$ and its successive $\xi, \xib$-derivatives. The coefficient $V_0$ is called the generating function of the symmetry.
\begin{prop}\label{prop:symmetry}
The generating function of a symmetry is a Jacobi field. 
Conversely, a Jacobi field $A$ uniquely determines a vertical symmetry $V_A$ of the form \eqref{eq:symvf} with the generating function $V_0=A$.
We consequently have a canonical isomorphism
\[ \mfj^{(\infty)}\simeq \mathfrak{S}_v.\]
\end{prop}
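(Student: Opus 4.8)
The proposition has two halves, and the forward half is immediate from the structure equations already derived. Given a vertical symmetry $V$ as in \eqref{eq:symvf}, write $A:=V_0$. Step $0$, Eq.~\eqref{eq:symstep0}, forces $V_1=2A_\xi$ and $V_{\bar1}=2A_{\xib}$ upon matching $\xi,\xib$-coefficients. Substituting these into the $\xib$-coefficient of Step $1$, Eq.~\eqref{eq:symstep1}, gives $V_{1,\xib}=-(\gamma^2+h_2\hb_2)A$, i.e. $2A_{\xi,\xib}=-(\gamma^2+h_2\hb_2)A$. Since the generating function $A$ has $\SO(2)$-weight $0$, the commutation relation $[\delx,\delxb]=-R\tfrac{\im}{2}E_\rho$ of Cor.~\ref{lem:commutation} gives $\delx\delxb A=\delxb\delx A$, so the displayed equation is exactly $\mce(A)=0$. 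Hence the generating function of any symmetry is a Jacobi field.

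For the converse I would reverse this bookkeeping. Given $A\in\mfj^{(\infty)}$, set $V_0=A$, define $V_1,V_{\bar1}$ by \eqref{eq:symstep0}, and then define $V_{j+1}$ (and its conjugate $V_{\overline{j+1}}$) inductively as the unique quantity making the $\xi$-coefficient (resp.~$\xib$-coefficient) of \eqref{eq:symstepj} hold, namely
$$V_{j+1}=V_{j,\xi}+(\delta h_{j+1}+h_2T_j)V_0+\tfrac{jh_j}{2}(\delta V_1-h_2V_{\bar1}),$$
where $V_{j,\xi}$ is the $\xi$-total derivative read off from the left side $\ed V_j+j\im V_j\rho$. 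This produces a well-defined $\SO(2)$-invariant derivation $V_A$ on $\mcf^{(\infty)}$ of the form \eqref{eq:symvf}. By construction $\mcl_{V_A}\theta_0\equiv0$ and the $\xi$-coefficient of $\mcl_{V_A}\theta_j\mod\iinf$ vanishes for every $j$; what remains is to verify that the unused $\xib$-coefficient of \eqref{eq:symstepj}, together with its conjugate, also vanishes, for then $\mcl_{V_A}$ annihilates every Pfaffian generator of the Frobenius ideal $\iinf$ and $V_A$ is a symmetry.

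The main obstacle is precisely this family of compatibility constraints, which I would dispatch by induction on $j$. The constraint at level $1$ is equivalent to $\mce(A)=0$ and so holds by hypothesis. For the inductive step at level $j\geq 2$, I would substitute the recursive definitions of $V_1,\ldots,V_j$ into the $\xib$-coefficient of \eqref{eq:symstepj} and show it reduces to an expression in $A$ and its $\delx,\delxb$-derivatives that vanishes on account of the already-established lower-order constraints. The essential analytic inputs are: (i) the non-commutation of $\delx,\delxb$ on a weight-$w$ function, with commutator $\tfrac{w}{2}R$ (the generalization \eqref{3anticommut} of Cor.~\ref{lem:commutation}), which is what converts a level-$(j-1)$ identity into the level-$j$ one; and (ii) the algebraic identities among $T_j$, $T_j^s$, $T_j^{\bar2}$ recorded in Cor.~\ref{cor:Tidentity}, which govern exactly the $h_{j+1},\,T_j$ and $\sum_s T_j^s V_s$ terms occurring in \eqref{eq:symstepj}. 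Conceptually the propagation is guaranteed because $\mcl_{V_A}$ commutes with $\ed$ and each $\theta_j$ is closed modulo $\iinf$ (Frobenius), so the residual $\xib$-constraint at level $j$ is an $\ed$-consequence of the vanishing already arranged at lower levels together with $\mce(A)=0$; turning this conceptual statement into the explicit term-by-term cancellation is where the bulk of the computation lies, and it is the step I expect to be delicate.

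Finally, the two constructions are mutually inverse. A vertical symmetry is determined by its coefficients $V_j$, and Steps $0$ through $j$ exhibit every $V_j$ as a universal differential expression in $V_0$ alone; hence $V\mapsto V_0$ is injective, with image inside $\mfj^{(\infty)}$ by the forward direction. The assignment $A\mapsto V_A$ is a linear right inverse satisfying $(V_A)_0=A$, and by uniqueness of the recursion any symmetry equals $V_{V_0}$. Therefore $A\mapsto V_A$ is a linear isomorphism $\mfj^{(\infty)}\simeq\mathfrak{S}_v$, as claimed.
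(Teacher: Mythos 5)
Your proposal is correct and follows essentially the same route as the paper: the paper likewise reads $\mce(A)=0$ off Steps 0 and 1, defines the $V_j$ by the $\xi$-components of \eqref{eq:symstep0}--\eqref{eq:symstepj}, and verifies the residual compatibility --- phrased there as the mixed-partial identity $(V_j)_{\xi,\xib}-(V_j)_{\xib,\xi}=\frac{j}{2}RV_j$, which is the same check as your $\xib$-coefficient constraint --- by collecting the $V_s$-terms and invoking exactly the identities of Cor.~\ref{cor:Tidentity} together with the weighted homogeneity of $T_j$. The only difference is presentational: the paper writes out the term-by-term cancellation that you defer to the ``delicate'' step, using precisely the two inputs you name.
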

\begin{proof}
From Eq.~\eqref{eq:symstep0}, \eqref{eq:symstep1}, one notes that
\be
(V_0)_{\xi,\xib}=\frac{1}{2}(V_1)_{\xib}=-\frac{1}{2}(\gamma^2+h_2\hb_2)V_0.\n
\ee

Conversely, suppose a Jacobi field $A=V_0$ is given. The $\xi$-components of the recursive equations ~\eqref{eq:symstep0}, \eqref{eq:symstep1}, and \eqref{eq:symstepj} determine $\{\,V_j\,\}$ for all $j\geq 1$. Given this sequence of coefficients, the compatibility condition to be checked is that for each $j\geq 0$ the following equation holds.
\[ (V_j)_{\xi,\xib} -(V_j)_{\xib,\xi} =\frac{j}{2}RV_j.\]

The cases $j=0, \,1,\,2$ are easily checked. For $j \geq 3$, one computes from Eq.~\eqref{eq:symstepj} as follows.
\begin{align}
 (V_j)_{\xi,\xib}&=
\delxb \Big(V_{j+1} - (\delta h_{j+1}+h_2T_j) V_0 -\frac{jh_j}{2}(\delta V_1-h_2V_{\bar{1}}) \Big)\n\\
&=\Big(- (\hb_2 h_{j+2}+\delta T_{j+1}) V_0 -\frac{(j+1)h_{j+1}}{2}(\hb_2  V_1- \delta V_{\bar{1}})
+(T^{\bar{2}}_{j+1} V_{\bar{2}}+ \sum_{s=2}^{j} T^s_{j+1}V_s)\Big) \n\\
&-(\delta T_{j+1}+h_2\delxb T_j)V_0
-\frac{j T_j}{2}(\delta V_1-h_2V_{\bar{1}}) \n \\
&- (\delta h_{j+1}+h_2T_j) \frac{1}{2}V_{\bar{1}}
-\frac{jh_j}{2}\left(-\delta (\gamma^2+h_2\hb_2)V_0-h_2(V_{\bar{2}}-2\delta\hb_2V_0 )  \right).\n
\end{align}
On the other hand,
\begin{align}
(V_j)_{\xib,\xi}&=\delx\Big(  - (\hb_2 h_{j+1}+\delta T_j) V_0 -\frac{jh_j}{2}(\hb_2  V_1- \delta V_{\bar{1}})
+(T^{\bar{2}}_jV_{\bar{2}}+ \sum_{s=2}^{j-1} T^s_jV_s)\Big)\n\\
&=\Big(  - (\hb_2 h_{j+2}+\delta \delx (T_j)) V_0 -\frac{jh_{j+1}}{2}(\hb_2  V_1- \delta V_{\bar{1}})
+(\delx (T^{\bar{2}}_j)V_{\bar{2}}+ \sum_{s=2}^{j-1} \delx(T^s_j)V_s)\Big)\n\\
&+\Big(  - (\hb_2 h_{j+1}+\delta T_j) \frac{1}{2}V_1
-\frac{jh_j}{2}(\hb_2  (V_2-2\delta h_2V_0)+ \delta (\gamma^2+h_2\hb_2)V_0)\Big)\n\\
&  + T^{\bar{2}}_j \Big(  - h_2 \hb_{3} V_0 -\hb_2(h_2  V_{\bar{1}}- \delta V_{1}) \Big)
+ \sum_{s=2}^{j-1} T^s_j\delx V_s. \n
\end{align}

Now we compute $(V_j)_{\xi,\xib}-(V_j)_{\xib,\xi}$ by collecting each $V_s$-terms.
\begin{itemize}
\item $V_0$-term:
After cancellations using Eq.~\eqref{eq:Trecurs},
\[\Big(\delta (-\delx T_j+\sum_{s=2}^{j-1}T^s_j h_{s+1})
+h_2(-\delxb T_j+T^{\bar{2}}_j\hb_3+\sum_{s=2}^{j-1}T^s_j T_s)\Big).
\]
This vanishes for $T_j$ is a function of $\{\,\hb_2,\,h_2,\, ... \, h_{j-1}\,\}$.

\item $V_1$-term:
\[ \frac{\delta}{2}\Big(-2T^{\bar{2}}_j\hb_2+\sum_{s=2}^{j-1}T^s_j s h_s-(j-1)T_j \Big).\]
This vanishes for $T_j$ is homogeneous of weight $(j-1)$ under the induced action by the structure group $\SO(2)$.

\item $V_{\bar{1}}$-term:
$ -\frac{h_2}{2}\Big(-2T^{\bar{2}}_j\hb_2+\sum_{s=2}^{j-1}T^s_j s h_s-(j-1)T_j \Big).$

\item $V_2$-term:
$-\delx T^2_j+T^2_{j+1}+\frac{jh_j}{2}\hb_2.$

\item $V_{\bar{2}}$-term:
$-\delx T^{\bar{2}}_j+T^{\bar{2}}_{j+1}+\frac{jh_j}{2}h_2.$

\item $V_s$-term:
$-\delx T^s_j+T^s_{j+1}-T^{s-1}_j.$

\item  $V_j$-term:
$T^j_{j+1}-T^{j-1}_j -\frac{j}{2}R.$
\end{itemize}
\two\noi
These identities for $T_j$ are  recorded in Cor.~\ref{cor:Tidentity}. The compatibility equation for $V_{\bar{j}}$ is verified similarly.
\end{proof}
 
\section{Relation to the PDE system}\label{sec:PDEsystem}
In this section we give a description of the relation of the EDS for CMC surfaces to the PDE for a scalar function of two variables 
\be\label{eq:uPDE}
u_{z \zb} + f(z,\zb,u)=0
\ee
for an appropriate function $f(z,\zb,u)$.  
This would allow one to apply the results for elliptic Poisson equation \cite{Fox2011} to CMC surfaces.
One of the other objectives is to provide a setting to apply the analytic methods and results of the elliptic PDE \eqref{eq:uPDE} to the study of the geometry of CMC surfaces at  umbilics.
\subsection{Integrable extension}\label{sec:localextension}
We refer to Sec.~\ref{sec:extension} for the relevant details on integrable extension.

Let $X'=\C \times \R \times \C$ with coordinates $z,u,u_1$. Let $f:\R \to \R$ be a scalar function in '$u$' variable.  On $X'$ define
\begin{align*}
\xi'&=\ed z,\\
\theta_0'&=\ed u - u_1 \ed z - u_{\bar{1}} \ed \zb, \\
\eta_1'&=\ed u_1 + f(u) \ed \zb,\\
\Psi'&= \Im(\eta_1' \w \xi'),
\end{align*}
and the ideal 
$$\mci'=\langle \theta_0', \ed\theta_0', \Psi' \rangle.$$  
The integral surfaces of $\mci'$ on which $\xi'\w\xib'\ne 0$ are locally the graphs of  solutions to the PDE \eqref{eq:uPDE}.  
\begin{rem}
The elliptic Monge-Ampere system $(X',\mci')$ possesses the function $z$ for which $\ed z$ is the characteristic. Hence it is not locally equivalent to the CMC system $(X,\mci).$
\end{rem}

Let $U \subset X$ be an open subset. Choose a section of the 1-form $\xi$ on $U$.  
Let 
$$Y'=U \times X',$$
and define the ideal generated by
\[  \check{\tn{I}}'=\mci_{\vert_U}\oplus \mci' \oplus \langle \Theta_1, \Theta_2, \Theta_3 \rangle,
\]
where
\begin{align}\label{eq:Thetaforms}
\Theta_1&=\xi-e^u\xi',\n\\
\Theta_2&=\eta_1 - e^{-u} \xi', \n\\
\Theta_3&=\im  \rho -(u_{\bar{1}} \xib' - u_1 \xi'), \n
\end{align}
and 
\be
f(u)=\frac{1}{4}(\gamma^2 e^{2u}-e^{-2u}).\label{eq:PDEf}
\ee
\begin{lem}\label{lem:PDEextension}
The EDS $(Y',\check{\tn{I}}')$ is an integrable extension of $(X',\mci')$.
\end{lem}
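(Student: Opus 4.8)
The plan is to verify the defining properties of an integrable extension from Section~\ref{sec:extension}: with the obvious projection $\Pi:Y'=U\times X'\to X'$ onto the second factor, one must check that $\Pi^*\mci'\subset\check{\tn{I}}'$, that the extra generators span a complement to $\Pi^*T^*X'$, and — the substantive point — that $\check{\tn{I}}'$ is a differential ideal. The inclusion $\Pi^*\mci'\subset\check{\tn{I}}'$ is immediate since $\mci'$ is one of the three summands. For the transversality I would observe that modulo $\Pi^*T^*X'$ the new $1$-forms $\theta_0,\Theta_1,\Theta_2,\Theta_3$ reduce to $\theta_0,\xi,\eta_1,\im\rho$, which together with the conjugates $\bar\Theta_1,\bar\Theta_2\mapsto\xib,\etab_1$ recover the full coframe $\{\rho,\xi,\xib,\theta_0,\eta_1,\etab_1\}$ of $\mcf|_U$; hence the added $1$-forms are independent modulo the pullbacks and pin down every fibre direction.

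Since $\mci|_U$ and $\mci'$ are already differential ideals, closure of $\check{\tn{I}}'$ reduces to showing $\ed\Theta_1,\ed\Theta_2,\ed\Theta_3\in\check{\tn{I}}'$. I would work throughout modulo $\theta_0,\theta_0'$ and the generators $\Theta_1,\Theta_2,\Theta_3$ — that is, substituting $\xi\equiv e^u\xi'$, $\eta_1\equiv e^{-u}\xi'$, $\im\rho\equiv u_{\bar1}\xib'-u_1\xi'$, and using $\ed u=\theta_0'+u_1\xi'+u_{\bar1}\xib'$ together with $\ed\xi'=0$. For $\Theta_1=\xi-e^u\xi'$ the structure equation $\ed\xi=\im\rho\w\xi-\theta_0\w(\etab_1+\delta\xi)$ of \eqref{eq:strt2} gives $\ed\Theta_1\equiv\im\rho\w\xi-e^u u_{\bar1}\,\xib'\w\xi'$, and the substitution $\im\rho\w\xi\equiv(u_{\bar1}\xib'-u_1\xi')\w e^u\xi'=e^u u_{\bar1}\,\xib'\w\xi'$ cancels it. The computation for $\Theta_2=\eta_1-e^{-u}\xi'$ is identical in form, now using $\ed\eta_1=-\im\rho\w\eta_1+\theta_0\w(\delta\eta_1+\gamma^2\xib)$.

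The crux is $\Theta_3$, and I expect this to be the main obstacle because it is where the nonlinearity $f(u)$ is forced. Here I would substitute $\ed u_1=\eta_1'-f(u)\xib'$ and $\ed u_{\bar1}=\etab_1'-f(u)\xi'$ into $\ed\Theta_3=\im\,\ed\rho-\ed(u_{\bar1}\xib')+\ed(u_1\xi')$, and use $\im\,\ed\rho=\tfrac12\eta_1\w\etab_1-\tfrac{\gamma^2}{2}\xi\w\xib+\im\delta\Psi$ from \eqref{eq:strt2}. The term $\im\delta\Psi$ lies in $\mci|_U$ and the combination $\eta_1'\w\xi'-\etab_1'\w\xib'=2\im\Psi'$ lies in $\mci'$, so both drop out, while the remaining $X'$-terms assemble into $2f(u)\,\xi'\w\xib'$. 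After the substitutions $\eta_1\w\etab_1\equiv e^{-2u}\xi'\w\xib'$ and $\xi\w\xib\equiv e^{2u}\xi'\w\xib'$, one is left with $\bigl(\tfrac12 e^{-2u}-\tfrac{\gamma^2}{2}e^{2u}+2f(u)\bigr)\xi'\w\xib'$, which vanishes identically precisely for $f(u)=\tfrac14(\gamma^2 e^{2u}-e^{-2u})$ as prescribed in \eqref{eq:PDEf}. This confirms $\ed\Theta_3\in\check{\tn{I}}'$, so $\check{\tn{I}}'$ is differentially closed and $(Y',\check{\tn{I}}')$ is an integrable extension of $(X',\mci')$; the only delicate bookkeeping is keeping the $\mci|_U$-, $\mci'$-, and $\Theta_i$-reductions separate so that the exact cancellation selecting $f(u)$ remains visible.
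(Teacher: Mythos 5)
Your proposal is correct and follows essentially the same route as the paper, whose proof simply asserts the two facts you verify by direct computation: that $\mci|_U$ reduces to zero modulo $\theta_0,\Theta_1,\Theta_2,\Theta_3$ (your substitutions $\xi\equiv e^u\xi'$, $\eta_1\equiv e^{-u}\xi'$ give exactly this), and that $\ed\Theta_i\equiv 0$ modulo $\theta_0,\Theta_1,\Theta_2,\Theta_3,\mci'$, with the $\ed\Theta_3$ reduction pinning down $f(u)=\tfrac14(\gamma^2e^{2u}-e^{-2u})$. The computations, including the cancellations via $\im\rho\equiv u_{\bar1}\xib'-u_1\xi'$ and the identification $\eta_1'\w\xi'-\etab_1'\w\xib'=2\im\Psi'$, are accurate.
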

\begin{proof}
By a direct computation one finds that
\begin{align}
\mci_{|_U}&\equiv 0 \mod \theta_0,\Theta_1,\Theta_2,\Theta_3, \n\\
\ed\Theta_i&\equiv0\mod\theta_0,\Theta_1,\Theta_2,\Theta_3,\mci'.\n
\end{align}
\end{proof}
On an integral surface of $(Y',\check{\tn{I}}')$ satisfying the independence condition 
$\xi\w\xib\ne 0$, one calculates that $h_2 \equiv e^{-2u}$.  Using this in the structure equation for $\ed h_2$ uncovers the relation $h_3 \equiv-4 u_1 e^{-3u}.$ Thus on an integral surface we have 
$$h_2^{-\frac{3}{2}}h_3  \equiv -4 u_1.$$ 
This observation has the following implications. For the PDE system $\mci'$ with the given \eqref{eq:PDEf} it was found that the spectral symmetry could be extended to an $\s{1}$-action on the infinite prolongation of the EDS and that this symmetry has the generating function $\Im(z u_1)$, \cite{Fox2011}.  We would like to find the corresponding spectral symmetry for the CMC EDS studied in this paper.  Notice that
\[
\ed z =\xi' \equiv e^{-u} \xi \equiv \sqrt{h_2} \xi =:\omega \, \mod \, \check{\tn{I}}'.
\]
Thus in order to write down a local generating function on $(\Xh{\infty}_*,\Ih{\infty})$ for the spectral symmetry, it is natural to introduce a primitive of $\omega$, say $z$, and then let $q=\Im(z h_2^{-\frac{3}{2}}h_3)$.  We will develop the related ideas in Sec.~\ref{sec:spectralsymmetry}. 

\two
Implicit in the above discussion is the assumption that we are studying the integral surfaces on which $\eta_1 \equiv e^{-u} \ed z$ does not vanish. This is equivalent to assuming that there are no umbilic points. In order to study the spectral symmetry in the context of CMC surfaces with umbilics, one would need to make an integrable extension which allows for $\eta_1$ to vanish at a point. There is such a generalization that can allow for umbilics. 

The 1-form $\omega$ pulls back to any (smooth) integral surface to be a holomorphic form. Near a zero of $\omega$ a local holomorphic coordinate can be chosen so that the Hopf differential is written as $\eta_1 \circ \xi \equiv z^m (\ed z)^2$ for an integer $m\geq 1$. For simplicity we will make the assumption that $m=2n$ is even so that locally $\omega \equiv z^n \ed z$ (if $m$ were not even, one could work on the normalization of the double cover of the integral surface defined by the square root of the Hopf differential). One finds that such integral surfaces are locally equivalent to the solutions to a slightly different PDF system. We present the details of this analysis in the below using a new integrable extension. 

Let $X'=\C \times \R \times \C$ with coordinates $z,u,u_1$. 
Let $f:\C \times \R \to \R$ be a scalar function in '$z, \zb, u$' variables.  
On $X'$ define as before
\begin{align*}
\xi'&=\ed z\\
\theta_0'&=\ed u - u_1 \ed z -  u_{\bar{1}} \ed \zb \\
\eta_1'&=\ed u_1 + f(z, \zb, u) \ed \zb\\
\Psi'&= \Im(\eta_1' \w \xi')
\end{align*}
and the ideal $\mci'=\langle \theta_0', \ed\theta_0', \Psi' \rangle$.  

Let $U \subset X$ be an open set on which there is a section of 1-form $\xi$.  On $Y'=U \times X'$ define the ideal
\[\check{\tn{I}}'=  \mci_{\vert_U} \oplus \mci' \oplus \langle \Theta_1, \Theta_2, \Theta_3 \rangle,
\]
where
\begin{align}
\Theta_1&=\xi-e^u\xi' ,   \label{eq:strtum}   \\
\Theta_2&=\eta_1 - e^{-u} z^{2n}\xi' ,  \n  \\
\Theta_3&=\im  \rho -(u_{\bar{1}} \xib' - u_1 \xi'),  \n 
\end{align}
and
\be
f(z, \zb, u) =\frac{1}{4}(\gamma^2 e^{2u}-|z|^{4n}e^{-2u}). \label{eq:PDEfum}
\ee
The definition of $f(z, \zb, u)$ and $\Theta_2$ now reflect the existence of an even degree umbilic point at $z=0$; otherwise the system is the same as above.

The following  lemma is the analogue of Lemma \ref{lem:PDEextension} in the presence of umbilics. 
\begin{lem}\label{lem:PDEextension2}
The EDS $(Y',\check{\tn{I}}')$ is an integrable extension of $(X',\mci')$.
\end{lem}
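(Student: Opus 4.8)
The plan is to follow the same template as the proof of Lemma~\ref{lem:PDEextension}, adapted to the extra factors $z^{2n}$ and $|z|^{4n}$ introduced in \eqref{eq:strtum} and \eqref{eq:PDEfum}. Let $\Pi: Y'=U\times X'\to X'$ be the projection onto the second factor. Since $\check{\tn{I}}'$ is assembled as a sum containing $\mci'$, the inclusion $\Pi^*\mci'\subset\check{\tn{I}}'$ is immediate, so it remains to verify that the remaining generators define a Frobenius-compatible fibration over $(X',\mci')$. Concretely, it suffices to establish the two congruences
\begin{align}
\mci_{|_U}&\equiv 0 \mod \theta_0,\Theta_1,\Theta_2,\Theta_3, \n\\
\ed\Theta_i&\equiv0\mod\theta_0,\Theta_1,\Theta_2,\Theta_3,\mci'. \n
\end{align}

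For the first congruence I would use $\Theta_1,\Theta_2$ to reduce the tautological forms, namely $\xi\equiv e^u\xi'$ and $\eta_1\equiv e^{-u}z^{2n}\xi'$ modulo the ideal. The generator $\eta_1\w\xi$ of $\mci$ from \eqref{1ideal2} then collapses to $z^{2n}\,\xi'\w\xi'=0$, and its conjugate $\etab_1\w\xib$ to $\zb^{2n}\,\xib'\w\xib'=0$; since $\theta_0$ is itself one of the listed generators, the first line follows. This is precisely the step in which the factor $z^{2n}$ in $\Theta_2$ is forced: it is exactly what makes the two reduced directions proportional so that $\mci$ degenerates, while remaining smooth across the umbilic $z=0$.

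For the second congruence I would compute $\ed\Theta_1,\ed\Theta_2,\ed\Theta_3$ using the CMC structure equations \eqref{eq:strt2} together with $\ed\xi'=\ed\xib'=0$ and $\ed u\equiv u_1\xi'+u_{\bar{1}}\xib'\mod\theta_0'$. In $\ed\Theta_1$ and $\ed\Theta_2$ the terms $\im\rho\w\xi$ and $-\im\rho\w\eta_1$ are handled by substituting $\im\rho\equiv u_{\bar{1}}\xib'-u_1\xi'\mod\Theta_3$; after reducing $\xi,\eta_1$ as above, the $\xib'\w\xi'$ contributions cancel between the reduced $\im\rho$-terms and the $\ed u$-part of $\ed(e^u)$, resp. $\ed(e^{-u}z^{2n})$, leaving only a multiple of $\theta_0'\w\xi'$, hence zero modulo $\mci'$. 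The essential computation is $\ed\Theta_3$: using the last line of \eqref{eq:strt2} and the reductions above one gets $\im\,\ed\rho\equiv-\tfrac12(\gamma^2 e^{2u}-|z|^{4n}e^{-2u})\,\xi'\w\xib'$, which equals $-2f\,\xi'\w\xib'$ exactly by the definition \eqref{eq:PDEfum} of $f$. Combining this with $\ed u_1\w\xi'$ and $-\ed u_{\bar{1}}\w\xib'$, the $f$-terms cancel and one is left with $\eta_1'\w\xi'-\etab_1'\w\xib'=2\im\,\Psi'\in\mci'$, as required.

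The main obstacle, and the only point where the structural constant $\gamma^2=\epsilon+\delta^2$ actually enters, is the reconciliation of $\ed\rho$ with $\Psi'$ in the $\ed\Theta_3$ computation; the precise form \eqref{eq:PDEfum} of $f$ — in particular the appearance of $|z|^{4n}$ rather than a constant — is dictated by matching the reduced Gauss-type term $\tfrac12 e^{-2u}|z|^{4n}\,\xi'\w\xib'$ coming from $\eta_1\w\etab_1$. I would also remark that, unlike in Lemma~\ref{lem:PDEextension} where the locus $\eta_1=0$ is excluded, every reduction above is polynomial (hence smooth) in $z$, so the congruences persist at the umbilic point $z=0$, which is the entire purpose of this variant.
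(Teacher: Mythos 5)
Your proposal is correct and takes the same approach the paper intends: the paper omits the proof of this lemma entirely, deferring to Lemma~\ref{lem:PDEextension}, whose proof simply asserts the two congruences $\mci_{|_U}\equiv 0$ and $\ed\Theta_i\equiv 0$ modulo $\theta_0,\Theta_1,\Theta_2,\Theta_3,\mci'$ ``by a direct computation.'' You verify exactly those two congruences, and your detailed computation (in particular the cancellation of the $f$-terms in $\ed\Theta_3$ leaving $2\im\,\Psi'$, and the observation that the $z^{2n}$-factors keep everything smooth across $z=0$) checks out.
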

 
\one
We give an indication of an idea how Lemma \ref{lem:PDEextension2} can be used to understanding the umbilics of CMC surfaces. Suppose that $n>0$. For smooth solutions of 
\[
u_{z \zb} +\frac{1}{4}(\gamma^2 e^{2u}-|z|^{4n}e^{-2u})=0 
\]
near $z=0$, $u(z,\zb)$ is close to a solution of the Liouville equation 
\[
u_{z \zb} +\frac{1}{4}\gamma^2 e^{2u}=0.
\] 
It is well known that solutions of the Liouville equation are locally equivalent to holomorphic curves in $\C^2$, \cite{Bryant2003}. Thus we expect that CMC surfaces  should behave like holomorphic curves  near the umbilic points. Considering the inverse coordinate on the open subset of $\X{1}$ on which $\eta_1\ne 0$, this approximately-holomorphic property is likely true also at the ends of complete CMC surfaces which correspond to the poles of  Hopf differential.

\subsection{Local equivalence with Abelian extension}\label{sec:localequivalence}
Let $X'^{(k-2)}=\C \times \R \times \C^{k-1}$ with coordinates $z,u=u_0,u_1, \, ... \, u_{k-1}$.
Define inductively for $j\geq 1$,
\begin{align*}
\eta_j'&=\ed u_j + S_j \ed \zb,\\
\theta_j'&= \eta_j' - u_{j+1} \ed z,
\end{align*}
where 
\begin{align*}
S_1&=f(u) \quad \tn{from} \;\eqref{eq:PDEf}, \\
S_j(u,u_1,u_2, \, ... \, u_{j-1})&=\del_z S_{j-1}.
\end{align*} 
Here $\del_z$ is the total derivative with respect to $z$ such as
\begin{align*}
S_2&=\del_z f(u) =f'(u) u_1, \\
S_3&=\del_z S_2 =f'(u) u_2+f''(u)u_1^2.
\end{align*}

Set the sequence of ideals inductively by
$$\mci'^{(k-2)}= \mci'^{(k-3)}\oplus \langle \theta_{k-2}', \eta_{k-1}'\w\ed z \rangle.$$  
Then $(X'^{(k-2)}, \mci'^{(k-2)})$ is the $(k-2)$-th prolongation of $(X',\mci').$

\two
Let
$$Y'^{(k-2)}=U \times X'^{(k-2)},$$
and define the ideal generated by
\[  \check{\tn{I}}'^{(k-2)}=\mci_{\vert_U}\oplus\mci'^{(k-2)} \oplus
 \langle \Theta_1, \Theta_2, \Theta_3  \rangle.
\]
Similarly as before we have,
\begin{lem}\label{lem:PDEextensionk-2}
The EDS $(Y'^{(k-2)},\check{\tn{I}}'^{(k-2)} )$ is an integrable extension of $(X'^{(k-2)}, \mci'^{(k-2)})$.
\end{lem}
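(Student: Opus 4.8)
The plan is to verify the two defining properties of an integrable extension (Sec.~\ref{sec:extension}) for the projection $\Pi : Y'^{(k-2)} = U\times X'^{(k-2)} \to X'^{(k-2)}$ onto the second factor: that $\check{\tn{I}}'^{(k-2)}$ is a differential ideal containing $\Pi^*\mci'^{(k-2)}$, and that the generators adjoined beyond $\Pi^*\mci'^{(k-2)}$ close up along the fibers $U$. Concretely this amounts to the two congruences
\[
\mci_{\vert_U}\equiv 0 \mod \theta_0,\Theta_1,\Theta_2,\Theta_3,
\qquad
\ed\Theta_i\equiv 0 \mod \theta_0,\Theta_1,\Theta_2,\Theta_3,\mci'^{(k-2)} .
\]
The inclusion $\Pi^*\mci'^{(k-2)}\subset\check{\tn{I}}'^{(k-2)}$ is immediate since $\mci'^{(k-2)}$ is one of the summands, so this is exactly the checklist of Lemma~\ref{lem:PDEextension}, now read modulo the larger ideal $\mci'^{(k-2)}$.

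The decisive observation is that the connection forms $\Theta_1,\Theta_2,\Theta_3$ of \eqref{eq:Thetaforms} are unchanged: they involve only the \emph{unprolonged} coframe $\xi,\eta_1,\rho$ on $U\subset X$ and the lowest-order jet data $u,u_1,u_{\bar 1}$ on the PDE side. Hence their exterior derivatives are computed from the finite structure equations \eqref{eq:strt2} on $X$ (no prolongation form $\theta_j$, $j\ge 2$, can enter, because $U$ is not prolonged) together with $\ed u_1=\eta_1'-f\,\xib'$ and its conjugate. First I would record the algebraic congruence: using $\xi\equiv e^u\xi'$ and $\eta_1\equiv e^{-u}\xi'$ gives $\Psi=\Im(\eta_1\w\xi)\equiv 0$ and $\ed\theta_0=-\Re(\eta_1\w\xi)\equiv 0$ modulo $\Theta_1,\Theta_2$, so $\mci_{\vert_U}$ reduces to the ideal generated by $\theta_0,\Theta_1,\Theta_2,\Theta_3$. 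Then I would compute $\ed\Theta_i$ verbatim as before; the only new feature is that the two-forms $\eta_1'\w\xi'$ and its conjugate appearing in $\ed\Theta_3$ now lie in $\mci'^{(k-2)}$, because $\theta_1'=\eta_1'-u_2\xi'\in\mci'^{(k-2)}$ forces $\eta_1'\w\xi'\equiv u_2\,\xi'\w\xi'=0$ (this is where $k\ge 3$ enters; the base case $k=2$ is Lemma~\ref{lem:PDEextension}). With these terms discarded, $\ed\Theta_3$ collapses, via \eqref{eq:strt2} for $\ed\rho$, to $\big[\tfrac12(e^{-2u}-\gamma^2e^{2u})+2f\big]\xi'\w\xib'$, which vanishes precisely by the choice \eqref{eq:PDEf} of $f$; the analogous and simpler computations give $\ed\Theta_1,\ed\Theta_2\equiv 0$.

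It then remains to confirm that $\check{\tn{I}}'^{(k-2)}$ is genuinely a differential ideal. For the summand $\mci_{\vert_U}$ this is inherited from $X$, where $\mci$ is closed by \eqref{1dPsi}; for $\mci'^{(k-2)}$ it is the defining property of the prolongation, which I would verify directly from the recursion $S_j=\del_z S_{j-1}$, giving $\ed\theta_j'\equiv -\theta_{j+1}'\w\xi'\mod\mci'^{(k-2)}$ so that all derivatives stay inside. Combined with $\ed\Theta_i\in\check{\tn{I}}'^{(k-2)}$ just established, this yields $\ed\check{\tn{I}}'^{(k-2)}\subset\check{\tn{I}}'^{(k-2)}$, and the two displayed congruences identify $\theta_0,\Theta_1,\Theta_2,\Theta_3$ as a Frobenius (hence compatible-ODE) system along the fibers $U$ once restricted to any integral manifold of $\mci'^{(k-2)}$.

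I do not expect a conceptual obstacle. Because $\Theta_1,\Theta_2,\Theta_3$ do not involve any of the higher jet variables $u_2,\dots,u_{k-1}$, prolonging the PDE side cannot manufacture a new closure condition, and every potentially dangerous two-form produced on differentiating $\Theta_i$ is absorbed into the enlarged ideal $\mci'^{(k-2)}$. The real work is purely in the bookkeeping of the prolongation: establishing once and for all that $\mci'^{(k-2)}$ is differentially closed under $S_j=\del_z S_{j-1}$, and checking the low-order cases $k=2,3,4$ by hand to anchor the pattern. That bookkeeping, rather than any single hard step, is the part that must be carried out with care.
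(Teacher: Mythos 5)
Your proposal is correct and takes essentially the same route as the paper: the paper proves the base case (Lemma~\ref{lem:PDEextension}) by verifying exactly your two congruences, and then disposes of the present lemma with the single remark ``similarly as before,'' which is precisely your key observation that $\Theta_1,\Theta_2,\Theta_3$ involve no prolongation variables, so the same computation goes through verbatim modulo the larger ideal $\mci'^{(k-2)}$. One cosmetic correction: the combination $\eta_1'\w\xi'-\ol{\eta_1'}\w\ol{\xi'}$ appearing in $\ed\Theta_3$ equals $2\im\,\Psi'$ and hence already lies in $\mci'$ in the base case, so absorbing those terms does not actually require $\theta_1'$ or the restriction $k\geq 3$.
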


We give an interpretation of $(Y'^{(k-2)},\check{\tn{I}}'^{(k-2)} )$ in term of the CMC system.
Recall $(  U \subset X, \mci\vert_U)$.
Let $(\hat{U}^{(k)}, \Ih{k}\vert_{\hat{U}^{(k)}})$ be the double cover of its $k$-th prolongation.\footnotemark\footnotetext{The double cover will be defined in Sec.~\ref{sec:doublecover}. For now it suffices to take a section of $h_2^{\frac{1}{2}}$ on $U^{(k)}$.}
Set 
$$\Zh^{(k)}=\hat{U}^{(k)}\times \C$$ 
and let $z_0$ be the $\C$-coordinate. Define the Abelian\footnotemark\footnotetext{An integrable extension is Abelian when it is defined by introducing a potential for a (possibly trivial) conservation law.}  integrable extension generated by
$$\check{\tn{I}}^{(k)}=\Ih{k}\vert_{\hat{U}^{(k)}}\oplus \langle \Theta_{z_0}  \rangle,
$$
where
$$ \Theta_{z_0}=\ed z_0 - h_2^{\frac{1}{2}}\xi.
$$ 
Let $Y^{(k)}\subset \Zh^{(k)}$ be the subset defined by 
$$ h_2-\hb_2=0.$$
Consider the induced system $(Y^{(k)}, \check{\tn{I}}^{(k)}).$
The following isomorphism follows from the construction.
\begin{lem}\label{lem:YY'iso}
There exists an isomorphism (for $2\leq k\leq\infty$)
$$(Y^{(k)}, \check{\tn{I}}^{(k)}) \simeq  (Y'^{(k-2)},\check{\tn{I}}'^{(k-2)} ).$$
\end{lem}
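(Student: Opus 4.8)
The plan is to exhibit an explicit fibrewise diffeomorphism $\Xi\colon Y^{(k)}\to Y'^{(k-2)}$ covering the identity on $U$, and to check that $\Xi^*\check{\tn{I}}'^{(k-2)}=\check{\tn{I}}^{(k)}$. The coordinate dictionary is forced by the relations already extracted from $(Y',\check{\tn{I}}')$ in Lemma~\ref{lem:PDEextension}: on integral manifolds $\eta_1=h_2\xi$ together with $\Theta_1=\Theta_2=0$ gives $h_2=e^{-2u}$, and $\ed z=\xi'\equiv e^{-u}\xi=h_2^{1/2}\xi=\omega$. Accordingly I set $z=z_0$ on the $\C$-factor, $u=-\tfrac12\log h_2$ on the fibre, and keep $U$ fixed. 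The double cover $\hat U^{(k)}$ enters precisely here, to make the branch $h_2^{1/2}=e^{-u}$ single valued; and the real constraint $h_2-\hb_2=0$ defining $Y^{(k)}$ is exactly the condition that $u$ be real. This same slice pins down the connection form $\rho$ occurring in $\Theta_3$, and a dimension count (both spaces have real dimension $2k+6$) shows the slice is cut out by one equation, so that $\Xi$ has a chance of being a diffeomorphism.

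Second, I would build the correspondence between the remaining jet variables $u_1,\dots,u_{k-1}$ and $h_3,\dots,h_{k+1}$ by iterating the weighted prolongation equation $\ed h_j+\im\,j\,h_j\rho=h_{j+1}\xi+T_j\xib$ of Lemma~\ref{lem:prolongedstrt}. The total $z$-derivative $u_j=\del_z u_{j-1}$ corresponds on the CMC side to differentiation along $\xi$ of the appropriate $\SO(2)$-weight, the connection contribution being supplied by $\Theta_3$; keeping track of this weight is what produces the correct constants, e.g. $u_1=-\tfrac14 h_2^{-3/2}h_3$ as already recorded. Inductively one gets $u_j=c_j\,h_2^{-(j+2)/2}h_{j+2}+(\text{terms in }h_2,\dots,h_{j+1})$ with $c_j\neq 0$, so $h_{j+2}$ enters $u_j$ linearly through a nowhere-vanishing coefficient. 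This change of variables is therefore triangular, hence invertible on the locus $h_2\neq 0$, and its inverse expresses each $h_{j+2}$ as a polynomial in $e^{-u},u_1,\dots,u_j$; by construction this inverse intertwines the CMC recursion with the PDE recursion $S_j=\del_z S_{j-1}$ defining $\mci'^{(k-2)}$.

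Finally I would verify that $\Xi$ carries one ideal onto the other. For the Abelian generator this is immediate: under the dictionary $\Theta_{z_0}=\ed z_0-h_2^{1/2}\xi$ equals $-e^{-u}(\xi-e^u\ed z)=-e^{-u}\Theta_1$, so $\langle\Theta_{z_0}\rangle$ and $\langle\Theta_1\rangle$ correspond, while $\Theta_2,\Theta_3$ and $\mci|_U$ land in $\Ih{k}|_{\hat U^{(k)}}$ by way of $\eta_1=h_2\xi$ and the slice definition of $\rho$. The substantive step, and the point I expect to be the main obstacle, is to show that $\Xi$ intertwines the two contact towers, namely that $\Xi^*\theta_j'$ lies in the span of $\theta_0,\dots,\theta_{j+2}$ modulo $\Theta_{z_0}$ and the lower contact forms, and conversely. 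Using $\theta_j'=\eta_j'-u_{j+1}\ed z$ and $\eta_j'=\ed u_j+S_j\ed\zb$, this reduces to matching the PDE recursion against Lemma~\ref{lem:prolongedstrt} and the $T_j$-identities of Cor.~\ref{cor:Tidentity}; the bookkeeping of the $\SO(2)$-weights and of the $\xib$-coefficients $T_j$ is delicate, but at each level it is forced by $\ed^2=0$, so no computation beyond the already established structure equations is needed. Once the finite-level isomorphisms are in place they are compatible with the projections $\pi_{k+1,k}$, and passing to the inverse limit yields the case $k=\infty$.
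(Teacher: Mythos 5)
Your proposal is correct and takes essentially the same route as the paper: the paper's proof is just "the construction," namely the coordinate dictionary $z_0\equiv z$, $h_2=\hb_2\equiv e^{-2u}$, $z_3\equiv -4u_1$, $p_{j+2}\equiv -4u_j$ that you rederive, together with the triangular (hence invertible on $h_2\neq 0$) change of jet variables and the matching of generators ($\Theta_{z_0}=-e^{-u}\Theta_1$, $\Theta_2\equiv\theta_1$ mod $\Theta_{z_0}$, while $\theta_0'$ and $\Theta_3$ correspond to $-\tfrac{1}{4h_2}(\theta_2+\thetab_2)$ and $\tfrac{1}{4h_2}(\theta_2-\thetab_2)$ on the slice $h_2=\hb_2$). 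The paper simply records this dictionary as a consequence right after the lemma instead of spelling out the ideal-matching, so your write-up is a faithful, more detailed version of its argument.
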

This has the following consequences.
From the isomorphism we have the correspondence
\begin{align}
z_0&\equiv z, \n \\
h_2=\hb_2&\equiv e^{-2u},\n \\
z_3&\equiv -4 u_1,  \label{eq:zurelation}  \\
z_4-\frac{3}{2}z_3^2&\equiv -4u_2, \n \\
p_{j+2}&\equiv-4 u_j, \quad j\geq 3,\n
\end{align}
where the sequence $p_{j+2}, j=1, 2, \, ... \,, $ is inductively determined by
$$ p_{j+2}=h_2^{-\frac{1}{2}}\delx p_{j+1}.$$
Assign the weights for the variables
$z_k=h_2^{-\frac{k}{2}} h_k, u_{k-2},$ for $k\geq 3,$
by
$$\tn{weight} (z_k) =k-2,\quad \tn{weight}(u_{k-2})=k-2.$$ 
Then from the formula
$$h_2^{-\frac{1}{2}}\delx z_k =z_{k+1}-\frac{k}{2}z_3z_k,$$
each $p_{j+2}=z_{j+2}+\,...\, $ is a weighted homogeneous polynomial of degree $j$ in the variable $z_k$'s.
\begin{cor}\label{cor:zurelation}
Under the isomorphism of Lem.~\ref{lem:YY'iso}, the functions of the variable
$\{ z_3, z_4, \, ... \, \}$ correspond to the functions of  $\{ u_1, u_2, \, ... \, \}$ preserving the given weights.
\end{cor}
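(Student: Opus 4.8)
The plan is to exhibit the correspondence of Lemma~\ref{lem:YY'iso} as a weight-preserving isomorphism between the polynomial algebras $\C[z_3,z_4,\ldots]$ and $\C[u_1,u_2,\ldots]$. First I would unify the three displayed relations into the single family $u_j\equiv-\tfrac14 p_{j+2}$ valid for all $j\geq1$. Indeed, with $D:=h_2^{-\frac12}\delx$ one has $p_3=z_3$ and $p_4=Dz_3=z_4-\tfrac32 z_3^2$, which matches $z_3\equiv-4u_1$ and $z_4-\tfrac32 z_3^2\equiv-4u_2$; for $j\geq3$ the relation $p_{j+2}\equiv-4u_j$ is exactly the one recorded above.

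The key structural point I would establish is that $D$ restricts to a derivation of $\C[z_3,z_4,\ldots]$ raising the assigned weight by exactly $1$. This follows from $h_2^{-\frac12}\delx z_k=z_{k+1}-\tfrac{k}{2}z_3 z_k$: on the generator $z_k$ of weight $k-2$ it produces the weight-$(k-1)$ element $z_{k+1}-\tfrac{k}{2}z_3z_k$, and since $\delx$ is a derivation so is $D$, whence the weight shift propagates additively across every monomial. As $p_{j+2}=D^{j-1}z_3$, each $p_{j+2}$ is therefore weighted homogeneous of degree $j$ with top-index term $z_{j+2}$ occurring linearly, so that
$$u_j\equiv-\tfrac14 p_{j+2}=-\tfrac14 z_{j+2}+Q_j(z_3,\ldots,z_{j+1}),$$
with $Q_j$ weighted homogeneous of degree $j$ in the lower variables.

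Next I would invert this triangular system by induction on $j$. For $j=1$ the relation $z_3=-4u_1$ is immediate. Assuming $z_3,\ldots,z_{j+1}$ have already been written as weighted homogeneous polynomials of the appropriate degrees in $u_1,\ldots,u_{j-1}$, the displayed relation gives $z_{j+2}=-4u_j+4Q_j(z_3,\ldots,z_{j+1})$, and substituting the inductive expressions expresses $z_{j+2}$ as a weighted homogeneous polynomial of degree $j$ in $u_1,\ldots,u_j$. The only constant one ever divides by is $-\tfrac14\neq0$, so no denominators in the variables ever arise and the inverse substitution genuinely lands in $\C[u_1,u_2,\ldots]$.

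Finally I would assemble these into the two mutually inverse algebra homomorphisms $u_j\mapsto-\tfrac14 p_{j+2}$ and $z_{j+2}\mapsto-4u_j+4Q_j$, both of which carry each generator to a weighted homogeneous element of the same weight and hence are weight-preserving; this yields Corollary~\ref{cor:zurelation}. The argument is essentially formal, so I do not anticipate a serious obstacle; the only point demanding care is the bookkeeping that $D$ is genuinely weight-raising and that the triangular leading coefficient $-\tfrac14$ is a nonzero constant, these two facts together being what upgrade the substitution from a mere formal map to a polynomial automorphism.
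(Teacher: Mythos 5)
Your proposal is correct and follows essentially the same route as the paper: the paper's (implicit) argument is precisely that $p_{j+2}\equiv-4u_j$ with $p_{j+2}=\big(h_2^{-\frac{1}{2}}\delx\big)^{j-1}z_3$, that the formula $h_2^{-\frac{1}{2}}\delx z_k=z_{k+1}-\tfrac{k}{2}z_3z_k$ makes each $p_{j+2}=z_{j+2}+\ldots$ a weighted homogeneous polynomial of degree $j$, and that this triangular system with unit leading coefficient can be inverted. You merely make explicit the weight-raising derivation property of $h_2^{-\frac{1}{2}}\delx$ and the inductive inversion, which the paper leaves to the reader.
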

As a result the higher-order Jacobi fields and conservation laws for the CMC system 
correspond to those of the PDE \eqref{eq:PDEf}.
We shall use this isomorphism to reduce the problem of classification of Jacobi fields for the CMC system to the known classification for the PDE.
 
\np
\part{Enhanced prolongation}\label{part:prolongation}
\section{CMC surfaces as primitive maps}\label{sec:primitivemap}
In this section we recall the notion of primitive maps and describe how CMC surfaces in a 3-dimensional space form arise as primitive maps in an associated 4-symmetric space.  
This description of CMC surfaces allows one to introduce naturally the spectral parameter  $\lambda$ which will play the important role of connecting the theory of CMC surfaces with integrable systems. 

\two
Let $\G^{\C}$ be a complex semi-simple Lie group. Let $\G$ be the compact real form of $\G^{\C}$ with respect to an anti-holomorphic involution $\sigma:\G^{\C} \to \G^{\C}$.  Let $\tau:\G^{\C} \to \G^{\C}$ be a commuting automorphism of order $k$,
\[\sigma\circ\tau=\tau\circ\sigma, \quad \tau^k=\tn{identity},\]
and let $\K^{\C}\subset\G^{\C}$ be its stabilizer subgroup. Let $\K=\K^{\C}\cap\G$ be the corresponding real form of $\K^{\C}$. Consider the homogeneous space
\[ X=\G/\K\]
equipped with a choice of $\G$-invariant Riemannian metric. By the commuting property, the automorphism $\tau$ descends on $X$ to an isometry of order $k$ with the identity coset as an isolated fixed point. The Riemannian homogeneous space constructed in this way from the triple $(\G^{\C},\sigma,\tau)$ is called a \tb{Riemannian $k$-symmetric space}. We often simply refer to a homogeneous Riemannian manifold $X=\G/\K$ as a $k$-symmetric space and it is understood that some fixed triple $(\G^{\C},\tau,\sigma)$ gives rise to $X$.

The automorphisms $\sigma,\, \tau$ of the Lie group $\G^{\C}$ induce the commuting automorphisms (called by the same name) of the Lie algebra $\g^{\C}$ of $\G^{\C}$.   Let $\varepsilon$ be a fixed primitive $k$-th root of unity and denote the eigenspace decomposition of $\tau$ by (the indices are defined modulo $k$)
\be
\g^{\C}=\g_{0}\oplus \g_{1}\oplus \ldots \oplus \g_{-1},\n
\ee
where each $\g_{j}$ is the eigenspace with eigenvalue $\varepsilon^j$. By definition we have
\[ [ \, \g_i,\,\g_j  ]\subset \g_{i+j}.
\]
This implies that there exists the corresponding decomposition of the complexified tangent bundle $T^{\C}X$ of $X$,
\be\label{eq:TCXdecomp}
T^{\C}X\simeq\g^{\C}/\g_0= \tn{E}_{1}\oplus  \tn{E}_2\oplus \ldots \oplus  \tn{E}_{-1}.
\ee
\begin{defn}
Let $X$ a $k$-symmetric space described as above with the associated decomposition \eqref{eq:TCXdecomp} of the complexified tangent bundle. Let $\Sigma$ be a Riemann surface. A smooth map $\x:\Sigma \to X$ is \tb{primitive} when the (1,0)-part  of the derivative map $\x_*$ takes values in $\tn{E}_{-1}$,
\[ \x_*:T^{(1,0)}\Sigma \to \tn{E}_{-1}\subset T^{\C}X.\]
\end{defn}
Note that the notion of primitive map is meaningful when $k\geq 3$. 

The first order differential equation for primitive maps can be considered as a generalization of the equation for pseudo-holomorphic curves in an almost complex manifold, though the $k$-symmetric space $X$ in general need not have an invariant almost complex structure.  It is known that primitive maps are harmonic maps with respect to the appropriate choice of invariant Riemannian metric on the $k$-symmetric space.

\two
We now describe the example of 4-symmetric space we will be concerned with. The analysis is for the case $\gamma^2>0$, and we give a remark for the necessary changes for the case $\gamma^2<0$. Take $\G^{\C}=\SO(4,\C)$ with the commuting automorphisms
\[ \sigma(g)=\ol g,\quad \tau_{\tn{R}}(g)=\tn{R}g\tn{R}^{-1},
\]
where $\tn{R}$ is the real matrix
\[\tn{R}=\bp -1&\cdot&\cdot&\cdot \\ \cdot&\cdot&-1&\cdot \\ \cdot&1&\cdot&\cdot \\ \cdot&\cdot&\cdot&1 \ep,\quad \tn{R}^4=\rm{I}_4.
\]
Notice that
\[\tn{R}=\tn{R}_1\tn{R}_2=\bp -1&\cdot&\cdot&\cdot \\ \cdot&1&\cdot&\cdot    \\ \cdot&\cdot&1&\cdot \\ \cdot&\cdot&\cdot&1 \ep
 \bp 1&\cdot&\cdot&\cdot \\ \cdot&\cdot&-1&\cdot   \\ \cdot&1&\cdot&\cdot \\ \cdot&\cdot&\cdot&1 \ep.
\]
The automorphism $\tau_{\tn{R}_1}$ gives rise to the 2-symmetric space $\s{3}=\SO(4)/\SO(3)$, and the matrix $\tn{R}_2$ represents a rotation by angle $\frac{\pi}{2}$ in a $2$-plane of $\R^4$.  The stabilizer subgroup of $\tau_{\tn{R}}$ is $\K\simeq\SO(2)$, and $X$ is the oriented unit tangent bundle of $\s{3}$.  The eigenspace decomposition of the Lie algebra $\so(4,\C)$ admits the following explicit description, Figure \ref{fig:taudecompo}.
Here $\{ \, a, \,b,\,c, \, e,\,f, \, g\,\}$ are complex coefficients.
\begin{figure}
\begin{center}
\begin{tabular}{lcc}
\qquad\qquad eigenspace    &\vline & eigenvalue\\
\hline
$\g_0=\bp \cdot&\cdot&\cdot& \cdot\\ \cdot& \cdot &-a&\cdot \\
\cdot &a&\cdot &\cdot \\ \cdot & \cdot & \cdot &\cdot  \ep$ &\vline  & $+1$\\
$\g_1=\bp \cdot&-c&-\im c& \cdot\\ c& \cdot &\cdot&b \\
\im c &\cdot&\cdot &- \im b \\ \cdot & -b & \im b &\cdot  \ep$ &\vline & $+\im$\\
$\g_2=\bp \cdot&\cdot&\cdot& \im e\\ \cdot& \cdot &\cdot&\cdot \\\cdot &\cdot&\cdot &\cdot \\
-\im e & \cdot & \cdot &\cdot  \ep$ &\vline & $-1$\\
$\g_{-1}=\bp \cdot&-f&\im f& \cdot\\ f& \cdot &\cdot&-g \\
 -\im f &\cdot&\cdot &- \im g \\ \cdot & g & \im g &\cdot  \ep$ &\vline  & $-\im$\\
\end{tabular}
\end{center}
\caption{Eigenspace decomposition of $\so(4,\C)$ under $\tau$}
\label{fig:taudecompo}
\end{figure}
\begin{rem}
For the case $\gamma^2<0$, choose the involutions
\[ \sigma(g)=\tn{R}_1\ol{g}\tn{R}_1, \quad \tau= \tau_{\tn{R}}.
\]
This gives rise to the hyperbolic space form $\mathbb{H}^3=\SO(1,3)/\SO(3)$ 
and its unit tangent bundle.
\end{rem}

The primitive maps into $X$ are equivalent to the integral surfaces of the EDS for CMC surfaces \eqref{1ideal2}. To see this consider an integral surface $\x:\Sigma\to X$. On the induced $\SO(2)$-bundle $\x^*\mcf\to \Sigma$ define a $\g$-valued 1-form  
\[\psi=\psi_{-1}+\psi_{0}+\psi_{1}
\]
decomposed into three parts according to the decomposition of $T^{\C}X$, where
\be
\psi_{-1} = \frac{1}{2}
\bp \cdot & -\gamma \xi & \im \gamma \xi &\cdot \\
\gamma \xi &\cdot& \cdot & -h_2 \xi \\
-\im \gamma \xi  &\cdot &\cdot & -\im h_2 \xi \\
\cdot & h_2 \xi &\im h_2 \xi &\cdot \ep,\n
\ee
\be \psi_{0} =
\bp \cdot &\cdot & \cdot &\cdot \\
\cdot &\cdot& \rho &\cdot \\
\cdot  & -\rho &\cdot &\cdot\\
\cdot &\cdot &\cdot &\cdot \ep,\n
\ee
\be \psi_{1} = \frac{1}{2}
\bp \cdot & -\gamma \xib &- \im \gamma \xib &\cdot \\
\gamma \xib &\cdot& \cdot & -\hb_2 \xib \\
\im \gamma \xib  &\cdot &\cdot & \im \hb_2 \xib \\
\cdot & \hb_2 \xib &-\im \hb_2 \xib &\cdot \ep.\n
\ee
Notice that $\psi_{1}=\sigma(\psi_{-1})$, and the 1-form $\psi$ satisfies the Maurer-Cartan equation $\ed\psi+\psi\w\psi=0.$ Since $\psi_{-1}\equiv 0\mod\xi$, by definition $\x$ is a primitive map. Reversing the argument it is easily checked that the converse is also true.

\two
The formulation of CMC surfaces as primitive maps has an important geometric consequence, namely the introduction of \emph{spectral parameter.} Following the general theory of integrable systems, let $\lambda\in\C^*$ be the spectral parameter and set the $\SO(4,\C)[\lambda^{-1},\lambda]$-valued \tb{extended Maurer-Cartan form}  
\be\label{eq:primitiveform}
\psi_{\lambda}=\lambda^{-1}\psi_{-1}+\psi_0+\lambda \psi_1.
\ee
The 1-form $\psi_{\lambda}$ takes values in the Lie algebra $\g$ when $\lambda$ is a unit complex number. A direct computation shows that the extended 1-form $\psi_{\lambda}$ still satisfies the Maurer-Cartan equation
\be\label{eq:primitiveMC}
\ed\psi_{\lambda}+\psi_{\lambda}\w\psi_{\lambda}=0
\ee
whenever the data $\{\, \gamma^2, \, \xi,\,h_2,\,\rho\,\}$ satisfy the structure equation for CMC surfaces,  Eq.~\eqref{eq:strtx} in Sec.~\ref{sec:prolongation1}.

In order to incorporate and make use of the present extra symmetry structure involving the spectral parameter $\lambda$, it seems desirable to extend the underlying symmetry group from the simple Lie group $\G$ to the associated loop group. This extension should be considered as a starting point of the theory of integrable systems in CMC surface theory.

\section{Formal Killing field}\label{sec:formalKillingfields}
We introduce the formal Killing fields associated with primitive maps adapted to our setting
following Burstall and Pedit, \cite{Burstall1995}.  
The formal Killing fields were defined in \cite{Burstall1993} as a way of packaging together
the  infinitesimal symmetries of harmonic maps.
We shall extract a recursion scheme for Jacobi fields from the structure equation for formal Killing fields.
Then by examining the first few terms of the sequence of coefficients,
we are motivated to use the square root of  Hopf differential. 
This will lead to a double cover of the infinite prolongation which supports the splitting field for the linearized Jacobi equation. 

\two
Recall $\g^{\C}=\so(4,\C)$. Consider the {\bf based loop algebra} associated to $\g^{\C}$:
\[
\mcl(\g^{\C})= \left\lbrace  X_{\lambda} \in\g^{\C}[[ \lambda^{-1},\lambda]] \;:\; {\rm if}\; X_{\lambda}=\sum_{n=-\infty}^{\infty}X^n \lambda^n \;{\rm then}\;X^0=0\right\rbrace,
\]
and the {\bf twisted based loop algebra} associated to the $k$-symmetric space $(\G^{\C},\sigma, \tau)$:
\begin{align}\label{defn:twistedloop}
\mcl^{\sigma,\tau}(\g^{\C})=\left\lbrace  X_{\lambda} \in \mcl(\g^{\C}):\; \sigma(X_{\ol{\lambda}^{-1}})=X_{\lambda},  \;\; \tau(X_{\varepsilon \lambda})=X_{\lambda}, \;\; \varepsilon=e^{\frac{2\pi \im}{k}} \right\rbrace.
\end{align}
Note that the extended Maurer-Cartan form $\psi_{\lambda}$ from the previous section is an $\mcl^{\sigma,\tau}(\g^{\C})$-valued 1-form.
\begin{defn}
Suppose we have a $\mcl^{\sigma,\tau}(\g^{\C})$-valued 1-forms $\psi_{\lambda}$ on a Riemann surface $\Sigma$ as given in \eqref{eq:primitiveform} which satisfies the structure equation \eqref{eq:primitiveMC} independent of the spectral parameter $\lambda$. A \tb{formal Killing field} for  $\psi_{\lambda}$ is a map  $X_{\lambda}:\Sigma \to \mcl^{\sigma, \tau}(\g^{\C})$ which satisfies
\be\label{eq:KillingEquation}
\ed X_{\lambda}+[\psi_{\lambda}, X_{\lambda}]=0.
\ee
\end{defn}

In order to make use of the formal Killing fields to the higher-order analysis for CMC surfaces,
we give an explicit invariant decomposition of $X_{\lambda}$. 
Let us write a formal Killing field as
\[
X_{\lambda}=\sum_{n=-\infty}^{\infty} \left( X^{kn}\lambda^{kn}+X^{kn+1}\lambda^{kn+1}+\ldots +X^{kn+k-1}\lambda^{kn+k-1}    \right)
\]
where $X^{kn+j} \in \g_{j}$. The reality condition $\sigma(X_{\ol{\lambda}^{-1}})=X_{\lambda}$ implies that
\be\label{eq:realitycondition}
X^{-m}=\sigma({X^m}). 
\ee
In terms of this expansion, the $\lambda^{kn+j}$-component of Eq.~\eqref{eq:KillingEquation} gives
\be\label{eq:formalKilling_nj}
\ed X^{kn+j}+[\psi_0,X^{kn+j}]+[\psi_{-1},X^{kn+j+1}]+[\psi_{1},X^{kn+j-1}]=0.
\ee
Note in particular that an element of the based loop algebra has $X^0=0$ so that the $\lambda^0$-component gives the initial relation
\[
[\psi_{-1},X^1]+[\psi_1, X^{-1}]=0.
\]

\one
For the example at hand ($k=4$), we adopt the index notation as follows.
\be\label{eq:weightindex}
\left( X^{4n}, \,X^{4n+1},\,X^{4n+2},\,X^{4n+3}\right)
=\left(\{a^n\}, \, \{b^n,\,c^n\},\,\{e^n\},\,\{f^n,g^n\}\right).
\ee
The reality condition \eqref{eq:realitycondition} translates to
\be\label{eq:newreality}\begin{array}{rl}
a^{-n}&=\ol{a^n}, \\
b^{-n}&=-\ol{g^{n-1}}\\
c^{-n}&= \tn{sign}(\gamma^2)\,\ol{f^{n-1}}\\
e^{-n}&=-\tn{sign}(\gamma^2)\,\ol{e^{n-1}}\\
f^{-n}&=\tn{sign}(\gamma^2)\,\ol{c^{n-1}}\\
g^{-n}&=-\ol{b^{n-1}},\qquad\qquad n\geq 1.
\end{array}\ee
\begin{rem}
We will only consider the terms of non-negative index $n\geq 0$.
The following arguments are valid independent of the sign of $\gamma^2$. 
\end{rem}

The Killing field equation is expanded component-wise in these variables as follows.

\one
\emph{$\bullet\;n$-th equation, $n\geq1$}:
\begin{align}\label{eq:formalKilling_n}
&\ed  a^n +(\im \gamma c^n+\im h_2 b^n)\xi+(-\im \gamma  f^{n-1}+\im \hb_{2} g^{n-1})\xib = 0,\\
&\ed  b^n  -\im  b^n \rho+\frac{\im \gamma }{2} e^n \xi +\frac{\im}{2}  \hb_{2}a^n \xib  =0,\n\\
&\ed c^n +\im c^n \rho+\frac{\im}{2}  h_2 e^n  \xi+\frac{ \im \gamma }{2} a^n \xib = 0,\n\\
&\ed e^n  +(\im h_2 f^n-\im \gamma  g^{n})\xi+(\im \hb_{2} c^n+\im \gamma b^n)\xib  = 0,\n\\
&\ed  f^n  -\im  f^n \rho-\frac{\im \gamma }{2} a^{n+1} \xi+\frac{\im}{2} \hb_{2} e^n \xib = 0,\n\\
&\ed  g^n +\im  g^n \rho+\frac{\im}{2} h_2 a^{n+1}  \xi-\frac{\im \gamma }{2}  e^n \xib = 0.\n
\end{align}

\emph{$\bullet\;0$-th equation}:
\begin{align}\label{eq:formalKilling_0}
&a^0=0,\\
&\gamma  c^0+h_2 b^0 =0, \n \\
&\ed  b^ 0  -\im  b^ 0 \rho+\frac{\im \gamma }{2} e^ 0 \xi  = 0, \n \\
&\ed c^ 0 +\im c^ 0 \rho+\frac{\im}{2}  h_2 e^ 0  \xi = 0,\n \\
&\ed e^ 0  +(\im h_2 f^ 0-\im \gamma  g^{ 0})\xi+(\im \hb_{2} c^ 0+\im \gamma b^ 0)\xib  = 0, \n \\
&\ed  f^ 0  -\im  f^ 0 \rho-\frac{\im \gamma }{2} a^{ 1} \xi+\frac{\im}{2} \hb_{2} e^ 0 \xib =0, \n \\
&\ed  g^ 0 +\im  g^ 0 \rho+\frac{\im}{2} h_2 a^{ 1}  \xi-\frac{\im \gamma }{2}  e^ 0 \xib =0.\n
\end{align}

A part of interests in formal Killing fields is from that they give rise to Jacobi fields.  We demonstrate this for the specific case at hand.
\begin{lem}\label{lem:formalJacobi}
If the coefficients
$\{ a^n, b^n, c^n \}$, or $\{a^n, f^{n-1}, g^{n-1} \}$ satisfy the formal Killing field equation then $a^n$ is a Jacobi field,
\[
\mce(a^n):=\Delta a^n-2\left( \gamma^2+|h_2|^2 \right)a^n=0.
\]
Similarly, if  the coefficients
$\{ e^n, b^{n},c^{n} \}$, or $\{ e^n, f^{n}, g^{n} \}$ satisfy the formal  Killing field equation then $e^n$ is a Jacobi field.
\end{lem}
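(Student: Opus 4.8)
The plan is to reduce everything to the single second-order identity $\mce(a^n)=(a^n)_{\xi,\xib}+\tfrac12(\gamma^2+h_2\hb_2)a^n=0$, which is equivalent to the Laplace form in the statement because $\Delta=-4\,\delx\delxb$ on the induced metric (Eq.~\eqref{eq:LaplaceJacobi}) and $|h_2|^2=h_2\hb_2$. Working modulo $\iinf$ and modulo $\rho$, I would first read the total derivatives of $a^n$ straight off the first line of the formal Killing equations~\eqref{eq:formalKilling_n}, obtaining $(a^n)_\xi=-\im\gamma c^n-\im h_2 b^n$ and $(a^n)_{\xib}=\im\gamma f^{n-1}-\im\hb_2 g^{n-1}$. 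The governing idea is that the first-order Killing equation already expresses the gradient of $a^n$ in terms of its algebraic neighbours in the loop expansion, so a single further differentiation, fed by the equations for those neighbours, ought to close up into a second-order operator acting on $a^n$ alone.

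For the hypothesis $\{a^n,b^n,c^n\}$ I would differentiate $(a^n)_\xi$ with respect to $\xib$ and substitute the values $(b^n)_{\xib}=-\tfrac{\im}{2}\hb_2 a^n$ and $(c^n)_{\xib}=-\tfrac{\im\gamma}{2}a^n$ read off from the $b^n$- and $c^n$-lines, together with the holomorphy relation $\delxb h_2=0$ from Sec.~\ref{sec:prolongation1}; the Leibniz rule then yields $(a^n)_{\xi,\xib}=-\tfrac{\gamma^2}{2}a^n-\tfrac12 h_2\hb_2 a^n$, which is $\mce(a^n)=0$. For the hypothesis $\{a^n,f^{n-1},g^{n-1}\}$ I would instead differentiate $(a^n)_{\xib}$ with respect to $\xi$, using $(f^{n-1})_\xi=\tfrac{\im\gamma}{2}a^n$ and $(g^{n-1})_\xi=-\tfrac{\im}{2}h_2 a^n$ (supplied by~\eqref{eq:formalKilling_0} when $n=1$) and $\delx\hb_2=0$; commutativity $(a^n)_{\xi,\xib}=(a^n)_{\xib,\xi}$ forces the same value. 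The two statements for $e^n$ are structurally identical: read $(e^n)_\xi$ and $(e^n)_{\xib}$ off the $e^n$-line of~\eqref{eq:formalKilling_n}, then close up with the $\xi$-derivatives of $\{b^n,c^n\}$ in one case and the $\xib$-derivatives of $\{f^n,g^n\}$ in the other, recovering $(e^n)_{\xi,\xib}=-\tfrac12(\gamma^2+h_2\hb_2)e^n$ each time.

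This is in the end a mechanical verification, so I do not anticipate a genuine obstacle, only bookkeeping that must be carried out exactly. The first point requiring care is the extraction of total derivatives from the Maurer--Cartan form: dropping the $\rho$-terms is legitimate precisely because $a^n$ and $e^n$ are $\SO(2)$-invariant of weight zero while $b^n,c^n,f^n,g^n$ carry the compensating weights $\pm1$, so the operators $\delx,\delxb$ on the weight-zero functions $a^n,e^n$ are unambiguous. The second is that holomorphy of the Hopf differential must be invoked to annihilate the $(h_2)_{\xib}$ and $(\hb_2)_\xi$ terms that Leibniz would otherwise produce. The one substantive point --- the reason the identification with $\mce$ succeeds at all --- is that the $\gamma^2$ contribution (from the $c^n$, respectively $f^{n-1}$, pairing) and the $h_2\hb_2$ contribution (from the $b^n$, respectively $g^{n-1}$, pairing) assemble into exactly the Jacobi potential $\gamma^2+h_2\hb_2$.
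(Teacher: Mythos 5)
Your proposal is correct and takes essentially the same approach as the paper: a single differentiation of the first-order Killing relations, in which the $\gamma$-pairing and the $h_2$-pairing assemble into the Jacobi potential $\gamma^2+h_2\hb_2$. The only difference is organizational --- the paper computes $\ed\,\JAI\,\ed a^n$ at the level of forms from the full gradient $\ed a^n$ (thereby tacitly invoking the structure equations of all four neighbours $b^n$, $c^n$, $f^{n-1}$, $g^{n-1}$ at once), whereas your coefficient-level mixed-partial computation, carried out one triple at a time together with commutativity of total derivatives on weight-zero functions, tracks the lemma's either/or hypothesis more faithfully.
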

\begin{proof}
By definition, $-\ed \JAI \ed a^n = -4(a^n_{\xi,\xib})\frac{\im}{2}\xi\w\xib= \Delta a^n \frac{\im}{2} \xi \w \xib$.
From Eqs.~\eqref{eq:formalKilling_n},
\begin{align}
\ed  a^n &= -(\im \gamma c^n+\im h_2 b^n)\xi-(-\im \gamma  f^{n-1}+\im \hb_{2} g^{n-1})\xib, \n\\
\JAI \ed a^n &= -(\gamma c^n+h_2 b^n)\xi-( \gamma  f^{n-1}- \hb_{2} g^{n-1})\xib, \n \\
\ed \JAI \ed a^n &=  -2(\gamma ^2+|h_2|^2)a^n \frac{\im}{2} \xi \w \xib. \n
\end{align}
The $e^n$ case is verified similarly.
\end{proof}
The formally integrable structure equation for formal Killing fields is equivalent to the recursive sequence of essentially determined first order equations \eqref{eq:formalKilling_nj}, or in components \eqref{eq:formalKilling_n}. The lemma above indicates that by solving this sequence of equations one may obtain a canonical (possibly degenerate) sequence of Jacobi fields. The explicit differential algebraic recursion formulae for the formal Killing coefficients will be given in Sec.~\ref{sec:inductiveformula}.

\two
Let us give here an initial analysis for the $0$-th equation.
To begin, note that the based loop algebra condition $X^0=0$ gives $a^0=0$.
Differentiating this equation twice, one gets $\gamma  c^0+h_2 b^0 =0$ and
\[ -\im \gamma h_2 e^0 + h_3 b^0=0.
\]
Assuming $h_2\ne 0$, one may solve this equation for $e^0$. Substituting this in the formula for $\ed b^0$, we get the closed structure equation for $b^0$,
\[ \ed  b^ 0  -\im  b^ 0 \rho+\frac{h_3}{2h_2} b^ 0 \xi  = 0.
\]
By inspection $b^0$ must be a constant multiple of $h_2^{-\frac{1}{2}}$.

Let us set accordingly
\begin{align}
b^0&=  h_2^{-\frac{1}{2}}, \n \\
c^0&=  -\frac{1}{\gamma} h_2^{\frac{1}{2}}, \n \\
\im e^0&= \frac{1}{\gamma} h_2^{-\frac{3}{2}}h_3. \n
\end{align}
By Lemma \ref{lem:formalJacobi},
$e^0$ is a Jacobi field. One may verify this by a direct computation using the structure equation for prolongation. See also Exam.~\ref{ex:HopfJacobi} for a geometric derivation of this higher-order Jacobi field.

The Jacobi field $e^0$ admits a simple local representation for a choice of complex local coordinate on a CMC surface.
Given a CMC surface, let $\omega$ be a local square root of the Hopf differential away from the umbilics.
Let $z$ be a local coordinate so that
\[ \omega = \ed z= \frac{1}{b^0}\xi.
\]
Then one computes that the connection 1-form $\rho$ on the CMC surface is
\[
\rho =  \im\left( \del \log(\ol{b^0})- \delb \log({b^0}) \right).
\]
The Jacobi field is given by
\[\im  \gamma  e^0= h_2^{-\frac{3}{2}}h_3= \frac{\partial \;}{\partial z}\log(|h_2|^2)
=-2 \frac{\partial \;}{\partial z}\log(|b^0|^2).
\]

\two
Continuing this computation one may generate a sequence of higher-order Jacobi fields. 
For now let us indicate how to proceed to determine the next Killing coefficients $f^0, \,g^0, \,a^1.$ 

Differentiating the given $\,\im\gamma e^0$, one gets
\[  \gamma(h_2f^0-  \gamma g^0)=\delx (h_2^{-\frac{3}{2}}h_3).
\]
Differentiating this equation again, $a^1$ is determined as a function of $f^0$,
\[\im\gamma a^1=-\frac{h_3}{h_2}f^0+\frac{1}{\gamma h_2}\delx^2(h_2^{-\frac{3}{2}}h_3).
\]
Substituting this to the structure equation for $f^0$, one again gets a closed structure equation.
By inspection again this equation can be solved\footnotemark\footnotetext{This relies on the fact that $\,e^0$ is a Jacobi field.} and we have
\begin{align}
f^0&=\frac{h_2^{-\frac{5}{2}}}{2\gamma}(h_4-\frac{5}{4}h_3^2h_2^{-1}),\n\\
a^1&=-\frac{\im h_2^{-\frac{5}{2}}}{\gamma^2}(h_5-5h_4h_3h_2^{-1}+\frac{35}{8}h_3^3h_2^{-2}).\n
\end{align}
\begin{rem}\label{rem:babyrecursion}
Note from Lemma \ref{lem:lemma5.4} that given the Jacobi field $\,e^0$ the first order linear differential equation
\[\delxb f^0=-\frac{\im}{2}\hb_2 e^0
\]
determines $\,f^0$ up to a constant multiple of $\,h_2^{-\frac{1}{2}}$. Assuming a solution exists, the formula
\[\delx f^0=\frac{\im\gamma}{2}a^1
\]
determines the new Jacobi field $\,a^1$ from $\,e^0$ up to a constant multiple of $\,z_3=h_2^{-\frac{3}{2}}h_3$, which is itself a Jacobi field. 
\end{rem}

The important role played by the coefficient $b^0$ of the formal Killing field, and the formulae for the first two higher-order Jacobi fields $e^0,\,a^1$ suggest that we should adopt a version of prolongation in which the square root $\sqrt{h_2}$ rather than $h_2$ is a fundamental quantity.  We turn to this now and then will resume the study of the formal Killing field equations.
\section{Double cover of the prolongation}\label{sec:doublecover}
The preceding formal Killing field analysis, and the double cover $\Sigmah\to\Sigma$ of a CMC surface introduced in Definition \ref{defn:doublecover} prompt the definition of a global double cover $\hat{X}^{(\infty)}$ of the infinite prolongation $\xinf$ such that we have the following commutative diagram: \two\\
\centerline{\xymatrix{\Sigmah \ar[d]  \;  \ar@{^{}.>}[r] &\;\;\hat{X}^{(\infty)} \ar[d] \\
                                \Sigma  \;  \ar[r]  &\;\;\xinf  }} \\ 

\one\noi
The double cover will provide a natural setting to address the global questions for CMC surfaces. 

\two
Recall the $\SO(2)$-principle bundle $\Pi:\mcf \to X$, and the structure equation \eqref{eq:strt2}.  Let
\[
K\to X
\]
be the complex line bundle generated a section of the $1$-form $\xi$, and let $K_{\eta_1}\to X$ be the complex line bundle generated by a section of the $1$-form $\eta_1$. The structure equation \eqref{eq:strt2} shows that there exists a well defined $\SO(2)$-equivariant pairing
\[
K \otimes K_{\eta_1} \to X\times\C,
\]
so that
\[ K_{\eta_1}=K^{-1}.
\]
By definition $\Pi^*(K) \cong \mcf \times \C$ is  trivial.

For a point $u\in\mcf$, denote by $(\theta_0^u,\xi^u,\eta_1^u)$ the value of the 1-forms $(\theta_0,\xi,\eta_1)$ at $u$. Recall that $\mcf^{(1)}=\Pi^*(\X{1})$.
\begin{lem}
$\mcf^{(1)}\cong \pr(\Pi^*K \oplus \Pi^*K^{-1}) \cong \mcf \times \cp{1}.$
\end{lem}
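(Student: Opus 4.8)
The plan is to identify the fiber of $\mcf^{(1)}=\Pi^*(\X{1})$ over a point $u\in\mcf$ with the space of oriented integral $2$-planes of $\mci$ at $\pi(u)$, and then to match this space, $\SO(2)$-equivariantly, with the fiber of $\pr(\Pi^*K\oplus\Pi^*K^{-1})$. First I would recall from \eqref{1ideal2} that $\mci=\langle\theta_0,\,\eta_1\w\xi,\,\etab_1\w\xib\rangle$, so an oriented $2$-plane $E\subset T_{\pi(u)}X$ is an integral element precisely when $\theta_0|_E=0$ together with $(\eta_1\w\xi)|_E=0$ and its conjugate $(\etab_1\w\xib)|_E=0$. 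Since $\{\theta_0,\xi,\xib,\eta_1,\etab_1\}$ is a coframe on $X$, their common kernel in $T_{\pi(u)}X$ is zero; hence on every integral $2$-plane at least one of $\xi|_E,\eta_1|_E$ is nonzero.

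Next I would produce the two affine charts on the fiber. Because $(\eta_1\w\xi)|_E\in\Lambda^2(E^*)\otimes\C$ is a single complex equation, it vanishes exactly when $\xi|_E$ and $\eta_1|_E$ are $\C$-proportional. On the locus where $\xi|_E\ne0$ this forces $\eta_1|_E=h_2\,\xi|_E$ for a unique $h_2\in\C$, recovering \eqref{eq:h2}; conversely every $h_2\in\C$ yields a genuine integral element $E_{h_2}=\ker\theta_0\cap\ker(\eta_1-h_2\xi)$, since there $\eta_1\w\xi=h_2\,\xi\w\xi=0$ and $\etab_1\w\xib=\hb_2\,\xib\w\xib=0$. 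On the complementary chart where $\eta_1|_E\ne0$ one writes $\xi|_E=p_2\,\eta_1|_E$, and on the overlap $h_2p_2=1$. Thus the fiber is covered by two copies of $\C$ glued by $h_2\mapsto h_2^{-1}$, i.e. it is $\cp{1}$, with $h_2$ as the affine fiber coordinate; the complex orientation determined by $\JAI$ in \eqref{1JAI} selects precisely the oriented integral elements, with no doubling. This simultaneously reproves that $\pi_{1,0}$ has $\cp{1}$-fibers.

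Then I would write down the isomorphism. Viewing $\xi$ as a nowhere-zero section of $\Pi^*K$ and $\eta_1$ as one of $\Pi^*K^{-1}$, the induced trivializations let me define $\Phi\colon\pr(\Pi^*K\oplus\Pi^*K^{-1})\to\mcf^{(1)}$ by $[s:t]\mapsto\ker\theta_0\cap\ker(s\,\eta_1-t\,\xi)$, where $s\,\eta_1-t\,\xi$ is taken as an honest $1$-form on $\mcf$. This sends $[1:0]$ to the integral element with $h_2=0$ and $[0:1]$ to the polar plane $h_2=\infty$, is a fiberwise bijection onto the integral elements by the previous step, and is smooth. It is $\SO(2)$-equivariant because, in the trivializations by $\xi$ and $\eta_1$, the ratio $t/s$ carries the same weight $2$ as $h_2=\eta_1/\xi$ (the weights of $\xi,\eta_1$ being $-1,1$), so $\Phi$ descends to the asserted equivariant bundle isomorphism $\mcf^{(1)}\cong\pr(\Pi^*K\oplus\Pi^*K^{-1})$. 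Finally, since $\xi$ and $\eta_1$ are global nonvanishing sections of $\Pi^*K$ and $\Pi^*K^{-1}$ on $\mcf$ — this is exactly the triviality of $\Pi^*K$ already noted in the text — both summands are trivial, whence $\pr(\Pi^*K\oplus\Pi^*K^{-1})\cong\mcf\times\cp{1}$.

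The main obstacle is the second step: showing that the integral elements are exactly the $E_{h_2}$ together with the single point at infinity, that is, that there are no additional totally-real integral $2$-planes and no doubly-degenerate ones, and that the orientation bookkeeping yields $\cp{1}$ rather than a double cover or a disk. This is where ellipticity enters, through the nondegeneracy of the $(2,0)$-form $\eta_1\w\xi$ on $\ker\theta_0$; equivalently, the integral elements are precisely the $\JAI$-complex lines in $(\ker\theta_0,\JAI)$, whose $(1,0)$-cotangent space is spanned by $\xi\in K$ and $\eta_1\in K^{-1}$, and this is the conceptual reason the bundle $\pr(K\oplus K^{-1})$ appears.
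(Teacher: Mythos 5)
Your proof is correct and follows essentially the same route as the paper: both identify each point of $\mcf^{(1)}$ with a kernel $\ker\{\theta_0^u,\,t_0\xi^u-t_\infty\eta_1^u\}$, match it (up to a harmless swap of homogeneous coordinates) with the projective class $[t_0\xi^u,t_\infty\eta_1^u]$, and deduce the second isomorphism from the triviality of $\Pi^*K^{\pm1}$. The only difference is that you supply the verification the paper declares ``clear'' --- namely that the integral $2$-planes are exactly the $\JAI$-complex lines in $\ker\theta_0$ (no totally real ones), together with the orientation and $\SO(2)$-equivariance bookkeeping --- which strengthens rather than changes the argument.
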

\begin{proof}
The second isomorphism follows from the first because $\Pi^*K^{\pm 1}$ is trivial. We now prove the first isomorphism.  An element of $\mcf^{(1)}$ is given by
\[
\left(u, \ker\{\theta_0^u, t_0(u)\xi^u-t_{\infty}(u)\eta_1^u \} \right) \in \mcf^{(1)},
\]
where $t_0(u), \,t_{\infty}(u)$ are the linear fiber coordinates on $\,\Pi^*K^{\pm 1}$ at $u$ respectively.
Define the map $\mcf^{(1)} \to \pr(\Pi^*K \oplus \Pi^*K^{-1})$ by
\[
\left(u,\ker\{\theta_0^u, t_0(u)\xi^u-t_{\infty}(u)\eta_1^u\} \right) \mapsto [t_0(u)\xi^u, t_{\infty}(u)\eta_1^u].
\]
It is clear that this is a well defined isomorphism.
\end{proof}

The lemma leads to an invariant description of the first prolongation $\X{1}\to X$.
\begin{cor}
Let $\X{1}\to X$ be the first prolongation of the EDS for CMC surfaces.  Then we have the following isomorphisms of the $\C P^1$-bundles over $X$.
\begin{enumerate}[\qquad a)]
\item $\X{1}\cong \pr(K^{p+1} \oplus K^{p-1})$,
\item $\F{1}\cong \pr(\Pi^*K^{p+1} \oplus \Pi^*K^{p-1})$,
\end{enumerate}
for any integer $p$.
\end{cor}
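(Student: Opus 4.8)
The plan is to reduce to the case $p=0$ via the twist-invariance of projectivization, and then to descend the isomorphism of the preceding lemma from $\mcf$ down to $X$.

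First I would invoke the elementary fact that for a rank-two complex vector bundle $E$ and any line bundle $L$ over a base $B$, fiberwise tensoring by $L$ induces a canonical isomorphism $\pr(E)\cong\pr(E\otimes L)$: the two bundles have the same associated projective frame bundle, since their transition cocycles differ only by a scalar factor which acts trivially on $\cp{1}$. Applying this over $X$ with $E=K\oplus K^{-1}$ and $L=K^p$ gives
\[ \pr(K^{p+1}\oplus K^{p-1})=\pr\big((K\oplus K^{-1})\otimes K^p\big)\cong\pr(K\oplus K^{-1})
\]
for every integer $p$, so it suffices to prove part $a)$ in the single case $p=0$, i.e. $\X{1}\cong\pr(K\oplus K^{-1})$ as $\cp{1}$-bundles over $X$.

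Next I would upgrade the lemma to an $\SO(2)$-equivariant statement. The isomorphism $\mcf^{(1)}\cong\pr(\Pi^*K\oplus\Pi^*K^{-1})$ constructed above is assembled entirely from the tautological linear fiber coordinates $t_0,t_{\infty}$ on $\Pi^*K^{\pm1}$ together with the tautological forms $\xi,\eta_1$; under the right $\SO(2)$-action on $\mcf$ these transform as sections of the equivariant line bundles $K$ and $K^{-1}$, precisely because the structure equation \eqref{eq:strt2} supplies the well-defined equivariant pairing $K\otimes K_{\eta_1}\to X\times\C$ with $K_{\eta_1}=K^{-1}$. Hence the lemma's isomorphism is $\SO(2)$-equivariant. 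Since $\mcf^{(1)}=\Pi^*\X{1}$ and $\pr(\Pi^*K\oplus\Pi^*K^{-1})=\Pi^*\pr(K\oplus K^{-1})$, I would then quotient both sides by the free $\SO(2)$-action along $\Pi:\mcf\to X$ to obtain $\X{1}\cong\pr(K\oplus K^{-1})$ over $X$, which is part $a)$ at $p=0$.

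Finally, part $b)$ follows by pulling part $a)$ back along $\Pi$: since $\F{1}=\Pi^*\X{1}$ and pullback commutes with projectivization and direct sums, $\F{1}\cong\Pi^*\pr(K^{p+1}\oplus K^{p-1})=\pr(\Pi^*K^{p+1}\oplus\Pi^*K^{p-1})$. The step I expect to be the main obstacle is the descent: one must check that the concrete isomorphism of the lemma is genuinely $\SO(2)$-equivariant, not merely defined fiberwise on $\mcf$. This is exactly where the equivariant pairing $K_{\eta_1}\cong K^{-1}$ from \eqref{eq:strt2} does the work, since it is what makes $K$ descend to an honest line bundle on $X$; granting this, the twist-invariance of $\pr$ and the pullback in part $b)$ are purely formal.
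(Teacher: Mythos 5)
Your proof is correct and takes essentially the same route as the paper, whose entire proof reads ``Simply note that the objects descend properly. The projectivization is invariant under tensoring by $K^p$ or $\Pi^*K^{p}$'' --- precisely your two ingredients of twist-invariance of $\pr$ and $\SO(2)$-equivariant descent of the lemma's isomorphism. Your explicit check that the equivariance comes from the pairing $K\otimes K_{\eta_1}\to X\times\C$ (equivalently, the opposite $\SO(2)$-weights of $\xi$ and $\eta_1$) merely fills in the detail the paper compresses into ``descend properly.''
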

\begin{proof}
Simply note that the objects descend properly. The projectivization is invariant under tensoring by $K^p$ or $\Pi^*K^{p}$.
\end{proof}

In order to define the double cover, consider the isomorphism $\X{1} \cong \pr(K^{2} \oplus \C)$.
\begin{defn}
The \tb{double cover} $\Xh{1}$ of the first prolongation $\X{1}$ is defined by
\[ \Xh{1}= \pr(K \oplus \C).
\]
Define similarly
\[ \Fh{1}= \pr(\Pi^*K \oplus \C).
\]
\end{defn}
Following the notation used above, the branched double cover of the prolonged frame bundle is given by
\begin{align}
\bs:\Fh{1} &\to \F{1},\n\\
[t_0(u)\xi^u, t_{\infty}] &\mapsto [\left( t_0(u)\xi^u \right)^2, t_{\infty}^2].\n
\end{align}
This is a bundle map which is the standard double cover on the fibers
\begin{align}
\cp{1} &\to \cp{1},\n\\
[x,y] & \mapsto [x^2,y^2].\n
\end{align}
It is branched at the two points $0 = [0,1]$ and $\infty = [1,0]$.
\begin{cor}
There is a natural induced double covering map
\begin{align}
\bs:\Xh{1} &\to \X{1}.\n
\end{align}
\end{cor}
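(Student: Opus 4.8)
The plan is to descend the branched double covering $\bs\colon \Fh{1}\to\F{1}$ of prolonged frame bundles, already given fiberwise by the squaring map $[x,y]\mapsto[x^2,y^2]$, to the quotients by the structure group $\SO(2)$. Recall that $\Fh{1}=\Pi^*\Xh{1}=\pr(\Pi^*K\oplus\C)$ and $\F{1}=\Pi^*\X{1}=\pr(\Pi^*K^2\oplus\C)$ are pulled back along the principal bundle $\Pi\colon\mcf\to X$, where the second identification of $\X{1}$ is the case $p=1$ of the preceding corollary. Consequently both $\Fh{1}\to\Xh{1}$ and $\F{1}\to\X{1}$ are principal right $\SO(2)$-bundles, whose base is the total space modulo $\SO(2)$. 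Hence it suffices to verify that $\bs$ is $\SO(2)$-equivariant; it then factors through the two quotients and produces the desired map $\bs\colon\Xh{1}\to\X{1}$.

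First I would record the relevant $\SO(2)$-weights. The line bundle $K\to X$ is associated to $\Pi\colon\mcf\to X$ through a character $\chi\colon\SO(2)\to\C^\times$, and the tautological section $\xi$ that trivializes $\Pi^*K$ transforms by $\chi$. Thus $\SO(2)$ acts on the fibers of $\Fh{1}=\pr(\Pi^*K\oplus\C)$ through $\chi\oplus 1$, with quotient $\pr(K\oplus\C)=\Xh{1}$, while it acts on the fibers of $\F{1}=\pr(\Pi^*K^2\oplus\C)$ through $\chi^2\oplus 1$, with quotient $\pr(K^2\oplus\C)=\X{1}$.

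The heart of the argument is then an immediate equivariance computation. In the fiber coordinates $[x\,\xi^u:y]$ the map reads $\bs\colon[x\,\xi^u:y]\mapsto[x^2(\xi^u)^2:y^2]$, where $(\xi^u)^2$ trivializes $\Pi^*K^2$. Acting by $g\in\SO(2)$ scales the $\Pi^*K$-coordinate by $\chi(g)$, so that $\bs$ scales the $\Pi^*K^2$-coordinate by $\chi(g)^2$, which is precisely the $\SO(2)$-weight on the target $\F{1}$; therefore $\bs(u\cdot g)=\bs(u)\cdot g$ and $\bs$ descends to a well-defined $\bs\colon\Xh{1}\to\X{1}$. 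Since on each fiber the descended map is the standard degree-two branched cover $\cp{1}\to\cp{1}$, $[x:y]\mapsto[x^2:y^2]$, branched at $[1:0]$ and $[0:1]$, the quotient map is a double cover, branched along the two distinguished sections of $\X{1}$ (corresponding to $h_2=0$ and $h_2=\infty$). No serious obstacle arises here: the construction is purely linear-algebraic, and the only point requiring care is the weight bookkeeping, namely that squaring doubles the character $\chi$ so as to match the action $\chi^2$ on $\Pi^*K^2$, which holds by inspection.
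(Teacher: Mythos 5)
Your proposal is correct and follows essentially the paper's (implicit) route: the paper states the corollary as immediate from the frame-bundle map $\bs\colon\Fh{1}\to\F{1}$, and descending through the principal $\SO(2)$-quotients is exactly what ``induced'' means there. Your explicit weight check — that squaring the $\Pi^*K$-coordinate doubles the character $\chi$ so as to match the $\chi^2$-action on $\Pi^*K^2\cong\Pi^*K^{p+1}$ with $p=1$ — is the one detail the paper leaves to the reader, and you have supplied it correctly.
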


Let $\Ih{1}=\bs^*\I{1}$ be the pulled back ideal, and define a new EDS
\[
(\Xh{1},\,\Ih{1})
\]
which double covers $(\X{1},\, \I{1})$.  It is on this double cover that we will be able to define global conservation laws. This does not impose any restrictions on the study of CMC surfaces as every integral surface of $(X,\,\mci)$ has a possibly branched double cover which is an integral surface of  $(\Xh{1}, \Ih{1})$.

\two
We will now derive the structure equations on $\Fh{1}$.  
It will be useful to separate the two open sets which correspond to the standard affine charts of $\cp{1}$. Let $\Pi_1:\Fh{1}\to\Xh{1}$ denote the projection map.
\begin{defn}
Define the spaces
\begin{align*}
 \Fh{1}_0&:= \left\{\; [t_0(u)\xi^u, t_{\infty}] \; \vert \;\; t_{\infty} \neq 0 \right\},\\
 \Fh{1}_{\infty}&:= \left\{\; [t_0(u)\xi^u, t_{\infty}] \; \vert \; \;t_0(u)  \neq 0 \right\}.
\end{align*}
\begin{align*}
 \Xh{1}_0&:= \Pi_1(\Fh{1}_0),\\
 \Xh{1}_{\infty}&:=\Pi_1(\Fh{1}_{\infty}).
\end{align*}
\begin{align*}
 \X{1}_0&:= \bs(\Xh{1}_0),\\
 \X{1}_{\infty}&:=\bs(\Xh{1}_{\infty}).
\end{align*}
\end{defn}
The space $\X{1}_0$ corresponds to the standard prolongation of $(X,\mci)$ with the independence condition $\xi \w \xib \neq 0$ described in Sec.~\ref{sec:prolongation1}.  This space would seem to suffice for studying the global geometry of CMC surfaces in the space form $M$.  But in searching for global conservation laws we need to not only use a branched double cover of the prolongation, but also to work on the entire prolongation space $\Xh{1}$. We will see that we may consider the higher-order Jacobi fields either as singular functions 
which is smooth on $\Xh{1}_0$, or as smooth sections of line bundles over $\Xh{1}$. 

On $\Fh{1}_0$ we will use $h_1$ as the fiber coordinate such that
\[h_1^2=\bs^*(h_2).
\]
Set $p_2=h_2^{-1}$. On $\Fh{1}_{\infty}$ we will use $p_1$ as the fiber coordinate such that
\[p_1^2=\bs^*(p_2).
\]
In terms of these coordinates, we have the following descriptions of the ideals.
On $\Fh{1}_0$ we have $\Pi^*(\Ih{1}_0)= \langle \theta_0,\theta_1^0\rangle$ where
\[\theta_1^0=\eta_1-h_1^2 \xi.\]
On $\Fh{1}_{\infty}$ we have $\Pi^*(\Ih{1}_{\infty})= \langle \theta_0,\theta_1^{\infty}\rangle$ where
\[\theta_1^{\infty}=\xi-p_1^2 \eta_1.
\]

We could proceed in a number of ways.  We may construct the prolongation tower of $(\Xh{1},\Ih{1})$ and study the geometry there.  Or we could simply pull back the standard prolongation tower $(\X{k},\I{k})$ to $\Xh{1}$ using $\bs$.  It is this latter approach we will take as currently we  do not see any benefit of the extra structure obtained by the former.  It appears that introducing the square root of the Hopf differential is sufficient to obtain global conservation laws for CMC surfaces. Thus we make,
\begin{defn}
For $k \geq 2$ define the \tb{double cover} of the prolongations
\begin{align*}
\Fh{k}&:= \bs^*(\F{k}), \\
 \Xh{k}&:= \bs^*(\X{k}).
\end{align*}
We have the commutative diagram\one
\\
\centerline{
\xymatrix{
 \Xh{k} \ar[d]^{\hat{\pi}_{k,1}} \ar[r]^{\bs_k} & \X{k} \ar[d]^{\pi_{k,1}}  \\
 \Xh{1}  \ar[r]^{\bs} & \X{1}
}}\\

\one\noi and we set the ideals $\Ih{k}:=\bs_k^*(\I{k})$.

Let $\{ \Xh{k}_0, \Xh{k}_{\infty}, \X{k}_0, \X{k}_{\infty}\}$ be the inverse images under the projection maps
$\,\hat{\pi}_{k,1}, \, \pi_{k,1}$ of the open subsets
$\{\Xh{1}_0, \Xh{1}_{\infty}, \X{1}_0, \X{1}_{\infty}\}$ respectively.
Define similarly the corresponding open subsets  
$\{ \Fh{k}_0, \Fh{k}_{\infty}, \F{k}_0, \F{k}_{\infty}\}$ of $\Fh{k}, \,\F{k}$.

For simplicity, we denote the pull-back bundles for $1\leq k \leq \infty$ by 
\begin{align}\label{canonicalbundle}
\pi_{k,0}^*K&=K\to \X{k},\\
(\bs\circ\hat{\pi}_{k,1})^*K&=\hat{K} \to \Xh{k}.\n
\end{align}
\end{defn}

The following proposition shows that the double cover of prolongations constructed in this section fits naturally with the double cover of a CMC surface associated with the Hopf differential.  The proof follows  from the definitions.
\begin{prop}
Let $\x:\Sigma\hook X$ be an immersed integral surface of the EDS for CMC surfaces.
Let $\x^{(k)}:\Sigma\hook\X{k},\,1\leq k \leq\infty$, be the prolongation of $\x$.
Let $\nu:\Sigmah\to\Sigma$ be the double cover associated with the Hopf differential of $\x$, Definition \ref{defn:doublecover}.
There exists a lift $\xh^{(1)}:\Sigmah\hook\Xh{1}$ and the associated sequence of prolongations
\[ \xh^{(k)}:\Sigmah\hook\Xh{k},\quad 2\leq k \leq\infty,
\]
such that
\begin{enumerate}[\qquad a)]
\item  each $\xh^{(k)},\,k\geq 1,$ is integral to $\Ih{k}$,  
\item  $\x^{(k)}\circ \nu =\bs_k\circ\xh^{(k)}$.
\end{enumerate}
The lift $\,\xh^{(1)}$ and its prolongation sequence $\{\,\xh^{(k)}\,\}$ are uniquely determined by these  properties.
\end{prop}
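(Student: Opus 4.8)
The plan is to build $\xh^{(1)}$ directly from the tautological structure on $\Sigmah$, propagate the integrality and the covering relation through the \emph{pullback} definitions $\Ih{k}=\bs_k^{*}\I{k}$ and $\Xh{k}=\bs^{*}\X{k}$, and then assemble the tower $\{\xh^{(k)}\}$ using the universal property of the fibre product. The key observation is that the two double covers in play are literally the same square–root construction. On one hand, $\bs\colon\Xh{1}\to\X{1}$ is the fibrewise square-root cover: writing $h_1$ for the tautological coordinate on $\Xh{1}$ with $h_1^{2}=\bs^{*}h_2$, a point of $\Xh{1}$ over a point of $\X{1}_0$ with fibre coordinate $h_2$ is a choice of $\sqrt{h_2}$. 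On the other hand, by Definition~\ref{defn:doublecover} a point of $\Sigmah$ over $p\in\Sigma$ is a square root $\kappa\in K_p$ of the Hopf differential, $\kappa^{2}=\ff(p)=h_2(p)\,\xi_p^{2}$, so that $\kappa/\xi_p$ is precisely a square root of $h_2(p)$. These two structures are canonically matched, and this matching is what makes the proposition essentially formal.

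First I would define $\xh^{(1)}\colon\Sigmah\to\Xh{1}$ by sending $(p,\kappa)\in\Sigmah$ to the point of $\Xh{1}$ lying over $\x^{(1)}(p)$ whose tautological coordinate is $h_1=\kappa/\xi_p$. By construction $\bs\circ\xh^{(1)}=\x^{(1)}\circ\nu$, which is property b) for $k=1$. Property a) is then immediate: since $\x^{(1)}$ is the prolongation of the integral surface $\x$ we have $(\x^{(1)})^{*}\I{1}=0$, and therefore
\[
(\xh^{(1)})^{*}\Ih{1}=(\bs\circ\xh^{(1)})^{*}\I{1}=(\x^{(1)}\circ\nu)^{*}\I{1}=\nu^{*}\big((\x^{(1)})^{*}\I{1}\big)=0 .
\]
I would note that the construction handles the branch behaviour automatically: the ramification of $\bs$ is along $h_2=0$ and $h_2=\infty$, and on the image of $\x^{(1)}$ the former occurs exactly at the odd-degree umbilics, which is precisely where $\nu$ is branched; the assignment $\kappa\mapsto\kappa/\xi$ remains well defined there (giving $h_1=0=[0,1]$, the branch point of $\Xh{1}$).

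Next I would produce the tower. For $k\geq 2$ we have $\Xh{k}=\bs^{*}\X{k}=\X{k}\times_{\X{1}}\Xh{1}$, so a map into $\Xh{k}$ is the same as a pair of maps into $\X{k}$ and $\Xh{1}$ agreeing over $\X{1}$. The maps $\x^{(k)}\circ\nu\colon\Sigmah\to\X{k}$ and $\xh^{(1)}\colon\Sigmah\to\Xh{1}$ satisfy $\pi_{k,1}\circ(\x^{(k)}\circ\nu)=\x^{(1)}\circ\nu=\bs\circ\xh^{(1)}$, so the universal property yields a unique $\xh^{(k)}$ with $\bs_k\circ\xh^{(k)}=\x^{(k)}\circ\nu$ and $\hat{\pi}_{k,1}\circ\xh^{(k)}=\xh^{(1)}$. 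Integrality a) follows by the identical pullback computation, and the relation $\hat{\pi}_{k,k-1}\circ\xh^{(k)}=\xh^{(k-1)}$ (again immediate from the fibre-product description, using $\pi_{k,k-1}\circ\x^{(k)}=\x^{(k-1)}$) shows that $\xh^{(k)}$ realizes the integral element of $\xh^{(k-1)}$; this is what justifies calling $\{\xh^{(k)}\}$ a prolongation sequence covering $\{\x^{(k)}\}$ through $\bs_k$.

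The step requiring the most care is uniqueness, which is where the only genuine obstacle lies, namely the global $\Z/2$ ambiguity inherent in any lift through a double cover. Away from the odd-degree umbilic branch points $\bs$ restricts to an honest two-sheeted covering, so by unique path lifting any lift of $\x^{(1)}\circ\nu$ is pinned down by its value at a single point of each component of $\Sigmah$; a competing lift differs from $\xh^{(1)}$ by the deck involution $\kappa\mapsto-\kappa$ of $\Sigmah$. What singles out $\xh^{(1)}$ is exactly the tautological normalization built into its definition: the pullback of the coordinate $h_1$ equals $\kappa/\xi$, equivalently $(\xh^{(1)})^{*}$ of the tautological $\hat{K}$-valued form equals $\omega=\sqrt{\ff}$, so that $\nu$ is matched with $\bs$ as square-root covers rather than with its deck-twist. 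Once $\xh^{(1)}$ is fixed in this way, the fibre-product construction forces every $\xh^{(k)}$, and continuity across the ramification locus (where the two sheets of $\Sigmah$ meet) extends the agreement of any two such lifts from the dense unramified locus to all of $\Sigmah$, giving uniqueness of the entire sequence.
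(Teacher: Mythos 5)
Your proof is correct and takes essentially the same route as the paper, whose entire proof consists of the single line ``The proof follows from the definitions'': your tautological matching of the fiber coordinate $h_1$ with $\kappa/\xi_p$, the pullback computation $(\xh^{(k)})^*\Ih{k}=\nu^*(\x^{(k)})^*\I{k}=0$, and the fibre-product construction of the tower are exactly the content that line compresses. Your further observation that uniqueness from a) and b) alone holds only up to the deck involution of $\nu$, and is pinned down by the normalization that $\xh^{(1)}$ pulls the tautological form back to $\omega=\sqrt{\ff}$, is a genuine refinement of a point the paper glosses over.
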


It will often be useful to separate the following open subsets, $1\leq k\leq \infty$.
\begin{align*}
\Fh{k}_*&:=\Fh{k}_0 \cap \Fh{k}_{\infty}, \\
\F{k}_*&:=\F{k}_0 \cap \F{k}_{\infty}. \\
\Xh{k}_*&:=\Xh{k}_0 \cap \Xh{k}_{\infty}, \\
\X{k}_*&:=\X{k}_0 \cap \X{k}_{\infty}.
\end{align*}
Let us mention an important property of these subsets. On these subsets there exist a preferred set of coordinate functions which are balanced and adapted to the intrinsic structure of the prolongation. 
These functions will play a crucial role in determining the recursion relations for the formal Killing coefficients.
\begin{lem}\label{lem:zj}
There exists a sequence of functions 
$z_j : \Xh{\infty}_* \to \C,   j \geq 2,$
 defined as follows:  for $u \in \Fh{\infty}_*$, part of the information stored in $u$ is that of $[\eta_j^u-h_{j+1} \xi^u]$ for $j\geq 2$ and $[\eta_1^u-(h_1)^2 \xi^u]$ where $h_j, h_1 \neq 0$ and both depend on $u$.  We set
\[z_j=\frac{h_j}{h_1^j}, \quad  j \geq 2.
\]
Suppose that $\xh^{\infty}:\Sigmah \hook \Xh{\infty}_*$ is a smooth integral surface.  Locally  the induced function $(\xh^{\infty})^*(z_j)$ can be expressed as the quotient of a smooth function and a holomorphic function.
\end{lem}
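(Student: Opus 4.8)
The plan is to establish the two assertions separately: first that the functions $z_j$ are well defined on $\Xh{\infty}_*$, and second that their pullback to a smooth integral surface has the claimed local form as a ratio of a smooth function by a holomorphic function. The first assertion is essentially tautological from the construction of the double cover. On $\Fh{\infty}_*=\Fh{\infty}_0\cap\Fh{\infty}_\infty$ both fiber coordinates $h_1$ and $p_1$ are nonvanishing, so $h_1\neq 0$ is a genuine nonzero quantity encoded in each point $u$; since $h_1^2=\bs^*(h_2)$ and $h_j$ for $j\geq 2$ are the standard prolongation coefficients defined on $\F{\infty}_0$ (pulled back by $\bs$), the quotient $z_j=h_j/h_1^j$ is a well-defined smooth $\C$-valued function on $\Fh{\infty}_*$. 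To see it descends to $\Xh{\infty}_*$, I would check $\SO(2)$-invariance: under the structure group $h_j$ has weight $j$ and $h_1$ has weight $1$, so $h_j/h_1^j$ has weight $0$ and is therefore a function on the base $\Xh{\infty}_*$. (Here I am using that $h_1$ is a square root of $h_2$, whose weight $2$ halves to weight $1$ on the double cover, consistent with the branched covering $\bs$.)

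For the second assertion, fix a smooth integral surface $\xh^{(\infty)}:\Sigmah\hook\Xh{\infty}_*$. First I would recall from Definition~\ref{defn:doublecover} and the surrounding discussion that on such an integral surface the square root $\omega=\sqrt{\ff}$ of the Hopf differential is a \emph{holomorphic} $1$-form, and that $\omega=h_1\,\xi$ (so that $\omega^2=h_2\,\xi^2=\ff$ matches $\eta_1\circ\xi$). Because $\xh^{(\infty)}$ maps into $\Xh{\infty}_*$, the form $\omega$ is a nonvanishing holomorphic $1$-form locally, so I may choose a local holomorphic coordinate $z$ with $\omega=\ed z$; equivalently $\xi=h_1^{-1}\ed z$. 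Pulling back $z_j=h_j/h_1^j$ and using $h_j\,\xi^j=h_j h_1^{-j}(\ed z)^j$, one identifies $(\xh^{(\infty)})^*(z_j)$ as the coefficient of the section $h_j$ of $K^j$ measured against the holomorphic $j$-th power $\omega^j=(\ed z)^j$. Concretely, in the coordinate $z$ the pullback of $z_j$ is the function whose value is $h_j$ expressed relative to the holomorphic frame $(\ed z)^j$, and this is precisely a smooth function (the component of the generally non-holomorphic section $h_j$ of $K^j$) divided by the holomorphic transition data recording $h_1$ in terms of $\ed z$.

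To make the ratio structure explicit I would argue as follows. The coefficient $h_j$ is a smooth (but for $j\geq 3$ generally non-holomorphic) section of $K^j$, so in the local frame $(\ed z)^j$ it is represented by a smooth function, call it $H_j(z,\zb)$. The denominator $h_1^j$ is a power of $h_1$, and $h_1$ is the holomorphic quantity determined by $\omega=h_1\,\xi$; since $\omega=\ed z$ is holomorphic and $\xi$ is the $(1,0)$-form for the induced complex structure, $h_1$ is locally a holomorphic function of $z$ wherever it is nonvanishing, which holds on $\Xh{\infty}_*$. Hence $(\xh^{(\infty)})^*(z_j)=H_j(z,\zb)/h_1(z)^j$ exhibits the pullback as a smooth function divided by a holomorphic function, as claimed. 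The main obstacle in writing this out cleanly is the bookkeeping of the frame-versus-coordinate distinction---verifying that $h_1$, which a priori is just a nonvanishing smooth fiber coordinate, is genuinely \emph{holomorphic} once restricted to an integral surface and compared against the coordinate $z$ adapted to $\omega=\ed z$. This follows from the holomorphicity of $\omega$ together with the structure equation $\ed\xi=\im\rho\w\xi$ of Eq.~\eqref{eq:strtx}, which forces the conformal factor relating $\xi$ to $\ed z$ to be (anti)holomorphic in the appropriate sense; I would carry out this identification carefully as the one nontrivial computational point.
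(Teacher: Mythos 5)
The core identity buried in the middle of your argument---that $(\xh^{\infty})^*(z_j)$ is the coefficient of the smooth section $h_j\xi^j$ of $K^j$ measured against the holomorphic $j$-differential $\omega^j$---is in fact the paper's entire proof, which consists of the single formula $(\xh^{\infty})^*(z_j)=\bs^*(h_j\xi^j)/\omega^j$. But your attempt to ``make the ratio structure explicit'' introduces a genuine error: $h_1$ is \emph{not} a holomorphic function on an integral surface. The scalar $h_1$ is the coefficient of the holomorphic form $\omega$ relative to the \emph{unitary} coframe $\xi$: in a local coordinate with $\xi=e^{u}\ed z$ (so the induced metric is $e^{2u}\ed z\circ\ed\zb$) and $\ff=z^{p}(\ed z)^2$, one has $h_1=z^{p/2}e^{-u}$, where $u$ satisfies the Gauss equation $R=\gamma^2-h_2\hb_2$, not the Cauchy--Riemann equations. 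If $h_1$ were holomorphic (and nonvanishing, as on $\Xh{\infty}_*$), then $\log|h_1|=-u$ would be harmonic and the induced metric would be flat, which is false for a generic CMC surface. The structure equation $\ed\xi=\im\rho\w\xi$ that you invoke cannot rescue this: it merely \emph{defines} the connection form $\rho$ in terms of $u$ and imposes no equation on $u$ whatsoever. The holomorphicity lives in the form $\omega=h_1\xi$, not in its scalar coefficient $h_1$. There is also an internal inconsistency: if $H_j$ denotes the component of the section $h_j\xi^j$ relative to your frame $(\ed z)^j=\omega^j$, then $H_j$ already \emph{equals} $(\xh^{\infty})^*(z_j)$, so your final formula $(\xh^{\infty})^*(z_j)=H_j(z,\zb)/h_1(z)^j$ divides by $h_1^j$ a second time; it is only consistent if $h_1\equiv 1$.

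The repair is to remove $h_1$ from the argument entirely, as the paper does: write $(\xh^{\infty})^*(z_j)=\bs^*(h_j\xi^j)/\omega^j$, observe that the numerator is a smooth $j$-differential on $\Sigmah$ while the denominator is a holomorphic one, and express both relative to an arbitrary local holomorphic coordinate $w$, so that the numerator is $F(w,\ol{w})(\ed w)^j$ with $F$ smooth and the denominator is $g(w)(\ed w)^j$ with $g$ holomorphic; then $(\xh^{\infty})^*(z_j)=F/g$ as claimed. (Your adapted choice $\omega=\ed z$ also works, but then the holomorphic denominator is identically $1$ and the statement is immediate---no appeal to properties of $h_1$ is needed or possible.) Your first paragraph, on $\SO(2)$-invariance and well-definedness of $z_j$ on $\Xh{\infty}_*$, is correct and consistent with the paper.
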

\begin{proof}
Let $\omega$ be the square root of the Hopf differential for $\xh^{\infty}$ on $\Sigmah$. Then
\[ (\xh^{\infty})^*(z_j)= \frac{\bs^*(h_j\xi^j)}{\omega^j}.
\]
\end{proof}

There is a choice of coframe on $\Xh{\infty}_*$ which is better adapted with the functions $z_j,\zb_j.$
Define  
\begin{align*}
r&:=|h_2|=(h_2\hb_2)^{\frac{1}{2}},\\
\omega&:=h_1 \xi,\\
\zeta_j&:=h_1^{-j}\theta_j,\quad j\geq 0,\\
\hat{T}_j &:=h_1^{-(j-1)}T_j,\\
\sigma_j'&:=h_1^{-(j+1)} \tau_j',\\
\sigma_j'' &:=h_1^{-(j-1)} \tau_j'',\quad j\geq 2.
\end{align*}
Due to the transformation properties under the action of the structure group $\SO(2)$,  they are well defined on $\Xh{\infty}_*$. They satisfy the following structure equations:
\begin{align}\label{eq:zetastruct}
\ed \omega &=(\frac{1}{2}\zeta_2-\delta\theta_0)\w\omega-r\theta_0\w(\ol{\zeta_1}+\omb), \\
\ed \theta_0&=-\frac{1}{2}(\zeta_1\w\omega+\zetab_{1}\w\omb),\n\\
\ed \zeta_1&=-\left(\frac{1}{2} \zeta_{2}-\frac{1}{2}z_3 \zeta_1-\delta\theta_0\right) \w \omega
-\frac{1}{2}\zeta_2\w(\zeta_1+\omega)+r \theta_0\w(\ol{\zeta_1}+\omb)\n\\
&\quad+\theta_0\w(\delta(\zeta_1+\omega)+\gamma^2r^{-1}\omb),\n\\
\ed \zeta_j &=-\left( \zeta_{j+1}-\frac{j}{2} z_3 \zeta_j\right) \w \omega -\frac{j}{2}\zeta_2\w\zeta_j
+ \theta_0 \w \left( \hat{T}_j \zeta_1+r z_{j+1} \zetab_1 \right)\n\\
&\quad+\frac{j}{2}r z_j  \zeta_1 \w \zetab_1+ \sigma_j' \w \omega + r^{-1} \sigma_j'' \w \omb,
\quad{\rm for} \; j \geq 2,\n\\
\ed r&=\frac{r}{2}(z_3\omega+\zb_3\omb), \n\\
\ed z_j& =\zeta_j -\frac{j}{2} z_j\zeta_2+\left(z_{j+1}-\frac{j}{2} z_3 z_j \right) \omega + \hat{T}_{j} r^{-1}\omb, \quad{\rm for} \; j \geq 3,\n\\
 \hat{T}_{j+1}&= \hat{T}_{j,\omega}+\frac{j-1}{2}z_3  \hat{T}_j+\frac{j}{2}(\gamma^2-r^2)z_j.\n
\end{align}
We can now express the ideal $\iinfh$ on $\Xh{\infty}_*$ as 
$$\iinfh=\langle \theta_0, \zeta_j, \zetab_j \rangle,$$
and the generators are well defined forms on $\Xh{\infty}_*$.

We note an important property of $\hat{T}_j$. It is not difficult to see  that 
\begin{lem}\label{lem:Tjh}
\be\label{eq:Tjh}
\hat{T}_j \in \C[r^2,z_3,z_4,z_5,\ldots].
\ee  
\end{lem}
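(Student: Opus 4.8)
The claim is that $\hat{T}_j \in \C[r^2, z_3, z_4, z_5, \ldots]$, i.e.\ that each rescaled quantity $\hat{T}_j = h_1^{-(j-1)}T_j$ is a polynomial in the squared modulus $r^2 = h_2\hb_2$ and the balanced variables $z_k$ with \emph{no} dependence on $\hb_k$ for $k \geq 3$ and no unbalanced powers of $h_1$. The natural approach is induction on $j$ using the last recursion displayed in \eqref{eq:zetastruct},
\[
\hat{T}_{j+1} = \hat{T}_{j,\omega} + \tfrac{j-1}{2} z_3 \hat{T}_j + \tfrac{j}{2}(\gamma^2 - r^2) z_j,
\]
together with Cor.~\ref{cor:Tidentity} which already records that $T_j$ is at most linear in $\hb_2$ and at most quadratic in $h_2$.

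First I would verify the base cases directly from the explicit formulae $T_2 = 0$, $T_3 = h_2(\gamma^2 - h_2\hb_2)$, $T_4 = \tfrac{5}{2}\gamma^2 h_3 - \tfrac{7}{2}h_2\hb_2 h_3$ in Cor.~\ref{cor:Tidentity}. Rescaling: $\hat{T}_3 = h_1^{-2}T_3 = \gamma^2 - r^2$ (using $h_1^2 = h_2$, $r^2 = h_2\hb_2$), and $\hat{T}_4 = h_1^{-3}T_4 = \tfrac{5}{2}\gamma^2 z_3 - \tfrac{7}{2}r^2 z_3$, both manifestly in $\C[r^2, z_3, \ldots]$. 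The inductive step requires understanding the operator $\hat{T}_{j,\omega}$, the $\omega$-derivative appearing in the recursion. Here I would use the structure equations for $r$ and $z_j$ in \eqref{eq:zetastruct}: the equation $\ed r = \tfrac{r}{2}(z_3\omega + \zb_3\omb)$ gives $r_\omega = \tfrac{r}{2}z_3$, so $(r^2)_\omega = r^2 z_3 \in \C[r^2, z_3]$; and the equation for $\ed z_j$ gives $z_{j,\omega} = z_{j+1} - \tfrac{j}{2}z_3 z_j$ modulo the $\omb$- and ideal-terms, which lands in $\C[z_3, z_4, \ldots]$. The key point is that $\del_\omega$ acts as a derivation on the polynomial ring $\C[r^2, z_3, z_4, \ldots]$, sending each generator back into that ring.

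The cleanest formulation is therefore: establish that the derivation $\del_\omega$ (the total $\omega$-derivative modulo $\iinfh$) preserves the subring $\C[r^2, z_3, z_4, z_5, \ldots] \subset C^\infty(\Xh{\infty}_*)$. Granting this, the inductive step is immediate—if $\hat{T}_j$ lies in the ring, then $\hat{T}_{j,\omega}$ does too, and since $z_3, z_j, r^2 \in \C[r^2, z_3, \ldots]$ and the ring is closed under multiplication, the right-hand side of the recursion lies in the ring, giving $\hat{T}_{j+1} \in \C[r^2, z_3, \ldots]$. I would present this as a short lemma: $\del_\omega(r^2) = r^2 z_3$ and $\del_\omega(z_j) = z_{j+1} - \tfrac{j}{2}z_3 z_j$ for $j \geq 3$, both read off from \eqref{eq:zetastruct}, hence $\del_\omega$ restricts to a derivation of the stated polynomial ring.

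The main obstacle—and the step deserving genuine care rather than a one-line dismissal—is confirming that the $\omega$-derivative $\hat{T}_{j,\omega}$ introduces no hidden $\hb_k$ dependence or unbalanced $h_1$-powers through the passage between the $T_j$-variables and the balanced $z$-variables. Concretely, one must check that the rescaling $\hat{T}_j = h_1^{-(j-1)}T_j$ together with the weight assignments is consistent, so that $\del_\omega$ (which carries weight, being dual to $\omega = h_1\xi$) genuinely maps weight-homogeneous polynomials in $r^2, z_3, \ldots$ to weight-homogeneous polynomials in the same variables. The subtlety is that $T_j$ a priori involves $\hb_2$, but $\hb_2$ only ever enters through the combination $h_2\hb_2 = r^2$ after rescaling by $h_1^{-(j-1)}$; verifying this systematically—that every $\hb_2$ is paired with a compensating $h_2$ to form $r^2$, and that no $\hb_k$ with $k \geq 3$ survives—is precisely where Cor.~\ref{cor:Tidentity}'s structural bounds (linear in $\hb_2$, quadratic in $h_2$, and the identity $\delxb T_j = T_j^{\bar 2}\hb_3 + \sum T_j^s T_s$) must be invoked to close the induction cleanly.
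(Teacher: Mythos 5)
Your proof is correct and is essentially the paper's own argument: the paper defines $\del_{\omega}$ by $\ed f \equiv (\del_{\omega}f)\,\omega \mod \omb$, records exactly your two derivation formulas $\del_{\omega} z_j = z_{j+1} - \tfrac{j}{2}z_3 z_j$ and $\del_{\omega} r^2 = r^2 z_3$, and concludes by induction from the recursive formula for $\hat{T}_{j+1}$ together with the base case $\hat{T}_3 = R = \gamma^2 - r^2$. The only remark worth making is that your closing concern about hidden $\hb_k$-dependence is superfluous---since the recursion expresses $\hat{T}_{j+1}$ entirely through $\hat{T}_j$, $\del_{\omega}\hat{T}_j$, $z_3$, $z_j$ and $r^2$, the closure of the ring $\C[r^2,z_3,z_4,\ldots]$ under the derivation $\del_{\omega}$ already closes the induction in the balanced variables alone, and the structural bounds of Cor.~\ref{cor:Tidentity} are never needed.
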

\begin{proof}
Define the operator $\del_{\omega}$ by $\ed f \equiv (\del_{\omega}f)\omega\mod\omb$ for a scalar function $f$.
Then
\begin{align}
\del_{\omega} z_j&=z_{j+1}-\frac{j}{2} z_3 z_j, \n\\
\del_{\omega} r^2&=r^2 z_3.\n
\end{align}
It follows from the inductive formula for $\hat{T}_j$, and the fact $\hat{T}_3=R.$
\end{proof}
\noi
This observation is relevant for the analysis of refined normal form of Jacobi fields.

\two
Let us make a technical remark on the notation. Recall the set of notations such as $\mathfrak{J}^{(\infty)}, \mathfrak{S}, \mcc^{(\infty)}, \Cv{(\infty)}$ for Jacobi fields, symmetries, and conservation laws. We shall omit the upper hat `$\;\hat{}\;$' and continue to use them for the corresponding objects on the double cover $\xinfh.$

\two
In Galois theory, one is interested in finding the smallest field extension in which a given polynomial splits.  This serves as an analogy for our case.  If, instead of finding the roots of a polynomial, we take the Jacobi equation $\mce$ as the equation to be solved, then it turns out that the field
\begin{equation}
\mathfrak{F}:=\C (z_3,z_4,z_5,\ldots)
\end{equation}
is the smallest field containing all of the polynomial solutions. 
It is remarkable that the splitting field $\mathfrak{F}$ is an algebraic extension of $\C (h_2,h_3,h_4,\ldots)$.  For other primitive maps one will not generally have a quadratic extension, but it should still be algebraic.  It seems highly likely that for PDE's with the Cartan character $s_2 \neq 0$ the analogous splitting fields will not be algebraic extensions.  This should be one of the non-local phenomena \cite{Vinogradov1989}.  While the analogy with Galois theory is enticing, it seems not quite justified without further results. 

\section{Formal Killing field as enhanced prolongation}\label{sec:kfpro}
We now return to the analysis of the formal Killing fields.
We define a tower of EDS based upon the formal Killing field equation.  
The full structure equations are remarkably simple compared to those of the EDS $(\X{k},\I{k})$. 
The main simplification comes in the absence of recursive terms such as $\tau'_j$ and $\tau''_j$ in Lemma \ref{lem:prolongedstrt}. It is common for an integrable system to have a set of `good coordinates'. The formal Killing field provides such good coordinates for the prolongation of the CMC system. 

Considering the analysis of the first few terms of the formal Killing field in Sec.~\ref{sec:formalKillingfields},
 it is not surprising that there is an equivalence between the integral surfaces of the Killing field EDS and those of the CMC EDS. It turns out that the CMC EDS can be considered as a natural closure of the Killing field EDS.

\two
For a motivation for the following construction, we refer the reader to Sec.~\ref{sec:recursion}. We start by laying the basis for our construction. On $\Fh{1}_*$, take the coframe $\{ \rho,\theta_0,\omega,\beta_2,\eta'_2\}$ where we recall
\begin{align}
h_1^2&=h_2,\n\\
\omega&=h_1 \xi, \n \\
\beta_2&=h_1^{-1}\eta_1-\omega=h_1^{-1}\theta_1,\n \\
\eta'_2&=\ed \log(h_1)+\im \rho.\n 
\end{align}
With this initial data, inductively define the tower of spaces 
\begin{align}
\Fh{2}_{KF}&=\Fh{1}_*,\n\\
\Fh{2n+1}_{KF}&=\Fh{2n}_{KF} \times \C, \;\;\;\;\;\;\;\;\;  n \geq 1,\n \\
\Fh{2n+2}_{KF}&=\Fh{2n+1}_{KF} \times \C^2,   \;\;\;\;\; n \geq 1,\n
\end{align}
where the fiber coordinate of 
$\Fh{2n+1}_{KF} \to \Fh{2n}_{KF}$ is $A^{2n+1},$ and the fiber coordinates of 
$\Fh{2n+2}_{KF} \to \Fh{2n+1}_{KF}$ are $B^{2n+2}$ and $C^{2n+2}$.

Let 
\begin{align*}
s&=|h_1|=\sqrt{r}, \\
B^2 &=\gamma, \\
C^2 &=-1.
\end{align*} 
Define 
$\gamma_2 = \eta'_2 - A^3 \omega$ on $\Fh{3}_{KF}$.
For $n \geq 2 $ define 
\begin{align*}
\alpha_{2n-1}&=\ed A^{2n-1}-\frac{1}{2} \left( B^{2n}+\gamma C^{2n} \right)\omega-\frac{1}{2} \left( \gamma s^{-2} B^{2n-2}+s^2 C^{2n-2}\right)\omb, \\
\beta_{2n}&=\ed B^{2n}-\left( A^3B^{2n} -\gamma A^{2n+1} \right) \omega+s^2A^{2n-1} \omb, \\
\gamma_{2n}&=\ed C^{2n}+\left( A^{2n+1}+A^3 C^{2n}  \right) \omega+\gamma s^{-2} A^{2n-1} \omb. 
\end{align*}
\begin{defn}
The \tb{ideal for formal Killing fields} $\Ih{\infty}_{KF}$ on $\Fh{\infty}_{KF}$ is the differential ideal generated by
$$\Ih{\infty}_{KF}=\langle \theta_0, \beta_{2j}, \gamma_{2j}, \alpha_{2j+1} \rangle_{j=1}^{\infty}.$$
\end{defn}

\one
We now record the structure equation on $\Fh{\infty}_{KF}$. 
Note first by definition that  for $n\geq 2$,
\begin{align*}
\ed A^{2n-1}&=\alpha_{2n-1}+\frac{1}{2} \left( B^{2n}+\gamma C^{2n} \right)\omega+\frac{1}{2} \left( \gamma s^{-2} B^{2n-2}+s^2 C^{2n-2}\right)\omb, \\
\ed B^{2n}&=\beta_{2n}+\left( A^3B^{2n} -\gamma A^{2n+1} \right) \omega-s^2A^{2n-1} \omb,\\
\ed C^{2n}&=\gamma_{2n}-\left( A^{2n+1}+A^3 C^{2n}  \right) \omega-\gamma s^{-2} A^{2n-1} \omb.
\end{align*}
The following structure equation is obtained by a direct computation.
\begin{align*}
\ed \theta_0&=-\frac{1}{2} ( \beta_2 \w \omega + \betab_2 \w \omb),\\
\ed \rho &= \frac{\im}{2} \left( \gamma^2 s^{-2} - s^2 \right) \omega \w \omb +\frac{\im}{2} (s^2 \betab_2 - \delta \beta_2 ) \w \omega + \frac{\im}{2} (\delta \betab_2 -s^2 \beta_2 )\w  \omb -\frac{\im}{2} s^2 \beta_2 \w \betab_2,\\
\ed \omega &=\gamma_2 \w \omega - \theta_0 \w \left( s^2[\betab_2+ \omb]+ \delta \omega \right),\\
\ed h_1 &=-\im h_1 \rho+h_1 \gamma_2 + h_1A^3 \omega, \\
\ed s &=\frac{s}{2} \left( A^3 \omega + \ol{A^3} \omb + \gamma_2 +\gammab_2 \right),\\
\ed \beta_2 &= -2 \gamma_2 \w \omega +\beta_2 \w (\gamma_2 - \delta \theta_0) -\betab_2 \w s^2 \theta_0+ \theta_0 \w (2 \delta \omega +(s^2+\gamma^2 s^{-2}) \omb),\\
\ed \gamma_2 &=  -\alpha_3 \w \omega =A^3 \gamma_2 \w \omega + \frac{1}{2} (\delta \beta_2 - s^2 \betab_2) \w \omega + \frac{1}{2} (s^2 \beta_2 - \delta \betab_2)  \\
& +A^3 \theta_0 (\delta \omega + s^2 \omb) + \frac{1}{2} s^2 \beta_2 \w \betab_2 + s^2 A^3 \theta_0 \w \betab_2,  \\
\ed \alpha_3 &= -\frac{1}{2} \left( \beta_4 + \gamma \gamma_4 +(B^4+\gamma C^4)\gamma_2 \right) \w \omega + \left( \frac{1}{2}[s^2 + \gamma^2 s^{-2}] \gamma_2 + s^2 \gammab_2  \right) \w \omb\\
& +\frac{1}{2} \theta_0 \w  \left[ \left(-s^4 + \gamma^2 + \delta(B^4 + \gamma C^4) \right) \omega+  \left( s^2[B^4+\gamma C^4] +\delta (\gamma^2 s^{-2} - s^2)\right)  \omb  \right] \nonumber \\
&+ \frac{1}{2} \theta_0 \w \left[ (\gamma^2 - s^4) \beta_2 + s^2 [B^4 + \gamma C^4 ] \betab_2 \right].
\end{align*}
And then for $n \geq 2$,
\begin{align*}
&\ed \beta_{2n}=\left( \gamma \alpha_{2n+1} -A^3 \beta_{2n} - B^{2n} \alpha_3 +(\gamma A^{2n+1} -A^3 B^{2n}) \gamma_2 \right) \w \omega  \\
&+s^2\left( \alpha_{2n-1} + A^{2n-1} \gamma_2 + 2 A^{2n-1} \gammab_2  \right) \w \omb \nonumber \\
&+\theta_0 \w \left( [\delta(A^3 B^{2n} - \gamma A^{2n+1}) -s^4 A^{2n-1}] \omega +s^2[-\gamma A^{2n+1}+A^3 B^{2n} - \delta A^{2n-1}  ] \omb \right)  \nonumber \\
& +\theta_0 \w \left( [-s^4 A^{2n-1}] \beta_2 +s^2[A^3 B^{2n} -\gamma A^{2n+1}] \betab_2 \right),  \nonumber \\
&\nonumber \\
&\ed \gamma_{2n}= \left( \alpha_{2n+1} +A^3 \gamma_{2n}+C^{2n} \alpha_3 +[A^{2n+1} + A^3 C^{2n}] \gamma_2 \right) \w \omega + \gamma s^{-2} \left( \alpha_{2n-1} -A^{2n-1} \gamma_2  \right) \w \omb\\
&  -\theta_0 \w \left[  \left(  \delta(A^{2n+1}+A^3C^{2n})  +\gamma A^{2n-1}  \right)  \omega  
  - \left(  s^2(A^{2n+1}+A^3 C^{2n}) +\delta  \gamma s^{-2} A^{2n-1} \right) \omb \right] \nonumber \\
&  -\theta_0 \w \left(  \gamma A^{2n-1} \beta_2 + s^2 (A^{2n+1} + A^3 C^{2n} ) \betab_2 \right),  \nonumber \\
\nonumber \\
&\ed \alpha_{2n+1}=-\frac{1}{2} \left( \beta_{2n+2}+\gamma \gamma_{2n+2} \right) \w \omega
 -\frac{1}{2} \left( B^{2n+2}+\gamma C^{2n+2} \right) \gamma_2 \w \omega \\
&-\frac{1}{2} \left( \gamma s^{-2} \beta_{2n}+s^2 \gamma_{2n} \right)  \w \omb 
 +\left( \frac{1}{2}[\gamma s^{-2} B^{2n} -s^2 C^{2n})\gamma_2 -s^2 C^{2n} \gammab_2  \right) \w \omb \nonumber  \\
 &+ \frac{1}{2} \left(  \delta B^{2n+2} +\delta \gamma C^{2n+2}+ \gamma  B^{2n} +s^4 C^{2n}  \right) \theta_0 \w \omega \nonumber  \\
 &+ \frac{1}{2} \left(  s^2 (B^{2n+2} + \gamma C^{2n+2})+ \delta(\gamma s^{-2}  B^{2n} +s^2 C^{2n})  \right) \theta_0 \w \omb \nonumber  \\
  &+ \frac{1}{2} \theta_0 \w \left( (\gamma B^{2n} +s^4 C^{2n})\beta_2  +s^2 (B^{2n+2}+\gamma C^{2n+2}) \betab_2\right).  \nonumber 
\end{align*}
One may check that they formally satisfy the compatibility equation $\ed^2=0$. 

\two
A question arises as to if this extension to formal Killing field EDS does keep the set of integral surfaces, or specifically given a CMC surface how unique the associated formal Killing field is (recall the existence of certain ambiguity, integration constants, when solving for the first few formal Killing coefficients in Sec.~\ref{sec:primitivemap}). We wish to show that the original differential system $(\Fh{\infty}, \Ih{\infty})$ is the natural closure of the formal Killing field system $(\Fh{\infty}_{KF}, \Ih{\infty}_{KF})$. This implies in particular that a (non-totally geodesic) CMC surface admits a unique associated formal Killing field.

In the below we draw from the results of Sec.~\ref{sec:recursion}. From the inductive formulae to be defined in Sec.~\ref{sec:inductiveformula}, there exists an embedding
\be 
\mu:\Fh{\infty}\hook\Fh{\infty}_{KF}. \n
\ee
Let $Q^{2j}, \,j=1, 2, \, ... \, $, be the sequence of functions which defines the relation between the formal Killing field coefficients $b^{2j}$ and $c^{2j}$, e.g., 
\begin{align}\label{eq:Qseriesformula}
Q^{2}&=\gamma+b^2 c^2, \\
Q^{4}&=h_2^{\frac{1}{2}}b^{4}-\gamma h_2^{-\frac{1}{2}} c^{4}-m_{4} a^3 a^{3},\n\\
&\;... \, \n\\
Q^{2j+2}&=h_2^{\frac{1}{2}}b^{2j+2}-\gamma h_2^{-\frac{1}{2}} c^{2j+2}-m_{2j+2} a^3 a^{2j+1}+
\, \tn{lower order terms},\n \\
&\;... \,  \n
\end{align}
(here $b^{2j}, c^{2j}$ are constant multiples of $h_1^{-1} B^{2j}, h_1 C^{2j}$ respectively).
Let $\mathcal{Q}\subset\Omega^0(\Fh{\infty}_{KF})$ be the ideal of functions generated by $\{Q^{2j}\}_{j=1}^{\infty}$. The image $\mu(\Fh{\infty})$ is cut out by  $\mathcal{Q}$, and moreover it is clear by definition of the embedding $\mu$ that $\mathcal{Q}$ is complete in the sense that any function on $\Fh{\infty}_{KF}$ which vanishes identically on $\mu(\Fh{\infty})$ lies in $\mathcal{Q}$.
\begin{lem}
Consider the extended ideal $(\Ih{\infty}_{KF})^+$ on $\Fh{\infty}_{KF}$ generated by
\be
(\Ih{\infty}_{KF})^+= \Ih{\infty}_{KF}\oplus\mathcal{Q}.\n
\ee
Then for each $j\geq 1$
\be\label{eq:Q2j}
\ed Q^{2j}\equiv 0\mod (\I{\infty}_{KF})^+,
\ee
and $(\I{\infty}_{KF})^+$ is formally a Frobenius system.
It follows that the embedding $\mu$ induces a natural isomorphism.
\be 
(\Fh{\infty}, \Ih{\infty}) \simeq (\Fh{\infty}_{KF}, (\Ih{\infty}_{KF})^+).\n
\ee 
\end{lem}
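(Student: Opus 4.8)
The plan is to treat the lemma as three linked assertions: the conservation law $\ed Q^{2j}\equiv 0$, the Frobenius property of the enlarged ideal, and the resulting identification of exterior differential systems. The essential content is the first; the other two follow formally from it, from the structure equations already recorded on $\Fh{\infty}_{KF}$, and from the completeness of $\mathcal{Q}$ noted just above.

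First I would recognize the $Q^{2j}$ as the Laurent coefficients of an $\Ad$-invariant spectral polynomial of the formal Killing field. Assembling $X_{\lambda}$ from the tower coordinates $\{A^{2n+1},B^{2n},C^{2n}\}$ through the index dictionary \eqref{eq:weightindex}, the generators $\theta_0,\alpha_{2n+1},\beta_{2n},\gamma_{2n}$ of $\Ih{\infty}_{KF}$ are by construction the components of $\ed X_{\lambda}+[\psi_{\lambda},X_{\lambda}]$ (together with the contact relation for $\theta_0$), so that
$$\ed X_{\lambda}+[\psi_{\lambda},X_{\lambda}]\equiv 0 \mod \Ih{\infty}_{KF}.$$
Since the Killing form is $\ad$-invariant, $\ed\ip{X_{\lambda}}{X_{\lambda}}=2\ip{\ed X_{\lambda}}{X_{\lambda}}\equiv -2\ip{[\psi_{\lambda},X_{\lambda}]}{X_{\lambda}}=0 \mod \Ih{\infty}_{KF}$, so every coefficient of $\ip{X_{\lambda}}{X_{\lambda}}$ in its formal Laurent expansion in $\lambda$ is conserved modulo $\Ih{\infty}_{KF}$. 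A bookkeeping of the reality conditions \eqref{eq:newreality} and of the normalizations relating $(b^{2j},c^{2j})$ to $(h_1^{-1}B^{2j},h_1 C^{2j})$ identifies these coefficients, up to the lower-order corrections displayed in \eqref{eq:Qseriesformula}, with the $Q^{2j}$, giving $\ed Q^{2j}\equiv 0 \mod (\Ih{\infty}_{KF})^+$. I would organize this as an induction on $j$: one differentiates the explicit formula for $Q^{2j}$ using the structure equations for $\ed B^{2n},\ed C^{2n},\ed A^{2n+1}$, sends the $\alpha,\beta,\gamma$ terms into $\Ih{\infty}_{KF}$, and checks that the residual horizontal part is a combination $\sum_{k<j} f_k\,Q^{2k}$, which lies in $\mathcal{Q}$ by the inductive hypothesis and completeness.

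Granting the conservation law, the Frobenius property of $(\Ih{\infty}_{KF})^+=\Ih{\infty}_{KF}\oplus\mathcal{Q}$ is immediate. The $1$-form generators $\theta_0,\alpha_{2n+1},\beta_{2n},\gamma_{2n}$ already satisfy $\ed(\cdot)\equiv 0\mod\Ih{\infty}_{KF}$ by the explicit structure equations, each term on the right carrying a factor from $\{\theta_0,\alpha,\beta,\gamma\}$ or its conjugate (the ideal being closed under conjugation), which is the asserted identity $\ed^2=0$ for $\Ih{\infty}_{KF}$. For the function generators the needed closure $\ed Q^{2j}\equiv 0\mod(\Ih{\infty}_{KF})^+$ is exactly the conservation law, while for a product one has $\ed(Q^{2j}\theta)=\ed Q^{2j}\w\theta+Q^{2j}\ed\theta$, with both summands in the ideal. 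Hence $(\Ih{\infty}_{KF})^+$ is a formally Frobenius differential ideal.

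Finally I would deduce the isomorphism. By the first step the $Q^{2j}$ are first integrals of the foliation defined by $\Ih{\infty}_{KF}$, so the common zero locus $\{\mathcal{Q}=0\}$ is a union of leaves; by the discussion preceding the lemma this locus is precisely the embedded image $\mu(\Fh{\infty})$ and $\mathcal{Q}$ cuts it out completely. Pulling back along $\mu$ annihilates $\mathcal{Q}$, so $\mu^*(\Ih{\infty}_{KF})^+=\mu^*\Ih{\infty}_{KF}$, and the recursion of Sec.~\ref{sec:inductiveformula} defining $\mu$ is arranged so that $\mu^*\theta_0,\mu^*\alpha_{2n+1},\mu^*\beta_{2n},\mu^*\gamma_{2n}$ generate the CMC ideal $\Ih{\infty}=\langle\theta_0,\zeta_j,\zetab_j\rangle$ and conversely the CMC coframe is recovered from them. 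Thus $\mu$ is a diffeomorphism onto the leaf $\{\mathcal{Q}=0\}$ intertwining the two ideals, yielding the claimed isomorphism $(\Fh{\infty},\Ih{\infty})\simeq(\Fh{\infty}_{KF},(\Ih{\infty}_{KF})^+)$. The main obstacle is the first step: verifying that the residual horizontal part of $\ed Q^{2j}$ lies in $\mathcal{Q}$ and matching \eqref{eq:Qseriesformula} to the spectral invariant, which demands careful control of the reality conditions and of the lower-order terms produced by the recursion.
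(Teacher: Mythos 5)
Your reduction of the lemma to the single conservation statement $\ed Q^{2j}\equiv 0\mod(\Ih{\infty}_{KF})^+$, and your handling of the Frobenius property and of the isomorphism once that statement is granted, are fine; the gap is in the conservation statement itself, and it is genuine. The central display $\ed\ip{X_{\lambda}}{X_{\lambda}}\equiv-2\ip{[\psi_{\lambda},X_{\lambda}]}{X_{\lambda}}\equiv 0$ is not meaningful for the object you apply it to: the formal Killing field lives in $\g^{\C}[[\lambda^{-1},\lambda]]$, which is not an algebra, and every Laurent coefficient of $\ip{X_{\lambda}}{X_{\lambda}}$ is an infinite sum (at $\lambda^{0}$, for instance, it pairs each $X^{m}$ with $X^{-m}=\sigma(X^{m})$ by the reality condition \eqref{eq:newreality}). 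To get finite quantities you must truncate to the non-negative part $X^{+}_{\lambda}$; but a truncation does not automatically satisfy the Killing field equation modulo $\Ih{\infty}_{KF}$ --- the equation couples $X^{+}_{\lambda}$ to the discarded negative half --- and the coupling vanishes identically here only because of the based-loop normalization $a^{1}=0$ together with the fixed initial coefficients $B^{2}=\gamma$, $C^{2}=-1$ (equivalently $\gamma c^{2}+h_{2}b^{2}=0$), none of which you invoke. Likewise, identifying the Laurent coefficients of $\ip{X^{+}_{\lambda}}{X^{+}_{\lambda}}$ with the $Q^{2j}$ is not a matter of reality conditions: it is exactly the Pinkall--Sterling bookkeeping of Sec.~\ref{sec:inductiveformula} (the boundary terms $b^{2}c^{2j}+b^{2j}c^{2}$ reproduce the part of $Q^{2j}$ that is \emph{linear} in the top coefficients precisely because $b^{2},c^{2}$ are fixed multiples of $h_{1}^{\mp 1}$, and the diagonal corrections in $\hat{\tn{m}}_{j-1}$ must match). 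You defer precisely this as ``the main obstacle,'' so the substantive content is never established. Your fallback induction is also confused: if the horizontal residue of $\ed Q^{2j}$ were literally $\sum_{k<j}f_{k}Q^{2k}$, its membership in $\mathcal{Q}$ would follow from the definition of an ideal, with no role for completeness or an inductive hypothesis.

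What you are missing is that completeness of $\mathcal{Q}$ is not a convenience but the entire proof, and that no formula for any $Q^{2j}$ is needed. The paper argues softly: write $\ed Q^{2j}\equiv Q^{2j}_{\xi}\,\xi+Q^{2j}_{\xib}\,\xib\mod\Ih{\infty}_{KF}$ and pull this identity back along $\mu$. The left-hand side pulls back to $\ed(\mu^{*}Q^{2j})=\ed\,0=0$, while $\mu^{*}\Ih{\infty}_{KF}=\Ih{\infty}$ and $\xi,\xib$ remain independent modulo $\Ih{\infty}$; hence $\mu^{*}Q^{2j}_{\xi}=\mu^{*}Q^{2j}_{\xib}=0$, i.e.\ both coefficient functions vanish on $\mu(\Fh{\infty})$. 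Completeness of $\mathcal{Q}$ (every function vanishing on $\mu(\Fh{\infty})$ lies in $\mathcal{Q}$) then gives $Q^{2j}_{\xi},Q^{2j}_{\xib}\in\mathcal{Q}$, which is \eqref{eq:Q2j}; the Frobenius property and the isomorphism follow as you say. If you do wish to pursue your route, note that once repaired --- truncation to $X^{+}_{\lambda}$, verification that the coupling terms vanish, and the coefficient matching above --- it would prove the stronger statement $\ed Q^{2j}\equiv 0\mod\Ih{\infty}_{KF}$ with no reference to $\mathcal{Q}$ at all (one can check this by hand for $Q^{4}$); but as written the key step fails, and the mechanism that actually carries the paper's proof is absent from your argument.
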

\begin{proof}
Suppose 
$$\ed Q^{2j}\equiv Q^{2j}_{\xi}\xi+Q^{2j}_{\xib}\xib,\mod  \Ih{\infty}_{KF}.$$
Since $Q^{2j}$ vanishes when restricted to $\mu(\Fh{\infty})$,  the derivatives $Q^{2j}_{\xi}, Q^{2j}_{\xib}$ must also vanish. The claim follows from the completeness of the ideal $\mathcal{Q}$. 
\end{proof}
We shall identify $\Fh{\infty}$ with its image $\mu(\Fh{\infty})\subset\Fh{\infty}_{KF}$ from now on.
\begin{rem} $\mu^*\Ih{\infty}_{KF}=\Ih{\infty}$.
\end{rem}
We claim that for a non-totally geodesic CMC surface the associated formal Killing field is uniquely determined.
\begin{prop}\label{prop:closure}
A (non-totally geodesic) integral surface of $(\Fh{k}_{KF},\Ih{k}_{KF})$ necessarily lies in $\Fh{k}$ as an integral surface of $\Ih{k}$.
The system $(\Fh{k},\Ih{k})$ is a natural closure of $(\Fh{k}_{KF},\Ih{k}_{KF})$ by the ideal of functions $\mathcal{Q}$. 
\end{prop}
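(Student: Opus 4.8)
The plan is to deduce the Proposition from the preceding lemma, whose content is that the symbols $Q^{2j}_\xi,Q^{2j}_{\xib}$ lie in the ideal $\mathcal{Q}=\langle Q^{2j}\rangle$, that $(\Ih{\infty}_{KF})^+$ is formally Frobenius, and that $\mu$ identifies it with $\Ih{\infty}$. It therefore suffices to prove the first sentence, namely that every non-totally-geodesic integral surface $i:\Sigma\to\Fh{k}_{KF}$ of $\Ih{k}_{KF}$ already satisfies $i^*Q^{2j}=0$ for all relevant $j$. For then $\Sigma$ is integral to $(\Ih{k}_{KF})^+$, hence lands in $\mu(\Fh{k})\cong\Fh{k}$ as an integral surface of $\Ih{k}$, and the closure assertion is exactly the finite-level truncation of the lemma's isomorphism.

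First I would reduce to the base. On $\Fh{2}_{KF}=\Fh{1}_*$ we have $\beta_2=h_1^{-1}\theta_1$, so $i^*\theta_0=i^*\beta_2=0$ forces $i^*\theta_1=0$, whence the image of $\Sigma$ in $\Fh{1}_*$ is a genuine (double cover of a) CMC integral surface. Since $\Sigma$ is non-totally geodesic, $h_2\not\equiv0$, so after restricting to the dense open set $h_1\neq0$ we remain inside the chart $\Fh{k}_*$ on which all negative powers of $h_1$ and $s$ appearing in \eqref{eq:zetastruct} and in the $Q^{2j}$ are defined. The remaining fiber coordinates $A^{2n+1},B^{2n},C^{2n}$ are then propagated along $\Sigma$ by the first order equations $\alpha_{2n-1}=\beta_{2n}=\gamma_{2n}=0$, which are precisely the equations underlying the recursion of Sec.~\ref{sec:inductiveformula} that defines $\mu$, up to the choice of the initial data recorded by the $Q^{2j}$.

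The heart of the matter is to show $q_j:=i^*Q^{2j}\equiv0$. Because $i^*\Ih{k}_{KF}=0$ and the lemma gives $Q^{2j}_\xi,Q^{2j}_{\xib}\in\mathcal{Q}$, pulling back \eqref{eq:Q2j} yields a closed \emph{homogeneous linear} first order system $\ed q_j=\sum_\ell(i^*a_{j\ell})\,q_\ell\,i^*\xi+\sum_\ell(i^*b_{j\ell})\,q_\ell\,i^*\xib$ for the finite string $q_1,\dots,q_m$ living at level $k$. The normalization $B^2=\gamma,\ C^2=-1$ makes $Q^2$ identically zero, so $q_1\equiv0$ is the base case. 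I would then run an induction exploiting the triangular structure of \eqref{eq:Qseriesformula}: each new fiber coordinate $B^{2j+2},C^{2j+2}$ enters $Q^{2j+2}$ linearly with leading coefficient a nonvanishing power of $h_1$ (this is where non-totally-geodesic is used), while a recursion of the form $Q^{2j}_\xi\equiv(\text{const})\,Q^{2j+2}\pmod{\langle Q^{2\ell}:\ell\le j\rangle}$ ties consecutive constraints together; at the top of the finite tower $\Fh{k}_{KF}$ the structure equations for $\alpha,\beta,\gamma$ truncate, which forces the topmost $q_m$ to vanish, and the homogeneous system then cascades the vanishing downward. Lemma~\ref{lem:lemma5.4} is the analytic input making the relevant symbols constant multiples of $h_2^{-j/2}$, so that ``vanishing of a symbol'' becomes an honest algebraic equation.

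The step I expect to be the genuine obstacle is exactly this last vanishing: a homogeneous linear system does not by itself annihilate its solutions, so one must pin down the correct initial condition. Everything hinges on verifying that the triangular recursion of \eqref{eq:Qseriesformula}, together with the truncation at level $k$ and the reality/based normalization encoded in \eqref{eq:formalKilling_0} and \eqref{eq:newreality}, really does force $q_m(p)=0$ at a point and hence $q_j\equiv0$ along $\Sigma$. This requires careful bookkeeping of the leading terms $m_{2j+2}a^3a^{2j+1}$ in $Q^{2j+2}$ and of the non-degeneracy supplied by $h_1\neq0$. Once this is in hand, the inclusion $\Sigma\subset\mu(\Fh{k})$ and the closure statement follow formally from the preceding lemma.
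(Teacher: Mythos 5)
You have the right skeleton --- everything reduces to showing $i^*Q^{2j}=0$ on a non-totally-geodesic integral surface, after which the closure statement follows formally from the preceding lemma --- but the mechanism you propose for this vanishing is not the one that works, and you yourself flag the resulting hole without closing it. Your claimed recursion $Q^{2j}_{\xi}\equiv(\tn{const})\,Q^{2j+2} \mod \langle Q^{2\ell}:\ell\le j\rangle$ points the triangular structure in the wrong direction. What the paper's proof establishes (by direct computation with the structure equations \eqref{eq:newrecursion}) is that in $\ed Q^{2j+2}$ the next-level terms \emph{cancel}: the coefficient $a^{2j+3}$ enters $h_2^{\frac{1}{2}}\ed b^{2j+2}$ and $\gamma h_2^{-\frac{1}{2}}\ed c^{2j+2}$ with the same factor $\frac{\im\gamma}{2}h_2^{\frac{1}{2}}a^{2j+3}\xi$, so it drops out of the combination \eqref{eq:Qseriesformula} defining $Q^{2j+2}$. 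Consequently, modulo $\Ih{\infty}_{KF}$ and the \emph{lower} functions $Q^{2},\dots,Q^{2j}$, one has the diagonal, multiplicative relation
\begin{equation}
\ed Q^{2j+2}\equiv Q^{2j+2}\Big(\tfrac{1}{2}-\im\,m_{2j+2}\Big)\,a^3\xi ,\notag
\end{equation}
with no coupling to $Q^{2j+4}$ at all. In particular nothing needs to propagate down from the top of the finite tower, and the difficulty you correctly identify --- that a homogeneous linear system does not annihilate its own solutions without an initial condition --- never arises.

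With the diagonal relation in hand, the vanishing is proved by contradiction rather than by pinning down an initial condition, and this is the genuinely missing idea in your proposal. If $Q^{2j+2}$ were not identically zero on the integral surface, then on the open set where it is nonzero the 1-form $\chi=(\frac{1}{2}-\im\, m_{2j+2})a^3\xi$ would equal $\ed\log Q^{2j+2}$, hence be closed modulo $\Ih{\infty}_{KF}$; since $m_{2j+2}$ is real the constant is nonzero, and closedness of $a^3\xi$ forces, after one further differentiation, $a^3=0$, i.e.\ the surface is flat/totally geodesic --- contradicting the hypothesis. This is exactly where non-totally-geodesic enters; it has nothing to do with nonvanishing powers of $h_1$ (which is automatic on $\Fh{k}_*$), and Lemma~\ref{lem:lemma5.4} plays no role in this proof. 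Two smaller corrections: the paper handles the base case $Q^2=\gamma-b^2c^2$ by the \emph{same} contradiction scheme (closedness of $\frac{\im}{2}c^2a^3\xi$ gives $\gamma b^2-(\ol{c^2})^2c^2=0$, then $a^3=0$, flatness, and finally $Q^2=0$), rather than declaring $Q^2\equiv0$ by normalization as you do; and the induction runs bottom-up in $j$, each step self-contained, rather than top-down from a truncation.
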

\begin{proof}
First we show that each $Q^{2j}$ vanishes on every integral surface of $\Ih{\infty}_{FK}$ (which necessarily satisfies the independence condition $\xi\w\xib\ne 0$). 

Note that $c^2\ne 0$ on $\Fh{\infty}_{KF}$ by definition. We shall consider the following refinement of the equation \eqref{eq:Q2j}.
\be
\ed Q^{2j}\equiv Q^{2j}\chi_{2j} \mod  \Ih{\infty}_{KF}, \cup_{\ell=1}^{j-1}Q^{2\ell}.\n
\ee

\two
$\bullet$ case $Q^2$: We have $Q^2=\gamma-b^2c^2$. Differentiating this we get
\be
-\frac{\ed Q^2}{Q^2}\equiv\frac{\im}{2}c^2a^3\xi \mod \Ih{\infty}_{KF}. \n
\ee
If $Q^2$ were not identically zero on an integral surface, we must have
$$ \ed (c^2a^3\xi) \equiv  c^2 a^3_{\xib}\,\xib\w\xi \equiv 0\mod\Ih{\infty}_{KF}.$$
This implies from the structure equation that 
\be\label{eq:Q2relation}
\gamma b^2-(\bar{c^2})^2 c^2=0.
\ee
Differentiating this again by $\del_{\xib}$, we get 
$$a^3=0.$$
Thus the integral surface is necessarily flat and $|c^2|^2=|\gamma|=\tn{const}$. It then follows from \eqref{eq:Q2relation} that $Q^{2}=0$, a contradiction. Thus $Q^2$ vanishes on every integral surface of $\Ih{\infty}_{KF}$.

$\bullet$ case $Q^{2j}, j\geq 2$: From the formula \eqref{eq:Qseriesformula}, by a direct computation one finds that
\be
\frac{\ed Q^{2j+2}}{Q^{2j+2}}\equiv \big(\frac{1}{2}-\im\,m_{2j+2}\big) a^3\xi \mod  \I{\infty}_{KF}, \cup_{\ell=1}^{j}Q^{2\ell}.\n
\ee
Note here that the $a^{2j+3}$-terms cancel. It is easily checked that $m_{2j+2}$ is real and hence $\frac{1}{2}-\im\,m_{2j+2}\ne 0$. 
By the similar argument as above, if $Q^{2j+2}$ were not identically zero this leads to a contradiction.
\end{proof}

\one
We close this section by introducing a class of generalized finite-type CMC surfaces  which can be conveniently expressed in terms of  $(\Fh{k}_{KF},\Ih{k}_{KF})$.
\begin{defn}
Let 
\[Y^k=\{ h_1,A^3,B^4,C^4, \ldots, A^{2k-1}, B^{2n}, C^{2k},\hb_1, \ol{A}^3,\ol{B}^4,\ol{C}^4, \ldots, \ol{A}^{2k-1}, \ol{B}^{2n}, \ol{C}^{2k}\}.\] 
Thus $Y^k  \cong  \C^{2k}$. For a scalar function $F :Y^k \to \C$ let $\ed F \equiv F_{\omega}\omega+F_{\omb} \omb$ to define $F_{\omega}$ and $F_{\omb}$.  Using the structure equations above we find that
\begin{align}
F_{\omb}=\dd{F}{A^j}+ \ldots\,  \n
\end{align}
We will refer to an equation of the form
\begin{equation}\label{eq:finitetypeF}
F_{\omb}=\frac{1}{2}\left( \gamma s^{-2} B^{2k}+s^2 C^{2k}\right)
\end{equation}
as the {\bf finite-type equation} (recall that $s=|h_1|$).
\end{defn}
Solutions to the finite-type equation are locally bountiful as it is a single equation for a single function. 
\begin{lem}
Fix $k>2$.  Let $F:Y^k \to \C$ be a solution to Eq.~\eqref{eq:finitetypeF}. The structure equations
\begin{align*}
B^2&=\gamma, \;C^2=-1,\\
\ed s &=\frac{s}{2} \left( A^3 \omega + \ol{A^3} \omb\right),\\
\ed A^{2n-1}&=\frac{1}{2} \left( B^{2n}+\gamma C^{2n} \right)\omega+\frac{1}{2} \left( \gamma s^{-2} B^{2n-2}+s^2 C^{2n-2}\right)\omb, \\
\ed B^{2n}&=\left( A^3B^{2n} -\gamma A^{2n+1} \right) \omega-s^2A^{2n-1} \omb,\\
\ed C^{2n}&=-\left( A^3 C^{2n} +A^{2n+1} \right) \omega-\gamma s^{-2} A^{2n-1} \omb,
\end{align*}
for $n=2,3,\,...\, k$, along with the condition $A^{2k+1}=F(s,A^3, \ldots, C^{2n}, \ol{A}^3, \ldots, \ol{C}^{2n})$ formally satisfy $\ed^2=0$.
\end{lem}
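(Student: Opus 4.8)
The plan is to recognize these finite-type structure equations as the restriction of the formal Killing field structure equations on $(\Fh{\infty}_{KF},\Ih{\infty}_{KF})$ to an integral manifold, \emph{truncated} at level $k$. On an integral manifold of $\Ih{\infty}_{KF}$ every generator vanishes, so $\theta_0=0$, $\gamma_2=0$, and $\alpha_{2j+1}=\beta_{2j}=\gamma_{2j}=0$; substituting this into the structure equations recorded in Sec.~\ref{sec:kfpro} collapses $\ed\omega=\gamma_2\w\omega-\theta_0\w(s^2[\betab_2+\omb]+\delta\omega)$ to $\ed\omega=0$, and reduces $\ed s$, $\ed A^{2n-1}$, $\ed B^{2n}$, $\ed C^{2n}$ to exactly the five displayed equations (with $B^2=\gamma$, $C^2=-1$). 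Thus, away from the top level, the finite-type system is literally the infinite formal Killing field system, for which $\ed^2=0$ was already verified in Sec.~\ref{sec:kfpro}. The strategy is therefore to reduce the claim to that known fact, checking only the identities actually disturbed by the truncation.

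First I would isolate which compatibility identities are touched by setting $A^{2k+1}=F$. The variable $A^{2k+1}$ appears only in $\ed B^{2k}$ and $\ed C^{2k}$, each time as a coefficient of $\omega$ (through $-\gamma A^{2k+1}\omega$ and $-A^{2k+1}\omega$ respectively). Consequently, in $\ed(\ed A^{2k-1})$ — whose only access to $A^{2k+1}$ is via $\ed B^{2k},\ed C^{2k}$, which sit inside $\omega$-coefficients of $\ed A^{2k-1}$ — the $A^{2k+1}$-terms enter as $\omega\w\omega$ and drop identically, so $\ed(\ed A^{2k-1})=0$ regardless of $F$. Moreover every lower identity $\ed(\ed A^{2n-1})=\ed(\ed B^{2n})=\ed(\ed C^{2n})=0$ with $n<k$ holds verbatim as in the infinite system, since these involve only $A^{j},B^{j},C^{j}$ with $j\le 2k-1$. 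Here the hypothesis $k>2$ is what guarantees that the initial data $A^3$, $B^2=\gamma$, $C^2=-1$ lie strictly below the truncation level, so no low identity is affected. The only two identities that genuinely feel the truncation are $\ed(\ed B^{2k})=0$ and $\ed(\ed C^{2k})=0$.

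For these I would compute directly, using $\ed\omega=\ed\omb=0$. Because $A^{2k+1}=F$ enters $\ed B^{2k}$ and $\ed C^{2k}$ only through an $\omega$-coefficient, its second exterior derivative sees $F$ only through $\ed F\w\omega=(F_{\omega}\omega+F_{\omb}\omb)\w\omega=F_{\omb}\,\omb\w\omega$; the $\omega$-component $F_{\omega}$ is annihilated by the wedge with $\omega$ and plays no role whatsoever. In the infinite system the $\omb$-component of $\ed A^{2k+1}$ equals $\frac12(\gamma s^{-2}B^{2k}+s^2C^{2k})$ (the $\alpha_{2k+1}=0$ reduction of the formula in Sec.~\ref{sec:kfpro}), and it is precisely this value that makes $\ed(\ed B^{2k})$ and $\ed(\ed C^{2k})$ vanish. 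Hence imposing $F_{\omb}=\frac12(\gamma s^{-2}B^{2k}+s^2C^{2k})$ — which is exactly the finite-type equation \eqref{eq:finitetypeF} — forces both top-level identities to coincide with their vanishing infinite-tower counterparts. The conjugate identities $\ed(\ed\ol{B^{2k}})=\ed(\ed\ol{C^{2k}})=0$ follow by applying $\sigma$, since $\ol{A^{2k+1}}=\ol{F}$ and conjugation interchanges $\omega$ and $\omb$, turning the $\omb$-condition on $F$ into the matching $\omega$-condition on $\ol{F}$.

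The one point that deserves genuine care — and which I expect to be the main obstacle — is verifying rigorously that $F_{\omega}$ enters no compatibility condition, so that the single scalar equation \eqref{eq:finitetypeF} suffices rather than an overdetermined pair. This decoupling is the combinatorial heart of the ``good coordinates'' supplied by the formal Killing field: each higher coefficient couples to $\omega$ through a bare $A^{2k+1}$ and to $\omb$ only through already-present lower-order data, so that exactly one scalar derivative of the truncating function is constrained. Once this decoupling is confirmed and the reality bookkeeping is in place, $\ed^2=0$ holds on all generators and the finite-type system is formally Frobenius, as claimed.
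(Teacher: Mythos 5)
Your proposal is correct and takes essentially the same route as the paper: the paper's own proof likewise reduces the whole verification to the top-level identities, stating that ``we only need to check the last case of $n=k$'' and that a direct calculation gives $\ed^2 B^{2k}=\ed^2 C^{2k}=0$ if and only if $F_{\omb}=\frac{1}{2}\left(\gamma s^{-2}B^{2k}+s^2C^{2k}\right)$, which is exactly Eq.~\eqref{eq:finitetypeF}. Your only departure is in how the top identities are settled --- you match $\ed F\w\omega$ against the $\omb$-component of $\ed A^{2k+1}$ in the already-verified infinite Killing-field tower instead of recomputing them directly --- and your careful observation that $F_{\omega}$ is never constrained (because $F$ enters only as an $\omega$-coefficient of $\ed B^{2k}$ and $\ed C^{2k}$, so every occurrence of $\ed F$ is wedged with $\omega$) is precisely the justification the paper leaves implicit.
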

\begin{proof}
We only need to check the last case of $n=k$. A direct calculation shows that $\ed^2 B^{2k} = \ed^2 C^{2k} =0$ if and only if $F_{\omb}=\frac{1}{2}\left( \gamma s^{-2} B^{2k}+s^2 C^{2k}\right)$.
\end{proof}

\begin{cor}
Each solution $F$ of the finite-type equation defines a reduction of the infinite structure equations to a finite-dimensional Lie-Cartan system.  Thus each solution $F$ of the finite-type equation defines a finite-dimensional space of local integral manifolds for the CMC system.
\end{cor}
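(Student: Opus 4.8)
The plan is to read the preceding lemma as the statement that, once $F$ is fixed, the truncated formal Killing field equations close up into a finite-dimensional total differential system whose sole integrability condition is $\ed^2=0$. The corollary then reduces to two tasks: integrating this system, and carrying the resulting integral manifolds back to the CMC system by means of the closure result Prop.~\ref{prop:closure}.

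First I would make precise the reduction named in the first sentence. Fixing a solution $F$ of \eqref{eq:finitetypeF} means that $A^{2k+1}$ is no longer an independent fiber coordinate but is set equal to $F(s,A^3,\ldots,C^{2k},\ol{A}^3,\ldots,\ol{C}^{2k})$; this halts the prolongation tower of Sec.~\ref{sec:kfpro} at level $k$. On the locus where the ideal generators $\theta_0,\beta_{2j},\gamma_{2j},\alpha_{2j+1}$ vanish, the full structure equations collapse to the finite list displayed in the preceding lemma, in which every exterior derivative of a coordinate on $Y^k\cong\C^{2k}$ is a fixed combination of $\omega$ and $\omb$ with coefficients polynomial in the coordinates. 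Setting $\gamma_2=\theta_0=0$ in $\ed\omega=\gamma_2\w\omega-\theta_0\w(\ldots)$ moreover gives $\ed\omega=0$, so $\omega$ is closed and locally $\omega=\ed z$ for a holomorphic coordinate $z$, with $\omb=\ed\zb$. Thus the reduced data are a $Y^k$-valued map on a planar domain governed by a determined first-order system $\ed w=P(w)\,\omega+Q(w)\,\omb$ with $w=(s,A^3,B^4,\ldots,C^{2k})$; this determined finite-dimensional system is the Lie-Cartan system asserted in the first sentence.

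The integrability of such a total differential system is governed entirely by $\ed(\ed w)=0$, and this is exactly what the preceding lemma establishes: the relations for $n\le k-1$ coincide with those of the already compatible infinite tower, while the genuinely new identities $\ed^2 B^{2k}=\ed^2 C^{2k}=0$ hold precisely because $F$ solves \eqref{eq:finitetypeF}. By the Frobenius theorem for compatible total differential equations there is then, through each initial value $w_0\in Y^k$, a unique local solution $w(z,\zb)$, the open reality and positivity constraints (for instance $s>0$) being preserved along the flow. Hence the local integral manifolds of the reduced system are parametrized by $w_0$ and form a finite-dimensional space, of dimension at most $\dim_{\R}Y^k=4k$.

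Finally I would transport these back to CMC surfaces. By construction each solution $w(z,\zb)$ together with $\omega=\ed z$ is an integral surface of the formal Killing field ideal $\Ih{k}_{KF}$, since $\theta_0,\beta_{2j},\gamma_{2j},\alpha_{2j+1}$ have all been set to zero; provided it is not totally geodesic, Prop.~\ref{prop:closure} identifies it with an integral surface of $\Ih{k}$, hence with a local CMC integral manifold in $M$. This yields the advertised finite-dimensional family. The main obstacle is conceptual rather than computational: one must check that the truncation genuinely produces a determined, fully compatible total differential system on the finite-dimensional $Y^k$, not merely a consistent set of algebraic relations. This is precisely the role of the lemma, whose essential content is that the single scalar condition \eqref{eq:finitetypeF} on $F$ is both necessary and sufficient for the closing identities $\ed^2 B^{2k}=\ed^2 C^{2k}=0$.
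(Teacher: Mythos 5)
Your proposal is correct and follows essentially the same route the paper intends: the preceding lemma supplies the formal identity $\ed^2=0$ for the truncated system, the Lie--Cartan (Frobenius-type) theorem then yields a finite-dimensional family of local integral manifolds parametrized by initial data in $Y^k$, and Prop.~\ref{prop:closure} carries these back to the CMC system. The paper leaves exactly this argument implicit, so your write-up is simply a fleshed-out version of the intended proof.
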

The challenge now lies in finding a nonlinear $F$ for which we can gain a detailed geometric information about the integral manifolds.

\begin{rem}
It appears that one could specify $F$ so that $\dd{F}{\ol{A}^j}=\dd{F}{\ol{B}^j}=\dd{F}{\ol{C}^j}=0$.
\end{rem}


\begin{rem}
A candidate for a definition of an \emph{integrable} Monge-Ampere EDS on a five dimensional manifold is the following: determine the rough structure equations on the infinite prolongation. If the system admits infinitely many (nonlinear) finite-type reductions then it is integrable. A stronger notion would be to require that there exist  linear or polynomial such reductions. 
\end{rem}

Returning to the original CMC system, this general existence of reductions is only of theoretical interest.  For the practical purpose of solving the CMC equation explicitly in anyway we now must search for a scalar function $F$ that is preferably a polynomial function on $Y$. If $F$ is a polynomial, then the finite-type equation is likely to impose further relations that restrict to a submanifold of $Y$, as happens for the case when $F$ is linear. 

\begin{quest}
Is it possible that we might use the Cartan-K\"ahler approach to specify the initial data for $F$ on a hypersurface as a polynomial and then prove that the thickening is still polynomial?  

Is there anyway to relate this finite type equation to the hydrodynamic reduction, though it seems almost the reverse of hydrodynamic reduction.
\end{quest}
 
\section{Spectral symmetry}\label{sec:spectralsymmetry}
Recall from Lem.~\ref{lem:Tjh} that $\hat{T}_j$ is an element in the polynomial ring $\C[r^2,z_3, z_4, \,... \, ].$
From the formulae 
\begin{align*}
\del_{\omega} z_j&=z_{j+1}-\frac{j}{2}z_3z_j, \\
\del_{\omega} r^2&=r^2 z_3,
\end{align*}
suppose we assign the weights $\tn{wt}(z_j)=j-2, \,\tn{wt}(r^2)=0.$\footnotemark\footnotetext{From Cor.~\ref{cor:zurelation}, $\,z_j$ corresponds to  $u_{j-2}$ modulo lower order terms. }
Then the recursive defining equation \eqref{eq:zetastruct} for $\hat{T}_j$ shows that $\hat{T}_j$ is weighted homogeneous of weight $j-3$.

We introduce a new non-local symmetry called spectral symmetry 
to capture this auxiliary symmetry of the CMC system, 
which is uncovered only in terms of the balanced coordinates $z_j$'s of $\xinfh_*$.
For a nonlinear PDE this would be an exceptional phenomenon and 
in turn gives an indication of the underlying geometric structure.
\subsection{Integrable extension}\label{sec:extension}
We start with a general description of deformation of submanifolds in a homogeneous space via moving frame method.  
 
Recall that $\g$ is the six dimensional Lie algebra of the group of isometries $\G$ of the ambient 3-dimensional space form $M$. We shall identify $\g$ with the space of Killing vector fields on $M$. Let $\phi$ be a $\g$-valued 1-form on a manifold satisfying the Maurer-Cartan structure equation
$$\ed\phi+\phi\w\phi=0.$$
Consider a one parameter family of such 1-forms
$$\phi(t)\equiv\phi+t\delta\phi\mod \mco(t^2).$$
Then $\delta\phi$ satisfies the deformation equation
$$\ed(\delta\phi)+\delta\phi\w\phi+\phi\w\delta\phi=0.$$

Symbolically, let $e$ be a $\G$-frame ($\G$-valued function) for $\phi$ that satisfies the structure equation
$$\ed e=e \phi.$$
For a $\g$-valued function $v$, let 
$$e(t)\equiv e(\tn{I}+t v)\mod \mco(t^2)$$
be the deformation of frame corresponding to $\phi(t)$. Then $v$ satisfies the corresponding deformation equation
\be\label{eq:deformationequation}
\ed v=-\phi v+v\phi+\delta\phi.
\ee
We wish to introduce a generalized symmetry of CMC surfaces based on this description of deformation.

\two
A general criterion for the concept of symmetry of a differential equation would be a structure/rule that allows one to produce new solutions from the given ones. In this broader sense the following  spectral deformation  qualifies as a symmetry for the CMC system.

Consider the original structure equation \eqref{eq:strt2} on $X$ modulo $\mci$, i.e., we only consider \eqref{eq:strt2} as the induced structure equation on a CMC surface. For a unit complex number $e^{\im t}$, one notes that the structure equation modulo $\mci$ is invariant under the deformation
\be\label{eq:associatesymmetry}\left\{
\begin{array}{rlrl}
\xi&\lra\;\; \xi, &\quad\xib&\lra\;\; \xib,  \\
\rho&\lra\;\; \rho,&&\\
\eta_1&\lra\;\;   e^{2\im t} \eta_1, &\quad\etab_1&\lra\;\;  e^{-2\im t} \etab_1.
\end{array}\right.
\ee
The corresponding isometric $\s{1}$-family of (possibly multi-valued) CMC surfaces are called \tb{associate surfaces} to the original surface. 
\begin{rem} 
Compared to the symmetry defined in Sec.~\ref{sec:JacobiSymmetry}, one may heuristically consider \eqref{eq:associatesymmetry} as
$$\tn{symmetry}\mod\iinf.$$
It is evident from the structure equation that spectral deformation cannot be extended to a symmetry of $(\xinf,\iinf)$ in the usual sense.
\end{rem}
An analytic formulae for spectral symmetry can be derived based on the deformation equation \eqref{eq:deformationequation}.
Let $\{v_0,  v_{1,0}, v_{\bar{1},0},  v_{\rho}, v_{0,1}, v_{0,\bar{1}}\}$ be the components of the Lie algebra valued deformation function $v$ above corresponding to the coframe $\{\theta_0, \xi,\xib, \rho,\eta_1,\etab_1\}$. Then the deformation equation \eqref{eq:deformationequation} is written in these variables as follows (here $v_0$ is real, $v_{\rho}$ is imaginary,
and $v_{\bar{1},0}=\ol{v}_{1,0}, v_{0,\bar{1}}=\ol{v}_{0,1}$):
\begin{align}\label{eq:spectraldeform}
\pi_0:=\ed  v_0&-\Big[  (v_{1,0}+ h_2 v_{0,1})\xi +(v_{\bar{1},0}+ \hb_2 v_{0,\bar{1}})\xib+
v_{{0,1}}\theta_{{1}}+v_{{0,\bar{1}}}\thetab_1 \Big]\equiv 0,\\
\pi_{0,1}:=\ed  v_{0,1} - \im  v_{0,1} \rho&-\Big[-\frac{1}{2} (\delta v_0-v_{\rho}) \xi -\frac{1}{2}  \hb_{2}v_0 \xib
+\left( v_{{\bar{1},0}}-\delta v_{{0,1}} \right) \theta_{{0}}-\frac{1}{2} v_{{0}}\thetab_1 \Big]\equiv 0,\n\\
\pi_{0,\bar{1}}:=\ed  v_{0,\bar{1}}+\im  v_{0,\bar{1}} \rho&
-\Big[ -\frac{1}{2}  h_{2}v_0 \xi - \frac{1}{2} (\delta v_0+v_{\rho}) \xib
+\left( v_{{1,0}}-\delta v_{{0,\bar{1}}} \right) \theta_{{0}}-\frac{1}{2} v_{{0}}\theta_1 \Big]\equiv 0,\n\\
\pi_{\rho}:=\ed  v_{\rho}&-\Big[ \Big( (-\delta v_{1,0}+h_2 v_{\bar{1},0})+(\gamma^2 v_{0,\bar{1}}-\delta h_2 v_{0,1})\Big)\xi   \n\\
                   &\;\quad+\Big( (\delta v_{\bar{1},0}-\hb_2 v_{1,0})
+(-\gamma^2 v_{0,1}+\delta \hb_2 v_{0,\bar{1}})\Big)\xib\n\\
 &\;\quad+ \left( v_{{\bar{1},0}}-\delta v_{{0,1}} \right) \theta_{{1}}
+ \left( -v_{{1,0}}+\delta v_{{0,\bar{1}}} \right) \thetab_1 \Big]\equiv 0,\n\\
\pi_{1,0}:=\ed  v_{1,0} + \im  v_{1,0} \rho&
-\Big[  \Big(-\frac{1}{2}h_2 (\delta v_0-v_{\rho}) +\im h_2\Big)\xi -\frac{1}{2}\gamma^2 v_0 \xib \n\\
&\;\quad + \left( \delta v_{{1,0}}-\gamma^2 v_{{0,\bar{1}}}\right) \theta_{{0}}
-\frac{1}{2} \left( \delta v_{{0}}-v_{{\rho}} \right) \theta_{{1}} \Big]\equiv 0,\n\\
\pi_{\bar{1},0}:=\ed  v_{\bar{1},0}-\im  v_{\bar{1},0} \rho&-\Big[ -\frac{1}{2}\gamma^2 v_0 \xi 
+\Big(-\frac{1}{2}\hb_2 (\delta v_0+v_{\rho}) -\im \hb_2\Big)\xib \n\\
&\;\quad + \left( \delta v_{{\bar{1},0}}-\gamma^2 v_{{0,1}}\right) \theta_{{0}}
-\frac{1}{2} \left( \delta v_{{0}}+v_{{\rho}} \right) \thetab_{{1}} \Big]\equiv 0, 
\mod\;\iinf.\n
\end{align}
One may check by direct computation that this structure equation is compatible mod $\iinf$, i.e., when considered as a structure equation for the variables $\{v_0,  v_{1,0}, v_{\bar{1},0},  v_{\rho}, v_{0,1}, v_{0,\bar{1}}\}$, the identity $\ed^2=0$ holds mod $\iinf$.

Note the inhomogeneous terms $\im h_2, -\im \hb_2$ in $\delx v_{1,0}, \delxb v_{\bar{1},0}$. Without these terms Eq.~\eqref{eq:spectraldeform} is isomorphic to the structure equation for the classical conservation laws Eq.~\eqref{eq:classicalJacobi}.  This implies that the space of spectral symmetries is formally a six dimensional affine space modeled on the Lie algebra $\g$. 

Note also that $v_0$ is a Jacobi field, while $v_{\rho}$ is not and satisfies 
\be\label{eq:vrho}
\mce(v_{\rho})=-\im h_2\hb_2.
\ee
See Sec.~\ref{sec:PicardFuchs} for a geometrical interpretation of this equation.

\two
The analysis in the next section will reveal that the solutions to \eqref{eq:spectraldeform} do not exist on $\xinf$. In order to incorporate the spectral symmetry to our analysis, which is relevant  among other things for our proof for the nontriviality of higher-order conservation laws, let us introduce an integrable extension. The idea is to add the spectral deformation coefficients $\{v_0, v_{1,0}, v_{\bar{1},0},  v_{\rho}, v_{0,1}, v_{0,\bar{1}}\}$ as new variables subject to the compatible differential relation defined by \eqref{eq:spectraldeform}. One may  consider this as a PDE analogue of field extension for algebraic equation, \cite{Vinogradov1989, Krasilshchik2004}.

Recall the $\SO(2)$-bundle $\finf\to\xinf.$ Define the product bundle
\be\label{eq:ZFbundle}
Z_{\mcf}=\finf\times_{\SO(2)}\g \to \finf, 
\ee 
where $\SO(2)$ group action on $\g$ is specified by the connection 1-form $\rho$ in \eqref{eq:spectraldeform}.
Set the affine bundle modeled on $\g$ by 
\be\label{eq:Zbundle}
Z=Z_{\mcf}/\SO(2) \to\xinf.
\ee
We have the commutative diagram: \one\\
\centerline{\xymatrix{Z_{\mcf} \ar[d]^{\Pi} \ar[r]&Z \ar[d]^{\Pi}\\
\finf\ar[r] &\xinf}}\\
\begin{defn}\label{defn:integrableextension} 
Let $\Pi: Z \to\xinf$ be the affine bundle \eqref{eq:Zbundle} modeled on the Lie algebra $\g$ with fiber coordinates  
$\{v_0, v_{1,0}, v_{\bar{1},0},  v_{\rho}, v_{0,1}, v_{0,\bar{1}}\}$.
Let $Z_{\mcf}\to Z$ be the associated principal right $\SO(2)$ bundle defined by \eqref{eq:ZFbundle}. 

Let $\mcj_{\mcf}$ be the extended ideal on $Z_{\mcf}$ generated by
\begin{align}
\mcj^{\pi}_{\mcf}&=\langle \pi_0,  \pi_{1,0}, \pi_{\bar{1},0},  \pi_{\rho}, \pi_{0,1}, \pi_{0,\bar{1}} \rangle,\n\\
\mcj_{\mcf}&=\mcj^{\pi}\oplus\Pi^*\iinf _{\mcf}.\n
\end{align}
It is clear that these ideals are invariant under $\SO(2)$ action, and let $\mcj^{\pi}, \mcj$ be the corresponding ideals on $Z$. By construction $\mcj$ is Frobenius. The extended EDS $(Z,\mcj)$ is the \tb{integrable extension} of $(\xinf,\iinf)$ for the spectral symmetry. 

A vector field $V$ on $Z$ is  $\Pi$\tb{-horizontal} if it lies in the kernel of $\mcj^{\pi}$. 
A differential form $\Phi$ on $Z$ is  $\Pi$\tb{-horizontal} if it lies pointwise in the span of $\Pi^*\iinf$.
\end{defn}
\begin{rem}
Let $\xinfh\to\xinf$ be the double cover. The pull-back integrable extension is denoted by
$$\hat{Z} \to\xinfh.$$
We denote the corresponding ideals by the upper-hat notation, $\hat{\mcj}, \hat{\mcj^{\pi}}.$
\end{rem}
Let us mention two technical points. 
First, we shall identify $\iinf$ with its image $\Pi^*\iinf\subset\mcj$ (and similarly for $\Pi^*\iinf_{\mcf}\subset\mcj_{\mcf}$).
Second, the vector fields $\delx,\delxb, E_0, E_j, E_{\bar{j}}$ on $\finf$ admit unique lifts to $Z_{\mcf}$ as $\Pi$-horizontal vector fields. We denote these horizontal lifts by the same notation. Symbolically we have
$$ \{ \delx,\delxb, E_0, E_j, E_{\bar{j}}\} \subset (\mcj^{\pi}_{\mcf})^{\perp}.
$$

\one
On this integrable extension $Z$ we shall show that the spectral symmetry has a representation as a generalized symmetry vector field.\footnotemark\footnotetext{This kind of generalized symmetry is sometimes called a \emph{shadow}, \cite{Krasilshchik2004}\cite{Vinogradov1989}. See Definition \ref{defn:shadow}.} The non-local spectral symmetry will play an important role in our analysis of the (local) higher-order Jacobi fields and conservation laws. For example it allows one to decompose the relevant structures of $(\xinf,\iinf)$
into the weighted homogeneous pieces under spectral symmetry.
 
\two
We close this section with two lemmas on the properties of the extension $(Z,\mcj)$.
The following lemma  gives a property which separates the cases $\epsilon\ne0,$ or $\epsilon=0$.
\begin{lem}\label{lem:noZintegral}
Let $u:Z\to\C$ be a scalar function such that
$$\ed u \equiv 0 \mod \mcj.$$
Then,

\one
\qquad Case $\epsilon\ne 0$: $u$ is a constant.

\qquad Case $\epsilon=0$: the rank three sub-ideal generated by
$$\mcj^{\tn{FI}}=\langle 2\im\delta\theta_0+\pi_{\rho}, \theta_1+\im(\pi_{1,0}-\delta\pi_{0,\bar{1}}),\theta_{\bar{1}}-\im(\pi_{\bar{1},0}-\delta\pi_{0,1}) \rangle\subset\mcj$$
is Frobenius, and
$$\ed u \in \mcj^{\tn{FI}}.$$
\end{lem}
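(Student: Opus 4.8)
The plan is to analyze the closedness condition $\ed u \equiv 0 \mod \mcj$ directly in terms of the fiber coordinates $\{v_0, v_{1,0}, v_{\bar{1},0}, v_{\rho}, v_{0,1}, v_{0,\bar{1}}\}$ of the affine bundle $Z\to\xinf$ and the coordinates of $\xinf$ itself. First I would write $\ed u$ in the coframe adapted to $Z_{\mcf}$, namely the pullback coframe on $\finf$ together with the $\pi$-forms $\{\pi_0, \pi_{1,0}, \pi_{\bar{1},0}, \pi_{\rho}, \pi_{0,1}, \pi_{0,\bar{1}}\}$. Since $\mcj = \mcj^{\pi}\oplus\iinf$ is generated by $\theta_0, \zeta_j$ (the prolongation forms) together with the six $\pi$-forms, the condition $\ed u\equiv 0\mod\mcj$ means that $\ed u$ is a combination only of $\xi, \xib$ modulo $\mcj$. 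Writing $\ed u \equiv u_\xi \xi + u_\xib\xib \mod\mcj$, the requirement is $u_\xi = u_\xib = 0$, i.e. the total derivatives of $u$ along the two horizontal directions vanish. The main work is then to examine what functions of the extended variables can have vanishing horizontal derivatives.

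Next I would expand the horizontal derivatives using the structure equations \eqref{eq:spectraldeform} for the $\pi$-forms and the prolonged structure equations from Lemma~\ref{lem:prolongedstrt}. The key point is that the $\xi$- and $\xib$-components of $\ed v_{1,0}$ and $\ed v_{\bar{1},0}$ carry the inhomogeneous terms $\im h_2$ and $-\im\hb_2$ respectively, as noted just after \eqref{eq:spectraldeform}. When I impose $\delx u = \delxb u = 0$ and propagate these equations (differentiating again and using $\ed^2 = 0 \mod\iinf$, i.e. the commutation relation $[\delx,\delxb] = -R\frac{\im}{2}E_\rho$ and its analogues), I expect the $\gamma^2$-dependence to govern the outcome. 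Concretely, the iterated compatibility conditions should reduce to an equation whose solvability depends on whether $\gamma^2 = \epsilon + \delta^2$ forces a nontrivial relation; since $\delta$ is fixed, the genuine dichotomy in the conclusion comes from $\epsilon\ne0$ versus $\epsilon=0$.

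In the case $\epsilon\ne 0$ I would argue that the over-determined system $\delx u = \delxb u = 0$, combined with the noncommutativity $[\delx,\delxb] = -R\frac{\im}{2}E_\rho$ acting on $u$, forces all $v$-derivatives of $u$ and all higher $\xinf$-derivatives to vanish, leaving $u$ constant; here the structural obstruction that prevents spectral symmetry from descending to $\xinf$ (the reason the integrable extension is needed) is precisely what rigidifies $u$. In the case $\epsilon = 0$ the same propagation leaves a three-dimensional space of consistent directions, and I would verify by direct computation that the proposed generators $2\im\delta\theta_0 + \pi_\rho$, $\theta_1 + \im(\pi_{1,0} - \delta\pi_{0,\bar{1}})$, and $\theta_{\bar1} - \im(\pi_{\bar1,0} - \delta\pi_{0,1})$ satisfy $\ed$ of each lying in the span $\mcj^{\tn{FI}}$ itself, using \eqref{eq:spectraldeform} with $\gamma^2 = \delta^2$; Frobenius integrability then follows, and the computation of $\delx u = \delxb u = 0$ shows $\ed u$ is a section of $\mcj^{\tn{FI}}$.

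The hard part will be the bookkeeping in the $\epsilon=0$ case: I must check that the specific linear combinations defining $\mcj^{\tn{FI}}$ close up under $\ed$, which requires careful use of the inhomogeneous terms and the fact that $\gamma^2 = \delta^2$ makes certain coefficients in \eqref{eq:spectraldeform} conspire. The conceptual core, however, is the rigidity argument: showing that vanishing horizontal derivatives plus the curvature commutator kills everything when $\epsilon\ne0$ but admits a rank-three Frobenius residue when $\epsilon=0$. I expect the cleanest route is to first reduce $u$ to a function of $v_0$ and finitely many $\xinf$-variables using Lemma~\ref{lem:lemma5.4} (the rigidity lemma $\delxb f = 0 \Rightarrow f$ constant), then handle the remaining affine-fiber dependence by the explicit structure equations.
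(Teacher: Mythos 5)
Your setup coincides with the paper's: the hypothesis says $\ed u$ lies in the degree-one part of $\mcj$, the proof must constrain its coefficients by exploiting $\ed(\ed u)=0$, and your plan for the case $\epsilon=0$ (verify by direct computation that the three stated generators close up under $\ed$ when $\gamma^2=\delta^2$) is also how the paper finishes that case. The genuine gap is in the case $\epsilon\ne0$: you never supply the argument that kills the dependence of $\ed u$ on the high-order forms $\theta_j,\thetab_j$ and on the fiber forms $\pi_*$. The paper's proof writes $\ed u=\pi^u+\theta^u$ with $\theta^u=u^0\theta_0+\sum_{j\leq k}(u^j\theta_j+u^{\bar{j}}\thetab_j)$, observes the structural fact that $\ed\pi^u\equiv 0\mod\mcj^{\pi},\theta_0,\theta_1,\thetab_1,\theta_2,\thetab_2$ (the structure equations \eqref{eq:spectraldeform} involve only low-order forms), and then runs a descending induction using $\ed(\ed u)\equiv 0$ modulo shrinking collections of forms to conclude $u^j=u^{\bar{j}}=0$ for $j=k,k-1,\dots,2$; only after this reduction to $k=1$ does a direct computation involving the $\pi$-coefficients produce the dichotomy. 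Nothing in your proposal plays the role of this induction.

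Moreover, the two mechanisms you propose in its place do not work. First, the commutator $[\delx,\delxb]=-R\frac{\im}{2}E_{\rho}$ is vacuous here: a function $u$ on $Z$ is by construction $\SO(2)$-invariant, so $E_{\rho}u=0$ identically, and the commutation relation therefore yields no constraint at all — it cannot "force all $v$-derivatives of $u$ to vanish". Second, Lemma~\ref{lem:lemma5.4} is a statement about functions on $\xinf$; it cannot be applied \emph{first} to strip off the fiber dependence of a function on $Z$, because the lifted total derivatives mix base and fiber variables (e.g.\ $\delx v_{1,0}$ contains the inhomogeneous term $\im h_2$), and the fiber dependence is precisely what is at stake. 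Indeed, if such a reduction were available uniformly in $\epsilon$ it would prove $u$ constant also when $\epsilon=0$, contradicting the lemma's own conclusion: the local first integrals of $\mcj^{\tn{FI}}$ are nonconstant functions on $Z$ with $\ed u\in\mcj$. Lemma~\ref{lem:lemma5.4} can legitimately enter only at the very end of the $\epsilon\ne 0$ case, after the coefficients of $\ed u$ along all six $\pi$-forms have been shown to vanish, so that $u$ descends to a function on $\xinf$ annihilated by $\delx,\delxb$. Finally, a smaller point: the dichotomy is not governed by "whether $\gamma^2$ forces a nontrivial relation" — the paper assumes $\gamma^2\ne 0$ throughout — but by whether $\gamma^2=\delta^2$, i.e.\ $\epsilon=0$, which you do identify correctly later in the proposal.
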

\begin{proof}
Let
$$\ed u = \pi^u + \theta^u,$$
where
\begin{align}
\pi^u &=
 b^0\pi_0+b^{1,0}\pi_{1,0}+b^{\bar{1},0}\pi_{\bar{1},0}
+b^{\rho}\pi_{\rho}+b^{0,1}\pi_{0,1}+b^{0,\bar{1}}\pi_{0,\bar{1}}, \n\\
\theta^u&=  u^0\theta_0+\sum_{j=1}^k (u^j\theta_j+u^{\bar{j}}\thetab_j).\n
\end{align}
We check the consequences of the identity $\ed(\ed u)=0$.

From the structure equation we have
$$\ed \pi^u\equiv 0 \mod \mcj^{\pi}, \theta_0,\theta_1,\thetab_1,\theta_2,\thetab_2.$$
Considering the equation
$$\ed(\ed u)\equiv 0\mod \mcj^{\pi}, \theta_0,\theta_1,\thetab_1,\theta_2,\thetab_2, \, ... \, \theta_j,\thetab_j,$$
an inductive argument shows that $u^j=u^{\bar{j}}=0$ for $j=k, k-1, \, ... \, 2$. We may thus assume $k=1$. The remaining analysis follows by direct computation.
\end{proof}

An integrable extension is called an \emph{Abelian extension} for a conservation law $\varphi$ when the extension is given by introducing the anti-derivative for $\varphi$,
$$\pi=\ed v -\varphi,$$
and hence trivializes the conservation law. 
The following lemma shows that $(\Zh,\mcjh)$ does not include the Abelian extension for $\omega=\sqrt{\ff}$.
\begin{lem}\label{lem:phi0onZ}
Let $\omega=h_2^{\frac{1}{2}}\xi=\sqrt{\ff}$, which represents a nontrivial higher-order conservation law on $\Xh{1}$.
Then for any nonzero complex number $\lambda\in\C^*$ the  characteristic cohomology class 
$$[\tn{Re}(\lambda \omega)]\in H^1(\Omega^*(\Zh)/\mcjh, \underline{\ed})$$ 
is  nontrivial.
\end{lem}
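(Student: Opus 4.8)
The plan is to show that $[\tn{Re}(\lambda\omega)]$ cannot be written as $\underline{\ed}$ of a function on $\Zh$, i.e.\ that there is no scalar $u:\Zh\to\C$ with $\ed u \equiv \tn{Re}(\lambda\omega)\mod\mcjh$. First I would reduce to a direct-sum statement: since $\omega=h_2^{\frac12}\xi$ is $\Pi$-horizontal (pulled back from $\xinfh$) and already represents a nontrivial class on $\Xh{1}$ by Example~\ref{ex:HopfJacobi}, the only way it could become trivial after passing to the integrable extension $\Zh$ is if the extra fiber variables $\{v_0,v_{1,0},v_{\bar 1,0},v_\rho,v_{0,1},v_{0,\bar 1}\}$ supply a new antiderivative. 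So the content is genuinely about whether the extension $(\Zh,\mcjh)$ is an Abelian extension for $\tn{Re}(\lambda\omega)$, which by Lem.~\ref{lem:phi0onZ}'s own framing it is designed not to be.

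The key step is to invoke Lem.~\ref{lem:noZintegral}, which classifies all scalar functions $u$ on $Z$ (hence on $\Zh$) with $\ed u\equiv 0\mod\mcj$. Suppose for contradiction that $\tn{Re}(\lambda\omega)=\underline{\ed}\,u$ for some $u$. Then for a second complex number $\mu$ I would compare: the two-parameter family $\tn{Re}(\lambda\omega)$ and the fact that $\omega$ and $\omb$ are independent $(1,0)$- and $(0,1)$-pieces forces, upon writing $\ed u = \pi^u+\theta^u$ as in the proof of Lem.~\ref{lem:noZintegral}, that the $\omega$- and $\omb$-components of $\ed u$ equal $\frac{\lambda}{2}h_2^{1/2}$ and $\frac{\bar\lambda}{2}\hb_2^{1/2}$ respectively modulo the ideal. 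I would then run the same $\ed(\ed u)=0$ inductive argument used in Lem.~\ref{lem:noZintegral} (collecting coefficients of $\theta_j\w\xib$ and $\thetab_j\w\xi$ in decreasing order of $j$) to strip away all the $\theta$-components, arriving at a structure equation of the restricted form treated in Lem.~\ref{lem:lemma5.4'}. The inhomogeneous right-hand side $\tfrac{\lambda}{2}h_2^{1/2}$ — equivalently a term proportional to $h_2^{-1/2}$ after the normalization in Lem.~\ref{lem:lemma5.4'} — is precisely the obstruction that Lem.~\ref{lem:lemma5.4'} proves cannot arise as $\delxb f$ of a genuine function, forcing $\lambda=0$, contrary to $\lambda\in\C^*$.

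Concretely I would argue as follows: in the case $\epsilon\ne 0$, Lem.~\ref{lem:noZintegral} says any closed-mod-$\mcj$ function is constant, so the projection of $u$ to the base and the extension coordinates is rigid, and the only remaining freedom is the constant; thus $\ed u\equiv 0$ and $\tn{Re}(\lambda\omega)$ would itself be $\underline{\ed}$ of a constant, i.e.\ zero in cohomology on $\Xh{1}$ already, contradicting Example~\ref{ex:HopfJacobi}. In the case $\epsilon=0$ one has the extra Frobenius sub-ideal $\mcj^{\tn{FI}}$, and I would check directly that $\tn{Re}(\lambda\omega)$ does not lie in $\mcj^{\tn{FI}}$ by examining the $\omega,\omb$ leading terms: $\mcj^{\tn{FI}}$ is spanned by combinations of $\theta_0,\theta_1,\thetab_1$ with the $\pi$-generators and contains no $h_2^{1/2}\xi$ term, so a nonzero multiple of $\tn{Re}(\lambda\omega)$ cannot be an exact differential $\ed u$ for $u$ satisfying $\ed u\in\mcj^{\tn{FI}}$ unless $\lambda=0$.

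The main obstacle I anticipate is the bookkeeping in reducing the hypothesized equation $\ed u\equiv\tn{Re}(\lambda\omega)\mod\mcjh$ to the exact normal form \eqref{eq:Lemma5.4'} required to apply Lem.~\ref{lem:lemma5.4'}: the presence of the six extra $\pi$-generators (and their inhomogeneous $\im h_2,-\im\hb_2$ terms in \eqref{eq:spectraldeform}) means I must carefully verify that differentiating $u$ and imposing $\ed^2u=0$ eliminates all $v$-dependence in the $(1,0)/(0,1)$ symbol before the inductive $\theta_j$-stripping can proceed. I expect this to mirror the proof of Lem.~\ref{lem:noZintegral} almost verbatim, with the single modification that the right-hand side carries the nonzero source $\tfrac{\lambda}{2}h_2^{1/2}$; confirming that this source propagates intact through the induction — rather than being absorbable by a redefinition of $u$ — is the crux, and it is exactly what Lem.~\ref{lem:lemma5.4'} was generalized to handle.
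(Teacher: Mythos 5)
Your global framing is right---triviality on $\Zh$ would mean a scalar $u$ on $\Zh$ with $\ed u\equiv\tn{Re}(\lambda\omega)\mod\mcjh$, and the substance is whether the fiber variables of the extension can supply such an antiderivative---but both lemmas you lean on are invoked outside their hypotheses, and the gap sits exactly where the proof has to do work. Lemma~\ref{lem:noZintegral} classifies functions that are \emph{closed} modulo $\mcj$; your $u$ satisfies $\ed u\equiv\tn{Re}(\lambda\omega)\not\equiv 0\mod\mcjh$, so that lemma says nothing about it, and your deduction in the $\epsilon\neq 0$ case (``the projection of $u$ \dots is rigid \dots thus $\ed u\equiv 0$'') asserts the lemma's conclusion for a function that fails its hypothesis; the same objection applies to your $\epsilon=0$ discussion of $\mcj^{\tn{FI}}$. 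The way the paper legitimately converts such a problem into one where Lemma~\ref{lem:noZintegral} applies---see the proof of Prop.~\ref{prop:nontrivialvarphin}---is to form $\mcs(f)-(\tn{weight})f$, which is closed mod the ideal because $\varphi_n$ is weighted homogeneous under the spectral symmetry. That device is unavailable here: $\tn{Re}(\lambda\omega)=\tfrac{\lambda}{2}\omega+\tfrac{\ol{\lambda}}{2}\omb$ mixes the weights $-1$ and $+1$, and since $u$ may depend on the fiber coordinates $v_{1,0},v_{\rho},\dots$, one would also need the (nowhere computed) action of the shadow $\mcs$ on those coordinates.

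Second, Lemma~\ref{lem:lemma5.4'} is a statement about functions on open subsets of $\xinf$, so it cannot be quoted until all $v$-dependence of $u$ has been eliminated; that elimination---which must contend with the inhomogeneous terms $\im h_2\,\xi$ and $-\im\hb_2\,\xib$ in $\pi_{1,0},\pi_{\bar 1,0}$ of Eq.~\eqref{eq:spectraldeform}---is precisely the new content of the lemma (nontriviality on $\Xh{1}$ being already known), not a formality one can ``expect'' to go through. Worse, even for $v$-independent $u$ the reduced equation is $\delxb u=\tfrac{\ol{\lambda}}{2}\hb_2^{1/2}$, and $\hb_2^{1/2}$ is \emph{not} a constant multiple of the $h_2^{-1/2}$ treated by Lemma~\ref{lem:lemma5.4'}: their ratio is the nonconstant function $(h_2\hb_2)^{1/2}$, so the claimed ``normalization'' does not exist. (Indeed the paper never proves nontriviality of $[\omega]$ on $\Xh{1}$ via Lemma~\ref{lem:lemma5.4'}; it follows from Example~\ref{ex:HopfJacobi} through the nonvanishing generating Jacobi field $z_3$.) The paper's actual proof is more prosaic: if the class were trivial there would exist $\Theta\in\Omega^1(\mcjh)$ with $\ed(\tn{Re}(\lambda\omega)+\Theta)=0$; the structure equation for $\ed\omega$ lets one normalize $\Theta\equiv 0\mod\theta_0,\theta_1,\thetab_1,\mcjh^{\pi}$, and then a finite direct computation against Eq.~\eqref{eq:spectraldeform}---whose inhomogeneities are linear in $h_2$, never $h_2^{1/2}$---rules this out for $\lambda\neq 0$. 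That finite computation is the step your proposal replaces with inapplicable citations.
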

\begin{proof}
It suffices to show that there does not exist a 1-form $\Theta\in\Omega^1(\mcjh)$ such that
$$\ed(\tn{Re}(\lambda \omega)+\Theta)=0.$$
From the structure equation for $\omega=h_2^{\frac{1}{2}}\xi$ one may assume that
$$\Theta\equiv 0\mod \theta_0,\theta_1,\thetab_1,\mcjh^{\pi}.$$
The claim is verified by a direct computation.
\end{proof}
 
\subsection{Spectral symmetry}\label{sec:spectralsym}
Assign the weights
\be 
\tn{weight}(z_j)=j-2,\qquad \tn{weight}(\zb_j)=-(j-2), \qquad j\geq 3.\n
\ee
We wish to find a symmetry vector field (on $\Zh$) of the form
\be\label{spectralsymmetry}
\mcs  = V_{\xi}\delx+V_{\xib}\delxb+V_0 E_0+\sum_{j=1}^{\infty} \left(  V_j E_j + V_{\bar{j}}  E_{\bar{j}} \right) 
\ee
with the following properties:

\begin{enumerate}[\qquad a) ]
\item  it is purely imaginary,  $\overline{\mcs}=-\mcs$,  and hence
$$ V_{\xib}=-\ol{V_{\xi}},\qquad V_{\bar{j}}=-\overline{V_j }\;\;\tn{for}\;j\geq 0,$$

\item  when restricted to the variables $z_j, \zb_j$'s, it is the weighted Euler operator, 
$$ \mcs(z_j)=(j-2)z_j, \qquad \mcs(\zb_j)=-(j-2)\zb_j,$$

\item   it commutes with the Jacobi operator $\mce$ acting on the scalar functions on $\xinfh,$ 
\be\label{eq:commuterelation}
\mce\circ\mcs=\mcs \circ\mce.
\ee
\end{enumerate}
The condition c) means that $\mcs$ maps Jacobi fields to possibly non-local Jacobi fields.

\begin{rem}
For a classical symmetry $V$ (with horizontal part) which is generated by the isometries of $M$, we have
$$ \mcl_V \omega=0,\quad V(h_2\hb_2)=0,\quad V(z_j)=V(\zb_j)=0, \;\;\tn{for}\; j\geq 3.$$
It is easily checked that this property characterizes the subspace of classical symmetries among the symmetries. 

It is also clear that a classical symmetry is a symmetry of the Jacobi operator and satisfies  c). This shows that the space of spectral symmetries, if exists, would be a six dimensional affine space over the space of classical symmetries.
\end{rem}
 
The two conditions a), b) above,  and the structure equation for symmetry vector fields in Sec.~\ref{sec:JacobiSymmetry} show  that $\mcs$ is determined by the set of low order coefficients $\{ V_{\xi}, V_{\xib}, V_0, V_1, V_{\bar{1}}, V_2, V_{\bar{2}}\}$ as follows. 

Set  
\begin{align}\label{eq:spectralsymmetry}
p&=V_2 h_2^{-1}+ V_{\xi} h_2^{-1}h_3=\overline{V_2} \hb_2^{-1}+\overline{V_{\xi}} \hb_2^{-1}\hb_3=\bar{p}, \n \\
V_j &=- V_{\xi} h_{j+1}+\left( (j-2)+\frac{j}{2}p \right)h_j - V_{\xib} T_j, \quad \tn{for}\;\;  j\geq3,  \\
\ed  V_{\xi}-\im  V_{\xi}\rho&\equiv \left(\delta V_0-1-\frac{p}{2}\right)\xi+\hb_2 V_0\xib,\mod \iinf.\n
\end{align}
One may check that these equations are compatible mod $\iinfh$ and formally defines a symmetry vector field satisfying a), b). 
Note that the reality of $p$ implies 
\be\label{eq:Sr^2}
\mcs(h_2\hb_2)=0.
\ee

\two
We claim that the equation \eqref{eq:spectralsymmetry} cannot be solved in terms of local variables on $\xinfh$. Let us content ourselves with a heuristic argument for this. From the last equation for $\ed V_{\xi}$ an inspection shows that $V_{\xi}$ may depend at most on $h_2$, and it cannot involve $h_j$ for $j\geq 3$ (otherwise $V_0$ or/and essentially $V_2$ depends on higher $h_j$'s for $j\geq 3$ and this contradicts the middle equation for $V_3$). Hence $V_3$ depends at most on $h_4$. From the structure equation for symmetry it follows that roughly
$$ V_2 \sim h_3,\quad V_1 \sim h_2.$$
Since $V_1=2\delx V_0$, this implies that the generating Jacobi field $V_0$ should involve the non-local variable\footnotemark\footnotetext{This is only a heuristic explanation.  Lem.~\ref{lem:phi0onZ} shows that this argument is not exactly true.} 
$$V_0\sim \int h_2^{\frac{1}{2}}\xi.$$

\two
The spectral symmetry is instead defined on the integrable extension $Z$ (and hence on the double cover $\Zh$ by taking a lift) as a generalized intermediate symmetry in the following way.  
\begin{defn}\label{defn:shadow}
Let $\Pi:Z \to\xinf$ be the integrable extension in Definition \ref{defn:integrableextension}. Let $\mce$ continue to denote the horizontal lift of the Jacobi operator to $Z$. 

\begin{enumerate}[\qquad a) ]
\item
a scalar function $u:Z\to\C$ is a \tb{non-local Jacobi field} when
$$\mce(u)=0,$$
and $u$ is not a pull-back of a Jacobi field on $\xinf$.

\item
a $\Pi$-horizontal vector field $V$ on $Z$ is a \tb{shadow} when
\be\label{eq:shadow}
\mcl_V \iinf \subset \mcj,
\ee
and $V$ is not a lift of a symmetry on $\xinf$.

\item
a symmetry of $(Z,\mcj)$ is a \tb{non-local symmetry} of $(\xinf,\iinf)$ if it is not a lift of a symmetry on $\xinf$.

\item
a conservation law of $(Z,\mcj)$ is a \tb{non-local conservation law} of $(\xinf,\iinf)$ if it is not a lift of a conservation law on $\xinf$.
\end{enumerate}
\end{defn}
Note that a shadow is an intermediate object and is not necessarily a symmetry of $(Z,\mcj)$. Note also that a non-local symmetry is not necessarily $\Pi$-horizontal.
 
\one
By definition and from Prop.~\ref{prop:symmetry}, a non-local Jacobi field uniquely generates a shadow without $\delx, \delxb$ components.
 
We are now ready to define a spectral symmetry.
\begin{lem}\label{lem:spectralsymmetry}
On the integrable extension $Z\to\xinf$, the real valued function $v_0$ is a non-local Jacobi field. 
Set 
\be\begin{array}{rll}
V_0&=\im v_0,&\n\\
V_1&=2\im  (v_{1,0}+ h_2 v_{0,1}), &V_{\bar{1}}=-\ol{V_1}, \n\\
V_2&= 2\im(  h_2 v_{\rho} +\im h_2+h_3 v_{0,1}), &V_{\bar{2}}=-\ol{V_2},  \n\\
V_{\xi}&= -2\im v_{0,1}, &V_{\xib}=-\ol{V_{\xi}}. \n
\end{array}\ee
Then the \tb{\emph{spectral symmetry}}  $\mcs$ defined by Eqs.~\eqref{spectralsymmetry}, \eqref{eq:spectralsymmetry} with these formulae is a shadow.
\end{lem}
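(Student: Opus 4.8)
The plan is to establish the statement of Lem.~\ref{lem:spectralsymmetry} in three parts, directly tracking the three conditions a), b), c) that were imposed on $\mcs$ together with the shadow condition \eqref{eq:shadow}. First I would verify that $v_0$ is a non-local Jacobi field. This is already recorded in Eq.~\eqref{eq:spectraldeform} and the remark immediately following it, where it is noted that $v_0$ is a Jacobi field while $v_{\rho}$ satisfies the inhomogeneous equation \eqref{eq:vrho}. Concretely, from the first equation $\pi_0$ and the equations $\pi_{0,1},\pi_{0,\bar 1}$ in \eqref{eq:spectraldeform}, one computes $(v_0)_{\xi,\xib}$ modulo $\iinf$ (equivalently on $Z$, modulo $\mcj$) and checks $\mce(v_0)=(v_0)_{\xi,\xib}+\tfrac12(\gamma^2+h_2\hb_2)v_0=0$; that $v_0$ is genuinely non-local (not a pullback of a Jacobi field on $\xinf$) follows from the heuristic that the associated symmetry involves $\int h_2^{1/2}\xi$, made rigorous by Lem.~\ref{lem:noZintegral} and Lem.~\ref{lem:phi0onZ}, which show the required antiderivative does not descend to $\xinf$.

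Next I would confirm that the explicit low-order coefficients $\{V_0,V_1,V_{\bar 1},V_2,V_{\bar 2},V_\xi,V_{\xib}\}$ given in the lemma are consistent with the defining recursion \eqref{eq:spectralsymmetry} for a weighted-Euler spectral symmetry satisfying a) and b). The strategy is to show these seven functions, read off from the fiber coordinates $v_0,v_{1,0},v_{\bar1,0},v_\rho,v_{0,1},v_{0,\bar1}$ of the extension, reproduce the prescribed structure: reality $\overline{\mcs}=-\mcs$ is immediate since each $V_{\bar j}=-\overline{V_j}$ and $V_0=\im v_0$ with $v_0$ real and $v_\rho$ imaginary; the last equation of \eqref{eq:spectralsymmetry} for $\ed V_\xi$ should match $\pi_{0,1}$ from \eqref{eq:spectraldeform} after substituting $V_\xi=-2\im v_{0,1}$, and here the key is that the inhomogeneous term $\im h_2$ appearing in $\pi_{1,0}$ (and its conjugate in $\pi_{\bar1,0}$) is precisely what produces the ``$-1$'' inhomogeneity in the weighted-Euler condition b), i.e. $\mcs(z_j)=(j-2)z_j$ rather than a homogeneous scaling. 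I would also verify $p=\bar p$ using the definition $p=V_2h_2^{-1}+V_\xi h_2^{-1}h_3$ together with the structure equations for $\ed h_2,\ed h_3$, which gives \eqref{eq:Sr^2}.

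The main step, and the expected obstacle, is the shadow condition $\mcl_{\mcs}\iinf\subset\mcj$ of \eqref{eq:shadow}. The plan is to use Cartan's formula $\mcl_{\mcs}=\ed\circ(\mcs\lrcorner)+(\mcs\lrcorner)\circ\ed$ applied to each generator $\theta_0,\theta_j,\thetab_j$ of $\iinf$, and to show that the result lies in $\mcj=\mcj^\pi\oplus\Pi^*\iinf$ (not merely in $\iinf$, which is exactly why $\mcs$ is only a shadow and not an honest symmetry of $(\xinf,\iinf)$). Since the coefficients $V_j$ for $j\ge 3$ are forced by \eqref{eq:spectralsymmetry}, the verification reduces to the low-order generators $\theta_0,\theta_1,\theta_2$ and their conjugates; the higher generators are handled by the compatibility of \eqref{eq:spectralsymmetry} mod $\iinfh$, which was asserted when the recursion was set up, combined with the commutation relations of Cor.~\ref{lem:commutation}. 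The delicate point is that the terms which fail to lie in $\iinf$ must be shown to be accounted for exactly by the extension forms $\pi_0,\pi_{1,0},\ldots,\pi_{0,\bar1}$ spanning $\mcj^\pi$; this is where the precise numerical coefficients in the formulae for $V_1,V_2,V_\xi$ (the factors $2\im$, the $\im h_2$ correction inside $V_2$) are indispensable, and I expect this matching to be the bulk of the computation.

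Finally, the commutation condition c), $\mce\circ\mcs=\mcs\circ\mce$, can be deduced as a formal consequence once a) and the shadow property are in place: since $\mcs$ preserves $\iinf$ modulo $\mcj$ and preserves $h_2\hb_2$ by \eqref{eq:Sr^2}, the operator $\mcs$ commutes with the total derivatives $\delx,\delxb$ up to lower-order corrections that cancel against the potential $\tfrac12(\gamma^2+h_2\hb_2)$, so that $\mcs$ sends Jacobi fields to (possibly non-local) Jacobi fields. I would present c) as a corollary of the structure equations rather than an independent calculation, noting that it also serves as a consistency check on the coefficient formulae. Thus the proof assembles from: (i) $v_0$ a non-local Jacobi field via \eqref{eq:spectraldeform}; (ii) the explicit coefficients realizing a) and b); and (iii) the Cartan-formula verification of \eqref{eq:shadow}, which is the crux.
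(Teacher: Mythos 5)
Your proposal is correct and is essentially an explicit unpacking of the paper's own (one-line) proof, which simply asserts that the lemma ``follows from the construction of the integrable extension $(Z,\mcj)$'': the deformation equations \eqref{eq:spectraldeform} built into $\mcj^{\pi}$ are exactly what absorbs the failure terms in your Cartan-formula computation (e.g.\ one finds $\mcl_{\mcs}\theta_0=\im\,\pi_0\in\mcj^{\pi}$ with the stated coefficients), so the shadow condition holds by design. Note only that your part (iv) on the commutation relation $[\mcs,\mce]=0$ lies outside the lemma's claim --- the paper establishes it separately in Prop.~\ref{prop:Jacobicommute} via the bracket identity $[\mcs, h_2^{-\frac{1}{2}}\delx]^{\tn{h}}=h_2^{-\frac{1}{2}}\delx$ --- so the looseness of your sketch there is harmless for the statement at hand.
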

\begin{proof}
It follows from the construction of the integrable extension $(Z,\mcj)$.
\end{proof}

From the fact that $Z\to\xinf$ is an affine bundle, a global consideration leads to the following monodromy invariant of a CMC surface, which is somewhat dual to Kusner's momentum class.
\begin{prop}\label{prop:spectralmonodromy}
Let $\Sigma\hook X$ be a connected integral surface of the EDS for CMC surfaces. Let $\g$ be the Lie algebra of Killing vector fields. 
The spectral symmetry $\mcs$ defined in Lemma \ref{lem:spectralsymmetry} induces \tb{\emph{spectral monodromy}} homomorphism
\be\label{eq:spectralmonodromy}
\mu^{\mcs}:\pi_1(\Sigma) \lra \g.
\ee
\end{prop}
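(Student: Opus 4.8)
The plan is to read off the monodromy from the affine-bundle structure of the integrable extension $(Z,\mcj)$ together with the global existence of the classical symmetries. The key observation is that $\Pi:Z\to\xinf$ is an affine bundle modeled on $\g$, whose underlying linear space of fiberwise directions is, by the remark following the structure equations~\eqref{eq:spectraldeform}, canonically identified with the space of classical conservation laws (equivalently, by Noether's theorem Cor.~\ref{cor:Noether}, with the globally defined algebra $\g$ of Killing fields). A spectral symmetry $\mcs$ is a shadow on $\Zh$, i.e. a $\Pi$-horizontal vector field with $\mcl_{\mcs}\iinf\subset\mcj$; it is determined by the non-local Jacobi field $v_0$ and the associated section of $Z$ given by $\{v_0,v_{1,0},v_{\bar 1,0},v_\rho,v_{0,1},v_{0,\bar 1}\}$ solving the deformation system~\eqref{eq:spectraldeform}.

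First I would pull back $(Z,\mcj)$ along the integral surface $\x:\Sigma\hook X$ and pass to the universal cover $\pi:\widetilde\Sigma\to\Sigma$. Because $\mcj$ is Frobenius and $v_0$ is a non-local Jacobi field realizing the spectral deformation~\eqref{eq:associatesymmetry} at the infinitesimal level, the restriction of the extension to $\widetilde\Sigma$ is trivial: the equations~\eqref{eq:spectraldeform} are formally integrable mod $\iinf$, so on the simply connected $\widetilde\Sigma$ there exists a global section, i.e. a global solution $\tilde s=\{v_0,\dots,v_{0,\bar 1}\}$ of the deformation system. This is exactly the content of the moving-frame setup of Sec.~\ref{sec:extension}: solving~\eqref{eq:deformationequation} integrates the infinitesimal associate-family deformation to an honest $\g$-valued function $\tilde s$ on $\widetilde\Sigma$, well-defined once one fixes a base point and an initial value in the fiber $\g$.

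Next I would compute the defect of single-valuedness on $\Sigma$ itself. For $\gamma\in\pi_1(\Sigma)$ acting by deck transformation on $\widetilde\Sigma$, the two sections $\tilde s$ and $\gamma^*\tilde s$ both solve the same deformation system~\eqref{eq:spectraldeform}, whose solution space is an affine space over $\g$ (the inhomogeneous terms $\im h_2,-\im\hb_2$ are single-valued on $\Sigma$, so the difference of two solutions solves the associated homogeneous system, which by the remark before~\eqref{eq:spectraldeform} is precisely the classical system~\eqref{eq:classicalJacobi} with solution space $\g$). Hence there is a well-defined element
\be
\mu^{\mcs}(\gamma):=\gamma^*\tilde s-\tilde s\in\g.\n
\ee
The cocycle identity $\mu^{\mcs}(\gamma_1\gamma_2)=\mu^{\mcs}(\gamma_1)+\mu^{\mcs}(\gamma_2)$ follows because the $\pi_1(\Sigma)$-action on solutions is by affine translations through $\g$ and these translations are abelian and act trivially on the homogeneous part; concretely $\gamma_1^*(\gamma_2^*\tilde s-\tilde s)=\gamma_2^*\tilde s-\tilde s$ since the difference is the globally defined classical datum, giving additivity. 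This produces the homomorphism~\eqref{eq:spectralmonodromy}.

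The main obstacle, and the step I would spend the most care on, is verifying that the period $\mu^{\mcs}(\gamma)$ genuinely lands in $\g$ and is independent of the auxiliary choices (the primitive of $\omega$ and the base-fiber value), i.e. that the inhomogeneity in~\eqref{eq:spectraldeform} does not spoil additivity. Here I would lean on Lem.~\ref{lem:noZintegral} to control the kernel of $\ed$ on $Z$ in the case $\epsilon\ne 0$ and on the explicit Frobenius sub-ideal $\mcj^{\tn{FI}}$ in the case $\epsilon=0$, and on Lem.~\ref{lem:phi0onZ} to confirm that the square-root-of-Hopf-differential period (the genuinely non-local piece behind $\mcs$) contributes nontrivially, so that $\mu^{\mcs}$ is not merely the trivial homomorphism. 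The remaining verifications — that $\gamma^*\tilde s-\tilde s$ solves the homogeneous system and that the assignment is a homomorphism — are then routine given the affine structure of $\Pi:Z\to\xinf$.
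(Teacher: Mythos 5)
Your proposal is correct and follows essentially the same route as the paper's proof: solve the compatible linear system \eqref{eq:spectraldeform} on the universal cover, observe that its solution space is an affine space over the deck-invariant solutions of the homogeneous (classical) system \eqref{eq:classicalJacobi}, and read off the monodromy from deck-transformation differences, with additivity coming from that deck-invariance (the paper compresses this into ``by the linearity of the equations''). The only cosmetic difference is that your appeals to Lem.~\ref{lem:noZintegral} and Lem.~\ref{lem:phi0onZ} are unnecessary here, since the proposition asserts only the existence of the homomorphism $\mu^{\mcs}$, not its nontriviality.
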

\begin{proof}
When restricted to an integral surface, the system of equations \eqref{eq:spectraldeform} is compatible. Let $\widetilde{\Sigma}\to\Sigma$ be the universal covering. Existence and uniqueness theorem of ODE implies that there exists a lift $\widetilde{\Sigma}\hook Z$ of $\Sigma$ as an integral surface of $\mcj$, which is equivalent to that there exists a six dimensional affine space of spectral symmetries defined globally on $\widetilde{\Sigma}$ modeled on the vector space of classical Killing fields. By the linearity of the equations \eqref{eq:spectraldeform}, the action by deck transformation induces the monodromy homomorphism $\mu^{\mcs}$. Since $\g$ is a (Abelian) vector space, the homomorphism $\mu^{\mcs}$ is independent of choice of a base point for $\pi_1(\Sigma).$
\end{proof}
 
\begin{rem}
Since $\g$ is a vector space, spectral monodromy factors through\one \\  
\centerline{\xymatrix{\pi_1(\Sigma) \ar[d]\ar[dr]^{\mu^{\mcs}} &  \\ 
H_1(\Sigma,\R)\ar[r]^{\;\;\;\mu^{\mcs}} & \g  }}\\ 
and we have
$$\mu^{\mcs}:H_1(\Sigma,\R)\to\g.$$

\noi
Recall Kusner's momentum class $\mu_K\in \g^*\otimes H^1(\Sigma,\R),$ or equivalently
$$\mu_K:\g\to  H^1(\Sigma,\R).$$
As a consequence we have a linear map
$$\mu:=\mu_K\circ\mu^{\mcs}:H_1(\Sigma,\R) \to H^1(\Sigma,\R).$$
\end{rem}
\begin{quest}
Is $\mu$ skew-symmetric?
\end{quest}

\one
The following lemma characterizes the local functions on $\xinfh$ that are stable under the spectral symmetry. 
\begin{lem}\label{lem:Sstable}
Let $f\in C^{\infty}(\xinfh_*)$ be a scalar function. Suppose
$$\mcs(f)\in C^{\infty}(\xinfh_*)\subset C^{\infty}(\Zh).$$
Then $f$ is a function in the variables $\{h_2\hb_2, z_j, \zb_j\}$.
\end{lem}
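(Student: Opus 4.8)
The plan is to understand the action of $\mcs$ on the natural generating functions of $C^\infty(\xinfh_*)$ and show that stability of $\mcs(f)$ under the extension $\Zh\to\xinfh$ forces $f$ to be free of the ``spectral'' directions, i.e. free of the primitive $z$ of $\omega$ that is implicitly introduced by the affine fibre of $Z$. First I would recall that on $\xinfh_*$ the functions $\{h_2,\hb_2, z_3,z_4,\ldots,\zb_3,\zb_4,\ldots\}$ (equivalently $\{r^2=h_2\hb_2,\ \omega\text{-primitive},\ z_j,\zb_j\}$ after separating the conformal factor) furnish a coordinate system, so any $f$ may be written as a function of these. The key computation is the action of $\mcs$ on each generator. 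By property (b) of the spectral symmetry, $\mcs(z_j)=(j-2)z_j$ and $\mcs(\zb_j)=-(j-2)\zb_j$, and by Eq.~\eqref{eq:Sr^2} we have $\mcs(h_2\hb_2)=0$. Thus on the subring $\C[h_2\hb_2, z_j,\zb_j]$ the operator $\mcs$ acts as a weighted Euler derivation with values again in that same subring, hence certainly in $C^\infty(\xinfh_*)$.

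The content of the lemma is the converse: if $f$ depends on anything \emph{beyond} these combinations, then $\mcs(f)$ must genuinely involve the new fibre coordinates of $\Zh$ and cannot lie in $C^\infty(\xinfh_*)$. The decisive point is the behaviour of $\mcs$ on $h_2$ alone (as opposed to on the invariant combination $h_2\hb_2$). From Lemma~\ref{lem:spectralsymmetry} the low-order coefficients of $\mcs$, in particular $V_\xi=-2\im v_{0,1}$ and the generating function $V_0=\im v_0$, are built from the extension variables $v_0,v_{0,1},v_{1,0},v_\rho,\ldots$, which by Lemma~\ref{lem:noZintegral} and Lemma~\ref{lem:phi0onZ} are genuinely non-local: no nonzero $\C$-linear combination of them descends to a function on $\xinfh$. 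Concretely I would compute $\mcs(h_2)$ using \eqref{spectralsymmetry}--\eqref{eq:spectralsymmetry}: since $\ed h_2$ pairs with $E_2$ and $E_\xi$ and the relevant $V_2,V_\xi$ carry the non-local coefficients $v_{\rho}, v_{0,1}$, one finds $\mcs(h_2)$ is a nonzero multiple of $h_2$ times a non-local factor (morally $\mcs(h_2)\sim 2\im\, v_\rho\, h_2 + (\text{non-local})$), which is \emph{not} in $C^\infty(\xinfh_*)$. The same holds for $\mcs(\hb_2)$, while the combination $\mcs(h_2\hb_2)=0$ precisely because the non-local contributions cancel by reality of $p$ in \eqref{eq:Sr^2}.

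Granting this, the argument is then a separation/grading computation. Write $f=f(h_2,\hb_2,z_3,\zb_3,\ldots)$ and expand $\mcs(f)$ by the chain rule:
\begin{equation}
\mcs(f)=\frac{\partial f}{\partial h_2}\mcs(h_2)+\frac{\partial f}{\partial \hb_2}\mcs(\hb_2)+\sum_{j\ge 3}\Big(\frac{\partial f}{\partial z_j}(j-2)z_j-\frac{\partial f}{\partial \zb_j}(j-2)\zb_j\Big).\n
\end{equation}
The sum over $j$ lies in $C^\infty(\xinfh_*)$, so the hypothesis $\mcs(f)\in C^\infty(\xinfh_*)$ is equivalent to demanding that the first two terms together lie in $C^\infty(\xinfh_*)$. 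Substituting the non-local expressions for $\mcs(h_2),\mcs(\hb_2)$ and using that the non-local part of $\mcs(h_2)$ is an $\xinfh$-multiple of a fixed non-local function (and similarly for $\hb_2$), the requirement collapses to a relation forcing $\partial f/\partial h_2$ and $\partial f/\partial \hb_2$ to enter only through the invariant combination, i.e. $h_2\,\partial_{h_2}f=\hb_2\,\partial_{\hb_2}f$; equivalently $f$ depends on $h_2,\hb_2$ only via $h_2\hb_2$. Concluding, $f$ is a function of $\{h_2\hb_2, z_j,\zb_j\}$, as claimed. The main obstacle I anticipate is step two: pinning down exactly which non-local function appears in $\mcs(h_2)$ and verifying that it is $\C$-linearly independent over $C^\infty(\xinfh_*)$ from $1$, so that the cancellation can occur \emph{only} through the stated $h_2\hb_2$-dependence; this is where Lemmas~\ref{lem:noZintegral} and \ref{lem:phi0onZ} must be invoked carefully to rule out accidental descent of the non-local coefficients.
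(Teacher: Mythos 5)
Your final computation (the coefficient of the non-local variable $v_\rho$, equivalently of $p=2\im v_\rho-2$, forcing $h_2\,\partial_{h_2}f=\hb_2\,\partial_{\hb_2}f$) coincides with the paper's key step and is correct. But your argument has a genuine gap at the very first line: the functions $\{h_2,\hb_2,z_3,\zb_3,\dots\}$ do \emph{not} furnish a coordinate system on $\xinfh_*$. They are only fibre coordinates for the projection $\xinfh_*\to X$; the base $X$ is five-dimensional, and a general scalar function $f\in C^\infty(\xinfh_*)$ also depends on the base directions --- in the frame of Sec.~\ref{sec:prolongation1} these are the directions $E_0,E_1,E_{\bar 1}$ together with the parts of $\delx,\delxb$ not accounted for by the chain rule through the $h_j$'s. (For instance, any function pulled back from $X$, such as a coordinate function of the ambient space form, lies in $C^\infty(\xinfh_*)$ and is not a function of the $h_j$'s.) By writing $f=f(h_2,\hb_2,z_j,\zb_j)$ you assume away half of the conclusion: the lemma asserts both that $f$ is a fibre function \emph{and} that its $(h_2,\hb_2)$-dependence is only through $h_2\hb_2$, and your chain-rule computation only delivers the second half.

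This is exactly where the paper's proof does more work than yours: one expands $\mcs(f)=V_\xi f_\xi+V_{\xib}f_{\xib}+V_0f^0+V_1f^1+V_{\bar 1}f^{\bar 1}+V_2f^2+V_{\bar 2}f^{\bar 2}+\sum_{j\ge 3}(V_jf^j+V_{\bar j}f^{\bar j})$ and collects coefficients of \emph{all} the non-local variables. The coefficients of $V_0,V_1,V_{\bar 1}$ are $f^0,f^1,f^{\bar 1}$, so these must vanish; the vanishing of the $V_\xi,V_{\xib}$ coefficients then says that the total derivatives $f_\xi,f_{\xib}$ equal precisely the chain-rule expressions $\sum_j\bigl(h_{j+1}f^j+\ol{T}_jf^{\bar j}\bigr)$, etc., i.e.\ $f$ has no residual dependence on the horizontal directions and is genuinely a function of the fibre variables alone. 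Only after these reductions does the coefficient of $V_2$ (your $v_\rho$-coefficient) yield $h_2\partial_{h_2}f-\hb_2\partial_{\hb_2}f=0$; to repair your proof you must add this first stage. A secondary remark: the linear-independence worry in your last sentence needs no appeal to Lemmas \ref{lem:noZintegral} or \ref{lem:phi0onZ}. The quantities $v_0,v_{1,0},v_{0,1},v_\rho$ are fibre coordinates of the affine bundle $Z\to\xinf$ by construction, so a fibrewise-affine expression with coefficients in $C^\infty(\xinfh_*)$ vanishes iff all its coefficients do; those lemmas address a different (differential, not algebraic) descent question and are not the right tool here.
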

\begin{proof}
Collect $\mcs(f)$ in the non-local variables $\{V_{\xi}, V_{\xib}, V_0, V_{1}, V_{\bar{1}},V_{2}, V_{\bar{2}}\}$.
In order for $\mcs(f)\in  C^{\infty}(\xinfh_*)$ we first must have 
$$f^0, f^1, f^{\bar{1}}=0.$$
The remaining terms are
\begin{align}
\mcs(f)&\equiv V_{\xi}\left(f_{\xi}-h_{j+1}f_j+\frac{j}{2}h_2^{-1}h_3 h_j f_j- T_{\bar{j}}f_{\bar{j}}\right)
+V_2\left(f_2+\frac{j}{2}h_2^{-1}h_j f_j\right) \n\\
&\quad+\tn{conjugate terms}\mod  C^{\infty}(\xinfh_*).\n
\end{align}
It is clear from this that $f$ is a function of $\{h_2\hb_2, z_j, \zb_j\}$.
\end{proof}
As a derivation the kernel of $\mcs$ is nontrivial on the integrable extension $Z$.
\begin{lem}
The non-local Jacobi field $v_0$ has weight zero with respect to $\mcs$,
$$\mcs(v_0)=0.$$
\end{lem}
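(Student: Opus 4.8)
The plan is to establish $\mcs(v_0) = 0$ by combining the defining properties of the spectral symmetry with the weight assignments introduced at the start of Section \ref{sec:spectralsym}. The key observation is that $v_0$ is itself the generating non-local Jacobi field for $\mcs$, via the identity $V_0 = \im v_0$ from Lemma \ref{lem:spectralsymmetry}. Since $\mcs$ acts on scalar functions as a derivation, I would first identify the weight of $v_0$ under the grading $\tn{weight}(z_j) = j-2$, and then show that $\mcs$, acting as the weighted Euler operator on the balanced coordinates $z_j, \zb_j$, annihilates $v_0$ because it is a weight-zero quantity.

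First I would pin down the weight of $v_0$. From the recursion \eqref{eq:zurelation} in Corollary \ref{cor:zurelation}, the function $z_3 \equiv -4u_1$ corresponds to $u_1$, and more generally $z_j$ corresponds to $u_{j-2}$ modulo lower order terms with $\tn{weight}(u_{j-2}) = j-2$. Since $v_0$ is a non-local Jacobi field defined by the spectral deformation \eqref{eq:spectraldeform}, and the spectral deformation \eqref{eq:associatesymmetry} scales $\eta_1 \to e^{2\im t}\eta_1$ while fixing $\xi, \rho$ (hence fixing $\omega = h_2^{1/2}\xi$ and the ratios $z_j$), the generating function $v_0$ must be homogeneous of weight zero. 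The cleanest route is to use property b) of $\mcs$: since $\mcs(z_j) = (j-2)z_j$ and $\mcs(h_2\hb_2) = 0$ from Eq.~\eqref{eq:Sr^2}, the operator $\mcs$ acts as the Euler operator counting the net $z$-weight. I would then verify that $v_0$, expressed through the integrable extension, carries net weight zero.

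The main step is to show that $\mcs(v_0) = 0$ is forced by the commutation relation \eqref{eq:commuterelation}, $\mce \circ \mcs = \mcs \circ \mce$, together with the fact that $v_0$ generates $\mcs$ itself. Applying $\mcs$ to the non-local Jacobi equation $\mce(v_0) = 0$ gives $\mcs(\mce(v_0)) = \mce(\mcs(v_0)) = 0$, so $\mcs(v_0)$ is again a (non-local) Jacobi field. To conclude it vanishes, I would track the weight: since $\mcs$ raises weight by the amount dictated by its Euler action and $v_0$ has weight zero, $\mcs(v_0)$ would have to be a Jacobi field of a definite weight matching that of $v_0$ shifted by the intrinsic weight of the symmetry vector field. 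Because the spectral deformation is modeled on the weight-zero generator $\im$ (the inhomogeneous terms $\im h_2, -\im\hb_2$ in \eqref{eq:spectraldeform} preserve the weight grading once $z_j$ coordinates are used), the only consistent value is $\mcs(v_0) = 0$.

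The hard part will be making the weight-zero claim for $v_0$ fully rigorous rather than heuristic, since $v_0$ lives on the integrable extension $\Zh$ and is not a local function on $\xinfh$. The cleanest argument avoids computing $\mcs(v_0)$ coordinate-by-coordinate: I would instead use Lemma \ref{lem:Sstable} together with the structure of the extension. By construction the spectral symmetry is self-reproducing in the sense that its generating function transforms under its own flow by the adjoint weight, and the purely imaginary normalization $V_0 = \im v_0$ together with $\ol{\mcs} = -\mcs$ forces the weight to be the fixed point of the grading, namely zero. I expect the bulk of the verification to be a direct computation using \eqref{eq:spectraldeform} and the formulae of Lemma \ref{lem:spectralsymmetry}, confirming that every term in $\mcs(v_0)$ cancels; this is the routine but essential core, and it should follow from the construction of $(Z, \mcj)$ as asserted in the lemma's one-line proof.
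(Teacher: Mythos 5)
You have identified the right ingredients, but the proposal has a genuine gap: it never proves the statement, and the structural arguments offered in place of the computation are either circular or rest on results the paper does not provide. Concretely: (i) the assertion that "$v_0$ is a weight-zero quantity" is exactly the content of $\mcs(v_0)=0$, so it cannot be used as an input; (ii) your appeal to the commutation relation \eqref{eq:commuterelation} to conclude that $\mcs(v_0)$ is a Jacobi field misapplies Proposition \ref{prop:Jacobicommute}, which is stated and proved only for operators acting on $C^{\infty}(\xinfh)\subset C^{\infty}(\Zh)$ (pull-backs of local functions), whereas $v_0$ is a fiber coordinate of the extension and lies outside that class; (iii) the supporting claim that the spectral deformation fixes $\omega=h_2^{\frac{1}{2}}\xi$ and the $z_j$ is false --- under $\eta_1\to e^{2\im t}\eta_1$ one has $h_2\to e^{2\im t}h_2$, so $\omega$ scales with weight $-1$ and $z_j$ with weight $j-2$, exactly as in the paper's weight table; and (iv) the reality condition $\ol{\mcs}=-\mcs$ applied to the real function $v_0$ shows only that $\mcs(v_0)$ is purely imaginary, not that it vanishes --- in general the generating function of a symmetry is not invariant under its own flow (for the translation symmetry with generating function $u_x$ one has $V_{u_x}(u_x)=u_{xx}\neq 0$), so "self-reproducing" is not an argument.

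What the lemma needs is the direct computation you defer at the very end, and it is two lines. Since $\mcs$ is $\Pi$-horizontal, $\pi_0(\mcs)=0$; evaluating the formula for $\pi_0$ in \eqref{eq:spectraldeform} on $\mcs$ and using $\xi(\mcs)=V_{\xi}$, $\xib(\mcs)=V_{\xib}$, $\theta_1(\mcs)=V_1$, $\thetab_1(\mcs)=V_{\bar{1}}$ gives
\[
\mcs(v_0)=(v_{1,0}+h_2v_{0,1})V_{\xi}+(v_{\bar{1},0}+\hb_2v_{0,\bar{1}})V_{\xib}+v_{0,1}V_1+v_{0,\bar{1}}V_{\bar{1}}.
\]
Substituting $V_{\xi}=-2\im v_{0,1}$ and $V_1=2\im(v_{1,0}+h_2v_{0,1})$ from Lemma \ref{lem:spectralsymmetry}, the first and third terms cancel; their conjugates $V_{\xib}=-2\im v_{0,\bar{1}}$ and $V_{\bar{1}}=2\im(v_{\bar{1},0}+\hb_2v_{0,\bar{1}})$ cancel the second and fourth. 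Hence $\mcs(v_0)=0$. None of the machinery you invoke (commutation with $\mce$, Lemma \ref{lem:Sstable}, the grading) is needed, and the parts of it that would do real work are not available at this point in the paper.
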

\begin{proof}
Direct computation.
\end{proof}

\two
The commutation relation \eqref{eq:commuterelation} will be analyzed in Sec.~\ref{sec:commutation}.



\subsection{Spectral conservation law}\label{sec:spectralcvlaw}
Consider the quotient complex $(\Omega^*(Z)/\mcj, \underline{\ed})$, where $\underline{\ed}=\ed\mod\mcj$. A conservation law  for $(Z,\mcj)$ is by definition an element in the characteristic cohomology $H^1(\Omega^*(Z)/\mcj, \underline{\ed}).$ In the present case when $(Z,\mcj)$ is an integrable extension of $(\xinf,\iinf)$, a natural dual of shadow can be defined as an intermediate differentiated conservation law.
\begin{defn}\label{defn:horizontalcvlaw}
Let $\Pi:Z \to\xinf$ be the integrable extension from Definition \ref{defn:integrableextension}.
Let $\Phi\in\Pi^*\Omega^2(\iinf)$ be a $\Pi$-horizontal 2-form on $Z$. It is a \tb{horizontal conservation law} when
\be
\ed\Phi \equiv 0\mod \mcj^{\pi}. \n
\ee
\end{defn}
Recall that a conservation law of $(Z,\mcj)$ is called a non-local conservation law  of $(\xinf,\iinf)$.
A horizontal conservation law is an intermediate object and it is not necessarily a (differentiated) conservation law of $(Z,\mcj)$, nor $(\xinf,\iinf)$.

\one
By construction of the integrable extension, we have
\begin{lem}
Set
\be
\Phi_{v_0}= v_0\Psi+\im\theta_0\w\big(v_{1,0}\xi-v_{\bar{1},0}\xib+v_{0,1}\eta_1-v_{0,\bar{1}}\etab_1\big).
\ee
Then $\Phi_{v_0}$ is a horizontal conservation law.
\end{lem}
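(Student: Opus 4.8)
The plan is to verify the defining property of a horizontal conservation law from Definition \ref{defn:horizontalcvlaw} directly, namely that $\ed\Phi_{v_0}\equiv 0 \mod \mcj^{\pi}$. First I would note that $\Phi_{v_0}$ is manifestly $\Pi$-horizontal: it is built from $\Psi$ and from $\theta_0$ wedged with a $1$-form, and both $\Psi=\Im(\eta_1\w\xi)$ and $\theta_0$ lie in $\Pi^*\iinf$, so $\Phi_{v_0}\in\Pi^*\Omega^2(\iinf)$ as required. The bulk of the argument is then the computation of $\ed\Phi_{v_0}$ modulo $\mcj^{\pi}=\langle \pi_0, \pi_{1,0}, \pi_{\bar 1,0}, \pi_\rho, \pi_{0,1}, \pi_{0,\bar 1}\rangle$.

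The key structural observation making this routine is that $\Phi_{v_0}$ has exactly the same algebraic shape as the classical differentiated conservation law $\Phi_A$ of Eq.~\eqref{eq:H0}, with the single generating scalar $A$ replaced by the full collection of spectral deformation coefficients $\{v_0, v_{1,0}, v_{\bar 1,0}, v_{0,1}, v_{0,\bar 1}\}$. Concretely, $\Phi_{v_0}=v_0\Psi+\theta_0\w\sigma_{v}$, where $\sigma_v=\im(v_{1,0}\xi-v_{\bar 1,0}\xib+v_{0,1}\eta_1-v_{0,\bar 1}\etab_1)$; compare Eq.~\eqref{1sigma}, where for a genuine Jacobi field one has $\sigma=-\JAI\ed A$. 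The point is that the integrable extension $(Z,\mcj)$ was designed in Sec.~\ref{sec:extension} precisely so that the deformation equations \eqref{eq:spectraldeform} encode, modulo $\mcj^\pi$, the exact differential relations among $v_0$ and its ``derivatives'' $v_{1,0},v_{\bar 1,0},v_{0,1},v_{0,\bar 1},v_\rho$ that the coefficients of a classical conservation law satisfy in Eq.~\eqref{eq:classicalJacobi}. So I would substitute the forms $\pi_0,\pi_{1,0},\pi_{\bar 1,0},\pi_\rho,\pi_{0,1},\pi_{0,\bar 1}$ of \eqref{eq:spectraldeform} for the exterior derivatives $\ed v_0,\ed v_{1,0},\dots$ wherever they appear when expanding $\ed\Phi_{v_0}$ using the structure equations \eqref{eq:strt2} and \eqref{1dPsi}.

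The mechanics of the calculation are then: expand $\ed\Phi_{v_0}=\ed v_0\w\Psi+v_0\,\ed\Psi+\ed\theta_0\w\sigma_v-\theta_0\w\ed\sigma_v$, replace each $\ed v_\bullet$ by the corresponding $\pi_\bullet$ plus its semi-basic part given in \eqref{eq:spectraldeform}, use \eqref{eq:strt2} to expand $\ed\xi,\ed\eta_1,\ed\theta_0$, and \eqref{1dPsi} for $\ed\Psi$. Working modulo $\mcj^\pi$ kills the $\pi_\bullet$ contributions, and the remaining semi-basic terms should cancel identically. This cancellation is exactly the $\theta_0\w\xi\w\xib$, $\theta_0\w\eta_1\w\xi$, etc., bookkeeping that already succeeded for the classical case in Sec.~\ref{sec:classicallaws}; the inhomogeneous terms $\im h_2, -\im\hb_2$ in the $\pi_{1,0},\pi_{\bar 1,0}$ equations contribute a term proportional to $\theta_0\w\Psi$ which is consistent with $v_0$ being a non-local (rather than genuine) Jacobi field, but these extra pieces still lie in $\Pi^*\iinf$ and do not obstruct closure modulo $\mcj^\pi$.

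The main obstacle I anticipate is precisely tracking the inhomogeneous terms: because $v_0$ is only a \emph{non-local} Jacobi field (it satisfies $\mce(v_\rho)=-\im h_2\hb_2$ in \eqref{eq:vrho} rather than the homogeneous Jacobi equation), the naive parallel with the classical computation will leave over terms coming from the $\im h_2$ source in \eqref{eq:spectraldeform}. I would handle this by checking that these leftover terms organize into a multiple of $\Psi\wedge(\text{something in }\iinf)$ or otherwise lie in $\Pi^*\iinf$, so that $\ed\Phi_{v_0}$, while possibly nonzero as an honest $3$-form, is still congruent to $0$ modulo $\mcj^\pi$ — which is all that Definition \ref{defn:horizontalcvlaw} demands. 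Since the compatibility $\ed^2=0\mod\iinf$ of the deformation system \eqref{eq:spectraldeform} was already asserted, this consistency is guaranteed in principle, and the proof reduces to recording the explicit cancellation, which I would state as ``by a direct computation using \eqref{eq:spectraldeform}, \eqref{eq:strt2}, and \eqref{1dPsi}.''
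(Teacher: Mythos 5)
Your overall skeleton --- expand $\ed\Phi_{v_0}$, substitute the deformation equations \eqref{eq:spectraldeform} for the derivatives $\ed v_{\bullet}$, kill the $\pi_{\bullet}$-contributions mod $\mcj^{\pi}$, and observe that what remains is formally the classical computation of Sec.~\ref{sec:classicallaws} --- is exactly the paper's proof. But your resolution of the one genuinely nontrivial point, the inhomogeneous terms $\im h_2$ and $-\im\hb_2$, rests on a false principle that would sink the argument as written. You claim these terms ``contribute a term proportional to $\theta_0\w\Psi$'' and that this is harmless because such leftovers ``lie in $\Pi^*\iinf$ and do not obstruct closure modulo $\mcj^{\pi}$.'' This is wrong: $\mcj^{\pi}$ is generated by the six $1$-forms $\pi_0,\pi_{1,0},\pi_{\bar{1},0},\pi_{\rho},\pi_{0,1},\pi_{0,\bar{1}}$ alone, and $\Pi^*\iinf$ is \emph{not} contained in it --- that distinction is the entire content of Definition \ref{defn:horizontalcvlaw}, and the paper remarks immediately after it that a horizontal conservation law need not be a conservation law of $(Z,\mcj)$. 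A nonzero multiple of $\theta_0\w\Psi$ in $\ed\Phi_{v_0}$ would therefore \emph{not} be congruent to zero mod $\mcj^{\pi}$, and the lemma would fail. Likewise, your appeal to the compatibility $\ed^2=0$ of \eqref{eq:spectraldeform} cannot close this hole: that compatibility holds only mod $\iinf$ and says nothing about $\ed\Phi_{v_0}$ landing inside the smaller ideal $\mcj^{\pi}$.

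The correct resolution --- which is the paper's entire proof, and is purely algebraic --- is that the inhomogeneous term $\im h_2$ occurs only in the $\xi$-coefficient of $\ed v_{1,0}$ (i.e.\ in $\delx v_{1,0}$), and $-\im\hb_2$ only in the $\xib$-coefficient of $\ed v_{\bar{1},0}$ (i.e.\ in $\delxb v_{\bar{1},0}$). In $\ed\Phi_{v_0}$ these derivatives enter only through the terms $-\im\,\theta_0\w\ed v_{1,0}\w\xi$ and $+\im\,\theta_0\w\ed v_{\bar{1},0}\w\xib$, so the offending coefficients are annihilated identically by $\xi\w\xi=0$ and $\xib\w\xib=0$: they contribute nothing at all, not a $\theta_0\w\Psi$ term. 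Once this is noted, the structure equations for $\{v_0,v_{0,1},v_{0,\bar{1}},v_{1,0},v_{\bar{1},0},v_{\rho}\}$ mod $\mcj^{\pi}$ coincide with the classical structure equations \eqref{eq:classicalJacobi}, and the vanishing of $\ed\Phi_{v_0}$ mod $\mcj^{\pi}$ follows verbatim from the classical conservation law computation.
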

\begin{proof}
From Eq.~\eqref{eq:spectraldeform}, the inhomogeneous terms $\im h_2, -\im\hb_2$ occur only at $\delx v_{1,0}, \delxb v_{\bar{1},0}$ respectively, and they do not contribute to $\ed\Phi_{v_0}$ mod $\mcj^{\pi}$. The rest follows by comparing with the structure equation for classical conservation laws Eq.~\eqref{eq:classicalJacobi}. 
\end{proof}

We wish to show that in fact there exists a non-local conservation law $\varphi_{v_0}\in\Omega^1(Z)$ which corresponds to $\Phi_{v_0}$. Let  
\be 
\varphi_{0}=\im   \left( v_{0,\bar{1}} \xi- v_{0,1} \xib \right).\n
\ee
Differentiating this, one gets the identity
\be
\ed\varphi_0=\Phi_{v_0}+\im \left(\pi_{0,\bar{1}}\w\xi-\pi_{0,1}\w\xib\right)
+\im\delta\left(v_0\xi\w\xib-2v_{0,\bar{1}}\theta_0\w\xi+2v_{0,1}\theta_0\w\xib\right).
\ee

On the other hand, let 
$$\chi_0=v_{1,0}\xi-v_{\bar{1},0}\xib.$$
Then
$$\ed\chi_0\equiv \gamma^2 v_0\xi\w\xib\mod\mcj.$$
From this it follows that 
\be\label{eq:spectralcvlaw}
\varphi_{v_0}=\varphi_0-\im\frac{\delta}{\gamma^2}\chi_0
\ee
represents a non-local conservation law.
\begin{defn}
Let $Z\to X$ be the integrable extension for spectral symmetry. The \tb{spectral conservation law}  is the characteristic cohomology class 
$$[\varphi_{v_0}]   \in    H^1(\Omega^*(Z)/\mcj, \underline{\ed})$$
defined by Eq.~\eqref{eq:spectralcvlaw}.
\end{defn}
\begin{quest} 
Is the cohomology class $[\varphi_{v_0}]\in H^1(\Omega^*(Z)/\mcj, \underline{\ed})$   nontrivial? 
\end{quest}
 
For a CMC surface $\Sigma$, let $\langle \mcc^0 \rangle \subset H^1(\Sigma,\R)$ be the subspace spanned by the restriction of classical conservation laws. The non-local conservation law $[\varphi_{v_0}]$ has the following geometric interpretation.
\begin{thm}[Secondary conservation law] \label{thm:spectralcvlaw}
Let $\Sigma\hook M$ be a CMC surface. The spectral conservation law
$[\varphi_{v_0}] \in    H^1(\Omega^*(Z)/\mcj, \underline{\ed})$,   Eq.~\eqref{eq:spectralcvlaw},
restricts to represent an element in the quotient space
$$ [ \varphi_{v_0} ]\in  H^1(\Sigma,\R)/\langle \mcc^0 \rangle .$$
\end{thm}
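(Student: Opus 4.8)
The plan is to trace what happens to the non-local conservation law $\varphi_{v_0}$ when we pull it back to a CMC surface $\Sigma$, and to show that modulo the classical conservation laws it descends to a genuine de Rham class. The starting point is the integrable extension $\Pi:Z\to\xinf$, which is an affine bundle modeled on $\g$. By Proposition \ref{prop:spectralmonodromy}, when we restrict to an integral surface $\Sigma$ and pass to the universal cover $\widetilde{\Sigma}$, the spectral deformation equations \eqref{eq:spectraldeform} are compatible and admit a six-dimensional affine space of solutions $\{v_0,v_{1,0},v_{\bar 1,0},v_\rho,v_{0,1},v_{0,\bar 1}\}$, modeled on the classical Killing fields $\g$. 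Thus a choice of lift $\widetilde{\Sigma}\hook Z$ exists but is only defined up to the ambiguity of adding a classical solution; this is precisely the source of the $\langle\mcc^0\rangle$ indeterminacy appearing in the statement.

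First I would fix a lift and pull back the 1-form $\varphi_{v_0}=\varphi_0-\im\frac{\delta}{\gamma^2}\chi_0$ from Eq.~\eqref{eq:spectralcvlaw} to $\widetilde{\Sigma}$. On an integral surface the ideal $\iinf$ vanishes, so $\underline{\ed}\varphi_{v_0}=0$ becomes $\ed(\xh^*\varphi_{v_0})=0$, i.e.\ $\xh^*\varphi_{v_0}$ is a genuine closed real 1-form on $\widetilde{\Sigma}$, hence represents a class in $H^1(\widetilde{\Sigma},\R)$. The next step is to show this class descends to $H^1(\Sigma,\R)$ once we quotient by $\langle\mcc^0\rangle$. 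Under a deck transformation $g\in\pi_1(\Sigma)$, the lift changes by the spectral monodromy $\mu^{\mcs}(g)\in\g$; by the linearity of \eqref{eq:spectraldeform} in the fiber variables, the corresponding change in $\varphi_{v_0}$ is exactly the classical conservation law associated to the Killing field $\mu^{\mcs}(g)$. This is where I would invoke the identification, established in Sec.~\ref{sec:classicallaws}, between the classical differentiated conservation law $\Phi_A$ and Kusner's momentum class, together with the identity \eqref{eq:classicalcvlawidentity}: the variation of $\varphi_0$ and $\chi_0$ under a shift by a classical solution reproduces precisely a representative of $\langle\mcc^0\rangle$. Hence $\xh^*\varphi_{v_0}$ is well defined as an element of the quotient $H^1(\Sigma,\R)/\langle\mcc^0\rangle$, independent of the chosen lift.

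I would organize the verification in two pieces. For $\varphi_0=\im(v_{0,\bar 1}\xi-v_{0,1}\xib)$, the coefficients $v_{0,1},v_{0,\bar 1}$ transform affinely under the monodromy shift, with the linear part governed by the homogeneous analogue of \eqref{eq:spectraldeform}, which is isomorphic to the classical Jacobi system \eqref{eq:classicalJacobi}; the inhomogeneous terms $\im h_2,-\im\hb_2$ are precisely the non-classical part that survives as the genuine spectral content. For the correction term $\chi_0=v_{1,0}\xi-v_{\bar 1,0}\xib$, I would use the computed relation $\ed\chi_0\equiv\gamma^2 v_0\,\xi\w\xib\mod\mcj$ to confirm that the combination \eqref{eq:spectralcvlaw} is the unique one cancelling the $\delta$-weighted exact remainder in $\ed\varphi_0$, so that closedness holds on $\Sigma$ and the monodromy shift lands in the classical span. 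The reality of $\mcs$ and of $v_0$ guarantees that the resulting class is real-valued, consistent with the target $H^1(\Sigma,\R)$.

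The main obstacle I anticipate is controlling the affine ambiguity precisely enough to identify the monodromy shift with an element of $\langle\mcc^0\rangle$ rather than merely with an abstract element of $\g$. The subtlety is that $\mu^{\mcs}(g)\in\g$ must be fed through Kusner's momentum map $\mu_K:\g\to H^1(\Sigma,\R)$ (as noted in the remark following Prop.~\ref{prop:spectralmonodromy}), and one must check that the change in $\xh^*\varphi_{v_0}$ under deck transformation equals $\mu_K(\mu^{\mcs}(g))$ up to an exact form. This requires matching the affine translation structure of the integrable extension with the explicit momentum-class representatives from Eq.~\eqref{eq:momentumclass}, and verifying that no genuinely non-classical residue contaminates the ambiguity. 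Once that matching is in place, the descent to $H^1(\Sigma,\R)/\langle\mcc^0\rangle$ follows formally, completing the proof.
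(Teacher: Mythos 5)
Your proposal follows essentially the same route as the paper's proof: lift the universal cover $\widetilde{\Sigma}$ into $Z$ by ODE existence/uniqueness, observe that a deck transformation $[\gamma]\in\pi_1(\Sigma)$ shifts the lift by the spectral monodromy $\mu^{\mcs}([\gamma])\in\g$, and use the linearity of Eq.~\eqref{eq:spectraldeform} to identify the resulting change $[\gamma]^*\varphi_{v_0}-\varphi_{v_0}$ with the classical conservation law attached to that Killing field, so the class descends to $H^1(\Sigma,\R)/\langle \mcc^0 \rangle$. The "main obstacle" you flag is not a real gap: the homogeneous part of \eqref{eq:spectraldeform} is exactly the classical system \eqref{eq:classicalJacobi}, so the monodromy shift of $\varphi_{v_0}$ is by construction the (globally defined, Kusner-type) representative of a classical conservation law, which is all the quotient statement requires.
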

\begin{proof}
By the existence and uniqueness theorem of ODE, the universal cover $\widetilde{\Sigma}$ of $\Sigma$ admits a lift to $Z$ as an integral surface of $\mcj$. For a choice of base point in $\widetilde{\Sigma}$ let $[\gamma]\in\pi_1(\Sigma)$ be a deck transformation. Then by construction we have
$$[\gamma]^*\varphi_{v_0}=\varphi_{v_0}+\varphi^{\mu_{\mcs}([\gamma])},$$
where $\varphi^{\mu_{\mcs}([\gamma])}$ is the classical conservation law corresponding to the Killing vector field $\mu_{\mcs}([\gamma])\in\g$ of the spectral monodromy $\mu_{\mcs}.$
Note also that $\varphi_{v_0}$ is smooth.
\end{proof}
The spectral conservation law can thus be considered as a \tb{secondary characteristic cohomology class} of the EDS for CMC surfaces.
\begin{cor}
Let $\Sigma\hook X$ be a CMC surface. Suppose the image of the classical conservation laws $\langle \mcc^0 \rangle \subset H^1(\Sigma, \R)$ is trivial. Then the spectral conservation law $[ \varphi_{v_0} ]$ is a well defined element in $H^1(\Sigma,\R).$
\end{cor}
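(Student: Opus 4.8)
The plan is to upgrade the conclusion of Theorem~\ref{thm:spectralcvlaw} from the quotient $H^1(\Sigma,\R)/\langle\mcc^0\rangle$ to $H^1(\Sigma,\R)$ itself, using the hypothesis $\langle\mcc^0\rangle=0$ to kill the monodromy ambiguity. First I would recall from the proof of Theorem~\ref{thm:spectralcvlaw} that, after lifting the universal cover $\widetilde\Sigma$ to an integral surface of $\mcj$ in $Z$, the real closed $1$-form $\varphi_{v_0}$ obeys the equivariance law
$$[\gamma]^*\varphi_{v_0}=\varphi_{v_0}+\varphi^{\mu_{\mcs}([\gamma])},\qquad [\gamma]\in\pi_1(\Sigma),$$
where $\varphi^{\mu_{\mcs}([\gamma])}$ is the classical conservation law attached to the Killing field $\mu_{\mcs}([\gamma])\in\g$. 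The hypothesis $\langle\mcc^0\rangle=0$ says precisely that every classical conservation law restricts to a de~Rham-trivial $1$-form on $\Sigma$; in particular $\varphi^{\mu_{\mcs}([\gamma])}=\ed h_{[\gamma]}$ for some global $h_{[\gamma]}\in C^\infty(\Sigma,\R)$.

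Next I would convert this equivariant form into an honest de~Rham class on $\Sigma$ by a period construction. Since $\widetilde\Sigma$ is simply connected and $\varphi_{v_0}$ is closed there, I choose a primitive $G$ with $\ed G=\varphi_{v_0}$ and, fixing a base point $\tilde x_0$ over $x_0$, set $P([\gamma])=G([\gamma]\tilde x_0)-G(\tilde x_0)$. The core computation is to check that $P:\pi_1(\Sigma)\to\R$ is a homomorphism: differentiating the equivariance law gives $G\circ[\gamma]=G+\pi^*h_{[\gamma]}+\mathrm{const}$, and substituting this into $P([\gamma_1][\gamma_2])$ while using that deck transformations preserve the fibers of $\pi$ (so $\pi([\gamma_2]\tilde x_0)=x_0$ and $h_{[\gamma_1]}$ is evaluated at the base point in both terms) yields $P([\gamma_1][\gamma_2])=P([\gamma_1])+P([\gamma_2])$. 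Because $\R$ is abelian, $P$ factors through $H_1(\Sigma,\Z)$ and thus determines a class in $\mathrm{Hom}(H_1(\Sigma,\Z),\R)\cong H^1(\Sigma,\R)$; this is the asserted $[\varphi_{v_0}]$, and it is closed on $\Sigma$ since $\varphi_{v_0}$ represents a conservation law ($\underline{\ed}\,\varphi_{v_0}=0$).

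Finally I would verify independence of the remaining choices. The lift of $\Sigma$ into the affine $\g$-bundle $Z\to\xinf$ is determined only up to the $\g$-action, and two lifts change $\varphi_{v_0}$ by a classical conservation law $\varphi^Y$, $Y\in\g$; since $\langle\mcc^0\rangle=0$ this form is exact on $\Sigma$ and leaves the period homomorphism $P$ unchanged, while a change of $\tilde x_0$ or $G$ only shifts $G$ by a constant and so has no effect on $P$. I expect the main obstacle to be purely bookkeeping: verifying the additivity of $P$ cleanly, i.e.\ tracking that the potentials $h_{[\gamma]}$ contribute nothing to the periods because the endpoints of the lifted loops lie in a single $\pi$-fiber. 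Once this is in place the statement is immediate, for under $\langle\mcc^0\rangle=0$ the quotient appearing in Theorem~\ref{thm:spectralcvlaw} is canonically identified with $H^1(\Sigma,\R)$.
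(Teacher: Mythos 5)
Your proposal is correct and follows essentially the same route as the paper: the paper states this corollary without proof, treating it as an immediate specialization of Theorem~\ref{thm:spectralcvlaw} (once $\langle\mcc^0\rangle$ is the zero subspace, the quotient $H^1(\Sigma,\R)/\langle\mcc^0\rangle$ is $H^1(\Sigma,\R)$ itself), and the equivariance law $[\gamma]^*\varphi_{v_0}=\varphi_{v_0}+\varphi^{\mu_{\mcs}([\gamma])}$ you invoke is exactly the mechanism in the paper's proof of that theorem. Your period-homomorphism bookkeeping is just the explicit unwinding of what the paper leaves implicit, so there is no substantive difference in approach.
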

 


\subsection{Commutation relations}\label{sec:commutation}
In this section various properties of the spectral symmetry are established. In particular, we show that the spectral symmetry commutes with the horizontal lift of Jacobi operator as operators acting on the scalar functions $C^{\infty}(\xinfh)\subset C^{\infty}(\Zh)$.\footnotemark\footnotetext{We identify $C^{\infty}(\xinfh)$ with its image $\Pi^* C^{\infty}(\xinfh) \subset C^{\infty}(\Zh)$.} One may consider this as an analogue of the familiar K\"ahler identities.

\two
We start with a lemma which shows the robustness of the characteristic vector field $h_2^{-\frac{1}{2}}\delx$, which is dual to $\sqrt{\ff}=h_2^{\frac{1}{2}}\xi.$ 
\begin{lem}\label{lem:shadowdelx}
Let $V$ be a shadow (or symmetry) on $\Zh$. Then the $\Pi$-horizontal component  $[V, h_2^{-\frac{1}{2}}\delx]^{\tn{h}}$ is a shadow. It follows that
$$ [V, h_2^{-\frac{1}{2}}\delx]^{\tn{h}} \in\langle \;\delx, \delxb\; \rangle.$$
\end{lem}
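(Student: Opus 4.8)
The plan is to show that bracketing any shadow with the special characteristic field $h_2^{-\frac{1}{2}}\delx$ produces, after projecting to the $\Pi$-horizontal part, again a shadow, and then to invoke the already-established structural rigidity of shadows to pin down the result. Recall that a $\Pi$-horizontal vector field $V$ is a shadow precisely when $\mcl_V\iinf\subset\mcj$ (Definition~\ref{defn:shadow}), and that by Proposition~\ref{prop:symmetry} together with the analysis in Sec.~\ref{sec:spectralsym}, a shadow is essentially determined by its generating (possibly non-local) Jacobi field. First I would write $W=h_2^{-\frac{1}{2}}\delx$ and observe that $W$ is itself $\Pi$-horizontal, being the horizontal lift of the characteristic vector field dual to $\omega=\sqrt{\ff}$; moreover $W$ is a trivial symmetry in the sense of the Example following the definition of degree, so $\mcl_W\iinf\subset\mcj$ as well (indeed $\mcl_W\I{k}\subset\I{k+1}$).

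The key computation is the Jacobi-type identity for Lie derivatives along a bracket:
\be
\mcl_{[V,W]}=\mcl_V\mcl_W-\mcl_W\mcl_V.\n
\ee
Applying both $\mcl_V$ and $\mcl_W$ to $\iinf$ and using that each separately sends $\iinf$ into $\mcj$, together with the Frobenius property of $\mcj$ (so that $\mcl_V\mcj\subset\mcj$ and $\mcl_W\mcj\subset\mcj$, which follows since $V$ and $W$ are symmetries of the extended Frobenius system $\mcj$), one gets $\mcl_{[V,W]}\iinf\subset\mcj$. The subtlety is that $[V,W]$ need not be $\Pi$-horizontal: it may acquire components along the fiber directions $\partial/\partial v_{\bullet}$ of the extension $Z\to\xinf$. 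Decomposing $[V,W]=[V,W]^{\tn{h}}+[V,W]^{\tn{v}}$ into horizontal and vertical parts, I would check that the vertical part lies in $(\mcj^{\pi})^{\perp}$-complement in such a way that $\mcl_{[V,W]^{\tn{h}}}\iinf\subset\mcj$ still holds modulo $\mcj^{\pi}$; concretely, since $\iinf$ is $\Pi$-horizontal (it is $\Pi^*\iinf\subset\mcj$), contracting the vertical part against the generators of $\iinf$ contributes only terms already in $\mcj^{\pi}\subset\mcj$. This yields that the horizontal projection $[V,W]^{\tn{h}}$ is a shadow.

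The concluding step is to identify which shadows can arise this way. The generating Jacobi field of $[V,W]^{\tn{h}}$ is obtained by applying the commutator of $W=h_2^{-\frac12}\delx$ with the symmetry operator to the generating function of $V$; by the weight/order bookkeeping of Sec.~\ref{sec:prolongation1} and Lem.~\ref{lem:lemma5.4}, the only shadows whose generating data reduce to the characteristic directions are the trivial ones spanned by $\delx$ and $\delxb$. Thus $[V,h_2^{-\frac12}\delx]^{\tn{h}}\in\langle\delx,\delxb\rangle$. I expect the main obstacle to be the careful control of the vertical component $[V,W]^{\tn{v}}$: one must verify that it does not spoil the shadow condition when one passes to the horizontal projection, i.e. that all terms it contributes upon contraction with $\iinf$ land in $\mcj^{\pi}$. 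This is where the precise structure of the integrable extension $(Z,\mcj)$ in Definition~\ref{defn:integrableextension} — in particular that the fiber coordinates enter the structure equations \eqref{eq:spectraldeform} only through $\mcj^{\pi}$ — is essential, and it is the point that requires the most attention rather than routine manipulation.
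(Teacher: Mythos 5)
Your first half follows the paper's own route: you expand $\mcl_{[V,W]}=\mcl_V\mcl_W-\mcl_W\mcl_V$ for $W=h_2^{-\frac{1}{2}}\delx$ and dispose of the vertical component of $[V,W]$ by noting that it acts harmlessly on the pulled-back generators of $\iinfh$. One caveat: you justify $\mcl_V\mcjh\subset\mcjh$ by declaring that $V$ is ``a symmetry of the extended Frobenius system $\mcj$,'' but the paper explicitly warns (remark after Definition~\ref{defn:shadow}) that a shadow is \emph{not} necessarily a symmetry of $(Z,\mcj)$, and the Frobenius property of $\mcjh$ alone does not give invariance under $\mcl_V$. Fortunately this claim is never needed: in $\mcl_{[V,W]}\iinfh=\mcl_V(\mcl_W\iinfh)-\mcl_W(\mcl_V\iinfh)$ the first term uses only $\mcl_W\iinfh\subset\iinfh$ and the shadow property of $V$, and the second uses only $\mcl_V\iinfh\subset\mcjh$ and $\mcl_W\mcjh\subset\mcjh$; you should reorganize so the unjustified assertion disappears.

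The genuine gap is in your second step. The paper proves $[V,W]^{\tn{h}}\in\langle\delx,\delxb\rangle$ by an explicit computation showing that the generating function of this shadow vanishes identically: writing $V=g_0E_0+g_1E_1+g_{\bar{1}}E_{\bar{1}}+\dots$, the shadow structure equation gives $(g_0)_{\xi}=\frac{1}{2}g_1$, and Cartan's formula (using $\theta_0(W)=0$, $\ed\theta_0=-\frac{1}{2}(\theta_1\w\xi+\thetab_1\w\xib)$, and that $W$ annihilates $\theta_1,\thetab_1,\xib$) yields
\begin{align*}
f_0&=\theta_0([V,W])=V(\theta_0(W))-W(\theta_0(V))-\ed\theta_0(V,W)\\
&=0-h_2^{-\frac{1}{2}}\delx(g_0)+\tfrac{1}{2}(\theta_1\w\xi)(V,W)
=-\tfrac{1}{2}h_2^{-\frac{1}{2}}g_1+\tfrac{1}{2}g_1h_2^{-\frac{1}{2}}=0,
\end{align*}
after which the conclusion follows because a shadow with vanishing generating function has all its vertical coefficients determined recursively to be zero, leaving only the trivial horizontal symmetries. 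Your substitute---``weight/order bookkeeping'' plus Lemma~\ref{lem:lemma5.4}---cannot deliver this. An arbitrary shadow or symmetry $V$ is not weighted homogeneous, so weight counting does not apply; Lemma~\ref{lem:lemma5.4} concerns functions annihilated by $\delxb$ and says nothing about the generating function of a bracket; and your assertion that ``the only shadows whose generating data reduce to the characteristic directions are the trivial ones'' is a restatement of the conclusion rather than an argument, since there are many nontrivial shadows (one for every non-local Jacobi field). The whole point is to show that \emph{this particular} bracket has vanishing generating function, which is an exact cancellation special to the structure equation of shadows, not a consequence of general bookkeeping.
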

\begin{proof}
By definition of $\Pi$-horizontal direction as $(\hat{\mcj}^{\pi})^{\perp}$,
$$\mcl_{[V, h_2^{-\frac{1}{2}}\delx]}(\iinfh)=\mcl_{[V, h_2^{-\frac{1}{2}}\delx]^{\tn{h}}}(\iinfh).
$$
The claim that $[V, h_2^{-\frac{1}{2}}\delx]^{\tn{h}}$ is a shadow follows from the observations
$$\mcl_{\delx} \iinfh\subset\iinfh, \quad \mcl_{\delx}\hat{\mcj}\subset\hat{\mcj},\quad
\mcl_{V}(\iinfh)\subset\hat{\mcj}.$$

Let $f_0$ be the generating non-local Jacobi field for the shadow $[V, h_2^{-\frac{1}{2}}\delx]^{\tn{h}}$. For the second claim it suffices to show that $f_0=0$. Let
$$V=g_0 E_0+g_1 E_1 +g_{\bar{1}} E_{\bar{1}}+\, ... \, .
$$
By the defining property of a shadow one has $(g_0)_{\xi}=\frac{1}{2}g_1$. One computes then
\begin{align}
f_0&=\theta_0([V, h_2^{-\frac{1}{2}}\delx])\n\\
&=-\ed\theta_0(V, h_2^{-\frac{1}{2}}\delx)-h_2^{-\frac{1}{2}}\delx(\theta_0(V))\n\\
&=\frac{1}{2}\theta_1\w\xi(V, h_2^{-\frac{1}{2}}\delx) -h_2^{-\frac{1}{2}}\frac{1}{2}g_1=0.\n
\end{align}
\end{proof}
\begin{rem}
The subspace $\langle \;\delx, \delxb\; \rangle$ of trivial symmetries behaves under Lie bracket like an ideal in the space of shadows (symmetries).
\end{rem}

Recall the Jacobi operator originally defined on $\xinf$,
$$\mce=\delx \delxb +\frac{1}{2}(\gamma^2+h_2\hb_2). 
$$
The horizontal lift of this operator to $\Zh$ will be also denoted by $\mce$.
\begin{prop}\label{prop:Jacobicommute}
Let $\mcs$ be the spectral symmetry \eqref{spectralsymmetry} specified by Eq.~\eqref{eq:spectralsymmetry}. When considered as operators acting on the scalar functions $C^{\infty}(\xinfh)$, we have the commutation relations
\begin{align}\label{eq:Jacobicommute}
\quad&a)\quad [\mcs, h_2^{-\frac{1}{2}}\delx]^{\tn{h}}=+ h_2^{-\frac{1}{2}}\delx,\\
\qquad&\;\;\;\quad[\mcs, \hb_2^{-\frac{1}{2}}\delxb]^{\tn{h}}=-\hb_2^{-\frac{1}{2}}\delxb,\n\\
\quad&b)\quad [\mcs, \mce ]=0.\n
\end{align}
\end{prop}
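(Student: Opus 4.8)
The plan is to establish part a) first, as an identity of horizontal vector fields, and then deduce part b) by rewriting $\mce$ in the characteristic operators and letting the weights from a) cancel. Throughout I would write $D:=h_2^{-\frac12}\delx=\del_\omega$ and $\bar D:=\hb_2^{-\frac12}\delxb=\del_{\omb}$; both preserve $C^\infty(\xinfh)$. The structural inputs are the defining properties $\mcs(z_j)=(j-2)z_j$ and $\mcs(r^2)=0$ (so $\mcs(r)=0$), the structure equations \eqref{eq:zetastruct} giving $\del_\omega z_j=z_{j+1}-\tfrac{j}{2}z_3z_j$ and $\del_\omega r^2=r^2z_3$, and the holomorphy of $h_2$ (i.e. $T_2=0$), which yields $\delxb h_2=0$ and hence $\delx\hb_2=0$. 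The one conceptual point that turns everything into operator identities on $C^\infty(\xinfh)$ is that, since $\Pi:\Zh\to\xinfh$ is an affine bundle, every $\Pi$-vertical vector field kills functions pulled back from $\xinfh$; thus for $f\in C^\infty(\xinfh)$ and any horizontal $W$ one has $[\mcs,W]f=[\mcs,W]^{\tn h}f$.

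For part a) I would invoke Lemma \ref{lem:shadowdelx}, which guarantees that $[\mcs,h_2^{-\frac12}\delx]^{\tn h}$ is a shadow lying in $\langle\delx,\delxb\rangle$, so it equals $P\del_\omega+Q\del_{\omb}$ for some functions $P,Q$. To pin these down I evaluate on $r^2$: using $\mcs(r^2)=0$ I get $[\mcs,\del_\omega]r^2=\mcs(r^2z_3)=r^2z_3$, while $(P\del_\omega+Q\del_{\omb})r^2=(Pz_3+Q\zb_3)r^2$; independence of $z_3,\zb_3$ forces $P=1$, $Q=0$, so $[\mcs,h_2^{-\frac12}\delx]^{\tn h}=h_2^{-\frac12}\delx$. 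A direct check on the $z_j$, using $\mcs(z_{j+1})=(j-1)z_{j+1}$ together with the recursion for $\del_\omega z_j$, confirms the $+1$ eigenvalue and exhibits the weight bookkeeping. The conjugate relation then follows from $\overline{\mcs}=-\mcs$ and $\overline{h_2^{-\frac12}\delx}=\hb_2^{-\frac12}\delxb$, giving $[\mcs,\hb_2^{-\frac12}\delxb]^{\tn h}=-\hb_2^{-\frac12}\delxb$.

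For part b) I would first promote a) to operator identities $\mcs D-D\mcs=D$ and $\mcs\bar D-\bar D\mcs=-\bar D$ on $C^\infty(\xinfh)$ via the vertical-annihilation remark. Using $\delx\hb_2=0$, a one-line computation gives $\delx\delxb=r\,\del_\omega\del_{\omb}$, so $\mce=r\,D\bar D+\tfrac12(\gamma^2+r^2)$ on $C^\infty(\xinfh)$. Since $\mcs(r)=\mcs(r^2)=0$, multiplication by $r$ and by $\gamma^2+r^2$ commutes with $\mcs$, whence $[\mcs,\mce]=r\,[\mcs,D\bar D]$. Finally, for $f\in C^\infty(\xinfh)$ I expand $\mcs(D\bar D f)$ by applying the two commutators from a) in turn, first to $\bar D f\in C^\infty(\xinfh)$ and then to $f$; the $+1$ from $[\mcs,D]$ and the $-1$ from $[\mcs,\bar D]$ cancel, leaving $\mcs(D\bar D f)=D\bar D(\mcs f)$, i.e. $[\mcs,D\bar D]=0$, hence $[\mcs,\mce]=0$.

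The routine algebra here is short; the genuine obstacle is part a) and the bookkeeping that converts it into a commutator identity on $C^\infty(\xinfh)$. Because $\mcs$ is non-local and does \emph{not} preserve $C^\infty(\xinfh)$, one must carefully justify, using Lemma \ref{lem:shadowdelx} and the affine-bundle structure of $\Pi$, that the horizontal-component statement may legitimately be used as a bona fide operator commutator; getting this foundational point right, and tracking the exact $\pm1$ weights so that they cancel in $[\mcs,D\bar D]$, is where the argument demands the most care.
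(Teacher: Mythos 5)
Your overall route coincides with the paper's: part a) rests on Lemma \ref{lem:shadowdelx}, and part b) is deduced from a) exactly as in the paper, by writing $\mce = r\,\del_{\omega}\del_{\omb}+\tfrac12(\gamma^2+r^2)$ on $C^{\infty}(\xinfh)$, using $\mcs(r^2)=0$, and letting the weights $+1$ and $-1$ cancel; your part b), including the horizontal-lift/pullback bookkeeping, is correct. The gap is in the decisive step of part a). After reducing, via Lemma \ref{lem:shadowdelx}, to $[\mcs,h_2^{-\frac12}\delx]^{\tn{h}}=P\,\del_{\omega}+Q\,\del_{\omb}$ with $P,Q$ \emph{functions}, you evaluate on the single function $r^2$ and claim that $(P-1)z_3+Q\zb_3=0$ together with ``independence of $z_3,\zb_3$'' forces $P=1$, $Q=0$. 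That is a non sequitur: one scalar equation cannot determine two unknown functions. For any function $g$, the pair $P=1+\zb_3\,g$, $Q=-z_3\,g$ satisfies $(P-1)z_3+Q\zb_3=0$, so the $r^2$ evaluation leaves a functional ambiguity; and nothing forces $P,Q$ to be constants, since vector fields $V_{\xi}\delx+V_{\xib}\delxb$ with arbitrary function coefficients are precisely the trivial symmetries.

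The repair is exactly the step you relegate to a ``direct check \ldots confirms the $+1$ eigenvalue'': the evaluation on the $z_j$'s is not a confirmation but the essential argument, and it is what the paper does. The vector field $U:=[\mcs,h_2^{-\frac12}\delx]^{\tn{h}}-h_2^{-\frac12}\delx$ annihilates $h_2\hb_2$, $z_j$, $\zb_j$ (this uses the weighted homogeneity of $\hat{T}_j$, Lemma \ref{lem:Tjh}), so writing $U=(P-1)\del_{\omega}+Q\,\del_{\omb}$ and applying it to $z_j$ gives
\[
(P-1)\Bigl(z_{j+1}-\tfrac{j}{2}z_3z_j\Bigr)+Q\,\hat{T}_j\,r^{-1}=0,\qquad \tn{for all}\ j\geq 3 .
\]
Since $\hat{T}_j\in\C[r^2,z_3,\ldots,z_{j-1}]$, the top-order variable $z_{j+1}$ occurs only in the first term; choosing $j$ large enough that $P,Q$ do not depend on $z_{j+1}$ and differentiating with respect to $z_{j+1}$ forces $P=1$, after which $Q\,\hat{T}_j=0$ with $\hat{T}_3=\gamma^2-r^2\not\equiv 0$ forces $Q=0$. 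With this substitution (which is the content of the paper's equation \eqref{eq:Ucompatibility} combined with Lemma \ref{lem:shadowdelx}), your part a), and hence the whole proof, goes through.
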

\begin{proof}
b) Since $\mcs(h_2\hb_2)=0$, $\mcs$ commutes with the scalar multiplication operator by $h_2\hb_2$. Hence b) follows from a).
\begin{align}
\mcs \circ (h_2^{-\frac{1}{2}}\delx) \circ (\hb_2^{-\frac{1}{2}}\delxb)
&=\left( (h_2^{-\frac{1}{2}}\delx) \circ\mcs + h_2^{-\frac{1}{2}}\delx\right)\circ (\hb_2^{-\frac{1}{2}}\delxb)\n\\ 
&=(h_2^{-\frac{1}{2}}\delx) \circ\left( (\hb_2^{-\frac{1}{2}}\delxb)\circ\mcs-\hb_2^{-\frac{1}{2}}\delxb\right)
+(h_2^{-\frac{1}{2}}\delx)\circ (\hb_2^{-\frac{1}{2}}\delxb)\n\\
&=(h_2^{-\frac{1}{2}}\delx) \circ (\hb_2^{-\frac{1}{2}}\delxb)\circ\mcs.\n
\end{align}
Since $[\mcs,  h_2\hb_2]=0$, this implies 
$$[\mcs, \delx\delxb]=0.$$

a) Let
\begin{align}\label{eq:commutevector}
\Big([\mcs, h_2^{-\frac{1}{2}}\delx]- h_2^{-\frac{1}{2}}\delx\Big)^{\tn{h}}
&=U \\
&=u_{\xi}\delx+u_{\xib}\delxb+u_0E_0+\sum_{j\geq 1}u_jE_j+u_{\bar{j}}E_{\bar{j}}\n
\end{align}
be the $\Pi$-horizontal part of the vector field associated with the commutator.
By construction and the weighted homogeneous property of $T_j=\delxb h_j$, this derivation (vector field) annihilates $h_2\hb_2, z_j, \zb_j.$ Applying to $z_j$  we get
\be\label{eq:Ucompatibility}
u_j+(-\frac{j}{2}h_2^{-1}h_j)u_2+(-\frac{j}{2}h_2^{-1}h_3h_j+h_{j+1})u_{\xi}+ T_j u_{\xib}=0.
\ee

On the other hand from Lemma \ref{lem:shadowdelx} we have $ u_j, u_{\bar{j}}=0$ for all $ j\geq 1.$ Since Eq.~\eqref{eq:Ucompatibility} holds for all $j\geq 3$, this forces $u_{\xi}, u_{\xib}=0$.
\end{proof}
  
\subsection{Homogeneous decomposition}\label{sec:homogeneousdecomp}
Note the following homogeneous objects with weights under the spectral symmetry.
\[\begin{array}{rcrl}
&\vline&\tn{spectral weight}&\\
\hline
h_2^{-\frac{1}{2}}\delx, \omb &\vline& +1& \\
\hb_2^{-\frac{1}{2}}\delxb, \omega &\vline& -1& \\
v_0, h_2\hb_2&\vline& 0&\\
z_j,\,\zeta_j&\vline&+(j-2)&\tn{for}\;j\geq 3 \\
\zb_j,\,\zetab_j&\vline&-(j-2)&\tn{for}\;j\geq 3
\end{array}\]

\one\noi
Since Lie derivative commutes with the exterior derivative, these weights are invariant under the exterior differentiation. The commutation relation \eqref{eq:Jacobicommute} implies that the Jacobi operator $\mce$ also preserves the weight.

\two
We record a useful lemma. It shows that a homogeneous conservation law corresponds to a homogeneous Jacobi field.
\begin{lem}\label{lem:hrepJacobi}
Suppose $\varphi=f\omega+g\omb\in\Omega^1(\xinfh_*)$ be a conservation law which is homogeneous of weight $m$ under the spectral symmetry such that
$$\mcl_{\mcs}\varphi\equiv m\varphi\mod\mcjh.$$
Let $\Phi_u\in\Cv{(\infty)}$ be the corresponding differentiated conservation law generated by a Jacobi field $u$.
Then $u$ is homogeneous of weight $m+2$ under the spectral symmetry.
\end{lem}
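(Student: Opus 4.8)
The plan is to transport the weight-$m$ homogeneity of $\varphi$ through the symbol map $\ed_1$ of the exact sequence \eqref{eq:exactsequence}, which sends the class $[\varphi]$ to its generating Jacobi field $u$. First I would unwind the hypothesis directly in the balanced coframe. Writing $\varphi=f\omega+g\omb$ and using that $\omega,\omb$ span $\Omega^1(\Zh)/\mcjh$ and carry spectral weights $-1$ and $+1$ (so $\mcl_{\mcs}\omega\equiv-\omega$ and $\mcl_{\mcs}\omb\equiv+\omb\mod\mcjh$), the hypothesis $\mcl_{\mcs}\varphi\equiv m\varphi\mod\mcjh$ expands to
\[
(\mcs f-f)\,\omega+(\mcs g+g)\,\omb\equiv (mf)\,\omega+(mg)\,\omb\mod\mcjh .
\]
Since $\omega$ and $\omb$ are independent modulo $\mcjh$, this yields $\mcs f=(m+1)f$ and $\mcs g=(m-1)g$; that is, the two horizontal coefficients of $\varphi$ are themselves weight-homogeneous, of weights $m+1$ and $m-1$.

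Next I would pass to the differentiated conservation law $\Phi_u$. Because $\mcs$ is a shadow commuting with $\ed$ and $\mcjh$ is Frobenius (hence $\ed\mcjh\subset\mcjh$), the Lie derivative $\mcl_{\mcs}$ descends to the spectral sequence of Sec.~\ref{sec:highercvlaws1} and commutes with the symbol map $\ed_1\colon E^{0,1}_1\hook E^{1,1}_1$. Consequently $\ed_1$ is equivariant for the induced $\mcs$-grading and carries the weight-$m$ class $[\varphi]$ to a class of the \emph{same} weight $m$ in $E^{1,1}_1$. The remaining point is to compare this intrinsic cohomological weight with the weight of $u$ regarded as a scalar function, i.e.\ with the number $w$ for which $\mcs(u)=wu$; that $u$ is weight-homogeneous as a function follows from $[\mcs,\mce]=0$ (Prop.~\ref{prop:Jacobicommute}), which shows $\mcs$ preserves the space of Jacobi fields, together with the normal form of Lem.~\ref{lem:Jacobinormal} and Lem.~\ref{lem:lemma5.4}.

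The discrepancy between these two weights is a fixed shift $\Delta$, and the whole lemma reduces to the assertion $\Delta=+2$. To see this I would represent the symbol class concretely: in the reduced normal form of Prop.~\ref{prop:roughnormal}, $u$ is $2\im$ times the coefficient of the monomial $\theta_1\w\xi$ in $\Phi_u$, and this representing monomial carries spectral weight $-2$ (equivalently, the frame vector $E_1$ dual to $\theta_1$ satisfies $[\mcs,E_1]^{\tn{h}}\equiv 2E_1\mod\langle\delx,\delxb\rangle$, so extracting the coefficient raises the function weight by $2$ relative to the class weight). I would then pin the constant unambiguously against the base case of Example~\ref{ex:HopfJacobi}: there $\varphi=\omega$ has weight $m=-1$, while its generating Jacobi field is $u=z_3$, of weight $3-2=+1=m+2$. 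Hence $\Delta=2$ and $\mcs(u)=(m+2)u$ in general.

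The main obstacle is this third step: making the weight bookkeeping rigorous through the \emph{reduction} to normal form. For a purely horizontal representative $\varphi=f\omega+g\omb$ the two-form $\ed\varphi$ is not already reduced — indeed in Example~\ref{ex:HopfJacobi} the coefficient of $\Psi$ in the unreduced $\ed\omega$ vanishes, and the Jacobi field $z_3$ is produced only after adding correcting terms from $\I{\infty}$ — so the leading coefficient $u$ is assembled from lower-order terms by the reduction algorithm of Prop.~\ref{prop:roughnormal}. The work is to verify that each step of that algorithm is weight-preserving up to the single fixed shift, so that the weight of $u$ is controlled uniformly in the order $2k+1$. Once the shift is shown to be independent of the order, the one computation in Example~\ref{ex:HopfJacobi} fixes it to be $+2$, completing the argument.
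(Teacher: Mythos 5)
Your opening reduction is fine: comparing coefficients of $\omega,\omb$ modulo $\mcjh$ does give $\mcs f=(m+1)f$ and $\mcs g=(m-1)g$ (and, via Lem.~\ref{lem:Sstable}, even that $f,g$ are functions of $h_2\hb_2, z_j,\zb_j$ alone). But the two steps that carry your proof are not sound, and they are not the route the paper takes. The equivariance claim is the first gap: $\mcs$ is a shadow, not a symmetry of $(\xinfh,\iinfh)$ --- its defining property is $\mcl_{\mcs}\iinfh\subset\mcjh$, and its coefficients are non-local, so $\mcl_{\mcs}$ carries forms on $\xinfh$ to forms on $\Zh$ and induces no endomorphism of the spectral sequence $(E^{p,q}_r,\ed_r)$ of $(\Omega^*(\xinfh),\iinfh)$; ``the weight of a class in $E^{1,1}_1$'' is therefore not defined. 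Worse, the only congruence your hypothesis propagates is modulo $\mcjh$, and that congruence is vacuous exactly where you need it: every differentiated conservation law satisfies $\Phi_u\in F^1\Omega^2\subset\mcjh$, so $\mcl_{\mcs}\Phi_u\equiv m\Phi_u\mod\mcjh$ holds automatically for any shadow and any $\Phi_u$ and says nothing about $u$. To extract information one must work modulo the smaller ideal $\mcjh^{\pi}$, where the generators $\theta_0,\zeta_j,\zetab_j$ of $\iinfh$ survive, and then control the correction terms $\ed\Theta$, $\Theta\in\mcjh$, that appear when the hypothesis is differentiated. That term-by-term control against the structure equations is the actual content of the paper's proof; it is precisely the step your final paragraph defers.

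The second gap is the homogeneity of $u$ itself: it does not follow from $[\mcs,\mce]=0$ together with Lem.~\ref{lem:Jacobinormal} and Lem.~\ref{lem:lemma5.4}. The commutation only shows that $\mcs$ carries Jacobi fields to possibly non-local Jacobi fields, and Lem.~\ref{lem:Jacobinormal} constrains only the top-order term, saying nothing about mixed, conjugate, or classical lower-order parts; classical Jacobi fields in particular are not $\mcs$-homogeneous, which is why some separate argument must dispose of them. What the paper actually does is invoke the classification of Jacobi fields (Thm.~\ref{thm:higherNoether}~b), proved independently via Cor.~\ref{cor:zurelation} and \cite{Fox2011}): write $u=u_0+\sum_i u_i$ with each $u_i$ homogeneous of odd weight $w_i$ and $u_0$ classical, apply $\mcl_{\mcs}$ to $\ed\varphi+\ed\Theta=\sum_j\Phi_{u_j}$, and compare the coefficients of the monomials $\zeta_2\w\zeta_{2k+1}$ and $\zeta_3\w\zeta_{2k}$ modulo $\mcjh^{\pi}$; this forces $m=2k-1$, kills every $u_i$ with $w_i<m$, and a final direct computation kills $u_0$, leaving $u=u_k$ of weight $(2k+3)-2=m+2$. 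Your plan of pinning a ``universal shift'' $\Delta=+2$ by the single base case of Example~\ref{ex:HopfJacobi} presupposes that the shift is well defined and class-independent, which is equivalent to the unproven homogeneity statement; one example cannot supply that. Closing these gaps leads you either to the classification argument the paper uses or to reproducing its explicit coefficient comparison, so as written the proposal is genuinely incomplete.
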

\begin{proof}
We shall make use of the classification of Jacobi fields to be proved in Thm.~\ref{thm:higherNoether}:
the space of Jacobi fields is generated by the weighted homogeneous higher-order Jacobi fields of distinct odd spectral weights and the classical Jacobi fields. 

Let $u=\sum_{i=0}^k u_i$, where $u_i, i\geq 1,$ is higher-order and $u_0$ is classical.
By definition there exist a 1-form $\Theta\in\iinf$ such that
\be\label{eq:homcv}
\ed\varphi+\ed\Theta=\sum_{j=0}^{k} \Phi_{u_j}.
\ee
By Cor.~\ref{cor:zurelation} and \cite{Fox2011}, each $\Phi_{u_i}, i\geq 1$, is weighted homogeneous of odd spectral weight, say $w_i$.

Suppose the highest order (nonzero) Jacobi field $u_k=z_{2k+3}+\, ... \, $ so that up to constant scale the highest weight terms of $\Phi_{u_k}$ are
$$\Phi_{u_k} =\, ... \,+ \zeta_2\w\zeta_{2k+1}- \zeta_3\w\zeta_{2k}+\, ... \, .$$
Applying the spectral symmetry to Eq.~\eqref{eq:homcv}, we get 
\begin{align*}
m \ed\varphi+\ed\Theta'&\equiv\, ... \,+(2k-1) \zeta_2\w\zeta_{2k+1}- (2k-1)\zeta_3\w\zeta_{2k}+\, ... \, \\
&\;+\sum_{j=1}^{k-1} w_i \Phi_{u_j} +\mcl_{\mcs}\Phi_{u_0}\mod\mcjh^{\pi}
\end{align*}
for a 1-form $\Theta'\in\mcjh$.
Subtracting $m\cdot$Eq.~\eqref{eq:homcv} from this,
\begin{align*}
\ed\Theta''&\equiv\, ... \,+((2k-1)-m) \zeta_2\w\zeta_{2k+1}- ((2k-1)-m)\zeta_3\w\zeta_{2k}+\, ... \, \\
&\;+\sum_{j=1}^{k-1} (w_i-m) \Phi_{u_j} +(\mcl_{\mcs}-m)\Phi_{u_0}\mod\mcjh^{\pi}
\end{align*}
for a 1-form $\Theta''\in\mcjh$.

It is easily checked from the structure equation that this forces $m=2k-1$, and, since $w_i<m$,  $u_i$'s are trivial for $k-1\geq i \geq 1$. The equation is now reduced to
$$\ed\Theta''=(\mcl_{\mcs}-m)\Phi_{u_0}\mod\mcjh^{\pi}$$
for a classical conservation law $\Phi_{u_0}.$
One may check by direct computation that this implies $\Phi_{u_0}=0$ and $u_0$ is also trivial. 
\end{proof}
   
\subsection{Extension to affine Kac-Moody algebra}\label{sec:KacMoody}
The original EDS for CMC surfaces is defined on the unit tangent bundle $X\to M$ of the 3-dimensional space form $M$. Recall $\g$ is the Lie algebra of the group of isometries of $M$. As a consequence, the structure 1-form $\psi$ of a CMC surface is $\g$-valued. The main idea of the analysis in the previous sections is that by inserting the spectral parameter $\lambda$ the 1-form $\psi$ is extended to an associated  loop algebra valued 1-form $\psi_{\lambda}$, which remains compatible and satisfies the Maurer-Cartan equation. The naturally attached formal Killing field led to an enhanced prolongation and the associated formal Killing field coefficients. From this we shall extract several useful conclusions on the structure of the Jacobi fields and conservation laws later on. 

We introduce in this section another extension of $\psi_{\lambda}$ to a compatible affine Kac-Moody algebra valued 1-form.  Let us comment on a practical implication of this extension. So far one of the most useful tools for generating solutions of integrable PDE's is dressing transformation, \cite{Terng2000}. Based on the generally transcendental process of loop group factorization, the notion of dressing captures effectively the possible hidden symmetry structure on the moduli space of solutions. In this perspective, the extension to affine algebra itself represents an extended symmetry of the EDS for CMC surfaces and at the same time it may provide a setting for generating wider class of special solutions.

In addition,  we suspect that this extension will also play an important role in connecting the theory of CMC surfaces with the related  conformal field theory in physics, \cite{Babelon2007}. 

\two
Assume the structure constant $\gamma^2>0$. Recall $\g^{\C}=\so(4,\C)$, and the twisted based loop algebra $\mcl^{\sigma,\tau}(\g^{\C})$, \eqref{defn:twistedloop}.
For a CMC surface $\Sigma$, the formal Killing field $\tb{X}_{\lambda}$ whose components are given by $a^{2j+1},b^{2j},c^{2j}$'s (see Sec.~\ref{sec:recursion}) induces a covariant constant map
\be\label{eq:loopalgebramap}
\tb{X}_{\lambda}:\Sigmah\to\mcl^{\sigma,\tau}(\g^{\C}).
\ee
Note from the construction that the $\lambda^0$-th component $\tn{X}_{0}$ of $\tb{X}_{\lambda}$ is set to zero and 
$$\tb{X}_{\lambda}=...\;+\lambda^{-1}\tn{X}_{-1}+\lambda^{0}\tn{X}_{0}+\lambda^{1}\tn{X}_{1}+\;...$$
where the first few terms are
\begin{align}
\tn{X}_0&=0,\n\\
\tn{X}_1&= 
\bp
\cdot&-c^2&-\im c^2&\cdot\\
c^2&\cdot&\cdot&b^2\\
\im c^2&\cdot&\cdot&-\im b^2\\
\cdot&-b^2&\im b^2&\cdot
\ep,\n\\
\tn{X}_{-1}&=\ol{\tn{X}}_1.\n
\end{align}

The affine Kac-Moody algebra $\widehat{\mcl^{\sigma,\tau}(\g^{\C})}$  associated with $\mcl^{\sigma,\tau}(\g^{\C})$ is obtained from the twisted loop algebra by a central extension and a semi-direct product by the derivation $\lambda\frac{\partial}{\partial\lambda}$. As a vector space, we set
$$\widehat{\mcl^{\sigma,\tau}(\g^{\C})}\simeq \C d\oplus\mcl^{\sigma,\tau}(\g^{\C})\oplus\C c,$$  
where the element $d=\lambda\frac{\partial}{\partial\lambda}$ is the weight derivation and the element $c$ is a center. For $x,y\in\g^{\C}$ the Lie bracket is given by
\begin{align}
[d, x\lambda^n]&=nx\lambda^n, \n\\
[x\lambda^n,y\lambda^m]&=[x,y]\lambda^{n+m}+\langle x,y\rangle n\delta_{n+m,0}c,\n
\end{align}
while all other brackets are trivial. Here $\delta_{n+m,0}$ is Dirac delta function, and $\langle x,y\rangle=\tn{tr}(xy^\top)=-\tn{tr}(xy)$ is the Killing form of $\g^{\C}$.

\two
We wish to show that the formal Killing field ${\tb{X}}_{\lambda}$ extends to a $\widehat{\mcl^{\sigma,\tau}(\g^{\C})}$-valued affine Killing field $\widehat{\tb{X}}_{\lambda}$. Consider the ansatz
\begin{align}\label{eq:affinealgebramap}
\widehat{\tb{X}}_{\lambda}&:=(t, \tb{X}_{\lambda}+t\tn{X}_0, 4\im\gamma u_0):\widetilde{\Sigmah}\to\widehat{\mcl^{\sigma,\tau}(\g^{\C})},  \\
\ed \widehat{\tb{X}}_{\lambda}&+[(0,\psi_{\lambda},-4\im\sigma),\widehat{\tb{X}}_{\lambda}]=0,\n
\end{align}
(note that we add the $\lambda_0$-th component $t\tn{X}_0$ to $ \tb{X}_{\lambda}$).
Here the center component 1-form $\sigma$ should give
\be \label{eq:positivefield}
\ed\sigma=(\gamma^2+h_2\hb_2)\frac{\im}{2}\xi\w\xib 
\ee
so that the $\widehat{\mcl^{\sigma,\tau}(\g^{\C})}$-valued 1-form $(0,\psi_{\lambda},\sigma)$ satisfies the Maurer-Cartan structure equation
$$ \ed(0,\psi_{\lambda},\sigma)+\frac{1}{2}[(0,\psi_{\lambda},\sigma),(0,\psi_{\lambda},\sigma)]=0.$$

\one
It is straightforward to check from the construction that such $\widehat{\tb{X}}_{\lambda}$ is covariant constant when the components satisfy the following relations.
\begin{align}
t&\;\;\;\tn{is a constant},\n\\
\ed u_0&=h_2^{\frac{1}{2}}\xi+\hb_2^{\frac{1}{2}}\xib,\label{eq:centralfield}\\
\tn{X}_0&=
\bp
\cdot&\cdot&\cdot&\cdot\\
\cdot&\cdot&+1&\cdot\\
\cdot&-1&\cdot&\cdot\\
\cdot&\cdot&\cdot&\cdot
\ep.\n 
\end{align}

The relevant question is whether this extension can be canonically defined without explicit reference to the underlying CMC surface. Lem.~\ref{lem:phi0onZ} shows that the equation \eqref{eq:centralfield} for $u_0$ cannot be solved on $\Zh$. From this we propose, for now, the following definitions restricted to an individual CMC surface.
\begin{defn}
Let $\Sigma\hook \xinf$ be a CMC surface.
Let $\Sigmah\hook\xinfh$ be its double cover defined by the square root $\omega=\sqrt{\ff}.$
A \tb{central field} is a scalar function $u_0\in C^{\infty}(\Sigmah)$ which satisfies 
\be\label{eq:centralfielddefi}
\ed u_0= \omega+\omb.
\ee
A \tb{central connection} is a  (singular) 1-form $\sigma\in\Omega^1(\Sigmah)$ which satisfies
\be\label{eq:centralconnectiondefi}
\ed\sigma= (\gamma^2+h_2\hb_2)\frac{\im}{2}\xi\w\xib.
\ee
A \tb{central potential} is a (singular) scalar function $q$ on $\Sigmah$ which satisfies
\be\label{eq:centralpotnetialdefi}
q_{\xi\xib}+\frac{1}{2}(\gamma^2+h_2\hb_2)=0
\ee
so that it gives rise to a central connection
$$\sigma_q= \frac{\im}{2}(q_{\xi}\xi-q_{\xib}\xib).$$
\end{defn}
  
We proceed to the question of existence for central field and central connection.

\subsubsection{Central field $u_0$}
Recall that the 1-form 
$$2\tn{Re}(\omega)=h_2^{\frac{1}{2}}\xi+\hb_2^{\frac{1}{2}}\xib$$ 
represents a conservation law on $\Xh{1}$.  
Lem.~\ref{lem:phi0onZ} shows that one needs another integrable (Abelian) extension over $\Zh$ to accommodate the anti-derivative for $2\tn{Re}(\omega)$. 

In any case, given a CMC surface the central field $u_0$ is well defined up to constant and up to the period of the conservation law $2\tn{Re}(\omega)$.
\subsubsection{Central potential $q$}
Let us argue for simplicity on a given compact CMC surface. Observe that
$$\Big(\log(h_2\hb_2)\Big)_{\xi\xib}=-2(\gamma^2-h_2\hb_2).$$
It follows from the defining equation of the central potential 
$$q_{\xi\xib}=-\frac{1}{2}(\gamma^2+h_2\hb_2) 
$$ that roughly one expects  
$$ q\sim c \log(h_2\hb_2)\mod \tn{(smooth functions)}
$$
for a constant $c$ locally near each umbilic.

Assume that the given CMC surface $\Sigma$ is compact and $\tn{genus}(\Sigma)\geq 2$. By Gau\ss-Bonnet theorem, there exists a unique function $q^0\in C^{\infty}(\Sigma)$ which satisfies
\begin{align}
\int_{\Sigma} q^0\,\frac{\im}{2}\xi\w\xib&=0,\n\\
q^0_{\xi\xib}&=\left(\gamma^2-\frac{2\pi\chi(\Sigma)}{\tn{Area($\Sigma$)}}\right)-h_2\hb_2.\n
\end{align}
Here $\chi(\Sigma)$ is the Euler characteristic of $\Sigma$ and $\tn{Area($\Sigma$)}=\int_{\Sigma}\frac{\im}{2}\xi\w\xi.$
The (singular) central potential is then given by
$$ q= \frac{\tn{Area($\Sigma$)}}{2\pi\chi(\Sigma)} \left(\frac{1+\gamma^2}{2} q^0
+ \frac{ (1+\gamma^2)-\frac{2\pi\chi(\Sigma)}{\tn{Area($\Sigma$)}}}{4} \log(h_2\hb_2)\right). 
$$
Note that the associated central connection $\sigma= \frac{\im}{2}(q_{\xi}\xi-q_{\xib}\xib)$ has only simple pole type singularities at the umbilics.


\section{Recursion for Jacobi fields on $\Xh{\infty}_*$}\label{sec:recursion}
In this section we resume the analysis of formal Killing field equation in Sec.~\ref{sec:primitivemap},  from Remark \ref{rem:babyrecursion}. One of our main objectives is to obtain a recursion relation for higher-order Jacobi fields on $\Xh{\infty}_*$ that can be used to prove their very existence.  We will derive the recursion from the symmetry of the formal Killing field structure equation, guided by the goal of having the structure equations for functions that are defined on $\Xh{\infty}_*$. 
As a result we will be able to simply read off the infinite sequence of higher-order Jacobi fields and conservation laws from the structure equation for the enhanced prolongation.
A similar local construction was given for the Toda field equation associated to $\SU(3)/\SO(2)$ in \cite{Fox2012}.

\subsection{Motivation}\label{sec:motivation}
Recall the index notation \eqref{eq:weightindex} according to the powers of the spectral parameter $\lambda$. Consider the second, and third equation of Eqs.~\eqref{eq:formalKilling_n},
\begin{align}\label{eq:recursionpair}
&\ed  b^n  -\im  b^n \rho+\frac{\im \gamma }{2} e^n \xi +\frac{\im}{2}  \hb_{2}a^n \xib  \equiv 0,\\
&\ed c^n +\im c^n \rho+\frac{\im}{2}  h_2 e^n  \xi+\frac{ \im \gamma }{2} a^n \xib \equiv 0\mod\;\iinfh,\n
\end{align}
where $a^n,\,b^n,\,c^n,\,e^n$ are now considered as functions on $\Fh{\infty}$.
Following the formal integrability of the Killing fields equation \eqref{eq:KillingEquation}, suppose for a given Jacobi field $a^n$ there exist $b^n, \,c^n$ which satisfy these equations (they are unique up to addition by constant multiple of $h_2^{-\frac{1}{2}}, \,h_2^{\frac{1}{2}}$ respectively).

Let us write
\begin{align}\label{eq:formalinversedb}
\delxb b^n&=- \frac{\im\hb_2}{2}  a^n,  \\
\delxb c^n&=- \frac{\im\gamma}{2} a^n.\n
\end{align}
Then from Eq.~\eqref{eq:recursionpair} we have the recursive relation for Jacobi fields
\begin{align}\label{eq:formalrecursion}
e^n&=-\frac{2}{\im\gamma}\delx b^n,  \\
      &= -\frac{2}{\im h_2}\delx c^n.\n
\end{align}
Note that the ambiguity of solutions $b^n,\,c^n$ mentioned above corresponds to addition by constant multiples of the Jacobi field $z_3$ to $\,e^n$.
\begin{lem}\label{lem:bncn}
Let $a^n$ be a Jacobi field on $\xinfh$. Consider the first order differential equations for functions $b^n,\,c^n$ on $\Fh{\infty}$.
\begin{align}\label{eq:bncn}
&\ed  b^n  -\im  b^n \rho+\frac{\im \gamma }{2} e^n \xi +\frac{\im}{2}  \hb_{2}a^n \xib  \equiv 0,\\
&\ed c^n +\im c^n \rho+\frac{\im}{2}  h_2 e^n  \xi+\frac{ \im \gamma }{2} a^n \xib \equiv 0 \mod\;\iinfh,\n 
\end{align}
where
$$\im\gamma e^n=h_2^{-1}(-\im\delx^2a^n+h_3 b^n)
= h_2^{-2} (-\im  h_2 \delx^2a^n +\im h_3 \delx a^n - \gamma h_3 c^n).$$
These equations are formally compatible separately, i.e., $\ed^2\equiv 0\mod\;\iinfh$ is a formal identity for each equation.

Suppose, away from the locus $h_2=0$, there exists a solution to these equations. Then the scalar function $e^n$  is a Jacobi field.
\end{lem}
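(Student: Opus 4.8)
The plan is to recognize the equations \eqref{eq:bncn} as the second and third components of the formal Killing field system \eqref{eq:formalKilling_n}, and to deduce the Jacobi property of $e^n$ by the same mechanism as Lemma~\ref{lem:formalJacobi}, but in a self-contained way using only $a^n,b^n,c^n$ and the Gauss equation (so that no appeal to the still-unconstructed coefficients $f^n,g^n$ is needed). A preliminary observation that I would record first: after clearing $h_2$, the asserted equality of the two displayed expressions for $e^n$ is \emph{exactly equivalent} to
\[
\delx a^n = -\im\gamma c^n - \im h_2 b^n, \qquad (\star)
\]
namely the $\xi$-component of the first Killing equation. I would treat $(\star)$ as the relation that makes $e^n$ well defined and use it freely throughout.

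The central computation, from which everything follows, is an explicit formula for $\delxb e^n$ obtained by differentiating the definition $\im\gamma e^n = h_2^{-1}(-\im\delx^2 a^n + h_3 b^n)$. First I would record the elementary facts $\delxb h_2 = T_2 = 0$ (whence $\delx\hb_2 = 0$) and $\delxb h_3 = T_3 = h_2 R$ from Cor.~\ref{cor:Tidentity}, together with the weighted commutator $\delxb\delx - \delx\delxb = \tfrac{j}{2}R$ on $\Gamma(K^j)$ from \eqref{3anticommut}. Commuting $\delxb$ past the two $\delx$'s on $a^n$ (of weights $0,1,2$) and using that $a^n$ is a Jacobi field, i.e. $\delx\delxb a^n = -\tfrac12(\gamma^2 + h_2\hb_2)a^n$, the second derivative collapses; after the cancellation $\gamma^2 + h_2\hb_2 - R = 2h_2\hb_2$ this yields
\[
\delxb e^n = \tfrac{1}{\gamma}\hb_2\,\delx a^n - \tfrac{\im R}{\gamma}\,b^n .
\]
Substituting $(\star)$ and invoking the Gauss equation $R = \gamma^2 - h_2\hb_2$ (so that $h_2\hb_2 + R = \gamma^2$) collapses the $b^n$-terms and produces the clean identity $\delxb e^n = -\im\hb_2 c^n - \im\gamma b^n$, which is precisely the $\xib$-component of the fourth Killing equation — recovered here directly from the data.

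Granting a solution $b^n,c^n$, the Jacobi property is then immediate. Since $e^n$ is weight zero, $\delx$ and $\delxb$ commute on it, and differentiating the identity above by $\delx$ while inserting the $\xi$-components $\delx b^n = -\tfrac{\im\gamma}{2}e^n$ and $\delx c^n = -\tfrac{\im}{2}h_2 e^n$ of \eqref{eq:bncn} gives $\delx\delxb e^n = -\tfrac12(\gamma^2 + h_2\hb_2)e^n$, that is $\mce(e^n)=0$. For the formal compatibility I would check, for each equation separately, that the sole obstruction — the cross-derivative identity $\delxb\delx b^n - \delx\delxb b^n = -\tfrac{R}{2}b^n$ for $b^n\in\Gamma(K^{-1})$, and the analogous $+\tfrac{R}{2}c^n$ for $c^n\in\Gamma(K^{1})$ — holds identically; inserting the prescribed first derivatives together with the formula for $\delxb e^n$ reduces both verifications to the same single cancellation $h_2\hb_2 + R = \gamma^2$.

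The main obstacle — really the only nontrivial point — is the bookkeeping in computing $\delxb e^n$: commuting $\delxb$ through the two $\delx$'s generates curvature contributions, and it is not a priori clear that the $a^n$-terms reassemble into $\delx a^n$ with the $b^n$-coefficient turning out to be exactly $R$. What forces this is the Jacobi equation for $a^n$ in tandem with $\delxb h_3 = h_2 R$, and the final passage to the Killing-type expression and to $\mce(e^n)=0$ rests entirely on the Gauss equation. I expect no difficulty beyond keeping the weights and the factors of $\im$ and $\gamma$ straight.
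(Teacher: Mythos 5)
Your computations are all correct, and your central formula $\delxb e^n=\tfrac{1}{\gamma}\hb_2\,\delx a^n-\tfrac{\im R}{\gamma}\,b^n$ is exactly the identity at the heart of the paper's own proof, though you reach it by a genuinely different route: the paper obtains it from the $\xi$-component $e^n=-\tfrac{2}{\im\gamma}\delx b^n$ of the $b^n$-equation together with the commutator \eqref{3anticommut} applied to $b^n$ (saving the Jacobi equation of $a^n$ for the compatibility check), whereas you differentiate the defining formula for $e^n$ directly, using $\mce(a^n)=0$ and $\delxb h_3=h_2R$, so that your derivation simultaneously \emph{is} the formal compatibility verification for the $b^n$-equation. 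That is an economy the paper's proof does not make explicit.

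The one point to flag is the scope of your argument for $\mce(e^n)=0$. The paper deliberately examines ``the equation for $b^n$ only'': a solution of the single $b^n$-equation, with $e^n$ given by the first displayed expression, already forces $e^n$ to be a Jacobi field. This single-equation version is what the recursion actually uses --- Lem.~\ref{lem:recursionscheme} and Rem.~\ref{rem:babyrecursion} solve only the $b$-type equation and then read off the new Jacobi field. Your proof of the Jacobi property instead passes through the clean identity $\delxb e^n=-\im\hb_2c^n-\im\gamma b^n$, which requires a solution $c^n$ of the second equation \emph{and} the relation $(\star)$ linking the two; by declaring that you will ``use $(\star)$ freely throughout,'' you have quietly restricted to the joint system, and your Jacobi step does not apply when only $b^n$ is available. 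The gap closes immediately from what you already have, in either of two ways: (i) differentiate your intermediate formula by $\delx$, inserting $\delx b^n=-\tfrac{\im\gamma}{2}e^n$ and $\delx R=-\hb_2 h_3$, and then eliminate $\delx^2a^n$ via the defining relation $\im\gamma h_2e^n=-\im\delx^2a^n+h_3b^n$; the $b^n$-terms cancel and one gets $\delx\delxb e^n=-\tfrac12(\gamma^2+h_2\hb_2)e^n$ with no mention of $c^n$; or (ii) given $b^n$, \emph{define} $c^n:=\tfrac{1}{\gamma}\left(\im\delx a^n-h_2b^n\right)$ so that $(\star)$ holds by construction, and verify (a short computation using $\mce(a^n)=0$, $\delxb h_2=0$, and expression 1 for $e^n$) that this $c^n$ solves the second equation of \eqref{eq:bncn}, after which your argument applies verbatim.
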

\begin{proof}
Let us examine the equation for $\,b^n$ only. Differentiating the given equation $e^n=-\frac{2}{\im\gamma}\delx b^n$, we have
\begin{align*}
\delxb e^n&=-\frac{2}{\im\gamma}\delxb\delx b^n\\
              &=-\frac{2}{\im\gamma}(\delx \delxb b^n -\frac{R}{2}b^n)\\
              &=-\frac{2}{\im\gamma}\left(\delx (-\frac{\im \hb_2}{2} a^n) -\frac{R}{2}b^n \right).
\end{align*}
Since $e^n$ is supposed to be a Jacobi field, the Jacobi equation $\mce(e^n)=0$ gives after simplification
\[ \im\gamma e^n=h_2^{-1}(-\im\delx^2a^n+h_3 b^n).
\]

For the compatibility of the closed differential equation for $b^n$, it suffices to check
$\delxb( \im\gamma e^n)-\delx( \im\hb_2  a^n)=R b^n.$ This follows similarly from repeated application of the identity \eqref{3anticommut}.
\end{proof}
We should remark here that although the formal integrability of Eq.~\eqref{eq:bncn} is a necessary condition for the solvability of these equations, it is nevertheless not a sufficient condition. In fact the space of un-differentiated conservation laws is the obstruction space. The recursion process relies essentially on the fact that the obstruction to solving Eq.~\eqref{eq:bncn} vanishes when $a^n$ is a Jacobi field.

\two
In order to keep track of jet orders, let us rearrange the index notation for the coefficients of formal Killing field to
\[ \{  a^0, b^0, c^0, e^0, f^0, g^0, \, ...\} \to
\{a^{1}, -b^{2}, -c^{2}, a^{3}, -b^{4}, c^{4},\,... \}.
\]
The structure equation for the enhanced prolongation with the new index notation is given below 
for positive index $n\geq 1.$
\begin{align}
a^{1}&= 0,\, b^2=-\im\gamma h_2^{-\frac{1}{2}},\,c^2=\im h_2^{\frac{1}{2}},\n\\
a^3   &= h_2^{-\frac{3}{2}}h_3,\n\\
\ed  a^{2n-1}&\equiv  (\im \gamma c^{2n}+\im h_2 b^{2n})\xi +(\im \gamma  b^{2n-2}+\im \hb_{2} c^{2n-2})\xib,\label{eq:newrecursion}\\
\ed  b^{2n} - \im  b^{2n} \rho&\equiv  \frac{\im \gamma }{2} a^{2n+1} \xi +\frac{\im}{2}  \hb_{2}a^{2n-1} \xib,\n\\
\ed  c^{2n} +\im c^{2n}  \rho&\equiv \frac{\im}{2}  h_2 a^{2n+1}  \xi +\frac{ \im \gamma }{2} a^{2n-1} \xib,\n\\
\ed  a^{2n+1}&\equiv  (\im \gamma c^{2n+2}+\im h_2 b^{2n+2})\xi + (\im \gamma  b^{2n}+\im \hb_{2} c^{2n})\xib,\n\\
\ed  b^{2n+2} - \im  b^{2n+2} \rho&\equiv \frac{\im \gamma }{2} a^{2n+3} \xi + \frac{\im}{2}  \hb_{2}a^{2n+1} \xib ,\n\\
\ed  c^{2n+2} +\im c^{2n+2}  \rho&\equiv \frac{\im}{2}  h_2 a^{2n+3}\xi  + \frac{ \im \gamma }{2} a^{2n+1} \xib, \mod\;\iinf.\n
\end{align}
The case for $n\geq 1$ suffices for our purpose, and we shall not restate the reality condition \eqref{eq:newreality}. The first few terms  are recorded in the table.

Consider the following schematic diagram.

\be\label{recursiondiagram}
\begin{split}
\xymatrix{  & b^{2n} \ar[ld]_{\frac{2}{\im\hb_2}\delxb} \ar[rd]^{\frac{2}{\im\gamma}\delx} &
                 & b^{2n+2} \ar[ld]_{\frac{2}{\im\hb_2}\delxb} \ar[rd]^{\frac{2}{\im\gamma}\delx}  & \\
           ...\quad   a^{2n-1}  & & a^{2n+1} & & a^{2n+3}\quad...\\
                    & c^{2n}\ar[lu]^{\frac{2}{\im\gamma}\delxb}\ar[ru]_{\frac{2}{\im h_2}\delx}  &
                    & c^{2n+2}\ar[lu]^{\frac{2}{\im\gamma}\delxb}\ar[ru]_{\frac{2}{\im h_2}\delx}   &   }
\end{split}
\ee
In view of  Lem.~\ref{lem:bncn} the middle $a^{2n+1}$-sequence is a sequence of Jacobi fields,
and Eq.~\eqref{eq:bncn} is the recursion relation to be solved to generate this sequence.
\begin{figure}
\be\label{eq:recursioncoeff}\begin{array}{ccl}\n
 &\vline \\
\hline
a^1 &\vline&  0\\
\hline
b^2 &\vline&  -\im\gamma h_2^{-\frac{1}{2}}  \\
\hline
c^2 &\vline&   \im h_2^{\frac{1}{2}} \\
\hline
a^3 &\vline& z_3 \\
\hline
b^4 &\vline& -\frac{\im}{2} h_2^{-\frac{1}{2}}(z_4-\frac{5}{4}z_3^2)\\
\hline
c^4 &\vline&  -\frac{\im}{2\gamma} h_2^{\frac{1}{2}}(z_4-\frac{7}{4}z_3^2)\\
\hline
a^5 &\vline& -\frac{1}{\gamma}(z_5-5 z_4z_3 +\frac{35}{8}z_3^3 )\\
\hline
b^6 &\vline& {\frac {\im}{2\gamma}} h_2^{-\frac{1}{2}}\left(z_{{6}}
-7z_{{5}}z_{{3}}
-\frac{21}{4}z_{{4}}^{2}
+\frac{231}{8}z_{{4}}z_{{3}}^{2}
-\frac{1155}{64}z_{{3}}^{4}  \right)\\
\hline
c^6 &\vline& {\frac {\im}{2\gamma^2}} h_2^{\frac{1}{2}}\left(z_{{6}}
-8 z_{{5}}z_{{3}}
-\frac{19}{4} z_{{4}}^{2}
+\frac{259}{8} z_{{4}}z_{{3}}^{2}
-\frac{1365}{64} z_{{3}}^{4}  \right)
\end{array}
\ee
\caption{Formal Killing field coefficients}
\end{figure}
\subsection{Recursion on $\Xh{\infty}_*$}\label{sec:recursionxinf}
In this section we give the details of the recursion relation described in Lemma \ref{lem:bncn}. Define the polynomial ring
\[\mcr:=\C[z_3,z_4,\, ... \;].
\]
The recursion will produce a sequence of Jacobi fields which are weighted homogeneous polynomials in $\mcr$ with the spectral weights given in Sec.~\ref{sec:homogeneousdecomp}.  

\two
Let us start by re-writing Eqs.~\eqref{eq:bncn} in the new index notation.
Let $a^{2n+1}$ be a Jacobi field. We wish to solve the first order differential equations for functions $\,b^{2n+2},\,c^{2n+2}$ on $\Fh{\infty}$;
\begin{align}\label{eq:b2n+2}
&\ed b^{2n+2} -\im  b^{2n+2} \rho  \equiv \frac{\im \gamma }{2} a^{2n+3} \xi +\frac{\im}{2}  \hb_{2}a^{2n+1} \xib  \equiv 0,\\
&\ed c^{2n+2} +\im c^{2n+2} \rho  \equiv \frac{\im}{2}  h_2 a^{2n+3}  \xi+\frac{ \im \gamma }{2} a^{2n+1} \xib \equiv 0, \mod\;\iinf,\n 
\end{align}
where
\begin{align}
a^{2n+3}&=\frac{1}{\gamma h_2}\Big(- \delx^2a^{2n+1}+\im h_3 b^{2n+2}\Big),\n\\
&= \frac{1}{\gamma h_2}\Big(- \delx^2a^{2n+1}+  \frac{h_3}{ h_2} (\delx a^{2n+1}-\im\gamma c^{2n+2})\Big).\n
\end{align}
These equations suggest to consider the scaled function $\hat{b}^{2n+2}=h_2^{\frac{1}{2}}b^{2n+2}$, which is well defined on $\xinfh_*$.  
Set
\[ 2\im \Upsilon(a^{2n+1}):= h_2^{-\frac{1}{2}}(\delx^2 a^{2n+1})\xi - h_2^{\frac{1}{2}}\hb_2 a^{2n+1}\xib.
\]
Then Eq.~\eqref{eq:b2n+2} on $\Fh{\infty}$ is equivalent to  the following equation on $\xinfh_*$.
\be\label{eq:bh2n+2}
\ed \hat{b}^{2n+2}\equiv \Upsilon(a^{2n+1})\mod \Ih{\infty}.
\ee

Note from Lemma \ref{lem:bncn} that $\ed\Upsilon(a^{2n+1})\equiv 0 \mod \Ih{\infty}$, and $\Upsilon(a^{2n+1})$ defines an un-differentiated conservation law. Our claim is that this is trivial in cohomology.
\begin{lem}\label{lem:recursionscheme}
Let $a^{2n+1}\in\mcr$ be a nontrivial Jacobi field which is weighted homogeneous of odd spectral weight $2n-1$. From the analysis of Sec.~\ref{sec:Jacobifields}, up to scaling we may assume (linear in the highest weight term  $z_{2n+1}$)
\be\label{eq:a2n+1scale}
 a^{2n+1}=(-\frac{1}{\gamma})^{n-1} z_{2n+1}+\tn{(lower order terms)}.
\ee
Then the associated conservation law $[\Upsilon(a^{2n+1})]$ is trivial and there exists a function  $\hat{b}^{2n+2}\in\mcr$ which solves Eq.~\eqref{eq:bh2n+2}.
\end{lem}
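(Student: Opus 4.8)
The two assertions are really the same computation seen from two sides: a function $\hat b^{2n+2}$ solving \eqref{eq:bh2n+2} is precisely a primitive modulo $\Ih{\infty}$ of the $1$-form $\Upsilon(a^{2n+1})$, so the content is to exhibit $[\Upsilon(a^{2n+1})]=0$ in $\mcc^{(\infty)}$ \emph{together with} a primitive that happens to lie in $\mcr$. That $\Upsilon(a^{2n+1})$ is $\underline{\ed}$-closed, and hence represents a class, is already supplied by Lemma~\ref{lem:bncn}. The plan is therefore to split the argument into (A) triviality of the class and (B) polynomiality of the primitive.

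For (A) I would argue by spectral weight. Rewriting \eqref{eq:bh2n+2} in the balanced coframe $\{\omega,\omb\}$ of $\xinfh_*$ gives
\[
2\im\,\Upsilon(a^{2n+1})=h_2^{-1}(\delx^2 a^{2n+1})\,\omega-r\,a^{2n+1}\,\omb,\qquad r=|h_2|,
\]
and since $\tn{weight}(\omega)=-1$, $\tn{weight}(\omb)=+1$ and $\tn{weight}(a^{2n+1})=2n-1$, the form $\Upsilon(a^{2n+1})$ is spectrally homogeneous of \emph{even} weight $2n$. By Lemma~\ref{lem:hrepJacobi} a homogeneous conservation law of weight $m$ is generated by a Jacobi field of weight $m+2$, so were $[\Upsilon(a^{2n+1})]$ nontrivial its generating Jacobi field would have even weight $2n+2\ge 4$. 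However, the classification of Jacobi fields furnished by Corollary~\ref{cor:zurelation} together with \cite{Fox2011} shows that every Jacobi field is a combination of the classical ones (weight $0$) and the higher-order ones $a^{2m+1}$ (odd weight); no Jacobi field of even weight $\ge 4$ exists. Hence the generating Jacobi field vanishes and $[\Upsilon(a^{2n+1})]=0$, consistent with the dimension bound Theorem~\ref{thm:dimbound}. This collapse of the obstruction, happening precisely because $a^{2n+1}$ is a Jacobi field, is the heart of the lemma and the step I expect to require the most care.

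For (B), triviality yields a primitive $g$ on $\xinfh_*$ with $\underline{\ed}g=\Upsilon(a^{2n+1})$; replacing $g$ by its weight-$2n$ component (legitimate since $\underline{\ed}$ preserves weight) I may assume $g$ is homogeneous of weight $2n$, and this primitive is unique, because two primitives differ by a $\underline{\ed}$-closed function, which is constant by Lemma~\ref{lem:lemma5.4}, and a nonzero constant has weight $0\ne 2n$. From $\mcs(g)=2n\,g\in C^{\infty}(\xinfh_*)$ and Lemma~\ref{lem:Sstable}, $g$ is a function of $\{h_2\hb_2,z_j,\zb_j\}$. The decisive point is that the $\omega$-component of \eqref{eq:bh2n+2} simplifies, using $\delx=h_2^{1/2}\del_{\omega}$ and $\del_{\omega}h_2^{1/2}=h_2^{1/2}z_3$, to
\[
\del_{\omega}g=\frac{1}{2\im}\bigl(\del_{\omega}^{2}a^{2n+1}+z_3\,\del_{\omega}a^{2n+1}\bigr)\in\mcr .
\]
Because $\del_{\omega}\zb_j=\hat{\ol{T}}_j\,r^{-1}$ is the only source of odd powers of $r$, the odd-$r$ part of $\del_{\omega}g$ must vanish, which forces $g$ to be independent of the $\zb_j$ (the $\zb$-holomorphic part of $\del_{\omega}$ has only $\zb$-constants in its kernel, by the usual top-index peeling). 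Expanding the remaining $g\in\C[r^{\pm2},z_3,z_4,\ldots]$ in powers of $r^2$ and invoking $\hat{T}_j\in\C[r^2,z_3,\ldots]$ (Lemma~\ref{lem:Tjh}) together with the same peeling argument then kills every $r^2$-power, leaving $g\in\mcr$. Setting $\hat b^{2n+2}=g$ completes the construction; its $\omb$-component $\del_{\omb}\hat b^{2n+2}=\frac{\im}{2}r\,a^{2n+1}$ then holds automatically, and the explicit value $\hat b^4=-\frac{\im}{2}(z_4-\frac{5}{4}z_3^2)$ read off from the table of formal Killing field coefficients serves as a consistency check.
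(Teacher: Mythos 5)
Your treatment of the first assertion --- triviality of $[\Upsilon(a^{2n+1})]$ --- is the paper's own proof: the form is homogeneous of even spectral weight $2n$, so by Lem.~\ref{lem:hrepJacobi} its generating Jacobi field would be homogeneous of even weight $2n+2$, and no such nontrivial Jacobi field exists by the classification (the paper cites Thm.~\ref{thm:higherNoether}, whose proof rests on exactly the sources you invoke directly, Cor.~\ref{cor:zurelation} and \cite{Fox2011}). So part (A) is fine and is essentially identical to the paper's argument.

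Part (B) is where you genuinely diverge from the paper, and where your argument has real gaps. The paper does not prove polynomiality of the primitive abstractly at all: it defers the existence of $\hat{b}^{2n+2}\in\mcr$ to the explicit differential-algebraic inductive formula of Sec.~\ref{sec:inductiveformula} (Prop.~\ref{prop:inductiveformula} and Thm.~\ref{thm:abcnormalform}), namely the Pinkall--Sterling expression $b^{2n+2}=\frac{\im}{2h_2}(-\delx a^{2n+1}-\frac{1}{2}h_2^{1/2}\hat{\tn{m}}_n)$, whose ingredient $\hat{\tn{m}}_n$ is built algebraically from lower-order Killing coefficients and hence lies in $\mcr$ by induction; this is a constructive proof, and the paper explicitly warns that formal integrability of Eq.~\eqref{eq:bncn} is necessary but not sufficient, so the existence half is the essential content. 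Your abstract route has two gaps. First, ``replacing $g$ by its weight-$2n$ component'' is not legitimate as stated: the spectral symmetry $\mcs$ is non-local (its coefficients live on $\Zh$), so a general smooth function on $\xinfh_*$ has no weight decomposition. The correct way to force homogeneity --- carried out by the paper in the proof of Prop.~\ref{prop:nontrivialvarphin} --- is to apply $\mcl_{\mcs}$ to $\ed g\equiv\Upsilon(a^{2n+1})\mod\iinfh$ on $\Zh$, deduce $\ed\big(\mcs(g)-2n\,g\big)\equiv 0\mod\mcjh$, and invoke Lem.~\ref{lem:noZintegral}; that lemma settles only the case $\epsilon\neq 0$, and the case $\epsilon=0$ needs the separate argument given there. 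You address neither.

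Second, your peeling step fails as stated. The vanishing of the odd-$r$ part of $\del_\omega g$ gives the single relation $\sum_j(\partial g/\partial\zb_j)\,\ol{\hat{T}}_j=0$, and since every $\ol{\hat{T}}_j$ involves only conjugate indices $<j$, no new top variable is introduced: the kernel of the single vector field $\sum_j\ol{\hat{T}}_j\,\partial_{\zb_j}$ in finitely many variables is far larger than the ``$\zb$-constants.'' Concretely, $\zb_3\,\ol{\hat{T}}_4-2\,\zb_4\,\ol{\hat{T}}_3=(\tfrac{5}{2}\gamma^2-\tfrac{7}{2}r^2)\zb_3^2-2(\gamma^2-r^2)\zb_4$ is a nonconstant element of that kernel, so this relation alone cannot force $\zb$-independence. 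The peeling that does work uses the $\omb$-component of \eqref{eq:bh2n+2}: there $\del_{\omb}\zb_j=\zb_{j+1}-\frac{j}{2}\zb_3\zb_j$ introduces the new top variable $\zb_{k+1}$ linearly, which kills $\partial g/\partial\zb_k$ and permits downward induction (this is the mechanism behind Lem.~\ref{lem:Jacobinormal} and Lem.~\ref{lem:delbpoly0}); only afterwards do holomorphy plus homogeneity give polynomiality. So while your overall strategy for (B) could in principle be repaired with the paper's tools, as written the existence of $\hat{b}^{2n+2}\in\mcr$ is not established.
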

\begin{proof}
Given $[\Upsilon(a^{2n+1})]$, let $\Phi_u$ be the corresponding differentiated conservation law with the generating Jacobi field $u$. From the weighted homogeneity of $\Upsilon(a^{2n+1})$ and by Lem.~\ref{lem:hrepJacobi}, $u\in\mcr$ is also weighted homogeneous and it is a possibly trivial Jacobi field of even spectral weight $2n+2$.  From the classification of Jacobi fields, Thm.~\ref{thm:higherNoether}, such $u$ must be trivial.

The existence of an anti-derivative  $\hat{b}^{2n}\in\mcr$ will be proved in the next section, Prop.~\ref{prop:inductiveformula} and  Thm.~\ref{thm:abcnormalform}.
\end{proof}
The differential algebraic recursion formulae to be defined in the next section, which does not involve solving a differential equation, will in fact show that the following refinement of this lemma is true.
\begin{lem}
Let $a^{2n+1}\in\mcr$ be a Jacobi field as in Eq.~\eqref{eq:a2n+1scale} which is weighted homogeneous of odd spectral weight $2n-1$. Let $\hat{b}^{2n+2}$ be a solution to the recursive equation Eq.~\eqref{eq:bh2n+2}.
\begin{enumerate}[\qquad a)]
\item we may take $\hat{b}^{2n+2}\in\mcr$ such that it is weighted homogeneous of even spectral weight $2n$, and with the normal form
\[\hat{b}^{2n+2}=\frac{\im(-1)^n}{2\gamma^{n-1}} z_{2n+2}+\tn{(lower order terms)}.
\]
\item the new Jacobi field $a^{2n+3}\in\mcr$ is weighted homogeneous of odd spectral weight $2n+1$,
and with the normal form
$$a^{2n+3}=(-\frac{1}{\gamma})^n z_{2n+3}+\tn{(lower order terms)}.$$
\end{enumerate}
\end{lem}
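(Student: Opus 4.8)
The plan is to carry out the entire argument algebraically inside the graded polynomial ring $\mcr=\C[z_3,z_4,\ldots]$, using the spectral-weight grading $\deg z_j=j-2$ of Section~\ref{sec:homogeneousdecomp} and the degree-raising derivation $\del_{\omega}$ determined by $\del_{\omega}z_j=z_{j+1}-\frac{j}{2}z_3z_j$. First I would rewrite the recursion \eqref{eq:bh2n+2} purely in terms of $\del_{\omega}$ acting on $\mcr$. From $\delx h_2=h_3=h_2^{3/2}z_3$ one gets $\del_{\omega}h_2^{\frac12}=\frac12 z_3 h_2^{\frac12}$, hence the operator identity $h_2^{-1}\delx^2=\del_{\omega}^2+\frac12 z_3\del_{\omega}$ on $C^{\infty}(\xinfh_*)$. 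Since $\omega=h_2^{\frac12}\xi$, the $\omega$-component of \eqref{eq:bh2n+2} becomes
\[ \del_{\omega}\hat b^{2n+2}=-\frac{\im}{2}\Big(\del_{\omega}^2 a^{2n+1}+\tfrac12 z_3\,\del_{\omega}a^{2n+1}\Big), \]
whose right-hand side lies in $\mcr$ and is homogeneous of weight $2n+1$ because $a^{2n+1}\in\mcr$ is homogeneous of weight $2n-1$ by \eqref{eq:a2n+1scale}. The $\omb$-component carries a factor $r=(h_2\hb_2)^{1/2}$, but after clearing it the equation is a polynomial identity in $\C[r^2,z_3,z_4,\ldots]$, and its validity is guaranteed, once the $\omega$-component is solved, by the closedness $\ed\Upsilon(a^{2n+1})\equiv 0$ established in Lemma~\ref{lem:bncn}.

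For part a), existence of a polynomial antiderivative $\hat b^{2n+2}\in\mcr$ is supplied by Lemma~\ref{lem:recursionscheme}. To upgrade it to the homogeneous normal form I would use that $\del_{\omega}$ is injective on each positive-weight homogeneous summand of $\mcr$: if $z_m$ is the highest-index variable occurring in a homogeneous $f$, then $\del_{\omega}f$ contains the monomial $(\partial_{z_m}f)\,z_{m+1}$, which no other term of $\del_{\omega}f$ can cancel, so $\del_{\omega}f=0$ forces $f$ constant. Decomposing the solution of Lemma~\ref{lem:recursionscheme} into weight-homogeneous parts and matching weights then shows that, modulo an irrelevant additive constant, $\hat b^{2n+2}$ is homogeneous of weight $2n$ and uniquely determined. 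The leading coefficient is read off by induction from $\del_{\omega}z_j=z_{j+1}+(\text{terms free of }z_{j+1})$: assuming $a^{2n+1}=(-\tfrac1\gamma)^{n-1}z_{2n+1}+\cdots$, the top variable in the displayed right-hand side is $z_{2n+3}$ with coefficient $-\frac{\im}{2}(-\tfrac1\gamma)^{n-1}$, whence $\hat b^{2n+2}=\frac{\im(-1)^n}{2\gamma^{n-1}}z_{2n+2}+\cdots$, as claimed.

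For part b), I would use the recursion formula $a^{2n+3}=\frac{1}{\gamma h_2}\big(-\delx^2 a^{2n+1}+\im h_3 b^{2n+2}\big)$ from Lemma~\ref{lem:bncn}. Rewriting it with the same operator identity together with $h_2^{-3/2}h_3=z_3$ and $b^{2n+2}=h_2^{-1/2}\hat b^{2n+2}$ gives
\[ a^{2n+3}=\frac1\gamma\Big(-\del_{\omega}^2 a^{2n+1}-\tfrac12 z_3\,\del_{\omega}a^{2n+1}+\im z_3\,\hat b^{2n+2}\Big), \]
which is manifestly an element of $\mcr$, homogeneous of weight $2n+1$. The unique $z_{2n+3}$-contribution comes only from $-\frac1\gamma\del_{\omega}^2 a^{2n+1}$ and has coefficient $(-\tfrac1\gamma)^n$, giving the stated normal form. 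The base case $n=1$ is verified directly against the table of formal Killing coefficients \eqref{eq:recursioncoeff}, where $a^3=z_3$, $\hat b^4=-\frac{\im}{2}(z_4-\frac54 z_3^2)$, and $a^5=-\frac1\gamma(z_5-5z_4z_3+\frac{35}{8} z_3^3)$.

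The step I expect to be the main obstacle is the clean algebraic reduction of the first paragraph: establishing the operator identity $h_2^{-1}\delx^2=\del_{\omega}^2+\frac12 z_3\del_{\omega}$ and, more delicately, confirming that the $\omb$-component of \eqref{eq:bh2n+2}—which genuinely involves the non-polynomial factor $r$ and the quantities $\hat T_j$—imposes no constraint beyond the homogeneous $\del_{\omega}$-integration, so that the weight-$2n$ representative indeed solves the full closed equation. Once this is in place, the remaining work (homogeneity via injectivity of $\del_{\omega}$, and the two leading-coefficient inductions) is routine bookkeeping with the filtration of $\mcr$ by highest-index variable.
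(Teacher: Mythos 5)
Your proof is correct, but it takes a genuinely different route from the paper's. The paper proves this lemma (and the existence statement of Lemma~\ref{lem:recursionscheme} along with it) via the explicit Pinkall--Sterling type inductive formula of Sec.~\ref{sec:inductiveformula}: the quantity $\hat{\tn{m}}_n$, built quadratically out of the earlier Killing coefficients, yields the closed-form expressions \eqref{eq:bc2n+2}, \eqref{eq:a2n+3} for $b^{2n+2}, c^{2n+2}, a^{2n+3}$, from which polynomiality, weighted homogeneity, and the leading coefficients are read off directly (Prop.~\ref{prop:inductiveformula}, Thm.~\ref{thm:abcnormalform}); no differential equation is ever integrated. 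You instead argue structurally inside the graded ring $\mcr$: your operator identity $h_2^{-1}\delx^2=\del_{\omega}^2+\frac{1}{2}z_3\del_{\omega}$ is correct, existence of a polynomial antiderivative is imported from Lemma~\ref{lem:recursionscheme}, and homogeneity, uniqueness up to an additive constant, and the leading terms then follow from the fact that $\del_{\omega}$ raises spectral weight by one and annihilates no non-constant homogeneous polynomial (your highest-variable argument, an elementary polynomial counterpart of Lemma~\ref{lem:lemma5.4}); your leading-coefficient inductions check out against the table \eqref{eq:recursioncoeff}. Two remarks. First, the obstacle you flag at the end, namely whether the weight-$2n$ representative also solves the $\omb$-component of \eqref{eq:bh2n+2}, is already resolved by your own weight decomposition: the solution supplied by Lemma~\ref{lem:recursionscheme} satisfies both components, and both components respect the grading (give $r$ weight zero and use Lem.~\ref{lem:Tjh}), so its weight-$2n$ part satisfies both; alternatively, closedness of $\Upsilon(a^{2n+1})$ makes the discrepancy $g=\del_{\omb}\hat{b}^{2n+2}-\Upsilon_{\omb}$ satisfy $\del_{\omega}g=0$, hence $g$ is constant, and a constant that is homogeneous of nonzero weight $2n-1$ must vanish. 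Second, your route complements rather than replaces the paper's machinery: the polynomial existence you take from Lemma~\ref{lem:recursionscheme} is itself proved in the paper by the inductive formula of Sec.~\ref{sec:inductiveformula}, and $\del_{\omega}:\mcr\to\mcr$ is not surjective (for instance $z_3$ has no preimage, since a preimage would have weight zero and hence be constant), so the graded-ring argument alone cannot manufacture the antiderivative; the explicit recursion carries that burden, and is moreover what is used later, e.g.\ in Sec.~\ref{sec:polynomialKF}.
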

See the analysis of next section for the proof. 
A similar statement is true for the coefficient $c^{2n+2}$, and shall be omitted.

\two
Before we proceed to the differential algebraic inductive formulae for the formal Killing coefficients, let us give a description of the recursion relation in terms of the adapted $\omega$-frame and $z_j, \zb_j$ coordinates.  

For a scalar function $A:\Xh{\infty}_* \to \C$, define the covariant derivative 
\[
\ed A \equiv A_{\omega}\omega + A_{\omb}\omb \; \mod \Ih{\infty}
\]
where, for reference, we note that $A_{\omega}=h_1^{-1}A_{\xi}$.  
From the structure equation \eqref{eq:zetastruct},
$$\mce(A)=r A_{\omega, \omb}+\frac{1}{2} (\gamma^2 + r^2) A.$$

Note in passing,
\begin{lem}\label{lem:mcez}
$$\mce(z_j)=\hat{T}_{j+1}-\frac{j}{2}(\hat{T}_3z_j+\hat{T}_jz_3)+\frac{1}{2}(\gamma^2+r^2)z_j.
$$
It follows that
$$\mce(\mcr)\subset \mcr\oplus (h_2\hb_2)\mcr.
$$
\end{lem}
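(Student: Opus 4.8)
The plan is to establish Lemma~\ref{lem:mcez} by a direct computation using the structure equations \eqref{eq:zetastruct} on $\Xh{\infty}_*$, organized around the $\omega$-frame rather than the $\xi$-frame. First I would recall from the structure equation the formula for $\ed z_j$,
\[
\ed z_j = \zeta_j -\tfrac{j}{2} z_j\zeta_2+\left(z_{j+1}-\tfrac{j}{2} z_3 z_j \right) \omega + \hat{T}_{j} r^{-1}\omb, \quad j \geq 3,
\]
so that modulo $\iinfh$ (i.e., dropping the $\zeta_j,\zeta_2$ terms) one reads off the total derivatives
\[
\del_{\omega} z_j = z_{j+1}-\tfrac{j}{2} z_3 z_j, \qquad \del_{\omb} z_j = \hat{T}_{j} r^{-1}.
\]
The idea is then to apply $\mce(A)=r A_{\omega,\omb}+\tfrac12(\gamma^2+r^2)A$ with $A=z_j$, carrying out the mixed second derivative carefully. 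The bookkeeping hazard is that $\del_\omega$ and $\del_{\omb}$ act nontrivially on the coefficients $z_3$, $z_{j+1}$ and on $r$, so I would compute $\del_{\omb}(\del_\omega z_j)$ using $\del_{\omb} z_3 = \hat{T}_3 r^{-1}$, $\del_{\omb} r = \tfrac{r}{2}\zb_3$ (equivalently $\del_\omb r^2 = r^2\zb_3$ read from $\ed r$), and the inductive defining relation $\hat{T}_{j+1}= \hat{T}_{j,\omega}+\tfrac{j-1}{2}z_3\hat{T}_j+\tfrac{j}{2}(\gamma^2-r^2)z_j$ from the last line of \eqref{eq:zetastruct}.

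Concretely, the main step is to show
\[
r\,\del_\omega(\del_\omb z_j) = \del_\omega \hat{T}_j - \tfrac{j}{2} z_3 \hat{T}_j,
\]
using $\del_\omb z_j = \hat{T}_j r^{-1}$ together with $\del_\omega r^{-1} = -\tfrac12 z_3 r^{-1}$ (from $\del_\omega r^2=r^2 z_3$), so that the $r^{-1}$ and $r$ factors cancel and one is left with a polynomial expression. By the commutation identity for the natural operators — the analogue on $\Xh{\infty}_*$ of \eqref{3anticommut}, which here reads $[\del_\omega,\del_\omb]=\tfrac{j}{2}R\,r^{-1}\cdots$ appropriately weighted — I can equally compute $r\,\del_\omb(\del_\omega z_j)$ and then invoke $\hat{T}_3=R=\gamma^2-r^2$. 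Substituting the inductive formula for $\hat{T}_{j+1}=\hat{T}_{j,\omega}+\tfrac{j-1}{2}z_3\hat{T}_j+\tfrac{j}{2}(\gamma^2-r^2)z_j$ to eliminate $\hat{T}_{j,\omega}=\del_\omega\hat{T}_j$, the terms reorganize into
\[
\mce(z_j)=\hat{T}_{j+1}-\tfrac{j}{2}(\hat{T}_3 z_j+\hat{T}_j z_3)+\tfrac12(\gamma^2+r^2)z_j,
\]
which is exactly the claimed formula once one recalls $\hat{T}_3=R=\gamma^2-r^2$ and reconciles the $\tfrac{j}{2}(\gamma^2-r^2)z_j$ contribution with the $-\tfrac{j}{2}\hat{T}_3 z_j$ and $+\tfrac12(\gamma^2+r^2)z_j$ terms.

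For the second assertion, I would argue that $\mce$ maps $\mcr=\C[z_3,z_4,\dots]$ into $\mcr\oplus(h_2\hb_2)\mcr$. By Lemma~\ref{lem:Tjh}, each $\hat{T}_j\in\C[r^2,z_3,z_4,\dots]$, and since $r^2=h_2\hb_2$, every $\hat T_j$ lies in $\mcr\oplus(h_2\hb_2)\mcr$ after collecting the $r^2$-free part into $\mcr$ and the remainder into $(h_2\hb_2)\mcr$. The explicit formula for $\mce(z_j)$ just derived expresses it as a polynomial in $\hat{T}_{j+1},\hat{T}_3,\hat{T}_j,z_3,z_j$ and $(\gamma^2+r^2)$; substituting $r^2=h_2\hb_2$ and expanding shows $\mce(z_j)$ splits into an $r^2$-independent piece (in $\mcr$) and a piece divisible by $r^2=h_2\hb_2$. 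Since $\mce$ is a second-order differential operator and the generators $z_j$ map into $\mcr\oplus(h_2\hb_2)\mcr$, the general claim for arbitrary elements of $\mcr$ follows by the Leibniz rule together with the derivative formulas $\del_\omega z_j,\del_\omb z_j$ above, both of which preserve $\mcr\oplus(h_2\hb_2)\mcr$ (the $\del_\omb z_j=\hat T_j r^{-1}$ factor combines with the compensating $r$ in $\mce$ so that no genuine $r^{-1}$ survives). I expect the main obstacle to be the careful cancellation of the $r^{\pm1}$ factors in the mixed derivative so that the final expression is manifestly polynomial; the weighted-homogeneity bookkeeping under the spectral symmetry from Sec.~\ref{sec:homogeneousdecomp} serves as a useful consistency check at each stage, since both sides must carry spectral weight $j-2$.
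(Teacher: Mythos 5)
Your overall strategy is the right one, and indeed the only sensible one: the paper states this lemma without proof (``Note in passing''), leaving it as exactly the direct computation you set up, with $\del_{\omega}z_j=z_{j+1}-\tfrac{j}{2}z_3z_j$ and $\del_{\omb}z_j=\hat{T}_jr^{-1}$ read off from \eqref{eq:zetastruct}. However, your key displayed intermediate is wrong as written. Since $\ed r=\tfrac{r}{2}(z_3\omega+\zb_3\omb)$ gives $\del_{\omega}(r^{-1})=-\tfrac{1}{2}z_3r^{-1}$, the product rule yields $r\,\del_{\omega}(\del_{\omb}z_j)=\del_{\omega}\hat{T}_j-\tfrac{1}{2}z_3\hat{T}_j$, with coefficient $\tfrac{1}{2}$, not $\tfrac{j}{2}$. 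This matters: substituting $\hat{T}_{j,\omega}=\hat{T}_{j+1}-\tfrac{j-1}{2}z_3\hat{T}_j-\tfrac{j}{2}(\gamma^2-r^2)z_j$ into your version produces $-\tfrac{2j-1}{2}z_3\hat{T}_j$ and the lemma's formula does not appear; with the correct $\tfrac{1}{2}$ the $z_3\hat{T}_j$-terms sum to $-\tfrac{j}{2}z_3\hat{T}_j$ and everything matches via $\hat{T}_3=R=\gamma^2-r^2$. Relatedly, your appeal to a curvature commutator ``$[\del_{\omega},\del_{\omb}]=\tfrac{j}{2}R\,r^{-1}\cdots$'' is misplaced: $z_j$ is a genuine weight-zero function on $\Xh{\infty}_*$, and since $\ed\omega\equiv\ed\omb\equiv0\mod\iinfh$ the mixed total derivatives of functions commute outright; the identity \eqref{3anticommut} concerns weighted sections of $K^j$. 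In fact, exploiting this commutativity in the opposite order gives the formula in one line, with no need for $\del_{\omega}r$ or the recursion for $\hat{T}_{j+1}$: $r\,\del_{\omb}(\del_{\omega}z_j)=r\,\del_{\omb}\bigl(z_{j+1}-\tfrac{j}{2}z_3z_j\bigr)=\hat{T}_{j+1}-\tfrac{j}{2}(\hat{T}_3z_j+\hat{T}_jz_3)$.

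For the containment $\mce(\mcr)\subset\mcr\oplus(h_2\hb_2)\mcr$ there is a genuine gap. Lemma~\ref{lem:Tjh} only gives $\hat{T}_j\in\C[r^2,z_3,z_4,\dots]$, and that ring is strictly larger than $\mcr\oplus(h_2\hb_2)\mcr$: for instance $r^4z_3$ belongs to the former but not the latter, so ``collecting the $r^2$-free part into $\mcr$ and the remainder into $(h_2\hb_2)\mcr$'' is not automatic. What you actually need is that each $\hat{T}_j$ is at most \emph{linear} in $r^2$. This is true, and follows either by induction from $\hat{T}_{j+1}=\hat{T}_{j,\omega}+\tfrac{j-1}{2}z_3\hat{T}_j+\tfrac{j}{2}(\gamma^2-r^2)z_j$ together with $\del_{\omega}r^2=r^2z_3$ and the base case $\hat{T}_3=\gamma^2-r^2$, or from the statement in Cor.~\ref{cor:Tidentity} that $T_j$ is at most linear in $\hb_2$. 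With linearity in hand, your concluding argument does close up, and more cleanly than via Leibniz on $\mce$ (which is not a derivation): for $A\in\mcr$ one has $A_{\omega}\in\mcr$, the operator $r\del_{\omb}$ maps $\mcr$ into $\mcr\oplus(h_2\hb_2)\mcr$ because $r\del_{\omb}z_k=\hat{T}_k$, hence $rA_{\omega,\omb}=(r\del_{\omb})(A_{\omega})\in\mcr\oplus(h_2\hb_2)\mcr$, and adding $\tfrac{1}{2}(\gamma^2+r^2)A$ stays there.
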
\noi
We remark that the various characteristic differential algebraic properties of the CMC system can be attributed to this and Lem.~\ref{lem:Tjh}.

\two
The Jacobi equation $\mce(A)=0$ is now written as
\begin{equation}
A_{\omega, \omb}+\frac{1}{2} (\gamma^2 + r^2) r^{-1}A=0.
\end{equation}
The recursion for Jacobi field is expressed in this setting as follows.
\begin{defn}[Jacobi field Recursion]\label{defn:Jacobirecursion}
Define the recursion operator
\[
\mcp:\ker(\mce) \cap \mcr \to \ker(\mce) \cap \mcr
\]
as follows. Given a Jacobi field $A:\Xh{\infty}_* \to \C$ in the kernel of $\mce$, define
\begin{equation}\label{eq:upsilon}
\Upsilon(A)= \left(A_{\omega, \omega}+\frac{1}{2} z_3 A_{\omega}\right) \omega + \gamma^2 r^{-1}A \omb
\end{equation}
and a new function $C\in\mcr$  by the equation
$$\ed C \equiv \Upsilon(A)  \mod \Ih{\infty},$$  
and the condition that $C(0)=0$ (as a polynomial in $z_j$).  Then define  
\begin{equation}\label{eq:newA}
\mcp(A):=A'=A_{\omega, \omega} - \frac{1}{2}z_3\left(  A_{\omega} + C\right).
\end{equation}
\end{defn}
The above lemma's show that
\begin{prop}
The operator $\mcp$ is well-defined.
\end{prop}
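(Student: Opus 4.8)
The plan is to verify the four things packed into the assertion that $\mcp$ carries $\ker(\mce)\cap\mcr$ into itself: that $\Upsilon(A)$ is a bona fide closed representative of an element of $\mcc^{(\infty)}$, that this class is trivial so the primitive $C$ exists, that $C$ may be taken in $\mcr$ and is pinned down uniquely by $C(0)=0$, and finally that $A'=\mcp(A)$ again lies in $\ker(\mce)\cap\mcr$. Since every operation in the definition ($\del_\omega$, the assignment $A\mapsto\Upsilon(A)$, the passage $\Upsilon(A)\mapsto C$, and $\mcp$ itself) is $\C$-linear, I would first reduce to the case where $A$ is weighted homogeneous under the spectral symmetry $\mcs$, the general case following termwise. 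I would then record that $\mcr=\C[z_3,z_4,\dots]$ is stable under $\del_\omega$, since \eqref{eq:zetastruct} gives $\del_\omega z_j=z_{j+1}-\frac{j}{2}z_3z_j$, and that $\del_\omega$ raises the spectral weight by $1$; hence $A_\omega,A_{\omega,\omega}\in\mcr$ and the $\omega$-coefficient $A_{\omega,\omega}+\frac{1}{2}z_3A_\omega$ of $\Upsilon(A)$ lies in $\mcr$, so $\Upsilon(A)$ is well defined. That $\ed\Upsilon(A)\equiv 0\bmod\Ih{\infty}$ is exactly the formal compatibility of Lemma~\ref{lem:bncn}, obtained by differentiating and using $\mce(A)=0$ together with the commutator identity \eqref{3anticommut}.

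The heart of the matter is the vanishing of the obstruction $[\Upsilon(A)]$. Here I would use that every weighted-homogeneous element of $\ker(\mce)\cap\mcr$ has positive odd weight: each $z_j$ has weight $j-2\ge 1$, so the only weight-$0$ element of $\mcr$ is a constant, and constants are not Jacobi fields, while by the classification of Jacobi fields in Thm~\ref{thm:higherNoether} the homogeneous Jacobi fields realized in $\mcr$ occur precisely in distinct odd weights. Thus I may assume $A$ has odd weight $2n-1$, and a direct weight count shows $\Upsilon(A)$ is homogeneous of even weight $2n$. By Lemma~\ref{lem:hrepJacobi} the Jacobi field generating the differentiated conservation law $\ed\Upsilon(A)$ would then have weight $2n+2$, which is even and positive and hence vanishes by the same classification. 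Therefore $[\Upsilon(A)]=0$, a scalar primitive $C$ with $\ed C\equiv\Upsilon(A)\bmod\Ih{\infty}$ exists, and this is the content of Lemma~\ref{lem:recursionscheme}.

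It remains to address uniqueness and closure. Any two primitives differ by a function $f$ with $\ed f\equiv 0\bmod\Ih{\infty}$, so $\delxb f=0$ and $f$ is constant by Lemma~\ref{lem:lemma5.4}; the normalization $C(0)=0$ (which is automatic once $C$ is known to be homogeneous of the positive weight $2n$) then determines $C$ uniquely. That $C$ may in fact be chosen inside $\mcr$, as a weighted-homogeneous polynomial of weight $2n$, is the refinement of Lemma~\ref{lem:recursionscheme} established constructively in the following section (Prop~\ref{prop:inductiveformula}, Thm~\ref{thm:abcnormalform}). Granting this, $A'=A_{\omega,\omega}-\frac{1}{2}z_3(A_\omega+C)$ is manifestly an element of $\mcr$, of weight $2n+1$, and $\mce(A')=0$ by Lemma~\ref{lem:bncn} (the assertion there that the recursively produced $e^n$ is again a Jacobi field). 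Hence $\mcp(A)\in\ker(\mce)\cap\mcr$, as required.

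The main obstacle is the claim $C\in\mcr$. A priori the primitive $C$ is only a scalar function on $\Xh{\infty}_*$ and could acquire dependence on $r=|h_2|$, on the $\zb_j$, or on the remaining contact coordinates, none of which closedness alone excludes. The qualitative control comes from the spectral weight together with the commutation relations of Prop~\ref{prop:Jacobicommute}, so that $\mcs$ preserves weights and commutes with both $\mce$ and $h_2^{-\frac{1}{2}}\del_\omega$; but converting this control into genuine polynomiality is precisely why one must invoke the explicit differential-algebraic recursion of the next section rather than a soft cohomological argument. The triviality of $[\Upsilon(A)]$, which rests on the full classification of Jacobi fields in Thm~\ref{thm:higherNoether}, is the second load-bearing input.
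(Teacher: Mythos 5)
Your proposal is correct and follows essentially the same route as the paper: the paper's own proof is simply an appeal to the preceding lemmas, namely Lemma~\ref{lem:recursionscheme} (vanishing of the obstruction $[\Upsilon(A)]$ via the even/odd spectral-weight parity argument, Lemma~\ref{lem:hrepJacobi}, and the classification in Thm.~\ref{thm:higherNoether}), the refinement lemma giving $C\in\mcr$ via the explicit recursion of Sec.~\ref{sec:inductiveformula} (Prop.~\ref{prop:inductiveformula}, Thm.~\ref{thm:abcnormalform}), and Lemma~\ref{lem:bncn} for closedness of $\Upsilon(A)$ and for $\mcp(A)$ being again a Jacobi field. You have merely unpacked that terse reference into its constituent steps, correctly identifying the same two load-bearing inputs (the weight-parity/classification argument and the differential-algebraic recursion) and the same forward references the paper itself makes.
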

It follows that starting from the initial Jacobi field $A^1=z_3 \in \mcr$, repeatedly applying $\mcp$ creates an infinite sequence of independent, higher-order Jacobi fields which are weighted homogeneous polynomials in $\mcr$.
\begin{rem}
In \cite{Fox2012} it was shown that, for the Toda field system associated to the $6$-symmetric space $\SU(3)/\SO(2)$, one can derive a pair of recursions for Jacobi fields, $\mcp$ and $\mcn$, that are almost inverses of each.  For that system it seems essential to use the pair of them in order to obtain vanishing results needed for a complete characterization of the conservation laws.  For the Toda field equation associated to $\SO(4)/\SO(2)$, the single recursion $\mcp$ suffices.  For this reason we will not introduce a second recursion, $\mcn$, in the present article as currently it seems unnecessary.  However the reader may want to be aware of its existence.
\end{rem}

Here are the first few higher-order Jacobi fields on $\Xh{\infty}_*$:
\begin{align*}
A^3 & =z_3,\\
A^5&  =z_5 - 5 z_3 z_4   +\frac{35}{8}z_3^3,\\
A^7 & =z_{{7}}-\frac{21}{2}\,z_{{3}} z_{{6}} -{\frac {35}{2}}\,z_{{4}} z_{{5}} 
+{\frac {483}{8}}\,z_{{3}}^{2} z_{{5}} +{\frac {651}{8}}\,z_{{3}}z_{{4}}^{2} 
-231\,z_{{3}}^{3} z_{{4}} +{\frac {15015}{128}}\,z_{{3}}^{5},\\
A^9 &=z_{{9}}-18\,z_{{3}}z_{{8}}-42\,z_{{4}}z_{{7}}+{\frac {1419}{8}}\, z_{{3}}^{2}z_{{7}}-63\,z_{{5}}z_{{6}}
+{\frac {2871}{4}}\,z_{{3}}z_{{4}} z_{{6}}  -{\frac {19305}{16}}\,z_{{3}}^{3}z_{{6}} \n\\
&\quad+{\frac {3597}{8}}\,z_{{3}}z_{{5}}^{2}+{\frac {4851}{8}}\,z_{{4}}^{2} z_{{5}} 
-{\frac {98241}{16}}\,z_{{3}}^{2}z_{{4}}z_{{5}}  +{\frac {770055}{128}}\,z_{{3}}^{4}z_{{5}}
-{\frac {22165}{8}}\,z_{{3}}z_{{4}}^{3} \n\\  
&\quad+{\frac {1044615}{64}}\,z_{{3}}^{3}z_{{4}}^{2}-{\frac {2807805}{128}}\,z_{{3}}^{5}z_{{4}}
+{\frac {8083075}{1024}}\,z_{{3}}^{7}.\n
\end{align*}

\subsection{Inductive formula}\label{sec:inductiveformula}
The recursion relation for the formal Killing coefficients $\{a^{2n+1}, b^{2n+2}, c^{2n+2}\}_{n=1}^{\infty}$ described in the previous section, which involves integrating a closed 1-form, can be purely differential algebraically solved in terms of an inductive formula originally given in \cite[p419, Proposition 3.1]{Pinkall1989}. For related ideas compare \cite{Burstall1993}  and  \cite{Terng2005}.

\two
Set
\be\label{eq:aij}
\hat{\tn{a}}_{ij}=a^{2i+1}a^{2j+3}-2b^{2i+2}c^{2j+2}-2b^{2j+2}c^{2i+2},\quad 1\leq i\leq j.
\ee
Then a short computation shows that
\[\delxb \hat{\tn{a}}_{ij}=(\delxb a^{2i+1})a^{2j+3}-(\delxb a^{2i+3})a^{2j+1}.
\]
Consider the partial sum
\be\label{eq:sumn}
\hat{\tn{s}}_n=\sum_{\substack{i+j=n\\ i\leq j}}\hat{\tn{a}}_{ij},\quad n\geq 1.
\ee
From the above formula for $\delxb \hat{\tn{a}}_{ij}$ one finds that
\begin{align}
\delxb  \hat{\tn{s}}_n&=(\delxb a^3)a^{2n+1}-(\delxb a^5)a^{2n-1}+(\delxb a^5)a^{2n-1}-\,...\, \n\\
&=\left\{
\begin{array}{l}
(\delxb a^3)a^{2n+1}-(\delxb a^{n+3})a^{n+1}\quad\tn{when $n$ is even},\\
(\delxb a^3)a^{2n+1}-(\delxb a^{n+2})a^{n+2}\quad\tn{when $n$ is odd}.
\end{array}\right.\n
\end{align}

Set accordingly
\be\label{eq:mhatn}
\hat{\tn{m}}_n=
\left\{
\begin{array}{ll}
\hat{\tn{s}}_n+\frac{1}{2}(-\hat{\tn{a}}_{\frac{n}{2},\frac{n}{2}}+a^{n+1}a^{n+3})&\quad\tn{when $n$ is even},\\
\hat{\tn{s}}_n+\frac{1}{2}(a^{n+2})^2&\quad\tn{when $n$ is odd}.
\end{array}\right.
\ee
Then we have
\begin{align}
\delxb \hat{\tn{m}}_n&=(\delxb a^3)a^{2n+1}=(\im\gamma b^2+\im\hb_2 c^2)a^{2n+1}\n\\
&=h_2^{-\frac{1}{2}}(\gamma^2-h_2\hb_2)a^{2n+1}.\n
\end{align}
On the other hand,
\[
-2\im \delxb(\gamma c^{2n+2}-h_2 b^{2n+2})=(\gamma^2-h_2\hb_2)a^{2n+1}.
\]
Since these are weighted homogeneous polynomials by definition, it follows from Lemma \ref{lem:lemma5.4} that
\[\frac{\im}{2}h_2^{\frac{1}{2}}\hat{\tn{m}}_n=\gamma c^{2n+2}-h_2 b^{2n+2}.
\]

From the structure equation \eqref{eq:newrecursion} we also have
\[
-\im \delx a^{2n+1} =\gamma c^{2n+2}+h_2 b^{2n+2}.
\]
As a result we obtain an inductive formula for $c^{2n+2}, b^{2n+2}$:
\begin{align}\label{eq:bc2n+2}
b^{2n+2}&=\frac{\im}{2h_2}\left(- \delx a^{2n+1} -\frac{1}{2}h_2^{\frac{1}{2}}\hat{\tn{m}}_n\right),\\
c^{2n+2}&=\frac{\im}{2\gamma}\left(- \delx a^{2n+1} +\frac{1}{2}h_2^{\frac{1}{2}}\hat{\tn{m}}_n\right).\n
\end{align}
\begin{prop}\label{prop:inductiveformula}
Let $\{a^{2n+1}, b^{2n+2}, c^{2n+2}\}_{n=0}^{\infty}$ be the sequence of coefficients for the formal Killing field equation \eqref{eq:newrecursion}. The coefficients $b^{2n+2}, c^{2n+2}$ admit the inductive formulae \eqref{eq:bc2n+2}. The Jacobi field $a^{2n+3}$ therefore admits the inductive formula
\begin{align}\label{eq:a2n+3}
a^{2n+3}&=-2\im h_2^{-1}\delx c^{2n+2}\\
&=\frac{1}{\gamma h_2}\left(- \delx^2 a^{2n+1}
+\frac{1}{2}\delx(h_2^{\frac{1}{2}}\hat{\tn{m}}_n)\right).\n
\end{align}
\end{prop}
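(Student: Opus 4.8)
The plan is to establish the inductive formulae \eqref{eq:bc2n+2} and \eqref{eq:a2n+3} by a purely differential-algebraic argument, following the strategy already implicit in the text preceding Proposition~\ref{prop:inductiveformula}. The key point is to show that the quantity $\hat{\tn{m}}_n$ defined in \eqref{eq:mhatn} satisfies $\delxb\hat{\tn{m}}_n = h_2^{-\frac{1}{2}}(\gamma^2-h_2\hb_2)a^{2n+1}$, and that this matches the $\delxb$-derivative of $\gamma c^{2n+2}-h_2 b^{2n+2}$ coming from the formal Killing field structure equation, so that Lemma~\ref{lem:lemma5.4} forces the two expressions to be equal up to a controlled additive term.

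First I would verify the elementary identity $\delxb \hat{\tn{a}}_{ij}=(\delxb a^{2i+1})a^{2j+3}-(\delxb a^{2i+3})a^{2j+1}$ for the building blocks \eqref{eq:aij}. This is a direct computation: differentiate the three products $a^{2i+1}a^{2j+3}$, $b^{2i+2}c^{2j+2}$, $b^{2j+2}c^{2i+2}$ using the structure equations \eqref{eq:newrecursion} for $\delxb$ of each formal Killing coefficient, and observe the telescoping cancellation of the $b,c$ contributions. Summing over $i+j=n$, $i\le j$ as in \eqref{eq:sumn} produces a telescoping sum, and the boundary correction distinguishing the parity of $n$ is exactly what \eqref{eq:mhatn} is designed to absorb; this yields $\delxb\hat{\tn{m}}_n=(\delxb a^3)\,a^{2n+1}$. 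Then substituting $\delxb a^3 = \im\gamma b^2+\im\hb_2 c^2 = h_2^{-\frac{1}{2}}(\gamma^2-h_2\hb_2)$ (using the initial data $b^2=-\im\gamma h_2^{-\frac{1}{2}}$, $c^2=\im h_2^{\frac{1}{2}}$ from \eqref{eq:newrecursion}) gives the desired formula for $\delxb\hat{\tn{m}}_n$.

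Next I would compute $\delxb(\gamma c^{2n+2}-h_2 b^{2n+2})$ directly from \eqref{eq:newrecursion}, obtaining $-\tfrac{\im}{2}(\gamma^2-h_2\hb_2)a^{2n+1}$ after using $\delxb h_2=0$. Comparing the two, the combination $\tfrac{\im}{2}h_2^{\frac{1}{2}}\hat{\tn{m}}_n-(\gamma c^{2n+2}-h_2 b^{2n+2})$ is $\delxb$-closed; since all the coefficients are weighted homogeneous polynomials in $\mcr$ and hence lie in the setting of Lemma~\ref{lem:lemma5.4}, this closed quantity is a constant, which homogeneity forces to be zero. This gives the relation $\tfrac{\im}{2}h_2^{\frac{1}{2}}\hat{\tn{m}}_n=\gamma c^{2n+2}-h_2 b^{2n+2}$. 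Combining this with the other structure-equation consequence $-\im\delx a^{2n+1}=\gamma c^{2n+2}+h_2 b^{2n+2}$ and solving the resulting $2\times 2$ linear system yields \eqref{eq:bc2n+2}, and differentiating $c^{2n+2}$ via the recursion $a^{2n+3}=-2\im h_2^{-1}\delx c^{2n+2}$ from the schematic diagram \eqref{recursiondiagram} produces \eqref{eq:a2n+3}.

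The main obstacle I anticipate is the bookkeeping in the telescoping step: correctly tracking the parity-dependent boundary terms in \eqref{eq:mhatn} so that $\hat{\tn{m}}_n$ has exactly the clean derivative $(\delxb a^3)a^{2n+1}$ with no residual cross terms. The reality conditions \eqref{eq:newreality} and the precise index ranges in the partial sums must be handled carefully, since an off-by-one error in the symmetrization $i\le j$ versus the diagonal term $\hat{\tn{a}}_{\frac{n}{2},\frac{n}{2}}$ would spoil the cancellation. Once the telescoping is pinned down, the appeal to Lemma~\ref{lem:lemma5.4} and the linear algebra are routine, and the verification that the resulting $a^{2n+3}$ again lies in $\mcr$ and is weighted homogeneous of the correct spectral weight follows from Lemma~\ref{lem:Tjh} together with the weight assignments of Sec.~\ref{sec:homogeneousdecomp}.
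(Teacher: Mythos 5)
Your proposal follows essentially the same route as the paper's own proof: the same building blocks $\hat{\tn{a}}_{ij}$ with the telescoping identity for $\delxb\hat{\tn{a}}_{ij}$, the same parity-dependent correction defining $\hat{\tn{m}}_n$ so that $\delxb\hat{\tn{m}}_n=(\delxb a^3)a^{2n+1}$, the same appeal to Lemma~\ref{lem:lemma5.4} plus weighted homogeneity to conclude $\tfrac{\im}{2}h_2^{\frac{1}{2}}\hat{\tn{m}}_n=\gamma c^{2n+2}-h_2 b^{2n+2}$, and the same $2\times2$ system with $-\im\delx a^{2n+1}=\gamma c^{2n+2}+h_2 b^{2n+2}$ followed by $a^{2n+3}=-2\im h_2^{-1}\delx c^{2n+2}$. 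One sign slip to fix: the structure equations \eqref{eq:newrecursion} give $\delxb(\gamma c^{2n+2}-h_2 b^{2n+2})=+\tfrac{\im}{2}(\gamma^2-h_2\hb_2)\,a^{2n+1}$, not $-\tfrac{\im}{2}(\gamma^2-h_2\hb_2)\,a^{2n+1}$; with your stated sign the combination $\tfrac{\im}{2}h_2^{\frac{1}{2}}\hat{\tn{m}}_n-(\gamma c^{2n+2}-h_2 b^{2n+2})$ would not be $\delxb$-closed, so the sign must be corrected for the argument to proceed exactly as you describe.
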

It is clear by construction that each element of the sequence 
$$\{a^{2n+1}, h_2^{\frac{1}{2}}b^{2n+2}, h_2^{-\frac{1}{2}}c^{2n+2}\}_{n=0}^{\infty}$$ 
is a weighted homogeneous polynomial in $\mcr$. Let us summarize and record some of their properties which are relevant for our analysis. It shows that the recursion scheme indeed generates an infinite sequence of distinct higher-order Jacobi fields starting from the initial Jacobi field $a^3=z_3$.
\begin{thm}\label{thm:abcnormalform}
The recursion scheme proposed in Lemma \ref{lem:recursionscheme} works for all  $n\geq 1$.
The formal Killing coefficients $\, a^{2n+1},\,b^{2n+2},\,c^{2n+2}, \, n \geq 1,$ have the following properties: 
\begin{enumerate}[\qquad a)]
\item
they are elements in the ring $\mcr=\C[z_3,\,z_4,\,... \, ]$ of the following form, up to scaling by $\, h_2^{\frac{1}{2}},\,h_2^{-\frac{1}{2}}$ (no $\zb_j$-terms).
\begin{align}
a^{2n+1}&=(-\frac{1}{ \gamma})^{n-1} \left[ z_{2n+1} + (\tn{lower order terms}) \,+t^{2n+1}_a z_3^{2n-1}\right],
\label{eq:abcnormal} \\
b^{2n+2}&= h_2^{-\frac{1}{2}} \frac{\im(-1)^n}{2\gamma^{n-1}} \left[  z_{2n+2} + \,(\tn{lower order terms}) \,+t^{2n+2}_b z_3^{2n}\right],\n\\
c^{2n+2}&= h_2^{\frac{1}{2}} \frac{\im(-1)^n}{2\gamma^{n}} \left[ z_{2n+2}+ \,(\tn{lower order terms}) \,+t^{2n+2}_c z_3^{2n}\right].\n
\end{align}
\item
they are weighted homogeneous with the following spectral weights.
\begin{center}
\begin{tabular}{ccl}
&\vline& \tn{spectral weight}\\
\hline
$a^{2n+1}$ &\vline& $2n-1$\\
$h_2^{\frac{1}{2}}b^{2n+2}$ &\vline& $2n$\\
$h_2^{-\frac{1}{2}}c^{2n+2}$ &\vline& $2n$\\
\end{tabular}
\end{center}
\end{enumerate}
\end{thm}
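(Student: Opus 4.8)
The statement to be proved, Theorem~\ref{thm:abcnormalform}, asserts three things at once: that the recursion of Lemma~\ref{lem:recursionscheme} never stalls, that the coefficients $a^{2n+1}, b^{2n+2}, c^{2n+2}$ lie in the polynomial ring $\mcr$ with the stated leading and trailing normal form, and that they carry the stated spectral weights. The natural strategy is a single induction on $n$ that simultaneously tracks all three properties, using the explicit inductive formulae \eqref{eq:bc2n+2}, \eqref{eq:a2n+3} of Prop.~\ref{prop:inductiveformula} rather than the cohomological solvability of Lemma~\ref{lem:recursionscheme}. The advantage of the algebraic formulae is that they produce the next coefficient by pure differentiation and polynomial operations, so membership in $\mcr$ and weighted homogeneity are manifest once one knows they hold at the previous stage.

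First I would establish the base case $n=1$ directly from the table in Figure~\ref{eq:recursioncoeff}: one checks that $a^3=z_3$, that $h_2^{1/2}b^4$ and $h_2^{-1/2}c^4$ are the stated polynomials in $z_3,z_4$, and that the spectral weights $1,2,2$ are correct by the weight assignments of Sec.~\ref{sec:homogeneousdecomp} (namely $\tn{wt}(z_j)=j-2$). For the inductive step, assume \eqref{eq:abcnormal} and the weight table hold through index $n$, with $a^{2n+1}$ a weighted homogeneous polynomial of weight $2n-1$. I would then feed this into the inductive formula. The key structural inputs are: (i) the operator $\del_\xi$ raises the weight by $2$ when composed with the rescaling $h_2^{-1/2}\del_\xi$ (this is the content of the commutation relation $[\mcs, h_2^{-1/2}\del_\xi]^{\tn{h}}=+h_2^{-1/2}\del_\xi$ from Prop.~\ref{prop:Jacobicommute}, which forces weight additivity along the recursion); (ii) the partial sums $\hat{\tn{s}}_n$, $\hat{\tn{m}}_n$ built from the $\hat{\tn{a}}_{ij}$ in \eqref{eq:aij}--\eqref{eq:mhatn} are again in $\mcr$ and of the correct weight, because each $\hat{\tn{a}}_{ij}$ is a product of lower coefficients whose weights add to the right total; and (iii) Lemma~\ref{lem:mcez} together with Lemma~\ref{lem:Tjh}, which guarantee that $\mce(\mcr)\subset\mcr\oplus(h_2\hb_2)\mcr$ and that $\hat{T}_j\in\C[r^2,z_3,z_4,\ldots]$, so no spurious $\zb_j$-dependence can enter. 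Tracking the leading $z_{2n+1}$-coefficient through \eqref{eq:a2n+3} gives the scaling factor $(-1/\gamma)^n$ for $a^{2n+3}$ and, similarly, the factors $\tfrac{\im(-1)^{n+1}}{2\gamma^{n}}$, $\tfrac{\im(-1)^{n+1}}{2\gamma^{n+1}}$ for $b^{2n+4}$, $c^{2n+4}$; this is a bookkeeping computation on top-weight monomials using $\del_\omega z_j=z_{j+1}-\tfrac{j}{2}z_3 z_j$.

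The genuinely substantive point — the one that makes the recursion ``work'' rather than merely produce formal expressions — is that the closed $1$-form $\Upsilon(a^{2n+1})$ of \eqref{eq:bh2n+2} (equivalently \eqref{eq:upsilon}) is \emph{exact} within $\mcr$, so that an anti-derivative $\hat{b}^{2n+2}\in\mcr$ actually exists. Here the inductive formula \eqref{eq:bc2n+2} is doing real work: it exhibits $b^{2n+2}$ and $c^{2n+2}$ explicitly as polynomials, thereby \emph{constructing} the anti-derivative rather than appealing to the vanishing-of-obstruction argument of Lemma~\ref{lem:recursionscheme}. I would verify that the $\del_\xi$ and $\del_\xib$ derivatives of the formula \eqref{eq:bc2n+2} are consistent with the two defining equations \eqref{eq:b2n+2}; the $\del_\xib$ consistency is exactly the computation $\delxb\hat{\tn{m}}_n=h_2^{-1/2}(\gamma^2-h_2\hb_2)a^{2n+1}=-2\im\,\delxb(\gamma c^{2n+2}-h_2 b^{2n+2})$ carried out just before \eqref{eq:bc2n+2}, while the $\del_\xi$ consistency reproduces \eqref{eq:a2n+3}. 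Lemma~\ref{lem:lemma5.4} (a function with $\delxb f=0$ is constant) is what pins down the ambiguity: it forces the weighted homogeneous polynomial solution to be unique up to the known additive constant-multiple-of-$z_3$ freedom, closing the induction.

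\textbf{Main obstacle.} The principal difficulty is the \emph{simultaneous} control of the normal form in \eqref{eq:abcnormal}, which asserts not only the leading term $z_{2n+1}$ but also a specific trailing pure-power term $t^{2n+1}_a z_3^{2n-1}$, while confirming there is genuinely no $\zb_j$-dependence. The absence of $\zb_j$-terms is not automatic from the first-order equations \eqref{eq:b2n+2} alone; it rests on the interplay of weighted homogeneity (an $a^{2n+1}$ of positive weight $2n-1$ cannot contain weight-lowering $\zb_j$'s once one knows it is a polynomial) with Lemma~\ref{lem:mcez} and Lemma~\ref{lem:Tjh}, which keep $\mce$ and the $\hat{T}_j$ inside the ``holomorphic'' ring $\C[r^2,z_3,z_4,\ldots]$. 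I would therefore isolate, as a clean sub-lemma, the statement that every weighted homogeneous Jacobi field of positive odd weight produced by $\mcp$ lies in $\mcr$ and is $z_3$-polynomially bounded in its lowest-weight term; granting that, the explicit coefficient tracking for the scalings and the $t$-constants becomes routine, if tedious.
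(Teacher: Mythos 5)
Your proposal is correct and takes essentially the same route as the paper: the paper's own (one-line) proof rests exactly on the differential-algebraic inductive formulae of Prop.~\ref{prop:inductiveformula} — whose derivation already uses the partial sums $\hat{\tn{m}}_n$ and Lemma~\ref{lem:lemma5.4} to pin down the anti-derivative — so that membership in $\mcr$, weighted homogeneity, and the absence of $\zb_j$-terms hold by construction, with only the leading coefficients left to track, which is precisely your induction. Your extra appeals to Prop.~\ref{prop:Jacobicommute}, Lemma~\ref{lem:mcez} and Lemma~\ref{lem:Tjh} are harmless but redundant (and the phrase ``raises the weight by $2$'' should read ``by $1$'' per application of $h_2^{-\frac{1}{2}}\delx$), since the formulae \eqref{eq:bc2n+2}, \eqref{eq:a2n+3} involve only such derivatives and products of earlier coefficients, which manifestly preserve $\mcr$.
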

\begin{proof}
The coefficients of the highest order terms follows from the inductive formula. 
We omit the rest of details.
\end{proof}
Note from the explicit formula \eqref{eq:Tj} for $T_j=\delxb h_j$ that  the coefficients of the polynomials inside the bracket $[\, \cdot\, ]$ in the expressions above for $a^{2n+1}, b^{2n+2}, c^{2n+2}$, including $t^{2n+1}_a, t^{2n+2}_b, t^{2n+2}_c$, are in fact rational numbers. Hence up to scale they are elements of the subring
$$ \mathbb{Q}[z_3,\,z_4,\,... \, ]\subset\mcr.$$
\begin{rem}\label{rem:BCAbca}
Recall the coefficients $\{ \,B^{2n}, C^{2n}, A^{2n+1}\,\}_{n=1}^{\infty}$ for the enhanced prolongation from Sec.~\ref{sec:kfpro}.
They correspond to
\begin{align*}
B^{2n}&=\im h_2^{\frac{1}{2}} b^{2n}, \\
C^{2n}&=\im h_2^{-\frac{1}{2}}c^{2n}, \\
A^{2n+1}&=\frac{1}{2}a^{2n+1}.
\end{align*}
\end{rem}
  
\subsection{Resolution of trivial Jacobi field}\label{sec:resolution}
Recall the structure equation \eqref{eq:classicalJacobi} for the classical Jacobi fields from Sec.~\ref{sec:classicallaws}. When the structure constant $\gamma^2\ne 0$, it takes the following form when written mod $\iinf$.
\be\label{eq:classicaloperator}
\ed \begin{pmatrix} A\\A^1\\A^{\xi}\\B-\delta A \end{pmatrix} +
\im \begin{pmatrix} \cdot \\-A^1\\A^{\xi}\\ \cdot\end{pmatrix}\rho
\equiv\begin{pmatrix}
A^{\xi}+h_2A^1& * \\
\frac{1}{2}(B-\delta A)&-\frac{1}{2}\hb_2A\\
\frac{1}{2}h_2(B-\delta A)&-\frac{\gamma^2}{2}A\\
* & -\gamma^2 A^1-\hb_2 A^{\xi} \end{pmatrix}
\begin{pmatrix} \xi \\ \xib  \end{pmatrix},\mod \iinf.
\ee
This contains the following elliptic first order linear differential equation for the three variables
$(A, \frac{1}{\im} A^1, \frac{1}{\im\gamma} A^{\xi})$.
\be\label{operatorL}
L\bp A\\ \frac{1}{\im} A^1\\ \frac{1}{\im\gamma} A^{\xi} \ep:=
\bp
-\delx &\im h_2&\im\gamma\\
\frac{\im}{2}\hb_2&-\delxb&\cdot\\
\frac{\im\gamma}{2}&\cdot&-\delxb
\ep
\bp A\\ \frac{1}{\im}A^1\\ \frac{1}{\im\gamma} A^{\xi}  \ep =0.
\ee
It is clear that up to scaling by constants and by powers of $f=h_2^{-\frac{1}{2}}$,  the differential operator $L$  is equivalent to the recursion operator $\mcp$ of Definition \ref{defn:Jacobirecursion}.
\begin{lem}\label{lem:resolution}
Suppose $A$ is a Jacobi field. Let $(A,\frac{1}{\im} A^1,\frac{1}{\im\gamma} A^{\xi} )^t$ be an element in  $\ker (L)$. Then  $A^1_{\xi} =h_2^{-1} A^{\xi}_{\xi}$ is a Jacobi field.
\end{lem}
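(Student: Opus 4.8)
The plan is to read off from the condition $(A,\tfrac{1}{\im}A^1,\tfrac{1}{\im\gamma}A^{\xi})^t\in\ker(L)$ the three scalar relations encoded in the operator \eqref{operatorL},
\[
A_{\xi}=A^{\xi}+h_2A^1,\qquad \delxb A^1=-\tfrac12\hb_2A,\qquad \delxb A^{\xi}=-\tfrac12\gamma^2A,
\]
which are precisely the recursion equations of Lemma~\ref{lem:bncn} with $A^1,A^{\xi}$ in the roles of the formal Killing coefficients $b,c$. I would record the two facts that drive the computation: $\delxb h_2=0$ (since $T_2=0$), whence $\delx\hb_2=0$ and $\delxb h_2^{-1}=0$; and the commutation relation $[\delxb,\delx]f=\tfrac{m}{2}Rf$ on functions $f$ of $\SO(2)$-weight $m$, with $R=\gamma^2-h_2\hb_2$, from \eqref{3anticommut} (equivalently Corollary~\ref{lem:commutation}). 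Here $A$ has weight $0$, $A^1$ has weight $-1$, $A^{\xi}$ has weight $+1$, and $\delx$ raises weight by $1$, so that $A^1_{\xi}$ and $h_2^{-1}A^{\xi}_{\xi}$ both have weight $0$.

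The heart of the argument is to show the two candidate representatives of the new Jacobi field agree. Applying the weight-$(-1)$ commutator to $A^1$ and using the relations above together with $\delx\hb_2=0$ gives
\[
\delxb\!\big(\delx A^1\big)=\delx\delxb A^1-\tfrac12 R\,A^1=-\tfrac12\hb_2A^{\xi}-\tfrac12\gamma^2A^1,
\]
while the weight-$(+1)$ commutator applied to $A^{\xi}$, together with $\delxb h_2^{-1}=0$, gives
\[
\delxb\!\big(h_2^{-1}\delx A^{\xi}\big)=h_2^{-1}\big(\delx\delxb A^{\xi}+\tfrac12 R\,A^{\xi}\big)=-\tfrac12\gamma^2A^1-\tfrac12\hb_2A^{\xi}.
\]
These right-hand sides coincide, so the weight-$0$ function $D:=A^1_{\xi}-h_2^{-1}A^{\xi}_{\xi}$ satisfies $\delxb D=0$, and by Lemma~\ref{lem:lemma5.4} it is a constant.

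The main obstacle is upgrading ``constant'' to ``zero,'' and this is where the structural input is needed. I would do it by weighted homogeneity: decomposing $A$ into its spectral-homogeneous components via the classification of Jacobi fields (Theorem~\ref{thm:higherNoether}), and using that the spectral symmetry $\mcs$ commutes with $\mce$ and with $h_2^{-\frac12}\delx$ (Proposition~\ref{prop:Jacobicommute}), each homogeneous component of $D$ carries a nonzero odd spectral weight, whereas a nonzero constant has spectral weight $0$; hence $D=0$, i.e. $A^1_{\xi}=h_2^{-1}A^{\xi}_{\xi}$. Finally, to see that this common value is a Jacobi field I would feed $D=0$ back into a direct computation of $\mce$: since $A^1_{\xi}$ has weight $0$, commuting $\delxb\delx$ and substituting $\delxb\delx A^1=-\tfrac12\hb_2A^{\xi}-\tfrac12\gamma^2A^1$ yields
\[
\mce\big(A^1_{\xi}\big)=\tfrac12 h_2\hb_2\,\big(A^1_{\xi}-h_2^{-1}A^{\xi}_{\xi}\big)=\tfrac12 h_2\hb_2\,D=0 ,
\]
which proves the claim (alternatively one may invoke Lemma~\ref{lem:bncn}, whose consistency hypothesis is exactly the now-established identity $A^1_{\xi}=h_2^{-1}A^{\xi}_{\xi}$). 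The commutator bookkeeping leading to $\delxb D=0$ and the final $\mce$-identity are routine once the weights are assigned correctly; the only genuinely delicate step is the vanishing of the constant $D$.
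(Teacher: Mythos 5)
Your computational spine is correct, and in places sharper than what the paper writes down: the three kernel relations, the commutator computation giving $\delxb D=0$ for $D:=A^1_{\xi}-h_2^{-1}A^{\xi}_{\xi}$, the constancy of $D$ via Lemma~\ref{lem:lemma5.4}, and the closing identity $\mce(A^1_{\xi})=\tfrac12 h_2\hb_2\,D$ all check out. The paper's own proof is organized differently: it first asserts, from Lemma~\ref{lem:bncn} and the surrounding recursion arguments, that $A^1_{\xi}$ and $h_2^{-1}A^{\xi}_{\xi}$ are each Jacobi fields, so that $D$ is a \emph{holomorphic Jacobi field}, and then kills it by ``constant $+$ Jacobi $\Rightarrow$ zero'' (since $\mce(c)=\tfrac12(\gamma^2+h_2\hb_2)c$ for a constant $c$). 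Your identity, together with its mirror $\mce\big(h_2^{-1}A^{\xi}_{\xi}\big)=-\tfrac{\gamma^2}{2}D$, shows why this step is delicate: neither quantity is a Jacobi field until one knows $D=0$, so the Jacobi-field property and the vanishing of $D$ are the \emph{same} missing input. The reason is that $\ker(L)$ encodes only the $\delxb$-equations for $A^1,A^{\xi}$ (plus the expansion of $A_{\xi}$), whereas the closed system of Lemma~\ref{lem:bncn} also prescribes the $\delx$-derivatives; identifying the two candidate outputs is exactly the content of the lemma being proved.

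The genuine gap is in your vanishing step. First, Theorem~\ref{thm:higherNoether} decomposes a Jacobi field into higher-order weighted homogeneous pieces \emph{plus a classical piece}, and classical Jacobi fields are not spectrally homogeneous — they are precisely the part of the classification not labeled by odd spectral weights. So the claim that ``each homogeneous component of $D$ carries a nonzero odd spectral weight'' says nothing about the classical contribution, and a six-dimensional family of cases is left uncovered. (It can be handled, but by a different argument: for classical $A$ with $\gamma^2\ne0$, the structure equation \eqref{eq:classicalJacobi} forces $A^{1,1}=A^{\bar{1},\bar{1}}=0$, hence $A^{\xi,\xi}=\gamma^2A^{\bar{1},\bar{1}}=0$, hence $D=h_2A^{1,1}-h_2^{-1}A^{\xi,\xi}=0$.) Second, $D$ depends on the auxiliary pair $(A^1,A^{\xi})$, not on $A$ alone, so decomposing $A$ does not by itself decompose $D$: the given pair differs from a sum of homogeneous solutions by the kernel ambiguity $(A^1,A^{\xi})\mapsto(A^1+c\,h_2^{-\frac12},\,A^{\xi}-c\,h_2^{\frac12})$, and you must check that $D$ is unchanged under it (it is: both terms shift by $-\tfrac{c}{2}z_3$) and that homogeneous components of $A$ admit homogeneous solutions (Theorem~\ref{thm:abcnormalform}); neither point appears in your proposal. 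Finally, a misstatement: Proposition~\ref{prop:Jacobicommute} does \emph{not} say $\mcs$ commutes with $h_2^{-\frac12}\delx$; it says $[\mcs,h_2^{-\frac12}\delx]^{\tn{h}}=h_2^{-\frac12}\delx$, i.e.\ that operator has spectral weight $+1$ — only $[\mcs,\mce]=0$ is a commutation. The homogeneity idea is sound for the higher-order part of $A$, but as written your argument does not establish $D=0$.
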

\begin{proof}
From the previous arguments, both $A^1_{\xi}$ and $h_2^{-1} A^{\xi}_{\xi}$ are Jacobi fields. Note that $\delxb(A^1_{\xi} - h_2^{-1} A^{\xi}_{\xi})=0.$ The only holomorphic Jacobi field is $0$, and we must have  $A^1_{\xi}=h_2^{-1} A^{\xi}_{\xi}$.
\end{proof}
This lemma shows that the recursive property of the solutions to the Jacobi operator $\mce$ is already embedded in the structure equation for the classical Jacobi fields.

\two
We give an alternative analytic interpretation of the recursion scheme in Sec.~\ref{sec:recursionxinf} as a 
resolution of Jacobi field.

Let $a^{2n-1}$ be a Jacobi field. Solve Eq.~\eqref{operatorL} for  $(a^{2n-1}, b^{2n},c^{2n}).$
Lemma \ref{lem:resolution} suggests to write the following structure equation as in \eqref{eq:newrecursion}.
\begin{align}\label{eq:resolution-1}
\ed  a^{2n-1}&\equiv  (\im \gamma c^{2n}+\im h_2 b^{2n})\xi +(*)\xib,\\
\ed  b^{2n} - \im  b^{2n} \rho&\equiv  \frac{\im \gamma }{2} a^{2n+1} \xi +\frac{\im}{2}  \hb_{2}a^{2n-1} \xib,\n\\
\ed  c^{2n} +\im c^{2n}  \rho&\equiv \frac{\im}{2}  h_2 a^{2n+1}  \xi +\frac{ \im \gamma }{2} a^{2n-1} \xib,\n\\
\ed  a^{2n+1}&\equiv a^{2n+1}_{\xi}\xi + (\im \gamma  b^{2n}+\im \hb_{2} c^{2n})\xib,\mod\;\iinfh.\n
\end{align}
Here $a^{2n+1}_{\xib}$ is obtained from the commutative identity $\delxb (b_{\xi})=\delx(b_{\xib})-\frac{R}{2}b$.

Then $a^{2n+1}$ is the new Jacobi field. Solving Eq.~\eqref{operatorL}  again for $(a^{2n+1}, b^{2n+2}, c^{2n+2})^t$ and iterating this process, the enhanced prolongation by formal Killing fields is realized by a sequence of resolutions of the initial trivial Jacobi field $a^1=0.$
 \begin{rem}
The resolution of a classical Jacobi field does not  connect to the higher-order Jacobi fields. In the notation of \eqref{eq:classicaloperator}, it only produces constant linear combinations of the classical Jacobi fields $A, \, B$. 
\end{rem}
 
The resolution of Jacobi field by the operator $L$ prompts the following definition.
\begin{defn}\label{defn:resolutionlength}
Let $\Sigma\hook\xinf$ be an integral surface. 
Let $\Sigmah\hook\xinfh$ be its double cover.
Let $A$ be a (possibly singular) Jacobi field on $\Sigmah.$
The \tb{resolution length} of $A$ is the dimension of the vector space of Jacobi fields generated by the resolution sequence of $A$ by the operator $L$.
\end{defn}
Note the sequence of Jacobi fields $a^{2j+1}$'s are singular at the umbilics. Although the differential operator $L$ is smooth and elliptic, when the analysis is carried out on a (compact) CMC surface it is not obvious how to apply the existing elliptic theory to the resolution of higher-order Jacobi fields $a^{2j+1}$'s.

\two
In hindsight, the resolution by the operator $L$, which follows from the structure equation for classical Jacobi field (or classical conservation laws) and a computation of the first few terms, may have led directly to the enhanced prolongation without reference to the formal Killing field. On the other hand, the consideration of formal Killing field also allows one to make use of the powerful loop group methods, particularly for finite-type CMC surfaces. This will be examined in Sec.~\ref{sec:finitetype}.
  
\subsection{Recursion for conservation laws}\label{sec:recursionforcvlaws}
Set
\be\label{eq:varphin}
\varphi_n=c^{2n+2}\xi+b^{2n}\xib\in\Omega^1(\Xh{2n+1}).
\ee
The structure equation \eqref{eq:newrecursion} shows that
\[ \ed\varphi_n\equiv 0 \mod\iinfh,
\]
and $\varphi_n$ represents a conservation law. Note that  $\varphi_n$ is weighted homogeneous with
$$\tn{weight}(\varphi_n)=2n-1
$$
in the sense that
$$\mcl_{\mcs}\varphi_n\equiv(2n-1)\varphi_n\mod\mcjh.
$$
\begin{prop}\label{prop:nontrivialvarphin}
For  each $n\geq 0$,
\begin{enumerate}[\qquad a)]
\item  the conservation law $[\varphi_n]\in \Cv{(\infty)}$ is nontrivial,
\item  there exists a nonzero constant $\tn{t}_n$ such that
\be
[ \ed\varphi_n ] = \tn{t}_n  [\Phi_{a^{2n+3}}] \in \Cv{2n+2}.
\ee
\end{enumerate}
\end{prop}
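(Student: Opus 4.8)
The statement to prove is Proposition~\ref{prop:nontrivialvarphin}: for each $n\geq 0$, the conservation law $[\varphi_n]\in\Cv{(\infty)}$ is nontrivial, and there is a nonzero constant $\tn{t}_n$ with $[\ed\varphi_n]=\tn{t}_n[\Phi_{a^{2n+3}}]$. Let me lay out the plan.

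\textbf{Part b) first.} The natural order is to establish b) and then deduce a) from it. By construction $\varphi_n=c^{2n+2}\xi+b^{2n}\xib$ is $\underline{\ed}$-closed, so $[\varphi_n]\in\mcc^{(\infty)}$ and $[\ed\varphi_n]\in\Cv{(\infty)}$ is a genuine differentiated conservation law. The plan is to compute $\ed\varphi_n$ directly from the recursion structure equation \eqref{eq:newrecursion} and reduce it to the normal form $H^{(\infty)}$ of Prop.~\ref{prop:roughnormal}. Since $\varphi_n$ is weighted homogeneous of spectral weight $2n-1$ and $\ed$ preserves the spectral weight (Sec.~\ref{sec:homogeneousdecomp}), the class $[\ed\varphi_n]$ must be weighted homogeneous of weight $2n-1$. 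By Lem.~\ref{lem:hrepJacobi}, the generating Jacobi field of $[\ed\varphi_n]$ is then weighted homogeneous of weight $2n+1$, hence (by the classification in Thm.~\ref{thm:higherNoether}, together with Thm.~\ref{thm:abcnormalform} which shows $a^{2n+3}$ is the unique higher-order Jacobi field of that weight up to scale and lower-order/classical pieces) it must be a multiple of $a^{2n+3}$. Thus $[\ed\varphi_n]=\tn{t}_n[\Phi_{a^{2n+3}}]$ for some constant $\tn{t}_n$; the content is only to show $\tn{t}_n\neq 0$.

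\textbf{Extracting the principal coefficient.} To pin down $\tn{t}_n$ and its nonvanishing, I would track the highest-weight ($2k+1$-weight, with $k=n+1$) principal coefficient $B^{1,2k}$ of the reduced $2$-form representing $[\ed\varphi_n]$, in the sense of Prop.~\ref{prop:roughnormal}. Differentiating $\varphi_n=c^{2n+2}\xi+b^{2n}\xib$ using \eqref{eq:newrecursion} produces a $\theta_{2n+1}\w\xi$-type leading term whose coefficient is, up to the normalization constant between $a^{2n+3}$ and $\Phi_{a^{2n+3}}$ in \eqref{eq:A2k+1}, proportional to the top coefficient of $a^{2n+3}$. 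By Thm.~\ref{thm:abcnormalform}a), $a^{2n+3}=(-1/\gamma)^n z_{2n+3}+\dots$, whose leading $z_{2n+3}$-coefficient is manifestly nonzero. Matching this against $\Phi_{a^{2n+3}}$ whose principal coefficient is $B^{1,2k}=\im\,h_2^{-(2k+1)/2}$ times the same scale (Lem.~\ref{lem:lemma5.4} forces $B^{1,2k}$ to be a constant multiple of $h_2^{-(2k+1)/2}$), I obtain $\tn{t}_n$ explicitly as a ratio of the two nonzero leading coefficients, proving $\tn{t}_n\neq 0$.

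\textbf{Part a).} Nontriviality of $[\varphi_n]$ follows formally: if $[\varphi_n]=0$ in $\mcc^{(\infty)}$ then $\ed\varphi_n=\ed\Theta$ for $\Theta\in\Omega^1(\iinfh)$, so $[\ed\varphi_n]=0$ in $\Cv{(\infty)}$; but by b) this equals $\tn{t}_n[\Phi_{a^{2n+3}}]$ with $\tn{t}_n\neq 0$, and $[\Phi_{a^{2n+3}}]$ is nontrivial because $a^{2n+3}$ is a nontrivial (higher-order, genuinely of order $2n+3$) Jacobi field whose associated reduced $2$-form has nonvanishing principal coefficient $\sim h_2^{-(2n+3)/2}$, contradicting triviality. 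The one case needing separate care is $n=0$: there $\varphi_0=c^2\xi+b^0\xib$ is, up to scale, the real form $2\Re(\omega)$ with $\omega=\sqrt{\ff}$, and its nontriviality is exactly Lem.~\ref{lem:phi0onZ} (or the normal-form computation of Exam.~\ref{ex:HopfJacobi} giving the order-$3$ Jacobi field $z_3=a^3$). \textbf{The main obstacle} is the bookkeeping in the reduction of $\ed\varphi_n$ to normal form: one must verify that no lower-weight or classical Jacobi-field contamination spoils the leading-coefficient matching, and that the weight-homogeneity plus Lem.~\ref{lem:hrepJacobi} argument genuinely isolates the $a^{2n+3}$ component rather than merely bounding it. Confirming that the leading $\theta_{2n+1}\w\xi$ coefficient survives all the normalization steps of Prop.~\ref{prop:roughnormal} is the delicate point, but it is a finite computation anchored by Thm.~\ref{thm:abcnormalform}.
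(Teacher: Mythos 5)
Your reduction of part b) to Lem.~\ref{lem:hrepJacobi} (weighted homogeneity pins the generating Jacobi field of $[\ed\varphi_n]$ down to a constant multiple of $a^{2n+3}$) is exactly what the paper does, and your deduction of a) from b) is fine. The genuine gap is your argument that $\tn{t}_n\neq 0$. You propose to track the ``leading $\theta\w\xi$ coefficient'' of $\ed\varphi_n$ through the normalization of Prop.~\ref{prop:roughnormal}. But the coefficient of $\theta_{2n+2}\w\xi$ in $\ed\varphi_n$ \emph{before} reduction carries no cohomological information: exact $2$-forms in the ideal, e.g. $\ed(g\,\theta_{2n+1})=-g\,\theta_{2n+2}\w\xi+\ldots$, have exactly such leading terms, so a nonzero coefficient there is equally consistent with $[\ed\varphi_n]=0$. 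After reduction, the principal coefficient is (by Eq.~\eqref{eq:H2krelations}) the top derivative $E_{2n+3}$ of the generating Jacobi field, i.e. $\tn{t}_n\cdot E_{2n+3}(a^{2n+3})$ up to a factor of $\im$; asserting that this is nonzero is literally a restatement of $\tn{t}_n\neq0$, not a proof of it. The reduction itself is an uncontrolled cascade (each correction $\ed(g\,\theta_j)$ spawns new lower-weight terms $\ed g\w\theta_j$), and nothing in Thm.~\ref{thm:abcnormalform} controls its outcome; the step you call ``the delicate point'' is in fact the entire content of the proposition.

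The paper closes this by proving a) first, by a completely different mechanism, and then obtains the nonvanishing of $\tn{t}_n$ as a consequence (via b) $=$ Lem.~\ref{lem:hrepJacobi}), not the other way around. Concretely: if $\ed f\equiv\varphi_n\mod\iinfh$ for some function $f$, then spectral-weight rigidity --- Lie differentiating this equation by the spectral symmetry $\mcs$ on the integrable extension and invoking Lem.~\ref{lem:noZintegral} and Lem.~\ref{lem:Sstable} (with a separate argument in the case $\epsilon=0$) --- forces $f$ to be, up to a constant, a weighted homogeneous polynomial in the $z_j$'s of weight $2n-1$; the isomorphism with the elliptic sinh-Gordon system (Cor.~\ref{cor:zurelation}) together with the Fox--Goertsches classification then rules out such a polynomial primitive. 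The paper also sketches a second, more direct route: explicit integral cycles (great circles in $\s{3}$ whose normal rotates with integer speed $v$) on which the period of $\varphi_n$ is a nonzero polynomial expression in $v^{\frac{1}{2}}$, hence nonzero for suitable $v$. To repair your proposal you would need to substitute one of these genuine nontriviality arguments for the leading-coefficient claim.
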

\begin{proof}
a)
For the proof presented below we apply the results from Sec.~\ref{sec:extension}, \ref{sec:spectralsym}, especially Lemma \ref{lem:noZintegral}.

The case $n=0$ has been established in Exam.~\ref{ex:HopfJacobi}. Assume $n\geq 1$.
 
\two
Suppose there exists a scalar function $f$ on $\xinfh_*$ such that
\be\label{eq:exactvarphin}
\ed f \equiv \varphi_n \mod \iinfh.
\ee
We first claim that  up to scaling by constant $f$ is a weighted homogeneous polynomial in the variables $z_j$'s of weight $2n-1$ ($f\in\mco(2n+1)$).\footnotemark\footnotetext{This follows from Lem.~\ref{lem:delbpoly0} in the next section. We record here an alternative proof using the spectral symmetry.}

Given that
$$\mcl_{\mcs}\varphi_n\equiv (2n-1)\varphi_n \mod\mcjh,
$$
the Lie derivative of Eq.~\eqref{eq:exactvarphin} therefore gives (Lie derivative commutes with exterior derivative)
\be\label{eq:Sexactvarphin}
\ed \big(\mcs(f)\big) \equiv (2n-1) \varphi_n \mod \mcjh.
\ee
It follows that
$$ \ed \big(\mcs(f) -(2n-1)f \big) \equiv 0 \mod \mcjh.$$

\one
$\bullet$ Case $\epsilon\ne 0$:
By Lemma \ref{lem:noZintegral}
$$\mcs(f) = (2n-1)f +\tn{constant}.$$
By Lemma \ref{lem:Sstable} such $f$ is necessarily a function in the variables $h_2\hb_2, z_j, \zb_j$'s. The defining equation  \eqref{eq:exactvarphin} then shows that  $f$ is in fact a function in the variables $z_j$'s only.
From this it follows that up to adding a constant to $f$,
$$\mcs(f) = (2n-1)f$$
and our claim is verified.

\one
$\bullet$ Case $\epsilon=0$:
We still claim that up to constant $f$ is a weighted homogeneous polynomial in the variables $z_j$'s of weight $2n-1$.

From the defining equation  \eqref{eq:exactvarphin}, let
$$\ed f\equiv \varphi_n+\sum_{j=3}^{2n+1}   f_j \zeta_j   \mod \theta_0,\theta_1,\thetab_1,\theta_2.$$
Then
\begin{align}
\ed \big(\mcs(f)-(2n-1)f\big)&\equiv \sum_{j=3}^{2n+1}  [\mcs(f_j)-(2n-1-(j-2))f_j ] \zeta_j\n\\
& \mod \theta_0,\theta_1,\thetab_1,\theta_2,\thetab_2, \mcjh^{\pi},\n\\
&\equiv 0 \mod \mcjh^{\tn{FI}} \quad\tn{by Lemma \ref{lem:noZintegral}}.\n
\end{align}

Since $f$ does not depend on the variable $h_2\hb_2$, it follows that for each $j\geq 3$ the derivative $f_j$ is a constant coefficient weighted homogeneous polynomial in $z_j$'s. Hence one may write
$$ f = g+v$$
where $g$ is the weighted homogeneous part and $v\in C^{\infty}(\Xh{1}).$ One computes
$$h_2^{-\frac{1}{2}}(f_{\xi}-g_{\xi})=h_2^{-\frac{1}{2}}v_{\xi}=h_2^{-\frac{1}{2}}(v^{\xi}+v^{2}h_3).
$$
The left hand side of this equation is homogeneous in the variable $z_j$'s of weight $2n\geq 2$ while the right hand side is at most linear in $z_3$. They are therefore linearly independent and we have $v_{\xi}=0$, and our claim is verified.

The rest of proof for the nontriviality of $[\varphi_n]$ follows from Cor.~\ref{cor:zurelation} and the arguments for Lemma 8.15, 8.16 in \cite{Fox2011}.

\two
We give a sketch of an alternative and more direct proof by constructing 
a class of simple integral cycles of $(\xinfh,\iinfh)$ with nonzero periods for the conservation laws.
From the argument above we are allowed to work globally in $\xinfh_*.$

Consider for example the case $\epsilon=1, \delta=0$, and the ambient space $M=\s{3}.$ 
Let $\x:\s{1}\to X$ be an integral curve such that; its image under the projection $X\to M$ is a great circle, and the  normal vector  ($e_3$) rotates with a constant speed (say $v$, an integer, with respect to the arc-length) in the 2-plane orthogonal to the 2-plane containing the great circle. A computation shows that 
on the infinite prolongation of this integral curve the structure functions are 
\begin{align}
h_{2}&=-\im v,\n\\
h_{2k}&= c'_{2k}( v^{2k-1}+\,\tn{polynomial in $v$ of lower degree} ), \n\\
h_{2k+1}&=0, \quad \tn{for}\; \, k\geq 1, \n
\end{align}
for nonzero constants $c'_{2k}$.
From this one finds that $a^{2k+1}=0$ for all $k\geq 1$,
and that the conservation laws have the following form
$$\varphi_n= \textnormal{$c''_{n}$}\left(v^{\frac{2n+1}{2}}+\tn{powers in $v^{\frac{1}{2}}$ of lower degree}\right)\ed\phi 
$$
for nonzero constants $c''_{n}$.
The claim follows for $v$ is an arbitrary integer.

b)
Lem.~\ref{lem:hrepJacobi}.
\end{proof}

\section{Noether's theorem}\label{sec:Noethertheorem}   
The analysis for symmetries and conservation laws is summarized in the higher-order extension of Noether's theorem.
\begin{thm}[Noether's theorem]\label{thm:higherNoether}
The Noether's theorem for classical symmetries and conservation laws Cor.~\ref{cor:Noether} admits the following higher-order extension.
\begin{enumerate}[\qquad a)]
\item
The space of conservation laws is the direct sum of the classical conservation laws and the higher-order conservation laws:
\begin{align}
\Cv{(\infty)}&=\Cv{0}\oplus\cup_{k=1}^{\infty}\Cv{2k},\n\\
\mcc^{(\infty)}&=\mcc^0\oplus\cup_{k=1}^{\infty}\mcc^{2k-1}.\n
\end{align}
Here $\tn{dim}\,\Cv{0}=\tn{dim}\, \mcc^0=6$, and for $n\geq 1,$ 
\begin{align}
\Cv{n}&=\left\{
\begin{array}{rll}
&0  &\tn{when $n$ is odd,} \n \\
&\langle \Phi_{a^{2k+1}}, \Phi_{\ol{a}^{2k+1}} \rangle &\tn{when $n=2k$.}  \n 
\end{array}\right. \n \\
\mcc^{n}&=\left\{
\begin{array}{rll}
&\langle [\varphi_{k-1}], [\ol{\varphi_{k-1}}] \rangle &\tn{when $n=2k-1$,}  \n\\
&0&\tn{when $n$ is even}.\n
\end{array}\right. \n
\end{align}
\item
Recall the exact sequence Eq.~\eqref{eq:exactsequence} from Sec.~\ref{sec:highercvlaws1},
\be 
0\to E^{0,1}_1\hook E^{1,1}_1\to E^{2,1}_1.\n
\ee
The injective map $E^{0,1}_1\hook E^{1,1}_1$ is also surjective and we have the isomorphisms
\[ \mathfrak{J}^{(\infty)} \simeq \mathfrak{S}_v \simeq \mcc^{(\infty)} \simeq \Cv{(\infty)}.\]
\end{enumerate}
\end{thm}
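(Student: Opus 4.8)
The plan is to assemble Part~a) from the dimension bounds already in hand together with the explicit conservation laws produced by the recursion, and to deduce Part~b) from a classification of Jacobi fields obtained via the scalar PDE.

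For Part~a) I would first record the upper bounds: Theorem~\ref{thm:dimbound} gives $\dim\mch^{2k-1}=0$ and $\dim\mch^{2k}\le 2$, and the $\pm$-decomposition refines this to $\dim\mch^{2k}_{\pm}\le 1$ for each $k\ge 1$. Then I would supply matching lower bounds from Proposition~\ref{prop:nontrivialvarphin}: for each $k\ge 1$ the class $[\varphi_{k-1}]$ is nontrivial and $[\ed\varphi_{k-1}]=\tn{t}_{k-1}[\Phi_{a^{2k+1}}]$ with $\tn{t}_{k-1}\ne 0$, so $[\Phi_{a^{2k+1}}]\ne 0$ in $\Cv{2k}$; since $a^{2k+1}$ is of genuine order $2k+1$ (linear in $z_{2k+1}$ by Theorem~\ref{thm:abcnormalform}) its principal coefficient $B^{1,2k}$ does not vanish. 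Thus $[\Phi_{a^{2k+1}}]$ and its complex conjugate $[\Phi_{\ol{a}^{2k+1}}]$ lie in $\mch^{2k}_{+}$ and $\mch^{2k}_{-}$ respectively (one has vanishing $C^{1,2k}$, the other vanishing $B^{1,2k}$) and so are independent. Combined with the upper bounds this forces $\dim\mch^{2k}_{\pm}=1$, hence $\Cv{2k}=\langle\Phi_{a^{2k+1}},\Phi_{\ol{a}^{2k+1}}\rangle$. Together with the direct-sum splitting $\Cv{(\infty)}\simeq\Cv{0}\oplus\cup_{k}\mch^{2k}$ and the classical count $\dim\Cv{0}=\dim\mcc^0=6$ from Corollary~\ref{cor:Noether} (as $\dim\g=6$), this yields the stated form of $\Cv{(\infty)}$. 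The undifferentiated statement then follows by transporting through the (local) isomorphism $\mcc^{(\infty)}\simeq H^2(\iinfh,\ed)\simeq\Cv{(\infty)}$, which shifts the order grading by one so that $\mcc^{2k-1}\cong\Cv{2k}$: hence $\mcc^{2k-1}=\langle[\varphi_{k-1}],[\ol{\varphi_{k-1}}]\rangle$ is two-dimensional and $\mcc^{2k}=0$.

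For Part~b) the isomorphism $\mfj^{(\infty)}\simeq\mathfrak{S}_v$ is Proposition~\ref{prop:symmetry}, and $\mcc^{(\infty)}\simeq\Cv{(\infty)}$ is the local isomorphism $\mcc^{(\infty)}\simeq H^2(\iinfh,\ed)$. The essential new content is that the injective symbol map $\ed_1\colon E^{0,1}_1\hook E^{1,1}_1$ of \eqref{eq:exactsequence}, which sends a conservation law to its generating Jacobi field, is also surjective. By Part~a) the symbols actually realized are exactly the classical Jacobi fields together with $\{a^{2k+1},\ol{a}^{2k+1}\}_{k\ge 1}$ (the symbol of $[\varphi_{k-1}]$ being $a^{2k+1}$ up to scale), so surjectivity amounts to showing these exhaust $\mfj^{(\infty)}$. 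I would establish this by classifying $\mfj^{(\infty)}$: Lemma~\ref{lem:Jacobinormal} shows any Jacobi field of top order $k+1$ is, up to scale, $z_{k+1}+\mco(k)$, so each order contributes at most one new higher-order Jacobi field modulo lower-order ones; transporting the higher-order part to the scalar elliptic equation \eqref{eq:PDEf} via Corollary~\ref{cor:zurelation} and invoking the classification of Jacobi fields for that equation in \cite{Fox2011}, one finds that the nontrivial higher-order Jacobi fields occur only at odd spectral weight, one per weight, each matching $a^{2k+1}$ up to scale and classical/lower-order corrections. Theorem~\ref{thm:abcnormalform} confirms the recursion does produce a distinct such field at every odd weight, so the realized symbols account for the full list; this gives surjectivity and hence the chain of four isomorphisms.

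The main obstacle is precisely this surjectivity/classification step, and its delicate feature is the vanishing of higher-order Jacobi fields at \emph{even} spectral weight. This is the same phenomenon that makes the recursion obstruction $[\Upsilon(a^{2n+1})]$ trivial in Lemma~\ref{lem:recursionscheme}, so the abstract classification and the closure of the recursion are logically intertwined and one must guard against circularity. To that end I would take the purely differential-algebraic inductive formula of Proposition~\ref{prop:inductiveformula} as the source of the realized symbols $a^{2k+1}$, which establishes their existence independently of this theorem, and use the PDE correspondence with \cite{Fox2011} as the independent source of the full classification; the two are then reconciled by matching leading terms through Lemma~\ref{lem:Jacobinormal}.
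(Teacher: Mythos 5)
Your treatment of Part~a) is essentially the paper's: the paper proves a) in one line by citing Theorem~\ref{thm:dimbound} and Proposition~\ref{prop:nontrivialvarphin}, and your fleshed-out version (upper bounds from the $\pm$-decomposition, lower bounds from the nontriviality of $[\varphi_{k-1}]$ and the relation $[\ed\varphi_{k-1}]=\tn{t}_{k-1}[\Phi_{a^{2k+1}}]$) is the intended argument. One caveat on ordering: Proposition~\ref{prop:nontrivialvarphin}~b) rests on Lemma~\ref{lem:hrepJacobi}, whose proof explicitly invokes the classification of Jacobi fields, i.e.\ Part~b) of this very theorem; so the logical order must be b) first, then a), and your circularity safeguard needs to cover this dependency in addition to the one you identified in Lemma~\ref{lem:recursionscheme}.

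The genuine gap is in Part~b), at the step ``transporting the higher-order part to the scalar elliptic equation via Corollary~\ref{cor:zurelation}.'' Corollary~\ref{cor:zurelation} identifies only functions of the balanced variables $\{z_3,z_4,\ldots\}$ with functions of $\{u_1,u_2,\ldots\}$; it says nothing about a general Jacobi field $f=z_{2k}+\mco(2k-1)$, whose lower-order tail is a priori an arbitrary smooth function that may involve $h_2\hb_2$, mixed $z_j\zb_j$ terms, and the classical (horizontal) variables --- so there is no ``higher-order part'' to transport until one has been extracted. Lemma~\ref{lem:Jacobinormal} cannot simply be iterated to produce one: it controls only the dependence on the single top variable, and $f-z_{2k}$ is not itself a Jacobi field, since $\mce(z_{2k})\ne 0$ (by Lemma~\ref{lem:mcez} it lies in $\mcr\oplus(h_2\hb_2)\mcr$). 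The bulk of the paper's proof of b) consists precisely of supplying this missing bridge: Lemma~\ref{lem:delbpoly0} and Corollary~\ref{lem:delbpoly}, proved by induction on the order using Lemmas~\ref{lem:lemma5.4}, \ref{lem:lemma5.4'} and Lemma~\ref{lem:Tjh}, show that the coefficients forced by the Jacobi equation are constant-coefficient weighted-homogeneous polynomials; an induction then yields a splitting $f=P+g$ with $P$ an un-mixed polynomial in $z_j,\zb_j$ and $g$ a function on $\Xh{1}$, followed by the final step that $\mce(P)=c$, $\mce(g)=-c$ forces $c=0$ and that $g$ is classical. Only after this reduction is $P$ a legitimate input for Corollary~\ref{cor:zurelation} and the classification of \cite{Fox2011}, which then rules out the even leading term $z_{2k}$. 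Without this differential-algebraic normal-form argument, your classification step --- and hence the surjectivity of $E^{0,1}_1\hookrightarrow E^{1,1}_1$ --- does not go through.
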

\begin{proof}
a)  
It follows from Thm.~\ref{thm:dimbound} and Prop.~\ref{prop:nontrivialvarphin}.

b)
A direct computation shows that the only Jacobi fields on $\Xh{1}$ are classical.
From the results so far then, particularly Lem.~\ref{lem:Jacobinormal}, it suffices to show that there does not exist a Jacobi field $f\in\mco(2k), k\geq 2,$ of the form
$$ f=z_{2k}+\mco(2k-1).$$
By taking  complex conjugate this would also imply that there does not exist a Jacobi field $f\in\mco(-2k)$ of the form
$$ f=\zb_{2k}+\mco(-(2k-1)).$$

First we introduce a set of notations.
Let
\begin{align}
P_d&=\{ \tn{weighted homogeneous polynomials of degree $d\geq 0$ in  $z_j$} \}, \n\\
\mcp_d&=\oplus_{i=0}^{d} P_i, \n\\
\mcp_d(\ell)&=\mcp_d \cap \mco(\ell).\n\\
Q_d&=P_d\oplus (h_2\hb_2) P_d, \n\\
\mcq_d&= \oplus_{i=0}^{d} Q_{i},\n\\
\mcq_d(\ell)&=\mcq_d \cap \mco(\ell).\n
\end{align}
We start the proof with a lemma.  
\begin{lem}\label{lem:delbpoly0}
Let $v\in\mco(k), k\geq 3$.  
Suppose
$$h_2^{\frac{1}{2}}\delxb v\in\mcq_{d}(k). 
$$
Then $$v\in\mcp_{d+1}(k).$$
\end{lem}
\begin{proof}
Consider the case $k=3.$ For functions in $\mco(3)$ we have the commutation relation
$[E_3,\delxb]=0.$ Hence by applying $E_3$ repeatedly to $\delxb v$ we get
$$h_2^{\frac{1}{2}}\delxb E^{m}_3(v)=0$$
for some $m\leq d+1$. 
By Lem~.\ref{lem:lemma5.4}, $v$ is a polynomial in $z_3$, and we write
$$v= v_m z_3^m+v_{m-1}z_3^{m-1}+\, ... \, +v_1 z_3+v_0,  
$$
for $v_j\in\mco(2)$ with $v_m$ being a constant.
Substituting this to the given equation for $h_2^{\frac{1}{2}}\delxb v$, we find the recursive equation
$$h_2^{\frac{1}{2}}\delxb v_j+(j+1)v_{j+1}R=c'_j\gamma^2+c_j^{"}h_2\hb_2, \quad j=m, m-1, \, ... \, $$
for constants $c'_j,c_j^{"}$ ($v_{m+1}=0$).
Since $v_m$ is a constant and $h_2^{\frac{1}{2}}\delxb z_3=R=\gamma^2-h_2\hb_2$,
an inductive argument by Lem.~\ref{lem:lemma5.4'}  in decreasing $j$ shows that
all the coefficients $v_j$ must be constant, and $v\in\mcp_{d+1}(3)$.

Suppose the claim is true up to $\mco(k-1)$. 
Let $v\in\mco(k)$.  For functions in $\mco(k)$ we have the commutation relation
$[E_k,\delxb]=0.$ Hence similarly as above  $v$ is a polynomial in $z_k$, and we write
$$v= v_m z_k^m+v_{m-1}z_k^{m-1}+\, ... \, +v_1 z_k+v_0,  
$$
for $v_j\in\mco(k-1)$, $m(k-2)\leq d+1$, and $v_m$ being a constant.
Substituting this to the given equation for $h_2^{\frac{1}{2}}\delxb v$, we find the recursive equation
$$h_2^{\frac{1}{2}}\delxb v_j+(j+1)v_{j+1}\hat{T}_k\in \mcq_{d-j(k-2)}(k-1), 
\quad j=m, m-1, \, ... \,.$$
By Lem.~\ref{lem:Tjh}, $\hat{T}_k\in  \mcq_{k-3}(k-1)$.
It follows by the similar inductive argument (using the induction hypothesis) with decreasing $j$ 
that  $v_j\in \mcp_{d-j(k-2)+1}(k-1)$ for each $j$, 
and consequently $v\in\mcp_{d+1}(k)$.
\end{proof}
\begin{cor}\label{lem:delbpoly}
Let $u\in\mco(k), k\geq 3$.  Let $u^{k}=E_{k} (u)=\frac{\del u}{\del h_{k }}$.
Suppose
$$h_2^{\frac{1}{2}}\delxb (h_2^{ \frac{k}{2}} u^{k }) \in\mcq_{d}(k). 
$$
Then $h_2^{ \frac{k}{2}}u^k\in\mcp_{d+1}(k)$, and hence
$$u\in\mcp_{d+(k-1)}(k)\mod \mco(k-1).$$
\end{cor}
\begin{proof}
Substitute $v=h_2^{ \frac{k}{2}} u^{k }$ from Lem.~\ref{lem:delbpoly}.
\end{proof}  

We now give the proof of Noether's theorem.
Let
$$ f=z_{2k}+u_{(1)},\quad u_{(1)}\in\mco(2k-1)$$
be a Jacobi field. Applying the Jacobi operator one finds,
$$-\mce(z_{2k})\equiv h_2^{\frac{1}{2}}\delxb(h_2^{ \frac{(2k-1)}{2}}u_{(1)}^{2k-1})z_{2k}\mod\mco(2k-1).$$
By Lem.~\ref{lem:mcez}, $\mce(z_{2k})\in\mcq_{2k-2}(2k)$.
By Cor.~\ref{lem:delbpoly} we have 
$$ u_{(1)} = p_{(1)}+\mco(2k-2)$$
for $p_{(1)}\in \mcp_{2k-2}(2k-1).$

Suppose by induction we arrive at the formula
$$ f=z_{2k}+p_{(1)}+p_{(2)}+\, ... \, +p_{(j)}+u_{(j+1)},\quad u_{(j+1)}\in\mco(2k-(j+1)),$$
where each $p_{(i)}\in \mcp_{2k-2}(2k-i)$ such that
$$ q_{(i)}:=\mce\left(z_{2k}+p_{(1)}+p_{(2)}+\, ... \, +p_{(i)}\right)\in\mco(2k-i).
$$
By Lem.~\ref{lem:mcez} we then have $q_{(j)}\in\mcq_{2k-2}(2k-j)$.
Applying the Jacobi operator to the refined normal form of $f$ we get
$$ -q_{(j)}\equiv h_2^{\frac{1}{2}}\delxb(h_2^{ \frac{2k-(j+1)}{2}}u_{(j+1)}^{2k-(j+1)})z_{2k-j}\mod\mco(2k-(j+1)).$$
By Cor.~\ref{lem:delbpoly} we may write
\begin{align*}
u_{(j+1)}&=p_{(j+1)}+u_{(j+2)}, \\
p_{(j+1)}&\in\mcp_{2k-2}(2k-(j+1)),\\
u_{(j+2)}&\in\mco(2k-(j+2)).
\end{align*}

Continuing this process we arrive at the normal form
\be\begin{array}{rll}
f&=p+u_{(2k-2)}, &u_{(2k-2)}\in\mco(2),\n  \\
p&=z_{2k}+p_{(1)}+p_{(2)}+\, ... \, +p_{(2k-3)},&
\end{array}
\ee
where $p_{(2k-3)}\in \mcp_{2k-2}(3)$ such that
$$ q_{(2k-3)}:=\mce(p)\in\mcp_{2k-2}(3).
$$
 
By the complex conjugate of all of this argument, we may assume the Jacobi field $f$ decomposes into
$$f =P+g$$
where  $P$ is an un-mixed polynomial in $z_j, \zb_j$, and $g$ is a function on $\Xh{1}$.
Since $\mce(g)\in\mco(3,-3)$ is at most linear in $z_3,\zb_3$ and
the operator 
$\mce:\mcp_{2k-2}(2k)\to\mcq_{2k-2}(2k)$ preserves the spectral weight,
and since $z_3, \zb_3$ are Jacobi fields, 
we must have
$$\mce(P)=c'+c"h_2\hb_2$$
for some constants $c',c".$ Up to adding a constant to $P$, we finally have
\begin{align}
\mce(P)&=c, \n\\
\mce(g)&=-c,\n
\end{align}
for a constant $c.$\footnotemark\footnotetext{When the curvature of the ambient space $\epsilon\ne 0$, a direct computation shows that for a function $g$ on $X$ this is only possible when $c=0$ and $g$ is a classical Jacobi field. When $\epsilon=0$, there exists such a function $g$ on $X$.
This is an example of inhomogeneous Jacobi field, Sec.~\ref{sec:scalingHopf}.}
Since the Jacobi operator $\mce$ preserves the spectral weight and $\mce(1)=\frac{1}{2}(\gamma^2+h_2\hb_2)$,
the equation $\mce(P)=c$ implies that $c=0$ and that $P$ has no constant term.
Hence $P$ is a pure polynomial Jacobi field in the variables $z_j, \zb_j$ with the leading term $z_{2k}$.
By Cor.~\ref{cor:zurelation} such a Jacobi field corresponds to a higher-order Jacobi field of the elliptic sinh-Gordon equation with the even order leading term, a contradiction to the classification result of  \cite{Fox2011}.
\end{proof}
Due to the commutation relation $[\mcs,\mce]=0$, the analysis in fact gives a direct proof that \emph{the space of Jacobi fields is the direct sum of the space of higher-order, weighted homogeneous, un-mixed polynomial Jacobi fields in $z_j, \zb_j$, and the classical Jacobi fields.} By Cor.~\ref{cor:zurelation}, the classification result of \cite{Fox2011} shows that there exist two Jacobi fields for each odd order, namely the inductively defined sequence $a^{2j+1}, \ab^{2j+1}$.

\np
\part{Finite type surfaces}\label{part:FT}
\section{Structure equations arising from the recursion}\label{sec:recursionfinitetype}
We introduce yet another adapted structure equation on $\xinfh_*$ that arises from the recursion relation \eqref{eq:newrecursion} and the analysis in Sec.\ref{sec:recursionxinf}. This structure equation is suited for our analysis of the finite-type solutions, for which the induced structure equation closes up at a finite prolongation. Let us mention two relevant properties:

$\bullet$\,
the sequence of higher-order Jacobi fields $\,\{ \tn{A}^{n}\}_{n=1}^{\infty}$ is embedded in the structure equation with the right constant scale so that
$\tn{A}^{n}=z_{2n+1}+\tn{(lower order terms)}$,

$\bullet$\,
the structure equation only contains the real structure constant $\gamma^2$, and the equation for complex conjugate which is necessary  for our analysis is uniform independent of the sign of $\gamma^2$.

\two

The recursion in Sec.\ref{sec:recursionxinf} can be rearranged to define the following set of structure equations. Note here the index is such that $\tn{A}^n\sim a^{2n+1}$ up to constant scale.
\begin{lem}
There exists a sequence of functions $\{ \tn{A}^n,\,\tn{B}^n,\,\tn{C}^n\}_{n=1}^{\infty}$ on $\xinfh_*$ that satisfy the following structure equation. Here we have normalized so that $r=h_2$ is real valued, $\tn{B}^0=0,\,\tn{C}^0=-2$, and the complex $1$-form $\omega=r^{\frac{1}{2}}\xi$.
\begin{align}
\ed \omega& \equiv 0,  \nonumber\\
\ed r & \equiv r  \Re(\tn{A}^1 \omega), \nonumber \\
\ed \tn{A}^n&\equiv \tn{B}^n \omega -r^{-1} \left(\gamma^2 \tn{B}^{n-1}+\frac{1}{2}(\gamma^2-r^2) \tn{C}^{n-1} \right) \omb, \label{eq:InvariantStrEq} \\
\ed \tn{B}^n&\equiv\left( \tn{A}^{n+1}+ \frac{1}{2} \tn{A}^1 \left( \tn{B}^n+ \tn{C}^n \right) \right)\omega -\frac{1}{2} (\gamma ^2+r ^2) r ^{-1} \tn{A}^n \omb,  \nonumber\\
\ed \tn{C}^n &\equiv- \left( \tn{A}^{n+1} +\frac{1}{2} \tn{A}^1 \tn{C}^n \right)\omega +\gamma^2 r ^{-1} \tn{A}^n \omb,\quad \mod\iinfh, \quad n\geq 1. \n
\end{align}
Moreover each $\tn{A}^n$ is a higher-order Jacobi field which satisfies
\begin{equation}
r^{-1} \mce(\tn{A}^n)=\tn{A}^n_{\omega, \omb}+\frac{1}{2} (\gamma^2 + r^2) r^{-1}\tn{A}^n=0.\n
\end{equation}
\end{lem}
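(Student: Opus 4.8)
The plan is to define the triple $\{\tn{A}^n,\tn{B}^n,\tn{C}^n\}$ directly from the formal Killing coefficients $\{a^{2n+1},b^{2n+2},c^{2n+2}\}$ produced by the recursion of Section~\ref{sec:recursionxinf}, and then to obtain \eqref{eq:InvariantStrEq} by rewriting the recursion structure equation \eqref{eq:newrecursion} in the $\omega$-coframe after a constant rescaling. Throughout I would work on the real slice $r=h_2=\hb_2$ (so that $h_1=r^{\frac12}$ and $\omega=r^{\frac12}\xi$), exactly as in the local model of Lemma~\ref{lem:YY'iso}; this is what allows the equation and its complex conjugate to be stated uniformly. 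By Theorem~\ref{thm:abcnormalform} the balanced quantities $a^{2n+1}$, $h_2^{\frac12}b^{2n+2}$, $h_2^{-\frac12}c^{2n+2}$ are weighted homogeneous elements of $\mcr=\C[z_3,z_4,\ldots]$, so every function introduced below is a well-defined polynomial on $\Xh{\infty}_*$.

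Concretely I would set
\[
\tn{A}^n:=(-\gamma)^{n-1}a^{2n+1},\qquad \tn{B}^n:=\del_{\omega}\tn{A}^n,\qquad \tn{C}^n:=-2\im\,\gamma^{n}\,h_2^{-\frac12}c^{2n+2},
\]
with the seed conventions $\tn{A}^0=0$, $\tn{B}^0=0$, $\tn{C}^0=-2$. The factor $(-\gamma)^{n-1}$ is chosen to absorb the odd powers of $\gamma$ appearing in \eqref{eq:abcnormal}, so that $\tn{A}^n=z_{2n+1}+(\text{lower order})\in\mcr$; the power $\gamma^{n}$ in $\tn{C}^n$ is fixed by the same bookkeeping, leaving $\tn{C}^n=(-1)^n z_{2n+2}+(\text{lower order})\in\mcr$, and one checks that $\tn{C}^n$ is characterized inside $\mcr$ by $\del_{\omb}\tn{C}^n=\gamma^2 r^{-1}\tn{A}^n$. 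With these choices the first two lines of \eqref{eq:InvariantStrEq} are immediate: reducing the corresponding lines of \eqref{eq:zetastruct} modulo $\iinfh$ gives $\ed\omega\equiv0$ and $\ed r=\tfrac{r}{2}(z_3\omega+\zb_3\omb)=r\,\Re(\tn{A}^1\omega)$, using $\tn{A}^1=a^3=z_3$.

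The three remaining equations I would verify componentwise in $\omega$ and $\omb$. The $\omega$-part of $\ed\tn{A}^n$ is $\tn{B}^n$ by definition, while its $\omb$-part is read off from the $\xib$-term $\im\gamma b^{2n}+\im\hb_2 c^{2n}$ of \eqref{eq:newrecursion}; expressing $b^{2n},c^{2n}$ through $\tn{B}^{n-1},\tn{C}^{n-1}$ and imposing $r=\hb_2$ produces the stated coefficient $-r^{-1}\!\left(\gamma^2\tn{B}^{n-1}+\tfrac12(\gamma^2-r^2)\tn{C}^{n-1}\right)$, the case $n=1$ reproducing $\hat T_3 r^{-1}=(\gamma^2-r^2)r^{-1}$ and fixing the seeds $\tn{B}^0=0,\tn{C}^0=-2$. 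The equations for $\ed\tn{B}^n$ and $\ed\tn{C}^n$ follow the same way from the structure equations for $b^{2n+2},c^{2n+2}$ in \eqref{eq:newrecursion} together with the inductive formulae \eqref{eq:bc2n+2} and \eqref{eq:a2n+3}, where one must commute $\del_{\omega}$ and $\del_{\omb}$ and so pick up the curvature term governed by \eqref{3anticommut}. Finally, the Jacobi equation $r^{-1}\mce(\tn{A}^n)=\tn{A}^n_{\omega,\omb}+\tfrac12(\gamma^2+r^2)r^{-1}\tn{A}^n=0$ is inherited at once, since $\tn{A}^n$ is a constant multiple of $a^{2n+1}$, which is a Jacobi field by Lemma~\ref{lem:bncn}; alternatively it can be extracted as the $\omega\w\omb$-coefficient of $\ed(\ed\tn{A}^n)=0$ applied to the first line of \eqref{eq:InvariantStrEq}.

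The main obstacle I anticipate is organizational rather than conceptual: after the rescalings one must check that the $\omb$-derivatives genuinely close up inside $\mcr$ with $\gamma$ entering only through $\gamma^2$. This rests on the fact that each $\del_{\omb}z_j=\hat T_j r^{-1}$ with $\hat T_j\in\C[r^2,z_3,z_4,\ldots]$ (Lemma~\ref{lem:Tjh}), so that the apparent $r$-dependence of the $\omb$-terms reassembles into the single curvature combination $\gamma^2-r^2=R$, while Lemma~\ref{lem:mcez} supplies the analogous control for the second derivative. Verifying that all half-integer powers of $\gamma$ and all residual $r$-monomials cancel --- and hence that \eqref{eq:InvariantStrEq} is uniform in the sign of $\gamma^2$, the property that makes this reformulation convenient for the finite-type analysis and for passing to complex conjugates --- is where the bulk of the careful computation lies.
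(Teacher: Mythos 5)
Your strategy differs from the paper's proof in a genuine way: the paper never reintroduces the Killing coefficients $b^{2n+2},c^{2n+2}$ at this stage. It takes $\tn{A}^n$ to be the iterates of the intrinsic recursion operator $\mcp$ of Definition~\ref{defn:Jacobirecursion}, sets $\tn{B}^n:=\tn{A}^n_{\omega}$, takes $\tn{C}^n$ to be the potential entering the recursion, and then reads off every line of \eqref{eq:InvariantStrEq} from the closure identities $\ed^2\tn{A}^n\equiv 0$ and $\ed^2\tn{C}^{n-1}\equiv 0$, the recursion relation $\tn{A}^{n+1}=\tn{A}^n_{\omega,\omega}-\tfrac{1}{2}\tn{A}^1(\tn{A}^n_{\omega}+\tn{C}^n)$, and Eq.~\eqref{eq:upsilon}, with the seeds $\tn{B}^0=0,\ \tn{C}^0=-2$ forced by consistency at $n=1$. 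Your route --- rescale the formal Killing coefficients and transcribe \eqref{eq:newrecursion} into the $\omega$-coframe on the slice $h_2=\hb_2=r$ --- is workable in principle, but it places the entire burden of the proof on getting the rescaling constants exactly right, and that is where your proposal fails.

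Your dictionary is wrong by a parity sign. The normalization that actually satisfies \eqref{eq:InvariantStrEq} is
\[
\tn{A}^n=(-\gamma)^{n-1}a^{2n+1},\qquad \tn{B}^n=\del_{\omega}\tn{A}^n,\qquad \tn{C}^n=2\im\,(-\gamma)^{n}\,h_2^{-\frac{1}{2}}c^{2n+2},
\]
whereas you set $\tn{C}^n=-2\im\,\gamma^{n}h_2^{-\frac{1}{2}}c^{2n+2}$; the two agree only for odd $n$. Three concrete failures of your version: (i) at $n=0$ your formula gives $\tn{C}^0=-2\im\cdot h_2^{-\frac{1}{2}}\cdot\im h_2^{\frac{1}{2}}=+2$, contradicting the seed $\tn{C}^0=-2$ that you assert alongside it; (ii) from \eqref{eq:newrecursion} one computes $\del_{\omb}\bigl(h_2^{-\frac{1}{2}}c^{2n+2}\bigr)=\tfrac{\im\gamma}{2}r^{-1}a^{2n+1}$, so your $\tn{C}^n$ satisfies $\del_{\omb}\tn{C}^n=\gamma^{n+1}r^{-1}a^{2n+1}$, while the characterization you claim, $\del_{\omb}\tn{C}^n=\gamma^{2}r^{-1}\tn{A}^n=(-1)^{n-1}\gamma^{n+1}r^{-1}a^{2n+1}$, has the opposite sign for every even $n$ (e.g.\ $n=2$); (iii) matching the $\omb$-coefficient of $\ed\tn{A}^n$ in \eqref{eq:InvariantStrEq} forces exactly $\tn{C}^{n-1}=2\im(-\gamma)^{n-1}h_2^{-\frac{1}{2}}c^{2n}$, so with your constants that line fails for every odd $n\geq 3$. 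The alternating leading term $\tn{C}^n=(-1)^nz_{2n+2}+\cdots$ that your normalization produces is itself the warning sign: structure equations uniform in $n$ cannot hold for a normalization whose leading coefficient alternates, and the correct one gives $\tn{C}^n=-z_{2n+2}+(\tn{lower order})$ for all $n$ (check against the table \eqref{eq:recursioncoeff}: $\tn{C}^1=-z_4+\tfrac{7}{4}z_3^2$). With the factor corrected, the remainder of your outline --- the $\ed\omega$ and $\ed r$ lines from \eqref{eq:zetastruct}, the componentwise transcription of \eqref{eq:newrecursion}, and the Jacobi equation for $\tn{A}^n$ via Lemma~\ref{lem:bncn} --- does go through.
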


\begin{proof}
We derive these structure equations from the recursion in Sec.~\ref{sec:recursionxinf}.  The proof is a straightforward computation. All the congruences below are mod $\iinfh$.

Let $\ed \tn{A}^n \equiv \tn{A}^n_{\omega}\omega + \tn{A}^n_{\omb} \omb$, and set $\tn{B}^n=\tn{A}^n_{\omega}$.
The mixed partial condition from $\ed^2 \tn{A}^n \equiv 0$ provides the formula for $\tn{B}^n_{\omb}$.  Using the fact that $\tn{B}^n_{\omega}=\tn{A}^n_{\omega,\omega}$ and the recursion relation
$\tn{A}^{n+1}=\tn{A}^n_{\omega,\omega} - \frac{1}{2}\tn{A}^1\left(\tn{A}^n_{\omega} + \tn{C}^n\right)$, we get the formula for $\tn{B}^n_{\omega}$.

The structure equation for $\ed \tn{C}^n$ comes directly from Eq.~\eqref{eq:upsilon} if we use $\tn{A}^1$ in place of $z_3$.  All we have left is to derive the expression for $\tn{A}^n_{\omb}$.  First, using $\tn{A}^1=z_3$ and the structure equation for $z_3$ on $\Xh{\infty}_*$ we find that $\ed \tn{A}^1 \equiv \tn{B}^1\omega+ r^{-1}(\gamma^2 - r^2) \omb$.  With this in hand, the identity from $\ed^2 \tn{C}^{n-1} \equiv 0$ provides the expression for $\tn{A}^n_{\omb}$.  That expression then requires that  $\tn{B}^0=0$ and $\tn{C}^0=-2$ if it is to hold for $n=1$.
\end{proof}
 
In the next section we will prove that there exist many finite-type solutions away from the umbilic locus $r=0$ by utilizing this structure equation and the Lie-Cartan theorem for closed differential systems, \cite[Appendix]{Bryant2001}.
 
\section{Linear finite-type surfaces}\label{sec:finitetype}
In this section we prove that locally there exist many CMC surfaces called `linear finite-type surfaces'  which satisfy constant coefficient linear relations among the canonical Jacobi fields. We examine a few classes of low order examples. 
The geometry of these surfaces including the polynomial Killing fields, spectral curves, etc, will be further explored in the next sections.

\two
The basic finite-type condition used by Pinkall \& Sterling \cite{Pinkall1989} is
\begin{defn}\label{defn:finitetype}
Let $\{\tn{A}^j\}_{j=1}^{\infty}$ be  the sequence of normalized Jacobi fields defined by Eqs.~\eqref{eq:InvariantStrEq}.
An integral surface $\Sigma\hook\xinf$ of the EDS for CMC surfaces is of \tb{linear finite-type} when the associated double cover $\Sigmah\hook\xinfh$ has the following property; there exists an integer $n\geq 0$ such that when pulled back to $\Sigmah$  the sequence of Jacobi fields satisfy the linear equation
\begin{equation}\label{eq:finitetype}
\tn{A}^{n+1}=\sum_{j=0}^{n}\left( U_j \tn{A}^j + V_j \ol{\tn{A}}^j \right), \quad (\tn{set}\;\tn{A}^0=0) 
\end{equation}
for constant coefficients $U_j, V_j$.
The \tb{level} of a linear finite-type surface is the least integer $n$ such that Eq.~\eqref{eq:finitetype} holds.
\end{defn}

Vinogradov refers to setting $\tn{A}^{n+1}=0$ as the basic method for finding solutions invariant under the symmetry generated by the Jacobi field $\tn{A}^{n+1}$.  We will show below that for the system under study, the condition of Eq.~\eqref{eq:finitetype} implies that $\tn{A}^{n+k}=0$ for some $k\geq 0$.

\begin{exam}[Level $0$]
The linear finite-type surfaces of level 0 are defined by the equation
\[ \nA^1=z_3=h_1^{-3}h_3=0.
\]
From the well known characterization of CMC surfaces  with transitive group of symmetry, 
such a surface is a generalized cylinder.
\end{exam}
\begin{exam}[Level $1$]
Consider the next simplest case of linear finite-type surfaces of level $1$ defined by
\be\label{eq:FTlevel1}
\nA^2=U_1 \nA^1 +V_1 \nAb^1.
\ee
From the structure equation \eqref{eq:InvariantStrEq},
\[\ed\nA^1=\nB^1\omega+\frac{(\gamma^2-r^2)}{r}\omb.
\]
Differentiating Eq.~\eqref{eq:FTlevel1}  and collecting the $\omb$-terms, one gets
\begin{equation}\label{eq:finitetype1}
\nC^{1}=\frac{-2}{\gamma^2-r^2}\left( \gamma^2\nB^1 +rV_1\nBb^1 \right) - 2 U_1.
\end{equation}
Requiring that $\ed^2 \nB^1=0$ with this relation, the compatibility equation factors and the analysis is divided into two cases;  $|V_1|^2=\gamma^2$, or $|V_1|^2\ne\gamma^2.$

\one
$\bullet$ Case  $|V_1|^2=\gamma^2$: This requires $\gamma^2>0.$
The equations $\ed^2 \nA^1=0,\ed^2 \nB^1=0$ are identities. This leaves the following closed structure equation (mod $\iinfh$);
\begin{align}
\ed \omega&\equiv 0, \label{eq:n=1InvariantStrEq}  \\
\ed r &\equiv r  \Re(\nA^1 \omega), \nonumber \\
\ed \nA^1&\equiv \nB^1\omega +  r^{-1} (\gamma ^2-r ^2)\omb, \n \\
\ed \nB^1&\equiv
\left( U_1 \nA^1 +V_1 \nAb^1 +\frac{1}{2}\nA^1(\nB^1+\nC^1) \right)\omega
- \frac{r^{-1}}{2}(\gamma ^2+r ^2) \nA^1 \omb.\n
\end{align}
A direct computation shows that $\ed^2\equiv0\mod\iinfh$ is an identity for this putative structure equation.
By Lie-Cartan  theorem, for any choice of initial conditions of $r, \nA^1, \nB^1, \omega$ at a point satisfying $r \neq 0, \gamma^2 -r^2\ne 0$, and the constants $\gamma,U_1,V_1$ with $|V_1|=\gamma^2$, locally there exists an integral surface with a complex $1$-form $\omega$ and the functions $r, \nA^1, \nB^1$ satisfying this structure equation.

$\bullet$ Case  $|V_1|^2\ne\gamma^2$:
From the compatibility equation for $\ed^2 \nB^1=0$ we get
\[ \nB^1=\frac{\nA^1}{\nAb^1}\frac{\gamma^2-r^2}{r}.\]
This leaves the following closed structure equation (mod $\iinfh$);
\begin{align}
\ed \omega&\equiv 0,  \nonumber\\
\ed r &\equiv r  \Re(\nA^1 \omega), \nonumber \\
\ed\nA^1&=\frac{(\gamma^2-r^2)}{r}
 \left({\frac {\nA^{{1}}}{\nAb^1}}\omega+\omb\right).\n
\end{align}
A direct computation shows that $\ed^2\equiv0\mod\iinfh$ is again an identity for this putative structure equation. By Lie-Cartan  theorem, for any choice of initial conditions of $r, \nA^1,\omega$ at a point satisfying $r \neq 0, \gamma^2 -r^2\ne 0$, and the constants $\gamma,U_1,V_1$ with $|V_1|\ne\gamma^2$, locally there exists an integral surface with a complex $1$-form $\omega$ and the functions $r, \nA^1$ satisfying this structure equation.
\end{exam}

An analysis shows that the case $\gamma^2<0$ is generally not compatible with the linear finite-type condition. We therefore make the assumption on the structure constant $\gamma^2$:

\begin{center}\framebox{$\gamma^2>0$ for linear finite-type surfaces.}\end{center}

\one
\begin{prop}\label{prop:finitetypeexistence}
Consider the structure equations \eqref{eq:InvariantStrEq}$\mod\iinfh$.  
Impose a linear relation
 \begin{equation}\label{eq:finitetypen}
\tn{A}^{n+1}=\sum_{j=1}^{n}\left( U_j \tn{A}^j + V_j \ol{\tn{A}}^j \right),  
\end{equation}
for constants $U_j,V_j$. Then the $\omb$-derivative of this equation gives
\begin{align}\label{eq:ccondition}
\tn{C}^{n}=
&\frac{2}{\gamma^2-r^2}\sum_{j=1}^{n-1}
\left[ U_{j+1}\left(\gamma^2\nB^j+\frac{\gamma^2-r^2}{2}\nC^j\right)-r V_j\nBb^j\right] \\
&+\frac{-2}{\gamma^2-r^2}\left(\gamma^2\nB^n+r V_n\nBb^n\right)-2U_1.\n
\end{align}
With this relation, the following set of equations among the constant coefficients
\begin{equation}\label{eq:constants} 
|V_n|^2=\gamma^2 \quad \& \quad V_j=-V_n \Ub_{j+1} \;\; {\rm for} \;\; 1 \leq j \leq n-1   
\end{equation}
is sufficient to imply the identity $\ed^2 \tn{B}^n=0$. As a consequence,
the structure equations \eqref{eq:InvariantStrEq}$\mod\iinfh$ for $j \leq n$ along with Eqs.~\eqref{eq:finitetypen},~\eqref{eq:ccondition},~\eqref{eq:constants} imposed define a system of compatible closed structure equations that formally satisfies $\ed^2=0$.  By Lie-Cartan theorem there exist local solutions for initial conditions satisfying $r \neq 0$ and $r^2 \neq \gamma^2$.
\end{prop}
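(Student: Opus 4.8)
The plan is to treat the reduced collection \eqref{eq:InvariantStrEq} (for indices $j\le n$) as a Lie--Cartan system in the coframe $\{\omega,\omb\}$ whose independent structure functions are $r,\tn{A}^1,\dots,\tn{A}^n,\tn{B}^1,\dots,\tn{B}^n,\tn{C}^1,\dots,\tn{C}^{n-1}$, and to verify that the imposed algebraic relations make this system formally Frobenius. First I would differentiate the level-$n$ relation \eqref{eq:finitetypen} using \eqref{eq:InvariantStrEq}. Since conjugation interchanges $\omega\leftrightarrow\omb$ and fixes the real quantities $r,\gamma^2$, one has $\ed\ol{\tn{A}}^j\equiv \ol{\tn{B}}^j\,\omb-r^{-1}\big(\gamma^2\ol{\tn{B}}^{j-1}+\tfrac12(\gamma^2-r^2)\ol{\tn{C}}^{j-1}\big)\omega$, so the $\omega$- and $\omb$-parts separate cleanly. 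The $\omega$-part is automatically the definition of the dependent quantity $\tn{B}^{n+1}=\tn{A}^{n+1}_\omega$ and carries no constraint on the reduced variables; the $\omb$-part, after clearing $-r^{-1}$ and using $\tn{B}^0=0$, $\tn{C}^0=-2$, solves algebraically for the top function $\tn{C}^n$, producing exactly \eqref{eq:ccondition}. This is the reduction step: it eliminates $\tn{C}^n$, so that every differential appearing in the system is expressed through the reduced variables alone.

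With $\tn{A}^{n+1}$ given by \eqref{eq:finitetypen} and $\tn{C}^n$ by \eqref{eq:ccondition}, the only structure equation carrying a substituted expression is the one for $\ed\tn{B}^n$; all of $\ed\tn{A}^j\,(j\le n)$, $\ed\tn{B}^j\,(j\le n-1)$, $\ed\tn{C}^j\,(j\le n-1)$ involve only lower-index, un-substituted functions and hence inherit $\ed^2=0$ from the original formally integrable system \eqref{eq:InvariantStrEq}. Therefore the single remaining compatibility condition is $\ed^2\tn{B}^n=0$. Because $\ed\omega\equiv\ed\omb\equiv0\bmod\iinfh$, computing $\ed(\ed\tn{B}^n)$ reduces to extracting the coefficient of $\omega\w\omb$ and requiring it to vanish identically in the reduced variables.

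The main obstacle is this last computation. After substituting \eqref{eq:finitetypen} and \eqref{eq:ccondition} into $\ed\tn{B}^n=\big(\tn{A}^{n+1}+\tfrac12\tn{A}^1(\tn{B}^n+\tn{C}^n)\big)\omega-\tfrac12(\gamma^2+r^2)r^{-1}\tn{A}^n\,\omb$ and differentiating, one must show that the $\omega\w\omb$-coefficient, a polynomial in $\{r,\tn{A}^j,\ol{\tn{A}}^j,\tn{B}^j,\ol{\tn{B}}^j,\tn{C}^j\}$ with coefficients built from $\gamma^2,U_j,V_j$, vanishes precisely when \eqref{eq:constants} holds. I would first carry out $n=1$ and $n=2$ by hand to fix the pattern; the $n=1$ case must reproduce the Level $1$ computation, whose compatible branch is $|V_1|^2=\gamma^2$, consistent with \eqref{eq:constants}. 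For general $n$ the strategy is to group the $\omega\w\omb$-coefficient by the monomial it multiplies: the terms containing $\ol{\tn{A}}^j,\ol{\tn{B}}^j$ should telescope across the index $j$ once $V_j=-V_n\ol{U}_{j+1}$ is imposed, while the diagonal terms proportional to $\tn{A}^n\ol{\tn{A}}^n$ should cancel exactly under $|V_n|^2=\gamma^2$. The delicate bookkeeping lies in tracking conjugates through \eqref{eq:ccondition}, which mixes $\tn{B}^j$ and $\ol{\tn{B}}^j$, and in handling the $r^{-1}(\gamma^2-r^2)$ denominators, which I would clear at the outset by multiplying the identity through by $\gamma^2-r^2$.

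Finally, once $\ed^2=0$ is verified as a formal identity for all generating functions, the system \eqref{eq:InvariantStrEq} for $j\le n$ together with the substitutions \eqref{eq:finitetypen}, \eqref{eq:ccondition} and the constraints \eqref{eq:constants} is a closed Lie--Cartan system in $\{\omega,\omb\}$ whose structure functions are the reduced variables. The open conditions $r\neq0$ (needed for $\omega=r^{\frac12}\xi$ and the $r^{-1}$ terms) and $r^2\neq\gamma^2$ (needed for \eqref{eq:ccondition} to be regular) cut out the locus where the system is smooth, and the Lie--Cartan existence theorem \cite[Appendix]{Bryant2001} then yields, for each admissible choice of initial values at a point, a local integral surface. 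By construction this surface satisfies \eqref{eq:finitetypen} and is therefore a linear finite-type CMC surface of level at most $n$, completing the proof.
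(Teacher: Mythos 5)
Your proposal takes essentially the same route as the paper, whose entire proof reads ``The proof is a straightforward calculation'': differentiate \eqref{eq:finitetypen}, read off \eqref{eq:ccondition} from the $\omb$-component, reduce the compatibility of the truncated system to the single identity $\ed^2\tn{B}^n=0$ under \eqref{eq:constants}, and invoke the Lie--Cartan theorem on the open locus $r\neq 0$, $r^2\neq\gamma^2$. Your structural analysis (verifying that the remaining $\ed^2$ conditions are inherited from the original system, so that only $\ed^2\tn{B}^n=0$ is nontrivial) is correct and in fact more explicit than the paper's own proof, though, like the paper, you leave the final coefficient-vanishing computation as a direct calculation rather than executing it.
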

\begin{proof}
The proof is a straightforward calculation.
\end{proof}
\begin{rem}
We will see that if Eq.~\eqref{eq:finitetypen} is satisfied then $\tn{A}^{2n+1}=0$.  Thus  linear finite-type CMC surfaces admit polynomial Killing fields. See Sec.~\ref{sec:polynomialKF} for a proof. We refer the reader to \cite{Burstall1993} for a related work on the finite-type harmonic tori in symmetric spaces.
\end{rem}

With the linear finite-type equation \eqref{eq:finitetypen} imposed it becomes necessary to include all of its differential consequences into consideration. The result that this leads to a compatible structure equation under the assumption \eqref{eq:constants} on the constant coefficients as soon as it closes up after recovering the variables $\nC^n$ by Eq.~\eqref{eq:ccondition} is by no means trivial. 
It is due to the fact that Eq.~\eqref{eq:finitetypen} is stable under the Laplacian;
$$ \delx\delxb\left(Eq.~\eqref{eq:finitetypen}\right)\equiv 0\mod Eq.~\eqref{eq:finitetypen}$$
(here we use the Jacobi equation for $\nA^{j}$).
 
Based on the robustness of this phenomenon and the well known results on the spectral geometry of finite-type harmonic maps it would not be surprising that the linear finite-type equation inherits the symmetries generated by the canonical Jacobi fields.
\begin{conj}\label{conj:LFTsymmetry}
Let $\mu:\hat{Y}^{(n)} \subset \Xh{2n+2}$ be a submanifold defined by a $n$-th level linear finite-type equation.  Then $(\hat{Y}^{(n)},\mu^*\Ih{2n+2})$ is a Frobenius system.  Moreover, to each Jacobi field $\tn{A}^k, k\leq n$, there exists an associated symmetry vector field $V_{\tn{A}^k}$ (locally) defined on $\hat{Y}^{(n)}$ so that the corresponding flow $\phi_k(t):\hat{Y}^{n} \to \hat{Y}^{n}$ is a symmetry of $\mu^*\Ih{2n+2}.$
\end{conj}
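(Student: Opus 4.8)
The plan is to separate the statement into the Frobenius assertion and the construction of the symmetries, reading each off from the closed structure equations of Proposition~\ref{prop:finitetypeexistence} together with the general correspondence between Jacobi fields and symmetries of Sections~\ref{sec:Jacobifields}--\ref{sec:JacobiSymmetry}.

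For the Frobenius assertion, I would first note that the rank-two Cartan distribution $(\iinfh)^{\perp}$, spanned dually by $\del_\omega$ and $\del_\omb$, is involutive on $\xinfh$ because $\iinfh$ is Frobenius by construction. The finite-type locus $\hat{Y}^{(n)}$ is cut out by $\tn{A}^{n+1}-\sum_{j=1}^n(U_j\tn{A}^j+V_j\ol{\tn{A}}^j)=0$ together with Eq.~\eqref{eq:ccondition} recovering $\tn{C}^n$ and the constant relations \eqref{eq:constants}. The content of Proposition~\ref{prop:finitetypeexistence} is precisely that, once these relations are imposed, the derivatives $\ed\omega,\ed r,\ed\tn{A}^j,\ed\tn{B}^j$ for $j\le n$ close up modulo $\iinfh$ and satisfy $\ed^2=0$. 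I would spell out that this closure is exactly the statement that $\del_\omega$ and $\del_\omb$ are tangent to $\hat{Y}^{(n)}$; granting this, the Cartan distribution restricts to an involutive rank-two distribution on $\hat{Y}^{(n)}$ whose annihilator in $T\hat{Y}^{(n)}$ is $\mu^*\Ih{2n+2}$. Hence $\ed(\mu^*\Ih{2n+2})\subset\mu^*\Ih{2n+2}$, which is the Frobenius property.

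For the symmetries, by Proposition~\ref{prop:symmetry} each Jacobi field $\tn{A}^k$ determines a unique vertical symmetry $V_{\tn{A}^k}$ of $(\xinfh,\iinfh)$ with generating function $\tn{A}^k$. The whole problem then reduces to tangency of $V_{\tn{A}^k}$ to $\hat{Y}^{(n)}$ for $k\le n$: once tangency holds, the restriction $V_{\tn{A}^k}|_{\hat{Y}^{(n)}}$ is automatically a symmetry of $\mu^*\Ih{2n+2}$, and its flow $\phi_k(t)$ preserves $\mu^*\Ih{2n+2}$. Tangency means that $V_{\tn{A}^k}$ applied to the defining function $F=\tn{A}^{n+1}-\sum_{j\le n}(U_j\tn{A}^j+V_j\ol{\tn{A}}^j)$ and to its $\del_\omega,\del_\omb$-consequences vanishes on $\hat{Y}^{(n)}$. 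The mechanism I would use is that $V_{\tn{A}^k}$ commutes with $\ed$, and hence, since it is a symmetry, commutes modulo $\iinfh$ with the horizontal operators $\del_\omega,\del_\omb$; combined with the commutation $[\mcs,\mce]=0$ of Proposition~\ref{prop:Jacobicommute} and the weight bookkeeping of Theorem~\ref{thm:abcnormalform}, this should force $V_{\tn{A}^k}$ to map the recursively generated hierarchy $\{\tn{A}^j\}$ into itself by a constant-coefficient, recursion-equivariant linear operator. The Laplacian-stability noted before the statement, $\del_\omega\del_\omb(F)\equiv 0 \bmod F$, would then be upgraded to invariance of the entire finite-type ideal under each flow.

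The hard part is this last step: proving that the higher symmetries preserve the inhomogeneous, mixed-weight relation $F=0$. Because $F$ couples $\tn{A}^{n+1}$ to lower $\tn{A}^j$ and to conjugates $\ol{\tn{A}}^j$ with prescribed constants obeying \eqref{eq:constants}, the naive weight-grading argument fails, and one genuinely needs the commuting-flow structure of the hierarchy, namely that $[V_{\tn{A}^i},V_{\tn{A}^j}]$ is again a symmetry whose generating Jacobi field lies in the span of the hierarchy. Establishing this commutativity intrinsically, rather than importing it from the loop-group theory of finite-type harmonic maps, is the crux; I would attempt it by expressing $V_{\tn{A}^k}$ through the formal Killing coefficients $a^{2k+1},b^{2k+2},c^{2k+2}$ of Section~\ref{sec:recursion} and computing the bracket of two such symmetries via the Lie bracket in the twisted loop algebra $\Lambda^{\sigma,\tau}\so(4,\C)$, where commutativity of the relevant abelian subalgebra is transparent. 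This is the step I expect to require the most care, and is presumably why the statement is posed only as a conjecture.
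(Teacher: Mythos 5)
First, a point of comparison: the statement you are proving is posed in the paper as Conjecture~\ref{conj:LFTsymmetry} --- the paper itself offers \emph{no} proof, so there is no argument of the authors to measure yours against, and indeed your attempt, like the paper, stops short of a proof. Your Frobenius half is sound but is essentially a repackaging of Prop.~\ref{prop:finitetypeexistence}: once the adapted constant relations \eqref{eq:constants} are imposed, the closed structure equations formally satisfy $\ed^2=0$, and this is exactly the tangency of the Cartan distribution that you describe. One caveat you should address: the conjecture speaks of an arbitrary $n$-th level linear finite-type equation, whereas Prop.~\ref{prop:finitetypeexistence} assumes the adapted constants; for non-adapted coefficients the differential consequences of the defining relation impose further algebraic relations (see the level-$1$ example, case $|V_1|^2\neq\gamma^2$), so the submanifold $\hat{Y}^{(n)}$ must be re-specified (via Lemma~\ref{lem:ftypenormalform}) before the Frobenius claim is even well posed.

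The genuine gap is in the symmetry half, and it is larger than you acknowledge. You correctly reduce the problem, via Prop.~\ref{prop:symmetry} and the identification of $\hat{Y}^{(n)}$ with its infinite prolongation inside $\xinfh$, to the tangency statement that $V_{\tn{A}^k}$ annihilates $F=\tn{A}^{n+1}-\sum_{j\le n}(U_j \tn{A}^j+V_j\ol{\tn{A}}^j)$ and its $\del_{\omega},\del_{\omb}$-consequences on the locus. But the mechanism you propose --- commutativity of the hierarchy flows --- cannot deliver this. For \emph{vertical} symmetries the generating function of $[V_P,V_Q]$ is $V_P(Q)-V_Q(P)$ (the $\ed\theta_0$ correction vanishes on vertical fields), so even the strongest commutativity statement available in the paper (the remark after Cor.~\ref{cor:Poissoncommute}, that $[V_P,V_Q]=0$ whenever $Q$ is higher order) yields only the antisymmetric relation $V_{\tn{A}^i}(\tn{A}^j)=V_{\tn{A}^j}(\tn{A}^i)$; it says nothing about whether either side lies in the constant-coefficient span of $\{\tn{A}^j,\ol{\tn{A}}^j\}$, which is what tangency to $F=0$ requires. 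What is missing is an explicit formula, or at least a structural statement, for $V_{\tn{A}^k}(\tn{A}^j)$ modulo the finite-type ideal --- precisely the step that loop-group proofs for finite-type harmonic maps supply through the finite-dimensional polynomial-Killing-field picture (cf.\ Thm.~\ref{thm:polyKilling} and the Frobenius system on $\tb{P}(m)\times\R$), and precisely the step whose absence is the reason the statement is left as a conjecture. Your honest flagging of this point is to your credit, but until $V_{\tn{A}^k}(F)\equiv 0 \bmod F$ (and its derivatives) is actually established, the second assertion of the conjecture remains unproved.
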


\begin{exam}[Level $1$, $\vert V_1 \vert^2=\gamma^2$ case continued]
Let $\mu: \hat{Y}^{(1)}\subset \Xh{4}$ be the submanifold defined by Eqs.~\eqref{eq:FTlevel1}, \eqref{eq:finitetype1}.
Let $\mu^*\Ih{4}$ be the induced ideal on $\hat{Y}^{(1)}$.
Let $V_{\nA^1}\in H^0(T\Xh{4})$ be the symmetry vector field generated by the Jacobi field $\nA^1.$
Conj.~\ref{conj:LFTsymmetry} claims that the restriction of Jacobi field $(\nA^1)_{\vert_{\hat{Y}^{(1)}}}$ generates a symmetry of the Frobenius system $(\hat{Y}^{(1)}, \mu^*\Ih{4})$.  It is likely that $V_{\nA^1}$ is in general not tangent to $\hat{Y}^{(1)}$.

Note the conservation laws
\begin{align*}
\phi_0:&=\omega\\
&=(\varphi_0)_{\vert_{Y^{(1)}}}, \\
\phi_1:&=(\nB^1+\frac{1}{4}(\nA^1)^2)\omega-2r\omb \\
&=2\im\gamma(\varphi_1)_{\vert_{Y^{(1)}}}.
\end{align*}
From the defining equation \eqref{eq:FTlevel1} we suspect that there exists the corresponding constant coefficient linear relation among the conservation laws so that
$$ [\phi_1]\in \langle [\phi_0], [\ol{\phi}_0] \rangle\subset \mcc^{(\infty)}_{\vert_{Y^{(1)}}}.$$
\end{exam}

\section{Polynomial Killing field}\label{sec:polynomialKF}
Beginning this section we return to adopt the formal Killing field coefficients $a^{2j+1}, b^{2j+2}, c^{2j+2}$'s for our analysis of  linear finite-type CMC surfaces.

Let $\Sigma\hook\xinf$ be a linear finite-type CMC surface of level $m$; on the double cover
$\Sigmah\hook\xinfh$ the sequence of canonical Jacobi fields satisfy the linear relation
\be\label{eq:Plevelm}
a^{2m+1}=\sum_{j=1}^{m-1} U_j a^{2j+1}+V_j\ab^{2j+1} 
\ee
for constant coefficients $U_j, V_j$. We show that such $\Sigmah$ admits a $\sla(2,\C)$-valued Killing field, which becomes a polynomial of degree $2m-1$ in the spectral parameter $\lambda$ under the action of spectral symmetry.  The existence of a polynomial Killing field is a well known feature in the integrable systems theory, which comes with a standard package of integration methods. 
For example it leads to the spectral curve, an associated complex algebraic curve, 
which effectively linearizes the CMC surface on its Jacobian.

\two
To begin we analyze the differential consequences of the defining equation \eqref{eq:Plevelm}. We first show that Eq.~\eqref{eq:Plevelm} and its derivatives  determine all the Jacobi fields of order $\geq 2m+1$ in terms of the Jacobi fields of order $\leq2m-1$. 

Recall the structure equation \eqref{eq:newrecursion}. Differentiating Eq.~\eqref{eq:Plevelm} by $\delx$ one gets
\begin{align}\label{eq:Pdel}
\gamma c^{2m+2}+h_2 b^{2m+2}&=\sum_{j=1}^{m-1} U_j(\gamma c^{2j+2} +h_2 b^{2j+2}) \\
                                                      &\,-\sum_{j=1}^{m-1}V_j (\gamma \bb^{2j} +h_2 \cb^{2j}).\n
\end{align}
Differentiating this equation by $\delx$ again, one gets
\be\label{eq:Pdeldel}
a^{2m+3}-\sum_{j=1}^{m-1}( U_ja^{2j+3}+V_j\ab^{2j-1} )
=\frac{h_3}{\im\gamma h_2} \left(-b^{2m+2}+ \sum_{j=1}^{m-1}(U_jb^{2j+2}-V_j\cb^{2j}) \right).
\ee

We claim that the term on the right hand side is given by
\be\label{eq:b2m+2}
P^{2m+2}:=h_2^{\frac{1}{2}}\left(-b^{2m+2}+ \sum_{j=1}^{m-1}(U_jb^{2j+2}-V_j\cb^{2j}) \right)=\im\gamma U_0
\ee
for a constant $U_0$. For this, differentiate $P^{2m+2}$ by $\delx$ and  one finds
\[ \delx(P^{2m+2})\equiv 0\mod \eqref{eq:Pdeldel}.
\]
Differentiating $P^{2m+2}$ by $\delxb$ one finds
\[  \delxb(P^{2m+2})\equiv0\mod \eqref{eq:Plevelm},
\]
and the claim follows.

By definition $a^3=h_2^{-\frac{3}{2}}h_3=-2\delx(h_2^{-\frac{1}{2}})$. Substituting Eq.~\eqref{eq:b2m+2} to Eq.~\eqref{eq:Pdeldel}, we have
\begin{lem}\label{lem:shift}
Suppose the sequence of canonical Jacobi fields of a CMC surface, not necessarily of level $m$, 
satisfy the linear relation \eqref{eq:Plevelm}.
Then they also satisfy
\be\label{eq:a2m+3}
a^{2m+3}=\sum_{j=1}^{m-1}( U_ja^{2j+3}+V_j\ab^{2j-1} ) +U_0 a^3
\ee
for a constant $U_0$.
\end{lem}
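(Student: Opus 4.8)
The plan is to derive \eqref{eq:a2m+3} from the defining relation \eqref{eq:Plevelm} by applying the holomorphic total derivative $\delx$ twice and exploiting the fact that the recursion \eqref{eq:newrecursion} shifts the index of the formal Killing coefficients. Reading off the $\xi$- and $\xib$-components of \eqref{eq:newrecursion} one has $\delx a^{2j+1}=\im\gamma c^{2j+2}+\im h_2 b^{2j+2}$ and $\delxb a^{2j+1}=\im\gamma b^{2j}+\im\hb_2 c^{2j}$; conjugating the latter (here $\gamma$ is real, since $\gamma^2>0$ for linear finite-type surfaces) gives $\delx\ab^{2j+1}=-\im\gamma\bb^{2j}-\im h_2\cb^{2j}$. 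First I would apply $\delx$ to \eqref{eq:Plevelm} and clear the factor $\im$; this yields \eqref{eq:Pdel}, a linear relation among the combinations $\gamma c^{2j+2}+h_2 b^{2j+2}$ and their conjugates.

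Next I would differentiate \eqref{eq:Pdel} a second time by $\delx$, using $\delx b^{2n}=\tfrac{\im\gamma}{2}a^{2n+1}$, $\delx c^{2n}=\tfrac{\im}{2}h_2 a^{2n+1}$ from \eqref{eq:newrecursion} together with $\delx h_2=h_3$. Each summand $\delx(\gamma c^{2j+2}+h_2 b^{2j+2})$ collapses to $\im\gamma h_2 a^{2j+3}+h_3 b^{2j+2}$, and each conjugate summand to $-\im\gamma h_2\ab^{2j-1}+h_3\cb^{2j}$. Grouping the $a$-terms against the $(b,\cb)$-terms and dividing by $\im\gamma h_2$ produces exactly \eqref{eq:Pdeldel}: the sought shifted combination $a^{2m+3}-\sum_j(U_j a^{2j+3}+V_j\ab^{2j-1})$ equals $\tfrac{h_3}{\im\gamma h_2}$ times the remainder $-b^{2m+2}+\sum_j(U_j b^{2j+2}-V_j\cb^{2j})$.

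The crux, and the step I expect to be the main obstacle, is establishing \eqref{eq:b2m+2}, namely that the rescaled remainder $P^{2m+2}:=h_2^{1/2}\bigl(-b^{2m+2}+\sum_j(U_j b^{2j+2}-V_j\cb^{2j})\bigr)$ is a constant $\im\gamma U_0$. My plan is to compute $\delx P^{2m+2}$ and $\delxb P^{2m+2}$ directly. Using $\delx(h_2^{1/2})=\tfrac12 h_2^{-1/2}h_3$ one finds the two contributions to $\delx P^{2m+2}$ cancel precisely once \eqref{eq:Pdeldel} is substituted, so $\delx P^{2m+2}\equiv 0\bmod\eqref{eq:Pdeldel}$; and using $\delxb h_2=T_2=0$ one finds $\delxb P^{2m+2}=\tfrac{\im}{2}h_2^{1/2}\hb_2\bigl(-a^{2m+1}+\sum_j(U_j a^{2j+1}+V_j\ab^{2j+1})\bigr)$, which vanishes by \eqref{eq:Plevelm}. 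Thus on the finite-type locus, where both \eqref{eq:Plevelm} and its consequence \eqref{eq:Pdeldel} hold identically, $\delxb P^{2m+2}=0$, and Lemma~\ref{lem:lemma5.4} forces $P^{2m+2}$ to be locally constant. The delicate point is the bookkeeping: the telescoping cancellations only occur after both relations are invoked, so I would organize the calculation by equal index $j$ and track the $h_3/h_2$ factors carefully.

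Finally, substituting $P^{2m+2}=\im\gamma U_0$ into \eqref{eq:Pdeldel} turns its right-hand side into $\tfrac{h_3}{\im\gamma h_2}\cdot\im\gamma U_0\,h_2^{-1/2}=U_0\,h_2^{-3/2}h_3=U_0\,a^3$, since $a^3=h_2^{-3/2}h_3=-2\delx(h_2^{-1/2})$. This is exactly \eqref{eq:a2m+3}, completing the argument and showing that the linear relation propagates one step up in index at the cost of an extra multiple of the base Jacobi field $a^3$.
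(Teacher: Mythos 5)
Your proposal reproduces the paper's own proof essentially verbatim: differentiate \eqref{eq:Plevelm} twice by $\delx$ to obtain \eqref{eq:Pdel} and \eqref{eq:Pdeldel}, show that $P^{2m+2}$ has vanishing $\delx$- and $\delxb$-derivatives modulo \eqref{eq:Pdeldel} and \eqref{eq:Plevelm} respectively, conclude that it equals a constant $\im\gamma U_0$, and substitute back using $a^3=h_2^{-\frac{3}{2}}h_3$. One small correction: Lemma~\ref{lem:lemma5.4} is a statement about functions on the prolongation space $\xinf$ and does not apply to a function restricted to a single integral surface, where $\delxb f=0$ only yields holomorphicity; but you do not need it, since you have already shown that \emph{both} $\delx P^{2m+2}$ and $\delxb P^{2m+2}$ vanish identically on the (connected) surface, so $\ed P^{2m+2}=0$ and $P^{2m+2}$ is constant --- which is exactly how the paper concludes.
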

By repeated application, the lemma implies that all the higher-order Jacobi fields $a^{2n+1}, \, n\geq m,$ lie in the constant coefficient linear span of  $\{ a^{2j+1},\,\ab^{2j+1}\}_{j=1}^{m-1}$.

Note by Eq.~\eqref{eq:b2m+2} the equation \eqref{eq:Pdel} splits into two parts and we have from $b^2=-\im\gamma h_2^{-\frac{1}{2}}, c^2=\im h_2^{\frac{1}{2}}$ that
\begin{align}\label{eq:bc2m+2}
  b^{2m+2}&=\sum_{j=1}^{m-1}(U_jb^{2j+2}-V_j\cb^{2j}) -\im\gamma U_0 h_2^{-\frac{1}{2}},\\
                  &=\sum_{j=1}^{m-1}(U_jb^{2j+2}-V_j\cb^{2j})+U_0 b^2,\n\\
  c^{2m+2} &=\sum_{j=1}^{m-1} ( U_j  c^{2j+2} - V_j  \bb^{2j}) + \im U_0 h_2^{\frac{1}{2}},\n\\
                  &=\sum_{j=1}^{m-1} ( U_j  c^{2j+2} - V_j  \bb^{2j})+U_0 c^2.\n
\end{align}
\begin{cor}\label{cor:abcgeneratingrelation}
For a linear finite-type CMC surface  of level $m$,
all the higher-order Killing coefficients $\{ a^{2n+1}, b^{2n+2}, c^{2n+2}\}_{n\geq m}$ lie in the constant coefficient linear span of the generating  set $\,\{ a^{2j+1}, \ab^{2j+1}, b^{2j+2}, \bb^{2j+2}, c^{2j+2},\cb^{2j+2}\}_{j=0}^{m-1}$.
\end{cor}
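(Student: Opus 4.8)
The plan is to prove the statement by treating the $a$-sequence and the $b,c$-sequences separately, exploiting that the generating set
\[
S=\langle\, a^{2j+1},\ab^{2j+1},\ b^{2j+2},\bb^{2j+2},\ c^{2j+2},\cb^{2j+2}\,\rangle_{j=0}^{m-1}
\]
is closed under complex conjugation, $S=\ol S$. For the $a$-coefficients there is nothing new to do: the repeated application of Lemma~\ref{lem:shift} recorded just after Eq.~\eqref{eq:a2m+3} already shows that $a^{2n+1}\in\langle a^{2j+1},\ab^{2j+1}\rangle_{j=1}^{m-1}\subset S$ for every $n\geq m$, and conjugating gives $\ab^{2n+1}\in S$ as well. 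Thus it remains only to place the higher $b^{2n+2},c^{2n+2}$ in $S$; the conjugate coefficients then follow since $S=\ol S$.

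First I would fix $n\geq m$ and expand $a^{2n+1}=\sum_{j=1}^{m-1}\big(\alpha_j a^{2j+1}+\beta_j\ab^{2j+1}\big)$ with constants $\alpha_j,\beta_j$. The key is to propagate this \emph{linear} relation through the recursion using only the structure equations \eqref{eq:newrecursion} (which are linear) rather than the nonlinear inductive formula of Prop.~\ref{prop:inductiveformula}. Reading off \eqref{eq:newrecursion} gives $\delxb b^{2k+2}=\tfrac{\im}{2}\hb_2\,a^{2k+1}$ and $\delxb c^{2k+2}=\tfrac{\im\gamma}{2}\,a^{2k+1}$, and conjugating the $\xi$-derivatives $\delx b^{2j}=\tfrac{\im\gamma}{2}a^{2j+1}$, $\delx c^{2j}=\tfrac{\im}{2}h_2 a^{2j+1}$ yields $\delxb\bb^{2j}=-\tfrac{\im\gamma}{2}\ab^{2j+1}$ and $\delxb\cb^{2j}=-\tfrac{\im}{2}\hb_2\,\ab^{2j+1}$. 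I would then form the corrected combinations
\[
\tilde b=b^{2n+2}-\sum_{j=1}^{m-1}\big(\alpha_j b^{2j+2}-\beta_j\cb^{2j}\big),\qquad
\tilde c=c^{2n+2}-\sum_{j=1}^{m-1}\big(\alpha_j c^{2j+2}-\beta_j\bb^{2j}\big),
\]
whose $\xib$-derivatives telescope to zero by the displayed relations, so that $\delxb\tilde b=\delxb\tilde c=0$.

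Finally I would invoke rigidity. Since $T_2=0$, i.e.\ $\delxb h_2=0$, the scaled functions $h_2^{\frac12}\tilde b$ and $h_2^{-\frac12}\tilde c$ are still annihilated by $\delxb$, and by Thm.~\ref{thm:abcnormalform} they have $\SO(2)$-weight zero, hence descend to genuine scalar functions on $\xinfh_*$. The double-cover analog of Lemma~\ref{lem:lemma5.4} (cf.\ Lemma~\ref{lem:lemma5.4'}) then forces each to be constant, so that $\tilde b$ is a constant multiple of $h_2^{-\frac12}$ (equivalently of $b^2$) and $\tilde c$ a constant multiple of $h_2^{\frac12}$ (equivalently of $c^2$). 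As $b^2,c^2\in S$, this gives $b^{2n+2},c^{2n+2}\in S$ for all $n\geq m$, recovering Eq.~\eqref{eq:bc2m+2} when $n=m$ and completing the argument. The main obstacle is this last step: one must verify that $h_2^{\frac12}\tilde b$ and $h_2^{-\frac12}\tilde c$ are single-valued, weight-zero, analytic functions on the open set $\xinfh_*$ away from the umbilic locus $h_2=0$, so that the rigidity Lemma~\ref{lem:lemma5.4} genuinely applies on the double cover; by comparison the telescoping of the $\delxb$-derivatives and the index bookkeeping---namely that every right-hand side involves only generators of index $\le m-1$---are routine once the recursion relations are in hand.
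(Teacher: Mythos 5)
Your telescoped combinations $\tilde b$, $\tilde c$ are sound, and they are in fact the very objects the paper works with: for $n=m$, with $\alpha_j=U_j$, $\beta_j=V_j$, one has $h_2^{\frac12}\tilde b=-P^{2m+2}$ in the notation of Sec.~\ref{sec:polynomialKF}, and your index/conjugation bookkeeping is correct. The genuine gap is in the final rigidity step. The identities $\delxb\tilde b=\delxb\tilde c=0$ are obtained only after substituting the finite-type relation \eqref{eq:Plevelm}, which holds along the given integral surface (equivalently, along its prolongation inside $\xinfh$), \emph{not} identically on the open set $\xinfh_*$. Lemma~\ref{lem:lemma5.4} and its refinement Lemma~\ref{lem:lemma5.4'} concern a scalar function $f$ with $\delxb f=0$ on an \emph{open} subset $U\subset\xinf$; their proofs exploit the independence of all the jet coordinates $h_j,\hb_j$, which is destroyed once one restricts to the two-dimensional locus cut out by \eqref{eq:Plevelm}. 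So the lemma cannot be applied to $h_2^{\frac12}\tilde b$ or $h_2^{-\frac12}\tilde c$. Along the surface itself, annihilation by $\delxb$ only says that these functions are \emph{holomorphic} for the induced complex structure; holomorphic functions on a (local, non-compact) finite-type surface are plentiful, so constancy does not follow from this alone, and the conclusion $\tilde b\in\langle b^2\rangle$, $\tilde c\in\langle c^2\rangle$ is unjustified as written.

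What is missing is exactly the other half of the derivative computation, and this is how the paper closes the argument: it shows \emph{both} $\delxb P^{2m+2}\equiv 0\mod\eqref{eq:Plevelm}$ (your telescoping) \emph{and} $\delx P^{2m+2}\equiv 0\mod\eqref{eq:Pdeldel}$, where \eqref{eq:Pdeldel} is the twice-$\delx$-differentiated finite-type relation. With both total derivatives vanishing along the connected surface, $P^{2m+2}$ is constant, whence \eqref{eq:Pdel} splits into the two equations \eqref{eq:bc2m+2}; the higher coefficients are then handled by iterating via Lemma~\ref{lem:shift}. To repair your version for general $n\geq m$ you would need the analogous computation of $\delx\tilde b$ and $\delx\tilde c$ modulo the derived relations for $a^{2n+3}$ (i.e., an induction on $n$), which is precisely the route the paper takes. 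In short: your construction coincides with the paper's, but the step you flagged as the "main obstacle" is not merely a matter of checking single-valuedness and weights — the appeal to Lemma~\ref{lem:lemma5.4} is invalid in this setting, and the $\delx$-derivative computation cannot be avoided.
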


\two
Lemma \ref{lem:shift} has another application.
\begin{lem}\label{lem:ftypenormalform}
The defining equation \eqref{eq:Plevelm} for  linear finite-type surfaces  can be normalized so that
\[ a^{2m+1}=\sum_{j=1}^{m-1} U'_j a^{2j+1}+V'_j\ab^{2j+1},
\]
where the constant coefficients $U'_j, V'_j$ satisfy the relation
\[ |V'_{m-1}|^2=1,\quad\&\quad V'_j=-V'_{m-1}\ol{U}'_{j+1}\quad\tn{for}\; 1\leq j \leq m-1.
\]
\end{lem}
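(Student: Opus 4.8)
The plan is to recognize the asserted identities as the compatibility conditions \eqref{eq:constants} of Proposition~\ref{prop:finitetypeexistence}, transcribed from the normalized Jacobi fields $\tn{A}^j$ into the formal Killing coefficients $a^{2j+1}$. Since a linear finite-type surface has $\gamma^2>0$, the constant $\gamma$ is real, and Theorem~\ref{thm:abcnormalform} together with the normalization $\tn{A}^n=z_{2n+1}+(\text{lower order})$ yields the exact dictionary $a^{2j+1}=(-1/\gamma)^{j-1}\tn{A}^j$ and $\ab^{2j+1}=(-1/\gamma)^{j-1}\ol{\tn{A}}^j$. Substituting this into the defining relation \eqref{eq:Plevelm} and clearing the common factor $(-\gamma)^{m-1}$ converts it to $\tn{A}^m=\sum_{j=1}^{m-1}(\tilde U_j\tn{A}^j+\tilde V_j\ol{\tn{A}}^j)$ with $\tilde U_j=(-\gamma)^{m-j}U'_j$ and $\tilde V_j=(-\gamma)^{m-j}V'_j$. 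A direct check then shows that the conditions $|\tilde V_{m-1}|^2=\gamma^2$ and $\tilde V_j=-\tilde V_{m-1}\ol{\tilde U}_{j+1}$ of \eqref{eq:constants} (with $n=m-1$) are exactly equivalent to $|V'_{m-1}|^2=1$ and $V'_j=-V'_{m-1}\ol{U}'_{j+1}$, the case $j=m-1$ holding automatically once the coefficient of $a^{2m+1}$ is normalized to $-1$. Thus everything reduces to proving the \emph{necessity} of \eqref{eq:constants} for a normalized relation, the converse of the sufficiency recorded in Proposition~\ref{prop:finitetypeexistence}.

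For the necessity I would differentiate \eqref{eq:finitetypen} using the structure equations \eqref{eq:InvariantStrEq}: the $\omb$-component recovers $\tn{C}^{m-1}$ in the form \eqref{eq:ccondition}, and then the identity $\ed^2\tn{B}^{m-1}=0$, which holds automatically on an actual integral surface, expands into a relation among the structure functions $r,\tn{A}^1,\dots,\tn{A}^{m-1}$ and their conjugates. On a genuine level-$m$ surface these functions are functionally independent, so the vanishing of the coefficients of the independent monomials forces a closed algebraic system on the constants. As in the level-$1$ computation preceding Proposition~\ref{prop:finitetypeexistence}, this compatibility identity \emph{factors}: the regular factor reproduces \eqref{eq:constants} verbatim, while the other factor describes a degenerate branch in which $\tn{B}^{m-1}$ is forced to be an algebraic function of the lower-order data.

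The word ``normalized'' is precisely what disposes of the degenerate factor, and this is where the real work lies. The degenerate branch is characterized by the emergence of an extra constant-coefficient linear relation among the lower Jacobi fields $\{a^{2j+1},\ab^{2j+1}\}_{j<m}$; at level $1$, for instance, the branch $|V_1|^2\neq\gamma^2$ forces $\nA^1/\nAb^1$ to be a constant of modulus one. Such an extra relation makes the coefficients in \eqref{eq:Plevelm} non-unique, so one may add a suitable multiple of the lower relation to adjust $V'_{m-1}$ to modulus one and then solve for the remaining $U'_j$; replacing \eqref{eq:Plevelm} by this equivalent relation places it in the regular branch. Minimality of the level guarantees that this substitution does not accidentally lower the order of the relation and that the procedure terminates. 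I expect this step -- showing that a reality-symmetric defining relation can always be attained -- to be the main obstacle, since the compatibility condition is only a product of factors and excluding the non-regular factor requires a genuine argument rather than an appeal to genericity.

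Finally, the structural origin of the reflection symmetry $V'_j=-V'_{m-1}\ol{U}'_{j+1}$ is the reality condition \eqref{eq:newreality} on the formal Killing field, which intertwines the positive- and negative-index recursions through $\lambda\mapsto\bar\lambda^{-1}$ and $\sigma$ and forces the truncating relation to be palindromic up to conjugation, while the modulus-one condition reflects the normalization of the initial coefficients $b^2,c^2$. A possibly cleaner alternative would bypass the $\ed^2$ computation and derive the symmetry directly by combining \eqref{eq:Plevelm}, its complex conjugate, and the shifted relation \eqref{eq:a2m+3} of Lemma~\ref{lem:shift} together with the companion formulas \eqref{eq:bc2m+2} for $b^{2m+2}$ and $c^{2m+2}$, reading off the coefficient identities forced by the consistency of this over-determined system.
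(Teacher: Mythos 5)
The paper does not actually prove this lemma internally: its ``proof'' is a citation to \cite[p425, Proposition 4.3]{Pinkall1989}, where the defining relation is brought to adapted form by a short list of algebraic operations on relations --- constant rescaling, complex conjugation, addition of relations, and the shift of Lemma~\ref{lem:shift} --- together with the remark that omitting the spectral-symmetry operation is what leaves a unit complex number $V'_{m-1}$ rather than $V'_{m-1}=1$. Your bookkeeping is correct as far as it goes: with $\gamma$ real, the dictionary $a^{2j+1}=(-1/\gamma)^{j-1}\tn{A}^j$ does convert \eqref{eq:constants} into $|V'_{m-1}|^2=1$ and $V'_j=-V'_{m-1}\ol{U}'_{j+1}$, and your closing ``cleaner alternative'' --- playing \eqref{eq:Plevelm}, its conjugate, the shifted relation \eqref{eq:a2m+3}, and the companion formulas \eqref{eq:bc2m+2} against each other --- is in fact the closest in spirit to what Pinkall--Sterling actually do.

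The main body of your proposal, however, has a genuine gap, which you yourself flag as ``the main obstacle'' but do not close. Two steps are missing. First, the claim that the compatibility identity $\ed^2\tn{B}^{m-1}=0$ \emph{factors} into a regular factor reproducing \eqref{eq:constants} and a degenerate factor is only verified in the paper at level $1$; for general $m$ you assert it by analogy, and your appeal to functional independence of $r,\tn{A}^1,\dots,\tn{A}^{m-1}$ is circular, since that independence fails precisely on the degenerate branches you must then repair (at level $1$ the degenerate branch forces $\nA^1/\nAb^1$ to be a unimodular constant, i.e.\ an extra functional relation). Second --- and this is the entire content of the lemma, since Proposition~\ref{prop:finitetypeexistence} already records sufficiency --- you do not show that a minimal-level relation lying in a degenerate branch can be replaced by an \emph{equivalent} relation of the same level satisfying \emph{all} the adapted conditions. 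At level $1$ this is a one-parameter adjustment: the extra relation $\nA^1=e^{\im\theta}\nAb^1$ lets you shift $(U_1,V_1)\mapsto(U_1+t,\,V_1-te^{\im\theta})$ and choose $t$ with $|V_1-te^{\im\theta}|=|\gamma|$, the palindromic conditions being vacuous. But for $m\ge 2$ the adapted conditions form the full coupled system $V'_j=-V'_{m-1}\ol{U}'_{j+1}$, $1\le j\le m-2$, together with $|V'_{m-1}|=1$, and you give no argument that the lower-order relations available in a degenerate branch supply enough parameters to satisfy all of them simultaneously, nor that your substitution procedure terminates without destroying minimality of the level. Since the lemma's whole point is the word ``normalized,'' the proposal as written does not prove it; one would either have to carry out that adjustment argument in general or, as the paper does, fall back on the operation-by-operation normalization of \cite{Pinkall1989}.
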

\begin{proof}
We refer the reader to  \cite[p425, Proposition 4.3]{Pinkall1989}. Note that we do not use
the operation (\textrm{vi})  in  \cite[p424, bottom]{Pinkall1989} (which is essentially the spectral symmetry). This accounts for the introduction of the unit complex number $V'_{m-1}$.
\end{proof}
\begin{defn}
The equation \eqref{eq:Plevelm} for linear finite-type surfaces is \tb{adapted}
when the constant coefficients $U_j, V_j$ satisfy the algebraic relation
\be\label{eq:UVadapted}
|V_{m-1}|^2=1,\quad\&\quad V_j=-V_{m-1}\ol{U}_{j+1}\quad\tn{for}\; 1\leq j \leq m-1.
\ee
\end{defn}
Note from Prop.~\ref{prop:finitetypeexistence} that there exist many local  linear finite-type CMC surfaces satisfying the adapted equation. We shall work with the adapted linear finite-type equation from now on.
\begin{rem}
It is clear from the weighted homogeneity, or spectral symmetry, of the higher-order Jacobi fields that the $\s{1}$-family of associate surfaces of a linear finite-type surface are linear finite-type. In the original paper \cite[p425, Proposition 4.3]{Pinkall1989} it is shown that one may further normalize by spectral symmetry so that $V_{m-1}=1$, i.e., there exists an associate surface of a linear finite-type surface which satisfies the adapted linear finite-type  equation with $V_{m-1}=1$.
\end{rem}

There is another identity obtained from differentiating Eq.~\eqref{eq:Plevelm}, this time assuming the  linear finite-type equation is adapted. This identity will be used for the construction of polynomial Killing field.

Differentiate Eq.~\eqref{eq:Plevelm} by $\delxb$ and one gets
\be\label{eq:Pdelb}
\gamma b^{2m}+\hb_2 c^{2m}=\sum_{j=1}^{m-1}\left( U_j(\gamma b^{2j}+\hb_2 c^{2j})
                                                                               - V_j(\gamma \cb^{2j+2}+\hb_2 \bb^{2j+2})\right).
\ee
Differentiating this equation again by $\delxb$ and taking a complex conjugate one gets
\be\label{eq:Pdelbdelb}
-\cb^{2m}+\sum_{j=1}^{m-1} ( \ol{U}_j \cb^{2j}- \ol{V}_j b^{2j+2} )
=\frac{\im\gamma h_2}{h_3}\left(-\ab^{2m-1}+\sum_{j=1}^{m-1}(\ol{U}_j\ab^{2j-1}+\ol{V}_j a^{2j+3})\right).
\ee
Since $a^1=0$, comparing the right hand side with Eq.~\eqref{eq:Plevelm} together with the adapted condition on the coefficients $U_j,V_j$, we have
\[-\ab^{2m-1}+\sum_{j=1}^{m-1}(\ol{U}_j\ab^{2j-1}+\ol{V}_j a^{2j+3})
=\ol{V}_{m-1}U_1a^3.
\]
Hence Eq.~\eqref{eq:Pdelbdelb} gives
\[-\cb^{2m}+\sum_{j=1}^{m-1} ( \ol{U}_j \cb^{2j}- \ol{V}_j b^{2j+2} )
=-\ol{V}_{m-1}U_1b^2.
\]
As a consequence, after complex conjugation of this equation,  Eq.~\eqref{eq:Pdelb} splits into two parts and we have
(recall from \eqref{eq:newrecursion} that $\hb_2\bb^2=\im\gamma\hb^{\frac{1}{2}}=-\gamma\cb^2$)
\begin{align}\label{eq:cb2m}
c^{2m}&=\sum_{j=1}^{m-1} ( U_j c^{2j}- V_j \bb^{2j+2} )+V_{m-1}\ol{U}_1\bb^2,\\
 b^{2m}&=\sum_{j=1}^{m-1}\left( U_j b^{2j} - V_j  \cb^{2j+2})\right)+V_{m-1}\ol{U}_1\cb^2.\n
\end{align}
Note that they are complex conjugate of each other up to scaling by $\ol{V}_{m-1}$.

\two
With this preparation we proceed to the construction of a polynomial Killing field. The construction originally due to Pinkall \& Sterling in \cite{Pinkall1989}, which we follow closely here, is essentially local, compared to for example Hitchin's for harmonic tori in the 3-sphere which made use of the global holonomy of the associated flat $\sla(2,\C)$-connection,  \cite{Hitchin1990}. A consequence of the local construction is that the geometry of a  linear finite-type surface is a posteriori global; there exists as we will find the corresponding globally well defined rank 2 distribution on essentially a space of certain polynomials in an auxiliary parameter $\lambda$ for which a linear finite-type surface can be viewed as a part of integral leaf.

A main idea of construction is the Lie algebra decomposition
\be\label{eq:so4cdecomposition}
\so(4,\C)=\sla(2,\C)\oplus\sla(2,\C).
\ee
The reducibility of $\so(4,\C)$ already appeared in the recursion equation \eqref{eq:newrecursion} which repeats a 3-step,  rather than a 6-step, relation. We shall make use of the decomposition and construct an $\sla(2,\C)$-valued polynomial Killing field first.  We observe then that this can  be extended, or doubled, to an $\so(4,\C)$-valued polynomial Killing field.

\two
By Lemma ~\ref{lem:ftypenormalform}, assume that $\Sigma$ is linear finite-type of level $m$ and the sequence of Jacobi fields satisfy the adapted constant coefficient linear equation \eqref{eq:Plevelm}. We shall suppress the spectral parameter $\lambda$ from the analysis for the moment and treat the sequence of Killing coefficients $a^{2j+1}, b^{2j+2}, c^{2j+2}$ simply as the sections of the bundles $\hat{K}^0, \hat{K}^{-1}, \hat{K}^1$ over the double cover $\Sigmah$ respectively (here $\hat{K}\to\Sigmah$ is the canonical bundle).

Based on Cor.~ \ref{cor:abcgeneratingrelation}, set for an ansatz
\begin{align}\label{eq:Killingansatz}
\hat{\tb{a}}&=\sum_{j=1}^{m-1} s_j a^{2j+1}+t_j\ab^{2j+1}, \\
\hat{\tb{b}}&=\sum_{j=1}^{m-1} (p_j b^{2j+2}-q_j\cb^{2j})+p_0b^2-q_m\cb^{2m},  \n \\
\hat{\tb{c}}&=\sum_{j=1}^{m-1} (p_j c^{2j+2}-q_j\bb^{2j})+p_0c^2-q_m\bb^{2m},\n
\end{align}
for constant coefficients $s_j, t_j, p_j, q_j$. In order to make these equations transversal to Eq.~\eqref{eq:cb2m}, let us set $q_m=0$. Define
\begin{align}
\psi_+&=
\bp\frac{\im}{2}\rho&-\frac{1}{2}(\gamma \xib+h_2\xi)\\
\frac{1}{2}(\gamma \xi+\hb_2\xib) &-\frac{\im}{2}\rho
\ep,\label{eq:tbXdefi}\\
\bf{X}&=
\bp
-\im \hat{\tb{a}}& 2 \hat{\tb{c}}\\
2\hat{\tb{b}} &\im\hat{\tb{a}}
\ep.\n
\end{align}
One finds that $\psi_+$ satisfies the structure equation $\ed\psi_+ + \psi_+\w\psi_+=0$.
We wish to determine the set of constants  $\{ s_j, t_j, p_j, q_j\}$ so that
\be\label{eq:tbXstructure}
\ed\tb{X}+[\psi_+, \tb{X}]=0.
\ee
This is written component-wise
\be\label{eq:dbhabc}
\bp \ed\hat{\tb{a}} \\ \ed\hat{\tb{b}}-\im\rho\hat{\tb{b}}  \\  \ed\hat{\tb{c}}+\im\rho\hat{\tb{c}} \ep=
\bp
 \im\gamma \hat{\tb{c}}+\im h_2\hat{\tb{b}} & \im\gamma\hat{\tb{b}}+\im \hb_2 \hat{\tb{c}}\\
 \frac{\im}{2} \gamma\hat{\tb{a}}& \frac{\im}{2}\hb_2\hat{\tb{a}} \\
 \frac{\im}{2} h_2\hat{\tb{a}} &\frac{\im}{2} \gamma\hat{\tb{a}}
\ep \bp \xi \\ \xib \ep.
\ee
Note that this structure equation is obtained from Eq.~\eqref{eq:newrecursion} by setting $a^{2n+1}=\hat{\tb{a}}, b^{2n+2}=\hat{\tb{b}}, c^{2n+2}=\hat{\tb{c}}$ for all $n$.

We proceed with the computation of Eq.~\eqref{eq:dbhabc}. First, imposing that the set of compatibility equations arising from the equations for $\ed\hat{\tb{b}}, \ed\hat{\tb{c}}$ are proportional to Eq.~\eqref{eq:Plevelm} and its complex conjugate, we get the following set of linear equations (we omit the details of straightforward computation).
\be
\bp  q_m \\ q_j-t_j \\p_j-s_j
\ep =\beta_1
\bp -1\\ \ol{U}_j \\ \ol{V}_j
\ep, \quad
\bp p_{m-1} \\p_{j-1}-s_j \\q_{j+1}-t_j
\ep=\beta_2
\bp -1\\ U_j \\ V_j
\ep.
\ee
Here $\beta_1, \beta_2$ are arbitrary constants. Since $q_m=0$, we must have $\beta_1=0$. This is then a set of $4m-2$ linear equations for the $4m-2$ unknowns $\{ q_j, p_j, t_j, s_j\}$. One easily finds that this system of equations is non-degenerate and  $\{ q_j, p_j, t_j, s_j\}$ are uniquely determined up to scale by $\{ \beta_1=0, \beta_2,  U_j, V_j\}$. Without loss of generality, let us scale $\beta_2=1$. The solution is given by
\begin{align}\label{eq:qtps}
q_m&=0, \\
q_j=t_j&=-(V_{m-1}+V_{m-2}+\,...\, \;+V_{j+1}+V_j ),  \n\\
&\hspace{2cm} \rule{40pt}{0.2pt}  \n \\
p_j=s_j&= -1+U_{m-1}+U_{m-2}-\, ... \, +U_{j+1}, \n \\
p_0&= -1+U_{m-1}+U_{m-2}+ \hspace{1cm}  ... \hspace{1cm}+U_{1}. \n
\end{align}

Substitute these equations to $\ed \hat{\tb{a}}$, and imposing that the resulting compatibility equations are proportional to Eq.~\eqref{eq:cb2m} and its complex conjugate, one gets an identity.\footnotemark
\footnotetext{Here one needs to take into account the linear relation
\[ \gamma c^2+h_2 b^2=0.
\]}
As a result, \emph{there exists up to scale a unique non-trivial $\sla(2,\C)$-Killing field defined by the ansatz \eqref{eq:Killingansatz}.}

\two
It is at this point that the spectral symmetry of CMC surfaces plays its role. The symmetry gives an extension of the Killing field $\tb{X}$ to a polynomial $\sla(2,\C)$-Killing field $\tb{X}(\lambda)$ in the spectral parameter $\lambda$. By introducing the auxiliary parameter $\lambda$, we are able to capture  the underlying algebraic structure of the linear finite-type surfaces. 

For a unit complex number $\lambda$, set the $\mathbb{S}^1$-family of $\su(2)$-valued 1-form 
\be\label{eq:psi+lambda}
\psi_+(\lambda) =
\bp\frac{\im}{2}\rho&-\frac{1}{2}(\gamma \xib+\lambda^{-1}h_2\xi)\\
\frac{1}{2}(\gamma \xi+\lambda \hb_2\xib) &-\frac{\im}{2}\rho
\ep, \ee
which is obtained from $\psi_+$ by spectral symmetry. It is straightforward to check that the Maurer-Cartan compatibility equation 
$$\ed\psi_+(\lambda)+\psi_+(\lambda) \w\psi_+(\lambda) =0$$
continues to hold when $\lambda$ is extended to a nonzero complex number $\lambda\in\C^*$. 
The algebro-geometric construction that follows, which may seem ad-hoc at first sight, can be attributed to this extension property of the spectral symmetry to the $\C^*$-symmetry.   

We now state the main result of this section.
\begin{thm}\label{thm:polyKilling}
Let $\Sigma\hook\xinf$ be an integral surface of the EDS for CMC surfaces. 
Suppose $\Sigma$ is linear finite-type of level $m$ so that on the double cover $\Sigmah\to\Sigma$
the canonical Jacobi fields satisfy an adapted linear equation \eqref{eq:Plevelm}.
Then $\Sigmah$ admits a $\sla(2,\C)$-valued polynomial Killing field $\lambda^{\frac{1}{2}}\tb{X}(\lambda)$ of degree $2m-1$.
\end{thm}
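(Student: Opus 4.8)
The plan is to construct the polynomial Killing field by reinstating the spectral parameter $\lambda$ into the $\lambda$-independent $\sla(2,\C)$-Killing field $\tb{X}$ that was already built above, and then to verify that the resulting $\lambda$-dependent object is a genuine polynomial of the claimed degree. First I would recall the decomposition \eqref{eq:so4cdecomposition} and the fact that the recursion \eqref{eq:newrecursion} respects the reducibility of $\so(4,\C)$, so that the construction splits off an $\sla(2,\C)$-factor; this is precisely the half captured by $\psi_+(\lambda)$ in \eqref{eq:psi+lambda}. The key observation is that each formal Killing coefficient $a^{2j+1}, b^{2j+2}, c^{2j+2}$ carries a definite spectral weight (Thm.~\ref{thm:abcnormalform} b): $a^{2j+1}$ has weight $2j-1$, while $h_2^{\pm\frac{1}{2}}b^{2j+2}, h_2^{\mp\frac{1}{2}}c^{2j+2}$ have weight $2j$). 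Hence I would assign to each term in the ansatz \eqref{eq:Killingansatz} the matching power of $\lambda$ dictated by its spectral weight, so that $\tb{X}(\lambda)$ becomes a Laurent polynomial in $\lambda$ whose top-degree term comes from the $j=m-1$ pieces.

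The central step is to promote Eq.~\eqref{eq:tbXstructure} to its $\lambda$-dependent version, namely to check
\be
\ed\tb{X}(\lambda)+[\psi_+(\lambda), \tb{X}(\lambda)]=0 \n
\ee
for all $\lambda\in\C^*$. Because $\psi_+(\lambda)$ satisfies the extended Maurer-Cartan equation for every nonzero $\lambda$ (as noted after \eqref{eq:psi+lambda}), and because the spectral symmetry $\mcs$ acts as the weighted Euler operator on the balanced coordinates, the $\lambda=1$ identity already established by solving \eqref{eq:qtps} propagates to all $\lambda$ once the $\lambda$-powers are distributed consistently with the spectral weights. Concretely, collecting the equation by powers of $\lambda$ reproduces, in each graded piece, exactly the compatibility equations of \eqref{eq:dbhabc} together with the finite-type relations \eqref{eq:Plevelm} and \eqref{eq:cb2m}; these were shown above to be satisfied identically by the coefficients \eqref{eq:qtps} under the adapted condition \eqref{eq:UVadapted}. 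The factor $\lambda^{\frac{1}{2}}$ in the statement arises from the half-integral weights introduced by the scalings $h_2^{\pm\frac{1}{2}}$, i.e.\ from working on the double cover $\Sigmah$ rather than $\Sigma$; after multiplying through by $\lambda^{\frac{1}{2}}$ the object becomes a genuine polynomial in $\lambda$ with integer powers.

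To pin down the degree as $2m-1$, I would track the highest and lowest surviving powers of $\lambda$. The leading term is controlled by the generators of highest order, $a^{2m-1}, \ab^{2m-1}$ and the corresponding $b, c$ of level $m-1$, which by Thm.~\ref{thm:abcnormalform} have spectral weights $2m-3$ and $2m-2$; the reality structure \eqref{eq:newreality} pairs these with their conjugates at the opposite extreme of the weight range, producing a symmetric Laurent polynomial of width $2m-1$ after the $\lambda^{\frac{1}{2}}$ twist. I would then invoke Cor.~\ref{cor:abcgeneratingrelation}, which guarantees that all higher coefficients lie in the span of the generating set, so no terms of degree exceeding $2m-1$ can appear and the polynomial truncates exactly as claimed. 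Finally I would note that non-triviality of $\tb{X}(\lambda)$ follows from the non-degeneracy of the linear system \eqref{eq:qtps} (with $\beta_2=1$), so the Killing field does not vanish identically.

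The main obstacle I anticipate is the bookkeeping in the graded decomposition by powers of $\lambda$: one must verify that every off-diagonal and diagonal entry of the matrix equation, when expanded in $\lambda$, regroups precisely into a combination of \eqref{eq:Plevelm}, its conjugate, \eqref{eq:cb2m}, and the base relation $\gamma c^2 + h_2 b^2 = 0$, with the adapted constraints \eqref{eq:UVadapted} being exactly what forces the cross terms to cancel. This is the step where the interplay between the spectral weights, the reality condition, and the adapted coefficient relations is genuinely used, and where an inconsistent assignment of $\lambda$-powers would break polynomiality; the rest of the argument is an organized but routine translation of the already-proven $\lambda=1$ case.
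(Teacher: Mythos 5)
Your high-level strategy---reinstating $\lambda$ into the already-constructed $\sla(2,\C)$-Killing field via the spectral symmetry and then verifying the $\lambda$-dependent Killing equation---is the same as the paper's, but your prescription for distributing the $\lambda$-powers has a genuine gap, and it sits exactly at the step you flag as the ``main obstacle.'' You assign powers of $\lambda$ only to the generators $a^{2j+1},\ab^{2j+1},b^{2j+2},\cb^{2j}$ according to their spectral weights, leaving the ansatz coefficients of \eqref{eq:Killingansatz} constant. Under that assignment the finite-type relation \eqref{eq:Plevelm} is \emph{not} weighted homogeneous: its left side has weight $2m-1$ while the terms $U_j a^{2j+1}$ and $V_j\ab^{2j+1}$ have weights $2j-1$ and $-(2j-1)$. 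Consequently the graded pieces of $\ed\tb{X}(\lambda)+[\psi_+(\lambda),\tb{X}(\lambda)]=0$ do \emph{not} regroup into the relations already verified at $\lambda=1$, and the object you build is a Laurent polynomial whose conjugate terms carry negative powers (down to roughly $\lambda^{-(m-\frac{3}{2})}$), so no single factor $\lambda^{\frac{1}{2}}$ can turn it into a polynomial of degree $2m-1$. The missing ingredient, which is the crux of the paper's proof, is that the linear finite-type \emph{constants} must transform as well: $U_j(\lambda)=U_j\lambda^{m-j}$, $V_j(\lambda)=V_j\lambda^{m+j-1}$, equivalently the coefficients $s_j,t_j,p_j,q_j$ of \eqref{eq:qtps} become $\lambda$-dependent. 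With this, each finite-type relation transforms into $\lambda^{m-\frac{1}{2}}$ times itself, the Killing equation for unit $\lambda$ follows from the $\lambda=1$ case by symmetry, and it extends to all $\lambda\in\C^*$ because both sides are Laurent polynomial in $\lambda$.

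This omission also invalidates your degree count. The extremal coefficients of $\lambda^{\frac{1}{2}}\tb{X}(\lambda)$ do not come from the highest-order generators $a^{2m-1},\ab^{2m-1}$ via their spectral weights; by \eqref{eq:abcbottomtop} they come from the \emph{lowest}-order generators paired with the top-transforming constant: $\tb{a}_{2m-2}=-V_{m-1}\ab^3$, $\tb{b}_{2m-2}=-\im V_{m-1}r$, $\tb{c}_{2m-2}=\im\gamma V_{m-1}r^{-1}$, while $\tb{a}_1=-a^3$, $\tb{b}_0=\im\gamma$, $\tb{c}_0=-\im$. The degree is exactly $2m-1$ because $V_{m-1}$ carries $\lambda^{2m-2}$ and the adapted condition $|V_{m-1}|=1$ of \eqref{eq:UVadapted} makes these top coefficients nonzero; your appeal to Cor.~\ref{cor:abcgeneratingrelation} for truncation is not what does the work, since polynomiality and the degree bound are built into the finite ansatz once the constants are transformed. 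Until that transformation law is incorporated, the ``routine translation of the $\lambda=1$ case'' you describe does not go through.
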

\begin{proof}
First, apply the spectral symmetry
\[ h_j \to \lambda^{-1}h_j, \quad \hb_j \to \lambda \hb_j, \quad\tn{for}\;j\geq 2,
\]
for a unit complex number $\lambda$. Then $a^{2n+1}, b^{2n+2}, c^{2n+2}$ transform to
\begin{align}\label{eq:abclambda}
a^{2n+1}\longrightarrow  a^{2n+1}(\lambda)&=\lambda^{n-\frac{1}{2}}a^{2n+1}, \quad
\ab^{2n+1}\longrightarrow \ol{a^{2n+1}(\ol{\lambda}^{-1})} =\lambda^{-n+\frac{1}{2}}\ab^{2n+1},  \\
b^{2n+2}\longrightarrow b^{2n+2}(\lambda)&=\lambda^{n+\frac{1}{2}}b^{2n+2},\quad
\,\bb^{2n+2}\longrightarrow \ol{b^{2n+2}(\ol{\lambda}^{-1})} =\lambda^{-n-\frac{1}{2}}\bb^{2n+2},\n \\
c^{2n+2}\longrightarrow c^{2n+2}(\lambda)&=\lambda^{n-\frac{1}{2}}c^{2n+2},\quad
\,\cb^{2n+2}\longrightarrow \ol{c^{2n+2}(\ol{\lambda}^{-1})} =\lambda^{-n+\frac{1}{2}}\cb^{2n+2}.\n
\end{align}
The coefficients $U_j, V_j$ for the  linear finite-type equation \eqref{eq:Plevelm} transform accordingly
\begin{align}
U_j\longrightarrow U_j(\lambda)&=U_j \lambda^{m-j},\\
V_j\longrightarrow V_j(\lambda)&=V_j \lambda^{m+j-1}.\n
\end{align}
Substituting these into  $\hat{\tb{a}}, \hat{\tb{b}},\hat{\tb{c}}$ in Eq.~\eqref{eq:Killingansatz}, we get from Eq.~\eqref{eq:qtps}, 
\begin{align}
\hat{\tb{a}}(\lambda)&=\lambda^{-\frac{1}{2}}\sum_{j=1}^{2m-2}\hat{\tb{a}}_j\lambda^j,\n\\
&=\lambda^{-\frac{1}{2}}( -a^3\lambda+\,...\, -V_{m-1}\ab^3\lambda^{2m-2} ),\n\\ 
\hat{\tb{b}}(\lambda)&=\lambda^{ \frac{1}{2}}\sum_{j=0}^{2m-2}\hat{\tb{b}}_j\lambda^j,\n\\
&=\lambda^{\frac{1}{2}}(  -b^2+\,...\, +V_{m-1}\cb^2\lambda^{2m-2}         ),\n\\
\hat{\tb{c}}(\lambda)&=\lambda^{-\frac{1}{2}}\sum_{j=0}^{2m-2}\hat{\tb{c}}_j\lambda^j,\n\\
&=\lambda^{-\frac{1}{2}}( -c^2+\,...\, +V_{m-1}\bb^2\lambda^{2m-2}       ).\n
\end{align} 

Set
\[(\tb{a}(\lambda), \tb{b}(\lambda), \tb{c}(\lambda))
=( \lambda^{\frac{1}{2}}\hat{\tb{a}}(\lambda), \lambda^{-\frac{1}{2}}h_2^{\frac{1}{2}}\hat{\tb{b}}(\lambda),
\lambda^{\frac{1}{2}}h_2^{-\frac{1}{2}}\hat{\tb{c}}(\lambda)).
\] Then
\begin{align}
\tb{a}(\lambda)&=\sum_{i=1}^{2m-2}\tb{a}_i\lambda^i,\\
\tb{b}(\lambda)&=\sum_{i=0}^{2m-2}\tb{b}_i\lambda^i,\n\\
\tb{c}(\lambda)&=\sum_{i=0}^{2m-2}\tb{c}_i\lambda^i,\n
\end{align}
and each $\tb{a}_i, \tb{b}_i, \tb{c}_i$ is a polynomial in $z_j$'s only 
with coefficients in the vector space $\C\langle r, r^{-1}\rangle.$  
Note that
\be\label{eq:abcbottomtop}\begin{array}{rlrl}
\tb{a}_1&=-a^3, & \tb{a}_{2m-2}&=-V_{m-1}\ab^3, \\
\tb{b}_0&=\im\gamma,& \tb{b}_{2m-2}&=-\im V_{m-1} r, \\
\tb{c}_0&=-\im, & \tb{c}_{2m-2}&=\im\gamma V_{m-1}r^{-1}.
\end{array}\ee

A direct computation shows that the closed structure equation \eqref{eq:dbhabc}  translates to
\begin{align}\label{eq:dbabc}
\ed
\bp \tb{a}(\lambda) \\ \tb{b}(\lambda) \\ \tb{c}(\lambda) \ep&=
\bp
 \im\gamma   \tb{c}(\lambda) +\im   \tb{b}(\lambda)
& \im\gamma  r^{-1}\lambda \tb{b}(\lambda) +\im r \lambda \tb{c}(\lambda) \\
 \frac{\im}{2} \gamma   \lambda^{-1} \tb{a}(\lambda)  +\frac{1}{2}z_3 \tb{b}(\lambda)
& \frac{\im}{2} r \tb{a}(\lambda)  \\
 \frac{\im}{2}  \lambda^{-1} \tb{a}(\lambda) -\frac{1}{2}z_3 \tb{c}(\lambda)
&\frac{\im}{2} \gamma r^{-1} \tb{a}(\lambda)
\ep \bp \omega \\ \omb \ep.
\end{align}
Note that $\tb{a}(\lambda)$ is divisible by $\lambda$ and  the expression $\lambda^{-1} \tb{a}(\lambda)$ is a polynomial in $\lambda$. Since the structure equation \eqref{eq:dbabc} holds for arbitrary unit complex number $\lambda$, it in fact holds for arbitrary nonzero complex number $\lambda\in\C^*$.

The lowest, and highest degree terms in \eqref{eq:abcbottomtop} indicate that at generic points on the surface $\tb{a}(\lambda), \tb{b}(\lambda),\tb{c}(\lambda)$ are polynomials of given degree. It follows from Eq.~\eqref{eq:tbXstructure} that $\lambda^{\frac{1}{2}}\tb{X}(\lambda)$ is the desired $\sla(2,\C)$-valued polynomial Killing field of degree $2m-1$.
\end{proof}

Recall the recursive structure equation for the formal Killing fields \eqref{eq:newrecursion}. For a given  linear finite-type surface of level $m$, it follows from Eq.~\eqref{eq:tbXstructure} that with the substitution
\begin{align*}
\hat{\tb{a}}_j &\longrightarrow a^{2j+1}  \\
\hat{\tb{b}}_j &\longrightarrow  b^{2j}  \\
\hat{\tb{c}}_j &\longrightarrow  c^{2j},  \quad j\geq1,
\end{align*}
the formal Killing field structure equation \eqref{eq:newrecursion} holds. By the uniqueness of canonical Jacobi fields in Prop.~ \ref{prop:closure}, this shows that
\[ a^{4m-1} =0.
\]
As a result,  
\begin{cor}
A linear finite-type CMC surface of level $m$ admits an $\so(4,\C)$-valued polynomial Killing field 
of degree $4m-2$.
\end{cor}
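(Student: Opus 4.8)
The plan is to read off the corollary from Theorem~\ref{thm:polyKilling} together with the reducibility \eqref{eq:so4cdecomposition} and the vanishing $a^{4m-1}=0$ that was established in the paragraph preceding the statement. First I would recall that the structure equation \eqref{eq:dbabc} governing the components $\tb{a}(\lambda),\tb{b}(\lambda),\tb{c}(\lambda)$ of the $\sla(2,\C)$-valued polynomial Killing field $\lambda^{\frac{1}{2}}\tb{X}(\lambda)$ is, coefficient-by-coefficient in $\lambda$, identical to the formal Killing field recursion \eqref{eq:newrecursion} under the dictionary $\hat{\tb{a}}_j\mapsto a^{2j+1}$, $\hat{\tb{b}}_j\mapsto b^{2j}$, $\hat{\tb{c}}_j\mapsto c^{2j}$. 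Since the formal Killing field attached to a non-totally-geodesic CMC surface is unique by Prop.~\ref{prop:closure}, these substituted coefficients must coincide with the genuine formal Killing coefficients, and the fact that $\tb{X}(\lambda)$ is a polynomial of finite degree forces $a^{4m-1}=0$.

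Next I would propagate this vanishing through the recursion. Given $a^{4m-1}=0$, the inductive formulae \eqref{eq:bc2n+2} and \eqref{eq:a2n+3} — or, equivalently, the shift Lemma~\ref{lem:shift} applied repeatedly — show that every higher formal Killing coefficient $a^{2n+1},\,b^{2n+2},\,c^{2n+2}$ with $2n+1\geq 4m-1$ vanishes identically. Consequently the full $\so(4,\C)$-valued formal Killing field $X_{\lambda}$ of Sec.~\ref{sec:formalKillingfields}, whose components are exactly these coefficients assembled through the index dictionary \eqref{eq:weightindex} and the splitting \eqref{eq:so4cdecomposition}, has only finitely many nonzero Fourier modes; it is therefore a Laurent polynomial in $\lambda$.

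Finally I would apply the spectral symmetry, i.e. the scaling \eqref{eq:abclambda}, which carries the truncated $X_{\lambda}$ to an honest polynomial in $\lambda\in\C^{*}$ after clearing the common half-integer power, precisely as was done for the $\sla(2,\C)$ factor in the proof of Theorem~\ref{thm:polyKilling}. The two $\sla(2,\C)$ summands in \eqref{eq:so4cdecomposition} contribute complementary bands of $\lambda$-weights related by the reality condition \eqref{eq:newreality}, so the $\so(4,\C)$-valued field occupies twice the weight-span of a single $\sla(2,\C)$ factor, and its degree is $2(2m-1)=4m-2$.

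I expect the main obstacle to be the degree bookkeeping in the last step: one must match the $\Z$-grading of the twisted loop algebra $\mcl^{\sigma,\tau}(\so(4,\C))$ against the half-integer powers of $\lambda$ appearing in \eqref{eq:abclambda}, check that truncating at $a^{4m-1}=0$ is compatible with the reality condition \eqref{eq:newreality} relating the positive and negative modes, and verify that the two $\sla(2,\C)$ bands interlock to yield precisely degree $4m-2$ rather than a nearby value. Everything else is a direct transcription of the $\sla(2,\C)$ construction already carried out for Theorem~\ref{thm:polyKilling}.
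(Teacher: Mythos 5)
Your overall route is the paper's own: compare the polynomial Killing field structure equation \eqref{eq:tbXstructure}/\eqref{eq:dbabc} with the formal Killing recursion \eqref{eq:newrecursion}, invoke the uniqueness of the formal Killing field from Prop.~\ref{prop:closure} to identify the two sets of coefficients, and then assemble the $\so(4,\C)$-valued object from the two $\sla(2,\C)$ bands to get degree $2(2m-1)=4m-2$. Your first and last steps, including the degree bookkeeping you flag as the main obstacle, are exactly what the paper does (the paper leaves the band-matching implicit in its ``As a result'').

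The gap is in your second step. The vanishing of the tail does \emph{not} propagate from $a^{4m-1}=0$ through the recursion: the inductive formulae \eqref{eq:bc2n+2} determine $b^{2n+2}$ and $c^{2n+2}$ from $\delx a^{2n+1}$ \emph{together with} $\hat{\tn{m}}_n$, which is a quadratic expression in the lower-order coefficients. Setting $a^{2n+1}=0$ only forces $\gamma c^{2n+2}+h_2 b^{2n+2}=0$, while $\gamma c^{2n+2}-h_2 b^{2n+2}=\tfrac{\im}{2}h_2^{\frac{1}{2}}\hat{\tn{m}}_n$ need not vanish. Equivalently, from the structure equation $a^{2n+1}=0$ gives only $\delxb b^{2n+2}=\delxb c^{2n+2}=0$, so by Lemma~\ref{lem:lemma5.4} these coefficients are constant multiples of $h_2^{-\frac{1}{2}}$ and $h_2^{\frac{1}{2}}$ (i.e.\ of $b^2$ and $c^2$), not necessarily zero; whether the constants vanish depends on the finite-type relations satisfied by the \emph{lower} coefficients, which the recursion alone does not see. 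Lemma~\ref{lem:shift} cannot substitute either: it yields constant-coefficient linear relations among Jacobi fields, never their vanishing. The repair is one line, and it is already contained in your first step: uniqueness identifies \emph{every} canonical coefficient with the corresponding coefficient of the finite polynomial $\tb{X}(\lambda)$ extended by zero, so $a^{2j+1}$, $b^{2j+2}$, $c^{2j+2}$ all vanish simultaneously for $j\geq 2m-1$, with no propagation argument needed. With your second step replaced by this observation, the proof coincides with the paper's.
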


The closed structure equation \eqref{eq:dbabc} shows that the structure of local moduli space of  linear finite-type CMC surfaces of given level  is  fairly rigid, compare the discussion below with \cite{Fernandez2012}.
Let $\tb{P}_n$ be the vector space of polynomials in $\lambda$ of degree $n$. Then $(\lambda^{-1}\tb{a}(\lambda) ,\tb{b}(\lambda) ,\tb{c}(\lambda) )$ takes values in $\tb{P}(m):=\tb{P}_{2m-3}\times\tb{P}_{2m-2}\times\tb{P}_{2m-2}$.
Since the remaining variable $r$ satisfies
\be\label{eq:dalpha} \ed r=r\Re(z_3\omega),
\quad z_3=a^3=-\tb{a}_{1},
\ee
it follows that the system of equations \eqref{eq:dbabc}, \eqref{eq:dalpha} define a rank two Frobenius distribution on the space $\tb{P}(m)\times\R$. Here the latter $\R$ factor is for the variable $r$. In this way a linear finite-type CMC surface corresponds to an integral surface of a Frobenius system on a space of polynomials.

\section{Spectral curve}\label{sec:spectralcurve}
The Lie algebra $\sla(2,\C)$-valued polynomial Killing field can be considered as a particular form of first integral which for the class of linear finite-type surfaces represents in an invariant way the underlying algebraic structure of the prolonged structure equation. For compact linear finite-type surfaces, the geometric information extracted from the polynomial Killing field relates to a set of algebraic geometry objects, e.g., hyperelliptic curve, meromorphic differential, Jacobi variety. This set of algebraic data satisfying certain transcendental period conditions would be sufficient to invert the procedure and reconstruct the original  linear finite-type surface. 

In this section we introduce the spectral curve for compact linear finite-type surfaces and record its basic properties. The main result is that the monodromies of the associated $\C^*$-family of connections commute with each other. This agrees with the recent result of Gerding on the factorization of 
a compact high genus linear finite-type CMC surface through a branched covering of a torus, \cite{Gerding2011}.

\two
Recall
\be\label{eq:tbX2}
\tb{X}(\lambda)=
\bp
-\im \hat{\tb{a}}(\lambda) & 2\hat{\tb{c}}(\lambda)\\
2\hat{\tb{b}}(\lambda) &\im\hat{\tb{a}}(\lambda)
\ep. \ee
By construction, the polynomial Killing field coefficients $\hat{\tb{a}}, \hat{\tb{b}},\hat{\tb{c}}$ have a real involutive symmetry. 
\begin{lem}\label{lem:abcsymmetry}
\be\label{eq:abcsymmetry}
\begin{array}{rl}
V_j(\lambda)&=-V_{m-1}\lambda^{2m-2}\ol{U_{j+1}(\ol{\lambda}^{-1})}, \\
\tn{$\hat{\tb{a}}$}(\lambda)&=V_{m-1}\lambda^{2m-2}\ol{\tn{$\hat{\tb{a}}$}(\ol{\lambda}^{-1})},\\
\tn{$\hat{\tb{b}}$}(\lambda)&=-V_{m-1}\lambda^{2m-2}\ol{\tn{$\hat{\tb{c}}$}(\ol{\lambda}^{-1})},\\
\tn{$\hat{\tb{c}}$}(\lambda)&=-V_{m-1}\lambda^{2m-2}\ol{\tn{$\hat{\tb{b}}$}(\ol{\lambda}^{-1})}.
\end{array}
\ee
\end{lem}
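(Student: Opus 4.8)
The plan is to verify the reality/involution symmetry \eqref{eq:abcsymmetry} by tracing through the construction of the polynomial Killing field and exploiting the two sources of conjugation symmetry already present in the setup: the reality condition \eqref{eq:newreality} on the formal Killing coefficients, and the adapted-coefficient relations \eqref{eq:UVadapted}. First I would establish the symmetry for the transformed constants $U_j(\lambda), V_j(\lambda)$, since this is purely algebraic. Recall from the spectral-scaling in Thm.~\ref{thm:polyKilling} that $U_j(\lambda)=U_j\lambda^{m-j}$ and $V_j(\lambda)=V_j\lambda^{m+j-1}$. Applying the adapted relation $V_j=-V_{m-1}\ol{U}_{j+1}$ from \eqref{eq:UVadapted} and the normalization $|V_{m-1}|^2=1$, a direct substitution shows that $\ol{U_{j+1}(\ol\lambda^{-1})}=\ol{U}_{j+1}\lambda^{-(m-j-1)}$, so that $-V_{m-1}\lambda^{2m-2}\ol{U_{j+1}(\ol\lambda^{-1})}=-V_{m-1}\ol U_{j+1}\lambda^{m+j-1}=V_j\lambda^{m+j-1}=V_j(\lambda)$, which is the first line of \eqref{eq:abcsymmetry}.

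Next I would propagate this symmetry to the Killing-field components. The key input is the $\lambda$-dependence recorded in \eqref{eq:abclambda}: under spectral scaling each coefficient transforms by a definite half-integer power of $\lambda$, and the barred version satisfies $\ab^{2n+1}\mapsto \ol{a^{2n+1}(\ol\lambda^{-1})}$, etc. I would substitute the ansatz \eqref{eq:Killingansatz} together with the explicit solution \eqref{eq:qtps} for the constants $\{s_j,t_j,p_j,q_j\}$ into the definitions of $\hat{\tb a}(\lambda),\hat{\tb b}(\lambda),\hat{\tb c}(\lambda)$. Because $q_j=t_j$ and $p_j=s_j$, the expressions for $\hat{\tb a}$ pair each $a^{2j+1}$ term with its conjugate $\ab^{2j+1}$ term symmetrically, and similarly $\hat{\tb b}$ and $\hat{\tb c}$ are built from $b^{2j+2},\cb^{2j}$ and $c^{2j+2},\bb^{2j}$ respectively in a way that interchanges under conjugation. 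Replacing $\lambda$ by $\ol\lambda^{-1}$, taking the complex conjugate, and using \eqref{eq:abclambda} to convert the conjugated scaled coefficients back into the barred coefficients, the sum re-indexes (via the symmetry $V_j=-V_{m-1}\ol U_{j+1}$ of the constants just proven) precisely into $V_{m-1}\lambda^{2m-2}$ times the original series. The sign and the factor $V_{m-1}\lambda^{2m-2}$ appear uniformly, while the interchange $\hat{\tb b}\leftrightarrow\hat{\tb c}$ in the last two lines reflects the off-diagonal structure of $\tb X(\lambda)$ in \eqref{eq:tbX2} and the pairing of $b$ with $c$ in the ansatz.

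The cleanest way to organize the computation is to check the boundary terms \eqref{eq:abcbottomtop} first as a sanity check: one verifies directly that $\tb a_1=-a^3$ and $\tb a_{2m-2}=-V_{m-1}\ab^3$ are exchanged correctly under $\lambda\mapsto\ol\lambda^{-1}$, and likewise that $(\tb b_0,\tb b_{2m-2})=(\im\gamma,-\im V_{m-1}r)$ map to the $\tb c$-boundary data, which fixes the constant $V_{m-1}\lambda^{2m-2}$ unambiguously and confirms the $\tb b\leftrightarrow\tb c$ swap. The main obstacle I anticipate is purely bookkeeping: carefully matching the half-integer powers of $\lambda$ and the index shifts $j\mapsto m-1-j$ or $j\mapsto 2m-2-j$ across the three series so that the reindexed conjugate sum lands exactly on the original, with no leftover boundary terms. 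Since $a^1=0$ and the top coefficients $U_0$ do not enter the adapted relations, I would pay particular attention to the endpoints $j=1$ and $j=m-1$ of each sum, where the symmetry $V_{m-1}$ appears, to confirm that the identities hold term-by-term rather than merely up to a global scalar. Once the algebraic symmetry of the constants and the boundary data are pinned down, the remaining verification is a mechanical substitution, so I would present it as a direct computation.
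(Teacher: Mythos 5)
Your handling of the first two identities is correct and matches the paper: the relation for $V_j(\lambda)$ is exactly the substitution you describe, and for $\hat{\tb{a}}(\lambda)$ the coefficient-by-coefficient matching does close up using only the adapted relations \eqref{eq:UVadapted}, because the ansatz \eqref{eq:Killingansatz} contains both $a^{2j+1}$ and $\ab^{2j+1}$ for every $j=1,\dots,m-1$, so conjugation together with $\lambda\mapsto\ol{\lambda}^{-1}$ permutes the generators among themselves and the re-indexing you propose works verbatim.

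The gap is in the last two identities, where your ``mechanical substitution'' does not close. The ansatz is \emph{not} conjugation-symmetric in the $b,c$ entries: $\hat{\tb{b}}$ contains $b^{2m}$ (through $p_{m-1}=-1$) but no $\cb^{2m}$ (since $q_m=0$ was imposed), and likewise $\hat{\tb{c}}$ contains $c^{2m}$ but no $\bb^{2m}$. Consequently $-V_{m-1}\lambda^{2m-2}\ol{\hat{\tb{c}}(\ol{\lambda}^{-1})}$ contains the term $V_{m-1}\lambda^{m-\frac{1}{2}}\cb^{2m}$, which is simply absent from $\hat{\tb{b}}(\lambda)$ as a formal linear combination of Killing coefficients, so no re-indexing of the sums via \eqref{eq:UVadapted} can produce it. If you carry out the substitution you will find that the two sides of the $\hat{\tb{b}}$-identity differ by $\lambda^{m-\frac{1}{2}}$ times exactly the combination
\[
\sum_{j=1}^{m-1}\bigl(U_j b^{2j}-V_j\,\cb^{2j+2}\bigr)+V_{m-1}\ol{U}_1\,\cb^2-b^{2m},
\]
and note this mismatch sits at the \emph{middle} power $\lambda^{m-\frac{1}{2}}$ and is spread across all the generators $b^{2j},\cb^{2j+2}$, not just at the endpoint indices $j=1$ and $j=m-1$ that you single out as the delicate spots. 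This residual is not zero formally; it vanishes precisely because of Eq.~\eqref{eq:cb2m}, the relation satisfied by the Killing coefficients on a linear finite-type surface, which is a \emph{differential} consequence of the adapted finite-type equation \eqref{eq:Plevelm} (obtained by differentiating it twice by $\delxb$ and splitting), not an algebraic consequence of the constant relations \eqref{eq:UVadapted}. This is the one substantive input the paper's proof flags (``one needs to take into account Eq.~\eqref{eq:cb2m}''), and it is missing from your plan; as written, your argument would either stall at the unmatched $\cb^{2m}$, $\bb^{2m}$ terms or lead you to conclude incorrectly that the identity fails.
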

\begin{proof}
The identities for $V_j(\lambda), \tn{$\hat{\tb{a}}$}(\lambda)$ are straightforward. For the identities for $\tn{$\hat{\tb{b}}$}(\lambda), \tn{$\hat{\tb{c}}$}(\lambda)$, one needs to take into account Eq.~\eqref{eq:cb2m}.
\end{proof}

Consider $\lambda^{\frac{1}{2}}\tb{X}(\lambda)$ as an $\sla(2,\C)$-valued function on the product space $\Sigmah\times\C^*$. The structure equation \eqref{eq:tbXstructure} shows that $\lambda\det(\tb{X}(\lambda))$ is constant along the fibers of the projection
$\Sigmah\times\C^*\to\C^*$, and it is a well defined polynomial on the $\lambda$-plane $\C^*$.

Set 
$$P(\lambda):=\det(\tb{X}(\lambda)).$$
Then
\begin{align}\label{eq:Plambda}
P(\lambda)&=\hat{\tb{a}}^2(\lambda)-4\hat{\tb{b}}(\lambda)\hat{\tb{c}}(\lambda) \\
&=\lambda^{-1}\tb{a}^2(\lambda)-4\tb{b}(\lambda) \tb{c}(\lambda) \n \\
&=\sum_{j=0}^{4m-4} P_j\lambda^j.\n
\end{align}
Since $\tb{a}(\lambda)$ is divisible by $\lambda$ and has degree $2m-2$, it does not contribute to the extremal terms $P_0, P_{4m-4}$. From Eq.~\eqref{eq:abcbottomtop} we have
\begin{align}\label{eq:P04m-4}
P_0&=-4\tb{b}_0\tb{c}_0=-4\gamma, \\
P_{4m-4}&=-4\tb{b}_{2m-2}\tb{c}_{2m-2}=-4\gamma V_{m-1}^2.\n
\end{align}
This shows that $P(\lambda)$ is a polynomial of degree $4m-4$ with nonzero $0$-th degree term (hence not divisible by $\lambda$).

\begin{cor}\label{cor:Psymmetry}

\be\label{eq:Prealinvolution}
P(\lambda)=V_{m-1}^2\lambda^{4m-4}\ol{P(\frac{1}{\ol{\lambda}})}.
\ee
The roots of $P(\lambda)$ are symmetric with respect to the unit circle in the $\lambda$-plane.  In particular, no odd degree root of $P(\lambda)$ lie on the unit circle.
\end{cor}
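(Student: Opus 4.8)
The plan is to derive all three assertions of the corollary from the reality relations of Lemma~\ref{lem:abcsymmetry} together with the adapted condition $|V_{m-1}|^2=1$ from \eqref{eq:UVadapted} (so that $\ol{V_{m-1}}=V_{m-1}^{-1}$). First I would establish the reflection identity \eqref{eq:Prealinvolution}. Substituting \eqref{eq:abcsymmetry} into $P(\lambda)=\hat{\tb{a}}^2(\lambda)-4\hat{\tb{b}}(\lambda)\hat{\tb{c}}(\lambda)$ and treating the terms separately gives $\hat{\tb{a}}^2(\lambda)=V_{m-1}^2\lambda^{4m-4}\big(\ol{\hat{\tb{a}}(\ol{\lambda}^{-1})}\big)^2$ and, since the two sign factors $-V_{m-1}\lambda^{2m-2}$ multiply to $+V_{m-1}^2\lambda^{4m-4}$, also $\hat{\tb{b}}(\lambda)\hat{\tb{c}}(\lambda)=V_{m-1}^2\lambda^{4m-4}\,\ol{\hat{\tb{b}}(\ol{\lambda}^{-1})\hat{\tb{c}}(\ol{\lambda}^{-1})}$. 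Collecting these and pulling the complex conjugation outside the bracket yields $P(\lambda)=V_{m-1}^2\lambda^{4m-4}\,\ol{P(\ol{\lambda}^{-1})}$, which is \eqref{eq:Prealinvolution}.

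Next I would read off the symmetry of the zero set. The involution $\lambda\mapsto 1/\ol{\lambda}$ is reflection across the unit circle and fixes the circle pointwise. By \eqref{eq:P04m-4} the constant term $P_0=-4\gamma$ is nonzero (recall $\gamma^2>0$), so $\lambda=0$ is not a root and $1/\ol{\mu}$ is well defined for every root $\mu$. Evaluating \eqref{eq:Prealinvolution} at $\lambda=1/\ol{\mu}$, for which $\ol{\lambda}^{-1}=\mu$, gives $P(1/\ol{\mu})=V_{m-1}^2\,(1/\ol{\mu})^{4m-4}\,\ol{P(\mu)}$, which vanishes whenever $P(\mu)=0$. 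Hence the roots occur in reflected pairs $\{\mu,\,1/\ol{\mu}\}$, symmetric about the unit circle.

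The main point, and the step I expect to be the obstacle, is that a root on the unit circle has \emph{even} multiplicity; the identity \eqref{eq:Prealinvolution} alone forces only that the coefficients of $P$ are conjugate-palindromic, which is insufficient (a Hermitian Laurent polynomial such as the one restricting to $\sin\theta$ has a simple zero on the circle). The plan is to upgrade this to nonnegativity of a phase-corrected determinant. Set $\widetilde{P}(\lambda):=\ol{V_{m-1}}\,\lambda^{-(2m-2)}P(\lambda)$, a Laurent polynomial whose zeros on $\C^*$ have the same order as those of $P$. For $|\lambda|=1$ choose a unit complex number $w$ with $w^2=V_{m-1}\lambda^{2m-2}$; I claim $w^{-1}\tb{X}(\lambda)\in\su(2)$. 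Writing $\tilde{\tb{a}}=w^{-1}\hat{\tb{a}}$, $\tilde{\tb{b}}=w^{-1}\hat{\tb{b}}$, $\tilde{\tb{c}}=w^{-1}\hat{\tb{c}}$ and using \eqref{eq:abcsymmetry} with $\ol{\lambda}^{-1}=\lambda$ on the circle together with $\ol{w}=w^{-1}$, a direct check shows $\tilde{\tb{a}}$ is real and $\tilde{\tb{b}}=-\ol{\tilde{\tb{c}}}$; by the matrix form \eqref{eq:tbX2} this is exactly the condition that $w^{-1}\tb{X}(\lambda)$ be trace-free and anti-Hermitian. Therefore $\widetilde{P}(\lambda)=w^{-2}\det\tb{X}(\lambda)=\det\big(w^{-1}\tb{X}(\lambda)\big)=\tilde{\tb{a}}^2+4|\tilde{\tb{c}}|^2\ge 0$ on the unit circle, so $\widetilde{P}$ is a nonnegative real-analytic function there. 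Parametrizing the circle by $\lambda=\mu e^{is}$, the order of vanishing of $\widetilde{P}$ in $s$ at a circle root $\mu$ equals its holomorphic order, hence the multiplicity of the root of $P$; and a nonnegative real-analytic function can vanish only to even order (an odd order would force a sign change across $s=0$). This gives even multiplicity, i.e. no root of odd multiplicity lies on the unit circle.

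Modulo the two identities \eqref{eq:abcsymmetry} and \eqref{eq:tbX2}, the only genuinely delicate bookkeeping is tracking the phase $w$ and the factor $V_{m-1}\lambda^{2m-2}$ in verifying the $\su(2)$-reality of $w^{-1}\tb{X}(\lambda)$, after which the nonnegativity of the determinant (a sum of squares) and the even-order conclusion are immediate. I would present the $\su(2)$ verification as a short explicit computation and state the nonnegative-real-analytic even-order lemma in passing.
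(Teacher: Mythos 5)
Your proof is correct, and it is in fact more complete than what the paper offers: the paper states this corollary with no proof at all, as if it were an immediate consequence of Lemma~\ref{lem:abcsymmetry}. For the identity \eqref{eq:Prealinvolution} and the reflection symmetry of the root set, your derivation (substituting \eqref{eq:abcsymmetry} into $P=\hat{\tb{a}}^2-4\hat{\tb{b}}\hat{\tb{c}}$, then evaluating at $\lambda=1/\ol{\mu}$, with $P_0=-4\gamma\neq 0$ guaranteeing $\lambda=0$ is not a root) is the only reasonable reading of that implication, so there you coincide with the paper. But you are right that the final assertion is \emph{not} a formal consequence of the self-inversive identity alone --- your observation that a Hermitian Laurent polynomial restricting to $\sin\theta$ on the circle has simple circle zeros pins down exactly why --- so the paper's ``in particular'' conceals a genuine step. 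Your phase-corrected positivity argument supplies it: the adapted condition $|V_{m-1}|=1$ from \eqref{eq:UVadapted} lets you choose $w$ with $w^2=V_{m-1}\lambda^{2m-2}$ on $|\lambda|=1$, the relations \eqref{eq:abcsymmetry} with $\ol{\lambda}^{-1}=\lambda$ then say precisely that $w^{-1}\tb{X}(\lambda)\in\su(2)$ (consistent with $\psi_+(\lambda)$ in \eqref{eq:psi+lambda} being $\su(2)$-valued on the circle), whence $\ol{V_{m-1}}\lambda^{-(2m-2)}P(\lambda)=\tilde{\tb{a}}^2+4|\tilde{\tb{c}}|^2\geq 0$ there, and a nonnegative real-analytic function of the circle parameter vanishes to even order. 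This is the standard mechanism in the spectral-curve literature --- it is how the corresponding statement is obtained in \cite{Hitchin1990} and \cite{Pinkall1989} --- and is almost certainly what the authors intended; what your write-up buys is that the one point where an argument is actually needed gets one. A minor remark: the half-integer powers $\lambda^{\pm 1/2}$ make $\hat{\tb{a}},\hat{\tb{b}},\hat{\tb{c}}$ two-valued, but the resulting sign ambiguity cancels in $w^{-2}$, in squares, and in products, so your computation is unaffected.
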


\one
Let $\PP^1$ be the projectivization of the $\lambda$-plane $\C^*$. Let $V=\PP^1\times \C^2$ be the trivial rank 2 vector bundle with the standard $\sla(2,\C)$ representation.  By definition (for a fixed generic point of $\Sigmah$), the polynomial Killing field can be considered as a section
$$\lambda^{\frac{1}{2}}\tb{X}(\lambda) \in H^0(\PP^1,\mco(2m-1)\otimes\tn{End}(V)).
$$
Let $\pi_{\mco(2m-1)}:\mco(2m-1)\to\PP^1$ be the projection map. Let $\mu$ be the tautological section of the pulled back bundle $\pi_{\mco(2m-1)}^*\mco(2m-1)\to \mco(2m-1)$.
Set
\be\label{eq:spectralPoly}
P(\mu, \lambda):=\det(\mu\tn{I}_2 -\lambda^{\frac{1}{2}}\tb{X}(\lambda))=\mu^2+\lambda P(\lambda).
\ee
Then 
\be\label{eq:Pmulambda}
P(\mu, \lambda)\in H^0\left(\mco(2m-1),\pi_{\mco(2m-1)}^*\mco(2(2m-1))\right).
\ee
\begin{defn}\label{defn:spectralcurve}
Let $\Sigma\hook\xinf$ be a  linear finite-type CMC surface of level $m$.
Let $\Sigmah\hook\xinfh$ be its double cover.
Let  $\lambda^{\frac{1}{2}}\tb{X}(\lambda)$, \eqref{eq:tbX2}, be the $\sla(2,\C)$-valued polynomial Killing field of degree $2m-1$ defined on $\Sigmah$.  Let  Eq.~\eqref{eq:spectralPoly} be the associated algebraic equation. 
The \tb{spectral curve} $\mathcal{C}$ of $\Sigma$ is the algebraic curve
\[\mcc=P(\mu,\lambda)^{-1}(0)\subset\mco(2m-1)
\]
in the complex surface $ \mco(2m-1)$.
\end{defn}
From the adjunction formula, the arithmetic genus $p_a$ of $\mcc$ is given by
\[ p_a=1+\frac{-4+2(2m-1)}{2}=2m-2.
\]
Let us denote the hyperelliptic involution of $\mcc$ by
\be\label{eq:involution}
\mathfrak{i}: (\mu, \lambda) \to (-\mu, \lambda).
\ee
From the symmetry  \eqref{eq:Prealinvolution}, the spectral curve $\mcc$ admits a real involution $\varrho:\mcc\to\mcc$ defined by
\be\label{eq:realinvolution}
\varrho: (\mu,\lambda)\to (V_{m-1}\ol{\lambda}^{(-2m+1)}\ol{\mu}, (\ol{\lambda})^{-1}).
\ee 

Specifically, let $\{ \lambda_i, \ol{\lambda}_i^{-1} \}_{i=1}^{\ell}$ be the set of odd degree roots of $P(\lambda)$ with multiplicity $2n_i+1$ respectively ($\vert \lambda_i \vert\ne 1$).
Let $\{ \lambda^e_i \}_{i=1}^{q}$ be the set of even degree roots of $P(\lambda)$ with multiplicity $2n^e_i$ respectively.  Counting the degrees we have
\[ 2m-2=p_a=\sum_{i=1}^{\ell} (2n_i+1) +\sum_{i=1}^{q} (n^e_i).
\]
From the symmetry \eqref{eq:Prealinvolution}, these roots with multiplicity are invariant under the involution $\lambda \to \ol{\lambda}_i^{-1}$.

\begin{defn}\label{defn:hyperellipticcurve}
Let $\mcc$ be the spectral curve of a  linear finite-type CMC surface $\Sigma\hook\xinf$ as described above.
Let $\{ \lambda_i, \ol{\lambda}_i^{-1} \}_{i=1}^{\ell}$ be the set of odd degree roots (with multiplicity $2n_i+1$) of the associated polynomial $P(\lambda)$. The \tb{hyperelliptic curve} $\hat{\mcc}$ of $\Sigma$ is the Riemann surface of the algebraic curve $\mcc$ branched over the $2\ell+2$ points $0, \infty$ and   $\lambda_i, \ol{\lambda}_i^{-1}, \, i=1, 2, \, ... \, {\ell},$ of $\PP^1$.
\end{defn}
From the Riemann-Hurwitz formula, the geometric genus $p_g$ of $\mcc$ is given by
\[ p_g=\tn{genus}(\hat{\mcc})=\ell.
\]

By a generic case we mean the case when there exist no even degree roots and the multiplicity of each odd degree roots is $2n_i+1=1$. We consequently have for the generic case $\hat{\mcc}=\mcc$, and $p_a=p_g=2m-2$.

\subsection{Eigenline bundle}\label{sec:eigenline}
One may consider the polynomial Killing field $\lambda^{\frac{1}{2}}\tb{X}(\lambda)$ simply as matrices, or operators acting on $V=\C^2$, parametrized by $\Sigmah\times\C^*$. The algebraic construction of the spectral curve can then be considered as a completion of the space of eigenvalues of these $\Sigmah\times\C^*$-family of operators in the present geometric situation, see \cite{Mulase1994}  for a lucid overview of the related ideas in terms of KP equation.  
When interpreted from the more or less trivial duality

\two
\centerline{$\xymatrix@!R@R=7mm@C=3mm{
  & \ar[dl] \;\,\Sigmah\times\C^* \ar[dr] & \\
 \Sigmah &&\C^*}
$}
\two\one

\noindent
the eigenspaces form a line bundle over the spectral curve $\mcc$, the completion of certain branched double cover of $\C^*\subset\PP^1$, parametrized by $\Sigmah$.  

\two
For a point $z\in\Sigma\setminus\mcu$, consider the eigenspace 
\be\label{eq:eigenline}
\mce_z=\ker(\mu\tn{I}_2-\lambda^{\frac{1}{2}}\tb{X}(\lambda)_z)\subset V.
\ee
For $(\mu,\lambda)\in\mcc$ with $\lambda\in\C^*\subset\PP^1$ not a root of $P(\lambda)$, $\mce_z$ is a one dimensional subspace well defined at $(\mu,\lambda)$.  Since $\lambda^{\frac{1}{2}}\tb{X}(\lambda)$ is a polynomial in $\lambda$, by analytic continuation and the real involutive symmetry \eqref{eq:abcsymmetry} it follows that $\mce_z$ defines a line bundle over $\mcc$.\footnotemark
\footnotetext{Note that $\tb{X}(\lambda)$ is defined on the double cover $\Sigmah$. But under the sign change $h_2^{\frac{1}{2}}\to-h_2^{\frac{1}{2}}$ we have $\tb{X}(\lambda)\to-\tb{X}(\lambda)$ and $\mce_z$ is well defined for $z\in\Sigma$.}

In terms of a local coordinate $z$ of $\Sigma$ centered at a umbilic,
we observe from Lemma~\ref{lem:zj} that a singularity of $\lambda^{\frac{1}{2}}\tb{X}(\lambda)$ is locally of the form
\[ \frac{\tn{smooth matrix}}{z^k}+\frac{\tn{smooth matrix}}{\ol{z}^{\ell}}
\]
for positive integers $k,\ell$. Since the smooth part is again a polynomial in $\lambda$, an argument similar as above shows that the family of line bundles $\mce_z$ admit a unique analytic extension across $\mcu\subset\Sigma$, and $\mce_z$ is well defined all over $\Sigma$.
\begin{defn}
Let $\Sigma\hook\xinf$ be a linear  finite-type surface. Let $\lambda^{\frac{1}{2}}\tb{X}(\lambda)$ be the polynomial Killing field.
The \tb{eigenline bundle} $\mce_z\to\mcc$ over the spectral curve $\mcc$ of $\Sigma$ parametrized by a point $z\in\Sigma$ is the line bundle defined by the eigenspace \eqref{eq:eigenline}.
\end{defn}

\subsection{Spectral data}\label{sec:spectraldata}
Let  $\Sigma\hook\xinf$ be a compact  linear finite-type CMC surface of level $m$. We wish to argue that such $\Sigma$ with genus $\geq 2$ necessarily factors through a branched covering of a torus, and hence that  this analysis does not produce any compact high genus examples of immersed CMC surfaces.

\two
Let $\x^*\mcf\to\Sigma$ be the induced $\SO(2)$-bundle of oriented orthonormal coframes. Recall $V=\C^2$. Under the representation
\[ \chi: e^{\im t}\in\SO(2) \mapsto
\bp  e^{\frac{\im t}{2}}&\cdot \\
\cdot& e^{-\frac{\im t}{2}}
\ep \in\SL(2,\C),
\]
let $\mcv=\mcf_1\times_{\chi} V$ be the associated rank 2 vector bundle on $\Sigma$. The $\C^*$-family of $\sla(2,\C)$-valued 1-form $\psi_+(\lambda)$ on $\mcf$ then defines a $\C^*$-family of flat connection on $\mcv$. Let us denote this connection for a fixed $\lambda$ by $D_{\lambda}$.

Let $g=\tn{genus}(\Sigma)$. Let $\{a_k, b_k\}_{k=1}^g$ be a standard generator of the fundamental group $\pi_1(\Sigma)$ such that they have the intersection pairing $$(a_i, a_j)=(b_i, b_j)=0, (a_i,b_j)=\delta_{ij}.$$
Let $z_0\in\Sigma\setminus\mcu$ be a base point for $\pi_1(\Sigma)$. Let $\{A_k(\lambda)_{z_0}, B_k(\lambda)_{z_0}\}_{k=1}^g$ be the monodromy of the  flat connection $D_{\lambda}$ along $\{a_k, b_k\}_{k=1}^g$ based at $z_0$. 
Let
\begin{align*}
\alpha_k^2-\tn{tr}(A_k(\lambda))\alpha_k+1=0,\\
\beta_k^2-\tn{tr}(B_k(\lambda))\beta_k+1=0,  
\end{align*}
be the characteristic equations of the monodromies $\{A_k(\lambda)_{z_0}, B_k(\lambda)_{z_0}\}_{k=1}^g$ considered as 2-by-2 matrices.

Let $\{\alpha^{\pm 1}_k(\lambda), \beta^{\pm 1}_k(\lambda)\}_{k=1}^g$ be the corresponding eigenvalues. A different choice of base point results in the monodromies which are conjugate to the given ones, and $\{\alpha^{\pm 1}_k(\lambda), \beta^{\pm 1}_k(\lambda)\}_{k=1}^g$ are 2-valued functions on the $\lambda$-plane $\C^*$. The branching occurs at the points where the equations
\begin{align}\label{eq:ABdiscrminant}
\Delta_{A_k(\lambda)}&:= \tn{tr}(A_k(\lambda))^2-4=0,\\
\Delta_{B_k(\lambda)}&:= \tn{tr}(B_k(\lambda))^2-4=0,  \n
\end{align}
have odd degree zeros.
Set 
\be\label{eq:PhiPsi}
\Phi_k=\pm\ed\log\alpha_k,\quad  \Psi_k=\pm\ed\log\beta_k.
\ee
They are holomorphic 2-valued differentials on $\C^*$.

\two
In the high genus case the fundamental group $\pi_1(\Sigma)$ is not commutative, and we cannot directly conclude that the monodromy share a common eigenspace as was done in the tori case, \cite{Hitchin1990}. 
We make use of the polynomial Killing field $\lambda^{\frac{1}{2}}\tb{X}(\lambda)$ instead. It turns out that the analysis of the monodromy is relatively easier with this additional data. For example the finiteness of the set of branch points for the eigenvalues follows from an algebraic argument, compare this with  \cite[p642, Proposition (2.3)]{Hitchin1990}.

The following observation is crucial for the analysis in this section. For $\lambda\in\C^*$, let $\mce^{\pm}_z(\lambda)\subset V$ be the 2-valued eigenspace for $\lambda^{\frac{1}{2}}\tb{X}(\lambda)$ defined earlier.
\begin{lem}
The eigenspace $\mce^{\pm}_z(\lambda)$ is the common eigenspace for the entire monodromy $\{A_k(\lambda)_z, B_k(\lambda)_z\}_{k=1}^g$ based at $z$ for all $z\in\Sigma$. 
\end{lem}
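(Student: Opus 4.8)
The plan is to read the lemma as the single structural statement that the flat connection $D_\lambda$ and the polynomial Killing field $\lambda^{\frac12}\tb{X}(\lambda)$ are simultaneously ``diagonalized'' by the eigenline decomposition of $\tb{X}$, and to derive it entirely from the fact that $\tb{X}(\lambda)$ is a parallel section. Concretely, the Killing equation \eqref{eq:tbXstructure}, $\ed\tb{X}+[\psi_+(\lambda),\tb{X}]=0$, says precisely that $\lambda^{\frac12}\tb{X}(\lambda)$ is covariant constant for $D_\lambda$ in the adjoint representation on $\mathrm{End}(\mcv)$. First I would record the consequence that parallel transport intertwines $\tb{X}$ at the endpoints of a path: if $T_\gamma$ denotes $D_\lambda$-parallel transport along $\gamma$ from $z_0$ to $z_1$, then $T_\gamma\,\tb{X}(\lambda)_{z_0}=\tb{X}(\lambda)_{z_1}\,T_\gamma$. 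I would also use the earlier observation that $\lambda\det\tb{X}(\lambda)=\lambda P(\lambda)$ is a function of $\lambda$ alone (constant along $\Sigma$), so that the eigenvalue pair $\{+\mu,-\mu\}$ with $\mu^2=-\lambda P(\lambda)$ is the same at every $z\in\Sigma$ and is nondegenerate exactly when $P(\lambda)\neq0$.

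Next I would specialize to a loop based at $z$. Taking $\gamma=a_k$ (resp. $b_k$) in the intertwining relation gives $A_k(\lambda)_z\,\tb{X}(\lambda)_z=\pm\,\tb{X}(\lambda)_z\,A_k(\lambda)_z$, i.e. the monodromy commutes with $\tb{X}(\lambda)_z$ up to sign. At a generic $\lambda$ (one of the densely many with $P(\lambda)\neq0$) the two eigenvalues $\pm\mu$ are distinct, so the eigenlines $\mce^+_z(\lambda),\mce^-_z(\lambda)$ are one dimensional, and an operator commuting or anticommuting with $\tb{X}(\lambda)_z$ must preserve the unordered pair $\{\mce^+_z,\mce^-_z\}$. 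Thus every $A_k(\lambda)_z,B_k(\lambda)_z$ is diagonalized by the $\tb{X}$-eigenbasis, which is exactly the assertion that $\mce^{\pm}_z(\lambda)$ is a common eigenspace. The degenerate values $P(\lambda)=0$ are then absorbed by analytic continuation in $\lambda$, since both sides depend holomorphically on $\lambda\in\C^*$; the independence of $z$ follows because the argument applies verbatim at every $z\in\Sigma\setminus\mcu$ and extends across $\mcu$ via the local normal form of $\tb{X}(\lambda)$ from Lemma~\ref{lem:zj}.

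The main obstacle, and where the care must go, is the $\pm$ sign: $\tb{X}(\lambda)$ is defined only on the double cover $\Sigmah$ and is odd under the deck transformation $\sigma$ (so $\sigma^*\tb{X}=-\tb{X}$). A loop $a_k$ in $\Sigma$ lifts to a path in $\Sigmah$ whose endpoint is either the starting lift or its $\sigma$-image; in the second case the intertwining relation yields $A_k\,\tb{X}_z\,A_k^{-1}=-\tb{X}_z$, so $A_k$ \emph{exchanges} $\mce^+_z$ and $\mce^-_z$ rather than fixing each. I would handle this as in the remark accompanying the eigenline bundle: passing $\tb{X}\to-\tb{X}$ merely relabels the pair, so the \emph{two-valued} eigenspace $\mce^{\pm}_z(\lambda)$ — equivalently the eigenline bundle $\mce_z\to\mcc$, which is single valued on $\Sigma$ — is the object genuinely preserved by every monodromy, and it is this two-valued object that the lemma asserts to be common. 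Making the bookkeeping precise means identifying the sign character $a_k,b_k\mapsto\pm1$ with the monodromy of the branched cover $\Sigmah\to\Sigma$ and tracking it through the hyperelliptic involution $\mathfrak{i}$ of \eqref{eq:involution}; this is the step that feeds directly into the commutativity corollary, since simultaneous diagonalization in the fixed (two-valued) eigenbasis is what forces the monodromy matrices to commute.
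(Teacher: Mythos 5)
Your core argument is exactly the paper's proof: the Killing equation $\ed\tb{X}(\lambda)+[\psi_+(\lambda),\tb{X}(\lambda)]=0$ makes $\lambda^{\frac12}\tb{X}(\lambda)$ parallel for the adjoint connection, so parallel transport around a loop conjugates $\tb{X}(\lambda)_z$ into its continuation --- this is the paper's Eq.~\eqref{eq:XAcommute} --- and since $\lambda\det\tb{X}(\lambda)=\lambda P(\lambda)$ is constant in $z$, at any $\lambda$ with $P(\lambda)\neq 0$ the eigenvalues $\pm\mu$ are distinct and the eigenlines are preserved; analyticity in $\lambda\in\C^*$ and in $z$ (including across the umbilics, via the local form of the singularities) finishes, just as in the paper.

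The genuine gap is in your last paragraph, precisely where you try to be more careful than the paper. You are right that $\tb{X}(\lambda)$ is odd under the deck transformation of $\Sigmah\to\Sigma$, so that a loop whose lift to $\Sigmah$ is not closed would give $A_k\,\tb{X}_z\,A_k^{-1}=-\tb{X}_z$, forcing $A_k$ to \emph{swap} $\mce^+_z$ and $\mce^-_z$. But your resolution --- retreating to the statement that the unordered pair (equivalently the eigenline bundle over $\mcc$) is preserved --- does not prove the lemma; it replaces it by a strictly weaker statement. If some monodromy genuinely swaps the two lines then neither line is an eigenspace of it, so ``common eigenspace'' fails for that monodromy; and, worse, your closing claim that preservation of the two-valued object ``is what forces the monodromy matrices to commute'' is false: in the common basis a line-swapping monodromy is anti-diagonal and a line-preserving one is diagonal, and such matrices commute only when the diagonal one is scalar (two anti-diagonal ones also fail generically, since $\begin{pmatrix}0&a\\ b&0\end{pmatrix}\begin{pmatrix}0&c\\ d&0\end{pmatrix}=\begin{pmatrix}ad&0\\0&bc\end{pmatrix}$ while the reverse product is $\begin{pmatrix}cb&0\\0&da\end{pmatrix}$). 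So Corollary~\ref{cor:monocommute} would not follow from your version. What the argument actually needs is that the sign is $+1$ on every loop used, i.e.\ Eq.~\eqref{eq:XAcommute} exactly as the paper writes it; the paper assumes this tacitly. To close your proof you must kill the sign rather than absorb it into a relabeling: either (a) show the sign character of $\Sigmah\to\Sigma$ is trivial in the setting at hand --- for instance, a compact linear finite-type surface has no umbilics (Remark~\ref{rem:umbilicandFT}), so $\ff$ is nowhere vanishing, which by Riemann--Roch forces genus $1$, where $\ff=c\,(\ed z)^2$ has a global square root and the surface is split type, making the cover trivial --- or (b) work throughout with the monodromies of the pulled-back connection along loops of $\Sigmah$ (equivalently, loops in $\Sigma$ that lift closed), and reformulate the corollaries accordingly.
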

\begin{proof}
Assume first $z\in\Sigma\setminus \mcu$, and $\lambda\in\C^*\setminus P^{-1}(0)$.
The polynomial Killing field $\lambda^{\frac{1}{2}}\tb{X}(\lambda)$ is defined on $\Sigma$ up to sign, and from the structure equation $\ed\tb{X}(\lambda)+[\psi_+(\lambda),\tb{X}(\lambda)]=0,$ it follows by considering the parallel transport along $a_k$ that
\be\label{eq:XAcommute}
\lambda^{\frac{1}{2}}\tb{X}(\lambda)\vert_z=A_k^{-1}(\lambda)_z \big(\lambda^{\frac{1}{2}}\tb{X}(\lambda)\vert_z\big) A_k(\lambda)_z.
\ee
The similar identity holds for $B_{\ell}(\lambda)_z$. The eigenspaces $\mce^{\pm}_z(\lambda)$ correspond to distinct eigenvalues for $\lambda\in\C^*\setminus P^{-1}(0)$, and the claim follows for this generic case. Since each term of Eq.~\eqref{eq:XAcommute} is analytic in $\lambda\in\C^*$, the claim is in fact true for all $\lambda\in\C^*$.
All this argument is true for $z$ away from the umbilics, and by analytic continuation it follows that $\mce^{\pm}_z(\lambda)$ is the simultaneous eigenline for all of the monodromy. 
\end{proof}

The eigenspaces of $\lambda^{\frac{1}{2}}\tb{X}(\lambda)$ are generically distinct, and by continuity we have:
\begin{cor}\label{cor:monocommute}
For a compact, linear finite-type CMC surface $\Sigma$, the set of monodromies $\{A_k(\lambda)_z, B_k(\lambda)_z\}_{k=1}^g$ of the associated $\C^*$-family of flat connections $D_{\lambda}$ commute with each other.
\end{cor}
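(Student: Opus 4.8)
The plan is to derive the commutativity of the monodromies directly from the preceding lemma, which asserts that the eigenline bundle $\mce^{\pm}_z(\lambda)$ is a common eigenspace for the entire monodromy set $\{A_k(\lambda)_z, B_k(\lambda)_z\}_{k=1}^g$. The key observation is that at a generic value of the spectral parameter $\lambda\in\C^*$ the polynomial Killing field $\lambda^{\frac{1}{2}}\tb{X}(\lambda)$ has two distinct eigenvalues (the two sheets $\mu=\pm\sqrt{-\lambda P(\lambda)}$ of the spectral curve $\mcc$ are distinct away from the branch points, by Definition \ref{defn:hyperellipticcurve} and the fact that $P(\lambda)$ has only finitely many roots). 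Hence for such $\lambda$ the matrix $\lambda^{\frac{1}{2}}\tb{X}(\lambda)\vert_z$ is regular semisimple, with two distinct one-dimensional eigenspaces $\mce^{+}_z(\lambda)$ and $\mce^{-}_z(\lambda)$ spanning $V=\C^2$.

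First I would fix a point $z\in\Sigma\setminus\mcu$ and a generic $\lambda\in\C^*\setminus P^{-1}(0)$, so that the two eigenlines are distinct and complementary. By the cited lemma, each of the monodromy matrices $A_k(\lambda)_z$ and $B_k(\lambda)_z$ preserves both eigenlines $\mce^{\pm}_z(\lambda)$. A linear operator on $\C^2$ preserving two distinct (hence transversal) one-dimensional subspaces is diagonal in the basis adapted to those subspaces; consequently, every element of $\{A_k(\lambda)_z, B_k(\lambda)_z\}_{k=1}^g$ is simultaneously diagonalized by the fixed eigenframe $\{\mce^{+}_z(\lambda), \mce^{-}_z(\lambda)\}$ determined by the polynomial Killing field. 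Diagonal matrices commute, so the monodromies commute for this generic $(z,\lambda)$.

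Next I would promote this pointwise generic statement to the full claim by continuity and analyticity, exactly as the corollary indicates. Each monodromy $A_k(\lambda)_z$, $B_k(\lambda)_z$ is obtained by integrating the flat connection $D_\lambda$ and is therefore analytic in $\lambda\in\C^*$; the commutators $[A_k(\lambda)_z, B_\ell(\lambda)_z]$ and $[A_k(\lambda)_z, A_\ell(\lambda)_z]$, $[B_k(\lambda)_z, B_\ell(\lambda)_z]$ are then entry-wise analytic functions of $\lambda$ on $\C^*$. Having shown that these commutators vanish on $\C^*\setminus P^{-1}(0)$, which is $\C^*$ minus a finite set, the identity theorem for analytic functions forces them to vanish identically on all of $\C^*$. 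The independence from the base point $z$ follows because changing $z$ conjugates the whole monodromy set by a single fixed matrix (parallel transport), which preserves commutativity; alternatively one invokes analytic continuation across $\Sigma$ together with the fact that the eigenline bundle $\mce_z$ extends analytically over the umbilic divisor $\mcu$, as established in Section \ref{sec:eigenline}.

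The main obstacle—and the reason this corollary is not immediate—is precisely the point emphasized in the text just before the statement: in the high genus case $\pi_1(\Sigma)$ is non-abelian, so one cannot argue as in the torus case that the monodromies commute merely because the group of deck transformations is abelian. The entire force of the argument rests on producing an \emph{a priori} common eigenbasis that is independent of the loop, and this is exactly what the polynomial Killing field $\lambda^{\frac{1}{2}}\tb{X}(\lambda)$ supplies through the conjugation identity \eqref{eq:XAcommute}. Thus the only genuinely delicate step is verifying that the eigenspaces are generically distinct so that simultaneous preservation yields simultaneous diagonalization; once regular semisimplicity of $\tb{X}(\lambda)$ at generic $\lambda$ is in hand, the remaining steps are formal applications of linear algebra and analytic continuation.
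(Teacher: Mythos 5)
Your proposal is correct and follows essentially the same route as the paper: the paper's proof consists precisely of the remark that the eigenspaces of $\lambda^{\frac{1}{2}}\tb{X}(\lambda)$ are generically distinct, so that the preceding lemma yields simultaneous diagonalization of all monodromies at generic $\lambda$, with the conclusion extended ``by continuity'' to all of $\C^*$. You have merely made explicit the linear-algebra step (operators on $\C^2$ preserving two transversal lines are diagonal, hence commute) and the analytic-continuation and base-point-conjugation details that the paper leaves implicit.
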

We proceed to show that the hyperelliptic curve  $\hat{\mcc}$ 
is the spectral curve for each of the monodromies.
\begin{lem}\label{lem:eigenvaluebranching}
The odd degree zeros of the characteristic equations for the monodromies $\Delta_{A_k(\lambda)}=0, \Delta_{B_k(\lambda)}=0$, Eq.~\eqref{eq:ABdiscrminant}, occur exactly at the odd degree zeros of $P(\lambda)$ in $\C^*$.
\end{lem}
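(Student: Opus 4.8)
The plan is to compare two spectral invariants attached to the same flat $\C^*$-family of connections: the monodromy eigenvalues $\alpha_k^{\pm}(\lambda), \beta_k^{\pm}(\lambda)$ on one hand, and the eigenvalues $\pm\mu$ of the polynomial Killing field $\lambda^{\frac{1}{2}}\tb{X}(\lambda)$ cutting out the spectral curve $\mcc$ on the other. The key structural fact, established in the preceding lemma, is that the eigenline bundle $\mce^{\pm}_z(\lambda)$ is simultaneously an eigenspace for \emph{every} monodromy $A_k(\lambda)_z, B_k(\lambda)_z$. First I would fix a point $z\in\Sigma\setminus\mcu$ and a generic $\lambda\in\C^*$ away from $P^{-1}(0)$, so that $\lambda^{\frac{1}{2}}\tb{X}(\lambda)$ has two distinct eigenvalues and hence two distinct eigenlines $\mce^{+}_z(\lambda), \mce^{-}_z(\lambda)$. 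On each eigenline the monodromy $A_k(\lambda)_z$ acts by scalar multiplication by one of $\alpha_k^{\pm}(\lambda)$; this matches the two branches of the eigenvalue over the two sheets of the hyperelliptic cover, and the same holds for $B_k(\lambda)_z$.

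Next I would relate the branching loci directly. The eigenvalue $\alpha_k(\lambda)$ fails to be a single-valued function precisely when the two eigenlines $\mce^{\pm}_z(\lambda)$ coincide, i.e.\ when the two eigenvalues $\pm\mu$ of $\lambda^{\frac{1}{2}}\tb{X}(\lambda)$ coincide. From Eq.~\eqref{eq:spectralPoly}, $\mu^2 = -\lambda P(\lambda)$, so the two eigenvalues $\pm\mu$ collide exactly where $\lambda P(\lambda)=0$; since $P(0)\ne 0$ by Eq.~\eqref{eq:P04m-4}, within $\C^*$ the collision locus is precisely the zero set of $P(\lambda)$. The branching of $\alpha_k(\lambda)$ on $\C^*$, governed by the odd-order zeros of the discriminant $\Delta_{A_k(\lambda)}=\tn{tr}(A_k(\lambda))^2-4$, therefore must occur within the zero set of $P(\lambda)$. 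The content of the lemma is the sharpening to \emph{odd} degree: an odd-order zero of $\Delta_{A_k(\lambda)}$ forces a genuine branch point of $\alpha_k$, which in turn requires a transposition of the two eigenlines of $\tb{X}(\lambda)$, and a transposition of $\pm\mu$ occurs at $\lambda_0\in\C^*$ if and only if $P(\lambda)$ vanishes there to odd order.

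The cleanest way to carry out the odd-degree matching is a local monodromy computation around each candidate $\lambda_0$. I would expand $\lambda^{\frac{1}{2}}\tb{X}(\lambda)$ in a local parameter near $\lambda_0$ and analyze the behaviour of the eigenline bundle $\mce^{\pm}_z(\lambda)$ as $\lambda$ circles $\lambda_0$: the two sheets are interchanged exactly when the vanishing order of $\lambda P(\lambda) = -\mu^2$ is odd, which since $\lambda_0\ne 0$ means the vanishing order of $P(\lambda)$ is odd. Because the monodromy eigenvalues $\alpha_k, \beta_k$ are defined on the same two sheets as $\pm\mu$ (this is exactly what the simultaneous-eigenspace lemma provides), the sheet interchange of the $\alpha_k$-valuation and of the $\mu$-valuation coincide, and thus $\Delta_{A_k}$ and $\Delta_{B_k}$ branch precisely at the odd-order zeros of $P(\lambda)$, matching Definition~\ref{defn:hyperellipticcurve} of $\hat{\mcc}$.

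The main obstacle I anticipate is the converse direction and the handling of even-order zeros: one must rule out spurious odd-order branching of $\Delta_{A_k(\lambda)}$ at even-order zeros of $P(\lambda)$ (where $\tb{X}(\lambda)$ still has a coincident eigenvalue but no genuine eigenline transposition), and conversely confirm that each odd-order root of $P$ really does produce an odd-order zero of the monodromy discriminant rather than being masked by a coincidental tangency. This requires controlling the relation between the traces $\tn{tr}(A_k(\lambda))$ and the spectral polynomial $P(\lambda)$ carefully enough to compare \emph{orders} of vanishing, not merely zero sets; the commutation identity Eq.~\eqref{eq:XAcommute} together with the fact that $\mce^{\pm}_z(\lambda)$ extends holomorphically across $\mcu$ should suffice, but the bookkeeping of multiplicities at the even-order roots is the delicate point.
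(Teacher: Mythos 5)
Your proposal is correct and takes essentially the same route as the paper's proof: both rest on the simultaneous-eigenspace lemma, the relation $\mu^2=-\lambda P(\lambda)$, and the identification of branching (sheet interchange) of the monodromy eigenvalues $\alpha_k^{\pm 1},\beta_k^{\pm 1}$ with branching of the eigenlines $\mce^{\pm}_z(\lambda)$, which occurs exactly at the odd-order zeros of $P$ in $\C^*$. The multiplicity bookkeeping you flag as the main obstacle is not actually needed, since the lemma concerns only the parity of vanishing orders: your observation that $\alpha_k$ branches if and only if the eigenlines are transposed (no transposition at even-order zeros, genuine transposition at odd-order zeros) already settles both inclusions, which is precisely the paper's two-step contradiction argument.
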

\begin{proof}
Let $\lambda^e\in\C^*$ be an even degree zero of $P(\lambda)$ (including the case $P(\lambda^e)\ne 0$). With an abuse of notation, consider the following equation in a small neighborhood $U$ of $\lambda^e$ for a generic choice of $z\in\Sigma$.
\[  A_k(\lambda)_z\mce^{\pm}_z(\lambda)=\alpha^{\pm 1}_k(\lambda)\mce^{\pm}_z(\lambda)
\]
(and similarly for $B_k(\lambda)$). Here "$\mce^{\pm}_z(\lambda)$" actually means a nonzero section of the bundle.
Since $\lambda^e$ is an even degree zero of $P(\lambda)$, the terms $A_k(\lambda)_z,  \mce^{\pm}_z(\lambda)$ in the equation above are analytic functions on $U$. Hence $\alpha_k(\lambda)$ cannot have a real branch at $\lambda^e$.

Suppose $\lambda^o\in\C^*$ is an odd degree zero of $P(\lambda)$, while on the other hand it is an even degree zero of $\Delta_{A_k(\lambda)}=0$. Arguing similarly as above, this time since $A_k(\lambda)_z,  \alpha^{\pm 1}_k(\lambda)$ are analytic functions in a neighborhood of $\lambda^o$, by elementary computation their eigenspaces are also analytically determined. But $\mce^{\pm}_z(\lambda)$ is the eigenspace of polynomial Killing field whose eigenvalue branches at $\lambda^o$, and $\mce^{\pm}_z(\lambda)$ must have a real branch at $\lambda^o$, a contradiction.
\end{proof}
\begin{cor}\label{cor:commonspectral}
The hyperelliptic (spectral) curves $\hat{\mcc}_{\alpha_k}, \hat{\mcc}_{\beta_k}$ for the eigenvalues of the monodromies  $\{A_k(\lambda)_z, B_k(\lambda)_z\}_{k=1}^g$ exist, and they are all isomorphic to the hyperelliptic curve $\hat{\mcc}$ defined earlier.
\end{cor}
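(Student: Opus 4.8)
The plan is to realize all three curves $\hat{\mcc}$, $\hat{\mcc}_{\alpha_k}$, $\hat{\mcc}_{\beta_k}$ as hyperelliptic double covers of the spectral sphere $\PP^1$ (the compactified $\lambda$-plane) and to prove the isomorphisms by matching branch loci. The conceptual point is that, by the lemma immediately preceding Cor.~\ref{cor:monocommute}, the common eigenspace $\mce^{\pm}_z(\lambda)$ of the polynomial Killing field $\lambda^{\frac{1}{2}}\tb{X}(\lambda)$ is simultaneously an eigenspace for the entire commuting family of monodromies $\{A_k(\lambda)_z, B_k(\lambda)_z\}$. Consequently each monodromy eigenvalue $\alpha_k(\lambda)$, $\beta_k(\lambda)$ is the eigenvalue of $A_k$, resp. $B_k$, on the very line bundle $\mce_z \to \mcc$ that defines $\hat{\mcc}$. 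Thus all three curves are, heuristically, the locus on which $\mce^{\pm}_z(\lambda)$ becomes single-valued; the task is to make this precise.

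First I would establish existence. The eigenvalues $\alpha_k^{\pm 1}(\lambda)$ solve $\alpha_k^2-\tn{tr}(A_k(\lambda))\alpha_k+1=0$, so as a two-valued function of $\lambda\in\C^*$ the quantity $\alpha_k(\lambda)$ has nontrivial monodromy precisely where $\Delta_{A_k(\lambda)}=\tn{tr}(A_k(\lambda))^2-4$ has odd-order zeros. By Lem.~\ref{lem:eigenvaluebranching} these odd-order zeros in $\C^*$ are exactly the odd-order zeros $\{\lambda_i,\ol{\lambda}_i^{-1}\}_{i=1}^{\ell}$ of $P(\lambda)$, a finite set; together with the branching forced at the two ends $0,\infty$ this is a finite branch locus, so the Riemann surface $\hat{\mcc}_{\alpha_k}$ on which $\alpha_k$ becomes single-valued is a compact hyperelliptic curve, and likewise for $\hat{\mcc}_{\beta_k}$. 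This is the content of the remark in the text that finiteness of the branch set follows from an algebraic argument via the polynomial Killing field.

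Next I would match branch loci. By Definition~\ref{defn:hyperellipticcurve} the branch locus of $\hat{\mcc}$ is $\{0,\infty\}\cup\{\lambda_i,\ol{\lambda}_i^{-1}\}$. The previous step shows the branch locus of $\hat{\mcc}_{\alpha_k}$ (resp. $\hat{\mcc}_{\beta_k}$) is $\{0,\infty\}$ together with the odd-order zeros of $\Delta_{A_k}$ (resp. $\Delta_{B_k}$) in $\C^*$, and by Lem.~\ref{lem:eigenvaluebranching} these $\C^*$-parts coincide with those of $\hat{\mcc}$. Since a hyperelliptic double cover of $\PP^1$ is determined up to isomorphism over $\PP^1$ by its branch divisor, equality of the branch sets gives $\hat{\mcc}_{\alpha_k}\simeq\hat{\mcc}\simeq\hat{\mcc}_{\beta_k}$ for every $k$. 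Equivalently, single-valuedness of $\alpha_k$ along the eigenline bundle $\mce_z\to\mcc$ (normalized to $\hat{\mcc}$) yields a degree-one factorization $\hat{\mcc}\to\hat{\mcc}_{\alpha_k}$ over $\PP^1$, which is then the desired isomorphism.

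The one genuinely delicate point, and the main obstacle, is the behavior at the two ends $\lambda=0,\infty$. Away from the ends the interior branching is handed to us cleanly by Lem.~\ref{lem:eigenvaluebranching}, but at $0$ and $\infty$ the connection $\psi_+(\lambda)$ of Eq.~\eqref{eq:psi+lambda} degenerates (the entries $\lambda^{-1}h_2\xi$ and $\lambda\hb_2\xib$ blow up), so the monodromies $A_k(\lambda),B_k(\lambda)$ acquire essential singularities and one must verify directly that $\alpha_k,\beta_k$ branch there to exactly the order matching the definition of $\hat{\mcc}$. I would handle this using the reality involution $\lambda\mapsto\ol{\lambda}^{-1}$ interchanging $0$ and $\infty$, shared by $P(\lambda)$ through Cor.~\ref{cor:Psymmetry} and by the monodromy through the symmetry of $\psi_+(\lambda)$, together with the nonvanishing extremal coefficients $P_0=-4\gamma$ and $P_{4m-4}=-4\gamma V_{m-1}^2$ of Eq.~\eqref{eq:P04m-4}, to confirm that both constructions are branched at $0$ and at $\infty$ and at no further points. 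This is precisely the standard normalization of the spectral curve for finite-type harmonic maps, and it is what certifies that $\hat{\mcc}$ serves as the common spectral curve for all the monodromy eigenvalues.
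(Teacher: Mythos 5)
Your proof is correct and follows essentially the same route as the paper: the paper treats the corollary as an immediate consequence of Lem.~\ref{lem:eigenvaluebranching} (the odd-degree zeros of $\Delta_{A_k},\Delta_{B_k}$ in $\C^*$ coincide with those of $P(\lambda)$), which is exactly your branch-locus-matching argument for hyperelliptic double covers of $\PP^1$. The delicate point you flag about branching at $\lambda=0,\infty$ is legitimate, and the paper settles it immediately after the corollary by computing the asymptotics $\pm\log\alpha_k\equiv-\frac{\im\sqrt{\gamma}}{2}m_{a_k}\eta^{-1}+\im\pi n_k \bmod \mco(\eta)$ in the local coordinate $\eta=\lambda^{\frac{1}{2}}$, the same normalization you invoke.
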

We remark that this is a stringent condition on the connection form $\psi_+(\lambda)$. 
In fact such a  linear finite-type surface necessarily factors through a torus.
\begin{thm}[Gerding \cite{Gerding2011}]\label{thm:Gerding} 
A compact, linear finite-type CMC surface $\Sigma$ of genus $\geq 2$ necessarily factors through a branched covering of a torus,
$$ \Sigma \to T^2 \to X.$$
\end{thm}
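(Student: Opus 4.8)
<br>

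The plan is to leverage the structural consequences of Corollary~\ref{cor:commonspectral}, which establishes that the hyperelliptic curves $\hat{\mcc}_{\alpha_k}, \hat{\mcc}_{\beta_k}$ associated to the eigenvalues of the individual monodromies $\{A_k(\lambda)_z, B_k(\lambda)_z\}_{k=1}^g$ are all isomorphic to a single common hyperelliptic curve $\hat{\mcc}$. The geometric content is that every monodromy is simultaneously diagonalized over $\hat{\mcc}$ by the common eigenline bundle $\mce_z \to \mcc$, so the whole monodromy representation of $\pi_1(\Sigma)$ into the loop group factors through the abelian group of eigenvalues on $\hat{\mcc}$. First I would make precise the period map: for each $\lambda$, the logarithmic derivatives $\Phi_k = \pm\ed\log\alpha_k$ and $\Psi_k = \pm\ed\log\beta_k$ from Eq.~\eqref{eq:PhiPsi} are holomorphic (2-valued) differentials on $\C^*$ that lift to genuine meromorphic differentials on $\hat{\mcc}$, with poles only over $\lambda = 0, \infty$ dictated by the asymptotics of $\psi_+(\lambda)$.

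The key steps, in order, are as follows. First, use the commutativity established in Cor.~\ref{cor:monocommute} to pass from a nonabelian representation of $\pi_1(\Sigma)$ to an abelian one: because all monodromies share the common eigenline bundle $\mce_z$, the assignment $[\gamma] \mapsto$ (eigenvalue of the monodromy along $\gamma$ on $\mce_z$) is a homomorphism $\pi_1(\Sigma) \to H^0(\hat{\mcc}, \mathcal{O}^*)$ that factors through $H_1(\Sigma,\Z)$. Second, differentiate in $z$ and in $\lambda$: the eigenline bundle $\mce_z$ moves linearly on the Jacobian $\tn{Jac}(\hat{\mcc})$ as $z$ traces out $\Sigma$, precisely as in the Pinkall--Sterling picture \cite{Pinkall1989} for tori. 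Since the Abel--Jacobi image of $\Sigma$ in $\tn{Jac}(\hat{\mcc})$ is generated by integrating a \emph{fixed} pair of meromorphic differentials $\omega_1, \omega_2$ (coming from the two independent directions $\del_\xi, \del_{\xib}$ of the linear flow), the image lands inside a complex one-dimensional linear subvariety, i.e.\ an abelian subvariety $T^2 = \C / \Lambda$ of $\tn{Jac}(\hat{\mcc})$. Third, identify this $T^2$ as the target: the immersion data $\{\gamma^2, \xi, h_2, \rho\}$ are all recovered from the position of $\mce_z$ in $\tn{Jac}(\hat{\mcc})$ via the standard reconstruction (Baker--Akhiezer theory), so the CMC immersion is pulled back from a map on $T^2$, yielding the factorization $\Sigma \to T^2 \to X$.

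I expect the main obstacle to be the third step: closing the loop between the algebraic linearization on $\tn{Jac}(\hat{\mcc})$ and the \emph{actual} factorization of the geometric immersion $\underline{\x}:\Sigma \to M$. Establishing that the eigenline flow lands in a one-dimensional abelian subvariety is essentially linear-algebraic once commutativity and the common spectral curve are in hand, but showing that the periods of $\omega_1, \omega_2$ generate a genuine lattice $\Lambda$ of rank two (rather than degenerating, which would force $\Sigma$ into a lower-dimensional orbit, or failing to close up) requires the transcendental period/closing conditions. These are exactly the conditions that the monodromy eigenvalues $\alpha_k(\lambda), \beta_k(\lambda)$ take unit modulus simultaneously along a suitable loop in $\lambda$, reflecting the reality structure $\varrho$ of Eq.~\eqref{eq:realinvolution}. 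Rather than grinding through the detailed period analysis and the Baker--Akhiezer reconstruction, which is substantial, I would at this point defer to Gerding's argument and refer the reader to \cite{Gerding2011} for the complete treatment, since the consequences beyond this factorization statement are not pursued in the present paper.
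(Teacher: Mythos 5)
Your proposal takes essentially the same approach as the paper: both rest on the commuting monodromies (Cor.~\ref{cor:monocommute}) and the common spectral curve (Cor.~\ref{cor:commonspectral}), and both ultimately defer to Gerding's original work \cite{Gerding2011} for the actual factorization argument rather than carrying out the transcendental period analysis and reconstruction. The paper likewise stops short of a proof --- it computes the eigenvalue asymptotics, remarks that this makes Gerding's result ``quite plausible,'' and explicitly refers the reader to \cite{Gerding2011} for the remaining details --- so your sketch of the abelianized monodromy and Jacobian linearization, followed by deferral, matches the paper's own treatment.
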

We shall not pursue to give the complete proof for this theorem, and refer the reader to the original paper \cite{Gerding2011} for the remaining details.

\two
We conclude this section with a description of the eigenvalues of the monodromy.
Recall $\lambda:\mcc\to\PP^1$.
\begin{cor}
The eigenvalues $\{ \alpha_k(\lambda),\alpha^{-1}_k(\lambda), \beta_k(\lambda), \beta^{-1}_k(\lambda)\}$ 
for each of the monodromy $\{A_k(\lambda)_z, B_k(\lambda)_z\}, k=1,2,\,...\,g,$ are well defined holomorphic functions on $\mcc\setminus \lambda^{-1}\{0,\infty\}$.
In particular, the differentials \eqref{eq:PhiPsi} are holomorphic differentials on $\mcc\setminus \lambda^{-1}\{0,\infty\}$.
\end{cor}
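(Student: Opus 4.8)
The plan is to realize each eigenvalue as the scalar by which the corresponding monodromy acts on the eigenline bundle $\mce_z \to \mcc$ of Section~\ref{sec:eigenline}, and then to read off holomorphicity directly from the holomorphic dependence of the monodromy on $\lambda$ together with the holomorphicity of $\mce_z$.

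First I would recall the identification of points of the spectral curve with eigenlines. A point $(\mu,\lambda) \in \mcc$ lying over $\lambda \in \C^* \setminus P^{-1}(0)$ singles out one of the two eigenvalues $\mu = \pm \lambda^{\frac{1}{2}}\sqrt{-P(\lambda)}$ of $\lambda^{\frac{1}{2}}\tb{X}(\lambda)$, hence one of the two eigenlines $\mce^{\pm}_z(\lambda) \subset V$. This is precisely the data packaged by the eigenline bundle $\mce_z \to \mcc$, which by the analysis of Section~\ref{sec:eigenline} extends holomorphically across the fibres over the umbilic divisor, the odd-degree roots $\lambda_i, \ol{\lambda}_i^{-1}$, and the even-degree roots $\lambda_i^e$, so that for a fixed base point $z_0$ it is a holomorphic line subbundle of the trivial bundle over all of $\mcc$.

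Next I would invoke the conjugation identity \eqref{eq:XAcommute}, which says that the monodromy $A_k(\lambda)_{z_0}$ commutes with $\lambda^{\frac{1}{2}}\tb{X}(\lambda)\vert_{z_0}$ and therefore preserves each eigenline $\mce^{\pm}_{z_0}(\lambda)$; on that line it acts by multiplication by the eigenvalue $\alpha_k^{\pm 1}(\lambda)$, and similarly $B_k(\lambda)_{z_0}$ acts by $\beta_k^{\pm 1}(\lambda)$. Assigning to a point $(\mu,\lambda) \in \mcc$ the scalar by which $A_k(\lambda)_{z_0}$ acts on the eigenline determined by that point defines $\alpha_k$ as a single-valued function on $\mcc$: the two-valuedness over the $\lambda$-plane is resolved because a point of $\mcc$ already carries the choice of sheet, and by Lemma~\ref{lem:eigenvaluebranching} the odd-degree branch points of $\Delta_{A_k(\lambda)} = 0$ coincide with the ramification points of $\lambda:\mcc\to\PP^1$. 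For holomorphicity I would note that $A_k(\lambda)_{z_0}, B_k(\lambda)_{z_0}$ are obtained by integrating the flat connection $\psi_+(\lambda)$, which depends polynomially on $\lambda$ and $\lambda^{-1}$ and hence holomorphically on $\lambda \in \C^* = \PP^1 \setminus \{0,\infty\}$; thus they are holomorphic over $\mcc \setminus \lambda^{-1}\{0,\infty\}$. Since the scalar action of a holomorphic endomorphism on the holomorphic line bundle $\mce_{z_0}$ is itself holomorphic, $\alpha_k, \beta_k$ and, using $\det = 1$ so that $\alpha_k \ne 0$, their inverses are holomorphic on $\mcc \setminus \lambda^{-1}\{0,\infty\}$. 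The fibres over $0$ and $\infty$ must be excised because the $\lambda^{-1}h_2\xi$ and $\lambda\,\hb_2\xib$ terms of $\psi_+(\lambda)$ blow up there, so parallel transport fails to be holomorphic across them.

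The ``in particular'' claim then follows immediately: because the eigenvalues of the $\SL(2,\C)$-valued monodromies never vanish on $\mcc \setminus \lambda^{-1}\{0,\infty\}$, the logarithmic differentials $\Phi_k = \pm\, \ed\log\alpha_k$ and $\Psi_k = \pm\, \ed\log\beta_k$ of \eqref{eq:PhiPsi} are holomorphic one-forms there. The step I expect to be the main obstacle is the holomorphic extension of $\alpha_k, \beta_k$ across the ramification points of $\mcc$ over the odd-degree roots and, inside the arithmetic curve $\mcc$, across the nodal points over the even-degree roots: at a ramification point the eigenvalue carries a square-root branch in $\lambda$ that is uniformized by the local coordinate on $\mcc$, whereas at an even-degree root Lemma~\ref{lem:eigenvaluebranching} guarantees no real branching, so $\alpha_k$ remains single-valued in $\lambda$ and descends to both branches of the node; reconciling these two local pictures with the bundle-theoretic argument is where the genuine care is required.
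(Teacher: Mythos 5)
Your proposal is correct, but it is worth noting how it sits relative to the paper, which offers no written proof at all: there the corollary is presented as an immediate consequence of the branch-matching chain Lemma~\ref{lem:eigenvaluebranching} $\Rightarrow$ Cor.~\ref{cor:commonspectral}, i.e.\ the two-valued eigenvalue functions $\alpha_k^{\pm1},\beta_k^{\pm1}$ on the $\lambda$-plane branch (to odd order) exactly where $\lambda:\mcc\to\PP^1$ branches over $\C^*$, so they lift to single-valued functions on $\mcc$, holomorphic away from $\lambda^{-1}\{0,\infty\}$ because the monodromy of $\psi_+(\lambda)$, Eq.~\eqref{eq:psi+lambda}, depends holomorphically on $\lambda\in\C^*$. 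Your route is genuinely different in mechanism: you define the eigenvalue as the scalar by which $A_k(\lambda)_{z_0}$ acts on the eigenline bundle $\mce_{z_0}\to\mcc$ of Sec.~\ref{sec:eigenline}, using Eq.~\eqref{eq:XAcommute} to see that the eigenline is preserved; single-valuedness and holomorphicity then come in one stroke, and Lemma~\ref{lem:eigenvaluebranching} becomes essentially superfluous for this particular statement (you invoke it only to reconcile sheet labels, which is not strictly needed). What each approach buys: the paper's branch-matching argument is what yields the stronger structural conclusion that \emph{all} the monodromy spectral curves coincide with $\hat{\mcc}$ (Cor.~\ref{cor:commonspectral}), of which the present corollary is a by-product; your bundle-theoretic argument is more economical for the well-definedness and holomorphicity claim itself, at the cost of leaning on the holomorphic extension of $\mce_z$ across the fibres over the roots of $P(\lambda)$ — including the ramification and nodal fibres — which the paper establishes only by the analytic-continuation assertion in Sec.~\ref{sec:eigenline}, so you inherit rather than remove that delicate point. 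Your treatment of the inverses via $\det A_k(\lambda)=1$ and of the ``in particular'' clause (logarithmic differentials of nowhere-vanishing holomorphic functions) is exactly right.
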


Denote the holomorphic functions $\alpha^{\pm 1}_k(\lambda), \beta^{\pm 1}_k(\lambda)$ now defined on $\mcc\setminus \lambda^{-1}\{0,\infty\}$ by the same notation, and similarly for $\Phi_k, \Psi_k$.

\two
We examine the asymptotics of the eigenvalues $\alpha^{\pm 1}_k(\lambda), \beta^{\pm 1}_k(\lambda)$ at the two branch points $\lambda^{-1}\{0,\infty\}$. Note first the symmetry
\[ \ol{\psi_+(\ol{\lambda}^{-1})}^t =-\psi_+(\lambda).
\]
It follows that
\[  \ol{A_k(\ol{\lambda}^{-1})}^t =A_k(\lambda)^{-1}, \ol{B_k(\ol{\lambda}^{-1})}^t =B_k(\lambda)^{-1},
\]
and hence
\[  \ol{\alpha_k^{\pm 1}(\ol{\lambda}^{-1})}  =\alpha^{\mp 1}_k(\lambda),  \ol{\beta_k^{\pm 1}(\ol{\lambda}^{-1})}  =\beta^{\mp 1}_k(\lambda).
\]
The asymptotics at $\lambda^{-1}(\infty)$ are thus determined by the asymptotics at  $\lambda^{-1}(0)$, and we will examine the latter case.

Recall $\omega=\sqrt{\ff}$, the square root of the Hopf differential which is a 2-valued holomorphic 1-form on $\Sigma$. Assume that the generators of $\pi_1(\Sigma)$ lie in $\Sigma\setminus\mcu$ so that a branch of $\omega$ is well defined on each $a_k, b_k$.
Denote the periods of the chosen branch $\omega$ by
\be 
\int_{a_k}\omega=m_{a_k},\quad
\int_{b_k}\omega=m_{b_k}.
\ee
\begin{prop}
Let $\eta=\lambda^{\frac{1}{2}}$ be the local coordinate of $\mcc$ in a neighborhood of $\lambda^{-1}(0)$. The eigenvalues have the following asymptotics at $\eta=0$.
\begin{align}
\pm\log\alpha_k(\lambda)&\equiv-\frac{\im \sqrt{\gamma}}{2}m_{a_k}\eta^{-1}+\im\pi n_k,\n\\
  \pm\log\beta_k(\lambda)&\equiv-\frac{\im \sqrt{\gamma}}{2}m_{b_k}\eta^{-1}+\im\pi n'_k, \mod \mco(\eta), \n
\end{align}
where $n_k, n'_k$ are integers.
Hence
\begin{align}
\pm\Phi_k&\equiv \frac{\im \sqrt{\gamma}}{2}m_{a_k}\frac{\ed\eta}{\eta^2},\n\\
\pm\Psi_k&\equiv \frac{\im \sqrt{\gamma}}{2}m_{b_k}\frac{\ed\eta}{\eta^2}, \mod \tn{analytic terms}. \n
\end{align}
The differentials $\Phi_k, \Psi_k$ are meromorphic differentials on $\mcc$ of the second kind with double poles at $\lambda^{-1}\{0,\infty\}$ and otherwise regular.
\end{prop}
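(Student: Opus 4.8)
The plan is to extract the singular behaviour of the monodromy eigenvalues by an adiabatic (WKB) diagonalisation of the flat connection $D_{\lambda}$ near the irregular singularity at $\lambda^{-1}(0)$. Writing $\eta=\lambda^{\frac{1}{2}}$, which by Definition \ref{defn:hyperellipticcurve} is a genuine local coordinate on $\hat{\mcc}$ at the branch point over $\lambda=0$, I would first expand the connection form of Eq.~\eqref{eq:psi+lambda} in powers of $\eta$:
\begin{equation}
\psi_+ = \eta^{-2}\bp 0 & -\frac{1}{2}h_2\xi \\ 0 & 0 \ep
+ \bp \frac{\im}{2}\rho & -\frac{1}{2}\gamma\xib \\ \frac{1}{2}\gamma\xi & -\frac{\im}{2}\rho \ep
+ \eta^{2}\bp 0 & 0 \\ \frac{1}{2}\hb_2\xib & 0 \ep .
\end{equation}
The leading term is nilpotent, so a naive diagonalisation has a turning point at $\eta=0$. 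I would remove this apparent turning point by the constant (i.e. $z$-independent) gauge $h=\tn{diag}(\eta^{\frac{1}{2}},\eta^{-\frac{1}{2}})$, which only conjugates the monodromies and hence preserves their eigenvalues. After balancing, the leading term is $\eta^{-1}\ell$ with $\ell=\bp 0 & -\frac{1}{2}h_2\xi \\ \frac{1}{2}\gamma\xi & 0 \ep$, whose eigenvalue $1$-forms are $\pm\frac{\im\sqrt{\gamma}}{2}\,\omega$, since $-\det\ell=\frac{1}{4}\gamma h_2\xi^2$ and $\omega=\sqrt{\ff}=h_2^{\frac{1}{2}}\xi$ (here $\gamma>0$ for linear finite-type, so $\sqrt{-\gamma h_2}=\im\sqrt{\gamma h_2}$).

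The decisive point is that $\omega$ is nonvanishing on $\Sigma\setminus\mcu$, so along the generators $a_k,b_k$ (which we have arranged to lie in $\Sigma\setminus\mcu$, where a branch of $\omega$ is single-valued) the balanced leading symbol $\ell$ is regular semisimple. Standard asymptotic diagonalisation then supplies a gauge $P(z,\eta)=P_0(z)+\eta P_1(z)+\cdots$, holomorphic in $\eta$ near $0$ with $P_0$ diagonalising $\ell$, carrying $D_\lambda$ to $d+\tn{diag}(\mu,-\mu)$ where
\begin{equation}
\mu = \frac{\im\sqrt{\gamma}}{2}\,\eta^{-1}\omega + \mu^{(0)} + \mco(\eta).
\end{equation}
The monodromy eigenvalue is then $\alpha_k=\exp(-\oint_{a_k}\mu)$, so
\begin{align}
\log\alpha_k &= -\frac{\im\sqrt{\gamma}}{2}\,\eta^{-1}\oint_{a_k}\omega - \oint_{a_k}\mu^{(0)} + \mco(\eta) \\
&= -\frac{\im\sqrt{\gamma}}{2}\,m_{a_k}\,\eta^{-1} - \oint_{a_k}\mu^{(0)} + \mco(\eta),
\end{align}
and likewise along $b_k$ with $m_{b_k}$. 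To identify the constant term I would note that the subleading diagonal piece $\mu^{(0)}$ of the WKB normal form is the amplitude term, a logarithmic derivative; hence $\exp(-\oint_{a_k}\mu^{(0)})$ is a root of unity and, using the reality symmetry $\overline{\alpha_k(\bar\lambda^{-1})}=\alpha_k^{-1}(\lambda)$ to force it real, one gets $\oint_{a_k}\mu^{(0)}\in\im\pi\Z$. This yields the stated normal forms with integers $n_k,n_k'$.

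Differentiating, the constant term $\im\pi n_k$ drops out and $\Phi_k=\pm\ed\log\alpha_k\equiv\frac{\im\sqrt{\gamma}}{2}m_{a_k}\,\eta^{-2}\ed\eta$ modulo analytic terms, with the analogous statement for $\Psi_k$ and $m_{b_k}$. Thus there is a double pole at $\lambda^{-1}(0)$ with vanishing residue. The behaviour at $\lambda^{-1}(\infty)$ follows from the reality symmetry $\overline{\alpha_k(\bar\lambda^{-1})}=\alpha_k^{-1}(\lambda)$ recorded just before the Proposition, which interchanges the two branch points; combined with the preceding Corollary (holomorphy of $\alpha_k^{\pm1},\beta_k^{\pm1}$ on $\mcc\setminus\lambda^{-1}\{0,\infty\}$) this shows $\Phi_k,\Psi_k$ are meromorphic on $\hat{\mcc}$, regular off $\lambda^{-1}\{0,\infty\}$, with a residue-free double pole at each of the two points, i.e. differentials of the second kind. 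The hard part will be the rigorous justification of the WKB step: one must show that the balancing gauge $h$ genuinely resolves the turning point and that the formal diagonalising gauge $P(z,\eta)$ yields the eigenvalue asymptotics to the claimed order uniformly along the closed loops $a_k,b_k$. This is the standard but delicate irregular-singularity/adiabatic analysis; the essential simplification is that away from $\mcu$ the balanced symbol $\ell$ is regular semisimple precisely because $\omega=\sqrt{\ff}$ is nonvanishing there, so the eigenvalue $1$-forms are distinct and the normal-form reduction is unobstructed along the chosen generators.
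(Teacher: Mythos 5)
Your route is genuinely different from the paper's, and the difference matters: the paper's proof is \emph{exact}, while yours is asymptotic, and the asymptotic step you defer is not a routine technicality but precisely the point where the finite-type hypothesis must enter. After your balancing gauge, $\eta=0$ is an irregular singularity (Poincar\'e rank one) in the parameter, so the diagonalizing gauge $P(z,\eta)=P_0+\eta P_1+\cdots$ is in general only a \emph{formal}, generically divergent series --- it is not ``holomorphic in $\eta$ near $0$'' --- and the expansions it encodes are a priori valid only sectorially in the $\eta$-plane, up to Stokes phenomena. At best this would give sectorial asymptotics of $\log\alpha_k$, whereas the Proposition asserts the much stronger statement that $\Phi_k=\ed\log\alpha_k$ is \emph{meromorphic} at $\eta=0$ with exactly a double pole, i.e.\ a convergent Laurent expansion with no essential singularity. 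The paper gets this for free from the polynomial Killing field, which your argument never uses: by the Lemma of Sec.~\ref{sec:spectraldata} (via Eq.~\eqref{eq:XAcommute}), the eigenline bundle $\mce_z(\eta)$ of $\lambda^{\frac{1}{2}}\tb{X}(\lambda)$ is invariant under the flat connection $D_\lambda$ and is simultaneously an eigenline of every monodromy; hence $D_{\eta^2}v=\theta v$ for a non-vanishing section $v=v_0+v_1\eta+\cdots$, where $\theta$ is an honest $1$-form-valued function of $\eta$, analytic apart from a simple pole at $\eta=0$ (the eigenline of a polynomial matrix family is analytic in $\eta$ on the resolved double cover), and the monodromy eigenvalue is \emph{exactly} $\exp(\oint\theta)$. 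Expanding $v$ gives $\theta_{-1}=-\frac{\im\sqrt{\gamma}}{2}h_2^{\frac{1}{2}}\xi=-\frac{\im\sqrt{\gamma}}{2}\omega$, and the Proposition follows by termwise integration, with no asymptotic matching. Your leading symbol and its eigenvalue forms $\pm\frac{\im\sqrt{\gamma}}{2}\omega$ agree with this, but to justify your expansion to the claimed order in a full punctured neighborhood you would in effect have to reconstruct the invariant eigenline --- that is, fall back on the Killing field.

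There is also a concrete error in your treatment of the constant term. The diagonal subleading piece of a matrix WKB normal form is the diagonal part of $P_0^{-1}\ed P_0+P_0^{-1}A_0P_0$, which is not in general a pure logarithmic derivative, so ``$\exp(-\oint_{a_k}\mu^{(0)})$ is a root of unity'' is unjustified; and the reality symmetry $\ol{\alpha_k(\ol{\lambda}^{-1})}=\alpha_k^{-1}(\lambda)$ relates the expansion at $\lambda=0$ to the expansion at $\lambda=\infty$, so it cannot ``force real'' the constant term at $\eta=0$ alone. The paper's reason is elementary and different: away from $\eta=0$ the monodromy matrix is a single-valued function of $\lambda=\eta^2$ with unit determinant, so the hyperelliptic involution $\eta\mapsto-\eta$ interchanges the two reciprocal eigenvalues; hence $\log\alpha_k(-\eta)\equiv-\log\alpha_k(\eta)\mod 2\pi\im\Z$, which forces the constant coefficient (indeed every even-order coefficient) to lie in $\im\pi\Z$. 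You should replace your constant-term argument by this involution argument, and replace the WKB step by the eigenline-bundle computation, at which point your leading-order calculation becomes a correct consistency check of $\theta_{-1}$.
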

\begin{proof}
We argue in a small neighborhood of the point $\eta=0$. For the given polynomial Killing field $\lambda^{\frac{1}{2}}\tb{X}(\lambda)$,
the eigenvalues $\pm\mu$ have the local expansion
\[ \pm\mu=2\sqrt{b^2c^2}\eta+\, ... \, = 2\sqrt{\gamma}\eta+\tn{higher-order terms in $\eta$}.
\]
Choose $+\mu$, and let
\[ v=v_0+v_1\eta+\,...\,, \quad v_i=\bp v_i^1\\v_i^2 \ep
\]
be the expansion of a non-vanishing local section of the eigenspace $\mce(\eta)$ (without lower script $z$) for $\mu$, now considered as a line bundle over $\Sigma$ for a fixed $\eta=\lambda^{\frac{1}{2}}$ near $\eta=0$. A direct computation shows that we may take
\[ v_0=\bp 1\\ 0 \ep, \quad v_1=\bp v_1^1 \\ v_1^2 \ep, \; v_1^2=-\frac{\sqrt{\gamma}}{c^2}=\im\sqrt{\gamma}h_2^{-\frac{1}{2}}.
\]
The sub-bundle $\mce(\eta)\subset\mcv$ is invariant under the flat connection $D_{\lambda}$, and there exists a 1-form $\theta$ such that
\be\label{eq:thetadefi} D_{\eta^2}v=\ed v+\psi_+(\lambda) v=\theta v.
\ee
Let $\theta=\theta_{-1}\eta^{-1}+\theta_0+\, ... \, $ be the expansion of $\theta$ in $\eta$ (there are no higher degree negative terms like $\theta_{-2}\eta^{-2}$, etc). Collecting the $\eta^{-1}$-terms from \eqref{eq:thetadefi} one gets
\[\theta_{-1}=-\frac{v_1^2}{2}h_2\xi=-\frac{\im \sqrt{\gamma}}{2}h_2^{\frac{1}{2}}\xi.
\]
Integrating this over the cycles $a_k, b_k$ gives the desired formulae. Note that away from $\eta=0$ the monodromy is a function of $\lambda=\eta^2$, and this explains the value of constant terms in $\im\pi\mathbb{Z}$.
\end{proof}
The present analysis shows that 
Gerding's result on the factorization of a linear finite-type high genus CMC surface is quite plausible.
It also gives an indication of how to construct the factored CMC torus. 

 

\section{Nonlinear finite-type surfaces}\label{sec:nft}
In this section we introduce a class of nonlinear finite type CMC surfaces.
The umbilics are allowed, and by Rem.~\ref{rem:umbilicandFT} in the below,
these surfaces are generally not linear finite type.
The Smyth surfaces (Mr. and Mrs. Bubbles) with rotationally symmetric metric
and an umbilic point of arbitrary degree at the center are the well known examples in this class.

\subsection{Flat structure 3-web}
Recall that a planar $m$-web is a set of $m$ pairwise transversal foliations on a 2-dimensional surface.
\begin{defn}\label{defn:kthHopf}
Given a CMC surface $\Sigma$,  the \tb{$m$-th order Hopf differential} is the section 
$$\Phi_m:=h_m\xi^m\in H^0(\Sigma, K^m).$$
In case $\Phi_m$ does not vanish identically, the \tb{$m$-th order structure web} $\mcw_m$ is the $m$-web of possibly singular foliations defined by the line fields
$$\tn{Im}(\Phi_m)=0.$$
\end{defn}
Note the structure web $\mcw_m$ has the property that the corresponding $m$ line fields divide the tangent plane at each point in $m$ equal angles.

Compared to the general $m\geq 4$ webs, the local geometry of a planar $3$-web up to diffeomorphism is relatively simple;  the primary local invariant is the web curvature 2-form. The class of CMC surfaces we shall be interested in are those for which the structure 3-web $\mcw_3$ is flat and the web curvature vanishes.

\two
Let us give a brief analytic derivation of the web curvature of a 3-web. We refer the reader to 
\cite{Chern1982} \cite{Pereira2009}   for the details on planar 3-webs. 
Up to scaling, suppose that a planar 3-web $\mcw$ is locally defined by 1-forms 
$\eta^a, a=1,2,3,$ on a surface satisfying the normalization
\be\label{eq:etasum}
\eta^1+\eta^2+\eta^3=0.
\ee
In this case, there exists a unique web connection 1-form $\psi$ such that
$$\ed\eta^a=\psi\w\eta^a, \quad a=1,2,3.$$
The web curvature 2-form $\Omega_{\mcw}$ is defined by
$$\Omega_{\mcw}=\ed\psi.$$
It is easily checked that $\Omega_{\mcw}$ is independent of choice of $\eta^a$'s satisfying Eq.~\eqref{eq:etasum}.

\two
The CMC surfaces with flat structure 3-web are analytically characterized by the following nonlinear equation on the structure functions.
\begin{lem}\label{lem:flat3web}
Given a CMC surface, suppose the third order Hopf differential $\Phi_3$ is nonzero 
and the structure 3-web $\mcw_3$ is well defined. 
Then $\mcw_3$ is flat whenever the structure functions satisfy
\be\label{eq:flat3web}
\tn{Im}\left(\frac{z_4}{z_3^2}\right)=0.
\ee
\end{lem}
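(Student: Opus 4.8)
The plan is to write down the three defining one-forms of the web $\mcw_3$ explicitly, compute its web connection and curvature, and then reduce that curvature to a multiple of $\Im(z_4/z_3^2)$. I work locally on $\Sigma$ away from the umbilic locus and the zero set of $h_3$, where $\mcw_3$ is a genuine $3$-web. Since $\Im(h_3\xi^3)=0$ reads $\arg h_3 + 3\arg\xi \equiv 0 \pmod\pi$, the three line fields sit at angular separation $\pi/3$; so I would introduce the adapted $(1,0)$-form $\zeta = (h_3/|h_3|)^{1/3}\xi$, for which $h_3\xi^3 = |h_3|\,\zeta^3$, and set $\eta^k = \Im(\nu^k\zeta)$ with $\nu = e^{2\pi\im/3}$, $k=0,1,2$. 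These annihilate the three line fields and satisfy the normalization $\eta^0+\eta^1+\eta^2 = \Im\!\big((1+\nu+\nu^2)\zeta\big)=0$ required to define the web connection. (The cube root is only locally single-valued, but the final curvature will be expressed through quantities that descend to $\Sigma$.)

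First I would compute $\ed\zeta$. Because $h_3/|h_3|$ has unit modulus, the relevant logarithmic derivative is purely imaginary, whence $\ed\zeta = \im\sigma\wedge\zeta$ for the real one-form $\sigma = \tfrac13\,\ed(\arg h_3) + \rho$, using $\ed\xi = \im\rho\wedge\xi$ on the surface. Feeding in the prolonged structure equation $\ed h_3 + 3\im h_3\rho = h_4\xi + T_3\xib$ (with $T_3 = h_2(\gamma^2-h_2\hb_2)$ from Cor.~\ref{cor:Tidentity}) to evaluate $\ed(\arg h_3) = \Im(\ed h_3/h_3)$, the $\rho$-contributions cancel and one is left with the clean expression $\sigma = \tfrac13\,\Im\!\big(\tfrac{h_4}{h_3}\xi + \tfrac{T_3}{h_3}\xib\big)$. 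The key structural step is then to identify the web connection: from $\ed\zeta = \im\sigma\wedge\zeta$ one gets $\ed\eta^k = \sigma\wedge\Re(\nu^k\zeta)$, and solving the defining relations $\ed\eta^k = \psi\wedge\eta^k$ in the coframe $\{\zeta,\bar\zeta\}$ forces $\psi = \JAI\sigma$, the Hodge dual of $\sigma$ (here $\JAI$ restricts to the Hodge star of the induced metric, as recorded in Sec.~\ref{sec:Jacobifields}). Hence $\Omega_{\mcw} = \ed\psi = \ed(\JAI\sigma)$.

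The remaining step is the reduction to $z_4/z_3^2$. Rewriting in the square-root-of-Hopf coframe $\omega = h_2^{1/2}\xi$ gives $\JAI\sigma = -\tfrac13\,\Re\!\big((A-\bar B)\,\omega\big)$ with $A = z_4/z_3$ and $B = R/(r z_3)$, where $R = \gamma^2 - h_2\hb_2$ and $r = |h_2|$. Using that $\omega=\sqrt{\ff}$ is holomorphic, hence closed on the surface ($\ed\omega\equiv 0 \bmod \Ih{\infty}$ from Eq.~\eqref{eq:zetastruct}), the exterior derivative collapses to $\ed(\JAI\sigma) = \tfrac23\,\Im\!\big(\del_{\bar\omega}A + \del_\omega B\big)\,\omega_1\wedge\omega_2$, where $\omega = \omega_1 + \im\omega_2$. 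Inserting the surface structure relations from Eq.~\eqref{eq:zetastruct}, namely $\del_\omega z_j = z_{j+1} - \tfrac j2 z_3 z_j$, $\del_{\bar\omega} z_j = \hat{T}_j r^{-1}$, $\del_\omega r = \tfrac r2 z_3$, together with $\hat{T}_3 = R$ and $\hat{T}_4 = (\tfrac52 R - r^2)z_3$, a direct computation gives $\del_{\bar\omega}A + \del_\omega B = \tfrac72 R r^{-1} - 2r - 2R r^{-1}\tfrac{z_4}{z_3^2}$. Since $R,r,\gamma^2$ are real, the first two (real) terms drop out under $\Im$, leaving
\[
\Omega_{\mcw} = \ed\psi = -\tfrac43\,(\gamma^2 - h_2\hb_2)\,\Im\!\Big(\frac{z_4}{z_3^2}\Big)\,dA,
\]
with $dA = \tfrac{\im}{2}\xi\wedge\xib$ the area form. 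This is manifestly defined on $\Sigma$ (indeed $z_4/z_3^2 = h_2 h_4/h_3^2$ is single-valued), and it vanishes whenever $\Im(z_4/z_3^2)=0$, which proves the Lemma; it moreover yields the converse at points where the metric is non-flat, $\gamma^2\neq h_2\hb_2$.

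I expect the main obstacle to be the identification $\psi = \JAI\sigma$ together with the bookkeeping required to verify the cancellation of all real ($\Im$-free) terms in $\del_{\bar\omega}A + \del_\omega B$. The two facts that make the computation tractable are the cancellation of the $\rho$-terms in $\sigma$ and the closedness $\ed\omega\equiv 0$; the principal source of potential sign and scale errors is getting the cube-root normalization of $\zeta$ and the coframe conversions between $\xi$ and $\omega$ exactly right.
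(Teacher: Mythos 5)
Your proposal is correct and follows essentially the same route as the paper: both identify the web connection as the Hodge dual ($\JAI$-image) of the connection-type $1$-form extracted from the structure equation $\ed h_3+3\im h_3\rho=h_4\xi+T_3\xib$, and then differentiate it to exhibit the web curvature as a multiple of $\Im\left(z_4/z_3^2\right)$. The differences are only presentational---the paper normalizes the frame so that $h_3=\hb_3$ (in which case your $\sigma$ reduces to $\rho$ and your verified identity $\psi=\JAI\sigma$ becomes the paper's ``by inspection'' claim $\psi=*\rho$) and computes in the $\xi$-coframe with $h_j$'s, whereas you keep gauge invariance via the cube-root phase and work in the $\omega$-coframe with $z_j$'s---and your final formula $\Omega_{\mcw}=-\tfrac{4}{3}(\gamma^2-h_2\hb_2)\,\Im\!\left(\tfrac{z_4}{z_3^2}\right)\tfrac{\im}{2}\xi\w\xib$ agrees exactly with the paper's $\tfrac{R}{3h_3\hb_3}(\hb_2\hb_4-h_2h_4)\,\xi\w\xib$.
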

\begin{proof}
Assuming $h_3\ne 0$, choose a frame for which $h_3=\hb_3$ is real. Differentiating this equation, one gets
$$\rho=-\frac{\im}{3(h_3+\hb_3)}\left((h_4-\hb_2 R)\xi-(\hb_4-h_2 R)\xib\right).$$
By inspection the web connection 1-form $\psi=*\rho$, where '$*$' is the $\C$-linear Hodge star operator acting on 1-forms by $*\xi=-\im \xi.$ The web curvature 2-form is then given by
$$\Omega_{\mcw_3}=\ed(*\rho)=\frac{R}{3h_3\hb_3}(\hb_2\hb_4-h_2h_4)\xi\w\xib.
$$
We are assuming that $\Phi_3=h_3\xi^3\ne 0$, and hence the curvature $R$ does not vanish identically either.
Since the function $\tn{Im}\left(\frac{z_4}{z_3^2}\right)$ in Eq.~\eqref{eq:flat3web} is well defined on the CMC surface, this is equivalent to $\hb_2\hb_4-h_2h_4=0$ for this particular choice of frame.
\end{proof}
\begin{rem}
Note 
\be\label{eq:dh23}
\ed (h_2\hb_2)\w\ed(h_3\hb_3)=
-h_3\hb_3( \frac{z_4}{z_3^2}-\frac{\zb_4}{\zb_3^2})\xi\w\xib.
\ee
\end{rem}
Differentiating Eq.~\eqref{eq:flat3web}, one may solve for $h_5,\hb_5$ in terms of the lower order variables
$\{\,h_2,\hb_2,h_3,\hb_3, h_4,\hb_4\,\}$. 
Thus the structure equation  closes up at order $5$ and 
Eq.~\eqref{eq:flat3web} leads to a nonlinear finite type equation. 
We claim that the resulting structure equation is compatible.
\begin{prop}\label{prop:flat3webcompat}
Consider the system of equations generated by Eq.~\eqref{eq:flat3web} and its $\delxb, \delx$-derivatives.
The resulting putative structure equation, which is closed at order 5, is compatible and formally satisfies $\ed^2=0$.
By Lie-Cartan theorem there exist local solutions for initial conditions satisfying $h_2, h_3\ne 0.$
\end{prop}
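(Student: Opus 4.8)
The plan is to realize Eq.~\eqref{eq:flat3web} as a single real first-order relation on the infinite prolongation, solve its $\delx$-derivative for the top structure function $h_5$, and then check that the resulting order-$5$ structure equations are formally Frobenius so that Lie--Cartan applies. Throughout I work on $\mcf^{(\infty)}_0$ with the recursion $\delx h_j=h_{j+1}$, $\delxb h_j=T_j$ from Eq.~\eqref{eq:dhj}, and I write the flat-web condition as $F:=W-\ol{W}=0$, where $W=z_4/z_3^2=h_2h_4/h_3^{2}$. The essential structural fact is that each $z_j$ has $\SO(2)$-weight $0$, so $W$ is weight $0$ and $F$ is purely imaginary: $\ol{F}=-F$, whence $\delxb F=-\ol{\delx F}$, and the commutation relation \eqref{3anticommut} gives $[\delxb,\delx]F=\tfrac{0}{2}RF=0$ since $F$ is a section of $K^0$.

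First I would compute $\delx F$. As $\delx z_4=h_2^{-2}h_5-2h_2^{-3}h_3h_4$ supplies the only fifth-order term, $\delx W$ is affine in $h_5$ with coefficient $h_2h_3^{-2}$, while $\delx\ol{W}=\ol{\delxb W}$ involves only $T_3,T_4$ and hence no fifth-order quantity. Thus $\delx F=h_2h_3^{-2}\,h_5+L$ with $L$ a polynomial in $\{h_2,\hb_2,h_3,\hb_3,h_4,\hb_4\}$, and because $h_2,h_3\neq 0$ I can solve $h_5=\Xi(h_2,\hb_2,h_3,\hb_3,h_4,\hb_4)$. The conjugate equation $\delxb F=-\ol{\delx F}=0$ then gives $\hb_5=\ol{\Xi}$ automatically, so no independent constraint appears on the conjugate side. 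This is precisely the sense in which the system closes at order $5$.

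Next I would assemble the putative closed structure equation on the finite-dimensional manifold carrying the frame-bundle coordinates together with $\{h_2,\hb_2,h_3,\hb_3,h_4,\hb_4\}$: take Eqs.~\eqref{3strt2} for $\xi,\theta_0,\rho$ unchanged, and the prolongation equations for $h_2,h_3,h_4$ and their conjugates from Eq.~\eqref{eq:thetaj}, with every occurrence of $h_5,\hb_5$ replaced by $\Xi,\ol{\Xi}$. All the identities $\ed^2=0$ inherited from the original, already-compatible prolongation hold verbatim; the only genuinely new conditions are $\ed^2h_4=0$ and its conjugate. Expanding $\ed(\ed h_4)$ and using $T_{j+1}=\delx T_j+\tfrac{j}{2}Rh_j$ from Cor.~\ref{cor:Tidentity}, the coefficient of $\xi\w\xib$ collapses to the single scalar identity $\delxb\Xi=T_5$.

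The main obstacle is exactly this identity $\delxb\Xi=T_5$, which says that $F=0$ is stable under the Laplacian $\delxb\delx$. I would prove it by differentiating the relation $h_2h_3^{-2}h_5+L=0$ defining $\Xi$ in the $\delxb$-direction inside the full prolongation, where $\delxb h_5=T_5$ is the universal value, and comparing with $\delx\delxb F$; the weight-$0$ identity $\delxb\delx F=\delx\delxb F$ together with $\delxb F=-\ol{\delx F}$ forces the two computations to agree, reducing everything to a polynomial identity in $\{h_2,\hb_2,h_3,\hb_3,h_4,\hb_4\}$ built from $T_3,T_4,T_5$ and their $\delx,\delxb$-derivatives. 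This identity I would confirm by direct computation, with \texttt{Maple} for the bookkeeping as the authors do elsewhere. Granting it, the assembled system formally satisfies $\ed^2=0$, and Cartan's generalization of Lie's third theorem for closed differential systems, \cite[Appendix]{Bryant2001}, yields local real-analytic integral surfaces through any point with $h_2,h_3\neq 0$, proving Prop.~\ref{prop:flat3webcompat}.
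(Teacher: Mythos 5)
Your proposal is correct and takes essentially the same route as the paper: the paper rewrites Eq.~\eqref{eq:flat3web} as $h_2\hb_3^2h_4=\hb_2h_3^2\hb_4$, solves for $h_4,\hb_4$, differentiates to express $h_5,\hb_5$ in the order-$4$ variables, and then verifies $\ed^2=0$ by direct substitution into the structure equations for $\ed h_4,\ed\hb_4$ before invoking Lie--Cartan, which is exactly your reduction via solving $\delx F=0$ for $h_5=\Xi$ and checking the single identity $\delxb\Xi=T_5$. The only caveat is that your reality/weight-zero observation by itself shows only that the obstruction $\delxb\delx F$ is purely imaginary, not that it vanishes modulo the constraint ideal, so the final step remains a genuine direct computation --- precisely the step the paper also disposes of by ``direct computation.''
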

\begin{proof}
Eq.~\eqref{eq:flat3web} is equivalent to
\be\label{eq:flat3web'}
h_2\hb_3^2h_4=\hb_2h_3^2\hb_4.
\ee
Solving for $h_4, \hb_4$ we get
$$
h_4 = \frac{\hb_2h_3^2\hb_4}{h_2\hb_3^2},\quad
\hb_4 =\frac{h_2\hb_3^2h_4}{\hb_2h_3^2}.
$$
Differentiating this we get
$$
h_5 = \delx\left(\frac{\hb_2h_3^2\hb_4}{h_2\hb_3^2}\right), \quad
\hb_5 =\delxb\left(\frac{h_2\hb_3^2h_4}{\hb_2h_3^2}\right).
$$
Substituting this to the structure equation for $\ed h_4, \ed\hb_4$, the claim is verified by direct computation.
\end{proof}

\subsection{Un-coupled structure equation}
The defining equation ~\eqref{eq:flat3web'} represents a relation among the phases of the complex coefficients $\{\,h_2, h_3, h_4\,\}.$ This suggests to choose a local frame such that
\begin{align}\label{eq:uncoupledparam}
h_2&= A e^{\im\phi},      \\
h_3&= B e^{\im\phi},      \n    \\
h_4&= C e^{\im\phi},    \n
\end{align}
where $A,B,C,\phi$ are real valued. 
Let $\xi=\omega^1+\im\omega^2$ be the real and imaginary parts.
Then the structure equation un-couples in this parametrization as follows.
\begin{align}\label{eq:uncoupledstrt}
\ed A&=B\omega^1,   \\
\ed B&=(C-A^3+\gamma^2 A)\omega^1, \n \\
\ed C&=\left(\frac{2C^2}{B}+\frac{(-B^2+2A^4-2\gamma^2A^2)C}{BA}+(5\gamma^2 B-7BA^2)\right)\omega^1, \n \\
\ed\phi&=\frac{(-2CA+3B^2-2A^4+2\gamma^2A^2)}{BA}\omega^2, \n \\
\rho&=\frac{(CA-B^2+A^4-\gamma^2A^2)}{BA}\omega^2. \n
\end{align}

From the structure equation $\ed\omega^1=-\rho\w\omega^2=0$,  
and one may write locally $\omega^1=\ed s$ for a parameter $s$.
Assuming $\frac{(-2CA+3B^2-2A^4+2\gamma^2A^2)}{BA}\ne 0$, one may take $(s,\phi)$ as a local coordinate and write $\omega^2 =q(s) \ed\phi$, where $q(s)=\frac{BA}{(-2CA+3B^2-2A^4+2\gamma^2A^2)}.$
It follows that the metric of the  CMC surface is written in this coordinate in warped product form
$$\xi\xib=(\omega^1)^2+(\omega^2)^2=(\ed s)^2+q(s)^2(\ed\phi)^2.
$$

This in particular implies that the metric admits a Killing field. To see this,
let $\{\,e_1, e_2\,\}$ be the dual frame of $\{ \, \omega^1, \omega^2\,\}$.
Let $f$ be a function which satisfies
$$\ed f + f \frac{(CA-B^2+A^4-\gamma^2 A^2)}{BA}\omega^1=0.$$
Then it is easily checked that the vector field $f e_2$ is a Killing field of $\xi\xib.$
Note that the leaves of the $e_1$-foliation are geodesics.

\two
In order to see how this surface may look like, consider for a concrete example the case of a minimal surface  in the 3-sphere, $\delta=0, \epsilon=1$.
Let $\tn{v}=(\tn{v}_0,\tn{v}_1, \tn{v}_2, \tn{v}_3)$ be the adapted $\SO(4)$-frame along the minimal surface
such that $\tn{v}_0$ describes the surface, $\tn{v}_3$ is normal to the surface, and $\{ \tn{v}_1, \tn{v}_2\}$ are tangent to the surface that correspond to the frame $\{ e_1,e_2 \}$. A computation shows that they satisfy the following structure equation.
$$
\ed\tn{v}\equiv\tn{v}
\bp
\cdot &-1&\cdot&\cdot\\
1&\cdot&-A\cos(\phi)&\cdot\\
\cdot &A\cos(\phi)&\cdot&-A\sin(\phi)\\
\cdot &\cdot&A\sin(\phi)&\cdot
\ep\omega^1 \mod \omega^2.
$$
As the parameter $\phi$ varies from $0$ to $\pi$, the corresponding leaves of the geodesic $e_1$-foliation oscillate between a planar curve lying in a totally geodesic $\s{2}\subset\s{3}$ 
and a great circle. The leaves in-between these two extremes are generalized helices.

\subsection{Examples}
\subsubsection{Cohomogeneity-$1$ surfaces} 
Consider the degenerate case $\ed\phi=0$ and the phase is constant.
It is known that this is equivalent to that the CMC surface is invariant under a Killing field of the ambient space form. The cohomogeneity-$1$ CMC surfaces are well understood, \cite{Brendle2013}.

\subsubsection{Torus} 
We claim that if a compact torus belongs to this class, then it is necessarily of cohomogeneity-$1$.

Choose a uniformization $z$ of the given torus so that the Hopf differential is
$$h_2\xi^2=A e^{\im\phi}(\omega^1+\im\omega^2)^2=(\ed z)^2.
$$
When restricted to an $e_1$-curve, this becomes
$$ A e^{\im\phi}(\omega^1)^2=(\ed z)^2.$$
Since $\ed\phi\equiv 0\mod\omega^2$, the phase $e^{\im\phi}$ is constant along the $e_1$-curve.
Hence the locus of each $e_1$-curve on the torus is a straight line with respect to the uniformization $z$.

Suppose $\ed\phi\not\equiv0$. Then by continuity there exists a $e_1$-curve which is dense on the torus. But since $e^{\im\phi}$ is constant on the curve, this implies that $e^{\im\phi}$ is constant everywhere, a contradiction.

\two
Since the metric on the surface must admit a Killing field, it is unlikely that
a compact high genus CMC surface belongs to this class either.

\subsubsection{Smyth surfaces}\label{sec:Smyth}
Brian Smyth discovered a one parameter family of complete CMC-1 planes in $\E^3$ with  radial metric
and with an umbilic of arbitrary degree at the origin (center), \cite{Smyth1993}\cite{Timmreck1994}.
They are also known as Mr. and Mrs. Bubbles, \cite{Geometrie}.
We refer the reader to \cite{Reis2003} for the analytic details on Smyth surfaces.

Since the metric is invariant under a one parameter family of symmetry,
the curvature must satisfy the functional relation
$$\ed R \w \ed(\vert \nabla  R \vert )=0.$$
It is easily checked that this is equivalent to \eqref{eq:dh23},
and a Smyth surface has the flat structure 3-web.

\np
\part{Periods}\label{part:period}
\section{Fundamental divisor}\label{sec:divisor}
We now turn our attention to the geometry of the periods of conservation laws for compact high genus CMC surfaces.
In this section we start by recording a relevant parity property of the zero divisor of the square root of Hopf differential.
This is an elementary observation. We present the details here to fix notation and for future reference.

\two
Let $\Sigma$ be a compact CMC surface of genus $g\geq 2$. Let
$$\pi:\Sigmah\to\Sigma$$
be the double cover defined by the square root $\omega=\sqrt{\ff}\in H^0(\Sigmah,\hat{K})$. 
Let $\hat{g}$ denote the genus of $\Sigmah.$ 
 Recall that  $\pi$ is branched over the odd degree zeros of the Hopf differential.
\begin{defn}
Let $\Sigma$ be a compact CMC surface of genus $\geq 2$. 
Let $\pi:\Sigmah\to\Sigma$ be the double cover defined by $\omega=\sqrt{\ff}.$ 
The \tb{fundamental divisor} of $\Sigmah$ is the divisor (of zeros)
$$\hat{\mcu}:=(\omega)_0\in \Sigmah^{(2\hat{g}-2)}.$$
Note by definition the umbilic divisor $ \mcu=(\ff)_0 = \pi_*(\hat{\mcu}).$
\end{defn}
\one
We claim that 
\begin{lem}
Without counting multiplicities, the fundamental divisor $\hat{\mcu}$ consists of even number of points.
\end{lem}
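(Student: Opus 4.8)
The plan is to reduce the parity of the number of points in $\operatorname{supp}(\hat{\mcu})$ to a parity statement about the zeros of the Hopf differential $\ff$ on $\Sigma$ itself, and then to settle that statement using only the total degree $\deg \mcu = 4g-4$. Write the umbilic divisor as $\mcu = \sum_{p} m(p)\, p$, where $m(p)\ge 1$ is the umbilic degree. The zeros of $\ff$ split into two classes according to the parity of $m(p)$: the \emph{odd} zeros, over which the double cover $\pi:\Sigmah\to\Sigma$ is branched, and the \emph{even} zeros, over which $\pi$ is unramified. The first thing to establish is a dictionary between these two classes and the support of $\hat{\mcu}$.

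The key local computation is the order of vanishing of $\omega=\sqrt{\ff}$ at the points of $\Sigmah$ lying over a given $p$. If $m(p)=2\ell+1$ is odd, choose a local coordinate $z$ on $\Sigma$ with $\ff = z^{2\ell+1}g(z)(\ed z)^2$, $g(0)\ne 0$, and the adapted coordinate $w$ with $z=w^2$ on $\Sigmah$; then $\omega = 2\,w^{2\ell+2}\sqrt{g(w^2)}\,\ed w$, so the single branch point $\hat p$ over $p$ lies in $\operatorname{supp}(\hat{\mcu})$ (with even multiplicity $2\ell+2>0$). If $m(p)=2\ell$ is even with $\ell\ge 1$, then $\pi$ is a local biholomorphism near each of the two preimages $\hat p_1,\hat p_2$, and $\omega = z^{\ell}\sqrt{g}\,\ed z$ vanishes to order $\ell\ge1$ at each; hence both preimages lie in $\operatorname{supp}(\hat{\mcu})$. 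Consequently
\[
\#\operatorname{supp}(\hat{\mcu}) = \#\{\text{odd zeros of }\ff\} + 2\,\#\{\text{even zeros of }\ff\},
\]
so its parity is exactly the parity of the number of odd zeros of $\ff$.

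It remains to show that the number of odd zeros of $\ff$ is even. Since $\ff\in H^0(\Sigma,K^2)$ the total degree is $\sum_p m(p)=\deg\mcu = \deg K^2 = 2(2g-2)=4g-4$, an even integer. The even zeros contribute an even amount to this sum, so the sum of the odd values $m(p)$ over the odd zeros is even; a sum of an even number of odd integers is even, while a sum of an odd number of odd integers is odd, so the number of odd zeros must be even. Combining with the displayed formula gives $\#\operatorname{supp}(\hat{\mcu}) \equiv 0 \pmod 2$, which is the claim.

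I expect no genuine obstacle here: the argument is an elementary parity bookkeeping once the local normal form for $\omega$ over branch and non-branch points is in hand. The only care required is in the local analysis — making sure each zero of $\ff$ of either type genuinely contributes to the \emph{support} (not merely the multiplicity) of $\hat{\mcu}$, i.e.\ that the relevant orders of vanishing are strictly positive — and in correctly identifying which zeros are branch points of $\pi$, which is precisely the content recorded after Definition~\ref{defn:doublecover}.
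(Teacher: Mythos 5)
Your proof is correct and follows essentially the same route as the paper: split the zeros of $\ff$ by parity of their degree, deduce from the evenness of $\deg\mcu = 4g-4$ that the number of odd-degree zeros $\ell^o$ is even, and observe that the support of $\hat{\mcu}$ has $2\ell^e+\ell^o$ points. The only cosmetic difference is that you verify the support count by local normal forms for $\omega$ over branch and non-branch points, where the paper does the equivalent bookkeeping by counting $\deg\hat{K}=2\hat{g}-2$ with multiplicities.
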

\begin{proof}
Counting the total number of zeros of the quadratic differential $\ff$ on $\Sigma$ with multiplicities, by Riemann-Roch we have
\be\label{eq:ffzeros}
4g-4=\Sigma_{i=1}^{\ell^e} d_i^e + \Sigma_{i=1}^{\ell^o} d_i^o.
\ee
Here 

\begin{center}

\qquad\qquad\;$\ell^e$: number of even degree zeros of $\ff$,

\qquad\qquad$\ell^o$: number of odd degree zeros of $\ff$. 
\end{center}

\one\noi
The corresponding degrees of zero are denoted by $d_i^e,\, d_i^o$ respectively. 
From the parity of the equation \eqref{eq:ffzeros}, we have

\begin{center} $\ell^o$ is even.\end{center}

\noi
By definition $\ell^o$ is the total branching number of the double cover $\Sigmah\to\Sigma$. 
By  Riemann-Hurwitz formula one consequently gets the genus formula
\be\label{eq:genushat}
\hat{g}=2g-1+\frac{\ell^o}{2}.
\ee

Counting the total number of zeros of the 1-form $\omega$ on $\Sigmah$ with multiplicities, by Riemann-Roch we have
\be\label{eq:omegazeros}
2\hat{g}-2=\Big(\Sigma_{i=1}^{\ell^e} \frac{d_i^e }{2}+\Sigma_{i=1}^{\ell^e} \frac{d_i^e }{2}\Big)
+ \Sigma_{i=1}^{\ell^o} (d_i^o+1).
\ee
Here the two identical sums $\Sigma_{i=1}^{\ell^e} \frac{d_i^e }{2}$ in the parenthesis represent the zeros (with multiplicities) on the two distinct copies of $\Sigmah$ over the even degree zero points of $\ff$. This yields that the total number of zero points of $\omega$ is
\begin{align}
\tn{$\sharp$ of zero points (without multiplicities) of $\omega$ on $\Sigmah$}:=r&=2\ell^e+\ell^o\n\\
&=\tn{even}.\n
\end{align}
\end{proof}

\section{Jacobi fields and conservation laws at umbilics}\label{sec:umbilics}
The coefficients of the sequence of higher-order conservation laws $\varphi_n$ are 
up to scale polynomials in the variables $z_j=h_2^{-\frac{j}{2}}h_j.$
As a result they are singular at the fundamental divisor.
We examine their singularities and give an interpretation as smooth sections 
of the twisted line bundle $\Omega^1(\Sigmah)\otimes(\hat{K})^{4n}.$
This  leads to the new concept of twisted characteristic cohomology.
Locally we establish a refined normal form for the conservation laws
adapted at the umbilics. This indicates that the residues are generally non-trivial
and detect certain higher-order quantitative invariants for CMC surfaces at the umbilics.

 
\subsection{Twisted conservation law}\label{sec:twistedcvlaw}
Recall the asymptotics Eq.~\eqref{eq:abcnormal} of the formal Killing coefficients:
\begin{align}
B^{2n}&:=\im h_2^{\frac{1}{2}} b^{2n}
 =   \frac{ (-1)^{n}}{2\gamma^{n-2}} \left[  z_{2n} + \,(\tn{lower order terms}) \,+t^{2n}_b z_3^{2n-2}\right], \label{eq:bcanormal} \\
C^{2n}&:=\im h_2^{-\frac{1}{2}} c^{2n}
 =  \frac{ (-1)^{n }}{2\gamma^{n-1}} \left[ z_{2n}+ \,(\tn{lower order terms}) \,+t^{2n}_c z_3^{2n-2}\right], \n\\
A^{2n+1}&:=\frac{a^{2n+1}}{2}
 =\frac{ (-1)^{n-1 }}{2\gamma^{n-1}} \left[ z_{2n+1} + (\tn{lower order terms}) \,+t^{2n+1}_a z_3^{2n-1}\right].\n
\end{align}
Here $\{\, B^{2n}, C^{2n}, A^{2n+1}\,\}$ are the coefficients of the enhanced prolongation from Sec.~\ref{sec:kfpro}.
Since $z_j=h_2^{-\frac{j}{2}}h_j$, they appear to be singular at the fundamental divisor.

On the other hand $\omega=h_2^{\frac{1}{2}} \xi$ is smooth.
Accordingly, when restricted to a CMC surface, we may consider them as smooth sections of the powers of canonical line bundle 
$\hat{K}\to\Sigmah$.
\begin{lem}\label{lem:BCAtwisted}
Consider the formal Killing coefficients twisted by powers of $\omega$,
$$\{\, B^{2n}\omega^{4n-4},\;C^{2n}\omega^{4n-4}, \; A^{2n+1}\omega^{4n-2} \,\}.$$
Then they are smooth sections of $\hat{K}^{4n-4}, \hat{K}^{4n-4}, \hat{K}^{4n-2}$ respectively. 
\end{lem}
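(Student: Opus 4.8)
The plan is to reduce everything to a single statement about the generators $z_j$ of the ring $\mcr$ and then use that the $\omega$-twist is multiplicative. By Theorem~\ref{thm:abcnormalform} (together with Eq.~\eqref{eq:bcanormal}), the quantities $A^{2n+1}$, $B^{2n}$, $C^{2n}$ are, after the scalings already built into their definitions, weighted homogeneous elements of $\mcr=\C[z_3,z_4,\ldots]$ of spectral weights $2n-1$, $2n-2$, and $2n-2$ respectively, with no $\zb_j$-terms. Since $z_j$ has spectral weight $j-2$, the twisting exponents $4n-2=2(2n-1)$ and $4n-4=2(2n-2)$ are in each case exactly twice the spectral weight. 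Hence it suffices to prove the uniform assertion: for every weighted homogeneous $P\in\mcr$ of spectral weight $w$, the product $P\,\omega^{2w}$ is a smooth section of $\hat{K}^{2w}$ on $\Sigmah$.

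The reduction to generators goes as follows. A weighted homogeneous polynomial of weight $w$ is a sum of monomials $z_{j_1}\cdots z_{j_k}$ with $\sum_i(j_i-2)=w$, and correspondingly $\omega^{2w}=\prod_i\omega^{2(j_i-2)}$, so that
\[
(z_{j_1}\cdots z_{j_k})\,\omega^{2w}=\prod_{i=1}^{k}\bigl(z_{j_i}\,\omega^{2(j_i-2)}\bigr).
\]
Therefore the whole statement follows once we show the single fact that $S_j:=z_j\,\omega^{2(j-2)}$ is a smooth section of $\hat{K}^{2(j-2)}$ on $\Sigmah$ for all $j\geq3$: a product of smooth sections is smooth, and a sum of smooth sections of the same bundle is smooth, so no cancellation among monomials is ever required.

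I would prove the smoothness of $S_j$ by induction on $j$, using the recursion $z_{j+1}=\del_{\omega}z_j+\tfrac{j}{2}z_3z_j$ from the proof of Lemma~\ref{lem:Tjh} and the fact that on a CMC surface $\omega$ is a genuine closed holomorphic $1$-form (so $\ed\omega=0$). Multiplying the recursion by $\omega^{2(j-1)}$, the zeroth-order term becomes $\tfrac{j}{2}\bigl(z_3\omega^2\bigr)\bigl(z_j\omega^{2(j-2)}\bigr)=\tfrac{j}{2}S_3S_j$, which is smooth by the inductive hypothesis; and a short local computation shows that $\omega^{2(j-1)}\del_{\omega}z_j$ equals, up to smooth factors, $\omega_0\,\del_t S_j-2(j-2)\,\omega_0'\,S_j$, where $\omega_0$ is the local holomorphic coefficient of $\omega$. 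This is smooth precisely because the pole raised by $\del_{\omega}$ is compensated by the twisting power. The base case $S_3=z_3\omega^2$ is where the local normal form of the Hopf differential and metric at an umbilic, Eqs.~\eqref{eq:localum},~\eqref{eq:Hopfum}, is used: there $z_3=h_2^{-\frac{3}{2}}h_3$ has a pole purely in the holomorphic coordinate of order one more than the vanishing order of $\omega$, so that $z_3\omega^2$ extends smoothly (indeed vanishingly) across the fundamental divisor.

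The main obstacle is this base case together with the precise bookkeeping of pole orders at umbilics. It is tempting to try to finish using only Lemma~\ref{lem:zj}, which already gives that $z_j\omega^{j}=\bs^*(h_j\xi^j)$ is smooth; but this is insufficient, since for a high power such as $z_3^{\,w}$ the bound it yields, roughly $3w\cdot\tn{ord}(\omega)$, far exceeds the bound $w\bigl(\tn{ord}(\omega)+1\bigr)$ that one actually needs, and only the multiplicative/inductive structure above reconciles the two. Thus the real content is verifying the base-case pole order from the umbilic normal form and checking that the inductive step preserves smoothness uniformly in $j$; everything else is formal.
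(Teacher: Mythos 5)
Your proof is correct, and it shares the paper's skeleton: the same base case (smoothness of $z_3\omega^2$, checked from the umbilic normal form Eqs.~\eqref{eq:localum}, \eqref{eq:Hopfum}, with even and odd degree umbilics treated separately) and the same multiplicative reduction of a weighted homogeneous polynomial to products of generators $z_{j}\,\omega^{2(j-2)}$. Where you genuinely diverge is in handling the generators with $j\geq 4$: the paper only verifies the pure power $z_3^{2n-2}\omega^{4n-4}=(z_3\omega^2)^{2n-2}$ and asserts that the other cases ``follow similarly,'' whereas your induction on $j$ via the structure-equation recursion $z_{j+1}=\del_{\omega}z_j+\tfrac{j}{2}z_3z_j$, together with the cancellation of $\omega_0^{-1}$ in $(\del_{\omega}z_j)\,\omega^{2(j-1)}=\omega_0\,\del_t s_j-2(j-2)\,\omega_0'\,s_j$, is a rigorous replacement for that unproved step, and it buys uniformity: a single local computation at $j=3$ suffices, with no further normal-form analysis of higher $h_j$'s. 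One remark, though: your dismissal of Lemma~\ref{lem:zj} is only half right. It is indeed useless for powers of $z_3$, but for every generator with $j\geq 4$ one has $2(j-2)\geq j$, so
\begin{equation}
z_j\,\omega^{2(j-2)}=\bigl(z_j\,\omega^{j}\bigr)\,\omega^{\,j-4}\notag
\end{equation}
is smooth directly by Lemma~\ref{lem:zj}; combined with your monomial factorization and the base case for the $z_3$ factors, this disposes of every monomial with no induction at all, and is presumably what the paper's ``follow similarly'' is gesturing at. So your recursion-based induction is valid but strictly more machinery than needed; its compensating advantage is that it establishes the $j\geq 4$ cases from the structure equations alone, independently of Lemma~\ref{lem:zj}, and in particular recovers the sharper pole bound (pole of $z_j$ equals $(j-2)$ times one more than the vanishing order of $\omega$) that neither the paper's sketch nor Lemma~\ref{lem:zj} makes explicit.
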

\begin{proof}
For the initial case $n=1$, we have
$ ( B^{2}\omega^{0},\;C^{2}\omega^{0}, \; A^{3}\omega^{2})
= (\gamma, -1, \frac{1}{2}h_3h_2^{-\frac{1}{2}}\xi^2)$.
We claim that $h_3h_2^{-\frac{1}{2}}\xi^2$ is smooth.
 
Consider first an even degree umbilic point of $\ff$ so that a section $\omega=h_2^{\frac{1}{2}}\xi$ of $K$ is locally defined (on $\Sigma$). 
Take a local coordinate $z$ such that $\xi=e^u\ed z, \eta_1=e^{-u} z^{m}\ed z$, and 
$\ff=\eta_1\circ\xi=z^{m}(\ed z)^2, \, m\geq 1.$
From the structure equation one finds
\begin{align*}
h_2&=e^{-2u} z^{m}, \\
h_3&\equiv m e^{-3u} z^{m-1}\mod z^{m}.
\end{align*}
Hence $h_3h_2^{-\frac{1}{2}}\equiv m e^{-2u} z^{\frac{m}{2}-1}$ modulo smooth terms.
This suffices for the case $m\geq 2$ is even. 

For an odd degree umbilic point, choose a local coordinate $w$ on $\Sigmah$ such that $z=w^2$ and
$\xi=2e^{u}w\ed w.$ Then 
$h_3h_2^{-\frac{1}{2}}\xi^2\equiv 4 m   w^m (\ed w)^2$ modulo smooth terms.

For the general case $n\geq2$, consider $B^{2n}\omega^{4n-4}$.
From the weighted homogeneity, one only needs to check the lowest order term
$z_3^{2n-2} \omega^{4n-4}=(h_3h_2^{-\frac{1}{2}}\xi^2)^{2n-2}.$
The other cases follow similarly.
\end{proof}
\begin{cor}\label{cor:varphintwisted}
Let $\Sigma\hook \xinf$ be a CMC surface.
Let $\Sigmah\to\xinfh$ be its double cover.
Let $\varphi_n=c^{2n+2}\xi+b^{2n}\xib, n=0,1, \, ... \,,$ be the sequence of 1-forms representing the higher-order conservation laws.
Then $\varphi_n\otimes \omega^{4n}$ is a smooth section of the twisted line bundle
$$\varphi_n \otimes \omega^{4n} \in H^0(\Omega^1(\Sigmah)\otimes (\hat{K})^{4n}).$$
\end{cor}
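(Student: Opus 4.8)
The plan is to derive the twisted smoothness of $\varphi_n\otimes\omega^{4n}$ by writing it as a sum of two terms, $c^{2n+2}\xi\otimes\omega^{4n}$ and $b^{2n}\xib\otimes\omega^{4n}$, and showing each is a smooth section of $\Omega^1(\Sigmah)\otimes(\hat K)^{4n}$ separately. First I would recall from Remark~\ref{rem:BCAbca} and the notation of Lemma~\ref{lem:BCAtwisted} that $C^{2n+2}=\im h_2^{-\frac12}c^{2n+2}$ and $B^{2n}=\im h_2^{\frac12}b^{2n}$, so that
\begin{align}
c^{2n+2}\xi &= -\im\, C^{2n+2}\,(h_2^{\frac12}\xi)=-\im\, C^{2n+2}\omega,\n\\
b^{2n}\xib &= -\im\, B^{2n}\,(h_2^{-\frac12}\xib)=-\im\, B^{2n}\,\hb_2^{-1}\omb.\n
\end{align}
The first rewriting is immediate since $\omega=h_2^{\frac12}\xi$; the second uses $\omb=\hb_2^{\frac12}\xib$, hence $\xib=\hb_2^{-\frac12}\omb$ and $h_2^{\frac12}\xib=h_2^{\frac12}\hb_2^{-\frac12}\omb$, so on the locus $h_2=\hb_2$ (the reality normalization used throughout, e.g.\ in Lemma~\ref{lem:zj} and Sec.~\ref{sec:recursionfinitetype}) we simply get $b^{2n}\xib=-\im B^{2n}\hb_2^{-1}\omb$, with $B^{2n}\hb_2^{-1}$ again a polynomial in the balanced coordinates times a smooth factor. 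The upshot is that $\varphi_n$ is, up to smooth nonvanishing scalars, built from $C^{2n+2}\omega$ and $B^{2n}\omb$.

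The key step is then the bookkeeping of powers of $\omega$. Twisting by $\omega^{4n}$ gives
\begin{align}
c^{2n+2}\xi\otimes\omega^{4n}&=-\im\,\bigl(C^{2n+2}\omega^{4n}\bigr)\otimes\omega,\n\\
b^{2n}\xib\otimes\omega^{4n}&=-\im\,\hb_2^{-1}\bigl(B^{2n}\omega^{4n-4}\bigr)\otimes\omega^{4}\,\omb\,\omega^{-4n+4n}.\n
\end{align}
Here I would invoke Lemma~\ref{lem:BCAtwisted} directly: with the index shift $n\mapsto n+1$ in that lemma, $C^{2n+2}\omega^{4(n+1)-4}=C^{2n+2}\omega^{4n}$ is a smooth section of $\hat K^{4n}$, so $C^{2n+2}\omega^{4n}\otimes\omega$ is a smooth section of $\hat K^{4n}\otimes\hat K=\hat K^{4n+1}\subset\Omega^1(\Sigmah)\otimes\hat K^{4n}$, which is precisely the desired twisted $1$-form. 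For the $\omb$-term, Lemma~\ref{lem:BCAtwisted} (at index $n$) gives that $B^{2n}\omega^{4n-4}$ is a smooth section of $\hat K^{4n-4}$; multiplying by the smooth section $\omega^4$ and by the smooth factor $\hb_2^{-1}$ (which is smooth and nonvanishing away from umbilics, and whose behaviour at umbilics is absorbed by the already-established smoothness of the twisted product) shows $\hb_2^{-1}B^{2n}\omega^{4n}$ is smooth, and tensoring with $\omb\in H^0(\Omega^{0,1}(\Sigmah))$ lands in $\Omega^1(\Sigmah)\otimes\hat K^{4n}$.

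The main obstacle I anticipate is handling the $\omb$ (antiholomorphic) direction cleanly at the fundamental divisor: $\omega$ is holomorphic, but the second summand involves $\hb_2$ and $\omb$, and one must check that the apparent singularity from $\hb_2^{-1}$ is genuinely cancelled rather than merely relocated. The safe route is to run the local-coordinate computation of Lemma~\ref{lem:BCAtwisted} for the $B$-coefficient in both the even-degree chart ($\xi=e^u\ed z$, $z=w^0$) and the odd-degree chart ($z=w^2$, $\xi=2e^u w\,\ed w$), tracking the powers of $w$ in $B^{2n}\omega^{4n-4}\,\omega^4\,\omb$, and verifying the total power of $w$ (resp.\ $\bar w$) is nonnegative so that the section extends smoothly across $\hat{\mcu}$. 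Since Lemma~\ref{lem:BCAtwisted} already carries out exactly this power-counting for $B^{2n}\omega^{4n-4}$ and $A^{2n+1}\omega^{4n-2}$, the remaining work is the routine index shift together with multiplication by the smooth sections $\omega^4$ and $\omb$; I would present it as a short corollary of that lemma rather than re-deriving the umbilic asymptotics from scratch.
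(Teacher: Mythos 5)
Your decomposition into the $\xi$- and $\xib$-summands, and your treatment of the first one, are correct and match the paper's intended argument: $c^{2n+2}\xi\otimes\omega^{4n}=-\im\,(C^{2n+2}\omega^{4n})\otimes\omega$ is smooth by Lemma~\ref{lem:BCAtwisted} applied at index $n+1$. The gap is in the $\xib$-term, and it is genuine. The identity $h_2^{-\frac12}\xib=\hb_2^{-1}\omb$ that you invoke holds only in the gauge $h_2=\hb_2$; that gauge is an $\SO(2)$-frame choice which is not smooth at the umbilics, and the expression $\hb_2^{-1}(B^{2n}\omega^{4n-4})\otimes\omega^{4}\otimes\omb$ is not $\SO(2)$-invariant (it carries the nonzero weight of $\hb_2^{-1}$, since $B^{2n}$, $\omega$, $\omb$ are invariant), so it is not a well-defined section of any bundle over $\Sigmah$ and cannot be power-counted in a holomorphic chart. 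Concretely, in the chart of Lemma~\ref{lem:BCAtwisted} with $\xi=e^u\ed z$, $h_2=e^{-2u}z^p$, one has $\omb=\zb^{p/2}\ed\zb$ and hence $\hb_2^{-1}\omb=e^{2u}\zb^{-p/2}\ed\zb$: a genuine pole in $\zb$ which no other factor cancels, since $B^{2n}\omega^{4n-4}$ and $\omega^4$ are $\zb$-regular. So your proposed "safe route" fails if the $\hb_2^{-1}$ is kept, while your last paragraph, which counts powers in $B^{2n}\omega^{4n-4}\,\omega^4\,\omb$, silently drops it and therefore counts the wrong object. The sentence that the singularity of $\hb_2^{-1}$ is "absorbed by the already-established smoothness of the twisted product" is circular: that product is exactly what is to be proved smooth.

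The repair is short and stays frame-invariant. Keep your first (correct) equality $b^{2n}\xib=-\im\,B^{2n}h_2^{-\frac12}\xib$, equivalently $-\im\,(h_2\hb_2)^{-\frac12}B^{2n}\omb$, and combine the singular factor with $\omega^4$ rather than with $\omb$:
\begin{equation*}
h_2^{-\frac12}\xib\otimes\omega^4=h_2^{\frac32}\,\xi^4\,\xib=\omega^3\otimes(\xi\circ\xib),
\end{equation*}
so that
\begin{equation*}
b^{2n}\xib\otimes\omega^{4n}=-\im\,(B^{2n}\omega^{4n-4})\otimes\omega^3\otimes(\xi\circ\xib).
\end{equation*}
Every factor on the right is manifestly smooth: $B^{2n}\omega^{4n-4}$ by Lemma~\ref{lem:BCAtwisted} at index $n$, $\omega^3$ because $\omega$ is holomorphic on $\Sigmah$, and $\xi\circ\xib$ because it is the induced metric, a smooth section of $\hat{K}\otimes\ol{\hat{K}}$. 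This places the $\xib$-term in $\ol{\hat{K}}\otimes\hat{K}^{4n}\subset\Omega^1(\Sigmah)\otimes\hat{K}^{4n}$ (in the chart above its local form is $e^{2u}z^{3p/2}(\ed z)^4\ed\zb$ times a smooth factor, with only nonnegative powers), and together with your treatment of the $\xi$-term it completes the corollary.
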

This shows that from the global perspective  
it is more natural to think of the higher-order conservation laws 
as smooth sections of the twisted line bundle $\Omega^1(\Sigmah)\otimes (\hat{K})^{4n}$.
It is only if we want to think of them globally as untwisted 1-forms 
that we must allow singularities, similarly as for the homomorphic 1-forms on the projective space $\C\PP^N$.
The singularities are just an implication of topology on global analytic matters. 
This also suggests to introduce a general notion of twisted characteristic cohomology.

\subsection{Residue}\label{sec:residue}
The residues of the 1-forms $\varphi_n$ which are singular at the umbilics
can be used in turn to detect and give invariants for  the umbilics.
In this section we give a refined normal form for the conservation laws
at the umbilics  toward understanding their residues.

\two
We follow the local formulation of CMC surfaces adapted to an umbilic point from Sec.~\ref{sec:localextension},  Eqs.~\eqref{eq:strtum}, \eqref{eq:PDEfum}.
Given a CMC surface $\Sigma$, choose a local coordinate $z$ centered at an umbilic point such that
\be\label{eq:localum}
\xi=e^{u}\ed z, \;\eta_1=e^{-u} z^p \ed z
\ee
for a real valued function $u$ and a positive integer $p$. 
The Hopf differential is written in this coordinate as
\be\label{eq:Hopfum}
\ff=\eta_1\circ\xi=z^p (\ed z)^2.
\ee
Introduce a square root $w$  such that $z=w^2$.
At an even degree umbilic ($p$ even, no branching),
the computation of residues in $w$-coordinate would only double the values 
and hence does not affect the vanishing/non-vanishing argument.

We claim that in terms of $w$ the formal Killing field coefficients have the following normal form.
Here the notation $\mco(w^{\ell})$ means a polynomial in $w$ of degree $\leq \ell$
with coefficients in the ring of functions in  $u$ and its successive $z$-derivatives.
\begin{lem}\label{lem:formalKillingum}
With respect to the local formulation of CMC surfaces in Eqs.~\eqref{eq:localum},\eqref{eq:Hopfum}
adapted to an umbilic point, 
the formal Killing field coefficients have the following normal form.
\begin{align}\label{eq:abcinw}
b^{2n}&=\frac{1}{w^{(2n-1)p+(4n-4)}}\mco(w^{4n-4}), \\
c^{2n}&=\frac{1}{w^{(2n-3)p+(4n-4)}}\mco(w^{4n-4}), \n\\
a^{2n+1}&=\frac{1}{w^{(2n-1)p+(4n-2)}}\mco(w^{4n-2}), \qquad  n\geq 1. \n
\end{align}
Here $z=w^2$.
\end{lem}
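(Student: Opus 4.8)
The plan is to reduce everything to the single structural fact that the rescaled functions $a^{2n+1}$, $h_2^{\frac{1}{2}}b^{2n}$, $h_2^{-\frac{1}{2}}c^{2n}$ are weighted homogeneous polynomials in the generators $z_j=h_2^{-\frac{j}{2}}h_j$ (Thm.~\ref{thm:abcnormalform}), together with a local normal form for the $z_j$ themselves at the umbilic. First I would fix the gauge of Eqs.~\eqref{eq:localum},~\eqref{eq:Hopfum}. Since $\eta_1=h_2\xi$, one reads off $h_2=e^{-2u}z^p$, hence $h_2^{\pm\frac{1}{2}}=e^{\mp u}z^{\pm p/2}=e^{\mp u}w^{\pm p}$ and $\omega=h_2^{\frac{1}{2}}\xi=w^p\,\ed z=2w^{p+1}\ed w$ (a branch being chosen on $\Sigmah$; the cover $z=w^2$ is genuine when $p$ is odd and auxiliary when $p$ is even, and the local computation is identical in both cases). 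A short computation with $\ed\xi=\im\rho\w\xi$ gives $\rho=\im(u_z\,\ed z-u_{\bar z}\,\ed\bar z)$, so that the weighted $\xi$-derivative in Eq.~\eqref{eq:dhj} acts on a weight-$m$ function by $f\mapsto e^{-u}(f_z-m f u_z)$.

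The heart of the argument is a local normal form for $z_j$. Setting $\tilde h_j:=e^{ju}h_j$, the recursion \eqref{eq:dhj} collapses to $\tilde h_{j+1}=\partial_z\tilde h_j-2j u_z\tilde h_j$, with $\tilde h_2=z^p$. I would then prove by induction that $\tilde h_j=z^{p-j+2}Q_j(z)$, where $Q_j$ is a polynomial in $z$ of degree $\le j-2$ whose coefficients lie in the ring generated by $u$ and its successive $z$-derivatives: factoring $z^{p-j+2}$ out of the recursion yields $Q_{j+1}=(p-j+2)Q_j+z(Q_j'-2j u_z Q_j)$, which manifestly raises the degree by at most one and stays in the coefficient ring (the exponent $p-j+2$ may be negative, but the factorization and the polynomiality of $Q_j$ persist regardless). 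Consequently $z_j=h_2^{-\frac{j}{2}}h_j=z^{-(j-2)(p+2)/2}Q_j(z)$, and after the substitution $z=w^2$ this reads
\[
z_j=\frac{1}{w^{(j-2)(p+2)}}\,\mco\!\left(w^{2(j-2)}\right).
\]
The decisive point is that both the pole order $(j-2)(p+2)$ and the $w$-degree bound $2(j-2)$ are \emph{linear} in the spectral weight $j-2$ of $z_j$.

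By this linearity, any weighted homogeneous monomial $z_{j_1}\cdots z_{j_k}$ of total spectral weight $w=\sum(j_i-2)$ has pole order exactly $(p+2)w$ and regular part of $w$-degree $\le 2w$; hence every weighted homogeneous element of $\mcr$ of weight $w$ has the form $w^{-(p+2)w}\mco(w^{2w})$. Applying this to the three families of Thm.~\ref{thm:abcnormalform}: $a^{2n+1}$ has weight $2n-1$, giving pole order $(p+2)(2n-1)=(2n-1)p+(4n-2)$ and regular degree $\le 4n-2$, which is the stated form. For the other two, $h_2^{\frac{1}{2}}b^{2n}$ and $h_2^{-\frac{1}{2}}c^{2n}$ have weight $2n-2$ (Eq.~\eqref{eq:bcanormal}), so each equals $w^{-(2n-2)(p+2)}\mco(w^{4n-4})$; multiplying back by $h_2^{-\frac{1}{2}}=e^{u}w^{-p}$ and $h_2^{\frac{1}{2}}=e^{-u}w^{p}$ respectively (the exponentials absorbed into the coefficient ring) shifts the pole orders to $(2n-2)(p+2)+p=(2n-1)p+(4n-4)$ and $(2n-2)(p+2)-p=(2n-3)p+(4n-4)$, exactly the exponents in the statement.

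The main obstacle is the inductive normal form for $z_j$, and within it the bookkeeping that the factor $z^{p-j+2}$ survives even when its exponent turns negative (the umbilic produces a genuine pole of $h_j$ for $j>p+2$); here it is essential that $\mco(w^{\ell})$ is an upper bound allowing a vanishing constant term, so the conclusion is insensitive to the accidental drop in pole order that occurs at $j=p+2$. Everything else is a weight count, and the only genuine input beyond the local computation is the polynomiality of the Killing coefficients in $\mcr$, which I would cite from Thm.~\ref{thm:abcnormalform} rather than reprove.
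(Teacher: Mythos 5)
Your proposal is correct, but it takes a genuinely different route from the paper. The paper proves the lemma by running the induction directly through the differential-algebraic recursion of Sec.~\ref{sec:inductiveformula}: assuming the normal form \eqref{eq:abcinw} up to level $n$, it shows that the auxiliary quantities $\hat{\tn{a}}_{ij}$, $\hat{\tn{s}}_n$, $\hat{\tn{m}}_n$ acquire the corresponding normal form $w^{-2np-4n}\mco(w^{4n})$, and then substitutes into Eqs.~\eqref{eq:bc2n+2}, \eqref{eq:a2n+3} to advance the induction. You instead factor the problem through Thm.~\ref{thm:abcnormalform}: you take as given that $a^{2n+1}$, $h_2^{\frac{1}{2}}b^{2n}$, $h_2^{-\frac{1}{2}}c^{2n}$ are weighted homogeneous constant-coefficient polynomials in $\mcr$, prove a self-contained local normal form $z_j=w^{-(j-2)(p+2)}\mco(w^{2(j-2)})$ for the generators via the prolongation recursion $\tilde h_{j+1}=\partial_z\tilde h_j-2ju_z\tilde h_j$ (your computation of $\rho$, the gauge, and the induction on $Q_j$ all check out, including the exponents: $(2n-1)(p+2)=(2n-1)p+(4n-2)$ and the $\mp p$ shifts from $h_2^{\mp\frac{1}{2}}=e^{\pm u}w^{\mp p}$), and then conclude by the linearity of pole order and degree in the spectral weight. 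What your route buys is transparency and reusability: the exponents in \eqref{eq:abcinw} are explained once and for all as $(p+2)\times(\tn{spectral weight})$, and the same argument applies verbatim to any weighted homogeneous element of $\mcr$ (Jacobi fields, the $\hat T_j$ of Lem.~\ref{lem:Tjh}, the conservation laws $\varphi_n$ of Cor.~\ref{cor:varphininw}), whereas the paper's induction re-verifies the pole bookkeeping inside the Pinkall--Sterling machinery but needs nothing beyond the recursion itself. Two minor points, neither a gap: the base cases $b^2=-\im\gamma h_2^{-\frac{1}{2}}$, $c^2=\im h_2^{\frac{1}{2}}$ (weight $0$) should be noted since Thm.~\ref{thm:abcnormalform} is stated for the coefficients with index $\geq 3$; and your absorption of $e^{\pm u}$ into the coefficient ring of $\mco(w^{\ell})$ matches the paper's own (loose) usage, as its explicit formula for $\varphi_1$ carries an $e^{2u}$ factor.
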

\begin{proof}
Recall the notations and the inductive formulae for the Killing coefficients from 
Sec.~\ref{sec:inductiveformula}.
In terms of the normal form \eqref{eq:abcinw} a short computation shows that 
we would have
$$\hat{\tn{a}}_{ij}=\frac{1}{w^{2(i+j)p+4(i+j)}}\mco(w^{4(i+j)}).
$$
Hence assuming the claim \eqref{eq:abcinw} is true up to $n$,  we have
$\hat{\tn{s}}_n=\frac{1}{w^{2np+4n}}\mco(w^{4n})$. 
It follows that
$$\hat{\tn{m}}_{n}=\frac{1}{w^{2np+4n}}\mco(w^{4n}).
$$
Note $h_2=e^{-2u} z^p=e^{-2u} w^{2p}.$
Substituting these equations to Eqs.~\eqref{eq:bc2n+2}, \eqref{eq:a2n+3}, 
the claim is verified by induction. We omit the details.
\end{proof}
\begin{rem}\label{rem:umbilicandFT}
One finds in the normal form above that up to nonzero scale
the numerators $\mco(w^{4n-4}),\mco(w^{4n-2})\mod w$ are constants depending only on $p$.
Examining the explicit normal form for the Jacobi field $a^{2n+1}$ for the first few terms $n=1, 2, 3, \, ... $, 
it is easily checked (and can be proved) that 
as $w\to 0$ this limit of the numerator $\mco(w^{4n-2})$ for $a^{2n+1}$ 
is nonzero for any positive value of $p$
(it may become zero for certain negative integer values of $p$).
This shows that a CMC surface with an umbilic point cannot be of linear finite type.
\end{rem}
Note that $\xi \sim w\ed w, \xib \sim \ol{w}\ed\ol{w}$ up to nonzero scale.
From Lem.~\ref{lem:formalKillingum}, the corresponding refined normal form for conservation laws follow.
\begin{cor}\label{cor:varphininw}
With respect to the local formulation of CMC surfaces in Eqs.~\eqref{eq:localum},\eqref{eq:Hopfum}
adapted to an umbilic point, 
the higher-order conservation laws have the following normal form.
Here $z=w^2$.
\be\label{eq:varphininw}
\varphi_n=\frac{1}{w^{(2n-1)p+(4n-1)}}\mco(w^{4n}) \ed w 
+\frac{1}{w^{(2n-1)p+(4n-4)}}\mco(w^{4n-4})  \ol{w}\ed \ol{w},\quad n\geq 0.
\ee
\end{cor}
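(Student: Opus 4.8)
The plan is to obtain the normal form by direct substitution: insert the asymptotics of the formal Killing field coefficients from Lemma~\ref{lem:formalKillingum} into the representative $\varphi_n = c^{2n+2}\xi + b^{2n}\xib$ of Eq.~\eqref{eq:varphin}, and then rewrite the tautological forms $\xi,\xib$ in the square-root coordinate $w$ with $z=w^2$.

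First I would record that in the coordinate adapted to the umbilic, Eqs.~\eqref{eq:localum} give $\xi = e^u\,\ed z$, so that $\ed z = 2w\,\ed w$ and
\[
\xi = 2e^u\,w\,\ed w, \qquad \xib = 2e^u\,\ol{w}\,\ed\ol{w}.
\]
Because $e^u$ and its successive $z$-derivatives are exactly the smooth coefficients allowed in the ring underlying the notation $\mco(w^\ell)$, the prefactor $2e^u$ may be absorbed into the $\mco$-term without affecting the degree count. Thus multiplication by $\xi$ contributes one extra power of $w$ together with the differential $\ed w$, while multiplication by $\xib$ contributes one power of $\ol{w}$ together with $\ol{w}\,\ed\ol{w}$, matching the two differential factors in the statement.

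Next I would feed in Lemma~\ref{lem:formalKillingum}. The coefficient $b^{2n}$ is given directly, and $c^{2n+2}$ is obtained by the index shift $n\mapsto n+1$ in the stated formula for $c^{2n}$, yielding
\[
b^{2n} = \frac{1}{w^{(2n-1)p+(4n-4)}}\mco(w^{4n-4}), \qquad c^{2n+2} = \frac{1}{w^{(2n-1)p+4n}}\mco(w^{4n}).
\]
Multiplying $c^{2n+2}$ by $\xi$ lowers the $w$-denominator exponent by one, from $(2n-1)p+4n$ to $(2n-1)p+(4n-1)$, and leaves a numerator in $\mco(w^{4n})$, reproducing the first summand of Eq.~\eqref{eq:varphininw}; multiplying $b^{2n}$ by $\xib$ leaves the denominator exponent at $(2n-1)p+(4n-4)$ with numerator in $\mco(w^{4n-4})$ and the factor $\ol{w}\,\ed\ol{w}$ pulled out, reproducing the second summand.

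Once Lemma~\ref{lem:formalKillingum} is available the computation is pure bookkeeping, so there is no genuine analytic obstacle at this stage; the only points requiring care are the index shift $c^{2n}\mapsto c^{2n+2}$ and the verification that absorbing the smooth prefactor $2e^u$ into $\mco$ disturbs neither the degree of the numerator nor the exponent of the denominator. The substantive content — that the numerators are honest polynomials in $w$ of the asserted degrees — is entirely contained in Lemma~\ref{lem:formalKillingum}, which in turn rests on the inductive formulae of Section~\ref{sec:inductiveformula}; I would therefore not re-examine it at the level of the corollary.
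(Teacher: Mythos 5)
Your proposal is correct and is essentially the paper's own argument: the paper derives Cor.~\ref{cor:varphininw} precisely by noting that $\xi \sim w\,\ed w$, $\xib \sim \ol{w}\,\ed\ol{w}$ up to nonzero (smooth) scale and substituting the asymptotics of Lemma~\ref{lem:formalKillingum} into $\varphi_n = c^{2n+2}\xi + b^{2n}\xib$, exactly as you do. Your explicit index shift $c^{2n}\mapsto c^{2n+2}$ and the absorption of $2e^u$ into the $\mco$-coefficient ring are just the bookkeeping the paper leaves implicit, and both are consistent with the paper's own usage (e.g.\ its displayed formula for $\varphi_1$, whose $\ol{w}\,\ed\ol{w}$ coefficient contains $e^{2u}$).
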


Recall $$\omega=h_2^{\frac{1}{2}}\xi=2w^{p+1}\ed w.$$
It is  clear from this that $\varphi_n \otimes\omega^{4n}$ is a smooth section of 
$\Omega^1(\Sigmah)\otimes (\hat{K})^{4n}.$
 
\two
Since the curvature of the induced metric is $R=\gamma^2-h_2\hb_2=\gamma^2-e^{-4u}\vert z \vert^{2p}$, 
the coefficients of  the polynomials $\mco(w^{4n}), \mco(w^{4n-4})$ in the normal form for $\varphi_n$ 
are elements in the ring of functions
generated by the curvature $R$ and its successive $z$-derivatives.
The residues for the conservation laws 
in this way capture  certain higher-order numerical invariants of the curvature at the umbilics.
 
\two
We check the  case $\varphi_1$ for an example.
By a direct computation one finds
$$\varphi_1=\frac{\im}{w^p}\left[ 
\frac{\left(3 {p}^{2}+4\,p -16pu_{{1}} w^{2} 
+16(u_{2}+u_{1}^{2})w^{4} \right)}{4\gamma w^3} \ed w
- 2\gamma e^{2u}\ol{w}\ed\ol{w}  \right]
$$
(here $u_1=u_z, u_2=u_{zz},$ etc. Recall $u$ is defined on $\Sigma$).
Consider the Taylor  series expansion
\begin{align*}
 u&=\mbox{$\sum_{i,j=0}^{\infty} c_{ij}z^i\zb^j$},\\
&=\mbox{$\sum_{i,j=0}^{\infty} c_{ij}w^{2i}\ol{w}^{2j}$}. 
\end{align*}
Then the residue of $\varphi_1$ at $w=0$ is expressed by an algebraic formula 
involving the coefficients $c_{ij}$'s.
Similar interpretation would hold for the higher-order conservation laws.

\two
As this observation may show, the residues of conservation laws are 
subtle invariants. We currently do not have any further information regarding their behavior , e.g., when they vanish.
\begin{exam}[Smyth surfaces from Sec.~\ref{sec:Smyth}]
We show that for the umbilics of Smyth surfaces
the residues of conservation laws are all trivial.

Recall the parametrization \eqref{eq:uncoupledparam} and the un-coupled structure equation \eqref{eq:uncoupledstrt}. A direct computation shows that
$$z_k=\{A,B,C\} e^{-\left(\frac{k-2}{2}\right)\im\phi},
$$
where $\{A,B,C\}$ is a generic notation for a function in the variables $A,B,C$  only (not involving $\phi$).
From this we have
\begin{align}
c^{2n+2}&=\{A,B,C\} e^{-\left(\frac{2n-1}{2}\right)\im\phi},\n\\
b^{2n}&=\{A,B,C\} e^{-\left(\frac{2n-1}{2}\right)\im\phi}.\n
\end{align}
Hence the conservation laws are of the form
\begin{align}
\varphi_0&\equiv e^{ \frac{1}{2} \im\phi} \{A,B,C\} \ed\phi, \n\\
\varphi_n&\equiv e^{-\left(\frac{2n-1}{2}\right)\im\phi}  \{A,B,C\}\ed\phi,\mod\omega^1.\n
\end{align}

Given a Smyth surface with an umbilic at the center, 
the intrinsic $\s{1}$ metric symmetry is in the direction of $(\omega^1)^{\perp}$.
It follows that the residue of a conservation law is obtained by setting $\omega^1=0$ and integrating with respect to $\ed\phi$. Note that $\ed A, \ed B, \ed C\equiv 0\mod \omega^1$ 
and they are constants on such locus of integration (an orbit of  $\s{1}$ isometry).

By definition $\varphi_0=\sqrt{\ff}$ has no residue. 
From the given normal form for $\varphi_0$ above, the interval of integration for $\phi$ then must be an integer multiple of $4\pi$. Equivalently, a Smyth surface with umbilic 
(or its double cover in case of odd degree umbilic) is locally a union of even number of sectors parametrized 
as in \eqref{eq:uncoupledparam} for $\phi\in[0, 2\pi].$
\end{exam}
  
We remark that  the conservation laws  may  have nontrivial residues at the poles of the Hopf differential.
They therefore have possible applications
to detect and distinguish the ends of complete CMC surfaces too.
 
\section{Abel-Jacobi maps}\label{sec:abel}
In this section we initiate the study of periods of conservation laws.
We introduce a natural rationality criterion for compact CMC surfaces 
in terms of an Abel-Jacobi map defined by the periods.

Assume that $\Sigma$ is a compact CMC surface of genus $g\geq 1.$
We continue the analysis from Sec.~\ref{sec:divisor}.

\subsection{Spectral Jacobian}\label{sec:spectralJacobian} 
Recall $r$ is the number of zero points of $\omega$. Set a positive integer
\begin{align} m&=\frac{1}{2}(r+2\hat{g})=\frac{r}{2}+\hat{g} \in \mathbb{Z}_+ \\
&=\ell^e+\frac{\ell^o}{2}+\Big(2g-1+\frac{\ell^o}{2}\Big)\n\\
&=2g-1+(\ell^e+\ell^o)\leq 2g-1+(4g-4)=6g-5.\n
\end{align}
Note that when $g\geq 2$, $m$ is bounded below by
$$2g\leq m.$$ 


Recall the sequence of higher-order conservation laws 
$$\varphi_j, \bar{\varphi}_j, \quad j=0,1,\, ... \, .$$
Consider a set of 
$$n\geq m$$ 
linearly independent conservation laws
\begin{align}
\phi^A&=\sum_{j=0}^{\infty} \Big(U^A_j\varphi_j+V^A_j\bar{\varphi}_j\Big)+\phi^A_{-1},
\quad A=1,2,\, ... \, n,\n\\
&\quad\phi^A_{-1}\in\mcc^{0}.\n
\end{align}
Here $U^A_j,  V^A_j$'s are constant coefficients with finitely many nonzero terms.
Set the column of conservation laws
\be\label{eq:veccv}
\vec{\phi}=(\phi^1,\phi^2,\, ... \, \phi^n)^t.
\ee

\one
Let $\{ \hat{p}_i \}_{i=1}^{r}\subset\Sigmah$ be the set of zero points of $\omega.$ Define the residues
\be
\tn{r}_i =\tn{Res}_{\hat{p}_i}(\vec{\phi}) \in\C^n,\quad i=1,2,\, ... \, r.
\ee
By Stokes theorem we have
$$\sum_i \tn{r}_i=0.$$

Let $\{ \hat{a}_k, \hat{b}_k \}_{k=1}^{\hat{g}}$ be a basis of $H_1(\Sigmah,\Z)$.
Define the periods
\be
\hat{\tn{a}}_k=\int_{a_k} \vec{\phi}, \quad \hat{\tn{b}}_k=\int_{b_k} \vec{\phi}, 
\qquad k=1,2,\, ... \, \hat{g}.\n
\ee
By definition the periods are defined modulo the integral lattice generated by the set of residues.
\begin{defn}
Let $\Sigma$ be a compact CMC surface. Let $\Sigmah\to\Sigma$ be its double cover.
For a choice of column of conservation laws \eqref{eq:veccv},
the associated \tb{spectral lattice}  
is the integral lattice generated by
\be
L_{\vec{\phi}} =\Z\langle \tn{r}_i, \hat{\tn{a}}_k, \hat{\tn{b}}_k \rangle  \subset \C^n. \n
\ee
\end{defn} 
The rank of the lattice $L_{\vec{\phi}}$ is bounded above by
$$(r-1)+2\hat{g},$$ 
which is odd for $r$ is known to be even. Note the condition $n\geq m$ ensures that
$$\tn{dim}_{\R}\C^n=2n\geq 2m =r +2\hat{g} >(r-1)+2\hat{g}\geq \tn{rank}(L_{\vec{\phi}}).
$$
\begin{defn}\label{defn:spectralJacobian}
For a  compact CMC surface $\Sigma$ with a choice of column of conservation laws \eqref{eq:veccv},
the associated \tb{spectral Jacobian}   is the quotient space
\be
T_{\vec{\phi}}=\C^n/L_{\vec{\phi}}.
\ee
\end{defn}
Similarly as in the Riemann surface theory, the spectral Jacobian comes equipped with a canonical Abel-Jacobi map.
\begin{defn}\label{defn:AJmap}
For a  compact CMC surface $\Sigma$ with a choice of column of conservation laws \eqref{eq:veccv},
the associated \tb{Abel-Jacobi map} is defined as follows:
pick an arbitrary base point $\hat{p}_0\in\Sigmah$ away from the fundamental divisor $\hat{\mcu}$.
By construction one has the well defined map
\be
\tn{u}_{\vec{\phi}}:\Sigmah\setminus\hat{\mcu} \to T_{\vec{\phi}} \n
\ee
defined by
\be
\tn{u}_{\vec{\phi}}(\hat{p}) = [ \int_{\hat{p}_0}^{\hat{p}}\vec{\phi}\;] \n\quad  \tn{for $\hat{p}\in\Sigmah.$ }
\ee
Here $[\;\cdot\;]$ denotes the equivalence class of a vector in $\C^n$ as a point in $T_{\vec{\phi}}.$
\end{defn}

In the following we shall examine the properties of the spectral Jacobian and its associated objects
for a simple choice of  column of conservation laws.

 



  
\subsection{CMC surfaces of rational type}\label{sec:rationalCMC}
We consider the column of higher-order truncated conservation laws $\phi=(\varphi_1,\varphi_2, \, ... \, \varphi_n)^t.$ To this end, set 
$$\ol{H}^{(1,0)}_n:=\C\langle\varphi_1,\varphi_2,\, ... \, \varphi_n\rangle\simeq\C^n$$
be the vector space of the first $n$ higher-order conservation laws after the always smooth $\varphi_0.$
We treat the following two cases separately depending on the presence of local residues for the conservation laws.

\subsubsection{Case: No residues for $\varphi_j$}\label{sec:casenoresidues}  
Then we have a natural map
\be\label{eq:pullbackmap}
\ol{H}^{(1,0)}_n\to H^1(\Sigmah,\C)\to H^{(1,0)}(\Sigmah),
\ee
where the first map is pull-back and the second map is just projection. 
Dualizing this one obtains the linear map
\be\label{eq:pullbackdualmap}
\tn{u}_n:\left(H^{(1,0)}(\Sigmah)\right)^*\to \left(\ol{H}^{(1,0)}_n\right)^*.
\ee
\begin{defn}\label{defn:levelnoresidue}
The \tb{level} of a compact CMC surface is the least integer $n$ such that  the linear map $\tn{u}_n$ in \eqref{eq:pullbackdualmap} is injective.
\end{defn}
In the discussion below we assume that the CMC surface has level $\leq n$ and $\tn{u}_n$ is injective.
 
\two
Define the period lattice
\be\label{defn:Gammaperiod}
\Gamma_n:=\Z\langle \hat{a}_k, \hat{b}_k \rangle \subset \left(\ol{H}^{(1,0)}_n\right)^*.
\ee
Here  the homology basis $\{ \hat{a}_k, \hat{b}_k \}_{k=1}^{\hat{g}}$ of $\Sigmah$ are considered as elements in  $\left(\ol{H}^{(1,0)}_n\right)^*$ by the period homomorphism.

Let 
$$[\omega]\in H^{(1,0)}(\Sigmah)$$
denote the cohomology class represented by $\omega=\sqrt{\ff}$. 
Let $[\omega]^{\perp}\subset \left(H^{(1,0)}(\Sigmah)\right)^*$ be the dual hyperplane.
\begin{defn}[No residues]\label{defn:noresiduerational}
Let $\Sigma$ be a compact CMC surface. Let $\Sigmah\to\Sigma$ be the double cover defined by the square root of Hopf differential $\omega=\sqrt{\ff}$.  Let $\Lambda\subset \left(H^{(1,0)}(\Sigmah)\right)^*$ be the period lattice so that the Jacobian of $\Sigmah$ is given by
$$\tn{Jac}(\Sigmah)=\left(H^{(1,0)}(\Sigmah)\right)^*/\Lambda\simeq T^{\hat{g}}.$$

Suppose the conservation laws  $\varphi_j \in \ol{H}^{(1,0)}_n$ have no residues 
at the fundamental divisor $\mcuh.$
The CMC surface $\Sigma$ is \tb{rational type} of \tb{order $n$} when 
\begin{enumerate}[\qquad a)]
\item the class $[\omega]\in H^{(1,0)}(\Sigmah)$ is rational so that 
$[\omega]^{\perp}/\Lambda\subset \tn{Jac}(\Sigmah)$ is a compact torus,
\item under the linear map \eqref{eq:pullbackdualmap}, 
$$\tn{u}_n(\Lambda)\subset\Gamma_n.
$$
\end{enumerate}

It follows that for a rational type CMC surface $\Sigma$ there exists the associated linear \tb{Abel-Jacobi map}  
\be\label{eq:AbelJacobimapnor}
\tn{u}_n:\left(T_{[\omega]^{\perp}},\tn{Jac}(\Sigmah)\right) \to 
\left(\tn{u}_n(T_{[\omega]^{\perp}}), (\ol{H}^{(1,0)}_n)^*/\Gamma_n\right).
\ee
Here $T_{[\omega]^{\perp}}= [\omega]^{\perp}/\Lambda.$
\end{defn}
 
Note the following commutative diagram for $n'\leq n$.

\be
\xymatrix{ \tn{Jac}(\Sigmah) \ar[r]^{\tn{u}_n} \ar[rd]^{\tn{u}_{n'}}  & \quad \left(\ol{H}^{(1,0)}_{n}\right)^*/\Gamma_{n}  \ar[d]^{\pi} \\
 &  \quad\left(\ol{H}^{(1,0)}_{n'}\right)^*/\Gamma_{n'} }
\ee
Here $\pi:\left(\ol{H}^{(1,0)}_n\right)^*/\Gamma_n\to\left(\ol{H}^{(1,0)}_{n'}\right)^*/\Gamma_{n'}$ is the projection induced by restriction map. It follows that if a CMC surface is rational type of order $n$, then it is rational type of order $n'$ for any $n'\leq n$.
 
\subsubsection{Case: Residues for $\varphi_j$}\label{sec:caseresidues}
Similarly as before we have a natural map
\be\label{eq:pullbackmap}
\ol{H}^{(1,0)}_n\to H^1(\Sigmah\setminus\mcuh,\C)\to H^{(1,0)}(\Sigmah\setminus\mcuh),
\ee
where the first map is  pull-back and the second map is just projection.\footnotemark
\footnotetext{The cohomology $H^{(1,0)}(\Sigmah\setminus\mcuh)$ of the punctured Riemann surface $\Sigmah\setminus\mcuh$ is generated by the set of meromorphic 1-forms with at most simple poles on $\mcuh$.}
Dualizing this one obtains the linear map
\be\label{eq:pullbackdualmapr}
\tn{u}_n:\left(H^{(1,0)}(\Sigmah\setminus\mcuh)\right)^*\to \left(\ol{H}^{(1,0)}_n\right)^*.
\ee
\begin{defn}\label{defn:levelnoresidue}
The \tb{level} of a compact CMC surface is the least integer $n$ such that  the linear map $\tn{u}_n$ in \eqref{eq:pullbackdualmapr} is injective.
\end{defn}
In the discussion below we assume that the CMC surface has level $\leq n$ and $\tn{u}_n$ is injective.
 
\two
Define the period lattice
\be\label{defn:Gammaperiod}
\Gamma_n:=\Z\langle \hat{p}_j, \hat{a}_k, \hat{b}_k \rangle\subset \left(\ol{H}^{(1,0)}_n\right)^*.
\ee
Here each zero point $\hat{p}_j$ of $\omega$ is considered as an element in $\left(\ol{H}^{(1,0)}_n\right)^*$ by residue homomorphism. The homology basis $\{ \hat{a}_k, \hat{b}_k \}_{k=1}^{\hat{g}}$ of $\Sigmah$ are considered as elements in $\left(\ol{H}^{(1,0)}_n\right)^*$ by the period homomorphism which is defined modulo the integral lattice generated by residues (and hence the lattice $\Gamma_n$  is well defined).

\begin{defn}[With residues]
Let $\Sigma$ be a compact CMC surface. Let $\Sigmah\to\Sigma$ be the double cover defined by the square root of the Hopf differential $\ff$.
Let $\Lambda\subset \left(H^{(1,0)}(\Sigmah\setminus\mcuh)\right)^*$ be the residue/period lattice so that
the Jacobian is given by
$$\tn{Jac}(\Sigmah\setminus\mcuh)=\left(H^{(1,0)}(\Sigmah\setminus\mcuh)\right)^*/\Lambda.
$$

Suppose the conservation laws  $\varphi_j \in \ol{H}^{(1,0)}_n$ have  
residues at the fundamental divisor  $\mcuh$.
The CMC surface $\Sigma$ is \tb{rational type} of \tb{order $n$} when 
\begin{enumerate}[\qquad a)]
\item the class $[\omega]\in H^{(1,0)}(\Sigmah\setminus\mcuh)$ is rational so that 
$[\omega]^{\perp}/\Lambda\subset \tn{Jac}(\Sigmah\setminus\mcuh)$ is a compact torus,
\item under the linear map \eqref{eq:pullbackdualmap},
$$\tn{u}_n(\Lambda)\subset\Gamma_n.
$$
\end{enumerate}

It follows that for a rational type CMC surface $\Sigma$ there exists the associated linear \tb{Abel-Jacobi map}  
\be\label{eq:AbelJacobimapr}
\tn{u}_n:\left(T_{[\omega]^{\perp}},\tn{Jac}(\Sigmah\setminus\mcuh)\right) \to 
\left(\tn{u}_n(T_{[\omega]^{\perp}}), (\ol{H}^{(1,0)}_n)^*/\Gamma_n\right).
\ee
Here $T_{[\omega]^{\perp}}= [\omega]^{\perp}/\Lambda.$
\end{defn}
Note that similarly as in the zero residue case if a CMC surface is rational type of order $n$, then it is rational type of order $n'\leq n$.
\section{Polarization}\label{sec:Polarization}
In this section we introduce a polarization on the space of truncated conservation laws $\ol{H}^{(1,0)}_n$ for a given compact CMC surface. We continue to assume that the CMC surface  has level $\leq n$
so that the Abel-Jacobi map is injective.

\two
Let $\Sigma$ be a compact CMC surface of genus $\geq 1$. For each integer $\ell\geq 2n$, 
consider the bilinear form $Q_{\ell}$ on $\ol{H}^{(1,0)}_n$ defined as follows: 
for $\phi_1,\phi_2\in \ol{H}^{(1,0)}_n$ let 
\be\label{eq:Qform}
Q_\ell(\phi_1,\phi_2):=\int_{\Sigmah}(h_2\hb_2)^{\ell}\phi_1\w *\ol{\phi}_2.
\ee
The weight factor $(h_2\hb_2)^{\ell}$ is introduced to guarantee convergence of the integral at the umbilics, Cor.~\ref{cor:varphintwisted}.
It is clear that $Q_\ell$ is a positive definite Hermitian form.
\begin{defn}
Let $\Sigma$ be a compact CMC surface of genus $\geq 1.$ Let $\Sigmah\to\Sigma$ be its double cover.
Let $\ol{H}^{(1,0)}_n=\C\langle\varphi_1,\varphi_2, \, ... \, \varphi_n\rangle$ be the vector space of truncated higher-order conservation laws.
The \tb{polarization} of $\ol{H}^{(1,0)}_n$ of degree $\ell\geq 2n$ associated with $\Sigma$ is the bilinear form $Q_\ell$ \eqref{eq:Qform}.
\end{defn}

Let 
$$V_{\Sigma} =\left(H^{(1,0)}(\Sigmah\setminus\mcuh)\right)^*.$$
\begin{defn}\label{defn:polarization}
Let $\Sigma$ be a compact CMC surface of genus $\geq 1$.  Let $\Sigmah\to\Sigma$ be its double cover.
The \tb{polarization} of $V_{\Sigma}$ of order  $n$ and  degree  $\ell\geq 2n$ 
is the bilinear form $\tn{u}_n^*Q_\ell$ on $V_{\Sigma}$
induced by the linear Abel-Jacobi map (either \eqref{eq:pullbackdualmap}, or \eqref{eq:pullbackdualmapr})
$$ \tn{u}_n: V_{\Sigma} \hook  (\ol{H}^{(1,0)}_n)^*. 
$$
\end{defn}
Note by the assumption on the level of $\Sigma$ that the induced bilinear form on $V_{\Sigma}$ is nondegenerate.

The vector space $V_{\Sigma}$ is canonically associated with the pair $(\Sigmah, \omega)$, 
which is a holomorphic object. The polarization gives a globally defined structural invariant 
for   compact CMC surfaces.

\section{Action of symmetries on conservation laws}\label{sec:families}
In this section we examine how the symmetry acts on the space of conservation laws. It turns out the action is almost trivial, except the action by the classical symmetry on the classical conservation laws. This on the one hand  indicates the rigid property of the conservation laws and their periods, and it also provides an indirect justification to consider deformation by non-canonical Jacobi fields for the study of periods of conservation laws; 
the canonical Jacobi fields keep the periods and do not incur any variation of periods for the higher-order conservation laws.

\two
Consider first the classical symmetries and conservation laws. Let $A, B$ be   classical Jacobi fields. Let $V_A, V_B$ be the infinite prolongation of corresponding classical symmetries, and let $\Phi_A, \Phi_B$ be the corresponding classical conservation laws respectively. 
From Eqs.~\eqref{eq:classicalCartanform}, \eqref{eq:Jacobicvlaw}, we have the equation
\[ \Phi_B=V_B\lhk\Upsilon_0\]
for the 0-th order Poincar\'e-Cartan 3-form $\Upsilon_0$.
Taking the Lie derivative one finds
\begin{align}
\mcl_{V_A}\Phi_B&=\mcl_{V_A} (V_B\lhk\Upsilon_0)   \n\\
&=V_B\lhk (\mcl_{V_A}\Upsilon_0)+[V_A,V_B]\lhk\Upsilon_0.\n
\end{align}
Since $\ed\Upsilon_0=0$ we have
$$\mcl_{V_A}\Upsilon_0=\ed (V_A\lhk\Upsilon_0)=\ed\Phi_A=0.$$
We consequently get
\be\label{eq:VAonPhiB}
\mcl_{V_A}\Phi_B=[V_A,V_B]\lhk\Upsilon_0.
\ee

Introduce the Poisson bracket $\{\,\cdot \,\}$ on the space of classical Jacobi fields by the equation
\be\label{eq:classicalPoisson}
 V_{\{A,B\}}:=[V_A,V_B].
\ee
Then the action \eqref{eq:VAonPhiB} is written as
\be\label{eq:PhiAB}
\mcl_{V_A}\Phi_B=\Phi_{\{A,B\}}.
\ee
We wish to examine the analogous relation for the higher-order symmetries and conservation laws.

\two
To begin it will be convenient to define a Poisson bracket on the space of Jacobi fields. Recall  $\mathfrak{J}^{(\infty)}$ is the space of Jacobi fields. For $P,Q\in\mathfrak{J}^{(\infty)}$ consider a 1-form
$$\varphi_{P,Q}:=-P\JAI\ed Q +Q\JAI \ed P.
$$
Since $\mce(P)=\mce(Q)=0$ it follows that
$$\ed \varphi_{P,Q}\equiv 0\mod\iinfh,$$
and $\varphi_{P,Q}$ represents a conservation law.
\begin{defn}\label{defn:Poissonbracket}
Let $\mathfrak{J}^{(\infty)}$ be the space of Jacobi fields. Define the \tb{Poisson bracket} 
$$  \{\; , \, \}:  \mathfrak{J}^{(\infty)}\times \mathfrak{J}^{(\infty)}\to \mathfrak{J}^{(\infty)}$$
as follows. For $P, Q\in\mathfrak{J}^{(\infty)}$, define $\{P,Q\}\in\mathfrak{J}^{(\infty)}$ by
$$\Phi_{\{P,Q\}}:=[\ed\varphi_{P,Q}]\in\Cv{(\infty)}.$$
Here $\Phi_{u}\in\Cv{(\infty)}$ is the differentiated conservation law (closed reduced 2-form) generated by the Jacobi field $u.$
\end{defn}
We will see in the below that when restricted to the subspace of classical Jacobi fields this Poisson bracket agrees with the one defined in \eqref{eq:classicalPoisson}.

The Poisson bracket is nonetheless almost trivial.
\begin{lem}\label{lem:Poissoncommute}
Let $P,Q$ be higher-order Jacobi fields. Then $\{P,Q\}=0$.
\end{lem}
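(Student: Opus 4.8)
The plan is to exploit the classification of conservation laws and Jacobi fields from Theorem~\ref{thm:higherNoether} together with the weighted homogeneity under the spectral symmetry $\mcs$. Recall that $\Cv{(\infty)}$ decomposes into the classical part and the higher-order part, and that the higher-order differentiated conservation laws $\Phi_{a^{2k+1}}, \Phi_{\ab^{2k+1}}$ are weighted homogeneous of distinct odd spectral weights $2k-1$ (by Theorem~\ref{thm:abcnormalform} and Lem.~\ref{lem:hrepJacobi}). The essential observation is that the Poisson bracket $\{P,Q\}$, being represented by $[\ed\varphi_{P,Q}]$ with $\varphi_{P,Q}=-P\JAI\ed Q+Q\JAI\ed P$, is \emph{additive} in spectral weight: if $P,Q$ are higher-order Jacobi fields of spectral weights $p,q$ respectively, then $\varphi_{P,Q}$ is homogeneous of weight $p+q$, hence so is $\Phi_{\{P,Q\}}$, and by Lem.~\ref{lem:hrepJacobi} the resulting Jacobi field $\{P,Q\}$ has spectral weight $p+q$.

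First I would reduce to the case where $P$ and $Q$ are weighted homogeneous, since by Theorem~\ref{thm:higherNoether} the space of higher-order Jacobi fields is spanned by the $a^{2k+1}, \ab^{2k+1}$, each of definite odd spectral weight, and the bracket is bilinear. For two higher-order homogeneous Jacobi fields of odd weights $p$ and $q$, the sum $p+q$ is \emph{even}. On the other hand, Theorem~\ref{thm:higherNoether}~a) shows that the only nonzero higher-order differentiated conservation laws live in $\Cv{2k}$ and are generated by Jacobi fields of \emph{odd} spectral weight $2k-1$; equivalently, all higher-order Jacobi fields have odd spectral weight, and there are no higher-order Jacobi fields of even spectral weight. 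Thus $\{P,Q\}$, having even spectral weight, cannot be a nonzero higher-order Jacobi field.

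It then remains to rule out that $\{P,Q\}$ is a nonzero \emph{classical} Jacobi field. Here I would use that the classical Jacobi fields are characterized (cf.\ the remark following Lem.~\ref{lem:Sstable}) by the property $\mcs(z_j)$-invariance, i.e.\ they have spectral weight incompatible with the strictly positive even weight $p+q \geq 2$ carried by $\Phi_{\{P,Q\}}$; concretely, the classical conservation laws $\Cv{0}$ are $\mcs$-homogeneous of the weights inherited from $\g$, and a weight count shows $\Phi_{\{P,Q\}}$ cannot land in $\Cv{0}$ unless it vanishes. Combining the two cases, $\Phi_{\{P,Q\}}=0$ in $\Cv{(\infty)}$, and since the map $u\mapsto\Phi_u$ from Jacobi fields to differentiated conservation laws is the isomorphism of Theorem~\ref{thm:higherNoether}~b), we conclude $\{P,Q\}=0$.

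The main obstacle I anticipate is verifying cleanly that $\varphi_{P,Q}$ is genuinely $\mcs$-homogeneous of weight $p+q$ modulo $\mcjh$, rather than merely homogeneous up to the exact terms and ideal terms that are quotiented out. This requires checking that the operations $\ed$, $\JAI$, and multiplication respect the spectral grading on $\Omega^*(\xinfh_*)/\iinfh$; the grading is preserved by $\ed$ because $\mcl_{\mcs}$ commutes with $\ed$, and $\JAI$ and pointwise multiplication manifestly preserve weights, so the genuine difficulty is bookkeeping the passage to the quotient complex and invoking Lem.~\ref{lem:hrepJacobi} with the correct weight shift. Once the weight of $\{P,Q\}$ is pinned down as even and positive, the vanishing is forced purely by the parity/weight classification, with no residual computation.
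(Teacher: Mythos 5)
Your proposal is correct and follows essentially the same route as the paper: the paper's proof also observes that for $P=a^{2m+1}$, $Q=a^{2n+1}$ (or $Q=\ab^{2n+1}$) the form $\varphi_{P,Q}$ is weighted homogeneous of \emph{even} spectral weight, and then invokes Lem.~\ref{lem:hrepJacobi} together with the classification of Thm.~\ref{thm:higherNoether} (higher-order Jacobi fields occur only in odd spectral weights, and the classical piece is excluded in the proof of Lem.~\ref{lem:hrepJacobi}) to conclude $[\ed\varphi_{P,Q}]=0$. Your only slip is bookkeeping the weight of $\{P,Q\}$ as $p+q$ rather than $p+q+2$ from Lem.~\ref{lem:hrepJacobi}, which you flag yourself and which does not affect the parity argument.
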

\begin{proof}
Consider the case $P=a^{2m+1},Q=a^{2n+1}, m<n.$ Then the 1-form $\varphi_{P,Q}$ is weighted homogeneous of even spectral weight $2m+2n$. Hence by Lem.~\ref{lem:hrepJacobi} and   Thm.~\ref{thm:higherNoether}, $[\ed\varphi_{P,Q}]$ is trivial. 
 
The case $P=a^{2m+1},Q=\ol{a}^{2n+1}$ follows by a similar argument.
\end{proof}
We will see in the below that $\{P,Q\}=0$  if one of the Jacobi fields, $Q$, is higher-order.

\two
With this preparation we now turn to the action of symmetries on the conservation laws.  
The following lemma shows that the action of symmetry by Lie derivative on the un-differentiated conservation laws is well defined independent of a particular representative.
\begin{lem}
Suppose that $[\p]=[\pt] \in \mcc^{(\infty)}$.  Let $V\in H^0(T\xinfh)$ be a symmetry.  Then $[\mcl_V \p]=[\mcl_V \pt] \in \mcc^{(\infty)}$.
\end{lem}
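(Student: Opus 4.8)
The statement asserts that the Lie derivative $\mcl_V$ descends to a well-defined operator on the $1$-st characteristic cohomology $\mcc^{(\infty)}=H^1(\Omega^*(\xinfh)/\iinfh,\underline{\ed})$. The plan is to show that $\mcl_V$ preserves both the space of $\underline{\ed}$-closed $1$-forms and the space of $\underline{\ed}$-exact $1$-forms, so that it acts on the quotient. Concretely, suppose $[\p]=[\pt]\in\mcc^{(\infty)}$; then by definition $\p-\pt=\underline{\ed} f$ for some scalar function $f$ on $\xinfh$, together with a correction by a $1$-form in $\iinfh$. So it suffices to establish two facts: first, that $\mcl_V$ sends $\underline{\ed}$-closed $1$-forms to $\underline{\ed}$-closed $1$-forms (so that $[\mcl_V\p]$ is a legitimate cohomology class at all), and second, that $\mcl_V(\underline{\ed} f)$ is again $\underline{\ed}$-exact modulo $\iinfh$, so that the difference $[\mcl_V\p]-[\mcl_V\pt]$ vanishes.

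The decisive input is Cartan's formula $\mcl_V=\ed\circ(V\lrcorner\,)+(V\lrcorner\,)\circ\ed$, combined with the fact that $V$ is a symmetry, i.e. $\mcl_V\iinfh\subset\iinfh$. First I would work at the level of genuine forms on $\xinfh$ rather than in the quotient complex: lift $\p$ to an honest $1$-form and note that $\underline{\ed}\p=0$ means $\ed\p\equiv 0\bmod\iinfh$, i.e. $\ed\p\in\Omega^2(\iinfh)$. The key computation is then
\[
\ed(\mcl_V\p)=\mcl_V(\ed\p),
\]
since exterior derivative commutes with Lie derivative. Because $\ed\p$ lies in the ideal $\iinfh$ and $V$ is a symmetry, $\mcl_V(\ed\p)\in\mcl_V\iinfh\subset\iinfh$, whence $\ed(\mcl_V\p)\equiv 0\bmod\iinfh$ and $\mcl_V\p$ is $\underline{\ed}$-closed. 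This handles the first fact and shows $[\mcl_V\p]\in\mcc^{(\infty)}$ is well defined as a class.

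For the second fact, take $\p-\pt=\ed f + \Theta$ with $\Theta\in\Omega^1(\iinfh)$, which is the precise meaning of $[\p]=[\pt]$ in the quotient complex (see the description of $E^{0,1}_1$ in Eq.~\eqref{eq:E011}). Then
\[
\mcl_V(\p-\pt)=\mcl_V(\ed f)+\mcl_V\Theta=\ed(\mcl_V f)+\mcl_V\Theta=\ed\big(V(f)\big)+\mcl_V\Theta .
\]
The first term $\ed(V(f))$ is exact, hence trivial in $\mcc^{(\infty)}$; and $\mcl_V\Theta\in\mcl_V\iinfh\subset\iinfh$ because $V$ is a symmetry and $\Theta\in\iinfh$. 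Therefore $\mcl_V(\p-\pt)\equiv\ed(V(f))\bmod\iinfh$ is $\underline{\ed}$-exact, giving $[\mcl_V\p]=[\mcl_V\pt]$. The main obstacle, and the only point requiring genuine care, is the bookkeeping of the quotient $\ed\Omega^0(\xinfh)+\Omega^1(\iinfh)$ appearing in the definition of $E^{0,1}_1$: one must confirm that $\mcl_V f$ is indeed a scalar function (so its differential is exact in the relevant sense) and that every occurrence of a form in $\iinfh$ stays in $\iinfh$ under $\mcl_V$ — both of which follow directly from the symmetry condition $\mcl_V\iinfh\subset\iinfh$ and the fact that $\iinfh$ is a Frobenius (hence $\ed$- and $\mcl_V$-stable) ideal. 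No delicate estimates or recursion are needed; the entire argument is formal once Cartan's formula and the symmetry property are in hand.
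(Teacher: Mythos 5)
Your proof is correct and follows essentially the same route as the paper: write $\pt-\p=\ed f+\Theta$ with $\Theta\in\iinfh$, use $\mcl_V(\ed f)=\ed\big(V(f)\big)$ together with $\mcl_V\iinfh\subset\iinfh$, and conclude the classes agree. Your additional verification that $\mcl_V\p$ is $\underline{\ed}$-closed (via $\ed(\mcl_V\p)=\mcl_V(\ed\p)\in\iinfh$) is a point the paper leaves implicit, but it is the same formal mechanism and changes nothing essential.
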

\begin{proof}
By definition of conservation law, $[\p]=[\pt] $ is equivalent to the existence of a scalar function $f$, 
and a 1-form $\Theta \in \iinfh$ such that $\pt=\p+\ed f + \Theta$.  Then we calculate that
\[
[\mcl_V \pt] =[ \mcl_V \p +  \mcl_V \ed f+  \mcl_V \Theta]  =[\mcl_V \p +  \ed \left(V( f) \right)+  \mcl_V \Theta]=[\mcl_V \p]
\]
for $\mcl_V \iinfh \subset \iinfh$.
\end{proof}

This justifies the following definition.
\begin{defn}
The action of a symmetry vector field $V\in H^0(T\xinfh)$ on a conservation law $[\p] \in \mcc^{(\infty)}$ is defined by the Lie derivative $[\mcl_V \p]$.
\end{defn}

 
We wish to extend the identity Eq.~\eqref{eq:PhiAB} to the entire space of Jacobi fields $\mathfrak{J}^{(\infty)}.$
To this end we make use of the differentiated conservation laws in normal form.  Recall that associated to a Jacobi field $Q$ we have the corresponding differentiated conservation law  $\Phi_Q$
which is a closed reduced 2-form
\begin{equation}
\Phi_Q \equiv \theta_0 \w (-\JAI \ed Q) + Q \Psi \mod F^2\Omega^2.
\end{equation}
\begin{prop}\label{prop:symmetryaction}
Let $P, Q\in\mathfrak{J}^{(\infty)}$. Suppose 
$\ed  \p_Q \equiv \Phi_Q  \mod F^2\Omega^2$ and $\p_Q$ is a \tn{mod} $F^2\Omega^2$-representative for $\Phi_Q.$
Then $$\mcl_{V_P} \p_Q \equiv \p_{\{P,Q\}}  \mod \iinfh.$$
 It follows that 
\be\label{eq:PhiPQ}
[\mcl_{V_P}\Phi_Q]\equiv \Phi_{\{P,Q\}} \in\Cv{(\infty)},
\ee
and the identity  \eqref{eq:PhiAB} extends to all Jacobi fields.
\end{prop}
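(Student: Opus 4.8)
The plan is to run the mechanism of the classical computation \eqref{eq:VAonPhiB}--\eqref{eq:PhiAB} on the infinite prolongation, contracting against the vertical symmetry $V_P$ attached to $P$ by Prop.~\ref{prop:symmetry}. Since $V_P$ is a symmetry, $\mcl_{V_P}$ preserves $\iinfh$, hence the whole filtration $F^p\Omega^q$, and it commutes with $\ed$; in particular $\mcl_{V_P}\p_Q$ is again $\underline{\ed}$-closed, so both sides of the asserted identity are conservation-law $1$-forms and it suffices to compare them in $\underline{\Omega}^1=\Omega^1(\xinfh)/\iinfh$. First I would expand by Cartan's formula $\mcl_{V_P}\p_Q = V_P\lhk\ed\p_Q + \ed(V_P\lhk\p_Q)$ and feed in the hypothesis $\ed\p_Q\equiv\Phi_Q \mod F^2\Omega^2$, so that $V_P\lhk\ed\p_Q \equiv V_P\lhk\Phi_Q \mod V_P\lhk(F^2\Omega^2)$.

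The heart is the contraction $V_P\lhk\Phi_Q$. Using the reduced normal form \eqref{eq:H2k}, modulo $F^2\Omega^2$ one has $\Phi_Q\equiv Q\,\Psi+\theta_0\w\sigma_Q$ with $\sigma_Q=-\JAI\ed Q$ as in \eqref{eq:sigmaform}, while every remaining monomial of $\Phi_Q$ (the $\theta_i\w\theta_j$, $\thetab_i\w\thetab_j$ and mixed terms with $i,j\ge 1$) lies in $F^2\Omega^2=\iinfh\w\iinfh$. Contracting the vertical field $V_P$ and reducing $\mod\iinfh$ (where $\eta_1\equiv h_2\xi$, $\etab_1\equiv\hb_2\xib$, $\theta_0\equiv 0$, $V_P\lhk\xi=V_P\lhk\xib=0$, and $V_0=P$, $V_1=2P_\xi$, $V_{\bar 1}=2P_\xib$ from \eqref{eq:symstep0}--\eqref{eq:symstep1}), I would obtain $V_P\lhk(Q\Psi)\equiv -\im\,Q(P_\xi\xi-P_\xib\xib)$ and $V_P\lhk(\theta_0\w\sigma_Q)\equiv \im\,P(Q_\xi\xi-Q_\xib\xib)$. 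Their sum is $\im(PQ_\xi-QP_\xi)\xi-\im(PQ_\xib-QP_\xib)\xib$, which is precisely $\varphi_{P,Q}\mod\iinfh$ in the notation of Definition~\ref{defn:Poissonbracket}. This bilinear match is the conceptual core, and it closes on the nose.

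Two correction terms then remain. The $F^2\Omega^2$ remainder is harmless: contracting a decomposable element of $\iinfh\w\iinfh$ with any vector leaves one factor in $\iinfh$, so $V_P\lhk(F^2\Omega^2)\subset\iinfh$. The term $\ed(V_P\lhk\p_Q)$ is $\ed$ of a function, hence $\underline{\ed}$-exact; it need not vanish $\mod\iinfh$, which is exactly why the clean conclusion lives on characteristic cohomology: $\mcl_{V_P}\p_Q$ and $\varphi_{P,Q}$ represent the same class, with $\varphi_{P,Q}$ representing $\{P,Q\}$ by construction. Applying $\ed$ and using that $\mcl_{V_P}$ preserves $\Omega^2(\iinfh)$ gives $\ed(\mcl_{V_P}\p_Q)\equiv\mcl_{V_P}\Phi_Q \mod F^2\Omega^2$; since the two closed $2$-forms $\mcl_{V_P}\Phi_Q$ and $\ed\varphi_{P,Q}=\Phi_{\{P,Q\}}$ differ by $\ed$ of $(\mcl_{V_P}\p_Q-\varphi_{P,Q})$, they are cohomologous, whence $[\mcl_{V_P}\Phi_Q]=\Phi_{\{P,Q\}}$ in $\Cv{(\infty)}$. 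Specializing $P,Q$ to classical Jacobi fields recovers \eqref{eq:PhiAB}, and combining with Lemma~\ref{lem:Poissoncommute} shows the action is trivial whenever $Q$ is higher order.

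The main obstacle is not the core contraction but the bookkeeping needed to make the displayed $\mod\iinfh$ statement literally correct for the chosen representatives: one must either fix the $\mod F^2\Omega^2$ normalization of $\p_{\{P,Q\}}$ so as to absorb the exact term $\ed(V_P\lhk\p_Q)$, or read the equivalence up to an $\underline{\ed}$-coboundary, which is all that the differentiated conclusion actually uses. A secondary point to verify carefully is that reducing $\sigma_Q=-\JAI\ed Q$ modulo $\iinfh$ legitimately converts the covariant derivatives $Q^\xi,Q^1$ into the total derivatives $Q_\xi=Q^\xi+h_2Q^1$ that appear in $\varphi_{P,Q}$; this follows from $\eta_1\equiv h_2\xi$, $\etab_1\equiv\hb_2\xib \mod\iinfh$ together with $Q^0=0$ for a Jacobi field.
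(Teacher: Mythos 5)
Your proposal is correct and follows essentially the same route as the paper's proof: Cartan's formula $\mcl_{V_P}\p_Q = V_P\lhk\ed\p_Q + \ed(V_P\lhk\p_Q)$, contraction of $V_P$ against the reduced normal form $\Phi_Q\equiv Q\Psi+\theta_0\w\sigma_Q \mod F^2\Omega^2$ to produce $\varphi_{P,Q}=-P\JAI\ed Q+Q\JAI\ed P$ modulo $\iinfh$, and absorption of the exact term $\ed(V_P\lhk\p_Q)$ at the level of characteristic cohomology. Your extra remarks on $V_P\lhk(F^2\Omega^2)\subset\iinfh$ and on converting covariant to total derivatives are exactly the implicit steps the paper's computation relies on.
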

\begin{proof}
Let $V_P=V_{\xi}\delx+ V_{\xib}\delxb+ P E_0 + V_j E_j+V_{\ol{j}}E_{\ol{j}}$ (when $P$ is vertical we set $V_{\xi}=V_{\xib}=0.$  These terms do not affect the computation below).
\begin{align*}
\mcl_{V_P} \p_Q &= V_P \lhk \ed \p_Q + \ed (V_P \lhk \p_Q)  \\
& \equiv V_P \lhk \Phi_Q + \ed (V_P \lhk \p_Q) \mod (F^1\Omega^1=\iinfh)  \\
& \equiv  -P \JAI \ed Q -\frac{\im}{2} Q (V_1 \xi - V_{\ol{1}}\xib )+ \ed (V_P \lhk \p_Q) \mod \iinfh\\
& \equiv -P\JAI \ed Q+ Q\JAI\ed P+ \ed (V_P \lhk \p_Q) \mod \iinfh\\
& \equiv \p_{\{P,Q\}} + \ed (V_P \lhk \p_Q) \mod \iinfh.
\end{align*}
\end{proof}
This immediately implies the following extension of Lemma \ref{lem:Poissoncommute}.
\begin{cor}\label{cor:Poissoncommute}
Let $P,Q\in \mathfrak{J}^{(\infty)}.$  Suppose $Q$ is a higher-order Jacobi field. 
Then $\{P,Q\}=0$.
\end{cor}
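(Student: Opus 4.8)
Let $P,Q\in\mathfrak{J}^{(\infty)}$ and suppose $Q$ is a higher-order Jacobi field. Then $\{P,Q\}=0$.

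Let me sketch how I'd prove this.
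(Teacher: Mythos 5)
Your proposal contains no argument at all: after restating the corollary you stop at ``Let me sketch how I'd prove this,'' so the entire proof is missing. This is not a minor omission, because the statement does not follow formally from the definition of the bracket; it requires two genuinely different inputs depending on the type of $P$. When both $P$ and $Q$ are higher-order, the claim is Lemma~\ref{lem:Poissoncommute}: for $P=a^{2m+1}$, $Q=a^{2n+1}$ (or a conjugate) the $1$-form $\varphi_{P,Q}=-P\,\JAI\ed Q+Q\,\JAI\ed P$ is weighted homogeneous of \emph{even} spectral weight, and by Lemma~\ref{lem:hrepJacobi} its generating Jacobi field would also have even spectral weight; since Theorem~\ref{thm:higherNoether} shows every nontrivial higher-order Jacobi field has odd spectral weight, the class $[\ed\varphi_{P,Q}]$ is trivial, i.e.\ $\{P,Q\}=0$.

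The case the corollary actually adds is $P$ classical and $Q$ higher-order, and there the mechanism is different: a classical symmetry $V_P$ is induced by an ambient isometry, hence leaves invariant $\omega=h_2^{\frac{1}{2}}\xi$, the quantity $h_2\hb_2$, and the balanced coordinates $z_j$. Since each higher-order conservation law $\varphi_n=c^{2n+2}\xi+b^{2n}\xib$ has coefficients which are, up to powers of $h_2^{\pm\frac{1}{2}}$, polynomials in the $z_j$, one gets $\mcl_{V_P}\varphi_n=0$ identically. Feeding this into Proposition~\ref{prop:symmetryaction}, which identifies $[\mcl_{V_P}\Phi_Q]$ with $\Phi_{\{P,Q\}}$ (together with Proposition~\ref{prop:nontrivialvarphin}\,b) relating $[\ed\varphi_n]$ to $\Phi_{a^{2n+3}}$), yields $\Phi_{\{P,Q\}}=0$ and hence $\{P,Q\}=0$. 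Without the spectral-weight parity argument in the first case and the invariance of $\omega$, $h_2\hb_2$, $z_j$ under classical symmetries in the second, no proof exists.
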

\begin{proof}
It suffices to check the case $P$ is classical. It is geometrically clear that a classical symmetry $V_P$ leaves invariant $\omega, h_2\hb_2, z_j$.
It follows  that for all $n\geq 0$ 
$$\mcl_{V_P}\varphi_n=0\mod \tn{nothing}.$$
\end{proof}
\begin{rem}
A similar argument shows that the Lie bracket
$[ V_P, V_Q]$ of the symmetry vector fields vanish
whenever $Q$ is a higher-order Jacobi field.
\end{rem}
Thus the Poisson bracket for Jacobi fields we introduced is almost trivial. It reduces to the bracket on the classical Jacobi fields.

\begin{cor}\label{cor:exactaction}
The action of symmetry on the conservation laws is nontrivial only when a classical symmetry acts on a classical conservation law.
\end{cor}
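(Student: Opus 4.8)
The plan is to reduce the entire assertion to the vanishing of the Poisson bracket $\{\,\cdot\,,\,\cdot\,\}$ on Jacobi fields, for which the decisive inputs are already available. By Noether's theorem (Thm.~\ref{thm:higherNoether}) every vertical symmetry is $V_P$ for a Jacobi field $P\in\mathfrak{J}^{(\infty)}$, and every conservation law is represented by a $1$-form $\varphi_Q$, equivalently by the reduced closed $2$-form $\Phi_Q$, for a Jacobi field $Q$; moreover $\mathfrak{J}^{(\infty)}$ splits into its classical summand and its higher-order summand. The key step is Prop.~\ref{prop:symmetryaction}, which identifies the action of $V_P$ on the class $[\varphi_Q]$ with the bracket, namely $[\mcl_{V_P}\Phi_Q]=\Phi_{\{P,Q\}}\in\Cv{(\infty)}$. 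Since the map $u\mapsto\Phi_u$ is the injective Noether isomorphism $\mathfrak{J}^{(\infty)}\simeq\Cv{(\infty)}$ and is intertwined with $\mcc^{(\infty)}\simeq\Cv{(\infty)}$, the action $[\mcl_{V_P}\varphi_Q]$ is trivial precisely when $\Phi_{\{P,Q\}}=0$, i.e. precisely when $\{P,Q\}=0$ as a Jacobi field. Thus I would phrase the whole corollary as a statement about the bracket.

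First I would record that $\{\,\cdot\,,\,\cdot\,\}$ is bilinear and antisymmetric. Bilinearity is immediate from the defining formula $\varphi_{P,Q}=-P\JAI\ed Q+Q\JAI\ed P$ together with the linearity of $u\mapsto\Phi_u$, and antisymmetry follows from $\varphi_{Q,P}=-\varphi_{P,Q}$, whence $\{Q,P\}=-\{P,Q\}$. Next I would invoke Cor.~\ref{cor:Poissoncommute}, which gives $\{P,Q\}=0$ whenever $Q$ is higher-order; composing with antisymmetry yields $\{P,Q\}=0$ whenever $P$ is higher-order as well. Writing $P=P_0+P_+$ and $Q=Q_0+Q_+$ for the classical and higher-order parts and expanding by bilinearity, the three cross terms $\{P_0,Q_+\}$, $\{P_+,Q_0\}$, $\{P_+,Q_+\}$ all vanish, so $\{P,Q\}=\{P_0,Q_0\}$. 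Consequently $[\mcl_{V_P}\varphi_Q]=[\varphi_{\{P_0,Q_0\}}]$ depends only on the classical parts and vanishes automatically as soon as either $P$ or $Q$ is purely higher-order. By Eq.~\eqref{eq:PhiAB}, the surviving classical--classical case reduces to $\mcl_{V_{P_0}}\Phi_{Q_0}=\Phi_{\{P_0,Q_0\}}$, which reproduces exactly the adjoint action of the isometry algebra $\g$ on Kusner's momentum classes from Sec.~\ref{sec:classicallaws}; this is the only regime in which the action can be nonzero, which is the content of the corollary.

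The proof is therefore almost entirely formal once Prop.~\ref{prop:symmetryaction} and Cor.~\ref{cor:Poissoncommute} are in hand, and the one point genuinely needing care is the passage from ``$\{P,Q\}$ vanishes as a Jacobi field'' to ``$[\mcl_{V_P}\varphi_Q]$ vanishes as a characteristic cohomology class.'' I would handle this strictly on the differentiated level: $\{P,Q\}=0$ gives $\Phi_{\{P,Q\}}=0$, and injectivity of the Noether map forces $[\mcl_{V_P}\Phi_Q]=0$, hence $[\mcl_{V_P}\varphi_Q]=0$. The main obstacle I anticipate is not in this corollary itself but in the fact that its meaning rests on the action being well defined on cohomology classes, independent of the chosen representative $\varphi_Q$; that independence is exactly the lemma established immediately before the corollary (using $\mcl_V\iinfh\subset\iinfh$), so I would cite it rather than re-derive it, and the remainder is the bookkeeping of the classical/higher-order decomposition described above.
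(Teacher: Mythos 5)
Your proposal is correct and follows essentially the same route as the paper: the action is identified with the Poisson bracket via Prop.~\ref{prop:symmetryaction}, and the bracket is killed in all mixed and higher-order cases by Lem.~\ref{lem:Poissoncommute} and Cor.~\ref{cor:Poissoncommute} (the paper's Cor.~\ref{cor:Poissoncommute} already asserts vanishing whenever one argument is higher-order, which with antisymmetry is exactly your cross-term argument). Your explicit bookkeeping via bilinearity, antisymmetry, and the classical/higher-order splitting only makes precise what the paper leaves implicit, so there is nothing to flag.
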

 

The analysis in this section has the following geometric implication. 
\begin{thm}\label{thm:invariantperiods}
Let $\x_t:\Sigma \hook \xinf$ be a smoothly varying $1$-parameter family of smooth 
CMC surfaces preserving the umbilic divisor $\mcu.$ 
Let $\xh_t:\Sigmah \hook \xinfh$ be its double cover.
Suppose the variational vector field $\dd{\xh_t}{t}=V_t$ is the symmetry vector field associated to a Jacobi field for each time $t$.  Then for any higher-order conservation law $[\p] \in \mcc^{(\infty)}$
its periods   (residues)  
\begin{equation}
\int_{a}\xh^*_t[\p] \hspace{1cm}{\rm for}\; \;\; [a] \in H_1(\Sigmah \setminus\mcuh, \Z)
\end{equation}
are independent of $t$ (here $\mcuh$ is the fundamental divisor).
\end{thm}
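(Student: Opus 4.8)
```latex
The plan is to combine two ingredients already established in the paper: the formula
\eqref{eq:PhiPQ} describing the infinitesimal action of a symmetry on a differentiated conservation
law, and the near-triviality of the Poisson bracket recorded in
Corollary~\ref{cor:Poissoncommute}. First I would observe that a period (or residue) of a
closed $1$-form on $\Sigmah\setminus\mcuh$ is a de Rham pairing, so its $t$-derivative is
controlled by the Lie derivative of the pulled-back form along the variational vector field
$V_t$. Explicitly, for a fixed homology class $[a]\in H_1(\Sigmah\setminus\mcuh,\Z)$, I would
write
\be
\frac{\ed}{\ed t}\int_{a}\xh_t^*[\varphi]
=\int_{a}\xh_t^*\big(\mcl_{V_t}\varphi\big),\n
\ee
using that $\dd{\xh_t}{t}=V_t$ and that the cycle $a$ can be taken to move with the family while
staying in the complement of the fundamental divisor (here the hypothesis that the deformation
preserves the umbilic divisor $\mcu$, equivalently $\mcuh$, is essential so that $[a]$ is a
well-defined class for all $t$).

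The core of the argument is then to show that $\xh_t^*(\mcl_{V_t}\varphi)$ is an exact form on
$\Sigmah\setminus\mcuh$, so that its period over the cycle $a$ vanishes. Since $V_t$ is by
assumption the symmetry vector field $V_{Q_t}$ generated by a Jacobi field $Q_t$, and since
$[\varphi]$ is a higher-order conservation law, hence representable by $\p_{P_t}$ for a
higher-order Jacobi field $P_t$, Proposition~\ref{prop:symmetryaction} gives
$\mcl_{V_{Q_t}}\p_{P_t}\equiv\p_{\{Q_t,P_t\}}\mod\iinfh$. By
Corollary~\ref{cor:Poissoncommute} the bracket $\{Q_t,P_t\}$ vanishes because the
second slot $P_t$ is higher-order, so the differentiated conservation law
$\Phi_{\{Q_t,P_t\}}$ is trivial. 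Tracing through the proof of
Corollary~\ref{cor:Poissoncommute} one in fact has $\mcl_{V_{Q_t}}\varphi=0$ modulo $\iinfh$
(indeed modulo nothing when $Q_t$ is classical), so that on any integral surface
$\xh_t^*(\mcl_{V_t}\varphi)$ is not merely closed but exact; writing it as $\ed H_t$ for a
locally defined function $H_t$ and invoking that its period over the closed cycle $a$ is zero
then finishes the computation.

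The main obstacle I anticipate is the careful bookkeeping of the singular behavior at
$\mcuh$. Because $\varphi$ is singular at the fundamental divisor, the statement genuinely
concerns periods and residues of a meromorphic-type $1$-form, and one must verify that the
exactifying primitive $H_t$ does not itself introduce new poles on $a$ that would obstruct the
vanishing of $\int_a \ed H_t$. This is where the twisted-section normal form of
Corollary~\ref{cor:varphininw}, which controls the order of the pole of $\varphi_n$ at an
umbilic of degree $p$, together with the hypothesis that the deformation preserves $\mcu$ (so
the pole locus does not migrate), must be used to keep $H_t$ single-valued on cycles avoiding
$\mcuh$. A secondary technical point is to justify interchanging $\ed/\ed t$ with the integral
and to reduce the general conservation law to the higher-order generators $\varphi_n$ and the
classical part; the classical part is handled separately since a classical symmetry may act
nontrivially on classical conservation laws, but the hypothesis that $[\varphi]$ is
\emph{higher-order} (or that we quotient by $\langle\mcc^0\rangle$ as in
Theorem~\ref{thm:spectralcvlaw}) removes that case. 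Once these singularity issues are settled,
the invariance of periods follows formally from the vanishing of the Poisson bracket.
```
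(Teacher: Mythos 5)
Your proposal is correct and follows essentially the same route as the paper: the paper's proof differentiates the period to get $\int_a \xh^*(\mcl_{V_0}\p)$ and then invokes Cor.~\ref{cor:exactaction}, which is precisely the combination of Prop.~\ref{prop:symmetryaction} and Cor.~\ref{cor:Poissoncommute} that you reconstruct. Your additional care about the primitive $H_t$ being single-valued away from $\mcuh$ and about the cycle staying in the complement of the fundamental divisor is a reasonable elaboration of what the paper leaves implicit, not a different argument.
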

\begin{proof}
We compute using Cor.~\ref{cor:exactaction}:
\begin{equation}
\dd{}{t}_{|_{t=0}}\int_{a}\xh^*_t[\p] = \int_{a} \xh^*(\mcl_{V_0}  \p)=0. \n
\end{equation}
\end{proof}
Thus for any smooth family of CMC surfaces with the fixed umbilic divisor for which the infinitesimal deformations are all given by Jacobi fields, the periods of higher-order conservation laws are invariant of the family.  It follows that in the umbilic-free case
the  sequence of linear Abel-Jacobi maps $\tn{u}_n$ on the Jacobian $\tn{Jac}(\Sigmah)$
from Eqs.~\eqref{eq:AbelJacobimapnor}, \eqref{eq:AbelJacobimapr}  
remain invariant under smooth deformation by Jacobi fields.
\begin{rem}
Let $\mcm$ be the formal moduli space of integral surfaces of $(\xinfh, \iinfh)$.
The space of symmetry vector fields $\mathfrak{S}$ can be considered as the tangent space
$$\mathfrak{S}=H^0(T\mcm).$$
From the analysis above the higher-order symmetry vector fields 
define a sequence of commuting flows on $\mcm.$
The theorem shows that the residues/periods are invariant under these flows.
\end{rem}

\section{Variation of periods}\label{sec:VP}
Recall that the Jacobi equation, and hence its solutions Jacobi fields, on a CMC surface describes the conformal CMC deformation with the prescribed Hopf differential.\footnotemark\footnotetext{This can be verified by a moving frame computation.}

Consider the case of a CMC torus. The Hopf differential is nowhere zero, and the infinite sequence of canonical Jacobi fields $a^{2j+1}$'s are all smooth. A geometric conclusion can be drawn from this that, since the kernel of an elliptic differential operator on a compact manifold is finite dimensional, a CMC torus is necessarily of  linear  finite-type. An alternative way to view this phenomenon would be that;
\emph{a CMC torus generally comes in a family  
and  admits conformal CMC deformations with the prescribed Hopf differential.} 
This qualitative statement could be made rigorous analytically in terms of the well developed spectral theory for harmonic tori, \cite{Hitchin1990}.

Consider the case of a CMC surface of genus $\geq 2.$ The canonical sequence of Jacobi fields and conservation laws are singular at the umbilics. In order to incorporate the singularities and obtain smooth objects, one is  led to consider the notion of Jacobi fields and conservation laws twisted by powers of canonical line bundle, Sec.~\ref{sec:umbilics}. In analogy with the torus case an alternative way to view this phenomenon would be that;
\emph{a CMC surface of genus $\geq 2$ is generally rigid under conformal CMC deformations with the prescribed Hopf differential.} 

It is more likely in this case that a pair of Riemann surface (of genus $\geq 2$) equipped with a holomorphic quadratic differential uniquely determines the compatible conformal metric. From this we argue that for the deformation theory of high genus CMC surfaces one should consider the deformation of underlying Riemann surface equipped with a Hopf differential, and the corresponding variation of Hodge-like structures (VHS) naturally emerges.

\two
In this section we initiate the study of VHS for CMC surfaces by considering the deformation of CMC surfaces that corresponds to scaling the Hopf differential. The objective is to determine the Picard-Fuchs equation for the periods of conservation laws, which is more or less computable  in this case by local analysis.

\subsection{Scaling Hopf differential}\label{sec:scalingHopf}
Let $\Sigma$ be a Riemann surface equipped with a pair $(\xi\circ\xib, \ff)$ of a conformal Riemannian metric $g=\xi\circ\xib$ and a Hopf differential $\ff\in H^0(\Sigma, K^2)$ that satisfies the compatibility equation
$$ R_{g} =\gamma^2 -\vert\ff\vert^2_{g}.$$
Consider a smooth conformal deformation of such pair on $\Sigma$ given by
\begin{align}
g(t)&=\xi(t)\circ \xib(t)=e^{\tilde{u}(t)}\xi\circ e^{\tilde{u}(t)}\xib= e^{2\tilde{u}(t)}\xi\circ\xib, \n\\
\ff(t)&=e^{-2t} \ff =e^{-2t} h_2\xi^2,\n
\end{align}
which scales the Hopf differential by the constant real parameter $e^{-2t}, t \in \R$. Here $\tilde{u}(t)$ is a $t$-dependent real valued scalar function which represents the corresponding conformal change of metric. Note that
$$h_2(t)=e^{-2t-2\tilde{u}(t)} h_2.
$$
The compatibility equation (Gau\ss\, equation) for each fixed $t$ is given by
\be\label{eq:scaleGauss}
\tilde{u}_{\xi\xib}+\frac{1}{4}(\gamma^2 e^{2\tilde{u}}-e^{4t-2\tilde{u}}h_2\hb_2)
=\frac{1}{4}(\gamma^2-h_2\hb_2).
\ee

Set the initial condition
$$ \tilde{u}(0)=0,$$
and consider the expansion
$$ \tilde{u}(t)=u t +\mco(t^2)$$
for a real valued scalar function $u$ on $\Sigma$. For simplicity we shall adopt the notation such as
$$ \dot{\tilde{u}} = \frac{\ed}{\ed t}\tilde{u}(t)\vert_{t=0} =u.$$
The upper dot  $(\;\dot{}\;)$  would mean $t$-derivative at $t=0$. Note that  $(\;\dot{}\;)$ is a complex linear operation.

Differentiating Eq.~\eqref{eq:scaleGauss} one gets the deformed compatibility equation
\be\label{eq:inhomJacobi}
\mce(u)=u_{\xi\xib}+\frac{1}{2}(\gamma^2+h_2\hb_2)u =h_2\hb_2, 
\ee
which is an \tb{inhomogeneous Jacobi equation}. We wish to determine the corresponding first order deformation of the higher-order derivatives $h_j$'s, and ultimately of the sequence of higher-order conservation laws $[\varphi_j],  j=0, 1, \, ... \, .$
\begin{exam}\label{exam:vrho}
Recall the integrable extension $Z\to\xinf$ for the spectral symmetry, Sec.~\ref{sec:extension}.
From Eq.~\eqref{eq:vrho} the real valued scalar function 
$$ \im v_{\rho}
$$
satisfies $\mce(\im v_{\rho})=h_2\hb_2$, and it is an \tb{inhomogeneous Jacobi field}.
\end{exam}

\two
Let us start with the following observations.
\begin{align}
\dot{\xi}&= u \xi, \n   \\
\dot{\rho}&=\im ( u_{\xi}\xi - u_{\xib}\xib), \n  \\
\dot{h}_2&= -(2u+2)h_2,   \n  \\
\dot{\tn{$(h_2 \bar{h}_2)$}}&= -(4u+4)h_2\hb_2,   \n  \\
\dot{(h_2^{\frac{1}{2}}\xi)}&=-h_2^{\frac{1}{2}}\xi. \n
\end{align}
From this we get,
\begin{lem}
\begin{align}\label{eq:hzdot}
\dot{h}_j &=-(j u+2)h_j - \delta_j,   \\
\dot{z}_j &=(j -2)z_j  - h_2^{-\frac{j}{2}}\delta_j,  \n  \\
&=(j -2)z_j  -\epsilon_j.\n
\end{align}
where $\delta_2=0$ and for $j\geq 3$,
\be\begin{array} {rll}
\delta_j&=\sum_{k=2}^{j-1} \tn{C}_j^k u_{j-k} h_k,    &      \n      \\
C^2_{j}&=4, \quad \tn{set} \; \;C^{j}_{j}=2j, \;\;&j\geq 3, \n \\  
\tn{C}_j^k&=\tn{C}_{j-1}^{k-1}+\tn{C}_{j-1}^k, &k\leq j-1, \,\;j\geq 4.\n
\end{array}\ee
Here we denote the derivatives of $u$ by  ($u$ is real)
$$ u_k =\delx^k u, \quad u_{\bar{k}} =\delxb^k u =\ol{u_k}.
$$
\end{lem}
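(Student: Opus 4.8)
The plan is to establish the formulae in the Lemma by a systematic differentiation of the defining structure equations at $t=0$. The statement has two layers: first the formula for $\dot{h}_j$, and then the derived formula for $\dot{z}_j$ together with the explicit combinatorial description of $\delta_j$ (the recursion for the coefficients $\tn{C}_j^k$). Since $z_j=h_2^{-\frac{j}{2}}h_j$, once $\dot{h}_j$ is known the formula for $\dot{z}_j$ follows by the product rule using $\dot{h}_2=-(2u+2)h_2$, so the crux is really the computation of $\dot{h}_j$ and the identification of the inhomogeneous correction term $\delta_j$.

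First I would set up the variational calculus carefully. The preliminary list of variations $\dot{\xi}=u\xi$, $\dot{\rho}=\im(u_\xi\xi-u_{\xib}\xib)$, $\dot{h}_2=-(2u+2)h_2$ (already recorded just above the statement) provides the base case. The key structural input is the inductive definition of the higher derivatives $h_j$ via Eq.~\eqref{eq:dhj}, namely $\ed h_j+\im j h_j\rho=h_{j+1}\xi+T_j\xib$, which says that on a CMC surface $h_{j+1}=\delx h_j + \im j h_j \cdot(\text{$\rho$-coefficient along $\xi$})$ after accounting for the connection term. I would differentiate this relation at $t=0$. Applying the operator $\frac{\ed}{\ed t}\big|_{t=0}$ to Eq.~\eqref{eq:dhj} and commuting it past $\delx$ (keeping track of the variation $\dot{\xi}=u\xi$, which rescales the coframe and hence shifts $\delx$ by a factor involving $u$), one obtains a recursion for $\dot{h}_{j+1}$ in terms of $\dot{h}_j$, $u$, and the lower $h_k$. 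The term $-(ju+2)h_j$ should emerge from the combination of the weight-$j$ scaling (the $\im j h_j\rho$ term, differentiated) and the uniform $-2$ shift inherited from $\dot{h}_2$; the inhomogeneous piece $-\delta_j$ collects all contributions in which a derivative lands on $u$ rather than on $h_k$.

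The main work, and the main obstacle, will be verifying the precise combinatorial recursion $\tn{C}_j^k=\tn{C}_{j-1}^{k-1}+\tn{C}_{j-1}^k$ with the boundary values $\tn{C}_j^2=4$ and $\tn{C}_j^j=2j$. My plan is to prove $\dot{h}_j=-(ju+2)h_j-\delta_j$ with $\delta_j=\sum_{k=2}^{j-1}\tn{C}_j^k u_{j-k}h_k$ by induction on $j$. The base cases $\delta_2=0$ and $\delta_3$ can be checked directly from $\dot{h}_3=\dot{(\delx h_2)}$ using $\delx u=u_1$, which fixes $\tn{C}_3^2=4$. For the inductive step I would differentiate the relation expressing $h_{j+1}$ as $\delx h_j$ (mod weight terms), write $\dot{(\delx h_j)}=\delx(\dot h_j)+[\dot{},\delx]h_j$ where the commutator records the variation of the coframe (each $\delx$ carries a factor $e^{-\tilde u}$ through $\xi(t)=e^{\tilde u}\xi$, giving $\dot{\delx}=-u\,\delx$), and then collect terms. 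The homogeneous part $-(ju+2)h_j$ differentiates to produce both $-((j+1)u+2)h_{j+1}$ and extra $u$-derivative terms; matching the latter against $\sum\tn{C}_{j+1}^k u_{j+1-k}h_k$ should yield Pascal-type identity $\tn{C}_{j+1}^k=\tn{C}_j^{k-1}+\tn{C}_j^k$, with the endpoint $\tn{C}_{j+1}^j=2j$ (the new top coefficient) arising from differentiating the weight factor. I expect the bookkeeping of which $u$-derivative order $u_{j-k}$ attaches to which $h_k$ — and confirming it is uniformly $u_{j-k}$ rather than a mix — to be the delicate point, so I would verify the pattern explicitly for $j=3,4,5$ before committing to the general induction, exactly as the paper's own methodology (trying the first few terms in \texttt{Maple}) suggests.
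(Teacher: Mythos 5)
Your proposal is correct and follows essentially the same route as the paper's proof: differentiate the defining structure equation \eqref{eq:dhj} at $t=0$ (using that $(\;\dot{}\;)$ commutes with $\ed$, together with $\dot{\xi}=u\xi$ and the variation of $\rho$), obtain the recursion $\dot{h}_{j+1}=(\dot{h}_j)_{\xi}-uh_{j+1}-ju_1h_j$, which is equivalent to $\delta_{j+1}=(\delta_j)_{\xi}+2ju_1h_j$, and then read off the Pascal-type recursion for $\tn{C}_j^k$ by induction. The one slip that your planned verification of $j=3,4,5$ would catch is the endpoint: the actual top coefficient of $\delta_{j+1}$ is $\tn{C}_{j+1}^{j}=\tn{C}_j^{j-1}+2j$ (e.g.\ $\tn{C}_4^3=10$), while $2j$ is only the bookkeeping convention $\tn{C}_j^j:=2j$ that absorbs the added $2ju_1h_j$ term so that the recursion $\tn{C}_{j+1}^k=\tn{C}_j^{k-1}+\tn{C}_j^k$ extends up to $k=j$.
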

\begin{proof}
By definition
$$\ed h_j(t) +\im j h_j(t)\rho(t)\equiv h_{j+1}(t)\xi(t) \mod \xib.$$
Since the $t$-derivative $(\;\dot{}\;)$ commutes with the exterior derivative $\ed$, one computes
$$ \delx \dot{h}_j \equiv ( \dot{h}_{j+1}+h_{j+1} u + jh_j u_1)\xi\mod \xib.$$
and we have the inductive formula for $j\geq 2$,
$$ \dot{h}_{j+1} =(\dot{h}_j)_{\xi} - u h_{j+1}- j u_1 h_j.$$
This is equivalent to
$$\delta_{j+1}=(\delta_j)_{\xi}+ 2 j u_{1} h_j.$$
The rest follows from this.
\end{proof}
\begin{rem}
Set 
$$ w_k = h_2^{-\frac{k}{2}} u_k, \quad w_{\bar{k}} = \hb_2^{-\frac{k}{2}} u_{\bar{k}},
$$
and assign the new weight with respect to the $t$-derivative
$$ \tn{weight} (z_k)=\tn{weight} (z_{\bar{k}})=\tn{weight} (w_k)=\tn{weight} (w_{\bar{k}})=k-2.
$$
The lemma shows  that the $t$-derivative operation $(\;\dot{}\;)$ preserves 
this weight on the variables $z_j, w_j, z_{\bar{j}}, w_{\bar{j}}.$
\end{rem}
  
The $t$-derivative formulae \eqref{eq:hzdot} can be extended to the formal Killing field coefficients $a^{2j+1}, b^{2j+2}, c^{2j+2}$ by taking $(\;\dot{}\;)$ of the recursion formulae \eqref{eq:bc2n+2}, \eqref{eq:a2n+3} from Sec.~\ref{sec:inductiveformula}.  This is a direct computation and we shall not present the details. Let us instead content ourselves by recording the first few terms of the sequence in the table. Note that  each element of the sequence $\{\da^{2n+1}, \db^{2n+2}, \dc^{2n+2}\}_{n=0}^{\infty}$ is (up to scaling) weighted homogeneous in the variables $z_j, w_j.$
\begin{figure}
\be\label{eq:dotrecursioncoeff}\begin{array}{ccl}
 &\vline \\
\hline
\da^1 &\vline&  0\\
\hline
\db^2 &\vline&  b^2- u\im\gamma h_2^{-\frac{1}{2}}  \\
\hline
\dc^2 &\vline&  -c^2-u \im h_2^{\frac{1}{2}} \\
\hline
\da^3 &\vline& a^3 -4 w_1 \\
\hline
\db^4 &\vline& 3 b^4  +\frac{\im}{8} h_2^{-\frac{1}{2}}\left( 16 w_2+u_0(5 z_3^2-4  z_4)\right)\\
\hline
\dc^4 &\vline& c^4+\frac{\im}{8\gamma} h_2^{\frac{1}{2}}\left(16w_2-16w_1z_3+u_0(-7z_3^2+4z_4)\right)\\
\hline
\da^5 &\vline& 3 a^5 -\frac{1}{2\gamma}\left(-8w_3+12w_2z_3+w_1(-5 z_3^2+4  z_4)\right )
\end{array}
\ee
\caption{Deformation of formal Killing coefficients}
\end{figure}

\subsection{Picard-Fuchs equation}\label{sec:PicardFuchs}
Suppose  $\Sigma$ be a compact orientable surface of genus $g\geq 2$. The moduli space of complex structures on $\Sigma$ has dimension $(3g-3)_{\C}$, and by Riemann-Roch $\tn{dim}_{\C}H^0(\Sigma, K^2)=3g-3$ also. Since the compatibility equation for the conformal metric is elliptic, and the fact that the canonical sequence of Jacobi fields are singular at the umbilics, one suspects that the total moduli space of admissible triples for CMC surfaces on $\Sigma$ is finite dimensional. The deformation in the previous section which scales the Hopf differential by a real parameter can be considered as a particular subset of this total moduli and when combined with the spectral symmetry, which scales the Hopf differential by unit complex numbers, they form a $\C^*$-family of deformations of the compatible triple data for CMC surfaces. 

Suppose more concretely that there exists a submersion
$$ \widetilde{\Sigma} \to \mcb =\C^*$$
each of whose fiber is a copy of $\Sigma$ equipped with a compatible triple parametrized by the scaling factor $\C^*$. Then from the previous analysis the higher-order conservation laws define on each fiber an infinite sequence of elements 
$[\varphi_j] \in H^1(\Sigmah\setminus\mcuh,\C)$. 
 This is a somewhat familiar picture as in the theory of VHS on complex projective varieties. Specifically the analogue of Picard-Fuchs equation would describe in this case the variation under the $\C^*$-family of the linear Abel-Jacobi map 
$$ \tn{u}_n: H^{(1,0)}(\Sigmah\setminus\mcuh)^* \to (\ol{H}^{(1,0)}_n)^*.$$
Equivalently we have a map
$$\mcb\simeq \C^* \to H^{(1,0)}(\Sigmah\setminus\mcuh)\otimes  (\ol{H}^{(1,0)}_n)^*.
$$

On the other hand compared to VHS the space of higher-order conservation laws is infinite dimensional, and they are  locally computable.

\two
In this section we examine the first order truncated Picard-Fuchs equation for the higher-order conservation laws $[\varphi_j], j=0,1,\, ... \,,$ under the scaling deformation of Hopf differential. 
 
Recall $\varphi_j=c^{2j+2}\xi+b^{2j}\xib.$ Applying $(\;\dot{}\;)$,
\begin{align}\label{eq:dotvarphij}
\dot{\varphi}_j&=\dc^{2j+2}\xi+\db^{2j}\xib +u\varphi_j  \\
&=(2j-1)\varphi_j +\psi_j. \n
\end{align}
We thus denote by $\psi_j$ the variation of $\varphi_j(t)$ at $t=0$ modulo the weighted scaling part $(2j-1)\varphi_j.$ By construction $\psi_j$ is homogeneous linear in the derivatives $w_k$'s of the inhomogeneous Jacobi field $u$.

By construction, again since $(\;\dot{}\;)$ operation commutes with the exterior derivative $\ed$, it is clear that $$\ed \psi_j =0$$
and $\psi_j $ is a (possibly trivial) conservation law. In fact this is true for any first order deformation given by an inhomogeneous Jacobi field $u$.

\begin{prop}
Let $\Sigma$ be a CMC surface. Let $u$ be a $\C$-valued inhomogeneous Jacobi field satisfying \eqref{eq:inhomJacobi}. 
Then the sequence of 1-forms $\psi_j$ in Eq.~\eqref{eq:dotvarphij} representing the variation of conservation laws are closed. Thus an inhomogeneous Jacobi field generates an associated sequence of conservation laws $\psi_j, j=0,1,\, ... \, .$
\end{prop}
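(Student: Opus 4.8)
The plan is to reduce the closedness of each $\psi_j$ to two facts already in hand: that every $\varphi_j$ is a conservation law, $\ed\varphi_j\equiv 0\mod\iinfh$ (Prop.~\ref{prop:nontrivialvarphin}), and that the variational operation $(\,\dot{}\,)$ commutes with $\ed$. From \eqref{eq:dotvarphij} we have $\psi_j=\dot\varphi_j-(2j-1)\varphi_j$, hence $\ed\psi_j=\ed\dot\varphi_j-(2j-1)\ed\varphi_j$, and the second term already lies in the ideal. So it suffices to prove $\ed\dot\varphi_j\equiv 0\mod\iinfh$. Writing $\ed\dot\varphi_j=\dot{(\ed\varphi_j)}$, the content is that $\ed\varphi_j(t)$ lies in the contact ideal for every member of the deforming family, and I must show this persists after $t$-differentiation at $t=0$.

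The cleanest route realizes $(\,\dot{}\,)$ as a Lie derivative. The first-order deformation attached to an inhomogeneous Jacobi field $u$ is the flow of a vector field $W$ on the integrable extension $\Zh$: for the distinguished field $u=\im v_\rho$ of Exam.~\ref{exam:vrho} this $W$ is exactly the real-scaling deformation governed by \eqref{eq:spectraldeform}, whose inhomogeneous terms $\im h_2,-\im\hb_2$ encode the scaling of the Hopf differential by a real parameter, and a general inhomogeneous Jacobi field differs from $\im v_\rho$ by an honest Jacobi field whose deformation is a genuine vertical symmetry. In either case $W$ is a shadow in the sense of Defn.~\ref{defn:shadow}, so $\mcl_W\iinfh\subset\mcjh$. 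Because the family is the flow of $W$ one has $\dot\varphi_j=\mcl_W\varphi_j$ (the computed $u\varphi_j$ term in \eqref{eq:dotvarphij} is precisely $c^{2j+2}\,\mcl_W\xi+b^{2j}\,\mcl_W\xib$ with $\mcl_W\xi=u\xi$); then $\ed\dot\varphi_j=\mcl_W\ed\varphi_j$, and since $\ed\varphi_j$ lies in the algebraic ideal $\iinfh$ while $\mcl_W$ carries each ideal generator into $\mcjh$, the Leibniz rule gives $\mcl_W\ed\varphi_j\equiv 0\mod\mcjh$. Thus $\ed\psi_j\equiv 0\mod\mcjh$ and $\psi_j$ represents a (possibly non-local, possibly trivial) conservation law.

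An alternative and more elementary argument, which also makes transparent the phrase ``any inhomogeneous Jacobi field,'' restricts to a surface. For each $t$ the scaled data $(\,\xi(t),\rho(t),h_k(t)\,)$ solve the CMC structure equations by the compatibility equation \eqref{eq:scaleGauss}, so on a genuine integral surface $\Sigmah$ the generators $\theta_0,\theta_k(t),\thetab_k(t)$ all pull back to zero and $\varphi_j(t)$ restricts to an honestly closed $1$-form. Differentiating $\ed\varphi_j(t)=0$ on $\Sigmah$ in $t$ and using $[\,\partial_t,\ed\,]=0$ yields $\ed\dot\varphi_j=0$, whence $\ed\psi_j=0$ on $\Sigmah$; since this holds for the generic integral surface it is equivalent to $\ed\psi_j\equiv 0\mod\iinfh$. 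Here the linearization \eqref{eq:inhomJacobi} of the Gauss equation guarantees that the infinitesimal datum $u$ is tangent to an actual one-parameter family of admissible triples, so no extension is needed.

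The main obstacle is the bookkeeping that makes the commutation precise: one must verify that the algebraically-defined variation $\dot\varphi_j=\dc^{2j+2}\xi+\db^{2j}\xib+u\varphi_j$ coming from the recursion genuinely equals the geometric $t$-derivative $\mcl_W\varphi_j$ (equivalently $\partial_t$ of the surface pullback), and that the variation of the ideal generators $\dot\theta_0,\dot\theta_k,\dot\thetab_k$ stays inside $\iinfh$ (resp.\ $\mcjh$) rather than producing stray $\xi,\xib$-terms. Via \eqref{eq:hzdot} and the variation table for $\{\dot a^{2j+1},\dot b^{2j+2},\dot c^{2j+2}\}$ this reduces to a finite check on the $t$-differentiated structure equations, which I expect to close up exactly because \eqref{eq:inhomJacobi} is the linearized Gauss equation; confirming this compatibility is the one computation I would carry out in full.
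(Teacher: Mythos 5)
Your real-$u$ argument (the second, ``elementary'' route) is essentially the paper's own proof: the paper deduces closedness from the first-order deformed structure equations for the formal Killing coefficients $\da^{2j+1},\db^{2j+2},\dc^{2j+2}$, which is exactly your differentiation of $\ed\varphi_j(t)=0$ along the family produced by \eqref{eq:scaleGauss}; the compatibility check you defer to the end is precisely where the paper's content lies. But there is a genuine gap: the proposition is stated for $\C$-valued inhomogeneous Jacobi fields, and neither of your routes covers that case. A complex $u$ can never be the velocity $\dot{\tilde{u}}$ of a family of conformal factors (which are real), so your claim that ``the linearization \eqref{eq:inhomJacobi} guarantees that $u$ is tangent to an actual one-parameter family of admissible triples'' fails exactly when $\Im(u)\neq 0$; this is not a technicality, since the imaginary part of $u$ is an arbitrary real Jacobi field. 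The paper closes this gap with an observation you never make: split $u$ into a real inhomogeneous Jacobi field plus an imaginary Jacobi field, note that the imaginary part does not affect the first-order deformation of the metric, and use the fact that the coefficients of $\psi_j$ are differential polynomials in $h_2,z_k,w_k$ only --- no conjugate variables $\hb_2,\zb_k,\bar{w}_k$ appear --- so the closedness identity is $\C$-linear in the jets of $u$ and extends from real $u$ to complex $u$.

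Your main route has a second, independent defect. The system \eqref{eq:spectraldeform} encodes the associate-family deformation, i.e.\ scaling of the Hopf differential by \emph{unit complex} numbers (its inhomogeneous terms $\im h_2,-\im\hb_2$ come from $\dot{\eta}_1=2\im\,\eta_1$ in \eqref{eq:associatesymmetry}), not by a real parameter; the real scaling is not isometric and is not governed by that extension. Moreover, the deformation field $W$ of the real scaling cannot be a shadow: by Defn.~\ref{defn:shadow} together with Prop.~\ref{prop:symmetry}, the generating function of a shadow must satisfy the \emph{homogeneous} Jacobi equation, whereas here $\mce(u)=h_2\hb_2\neq 0$ --- this is exactly what makes $u$ inhomogeneous (cf.\ Exam.~\ref{exam:vrho}). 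Hence the inclusion $\mcl_W\iinfh\subset\mcjh$, on which your Lie-derivative argument rests, is not available, and realizing the real scaling as a flow on $\Zh$ would require a new integrable extension that neither you nor the paper constructs.
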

\begin{proof}
When $u$ is real, this follows from the first order deformed structure equation for the formal Killing coefficients $\da^{2j+1}, \db^{2j+2}, \dc^{2j+2}.$  
Suppose we augment $u$ by an imaginary Jacobi field. This does not affect the first order deformation of the metric $\xi(t)\circ\xib(t).$ Note also that the coefficients of $\psi_j$ consists of $h_2, z_k, w_k$ and do not involve the conjugate variables $\hb_2, \zb_k, \bar{w}_k.$
\end{proof}

\two
The space of inhomogeneous Jacobi fields is by definition an affine space over the space of Jacobi fields. From the arguments at the beginning of Sec.~\ref{sec:VP}, we find that up to Jacobi fields there should exist essentially one smooth inhomogeneous Jacobi field, namely the non-local function 
$$\im v_{\rho}\mod v_0$$ 
induced from the integrable extension $Z\to\xinf$. 
It is defined on a CMC surface $\Sigma$ up to monodromy by classical Jacobi fields. 

We proceed to analyze the first order deformation of $[\varphi_j(t)]$ 
corresponding to  the inhomogeneous Jacobi field\footnotemark \footnotetext{Let us ignore the monodromy issue for the moment.} 
$$u=\im v_{\rho}+\im \delta v_0.$$
Note that  $\im v_0$ is an imaginary (non-local) Jacobi field. We suspect that the  characteristic cohomology classes of $\psi_1, \psi_2$ can be determined similarly for the case $u=\im v_{\rho}$ too. 
The imaginary Jacobi field $\im \delta v_0$ was added for the sake of computation.
The following analysis is carried out on $\Zh$.

By a direct computation, the first few terms are (recall that $w_k=h_2^{-\frac{k}{2}}u_k$)
\begin{align}\label{eq:psij}
\psi_0&=0, \\
\psi_1&=-2\im\left(  \frac{(-w_2+w_1z_3)}{\gamma}\omega+\frac{\gamma u_0}{(h_2\hb_2)^{\frac{1}{2}}}\omb \right), \n \\
\psi_2&=\frac{\im}{4}\Big[ \frac{1}{\gamma^2} \left(-8\,w_{{4}}+28\,w_{{3}}z_{{3}}-35\,w_{{2}} z_{{3}}^{2}
 +35w_{{1}}z_{{3}}^{3} -40w_{{1}} z_{{3}}z_{{4}}   +12w_{{2}}z_{{4}}+8w_{{1}} z_{{5}}  \right) \omega \n\\
&\qquad+\frac{1}{(h_2\hb_2)^{\frac{1}{2}}} \left(8\,w_{{2}}+5u_{{0}} z_{{3}}^{2} -4u_{{0}} z_{{4}}\right) \omb \Big]. \n  
\end{align}
One finds that these 1-forms are closed modulo the inhomogeneous Jacobi equation \eqref{eq:inhomJacobi} and its differential consequences.

\two
We claim that the characteristic cohomology classes represented by $\psi_1, \psi_2$ are computable as follows. Substituting $u=\im v_{\rho}+\im \delta v_0$, note
\begin{align}
\psi_1&\equiv \ed (-4\gamma h_2^{-\frac{1}{2}} v_{0,\bar{1}}), \n \\
\psi_2&\equiv \ed\left( 4 h_2^{\frac{1}{2}}v_{{0,1}} -8 h_2^{-\frac{1}{2}} v_{{1,0}} -2 v_0 z_3 
+ v_{0,\bar{1}} h_2^{-\frac{1}{2}}\left( -2z_{{4}}+\frac{5}{2}z_{{3}}^{2} \right)  \right) \n\\
&\quad +6\im \varphi_0 \mod \mcjh. \n
\end{align}
Hence their characteristic cohomology classes on $\Zh$ are determined as
\begin{align*}
[\psi_1]&=0,\\
[\psi_2]&=6\im [\varphi_0]\in H^1(\Omega^*(\Zh)/\mcjh,\underline{\ed}).
\end{align*}

\two
As a consequence of this initial analysis it is tempting to conjecture that the variational conservation laws $[\psi_j]$'s are not new on $\Zh$ but they are linear combination of the higher-order conservation laws.
\begin{conj}[Griffiths transversality]\label{conj:Gtransversality}
For the first order deformation given by the inhomogeneous Jacobi field $u=\im v_{\rho}+\im\delta v_0$, let $\psi_j, j=0,1,\, ... \,,$ be the sequence of 1-forms obtained from the variation of the higher-order conservation law  $\varphi_j$  by Eq.~\eqref{eq:dotvarphij}. Then as a characteristic cohomology class on $\Zh$ we have
\be\label{eq:Gtransversality}
[\psi_j] \in \langle [\varphi_0], [\varphi_1],\, ... \, [\varphi_{j-2}] \rangle \subset H^1(\Omega^*(\Zh)/\mcjh,\underline{\ed}).
\ee
\end{conj}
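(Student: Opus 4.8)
The plan is to translate \eqref{eq:Gtransversality} into a statement about generating Jacobi fields and then control their order by induction on $j$. By the Proposition preceding the conjecture each $\psi_j=\dot c^{2j+2}\xi+\dot b^{2j}\xib$ is closed modulo $\mcjh$, hence defines a class in $H^1(\Omega^*(\Zh)/\mcjh,\underline{\ed})$; writing $\ed\psi_j\equiv\Phi_{P_j}\bmod F^2\Omega^2$ for its differentiated representative, $P_j$ is the generating (co-symmetry) Jacobi field. Since $[\varphi_k]$ generates $[\Phi_{a^{2k+3}}]$ by Prop.~\ref{prop:nontrivialvarphin}, the isomorphisms $\mcc^{(\infty)}\simeq\Cv{(\infty)}\simeq\mathfrak{J}^{(\infty)}$ of Thm.~\ref{thm:higherNoether} reduce \eqref{eq:Gtransversality} to the following: after reducing $\psi_j$ modulo $\mcjh$ and exact forms to a \emph{local} conservation law, its generating Jacobi field lies in the span $\langle a^3,a^5,\ldots,a^{2j-1}\rangle$ of canonical higher-order Jacobi fields of weight at most $2j-3$. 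I would first compute $P_j$ explicitly from the deformed recursion for $\{\dot a^{2k+1},\dot b^{2k+2},\dot c^{2k+2}\}$ recorded in \eqref{eq:dotrecursioncoeff}, extracting the coefficient of $\Psi$ in the reduced form of $\ed\psi_j$.

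A crucial structural input is that the coefficients of $\psi_j$ are polynomials in $h_2,z_k,w_k$ and the single non-local function $u_0$, with no dependence on the conjugate variables $\hb_2,\zb_k,\bar w_k$ — the same observation that makes $\psi_j$ closed. This holomorphicity in the $z$-tower is what should eliminate the anti-holomorphic side: once $\psi_j$ is reduced to a local conservation law (next paragraph), the classification of Jacobi fields through Cor.~\ref{cor:zurelation} and \cite{Fox2011} shows that $z$-polynomial higher-order Jacobi fields are spanned by $\{a^{2k+1}\}$ alone, ruling out the $\ol a^{2k+1}$, hence the $[\ol\varphi_k]$, contributions, while the positive form-weight of $\psi_j$ excludes the weight-zero classical conservation laws.

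The heart of the argument is the order bound together with the passage from $\Zh$ back to local cohomology, and here the phenomenon is genuinely Griffiths-like. At the level of forms $\psi_j$ is spectrally homogeneous of weight $2j-1$ — one checks this against \eqref{eq:psij} with $\tn{wt}(w_k)=k$, $\tn{wt}(z_k)=k-2$, $\tn{wt}(u_0)=0$ — the same weight as $\varphi_j$, since $\psi_j=\dot\varphi_j-(2j-1)\varphi_j$ is the $t$-variation with the spectral scaling $\mcl_{\mcs}\varphi_j=(2j-1)\varphi_j$ subtracted. Yet \eqref{eq:Gtransversality} asserts that $[\psi_j]$ has weight at most $2j-5$. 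The weight drop must be absorbed by a non-local primitive: mirroring the computed cases $[\psi_0]=[\psi_1]=0$ and $[\psi_2]=6\im[\varphi_0]$, I would construct, by induction on $j$, a function $F_j$ on $\Zh$ polynomial in the $z_k$ with coefficients in the non-local ring generated by $v_0,v_{0,1},v_{1,0},v_{0,\bar1}$ such that
\[
\psi_j\equiv\ed F_j+\sum_{k=0}^{j-2}\lambda_k\varphi_k\bmod\mcjh.
\]
It is exactly the auxiliary imaginary Jacobi field $\im\delta v_0$ in $u=\im v_\rho+\im\delta v_0$ that organizes the $\delta$-dependent terms into $\ed F_j$. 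The inductive step would come from differentiating the recursion $a^{2n+3}=\mcp(a^{2n+1})$ in $t$, producing a relation for $\psi_n$ in terms of $\psi_{n-1}$ plus a correction built from the variation of $\mcp$, and showing the correction contributes only the top term $\langle\varphi_{n-2}\rangle$ modulo an exact non-local form.

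The main obstacle is carrying out this induction uniformly in $j$ rather than case by case. Because the quotient complex on $\Zh$ does not respect the spectral weight — exact forms $\ed F_j$ with non-local $F_j$ interpolate between weights, precisely the mechanism that permits the weight to drop — the clean grading argument that settles the local conservation laws is unavailable, and one must instead track the top-order terms $z_{2j+1},w_{2j}$ through the deformed recursion and verify that they cancel in cohomology against $\ed F_j$. Producing the primitive $F_j$ explicitly, or establishing its existence abstractly from a classification of the relevant local and non-local conservation laws on $\Zh$, is the step I expect to be hardest; the base cases $j\le 2$ from \eqref{eq:psij} supply both the pattern and the initial data for the induction.
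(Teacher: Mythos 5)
The statement you set out to prove is posed in the paper as a \emph{conjecture}: the paper's entire support for it is the explicit computation of the cases $j\le 2$ --- Eq.~\eqref{eq:psij} together with the display giving $[\psi_0]=[\psi_1]=0$ and $[\psi_2]=6\im[\varphi_0]$ --- and these are exactly the base cases you cite. So there is no proof in the paper to compare against, and your proposal, which you yourself describe as a program whose hardest step is unresolved, does not close the gap either.

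Two concrete problems. First, your opening reduction is not available: Thm.~\ref{thm:higherNoether}, Lem.~\ref{lem:hrepJacobi}, and the classification via Cor.~\ref{cor:zurelation} concern the characteristic cohomology of $(\xinfh,\iinfh)$, i.e.\ classes taken modulo $\ed\Omega^0(\xinfh)+\Omega^1(\iinfh)$, whereas \eqref{eq:Gtransversality} is an assertion in $H^1(\Omega^*(\Zh)/\mcjh,\underline{\ed})$, where one may additionally subtract differentials of \emph{non-local} functions (polynomials in $v_0, v_{0,1}, v_{1,0}, v_{0,\bar{1}}, v_{\rho},\ldots$). No Noether-type classification of conservation laws or co-symmetries is established for $(\Zh,\mcjh)$ anywhere in the paper --- even the non-triviality of the single spectral class $[\varphi_{v_0}]$ is left as an open question --- so ``reduce to generating Jacobi fields and bound their order'' is not a legitimate first step. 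You implicitly acknowledge this at the end (``the clean grading argument \ldots is unavailable''), but that remark undercuts the reduction on which the rest of the plan rests: the weight drop from $2j-1$ to at most $2j-5$ that you correctly isolate is possible \emph{only} because non-local exact forms break the spectral grading, and that is the same reason the local classification machinery cannot be invoked on $\Zh$. Second, the inductive step --- existence of a non-local primitive $F_j$ with $\psi_j\equiv\ed F_j+\sum_{k\le j-2}\lambda_k\varphi_k \bmod \mcjh$ --- is stated as a goal, not proved; differentiating the recursion in $t$ only reproduces data of the type tabulated in \eqref{eq:dotrecursioncoeff}, and showing that the resulting correction terms assemble into an exact non-local form plus a multiple of $\varphi_{j-2}$ is precisely the content of the conjecture. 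As it stands your proposal is a reasonable research program whose base cases agree with the paper's evidence, but it is not a proof.
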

For a general deformation of CMC surfaces, which is defined by a nonlinear differential equation, it would be a difficult problem to determine the variation of periods of the conservation laws. 
This conjecture points at the possibility of finding the explicit formulae for the Picard-Fuchs equation 
for higher-order conservation laws under the deformation by scaling Hopf differential. 
 
\two
The proposed equation \eqref{eq:Gtransversality} is reminiscent of the Griffiths transversality theorem for VHS. Here the relevant filtration is
$$ \ldots \, \subset \ol{H}^{(1,0)}_{n-1}\subset  \ol{H}^{(1,0)}_{n}\subset  \ol{H}^{(1,0)}_{n+1}\subset 
\ldots \; \subset\ol{H}^{(1,0)}_{\infty}\subset\mcc^{(\infty)}.
$$
Eqs.~~\eqref{eq:dotvarphij}, \eqref{eq:Gtransversality} show that the variation of $\ol{H}^{(1,0)}_{n}$, up to scaling factor, is bounded by $\ol{H}^{(1,0)}_{n-2}$,
$$\dot{ \left(  \ol{H}^{(1,0)}_{n} \right) }\subset  \ol{H}^{(1,0)}_{n-2} \mod \;\tn{scaling terms},
$$
and the order difference is bounded uniformly by $2\cdot 2=4$ independent of $n$ (recall 
$\varphi_n\in\Omega^1(\Xh{2n+1})$).

\np
\bibliographystyle{amsplain}
\providecommand{\MR}[1]{}\def\cprime{$'$}
\providecommand{\bysame}{\leavevmode\hbox to3em{\hrulefill}\thinspace}
\providecommand{\MR}{\relax\ifhmode\unskip\space\fi MR }
\providecommand{\MRhref}[2]{%
  \href{http://www.ams.org/mathscinet-getitem?mr=#1}{#2}
}

\providecommand{\MR}[1]{}\def\cprime{$'$}
\providecommand{\bysame}{\leavevmode\hbox to3em{\hrulefill}\thinspace}
\providecommand{\MR}{\relax\ifhmode\unskip\space\fi MR }
\providecommand{\MRhref}[2]{%
  \href{http://www.ams.org/mathscinet-getitem?mr=#1}{#2}
}
\providecommand{\href}[2]{#2}

\end{document}